\documentclass[reqno,11pt]{amsart}

\usepackage{amscd}

\usepackage{amsmath,amsthm,bbm,amssymb,latexsym,amsfonts,graphicx,tikz-cd,enumerate,comment,mathabx}

\usepackage[hidelinks]{hyperref}

\usepackage{dsfont}
\usepackage{euscript}
\usepackage[myheadings]{fullpage}

\RequirePackage[left=1in,right=1in,top=1in,bottom=1in]{geometry}


\newcommand{\bB}{\mathbb{B}}

\newcommand{\bD}{\mathbb{D}}
\newcommand{\bE}{\mathbb{E}}
\newcommand{\bG}{\mathbb{G}}
\newcommand{\bL}{\mathbb{L}}
\newcommand{\bO}{\mathbb{O}}

\newcommand{\bS}{\mathbb{S}}
\newcommand{\bZ}{\mathbb{Z}}

\newcommand{\sA}{\EuScript{A}}
\newcommand{\sC}{\EuScript{C}}
\newcommand{\sD}{\EuScript{D}}
\newcommand{\sE}{\EuScript{E}}
\newcommand{\sF}{\EuScript{F}}
\newcommand{\sG}{\EuScript{G}}
\newcommand{\sH}{\EuScript{H}}
\newcommand{\sI}{\EuScript{I}}
\newcommand{\sK}{\EuScript{K}}
\newcommand{\sL}{\EuScript{L}}
\newcommand{\sM}{\EuScript{M}}

\newcommand{\sO}{\EuScript{O}}
\newcommand{\sP}{\EuScript{P}}

\newcommand{\fL}{\mathfrak{L}}

\newcommand{\on}[1]{\operatorname{#1}}
\newcommand{\swedge}{\hspace{-0.2ex}\raisebox{0.15ex}{\scalebox{0.75}{$\wedge$}}}

\DeclareMathOperator{\BB}{B}
\DeclareMathOperator{\CC}{C}
\DeclareMathOperator{\LL}{L}
\DeclareMathOperator{\NN}{N}
\DeclareMathOperator{\RR}{R}
\DeclareMathOperator{\TT}{T}
\DeclareMathOperator{\UU}{U}

\newcommand{\modfact}{\mod^{\operatorname{fact}}}
\newcommand{\comfact}{\operatorname{com} \to \operatorname{fact}}
\newcommand{\uHom}{\underline{\operatorname{Hom}}}
\newcommand{\uEnd}{\underline{\operatorname{End}}}

\DeclareMathOperator{\dR}{dR}
\DeclareMathOperator{\id}{id}
\DeclareMathOperator{\ind}{ind}
\DeclareMathOperator{\coind}{coind}

\DeclareMathOperator{\acc}{acc}
\DeclareMathOperator{\enh}{enh}
\DeclareMathOperator{\aULA}{aULA}
\DeclareMathOperator{\add}{add}
\DeclareMathOperator{\cc}{c}
\DeclareMathOperator{\co}{co}
\DeclareMathOperator{\Fun}{Fun}
\DeclareMathOperator{\Map}{Map}
\DeclareMathOperator{\Sect}{Sect}
\DeclareMathOperator{\Loc}{Loc}
\DeclareMathOperator{\cts}{cts}

\DeclareMathOperator{\ram}{-ram}
\DeclareMathOperator{\cofib}{cofib}
\DeclareMathOperator{\fib}{fib}
\DeclareMathOperator{\disj}{disj}
\DeclareMathOperator{\fil}{fil}

\DeclareMathOperator{\inv}{inv}
\DeclareMathOperator{\Sym}{Sym}
\DeclareMathOperator{\Hom}{Hom}

\DeclareMathOperator{\colim}{colim}

\DeclareMathOperator{\Vect}{Vect}
\DeclareMathOperator{\Vac}{Vac}
\DeclareMathOperator{\Spec}{Spec}

\newcommand{\fSet}{\mathsf{fSet}}
\newcommand{\Part}{\mathsf{Part}}
\DeclareMathOperator{\surj}{surj}
\DeclareMathOperator{\cl}{cl}
\DeclareMathOperator{\red}{red}

\DeclareMathOperator{\spec}{spec}
\DeclareMathOperator{\Av}{Av}
\DeclareMathOperator{\oblv}{oblv}
\newcommand{\DD}{\mathsf{D}}
\newcommand{\IndCoh}{\mathsf{IndCoh}}
\newcommand{\QCoh}{\mathsf{QCoh}}

\DeclareMathOperator{\coh}{coh}

\DeclareMathOperator{\End}{End}
\newcommand{\Ind}{\mathsf{Ind}}
\DeclareMathOperator{\op}{op}
\DeclareMathOperator{\res}{res}
\DeclareMathOperator{\cores}{cores}
\newcommand{\Sph}{\mathsf{Sph}}
\DeclareMathOperator{\Sat}{Sat}
\DeclareMathOperator{\Gr}{Gr}
\DeclareMathOperator{\LS}{LS}
\DeclareMathOperator{\gr}{gr}
\DeclareMathOperator{\ch}{ch}
\newcommand{\Rep}{\mathsf{Rep}}
\DeclareMathOperator{\Ran}{Ran}
\DeclareMathOperator{\fact}{fact}
\DeclareMathOperator{\Fact}{Fact}
\DeclareMathOperator{\act}{act}
\DeclareMathOperator{\coact}{coact}
\DeclareMathOperator{\ren}{ren}
\DeclareMathOperator{\fha}{fha}

\newcommand{\Whit}{\mathsf{Whit}}
\DeclareMathOperator{\triv}{triv}
\DeclareMathOperator{\coChev}{coChev}
\DeclareMathOperator{\Chev}{Chev}
\DeclareMathOperator{\coPrim}{coPrim}
\DeclareMathOperator{\Prim}{Prim}

\DeclareMathOperator{\com}{com}

\DeclareMathOperator{\unit}{unit}

\DeclareMathOperator{\mult}{mult}
\newcommand{\ShvCat}{\mathsf{ShvCat}}

\newcommand{\Cat}{\mathsf{Cat}}
\newcommand{\Grp}{\mathsf{Grp}}

\newcommand{\FactCat}{\mathsf{FactCat}}
\newcommand{\FactAlg}{\mathsf{FactAlg}}

\newcommand{\DGCat}{\mathsf{DGCat}}
\newcommand{\AssocAlg}{\mathsf{AssocAlg}}
\newcommand{\ComAlg}{\mathsf{ComAlg}}
\newcommand{\ComHopfAlg}{\mathsf{ComHopfAlg}}
\newcommand{\LieCoalg}{\mathsf{LieCoalg}}
\newcommand{\LieAlg}{\mathsf{LieAlg}}
\newcommand{\AssocCoalg}{\mathsf{AssocCoalg}}
\newcommand{\CocomCoalg}{\mathsf{CocomCoalg}}
\DeclareMathOperator{\laxfact}{lax-fact}
\DeclareMathOperator{\laxuntl}{lax-untl}
\DeclareMathOperator{\nonuntl}{non-untl}
\DeclareMathOperator{\untl}{untl}
\DeclareMathOperator{\OblvUnit}{OblvUnit}
\DeclareMathOperator{\AddUnit}{AddUnit}
\DeclareMathOperator{\tstr}{t-str}

\DeclareMathOperator{\temp}{temp}
\DeclareMathOperator{\antitemp}{anti-temp}

\DeclareMathOperator{\td}{td}
\DeclareMathOperator{\nv}{nv}
\DeclareMathOperator{\strict}{strict}

\newcommand{\e}{\mathds{1}}

\newcommand{\mathendash}{\text{\textendash}}

\renewcommand{\mod}{\mathendash\mathsf{mod}}
\newcommand{\comod}{\mathendash\mathsf{comod}}

\numberwithin{equation}{subsection}

\newtheorem{theorem}{Theorem}[subsection]
\newtheorem{lemma}[theorem]{Lemma}

\newtheorem{corollary}{Corollary}[theorem]
\newtheorem{proposition}[theorem]{Proposition}

\theoremstyle{remark}
\newtheorem{rem}[theorem]{Remark}
\newtheorem{example}[theorem]{Example}
\newtheorem{definition}[theorem]{Definition}

\date{\today}

\begin{document}

\title{Langlands duality on the Beilinson-Drinfeld Grassmannian}

\author{Justin Campbell and Sam Raskin}

\maketitle

\begin{abstract}

We calculate various categories of
equivariant sheaves on the Beilinson-Drinfeld Grassmannian
in Langlands dual terms. For one, we obtain the 
factorizable derived geometric Satake theorem. More generally, 
we calculate the categorical analogue of 
unramified vectors in the Jacquet module of sheaves on the Grassmannian.

In all cases, our spectral categories involve factorization modules
for factorization algebras related to the Langlands dual group.

\end{abstract}

\setcounter{tocdepth}{1}
\tableofcontents

\section{Introduction}\label{s:intro}

\subsection{What is this paper about?}

Our main objectives in this paper are as follows:

\begin{itemize}

\item Realize a strategy of Gaitsgory-Lurie giving 
a natural construction of the derived geometric Satake equivalence
\cite{bezrukavnikov-finkelberg} (cf. the first footnote in 
\emph{loc. cit}.).

\item Extend derived geometric Satake from the usual affine Grassmannian
to the Beilinson-Drinfeld (or factorizable) affine Grassmannian. 

\item Extend the Arkhipov-Bezrukavnikov-Ginzburg equivalence
\cite{ABG} to the Beilinson-Drinfeld affine Grassmannian.

\end{itemize}

In fact, we unify the latter two subjects with a mutual generalization to parabolic
subgroups $P$ of $G$; for $P = G$ we obtain factorizable derived Satake,
while for $P = B$ we obtain factorizable Arkhipov-Bezrukavnikov-Ginzburg.

These results have been widely anticipated. In particular, Gaitsgory 
has spoken and written about our main results for some time, 
cf. \cite{dennis-laumonconf} \S 4.7 and \S 6.6.3, 
\cite{whatacts} \S 10.3.3,
\cite{jerusalem2014} Talk V.2 Conjecture 1, 
\cite{paris-notes} Program \S 4.1.1, and \cite{arinkin-gaitsgory} 
Remark 12.6.7 to name a few. 

However, the questions we consider have previously resisted
precise formulations. In the above sources, one finds 
no definitions of the spectral (or \emph{Langlands dual}) sides
of the equivalences under consideration. So although our results
have been anticipated, we can find no precisely formulated 
conjecture in the literature that 
we can point to and say ``We proved \emph{this} conjecture!" 
In large part, our task in this paper is to precisely formulate
the problem and to do so in a way that is amenable to its solution. 

Accordingly, below we will give an extended discussion 
of our point of view on the spectral side.

\subsection{Applications} 

Derived Satake at a point is closely related to subtle issues in the
global geometric Langlands program; see the above references as 
well as \cite{independence} and \cite{coeffs}.
Beraldo \cite{dario-top-ch} has given a lovely interpretation of the
meaning of full factorizable Satake, for which we refer to \emph{loc. cit}.

For a long time (cf. \cite{whatacts} and \cite{dennis-laumonconf}),
it has been anticipated that our results in the parabolic case are required
to understand global geometric Langlands near reducible local systems.  
Forthcoming work \cite{eis} uses our results
in completely settling exactly this problem. 


\subsection{The Gaitsgory-Lurie heuristic (spherical case)}

We now recall the idea of Gaitsgory and Lurie, in part because it was 
never written down (as far as we are aware).
For purposes of an introduction, we give ourselves the liberty
to be lax with our homological algebra and notation around 
factorization structures.

We let $\Gr_G$ denote the affine Grassmannian, which is the quotient
$\mathfrak{L}G/\mathfrak{L}^+G$ of the algebraic loop group of $G$ by the 
group of jets into $G$. Beilinson-Drinfeld viewed $\Gr_G$ as a factorization space
fibered over the Ran space of finite subsets of the curve $X$. The geometric side of derived Satake is the
category $\Sph_G := \DD(\Gr_G)^{\mathfrak{L}^+ G}$ of spherical D-modules on the 
affine Grassmannian\footnote{In the main body of this text, the symbol $\Sph_G$ is used for a slightly modified version of $\DD(\Gr_G)^{\mathfrak{L}^+ G}$, which we will not need in this introduction.}. 

Gaitsgory-Lurie observed that $\Sph_G$ is a \emph{factorization monoidal category} 
because of the Beilinson-Drinfeld factorization structure on the affine Grassmannian, and as such,
it acts on the factorization category $\DD(\Gr_G)^{\mathfrak{L}N^-,\psi}$
of Whittaker D-modules on the affine Grassmannian.

By a version of the geometric Casselman-Shalika formula of 
\cite{fgv}, it is known that 
\begin{equation}
\label{csequiv}
\DD(\Gr_G)^{\mathfrak{L}N^-,\psi} \simeq \Rep(\check{G})
\end{equation}
as factorization categories 
(see Theorem \ref{csformula} below).

We use the following standard heuristics: 
factorization categories are analogous to $\bE_2$-categories 
(also known as braided monoidal categories), while factorization monoidal
categories are analogous to $\bE_3$-categories. These heuristics
are made somewhat precise in the literature, see 
\cite{higheralgebra} \S 5.5 and notes from the 
talk by Nick Rozenblyum in \cite{paris-notes}. See also 
\cite{nocera}, where the $\bE_3$-structure on $\Sph_G$ is made 
completely rigorous.

Now recall that for an $\bE_n$-category $\sC$, there is
a certain category $Z_{\bE_n}(\sC)$ called its
\emph{$\bE_n$-center}, which is the universal 
$\bE_{n+1}$-category acting on $\sC$ as an $\bE_n$-category
in a suitable sense -- we refer to \cite{higheralgebra} \S 5.3 for definitions
and details. Explicitly, as a monoidal category, 
one can calculate $Z_{\bE_n}(\sC)$ as 
$\mathsf{End}_{\sC\mod_{\bE_n}}(\sC)$, the category of endofunctors
of $\sC$ as an $\bE_n$-module category over itself. 

Following this heuristic, we imagine that $\Sph_G$ is an $\bE_3$-category (which we know it literally is, cf. \cite{nocera})
acting on the $\bE_2$-category $\Rep(\check{G})$ via the equivalence (\ref{csequiv}).
Therefore, heuristically, we obtain a canonical functor of $\bE_3$-categories
\begin{equation}\label{eq:gaitsgory-lurie}
\Sph_G \longrightarrow Z_{\bE_2}(\Rep(\check{G})).
\end{equation}

On the other hand, it is well-known that:
\[
Z_{\bE_2}(\Rep(\check{G})) = Z_{\bE_2}(\QCoh(\bB \check{G})) = 
\QCoh((\bB \check{G})^{\bS^2})
\]

\noindent (cf. \cite{bzfn} Corollary 5.12). Here the last term
$(\bB \check{G})^{\bS^2}$ is the derived stack parametrizing maps
from the homotopy type of the $2$-sphere to the stack $\bB \check{G}$.
Writing $\bS^2$ as the suspension of $\bS^1$, this stack may be calculated
more explicitly as
\[
(\bB \check{G})^{\bS^2} = 
\bB \check{G} \underset{(\bB \check{G})^{\bS^1}}{\times} \bB \check{G} = 
\bB \check{G} 
\underset{(\check{G}\overset{\on{ad}}{/} \check{G})}{\times}
\bB \check{G}
= 
(\Omega_1 \check{G})/\check{G}
\]

\noindent where \[ \Omega_1 \check{G} := \Spec(k) \underset{\check{G}}{\times} \Spec(k) \]
is the DG group of automorphisms of the identity in $\check{G}$.
Using logarithms, one finds that \[ \Omega_1 \check{G} = \Omega_0 \check{\mathfrak{g}}
= \Spec(\Sym(\check{\mathfrak{g}}[1])), \] so we can interpret the functor
\eqref{eq:gaitsgory-lurie} as a functor
\[
\Sph_G \longrightarrow 
\Sym(\check{\mathfrak{g}}[1]))\mod(\Rep(\check{G})).
\]

The derived Satake theorem then says that this functor is
\emph{almost} an equivalence. More precisely, the derived Satake theorem of
\cite{bezrukavnikov-finkelberg} says that this is true \emph{up to 
renormalization} -- we should use the \emph{renormalized Satake category}
and the category $\IndCoh((\bB \check{G})^{\bS^2})$.

To summarize, their heuristic says to follow the recipe:
\begin{enumerate}

\item Use the factorization action of $\Sph_G$ on $\Rep(\check{G})$
to obtain a functor to a suitable analogue of the $\bE_2$-center
of $\Rep(\check{G})$.

\item Settle the homological algebra issues to obtain a functor that
is plausibly an equivalence.

\item Perform an explicit calculation to prove that the functor so
obtained is an equivalence.

\end{enumerate}

\begin{rem}

In \cite{bezrukavnikov-finkelberg}, one finds that many of the difficulties
have to do with constructing the functor. The Gaitsgory-Lurie idea is
of a markedly different nature than what Bezrukavnikov-Finkelberg considered.

\end{rem}

\subsection{Centers and $\bE_n$-modules}

We will reinterpret centers (and more generally: centralizers)
in terms of modules over rings (or near enough).
This material is not necessary, but we find it plays
an important motivating role for our main results.

Suppose that $\sC$ and $\sD$ are symmetric monoidal categories.

Given an $\bE_n$-monoidal functor $F:\sC \to \sD$, its \emph{$\bE_n$-centralizer} $Z_{\bE_n}(F)$
is the category 
\[ \mathsf{Hom}_{\sC\mod_{\bE_n}}(\sC,\sD), \] i.e., the 
$\bE_n$-Hochschild homology of $\sD$ as an $\bE_n$-module 
category for $\sC$. Its key property is that an $\bE_n$-map
$\sE \to Z_{\bE_n}(F)$ is equivalent to an $\bE_n$-map
$\sC \otimes \sE \to \sD$ whose composition
\[ \sC \xrightarrow{\id_{\sC} \otimes \e_{\sE}} \sC \otimes \sE 
\longrightarrow \sD \] is $F$ as an $\bE_n$-functor.
(We recover the $\bE_n$-center of $\sC$
by taking the centralizer of $\id_{\sC}$.)

We now give an alternative to
$\bE_n$-centralizers (that coincides in some cases). 
First, one 
may form $\mathsf{Hom}(\sC,\sD)$, the category of continuous
DG functors from $\sC$ to $\sD$. 
Using Day convolution, this category is naturally symmetric monoidal as well.

We remind the reader that $\bE_n$-algebras in 
$\mathsf{Hom}(\sC,\sD)$ are the same as (left) lax $\bE_n$-monoidal
(continuous DG) functors from $\sC$ to $\sD$.
A variant: given $F:\sC \to \sD$ a lax $\bE_n$-monoidal
functor, thought of as an $\bE_n$-algebra in 
$\mathsf{Hom}(\sC,\sD)$, an $\bE_n$-module for
$F$ in $\mathsf{Hom}(\sC,\sD)$ is a functor
$G:\sC \to \sD$ of lax $\bE_n$-module categories
for $\sC$. (In fact, this variant can be put on equal footing
with its predecessor using general colored operads.)

Now suppose that $\sC$ is \emph{rigid} (symmetric) monoidal. 
Then recall that a \emph{lax} linear morphism of
$\sC$-module categories is automatically linear --
this is standard for left module categories, but holds
just the same for $\bE_n$-module categories. Therefore,
in the above discussion, we have
\[
Z_{\bE_n}(F) := 
\mathsf{Hom}_{\sC\mod_{\bE_n}}(\sC,\sD) =
\mathsf{Hom}_{\sC\mod_{\bE_n}^{\on{lax}}}(\sC,\sD) 
\]

\noindent where $\sC\mod_{\bE_n}^{\on{lax}}$ is 
the category of \emph{lax} $\bE_n$-module categories
for $\sC$. In particular, we see that when $\sC$ is
rigid, we can extend the definition of 
$\bE_n$-centralizer to allow $F$ to be 
merely lax $\bE_n$-monoidal. Moreover, using the preceding
paragraph, we can think of $Z_{\bE_n}(F)$ as 
$\bE_n$-modules for $F$ in the symmetric monoidal category
$\mathsf{Hom}(\sC,\sD)$.

We now make one additional manipulation, continuing to 
assume $\sC$ is rigid. Then recall that $\sC$ is canonically
self-dual as a DG category. In particular, we obtain
\[ \mathsf{Hom}(\sC,\sD) \simeq \sC^{\vee} \otimes \sD \simeq 
\sC \otimes \sD \]
\noindent where one easily finds\footnote{For completeness,
we supply the argument. Giving a lax symmetric monoidal functor
$\sE \to \mathsf{Hom}(\sC,\sD)$ is the same as giving a lax
symmetric monoidal functor $\sC \otimes \sE \to \sD$. For
$\sE = \sC \otimes \sD$, we have the lax symmetric monoidal
functor
$\sC \otimes \sE = \sC \otimes \sC \otimes \sD \xrightarrow{m \otimes \on{id}} \sC \otimes \sD \xrightarrow{\underline{\on{Hom}}_{\sC}(\e_{\sC},-) \otimes \on{id}} \sD$
(the key point being that $\e$ is compact and 
$\underline{\on{Hom}}_{\sC}(\e,-):\sC \to \Vect$ 
is lax symmetric monoidal). This gives rise to the lax symmetric
monoidal functor $\sC \otimes \sD \to \mathsf{Hom}(\sC,\sD)$.
By definition, this is the composite equivalence from before. 
One can check that this lax symmetric monoidal equivalence is
actually symmetric monoidal.
}
that the composite
equivalence is symmetric monoidal. 

To summarize, we obtain the following under the
rigidity assumption on $\sC$. Below, for $F:\sC \to \sD$
(possibly lax) $\bE_n$-monoidal, we let $\sK_F \in \sC \otimes \sD$
be the corresponding object.

\begin{enumerate}

\item The $\bE_n$-center of $\sC$ is the category of $\bE_n$-modules
for $\sK_{\id_{\sC}} \in \bE_n\mathendash{\mathsf{alg}}(\sC \otimes \sD)$.

\item More generally, for $F:\sC \to \sD$ lax $\bE_n$-linear, 
the $\bE_n$-centralizer of $F$ is the category of $\bE_n$-modules
for $\sK_F \in \bE_n\mathendash{\mathsf{alg}}(\sC \otimes \sD)$.

\end{enumerate}

In particular, we have expressed $\bE_n$-centers and centralizers
in terms of $\bE_n$-modules.

\begin{example}\label{ex:sat-mot}

For $\sC = \Rep(\check{G})$, we find that $Z_{\bE_2}(\Rep(\check{G}))$
is the category of $\bE_2$-modules for the regular representation
$\sO_{\check{G}} \in \ComAlg(\Rep(\check{G} \times \check{G}))$.

\end{example}

In \S \ref{ss:intro-parabolic} below, we will extend this discussion
to the setting where there is a parabolic subgroup; we remark
that there, we actually do consider $\bE_2$-modules 
for lax monoidal functors and use centralizers rather than centers.

\begin{rem}

A technical remark: in \cite{R2} \S 6, it was observed that
factorization categories attached to \emph{rigid} symmetric monoidal
categories have much more favorable properties than general
factorization categories. We consider the use of rigidity
in the above discussion to be related to this observation, but 
here working purely in the $\bE_2$-setting.

\end{rem}

\subsection{Chiral setting}\label{ss:intro-chiral}

Following standard analogies, we consider chiral (or factorization)
algebras as de Rham analogues of $\bE_2$-algebras (which are Betti
objects), and similarly for $\bE_2$-modules. 

Recall from \cite{R2} that any symmetric monoidal category naturally gives rise to a factorization
category (analogous to thinking of an $\bE_{\infty}$-category as an $\bE_2$-category), and
any commutative algebra in a symmetric monoidal category gives rise to a factorization algebra
in the associated factorization category (analogous to thinking of an $\bE_{\infty}$-algebra as an $\bE_2$-algebra). 
 
We can then form the category of factorization modules
\[
	\Sph^{\spec,\nv} := 
\sO_{\check{G}}\modfact(\Rep(\check{G} \times \check{G}))
\]

\noindent (the abbreviation $\nv$ stands for ``naive"). In \S \ref{s:renorm} and \S \ref{s:spec-hecke}, we
explain how to perform
homological algebra corrections to $\Sph^{\spec,\nv}$
(analogous to replacing $\QCoh$ by $\IndCoh$) to 
obtain a factorization category $\Sph_{\check{G}}^{\spec}$.

Following Gaitsgory, we refer in this paper to this 
process of correcting homological defects as \emph{renormalization}.
The specific technical issues we face here are novel 
and significant. Their resolution
occupies a large portion of this work. 

In \S \ref{s:functor}, we explain how the Gaitsgory-Lurie paradigm
plays out in the chiral setting. Roughly speaking, their strategy
for constructing the derived Satake functor 
goes through without hiccups in our 
setting. The factorizable derived Satake equivalence itself is stated as 
Theorem \ref{mainthm}.

Our proof that the functor is an equivalence takes up \S \ref{s:mainthmproof}.
We do this by reducing to the case that $G = T$ is a torus; in \S \ref{s:torus} we verify this case directly.
Our process of reduction to the torus uses the study of Jacquet functors from \cite{R2}-\cite{R3}.

\subsection{The Gaitsgory-Lurie heuristic (parabolic case)}\label{ss:intro-parabolic}

As far as we are aware, Gaitsgory and Lurie did not observe
that a similar strategy also applies in the presence of a parabolic
$P$ of $G$.

Let $N_P$ denote the unipotent radical of $P$ and let $M = P/N_P$. On
the geometric side, we consider the
factorization category \[ \DD(\Gr_G)_{\fL N_P \fL^+M} \]
where on the right hand side, we have formed the \emph{coinvariant 
category}, cf. \cite{R3}. By \cite{lin-nearby}, we note
that this category can also be thought of as 
\[ \DD(\Gr_G)^{\fL N_{P^-} \fL^+M} \] i.e., \emph{invariants} for
the \emph{opposite} parabolic.

On the spectral side, there are \emph{two} natural candidates
to consider, attached to two lax symmetric monoidal
functors
\[
\Chev_{\Omega},\Chev_{\Upsilon}:\Rep(\check{G}) \longrightarrow \Rep(\check{M}).
\] 

\noindent Both functors come by first restricting a
representation from $\check{G}$ to $\check{P}$ and then applying
a functor $\Rep(\check{P}) \to \Rep(\check{M})$.

For $\Chev_{\Omega}$, the relevant 
functor $\Rep(\check{P}) \to \Rep(\check{M})$
is (derived) invariants for $\check{N}_{\check{P}}$.

The functor $\Chev_{\Upsilon}$ is more complicated to define.
Consider $\check{\mathfrak{n}}_{\check{P}}$ as a Lie algebra
in $\Rep(\check{M})$. We consider 
\[ Z_{\Rep(\check{M}),\bE_1}(\check{\mathfrak{n}}_{\check{P}}\mod(\Rep(\check{M})), \] i.e., the $\bE_1$-center (alias: Drinfeld 
center)
in the sense of $\Rep(\check{M})$-linear monoidal categories.
Because $\check{\mathfrak{n}}_{\check{P}}\mod(\Rep(\check{M}))$
is itself symmetric monoidal, there is a natural functor
\[ \check{\mathfrak{n}}_{\check{P}}\mod(\Rep(\check{M}))
\longrightarrow Z_{\Rep(\check{M}),\bE_1}(\check{\mathfrak{n}}_{\check{P}}\mod(\Rep(\check{M})). \]
We remark that one may identify 
$Z_{\Rep(\check{M}),\bE_1}(\check{\mathfrak{n}}_{\check{P}}\mod(\Rep(\check{M}))$ more explicitly with 
\[ (\check{\mathfrak{n}}_{\check{P}} \otimes \on{C}^{\cdot}(\bS^1))\mod(\Rep(\check{M})) = 
U_{\bE_2}(\check{\mathfrak{n}}_{\check{P}})\mod_{\bE_2}(\Rep(\check{M})). \]
(We highlight these later expressions are closer to those used in 
\cite{R2}-\cite{R3} and in the body of this paper, although they are 
perhaps less conceptual.)
Then $\Chev_{\Upsilon}$ comes by composing the forgetful functor
$\Rep(\check{G}) \to \Rep(\check{P})$ with the composite functor 
\[
\Rep(\check{P}) \subset \check{\mathfrak{n}}_{\check{P}}\mod(\Rep(\check{M}))
\longrightarrow
Z_{\Rep(\check{M}),\bE_1}(\check{\mathfrak{n}}_{\check{P}}\mod(\Rep(\check{M}))
\xrightarrow{\oblv} \check{\mathfrak{n}}_{\check{P}}\mod(\Rep(\check{M}))
\xrightarrow{\oblv}
\Rep(\check{M}).
\]

We expect the $\bE_2$-centralizers
$Z_{\bE_2}(\Chev_{\Omega})$ and $Z_{\bE_2}(\Chev_{\Upsilon})$
of these functors to be ``approximately" (i.e., up to issues 
of completion) equivalent 
by a Koszul duality procedure. 

Although $\Chev_{\Omega}$ is easier to define and arguably 
is the better object, we use (the de Rham analogue of) 
$\Chev_{\Upsilon}$. There are several reasons for this, which
we take a moment to record:

\begin{itemize}

\item One should think that $\Chev_{\Upsilon}$ is adapted to
$\fL N_P \fL^+M$-coinvariants and $\Chev_{\Omega}$ is adapted to invariants.
One may use either invariants or coinvariants due to \cite{lin-nearby},
so we are similarly free to use either $\Chev_{\Upsilon}$ or
$\Chev_{\Omega}$.

\item The paper \cite{R3} uses $\Chev_{\Upsilon}$, which 
makes our references somewhat easier. 

However, we remark that in the \emph{quantum} 
setting, more recent papers in the subject, such as 
\cite{chen-fu}, \cite{ruotao-flags}, and \cite{gl-small}
prefer $\Chev_{\Omega}$. 

\item Related to the previous point, the paper \cite{R3} uses certain technical features of
$\Chev_{\Upsilon}$ related to renormalization and t-structures, whose analogues have not been documented for $\Chev_{\Omega}$.
The upcoming work \cite{eis} will treat the case of $\Chev_{\Omega}$.

\end{itemize}

With those remarks out of the way, we note that there is 
a pairing
\[
\DD(\Gr_G)_{\mathfrak{L}N_P\mathfrak{L}^+M} 
\otimes \DD(\Gr_G)^{\fL N^-,\psi} \longrightarrow 
(\DD(\mathfrak{L}G)_{\mathfrak{L}N_P\mathfrak{L}^+M})^{\mathfrak{L}N^-,\psi}
\]

\noindent by convolution. One can further $!$-restrict to:
\[
(\DD(\mathfrak{L}(P \times N_M^-)_{\mathfrak{L}N_P\mathfrak{L}^+M})^{\mathfrak{L}N^-,\psi} = 
\DD(\Gr_M)^{\fL N_M^-,\psi}
\]

\noindent for $N_M^- := N^- \cap M$. Applying the equivalence (\ref{csequiv}) for
$G$ and $M$, we can rewrite the resulting functor as a pairing
\[
\DD(\Gr_G)_{\mathfrak{L}N_P\mathfrak{L}^+M} 
\otimes \Rep(\check{G}) \longrightarrow 
\Rep(\check{M}).
\]

\noindent By \cite{R3} Theorem 4.15.1, the induced functor
\[
\Rep(\check{G}) \longrightarrow \Rep(\check{M})
\]

\noindent obtained by pairing with the unit object 
in $\DD(\Gr_G)_{\mathfrak{L}N_P\mathfrak{L}^+M}$ is the functor
$\Chev_{\Upsilon}$ (or rather, its de Rham counterpart). 
If we imagined we were in the $\bE_2$-setting, this would
yield a functor
\[
\DD(\Gr_G)_{\mathfrak{L}N_P\mathfrak{L}^+M} \longrightarrow 
Z(\Chev_{\Upsilon})
\]

\noindent that ought to be an equivalence up to issues of 
renormalization.

Using the essentially the same discussion as in 
\S \ref{ss:intro-chiral}, we formulate and prove a purely
de Rham version of
exactly this assertion, Theorem \ref{mainthmps}. As we discuss in the paper, 
on underlying categories (i.e., forgetting factorization structures), 
we recover (by different means) the main theorem of \cite{ABG}. 

\begin{rem}

In truth, the references to \cite{R3} above are for
$P = B$. This is ultimately due to a choice in \cite{bg-deformations}
to work only with the Borel. Work in progress by
F\ae{}rgeman-Hayash \cite{FH} extends \cite{bg-deformations} to the
case of general parabolics. As such, our results for general parabolics
(besides $B$ and $G$) are conditional on their work.

\end{rem}

\subsection{Notation and conventions}

We work over a field $k$ of characteristic zero. We also assume that $k$ is algebraically closed, which is not strictly necessary but simplifies some notations.

Denote by $X$ a fixed smooth and connected (but not necessarily proper) curve over $k$.

We fix a reductive group $G$ over $k$ and a Borel subgroup $B \subset G$. Let $N \subset B$ denote the unipotent radical of $B$ and $T = B/N$ the Cartan. Choose a splitting $T \to B$, so that we can consider the opposite Borel $B^-$, with unipotent radical $N^-$. We will denote by $P$ a standard parabolic, with unipotent radical $N_P$, Levi subgroup $M$, opposite parabolic $P^-$, etc. The coweight lattice will be denoted by $\Lambda = \Hom(\bG_m,T)$.

The Langlands dual group of $G$ (over $k$) will be denoted by $\check{G}$, with corresponding standard parabolics $\check{P}$, etc.

We freely use the language of higher category theory as developed in \cite{htt} and \cite{higheralgebra}, and usefully summarized in \cite{gr-i} Chapter 1. We will use the term \emph{category} to mean $(\infty,1)$\emph{-category}, and \emph{DG category} to mean \emph{presentable stable} $k$\emph{-linear category}. DG categories assemble into a category $\DGCat$, which is symmetric monoidal with respect to the Lurie tensor product. The unit for this tensor product is the DG category $\Vect$ of (complexes of) $k$-vector spaces.

We work in the setting of derived algebraic geometry, i.e., our test objects are understood to be affine DG schemes over $k$. For any prestack $Y$ over $k$, we denote by $\QCoh(Y)$ the DG category of quasicoherent sheaves on $Y$. If $Y$ is locally almost of finite type, we can also consider the DG category $\IndCoh(Y)$ of ind-coherent sheaves, whose theory is developed in \cite{indcoh} and \cite{gr-i}. We will denote the DG category of D-modules on such a prestack by $\DD(Y)$. By definition, we have \[ \DD(Y) = \IndCoh(Y_{\dR}), \] where $Y_{\dR}$ is the \emph{de Rham prestack} of $Y$ defined by \[ \Map(S,Y) = \Map(S_{\cl,\red},Y) \] (here $S_{\cl,\red}$ denotes the maximal reduced classical subscheme of the test scheme $S$).

We will make use of the theory of D-modules on certain infinite-dimensional spaces such as loop groups, which has been developed in \cite{infdim} and \cite{B}. We use the $\DD^*$ version of this theory by default.

Finally, for a (possibly lax) prestack $Y$, we will denote by $\ShvCat(Y)$ the category of sheaves of categories on $Y$. We refer the reader to \cite{G3} for more details on this notion, or \cite{R1} for the lax setting.

\subsection{Acknowledgements} 
 
As the above introduction should make clear, this paper develops 
an old idea of Dennis Gaitsgory and Jacob Lurie. 

We warmly thank Dennis Gaitsgory for the boundless generosity with which he
shares his ideas.
We learned of his work with Lurie through conversations with him, but also
learned the major technical tools used in this paper from him. 

In addition, we thank Dima Arinkin, David Ben-Zvi, 
Dario Beraldo, Lin Chen, Gurbir Dhillon, Kevin Lin, Sergey Lysenko, and Nick Rozenblyum
for enlightening conversations related to this material.

S.R. was supported by NSF grants DMS-2101984 and DMS-2401526 and a Sloan Research Fellowship.

\section{Renormalizing crystals of categories}\label{s:renorm}

Below $S$ will denote a smooth quasicompact scheme over $k$ unless otherwise specified. This section lays the technical foundations for renormalization in the presence of a crystal structure over $S$, i.e., a $\DD(S)$-module structure. Proposition \ref{rencrys} gives conditions under which the canonical renormalization of a $\DD(S)$-module category with compatible t-structure admits a natural crystal structure. Definition \ref{d:aula} formulates what it means  for a crystal of categories with compatible t-structure to be ``almost ULA generated,"  and Proposition \ref{ularen} says that the canonical renormalization of such a category is ULA generated.

\subsection{Induced crystals} The forgetful functor \[ \oblv_S : \DD(S) \longrightarrow \IndCoh(S) \] is symmetric monoidal, and hence can be viewed as a morphism of $\DD(S)$-modules. Its left adjoint \[ \ind_S : \IndCoh(S) \longrightarrow \DD(S) \] therefore admits an oplax $\DD(S)$-linear structure, which is easily seen to be strict. Given a $\DD(S)$-module category $\sC$, we therefore obtain adjoint functors
\begin{equation}
\ind_{\sC} : \IndCoh(S) \underset{\DD(S)}{\otimes} \sC \rightleftarrows \sC : \oblv_{\sC}.
\label{indadj}
\end{equation}
Moreover, for any $\DD(S)$-linear functor $F : \sC \to \sD$ we have commutative squares \[
\begin{tikzcd}
\IndCoh(S) \otimes_{\DD(S)} \sC \arrow{r}{\id \otimes F} & \IndCoh(S) \otimes_{\DD(S)} \sD  \\
\sC \arrow{r}{F} \arrow{u}{\oblv_{\sC}} & \sD \arrow{u}{\oblv_{\sD}}
\end{tikzcd} \] and \[
\begin{tikzcd}
\IndCoh(S) \otimes_{\DD(S)} \sC \arrow{r}{\id \otimes F} \arrow{d}{\ind_{\sC}} & \IndCoh(S) \otimes_{\DD(S)} \sD \arrow{d}{\ind_{\sD}} \\
\sC \arrow{r}{F} & \sD.
\end{tikzcd} \]

\begin{proposition}

\label{comon}

The adjunction (\ref{indadj}) is both monadic and comonadic.

\end{proposition}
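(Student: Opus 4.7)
The plan is to verify the Barr-Beck-Lurie conditions for both halves of the adjunction, first in the base case $\sC = \DD(S)$ and then by a base change argument.

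\textbf{Base case.} For $\sC = \DD(S)$ the adjunction in question is the original $(\ind_S, \oblv_S)$. The right adjoint $\oblv_S$ is continuous because its left adjoint preserves compact generators: $\ind_S$ sends $\mathcal{O}_S$ to the compact object $\mathcal{D}_S \in \DD(S)$. Conservativity of $\oblv_S$ is the standard description $\DD(S) \simeq \mathcal{D}_S\mod(\IndCoh(S))$ for smooth $S$. The left adjoint $\ind_S$ is automatically continuous, and is conservative because $\mathcal{D}_S$ is faithfully flat over $\mathcal{O}_S$ (via the PBW filtration, with associated graded $\Sym \mathcal{T}_S$). Since both functors are continuous and conservative, the remaining Barr-Beck-Lurie conditions on preservation of split realizations and totalizations are automatic, yielding monadicity of $\oblv_S$ and comonadicity of $\ind_S$.

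\textbf{Reduction to the base case.} The adjunction $(\ind_\sC, \oblv_\sC)$ arises from $(\ind_S, \oblv_S)$ by base change: $\ind_\sC = \ind_S \otimes_{\DD(S)} \id_\sC$ and $\oblv_\sC = \oblv_S \otimes_{\DD(S)} \id_\sC$. Since $\ind_S$ and $\oblv_S$ are continuous and $\DD(S)$-linear, so are $\ind_\sC$ and $\oblv_\sC$. Furthermore, the monadic equivalence $\DD(S) \simeq (\oblv_S \ind_S)\mod(\IndCoh(S))$, which is an equivalence of $\DD(S)$-module categories, tensors over $\DD(S)$ with $\sC$ to give
\[ \sC \simeq (\oblv_\sC \ind_\sC)\mod\!\left( \IndCoh(S) \underset{\DD(S)}{\otimes} \sC \right), \]
witnessing monadicity of $\oblv_\sC$. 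The comonadic side is formally dual.

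\textbf{Main obstacle.} The crux is that the module category over a continuous $\DD(S)$-linear monad commutes with base change along $- \otimes_{\DD(S)} \sC$, and dually for comonads. This is a standard result in the theory of presentable DG categories, but it hinges on the continuity and $\DD(S)$-linearity of $\oblv_S \ind_S$ and $\ind_S \oblv_S$. Verifying this compatibility at the $\infty$-categorical level via the bar/cobar constructions, and in particular ensuring that these constructions base change correctly, is where the main technical work lies; it is formal but requires care.
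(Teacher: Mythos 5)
Your monadicity argument is fine: $\oblv_S$ (and more generally $\oblv_{\sC}$) is continuous, so the Barr--Beck--Lurie condition on preservation of split geometric realizations is automatic, and conservativity is what one needs. The base-change reduction of modules over a $\DD(S)$-linear monad is also a reasonable route, since $\IndCoh(S)$ is dualizable over $\DD(S)$. The paper instead cites conservativity of $\oblv_{\sC}$ directly (Lemma B.2.1 of \cite{R2}), but these are essentially interchangeable.

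The comonadicity part, however, has a genuine gap. You write that ``since both functors are continuous and conservative, the remaining Barr--Beck--Lurie conditions on preservation of split realizations \emph{and totalizations} are automatic.'' Continuity of $\ind_S$ means preservation of colimits; it says nothing about totalizations, which are limits. Preservation of totalizations of $\ind_S$-split cosimplicial objects is precisely the hard condition in the comonadic Barr--Beck--Lurie theorem, and it fails in general for continuous conservative left adjoints (this is the standard issue of convergence of the cobar construction). This is exactly why the paper passes through the notion of an \emph{effective monad} (\cite{R4} Definition 6.9.1 and Lemma 6.17.1): one must verify t-structure conditions on $\oblv_S \circ \ind_S$ in $\End_{\DD(S)}(\IndCoh(S))$ --- that it is coconnective, right flat, and that its cofiber from the identity is right flat and eventually connective. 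These boundedness/flatness properties are what ensure the cobar resolution converges, and they are established using the filtration on $\mathcal{D}_S$ with associated graded $\Sym_{\sO_S}\TT(S)$ --- the same PBW-type data you invoke, but deployed to verify genuinely non-automatic conditions rather than merely conservativity. Notably, because this effectiveness argument takes place in the $\DD(S)$-linear endofunctor category, it handles arbitrary $\sC$ at once, so the paper does not even need your base-change step for the comonadic side. To repair your argument you would need to replace the sentence claiming automaticity with a verification of some convergence criterion for the cobar construction, which would end up doing essentially the same work as the paper's effectiveness check.
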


\begin{proof}

For the monadicity it suffices to show that $\oblv_{\sC}$ is conservative, which is Lemma B.2.1 in \cite{R2}.

We claim that the monad $\oblv_{\sC} \circ \ind_{\sC}$ is \emph{effective} in the sense of \cite{R4} Definition 6.9.1, which will imply comonadicity of the adjunction by Remark 6.9.2 of \emph{loc. cit.} The convolution action of the infinitesimal groupoid \[ (S^2)^{\swedge}_{\Delta} \cong S \underset{S_{\dR}}{\times} S \] on $S$ gives rise to an action of $\IndCoh((S^2)^{\swedge}_{\Delta})$ on $\IndCoh(S)$, which in turn induces an equivalence of monoidal categories \[ \IndCoh((S^2)^{\swedge}_{\Delta}) \tilde{\longrightarrow} \End_{\DD(S)}(\IndCoh(S)). \] The natural t-structure on $\IndCoh((S^2)^{\swedge}_{\Delta})$ induces a t-structure on $\End_{\DD(S)}(\IndCoh(S))$, which we shift by $\dim S$ so that the coconnective objects are exactly the left t-exact functors (i.e., we normalize so that the unit $\id_{\IndCoh(S)}$ belongs to the heart).

The monad $\oblv_S \circ \ind_S$ lifts to an associative algebra in $\End_{\DD(S)}(\IndCoh(S))$, which corresponds to the algebra of differential operators in $\IndCoh((S^2)^{\swedge}_{\Delta})$. According to Lemma 6.17.1 of \emph{loc. cit.}, to prove that the monad is effective, it suffices to show that the following conditions are satisfied:
\begin{enumerate}[(i)]
\item $\oblv_S \circ \ind_S$ is coconnective in $\End_{\DD(S)}(\IndCoh(S))$;
\item $\oblv_S \circ \ind_S$ is right flat, i.e., the functor
\begin{align*}
\End_{\DD(S)}(\IndCoh(S)) &\longrightarrow \End_{\DD(S)}(\IndCoh(S)) \\
F &\mapsto \oblv_S \circ \ind_S \circ F
\end{align*}
is left t-exact;
\item the object \[ \cofib(\id_{\IndCoh(S)} \longrightarrow \oblv_S \circ \ind_S) \] in $\End_{\DD(S)}(\IndCoh(S))$ is right flat and eventually connective.
\end{enumerate}
All three of these conditions follow from the existence of the standard filtration on differential operators. Namely, the corresponding filtration on $\oblv_S \circ \ind_S$ in $\End_{\DD(S)}(\IndCoh(S))$ has associated graded given by tensoring with the locally free sheaf $\Sym_{\sO_S}\TT(S)$ (using the action of $\QCoh(S)$ on $\IndCoh(S)$).

\end{proof}

\subsection{Tensor product t-structures} Recall that for any DG categories $\sC$ and $\sD$ equipped with t-structures, their tensor product $\sC \otimes \sD$ admits a canonical t-structure, whose connective subcategory $(\sC \otimes \sD)^{\leq 0}$ is generated under colimits by objects $c \boxtimes d$ with $c$ and $d$ connective. If $\sC = \DD(S_1)$ and $\sD = \DD(S_2)$, then this t-structure makes the natural equivalence \[ \DD(S_1) \otimes \DD(S_2) \tilde{\longrightarrow} \DD(S_1 \times S_2) \] t-exact.

We will use the following technical lemma frequently. Recall that the t-structure on $\sC$ is said to be \emph{compactly generated} if the category $\sC^{\leq 0}$ is compactly generated.

\begin{lemma}

\label{tensorexact}

Let $\sC_1$, $\sC_2$, and $\sD$ be DG categories equipped with t-structures, and suppose we are given a functor $F : \sC_1 \to \sC_2$.
\begin{enumerate}[(i)]
\item If $F$ is right t-exact, then so is \[ F \otimes \id_{\sD} : \sC_1 \otimes \sD \longrightarrow \sC_2 \otimes \sD. \]
\item Suppose that $F$ is left t-exact, and that either
\begin{enumerate}[\indent(a)]
\item the t-structures on $\sC_1$ and $\sC_2$ are compactly generated, or
\item the t-structure on $\sD$ is compactly generated. 
\end{enumerate}
Then $F \otimes \id_{\sD}$ is left t-exact.
\end{enumerate}

\end{lemma}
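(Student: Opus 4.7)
My approach is to exploit the fact that $(\sC_1 \otimes \sD)^{\leq 0}$ is generated under colimits by pure tensors $c \boxtimes d$ with $c \in \sC_1^{\leq 0}$ and $d \in \sD^{\leq 0}$, and in the coconnective setting, to work with partial internal Hom functors. Part (i) is essentially immediate: $F \otimes \id_\sD$ is continuous, and $(F \otimes \id_\sD)(c \boxtimes d) = F(c) \boxtimes d$ lies in $(\sC_2 \otimes \sD)^{\leq 0}$ by right t-exactness of $F$, so right t-exactness of $F \otimes \id_\sD$ follows from closure of $(\sC_2 \otimes \sD)^{\leq 0}$ under colimits.

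For (ii), I would use the pairing criterion: $y \in (\sC \otimes \sD)^{\geq 1}$ if and only if $\Hom(c \boxtimes d, y) \in \Vect^{\geq 1}$ for all $c \in \sC^{\leq 0}$ and $d \in \sD^{\leq 0}$. Since $\Hom(-, y)$ sends colimits to limits and bounded-below subcategories of $\Vect$ are closed under limits, it suffices to test the pairing against a generating family of connectives. In case (b), with compact generators $\{d_\beta\}$ of $\sD^{\leq 0}$, the partial hom $\uHom_\sD(d_\beta, -) : \sC_i \otimes \sD \to \sC_i$ is continuous and, acting only on the second tensor factor, satisfies the identity $\uHom_\sD(d_\beta, (F \otimes \id_\sD)(z)) = F(\uHom_\sD(d_\beta, z))$ (checked on pure tensors and extended by continuity). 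Since also $\uHom_\sD(d_\beta, x) \in \sC_1^{\geq 1}$ whenever $x \in (\sC_1 \otimes \sD)^{\geq 1}$, by the defining adjunction and pairing, left t-exactness of $F$ yields the claim.

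Case (a) is where I expect the main subtlety. Here I would work with compact generators $\{c_\gamma\}$ of $\sC_2^{\leq 0}$ and the continuous $\uHom_{\sC_2}(c_\gamma, -) : \sC_2 \otimes \sD \to \sD$. The analogue of the identity used in (b) now requires commuting this partial hom past $F \otimes \id_\sD$, and the obstacle is that the right adjoint $F^R$ need not be continuous, since $F$ is not assumed to preserve compact objects. I would sidestep this by checking, pointwise at compact $c_\gamma$, the identity $(F \otimes \id_\sD)^R(c_\gamma \boxtimes d) = F^R(c_\gamma) \boxtimes d$: both sides corepresent the same continuous contravariant functor on pure tensor generators $c_1 \boxtimes d_1$, using only the ordinary adjunction for $F$ and bifunctoriality of $\boxtimes$. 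This yields the formula $\uHom_{\sC_2}(c_\gamma, (F \otimes \id_\sD)(x)) = \uHom_{\sC_1}(F^R(c_\gamma), x)$. Since $F$ being left t-exact is equivalent to $F^R$ being right t-exact, $F^R(c_\gamma) \in \sC_1^{\leq 0}$, so $F^R(c_\gamma) \boxtimes d_0 \in (\sC_1 \otimes \sD)^{\leq 0}$ for any $d_0 \in \sD^{\leq 0}$, and pairing against $x \in (\sC_1 \otimes \sD)^{\geq 1}$ lands in $\Vect^{\geq 1}$ as required.
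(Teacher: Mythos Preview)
The paper's own proof is simply a citation to \cite{RY}, so there is no in-house argument to compare against. Your arguments for part (i) and for case (ii)(b) are correct and standard.

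There is a genuine gap in your treatment of case (ii)(a). The formula you want, namely
\[
\uHom_{\sC_2}(c_\gamma,(F\otimes\id_\sD)(x)) \;\cong\; \uHom_{\sC_1}(F^{\RR}(c_\gamma),x),
\]
is equivalent (after applying $\Hom_\sD(d,-)$) to
\[
\Hom_{\sC_2\otimes\sD}(c_\gamma\boxtimes d,\,(F\otimes\id_\sD)(x))
\;\cong\;
\Hom_{\sC_1\otimes\sD}(F^{\RR}(c_\gamma)\boxtimes d,\,x).
\]
This would follow from an identification of the \emph{left} adjoint of $F\otimes\id_\sD$ on $c_\gamma\boxtimes d$, not the right adjoint. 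Your claimed identity $(F\otimes\id_\sD)^{\RR}(c_\gamma\boxtimes d)\cong F^{\RR}(c_\gamma)\boxtimes d$ goes the wrong way in the adjunction and, separately, is false in general: take $\sC_1=\sC_2=\Vect$, $F(V)=\bigoplus_{\mathbb{N}}V$, $c_\gamma=k$; then $F\otimes\id_\sD$ is $\bigoplus_{\mathbb{N}}$ on $\sD$, so $(F\otimes\id_\sD)^{\RR}(d)=\prod_{\mathbb{N}}d$, while $F^{\RR}(k)\boxtimes d=(\prod_{\mathbb{N}}k)\otimes d$, and these differ for general $\sD$.

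A route that does work in case (a): for compact $c_\gamma\in\sC_2^{\leq 0}$ the composite $H:=\Hom_{\sC_2}(c_\gamma,F(-)):\sC_1\to\Vect$ is continuous and left t-exact, and $\uHom_{\sC_2}(c_\gamma,(F\otimes\id_\sD)(x))=(H\otimes\id_\sD)(x)$, so one reduces to showing $H\otimes\id_\sD$ is left t-exact for such $H$. This is where the compact generation of the t-structure on $\sC_1$ must be used in an essential way (e.g.\ by expressing objects of $\sC_1\otimes\sD$ as exact functors $(\sC_1^c)^{\op}\to\sD$ and analyzing $H\otimes\id_\sD$ in those terms), rather than via an adjoint of $F$ that need not exist. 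Consult the cited lemmas in \cite{RY} for the details of this step.
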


\begin{proof}

This is a combination of Proposition A.1, Lemma A.6, and Lemma A.9 in \cite{RY}.

\end{proof}

\subsection{Compatible t-structures} The Verdier self-duality equivalence \[ \DD(S)^{\vee} \cong \DD(S) \] interchanges the symmetric monoidal structure whose multiplication is given by $\otimes^!$ with the cocommutative coalgebra structure on $\DD(S)$ whose comultiplication is given by \[ \Delta_{\dR,*} : \DD(S) \longrightarrow \DD(S \times S) \cong \DD(S) \otimes \DD(S). \] In particular, a $(\DD(S),\otimes^!)$-module structure on a DG category $\sC$ is the same datum as a $(\DD(S),\Delta_{\dR,*})$-comodule structure. In what follows we pass freely between the two.

Similarly to the case of D-modules, the Serre self-duality equivalence \[ \IndCoh(S)^{\vee} \cong \IndCoh(S) \] interchanges the symmetric monoidal structure whose multiplication is given by $\otimes^!$ with the cocommutative coalgebra structure on $\IndCoh(S)$ whose comultiplication is given by $\Delta_*$, so that an $(\IndCoh(S),\otimes^!)$-module structure is the same datum as an $(\IndCoh(S),\Delta_*)$-comodule structure.

\begin{definition}

Given a $\DD(S)$-module category $\sC$, we say that a t-structure on $\sC$ is \emph{compatible with the} $\DD(S)$\emph{-module structure} if the corresponding coaction map \[ \sC \longrightarrow \DD(S) \otimes \sC \] is t-exact. Similarly, if $\sD$ is an $\IndCoh(S)$-module category, we say that a t-structure on $\sD$ is \emph{compatible with the} $\IndCoh(S)$\emph{-module structure} if the coaction map \[ \sD \to \IndCoh(S) \otimes \sD \] is t-exact.

\end{definition}

\begin{proposition}

If $\sC$ is a $\DD(S)$-module category equipped with a compatible t-structure, then the category $\IndCoh(S) \otimes_{\DD(S)} \sC$ admits a unique t-structure such that $\ind_{\sC}$ is t-exact. Moreover, this t-structure makes $\oblv_{\sC}$ t-exact and is compatible with the $\IndCoh(S)$-action.

\end{proposition}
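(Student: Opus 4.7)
The approach is to use the comonadic description of $\tilde{\sC} := \IndCoh(S) \otimes_{\DD(S)} \sC$ furnished by Proposition \ref{comon}. Setting $T := \ind_\sC \circ \oblv_\sC$, the induced comonad on $\sC$, comonadicity identifies $\tilde{\sC} \simeq T\text{-comod}(\sC)$ so that $\ind_\sC$ becomes the forgetful functor and $\oblv_\sC$ becomes the cofree comodule functor $c \mapsto T(c)$. The t-structure on $\tilde{\sC}$ is then the one transported from $\sC$ through $\ind_\sC$: a comodule $(x, \delta)$ is declared connective (respectively, coconnective) iff $x \in \sC^{\leq 0}$ (respectively, $\sC^{\geq 0}$). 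For this to yield a genuine t-structure, truncations in $\sC$ must ascend to $T\text{-comod}(\sC)$, which amounts to $T$ being t-exact on $\sC$: under that assumption, $\tau^{\leq 0} x$ inherits a coaction $\tau^{\leq 0} x \to T(\tau^{\leq 0} x) \simeq \tau^{\leq 0} T(x)$, realizing the truncation of $(x, \delta)$ in the comodule category.

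The central input is thus t-exactness of $T$. Since $\ind_S$ and $\oblv_S$ are $\DD(S)$-linear, $T$ is the endofunctor of $\sC$ given by acting (through the $\DD(S)$-action on $\sC$) by the object $A := \ind_S(\oblv_S(\omega_S^\DD)) \in \DD(S)$. The standard PBW-type filtration on $\oblv_S \circ \ind_S$ used in the proof of Proposition \ref{comon} has associated graded given by tensoring with the locally free sheaf $\Sym_{\sO_S}\TT(S)$, hence is t-exact in both directions; so $\oblv_S \circ \ind_S$ is t-exact on $\IndCoh(S)$, and composing with the conservative, t-exact $\oblv_S$ gives t-exactness of $\ind_S \circ \oblv_S$ on $\DD(S)$. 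Thus $A$ lies in the heart of $\DD(S)$, and since the compatibility hypothesis on $\sC$ is exactly the t-exactness of the $\DD(S)$-action, acting by a heart object is t-exact. This proves $T$ is t-exact.

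Granted t-exactness of $T$, the rest is essentially formal. Uniqueness: $\ind_\sC$ is conservative (as the forgetful from comodules), so any t-structure for which it is t-exact must satisfy $\tilde{\sC}^{\leq 0} = \ind_\sC^{-1}(\sC^{\leq 0})$ and similarly for $\geq 0$, by a standard truncation argument. The functor $\oblv_\sC$ is t-exact because the cofree comodule on $c$ has underlying object $T(c) \in \sC$. For compatibility with the $\IndCoh(S)$-module structure, the commutative square
\[
\begin{tikzcd}
\tilde{\sC} \arrow{r}{\text{coact}_{\tilde{\sC}}} \arrow{d}{\ind_\sC} & \IndCoh(S) \otimes \tilde{\sC} \arrow{d}{\id \otimes \ind_\sC} \\
\sC \arrow{r}{(\oblv_S \otimes \id) \circ \text{coact}_\sC} & \IndCoh(S) \otimes \sC
\end{tikzcd}
\]
(commuting by the $\DD(S)$-linearity of $\ind_\sC$) has t-exact bottom horizontal (the compatibility hypothesis on $\sC$ combined with Lemma \ref{tensorexact}) and t-exact verticals (Lemma \ref{tensorexact} again), with the right vertical conservative by base-changing the comonadicity of $\ind_\sC$; t-exactness of the top horizontal then follows by a reflection argument. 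The principal obstacle is the t-exactness of $T$, where the smoothness of $S$ (through the PBW filtration) and the compatibility hypothesis on $\sC$ are brought together.
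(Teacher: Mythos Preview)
Your overall strategy matches the paper's: use the comonadicity of Proposition~\ref{comon} and reduce everything to showing that the comonad $T = \ind_\sC \circ \oblv_\sC$ on $\sC$ is t-exact. The difficulty is that your argument for the t-exactness of $T$ contains a genuine gap.

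You write that ``the compatibility hypothesis on $\sC$ is exactly the t-exactness of the $\DD(S)$-action, [so] acting by a heart object is t-exact.'' But this is not the definition: compatibility means the \emph{coaction} $\coact_\sC : \sC \to \DD(S) \otimes \sC$ is t-exact. Since $\coact_\sC$ is left adjoint to the action $\act_\sC$, compatibility only yields that $\act_\sC$ is \emph{left} t-exact, hence action by a heart object is only left t-exact in general. This is not a technicality: already for $\sC = \DD(S)$ with $S = \bA^1$, the heart object $\delta_0$ does not act t-exactly (e.g.\ $\delta_0 \otimes^! D_{\bA^1}$ has nonzero $H^1$). So knowing that $A = \ind_S\oblv_S(\omega_S)$ lies in the heart is not enough; one must use its specific structure.

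The paper's argument avoids this by a diagram chase. It embeds $\sC$ into $\DD(S) \otimes \sC$ via the t-exact coaction and observes that, after this embedding, $T$ is intertwined with the endofunctor $(\ind_S \circ \oblv_S) \otimes \id_\sC$ acting on the \emph{external} $\DD(S)$-factor. That functor is t-exact by Lemma~\ref{tensorexact} (since $\ind_S$ and $\oblv_S$ are t-exact and the t-structures on $\IndCoh(S)$, $\DD(S)$ are compactly generated), and the embedding is t-exact with t-exact left inverse, so $T$ inherits t-exactness. Your filtration argument on $A$ could in principle be completed along similar lines, but it would still require showing that each associated graded piece $\omega_S \otimes_{\sO_S} \Sym^n \TT(S)$ acts t-exactly on $\sC$, which does not follow from your stated hypothesis either. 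The remaining parts of your sketch (uniqueness, t-exactness of $\oblv_\sC$, and $\IndCoh(S)$-compatibility) are essentially correct and parallel to the paper, modulo the need to justify conservativity of $\id_{\IndCoh(S)} \otimes \ind_\sC$ more carefully (the paper appeals to the proof of Proposition~\ref{comon} for this).
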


\begin{proof}

We claim that the comonad $\ind_{\sC} \circ \oblv_{\sC}$ is t-exact. In view of the previous proposition, this will imply that there is a unique t-structure on $\IndCoh(S) \otimes_{\DD(S)} \sC$ which makes $\ind_{\sC}$ t-exact (because the comonad is left t-exact), and moreover that $\oblv_{\sC}$ is t-exact with respect to this t-structure (because the comonad is t-exact).

The cartesian square \[
\begin{tikzcd}
S \arrow{r} \arrow{d} & S \times S_{\dR} \arrow{d} \\
S_{\dR} \arrow{r} & S_{\dR} \times S_{\dR}.
\end{tikzcd} \]
gives rise to a commutative square \[
\begin{tikzcd}
\IndCoh(S) \arrow{r} \arrow{d}{\ind_S} & \IndCoh(S) \otimes \DD(S) \arrow{d}{\ind_S \otimes \id_{\DD(S)}} \\
\DD(S) \arrow{r} & \DD(S) \otimes \DD(S),
\end{tikzcd} \]
where the functors are given by $\IndCoh$ direct image, and base change implies that the square \[
\begin{tikzcd}
\DD(S) \arrow{r} \arrow{d}{\oblv_S} & \DD(S) \otimes \DD(S) \arrow{d}{\oblv_S \otimes \id_{\DD(S)}} \\
\IndCoh(S) \arrow{r} & \IndCoh(S) \otimes \DD(S)
\end{tikzcd} \]
also commutes. Both of these squares take place in the category of $\DD(S)$-modules, where $\DD(S)$ acts on the right factor of the tensor product categories. Notice also that the horizontal functors are fully faithful and admit $\DD(S)$-linear right adjoints. Thus, tensoring over $\DD(S)$ with $\sC$, we obtain a commutative diagram \[
\begin{tikzcd}
\sC \arrow{r} \arrow{d}{\oblv_{\sC}} & \DD(S) \otimes \sC \arrow{d}{\oblv_S \otimes \id_{\sC}} \\
\IndCoh(S)\otimes_{\DD(S)} \sC \arrow{r} \arrow{d}{\ind_{\sC}} & \IndCoh(S) \otimes \sC \arrow{d}{\ind_S \otimes \id_{\sC}} \\
\sC \arrow{r} & \DD(S) \otimes \sC.
\end{tikzcd} \]
where the horizontal functors are fully faithful. In order to prove that $\ind_{\sC} \circ \oblv_{\sC}$ is t-exact, it therefore suffices to prove that the bottom-left circuit of the outer square is t-exact, since the bottom horizontal functor is the coaction functor and hence t-exact by hypothesis.

Since the upper horizontal functor is t-exact, it is enough to prove that both right vertical functors are t-exact. The functors $\oblv_S$ and $\ind_S$ are both t-exact and the t-structures on $\IndCoh(S)$ and $\DD(S)$ are compactly generated, so this follows from Lemma \ref{tensorexact}.

To see that the t-structure on $\IndCoh(S) \otimes_{\DD(S)} \sC$ is compatible with the $\IndCoh(S)$-action, consider the commutative square \[
\begin{tikzcd}
S \arrow{r} \arrow{d} & S \times S \arrow{d} \\
S_{\dR} \arrow{r} & S_{\dR} \times S_{\dR}.
\end{tikzcd} \]
Passing to ind-coherent sheaves under direct image and tensoring over $\DD(S)$ with $\sC$, we see that the coaction functors fit into a commutative square \[
\begin{tikzcd}
\IndCoh(S) \otimes_{\DD(S)} \sC \arrow{r} \arrow{d}{\ind_{\sC}} & \IndCoh(S) \otimes \IndCoh(S) \otimes_{\DD(S)} \sC \arrow{d}{\ind_S \otimes \ind_{\sC}} \\
\sC \arrow{r} & \DD(S) \otimes \sC.
\end{tikzcd} \]
Since the lower-left circuit of this square is t-exact, it suffices to show that $\ind_S \otimes \ind_{\sC}$ is t-exact and conservative. We have seen above that \[ \ind_S \otimes \id_{\sC} : \IndCoh(S) \otimes \sC \longrightarrow \DD(S) \otimes \sC \] is t-exact. Since $\ind_{\sC}$ is t-exact and the t-structure on $\IndCoh(S)$ is compactly generated, the functor \[ \id_{\IndCoh(S)} \otimes \ind_{\sC} : \IndCoh(S) \otimes \sC \longrightarrow \IndCoh(S) \otimes \IndCoh(S) \underset{\DD(S)}{\otimes} \sC \] is t-exact by Lemma \ref{tensorexact}. Note that the proof of Proposition \ref{comon} shows that $\ind_S \otimes \id_{\sC}$ and $\id_{\IndCoh(S)} \otimes \ind_{\sC}$ are both comonadic and in particular conservative. Thus their composition $\ind_S \otimes \ind_{\sC}$ is t-exact and conservative as claimed.

\end{proof}

\subsection{Almost compact generation} This section borrows heavily from the appendix to \cite{BNP}. We warn the reader that our terminology does not agree with \emph{loc. cit.} in some places. For example, we use the term ``coherent t-structure" for a weaker notion than theirs.

Let $\sC$ be a DG category equipped with a t-structure. Recall that an object $c$ in $\sC$ is called \emph{almost compact} if its truncation $\tau^{\geq n}c$ is compact in $\sC^{\geq n}$ for any $n \in \bZ$. If $c$ is eventually coconnective and almost compact, then $c$ is called \emph{coherent}. Denote by $\sC^{\coh} \subset \sC$ the (non-cocomplete) full subcategory consisting of coherent objects.

If the t-structure on $\sC$ is right complete, then any almost compact object is eventually connective, and hence any coherent object is cohomologically bounded.

We will use the folllowing technical lemma repeatedly.

\begin{lemma}

\label{aclem}

Let $\sC$ and $\sD$ be DG categories equipped with t-structures which are compatible with filtered colimits and right complete. Suppose that $F : \sC \to \sD$ is t-exact and admits a continuous right adjoint, and moreover that $F$ is conservative on $\sC^+$. If $c$ belongs to $\sC^+$ and $F(c)$ is almost compact, then $c$ is almost compact (equivalently, coherent).

\end{lemma}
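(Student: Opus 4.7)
The plan is to show, for each $n$, that $\tau^{\geq n} c$ is compact in $\sC^{\geq n}$. Since $F$ is t-exact, $F(\tau^{\geq n} c) = \tau^{\geq n} F(c)$, which is compact in $\sD^{\geq n}$ by the almost compactness of $F(c)$. Replacing $c$ by $\tau^{\geq n} c$, the problem reduces to the following: given $c \in \sC^{\geq n}$ such that $F(c)$ is compact in $\sD^{\geq n}$, show $c$ is compact in $\sC^{\geq n}$. A crucial boundedness input follows: since $\sD$ is right complete and $F(c)$ is almost compact, $F(c)$ is eventually connective, so $F(c) \in \sD^{\geq n} \cap \sD^{\leq N}$ for some $N$.

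Next, I would verify that the restriction $F^{\geq n} : \sC^{\geq n} \to \sD^{\geq n}$ is a continuous, t-exact, conservative (since $\sC^{\geq n} \subseteq \sC^+$) left adjoint with continuous right adjoint $\tau^{\geq n} \circ G$---the continuity of the right adjoint uses compatibility of the t-structures with filtered colimits. By Barr--Beck--Lurie applied to this adjunction of presentable stable categories, $F^{\geq n}$ is comonadic, identifying $\sC^{\geq n}$ with the category of $T$-coalgebras in $\sD^{\geq n}$, where $T := F^{\geq n} \circ \tau^{\geq n} G$. Under this identification $c$ corresponds to a coalgebra $(F(c), \alpha)$, and for any $e \in \sC^{\geq n}$ one has
\[
\Hom_{\sC^{\geq n}}(c, e) \simeq \Tot_\Delta \bigl( \Hom_{\sD^{\geq n}}(F(c), T^{\bullet+1} F(e)) \bigr)
\]
via the cobar resolution of the target coalgebra.

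The final step, which I expect to be the main obstacle, is to commute filtered colimits past this totalization. For a filtered diagram $\{e_i\}$ in $\sC^{\geq n}$ with colimit $e$: at each fixed cosimplicial degree $k$, the functor $\Hom_{\sD^{\geq n}}(F(c), T^{k+1} F(-))$ commutes with filtered colimits because $T$ and $F$ are continuous and $F(c)$ is compact in $\sD^{\geq n}$. To commute $\Tot$ itself with $\colim_i$, I would use the boundedness $F(c) \in \sD^{\geq n} \cap \sD^{\leq N}$: a standard estimate gives $\Hom_{\sD^{\geq n}}(F(c), Y) \in \Vect^{\geq n-N}$ for every $Y \in \sD^{\geq n}$, so the cosimplicial $\Hom_{\sD^{\geq n}}(F(c), T^{\bullet+1} F(e_i))$ is uniformly coconnective in $\Vect$. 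For such cosimplicials, in each cohomological degree the totalization agrees with a sufficiently deep finite partial totalization---which uses right completeness of $\Vect$ and compatibility with filtered colimits---and finite limits commute with filtered colimits, yielding the required commutation and hence compactness of $c$ in $\sC^{\geq n}$.
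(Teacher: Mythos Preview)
The paper defers the proof to \cite{R5}, so I cannot compare line by line, but your overall strategy---reduce to showing compactness in $\sC^{\geq n}$, exploit boundedness of $F(c)$, express $\Hom(c,e)$ via the cobar resolution, and commute the resulting totalization with filtered colimits using uniform coconnectivity---is the standard one and is almost certainly what the cited argument does.

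There is, however, a genuine gap at the comonadicity step. Two minor points first: the categories $\sC^{\geq n}$ and $\sD^{\geq n}$ are presentable but not stable; and since $G$ is left t-exact (being right adjoint to the t-exact $F$), the right adjoint of $F^{\geq n}$ is just the restriction of $G$---no truncation is needed.

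The substantive issue is that Barr--Beck--Lurie does not give comonadicity for free: you must check that $F^{\geq n}$ preserves totalizations of $F^{\geq n}$-split cosimplicial objects, and a left adjoint has no reason to preserve such limits. This hypothesis really does require proof here, and the proof uses exactly the connectivity estimate you invoke only in your final paragraph. Concretely, one shows directly that $e \to \Tot (GF)^{\bullet+1} e$ is an isomorphism for every $e \in \sC^{\geq n}$: each $(GF)^{k+1} e$ lies in $\sC^{\geq n}$, so $\fib(\Tot \to \Tot_{\leq m})$ lies in $\sC^{\geq n+m+1}$; since $F$ is exact it commutes with the finite limits $\Tot_{\leq m}$, and since $F((GF)^{\bullet+1} e) = (FG)^{\bullet+1} Fe$ is split with augmentation $Fe$, the map $Fe \to \Tot_{\leq m}(FG)^{\bullet+1}Fe$ also has fiber in $\sD^{\geq n+m+1}$. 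Comparing through $\Tot_{\leq m}$ shows that $Fe \to F(\Tot)$ induces an isomorphism on every cohomology object; right completeness of $\sD$ then forces it to be an isomorphism, and conservativity of $F$ on $\sC^+$ gives $e \simeq \Tot (GF)^{\bullet+1} e$. With this established, the rest of your argument goes through unchanged.
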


\begin{proof}

See Step 3 in the proof of Lemma 6.11.2 in \cite{R5}.

\end{proof}

\begin{definition}

We say that $\sC$ is \emph{coherent} with respect to its t-structure if the inclusion \[ \sC^{\geq 0} \longrightarrow \sC^{\geq -1} \] preserves compact objects.

\end{definition}

For example, if $A$ is a classical commutative algebra, then $A\mod$ with its usual t-structure is coherent if and only if $A$ is coherent, i.e., every finitely generated ideal is finitely presented.

\begin{proposition}

Suppose that the t-structure on $\sC$ is compatible with filtered colimits. Then $\sC$ is coherent if and only if almost compact objects in $\sC$ are stable under truncation.

\end{proposition}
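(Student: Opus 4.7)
The plan is to move compactness between the subcategories $\sC^{\geq m}$ via the cofiber sequences $\tau^{\geq n+1} c \to \tau^{\geq m} c \to \tau^{[m,n]} c$ that assemble truncations of $c$.

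For the forward direction, assume $\sC$ is coherent. By shifting, the inclusion $\sC^{\geq n} \hookrightarrow \sC^{\geq n-1}$ preserves compact objects for every $n \in \bZ$, so by iteration a compact object of $\sC^{\geq n}$ remains compact in $\sC^{\geq m}$ for any $m \leq n$. Given an almost compact $c$ and an integer $n$, I verify $\tau^{\leq n} c$ is almost compact by showing $\tau^{\geq m}(\tau^{\leq n} c) = \tau^{[m,n]} c$ is compact in $\sC^{\geq m}$ for each $m$: the case $m > n$ is trivial since the object vanishes, and for $m \leq n$ the cofiber sequence $\tau^{\geq n+1} c \to \tau^{\geq m} c \to \tau^{[m,n]} c$ realizes $\tau^{[m,n]} c$ as a finite colimit of objects compact in $\sC^{\geq m}$---the middle term by almost compactness of $c$, the left term by almost compactness followed by iterated coherence. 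Since compactness in $\sC^{\geq m}$ is preserved by finite colimits (which are computed via $\sC$ and the left adjoint $\tau^{\geq m}$), the conclusion follows. A parallel argument using $\tau^{\geq m}(\tau^{\geq n} c) = \tau^{\geq \max(m,n)} c$ handles $\tau^{\geq n} c$.

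For the backward direction, suppose almost compacts are stable under truncation and let $d \in \sC^{\geq 0}$ be compact in $\sC^{\geq 0}$. Via the adjunction $\tau^{\geq n} \dashv (\sC^{\geq n} \hookrightarrow \sC^{\geq 0})$---whose right adjoint preserves filtered colimits by compatibility---$\tau^{\geq n} d$ is compact in $\sC^{\geq n}$ for every $n \geq 0$. Showing $d$ is compact in $\sC^{\geq -1}$ reduces to showing $d$ is almost compact in $\sC$, for then $d = \tau^{\geq -1} d$ is compact in $\sC^{\geq -1}$ directly from the definition of almost compactness. The principal obstacle is to verify compactness of $d$ in $\sC^{\geq n}$ for $n < 0$; I expect to invoke right-completeness of the t-structure to first conclude that $d$ is cohomologically bounded above (so $d \in \sC^{[0,N]}$ for some $N$), then analyze mapping out of $d$ into a filtered colimit $\colim_i e_i$ in $\sC^{\geq n}$ via the fiber sequence $\tau^{\leq -1}(\colim_i e_i) \to \colim_i e_i \to \tau^{\geq 0}(\colim_i e_i)$, using compactness of $d$ in $\sC^{\geq 0}$ together with the vanishing of $\Map(d, z)$ for $z \in \sC^{\leq -1}$ (a degree-counting fact available for bounded $d \in \sC^{\geq 0}$). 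The stability hypothesis enters by ensuring that the truncations of $d$ appearing in this analysis are themselves almost compact, which closes the argument.
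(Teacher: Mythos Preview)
The paper states this proposition without proof, pointing to the appendix of \cite{BNP} for the relevant background. So there is no ``paper's proof'' to compare against; I will evaluate your argument on its own merits.

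Your forward direction is correct and cleanly organized. The cofiber sequence $\tau^{\geq n+1} c \to \tau^{\geq m} c \to \tau^{[m,n]} c$, together with iterated coherence to push compactness of $\tau^{\geq n+1} c$ down from $\sC^{\geq n+1}$ to $\sC^{\geq m}$, does the job.

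The backward direction, however, has a genuine gap. First, you invoke right completeness of the t-structure to conclude that $d$ is cohomologically bounded above; this hypothesis is not part of the statement, and you cannot simply import it. Second, and more seriously, the ``degree-counting fact'' you cite is false: for $d \in \sC^{\geq 0}$ and $z \in \sC^{\leq -1}$, the mapping \emph{space} $\Map(d,z)$ need not be contractible, even when $d$ is bounded. The t-structure axiom gives $\pi_0 \Map(d,z) = [d,z] = 0$, but $\pi_k \Map(d,z) = [d[k], z]$ for $k \geq 1$ involves $d[k] \in \sC^{\geq -k}$, which no longer lies in $\sC^{\geq 0}$. For a concrete example, take $\sC = D(A)$, $d = A$, and $z = M[1]$ for a nonzero module $M$: then $\pi_1 \Map(A, M[1]) = [A, M] = M \neq 0$. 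So the fiber-sequence argument you sketch, analyzing $\Map(d, \tau^{\leq -1}(\colim e_i)) \to \Map(d, \colim e_i) \to \Map(d, \tau^{\geq 0}(\colim e_i))$ and hoping the first term vanishes, does not go through as written.

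Finally, your closing sentence (``The stability hypothesis enters by ensuring that the truncations of $d$ appearing in this analysis are themselves almost compact'') is too vague to constitute an argument: you have not yet exhibited any almost compact object to which the hypothesis could be applied, since whether $d$ itself is almost compact is precisely what is in question. The backward direction needs a different idea to make contact with the stability hypothesis; as it stands, the argument is a plan rather than a proof.
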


In particular, if $\sC$ is coherent then the subcategory $\sC^{\coh}$ is stable under truncation functors, and hence inherits a t-structure from $\sC$. This t-structure extends uniquely to a t-structure on the \emph{canonical renormalization} \[ \sC^{\ren} := \Ind(\sC^{\coh}) \] which is compatible with filtered colimits. The unique continuous extension \[ \sC^{\ren} \longrightarrow \sC \] of the inclusion $\sC^{\coh} \subset \sC$ is t-exact.

\begin{definition}

We call $\sC$ \emph{almost compactly generated} with respect to its t-structure if it satisfies the following conditions:
\begin{enumerate}[(i)]
\item the t-structure is compatible with filtered colimits and right complete;
\item $\sC$ is coherent;
\item $\sC^{\geq 0}$ is compactly generated.
\end{enumerate}

\end{definition}

\begin{rem}

Conditions (i) and (iii) are usually rather easy to verify in practice, while coherence is more subtle. One particularly bothersome example of this is the fact that the tensor product of coherent categories need not be coherent; for instance, it is well-known that a ring of polynomials with coefficients in a coherent ring need not be coherent.

\end{rem}

The canonical renormalization behaves well for almost compactly generated categories.

\begin{proposition}

If $\sC$ is almost compactly generated, then the t-exact functor $\sC^{\ren} \to \sC$ restricts to an equivalence \[ \sC^{\ren,+} \tilde{\longrightarrow} \sC^+. \]

\end{proposition}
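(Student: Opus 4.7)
The plan is to establish, for each $n \geq 0$, that the t-exact functor
$$\sC^{\ren, \geq -n} \longrightarrow \sC^{\geq -n}$$
is an equivalence; taking the filtered union over $n$ then yields the desired identification $\sC^{\ren,+} \simeq \sC^+$.

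The crux is to identify the compact objects of $\sC^{\geq -n}$ as precisely $\sC^{\coh} \cap \sC^{\geq -n}$. For the ``$\supseteq$'' direction: if $c$ is coherent and lies in $\sC^{\geq -n}$, then $c = \tau^{\geq -n} c$, which is compact in $\sC^{\geq -n}$ by almost-compactness. For the ``$\subseteq$'' direction, let $c$ be compact in $\sC^{\geq -n}$; I must check that $\tau^{\geq m} c$ is compact in $\sC^{\geq m}$ for every $m \in \bZ$. When $m \geq -n$, the functor $\tau^{\geq m}:\sC^{\geq -n} \to \sC^{\geq m}$ is left adjoint to the fully faithful inclusion, whose right adjoint preserves filtered colimits (since the t-structure on $\sC$ is compatible with filtered colimits), so the left adjoint preserves compact objects. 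When $m < -n$, $\tau^{\geq m} c = c$, so I instead need the inclusion $\sC^{\geq -n} \hookrightarrow \sC^{\geq m}$ to preserve compact objects. Applying the shift autoequivalence to the coherence hypothesis yields that each inclusion $\sC^{\geq k} \hookrightarrow \sC^{\geq k-1}$ preserves compact objects; composing finitely many such handles this case.

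With the identification of compact objects in hand, compact generation of $\sC^{\geq 0}$ transfers via the shift equivalence to compact generation of $\sC^{\geq -n}$, giving
$$\sC^{\geq -n} \;\simeq\; \Ind\bigl(\sC^{\coh} \cap \sC^{\geq -n}\bigr).$$
On the other side, the construction of the t-structure on $\sC^{\ren}$ (together with stability of coherent objects under truncation) identifies $\sC^{\ren, \geq -n} = \Ind(\sC^{\coh} \cap \sC^{\geq -n})$, and under the continuous extension $\sC^{\ren} \to \sC$ of the fully faithful inclusion $\sC^{\coh} \hookrightarrow \sC$ these two Ind-presentations are matched. I expect the main technical obstacle to be the $m < -n$ case above — namely, that compactness in a coconnective subcategory is preserved when weakening the coconnectivity cutoff — and it is precisely the coherence hypothesis, iterated via shifts, that secures this. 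Everything else in the argument is essentially formal manipulation of Ind-categories and compact generation.
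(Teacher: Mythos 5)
Your argument is correct. The paper states this proposition without proof, so there is no argument of its own to compare against; but your reduction is sound. The key points all check out: compact objects of $\sC^{\geq -n}$ are precisely $\sC^{\coh} \cap \sC^{\geq -n}$ (the $m \geq -n$ case uses compatibility of the t-structure with filtered colimits so that the inclusion $\sC^{\geq m} \hookrightarrow \sC^{\geq -n}$ preserves filtered colimits and hence $\tau^{\geq m}$ preserves compacts; the $m < -n$ case uses the shift-invariance of the coherence hypothesis); compact generation of $\sC^{\geq -n}$ follows from that of $\sC^{\geq 0}$ by shifting, giving $\sC^{\geq -n} \simeq \Ind(\sC^{\coh} \cap \sC^{\geq -n})$; and on the other side, $\sC^{\ren,\geq -n} \simeq \Ind(\sC^{\coh} \cap \sC^{\geq -n})$ because the t-structure on $\sC^{\ren}$ is compatible with filtered colimits (so $\tau^{\geq -n}$ commutes with them), coherent objects are stable under truncation, and objects of $\sC^{\coh}$ are compact in $\sC^{\ren}$. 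Since $\sC^{\ren} \to \sC$ is continuous and restricts to the identity on $\sC^{\coh}$, it matches the two $\Ind$-presentations, and passing to the union over $n$ finishes the proof. No gaps.
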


The next lemma supplies a convenient criterion for almost compact generation.

\begin{lemma}

Suppose that $\sC$ is equipped with a t-structure such that
\begin{enumerate}[(i)]
\item the t-structure is compatible with filtered colimits and right complete;
\item the category $\sC^{\heartsuit}$ is compactly generated;
\item any compact object in $\sC^{\heartsuit}$ is almost compact in $\sC$.
\end{enumerate}
Then $\sC$ is almost compactly generated.

\label{cohcrit}

\end{lemma}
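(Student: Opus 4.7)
The plan is to verify the three conditions defining an almost compactly generated category; since (i) is assumed directly, the task reduces to showing that $\sC^{\geq 0}$ is compactly generated and that $\sC$ is coherent.

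For compact generation, I first observe that for any compact $s \in \sC^\heartsuit$---which is almost compact in $\sC$ by hypothesis (iii)---and any $n \geq 0$, the shift $s[-n]$ is compact in $\sC^{\geq k}$ for every $k \leq n$. Indeed, $\Map_{\sC^{\geq k}}(s[-n], -) \simeq \Map_\sC(s, (-)[n])$, and the $[n]$-shift gives an equivalence $\sC^{\geq k} \simeq \sC^{\geq k-n}$ compatible with filtered colimits (by (i)), reducing the claim to compactness of $s$ in $\sC^{\geq k-n}$, which holds by almost compactness since $k - n \leq 0$. Taking $G := \{s[-n] : s \in \sC^\heartsuit \text{ compact},\, n \geq 0\}$, these compact objects of $\sC^{\geq 0}$ are generators: if $d \in \sC^{\geq 0}$ is right-orthogonal to $G$, a computation of $\pi_\ast$ gives $\Hom_\sC(s, d[m]) = 0$ for all compact $s$ and all $m \geq 0$; inducting on $m$, once $d \in \sC^{\geq m}$ one has $\Hom_\sC(s, d[m]) = \Hom_{\sC^\heartsuit}(s, H^m(d))$ (using $d[m] \in \sC^{\geq 0}$ and the $\tau^{\leq 0}$-adjunction), and (ii) forces $H^m(d) = 0$. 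Right completeness then gives $d = 0$.

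For coherence I appeal to the proposition preceding Lemma \ref{cohcrit}, reducing the question (under (i)) to stability of almost compact objects under truncation. The main preliminary is that almost compacts are closed under finite colimits and shifts: since $\tau^{\geq k}$ is left adjoint to the inclusion $\sC^{\geq k} \hookrightarrow \sC$, it preserves cofibers, and pushouts of compacts are compact in any presentable category; closure under shifts follows from the identity $\tau^{\geq k}(A[\ell]) = (\tau^{\geq k+\ell} A)[\ell]$ together with the shift equivalence $\sC^{\geq k+\ell} \xrightarrow{[\ell]} \sC^{\geq k}$. Moreover, for any almost compact $c$, each cohomology $H^i(c)$ is compact in $\sC^\heartsuit$: the functor $\tau^{\leq i} : \sC^{\geq i} \to \sC^{[i,i]}$ is right adjoint to an inclusion that preserves filtered colimits (by (i)), so preserves compacts, and it sends $\tau^{\geq i} c$ to $H^i(c)[-i]$. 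By (iii), every $H^i(c)$ is therefore almost compact in $\sC$.

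A Postnikov induction on $b-a$, using the cofiber sequence $\tau^{\leq b-1} d \to d \to H^b(d)[-b]$, then shows that every bounded object $d \in \sC^{[a,b]}$ with each $H^i(d)$ compact in $\sC^\heartsuit$ is itself almost compact in $\sC$. Applying this to $\tau^{[k, n]} c = \tau^{\geq k} \tau^{\leq n} c$ for an almost compact $c$ gives $\tau^{[k, n]} c$ almost compact, hence in particular compact in $\sC^{\geq k}$, so $\tau^{\leq n} c$ is almost compact. The complementary $\tau^{\geq n} c = \cofib(\tau^{\leq n-1} c \to c)$ is almost compact by closure under cofibers. The main obstacle is setting up the closure properties of almost compact objects correctly within the non-stable subcategories $\sC^{\geq k}$; once the left-adjoint formula for $\tau^{\geq k}$ and the Postnikov induction are in hand, the result follows.
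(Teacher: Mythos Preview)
The paper states this lemma without proof, so there is no argument in the paper to compare against. Your strategy---verify compact generation of $\sC^{\geq 0}$ via the shifts $s[-n]$, then establish coherence by a Postnikov induction showing almost compacts are stable under truncation---is correct and essentially complete. I flag one slip in the reasoning.

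You assert that $\tau^{\leq i} : \sC^{\geq i} \to \sC^{[i,i]}$ is \emph{right} adjoint to the inclusion, and conclude from the inclusion preserving filtered colimits that $\tau^{\leq i}$ preserves compacts. That implication is the wrong way around: a functor preserves compacts when its \emph{right} adjoint preserves filtered colimits, so you need $\tau^{\leq i}$ to be the \emph{left} adjoint here. Fortunately it is (in fact it is both): for $d \in \sC^{\geq i}$ and $e \in \sC^{[i,i]}$, applying $\Map_\sC(-,e)$ to the triangle $\tau^{\leq i} d \to d \to \tau^{\geq i+1} d$ and using that $\Map_\sC(\tau^{\geq i+1} d, e)$ and $\Map_\sC(\tau^{\geq i+1} d, e[1])$ vanish (since $\tau^{\geq i+1} d \in \sC^{\geq i+1}$ and $e, e[1] \in \sC^{\leq i}$) gives $\Map_\sC(\tau^{\leq i} d, e) \simeq \Map_\sC(d, e)$. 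This exhibits $\tau^{\leq i}$ as left adjoint to the inclusion $\sC^{[i,i]} \hookrightarrow \sC^{\geq i}$, which does preserve filtered colimits by (i), so $\tau^{\leq i}$ preserves compacts and your conclusion that $H^i(c)$ is compact in $\sC^{\heartsuit}$ follows. With this correction the Postnikov induction and the remaining closure arguments go through as written.
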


\subsection{Functorial renormalization} Suppose that $\sC$ and $\sD$ are DG categories equipped with t-structures. We will write \[ \Fun_{\fha}(\sC,\sD) \subset \Fun(\sC,\sD) \] for the full subcategory consisting of exact continuous functors of finite homological amplitude, or equivalently those exact continuous functors which send $\sC^+$ into $\sD^+$. Denote by \[ \Fun_{+\text{-}\!\cts}(\sC^+,\sD^+) \subset \Fun(\sC^+,\sD^+) \] the full subcategory consisting of exact functors which commute with filtered colimits bounded uniformly from below. There is a natural restriction functor
\begin{equation}
\Fun_{\fha}(\sC,\sD) \longrightarrow \Fun_{+\text{-}\!\cts}(\sC^+,\sD^+).
\label{fhaequiv}
\end{equation}

\begin{lemma}

\label{fhaext}

Suppose that $\sC$ is almost compactly generated, and that $\sC^{\ren} \tilde{\to} \sC$. Then the functor (\ref{fhaequiv}) is an equivalence.

\end{lemma}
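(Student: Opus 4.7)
The plan is to identify both sides of (\ref{fhaequiv}) with the category of exact functors out of $\sC^{\coh}$ taking values in $\sD^+$. Under the hypothesis $\sC^{\ren} \tilde{\to} \sC$, the identification $\sC^{\ren} = \Ind(\sC^{\coh})$ exhibits $\sC$ as compactly generated, with $\sC^c$ consisting of retracts of coherent objects. Consequently, any continuous $F : \sC \to \sD$ is determined by its restriction to $\sC^{\coh}$ via Ind-extension, which handles one side.

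The technical heart of the argument will be the claim that every $c \in \sC^+$ admits a presentation as a filtered colimit in $\sC$ of coherent objects with a uniform lower bound on cohomological amplitude. To prove this I would write $c = \colim_i c_i$ in $\sC^{\ren} \simeq \Ind(\sC^{\coh})$ with $c_i \in \sC^{\coh}$, choose $n$ so that $c \in \sC^{\geq -n}$, and apply $\tau^{\geq -n}$: compatibility of the t-structure on $\sC^{\ren}$ with filtered colimits yields $c = \colim_i \tau^{\geq -n} c_i$, and coherence of $\sC$ ensures that each $\tau^{\geq -n} c_i$ remains coherent (almost compact by the truncation-preservation property of coherent categories, and eventually coconnective as the truncation of a bounded object), giving a presentation in $\sC^{\coh} \cap \sC^{\geq -n}$.

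To construct a quasi-inverse to (\ref{fhaequiv}), given $G \in \Fun_{+\text{-}\!\cts}(\sC^+, \sD^+)$ I would restrict to $\sC^{\coh}$ and Ind-extend to a continuous exact functor $\tilde G : \sC \to \sD$. For any $c \in \sC^+$ presented as above, continuity of $\tilde G$ combined with the hypothesis that $G$ commutes with uniformly bounded-below filtered colimits gives $\tilde G(c) = \colim_i G(c_i) = G(c)$. In particular $\tilde G$ sends $\sC^+$ into $\sD^+$, so $\tilde G$ lies in $\Fun_{\fha}(\sC, \sD)$, and its restriction to $\sC^+$ recovers $G$.

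For full faithfulness of (\ref{fhaequiv}), I would argue that both $F \in \Fun_{\fha}(\sC, \sD)$ and its restriction $F|_{\sC^+}$ are determined by the further restriction to $\sC^{\coh}$ — on the left by continuity together with compact generation, on the right by the uniform presentation of $\sC^+$-objects combined with the $+$-continuity condition — so mapping spaces on either side agree with $\on{Hom}_{\Fun(\sC^{\coh}, \sD)}(F_1|_{\sC^{\coh}}, F_2|_{\sC^{\coh}})$. The main obstacle is precisely the uniformly bounded-below presentation lemma in the second paragraph, which is where the coherence assumption on $\sC$ is essential; everything else is standard Ind-extension formalism.
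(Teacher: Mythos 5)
Your proposal is correct and takes essentially the same approach as the paper: the inverse to (\ref{fhaequiv}) is restriction to $\sC^{\coh}$ followed by left Kan extension (Ind-extension) back to $\sC$, with the key point being that any object of $\sC^+$ is a filtered colimit, bounded uniformly from below, of coherent objects. The paper outsources this last point to a citation (Example 4.4.4 of \cite{R5}, via stability of $\sC^{\coh}$ under truncations), whereas you spell out the truncation argument directly; this is the same mechanism, just unpacked.
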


\begin{proof}

The inverse functor is given by the composite \[ \Fun_{+\text{-}\!\cts}(\sC^+,\sD^+) \longrightarrow \Fun(\sC^{\coh},\sD) \longrightarrow \Fun(\sC,\sD), \] where the first functor is restriction and the second is left Kan extension. Namely, as observed in Example 4.4.4 of \cite{R5}, the fact that $\sC^{\coh}$ is stable under truncation functors implies that any functor in $\Fun_{+\text{-}\!\cts}(\sC^+,\sD^+)$ is left Kan extended from $\sC^{\coh}$.

\end{proof}

Let us write $\DGCat_{\tstr}$ for the category of DG categories equipped with t-structures, with $1$-morphisms given by all continuous functors (we impose no t-exactness conditions on functors here). As described above, the Lurie tensor product of two objects in $\DGCat_{\tstr}$ admits a canonical t-structure, and hence the Lurie tensor product lifts to a symmetric monoidal structure on $\DGCat_{\tstr}$. We denote by $(\DGCat_{\tstr})^{\otimes}$ the associated pseudo-tensor category.\footnote{The term ``pseudo-tensor category," which we have borrowed from Beilinson-Drinfeld, is formally identical to ``colored operad" but plays a different conceptual role. A morphism from a colored operad to a pseudo-tensor category should be thought of as an algebra over the colored operad in the pseudo-tensor category. In particular, for us colored operads are ``small," while pseudo-tensor categories tend to be ``large."}

We define a $1$-full pseudo-tensor subcategory \[ (\DGCat_{\tstr})^{\otimes}_{\fha} \subset (\DGCat_{\tstr})^{\otimes} \] as follows. An object in $\DGCat_{\tstr}$ belongs to $(\DGCat_{\tstr})^{\otimes}_{\fha}$ provided that it is almost compactly generated. An $n$-ary morphism \[ \sC_1 \otimes \cdots \otimes \sC_n \longrightarrow \mathcal{D} \] belongs to $(\DGCat_{\tstr})^{\otimes}_{\fha}$ provided that the image of the composition \[ \sC_1^+ \times \cdots \times \sC_n^+ \longrightarrow \sC_1 \otimes \cdots \otimes \sC_n \longrightarrow \mathcal{D} \] is contained in $\mathcal{D}^+$.

Next, consider the symmetric monoidal category $\DGCat_{\tstr}^{\Delta^1}$ of arrows in $\DGCat_{\tstr}$. Define a $1$-full pseudo-tensor subcategory \[ (\DGCat_{\tstr}^{\Delta^1})^{\otimes}_{\ren} \subset (\DGCat_{\tstr}^{\Delta^1})^{\otimes} \] as follows. An object $\sC' \to \sC$ in $\DGCat_{\tstr}^{\Delta^1}$ belongs to $(\DGCat_{\tstr}^{\Delta^1})^{\otimes}_{\ren}$ provided that the following conditions are satisfied:
\begin{enumerate}[(i)]
\item $\sC' \to \sC$ is a t-exact functor;
\item $\sC$ and $\sC'$ are almost compactly generated;
\item the category $\sC'$ is compactly generated by $(\sC')^{\coh}$;
\item $\sC' \to \sC$ induces an equivalence $(\sC')^+ \tilde{\to} \sC^+$.
\end{enumerate}
(Note that these conditions imply that $\sC' \to \sC$ factors uniqely through a continuous t-exact equivalence $\sC' \tilde{\to} \sC^{\ren}$.) Given objects  \[ \sC_1' \to \sC_1,\cdots \!,\sC_n' \to \sC_n,\sD' \to \sD \] in $(\DGCat_{\tstr}^{\Delta^1})^{\otimes}_{\ren}$, an $n$-ary morphism \[
\begin{tikzcd}
\sC_1' \otimes \cdots \otimes \sC_n' \arrow{r} \arrow{d} & \mathcal{D}' \arrow{d} \\
\sC_1 \otimes \cdots \otimes \sC_n \arrow{r} & \mathcal{D},
\end{tikzcd} \]
in $(\DGCat_{\tstr}^{\Delta^1})^{\otimes}$ is an $n$-ary morphism in $(\DGCat_{\tstr}^{\Delta^1})^{\otimes}_{\ren}$ provided that both horizontal functors are $n$-ary morphisms in $(\DGCat_{\tstr})^{\otimes}_{\fha}$ (in fact, it suffices to impose this condition on the lower horizontal functor).

\begin{proposition}

\label{operadequiv}

The second projection
\begin{align*}
\DGCat_{\tstr}^{\Delta^1} &\longrightarrow \DGCat_{\tstr} \\
(\sC' \to \sC) &\mapsto \sC
\end{align*}
induces an equivalence of pseudo-tensor categories \[ (\DGCat_{\tstr}^{\Delta^1})^{\otimes}_{\ren} \tilde{\longrightarrow} (\DGCat_{\tstr})^{\otimes}_{\fha}. \]

\end{proposition}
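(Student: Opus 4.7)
The plan is to verify that the second projection is essentially surjective and fully faithful on $n$-ary morphism spaces. For essential surjectivity on objects, given $\sC \in (\DGCat_{\tstr})^{\otimes}_{\fha}$, take the canonical arrow $\sC^{\ren} \to \sC$. By construction of $\sC^{\ren} := \Ind(\sC^{\coh})$, this arrow is t-exact and $\sC^{\ren}$ is compactly generated by its coherent subcategory. The equivalence $\sC^{\ren,+} \tilde{\to} \sC^+$ is the earlier proposition on renormalization of almost compactly generated categories, and almost compact generation of $\sC^{\ren}$ follows from Lemma \ref{cohcrit}, using that $(\sC^{\ren})^{\heartsuit} \simeq \sC^{\heartsuit}$ is compactly generated and that its compact objects are compact (hence almost compact) inside $\sC^{\ren}$.

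The heart of the proof is constructing an inverse to restriction on $n$-ary morphism spaces. Given $F : \sC_1 \otimes \cdots \otimes \sC_n \to \sD$ in $(\DGCat_{\tstr})^{\otimes}_{\fha}$, the assumption that $F$ sends $\sC_1^+ \times \cdots \times \sC_n^+$ into $\sD^+$, combined with the equivalences $\sC_i^{\ren,+} \simeq \sC_i^+$ and $\sD^{\ren,+} \simeq \sD^+$, yields a multilinear functor $\sC_1^{\ren,+} \times \cdots \times \sC_n^{\ren,+} \to \sD^{\ren,+}$ commuting with uniformly-bounded-below filtered colimits in each slot. Further restricting to the coherent subcategories and left Kan extending variable-by-variable produces a continuous functor $F^{\ren} : \sC_1^{\ren} \otimes \cdots \otimes \sC_n^{\ren} \to \sD^{\ren}$ in $\fha$, and a direct check using the universal property of $\Ind$ confirms that the required square above $F$ commutes.

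Full faithfulness reduces to a multilinear strengthening of Lemma \ref{fhaext}: for almost compactly generated $\sC_i$ with $\sC_i^{\ren} \simeq \sC_i$ and any target $\sD$, restriction should set up an equivalence between the space of $\fha$ $n$-ary morphisms out of $\sC_1 \otimes \cdots \otimes \sC_n$ and the space of exact $n$-linear functors $\sC_1^+ \times \cdots \times \sC_n^+ \to \sD^+$ commuting with uniformly-bounded-below filtered colimits in each variable, with inverse given by left Kan extension from $\prod_i \sC_i^{\coh}$. The main obstacle is establishing this multilinear extension rigorously: the unary case uses truncation stability of $\sC^{\coh}$ together with Example 4.4.4 of \cite{R5}, and the multilinear case should follow by iterating the left Kan extension argument one variable at a time. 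The delicate point is that one cannot simply apply the unary lemma directly to $\sC_1 \otimes \cdots \otimes \sC_n$, since, as flagged in the remark after Lemma \ref{cohcrit}, tensor products of coherent categories need not be coherent, and $\sC_1 \otimes \cdots \otimes \sC_n$ need not itself be almost compactly generated.
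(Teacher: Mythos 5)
Your proposal is essentially the paper's proof: essential surjectivity via $\sC^{\ren} \to \sC$, and (on $n$-ary morphism spaces) restriction to $\prod_i (\sC_i')^{\coh}$, which lands in $(\sD')^+ \cong \sD^+$, followed by extension to the tensor product of ind-completions. Where you differ is in how you handle the multilinear extension step, which you flag as "the main obstacle." The paper has no obstacle here: since $\sC_1' \otimes \cdots \otimes \sC_n' = \Ind((\sC_1')^{\coh}) \otimes \cdots \otimes \Ind((\sC_n')^{\coh})$, the universal property of this tensor product says that continuous functors out of it correspond exactly to functors from $\prod_i (\sC_i')^{\coh}$ that are exact in each variable separately. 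This gives the unique extension in one step, with no need to iterate a variable-by-variable left Kan extension and no need to verify that exactness in the remaining variables survives each step of the iteration (which is what you'd owe if you took the iterated route). You are right that one cannot simply apply the unary Lemma \ref{fhaext} to $\sC_1 \otimes \cdots \otimes \sC_n$ because the tensor product of coherent categories need not be coherent — this is precisely why the statement is formulated in a pseudo-tensor category rather than a symmetric monoidal category, and why the argument proceeds through the coherent subcategories of each factor directly.
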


\begin{proof}

This functor is essentially surjective on the underlying categories because for any almost compactly generated category $\sC$, the canonical renormalization $\sC^{\ren} \to \sC$ is an object of $(\DGCat_{\tstr}^{\Delta^1})^{\otimes}_{\ren}$ which projects to $\sC$.

To see that this functor is fully faithful on $n$-ary morphisms, suppose that we are given objects \[ \sC_1' \to \sC_1,\cdots \!,\sC_n' \to \sC_n,\mathcal{D}' \to \mathcal{D} \] in $(\DGCat_{\tstr}^{\Delta^1})^{\otimes}_{\ren}$. We claim that any $n$-ary morphism \[ \sC_1 \otimes \cdots \otimes \sC_n \longrightarrow \mathcal{D} \] in $(\DGCat_{\tstr})^{\otimes}_{\fha}$ lifts to an $n$-ary morphism in $(\DGCat_{\tstr}^{\Delta^1})^{\otimes}_{\ren}$ which is unique up to contractible choice. Namely, the image of the composition \[ (\sC_1')^{\coh} \times \cdots \times (\sC_n')^{\coh} \tilde{\longrightarrow} \sC_1^{\coh} \times \cdots \times \sC_n^{\coh} \longrightarrow \mathcal{D} \] is contained in $\mathcal{D}^+$. Since this functor is exact in each variable separately, it follows that there is a unique continuous extension to \[ \sC_1' \otimes \cdots \otimes \sC_n' = \Ind((\sC_1')^{\coh}) \otimes \cdots \otimes \Ind((\sC_n')^{\coh}) \longrightarrow \mathcal{D}. \] Note that this extension defines an $n$-ary morphism in $(\DGCat_{\tstr}^{\Delta^1})^{\otimes}_{\ren}$, since any object in $(\sC_i')^+$ is a filtered colimit of objects in $(\sC_i')^{\coh}$ bounded uniformly from below. This completes the proof.

\end{proof}

We will make frequent use of the endomorphism of pseudo-tensor categories
\begin{align}
\label{renfunctorial}
(\DGCat_{\tstr})^{\otimes}_{\fha} &\longrightarrow (\DGCat_{\tstr})^{\otimes}_{\fha} \\
\sC &\mapsto \sC^{\ren} \nonumber
\end{align}
obtained as the composition \[ (\DGCat_{\tstr})^{\otimes}_{\fha} \tilde{\longrightarrow} (\DGCat_{\tstr}^{\Delta^1})^{\otimes}_{\ren} \longrightarrow (\DGCat_{\tstr})^{\otimes}_{\fha}, \] where the first functor is inverse to the equivalence of Proposition \ref{operadequiv} and the second functor is the first projection $(\sC' \to \sC) \mapsto \sC'$.

\subsection{Renormalization for crystals of categories} We continue to let $S$ denote a smooth quasicompact scheme.

Consider the pseudo-tensor category $(\DD(S)\mod_{\tstr})^{\otimes}$ whose objects are $\DD(S)$-module categories equipped with a compatible t-structure, with $n$-ary morphisms $(\sC_1,\cdots \!,\sC_n) \to \sD$ given by arbitrary $\DD(S)$-linear functors \[ \sC_1 \underset{\DD(S)}{\otimes} \cdots \underset{\DD(S)}{\otimes} \sC_n \longrightarrow \sD. \]

We define a pseudo-tensor subcategory \[ (\DD(S)\mod_{\tstr})^{\otimes}_{\fha} \subset (\DD(S)\mod_{\tstr})^{\otimes} \] a follows. An object $\sC$ of $(\DD(S)\mod_{\tstr})^{\otimes}$ belongs to $(\DD(S)\mod_{\tstr})^{\otimes}_{\fha}$ if $\sC$ is almost compactly generated. An $n$-ary morphism \[ \sC_1 \underset{\DD(S)}{\otimes} \cdots \underset{\DD(S)}{\otimes} \sC_n \longrightarrow \sD \] belongs to $(\DD(S)\mod_{\tstr})^{\otimes}_{\fha}$ provided that the image of the composition \[ \sC_1^+ \times \cdots \times \sC_n^+ \longrightarrow \sC_1 \underset{\DD(S)}{\otimes} \cdots \underset{\DD(S)}{\otimes} \sC_n \longrightarrow \sD \] is contained in $\sD^+$.

\begin{proposition}

\label{rencrys}

The morphism (\ref{renfunctorial}) lifts canonically to a morphism of pseudo-tensor categories
\begin{align*}
(\DD(S)\mod_{\tstr})^{\otimes}_{\fha} &\longrightarrow (\DD(S)\mod_{\tstr})^{\otimes}_{\fha} \\
\sC &\mapsto \sC^{\ren}.
\end{align*}

\end{proposition}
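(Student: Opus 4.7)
The plan is to deduce the proposition from Proposition \ref{operadequiv} by encoding a $\DD(S)$-module structure entirely as algebraic data living inside the pseudo-tensor category $(\DGCat_{\tstr})^{\otimes}_{\fha}$, then transporting that data through the equivalence. The key enabler is that Proposition \ref{operadequiv} is an equivalence of \emph{pseudo-tensor} categories, so any algebra/coalgebra or module/comodule structure described by $n$-ary morphisms in $(\DGCat_{\tstr})^{\otimes}_{\fha}$ lifts automatically and functorially to the level of canonical renormalizations.

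First I would verify that $\DD(S)$ itself belongs to $(\DGCat_{\tstr})^{\otimes}_{\fha}$ and that $\DD(S)^{\ren} \xrightarrow{\sim} \DD(S)$. For smooth quasicompact $S$, the natural t-structure on $\DD(S)$ is right complete and compatible with filtered colimits; $\DD(S)^{\heartsuit}$ is compactly generated; and the compact generators (coherent D-modules) are in fact compact, hence almost compact, in $\DD(S)$. Thus Lemma \ref{cohcrit} applies, and moreover $\DD(S)^{\coh}$ coincides with the compact objects, yielding $\DD(S)^{\ren} \simeq \DD(S)$. Similarly, the coalgebra structure maps on $\DD(S)$ (the comultiplication $\Delta_{\dR,*}$ and the counit to $\Vect$) are of finite homological amplitude, so they are morphisms in $(\DGCat_{\tstr})^{\otimes}_{\fha}$.

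Next, I would use the Verdier self-duality equivalence recalled in the subsection on compatible t-structures to reinterpret a $\DD(S)$-module structure on $\sC$ as a $\DD(S)$-comodule structure: a coaction functor $\sC \to \DD(S) \otimes \sC$ together with coassociativity and counit data. The compatibility with t-structure says this coaction is t-exact, hence in $(\DGCat_{\tstr})^{\otimes}_{\fha}$. Applying Proposition \ref{operadequiv}, the coaction lifts uniquely to $\sC^{\ren} \to \DD(S) \otimes \sC^{\ren}$, and the coassociativity and counit axioms lift by the uniqueness-up-to-contractible-choice clause of that proposition. For $n$-ary morphisms, a $\DD(S)$-linear morphism $F : \sC_1 \otimes_{\DD(S)} \cdots \otimes_{\DD(S)} \sC_n \to \sD$ of finite homological amplitude can be encoded as an $n$-ary morphism $\sC_1 \otimes \cdots \otimes \sC_n \to \sD$ in $(\DGCat_{\tstr})^{\otimes}_{\fha}$ together with compatibility data involving the various coactions; again Proposition \ref{operadequiv} lifts all of this uniquely to the renormalized categories.

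The main obstacle is conceptual bookkeeping rather than hard analysis: one must convince oneself that the entire infinite tower of coherence data defining a $\DD(S)$-module structure and $\DD(S)$-linearity is honestly described by morphisms in $(\DGCat_{\tstr})^{\otimes}_{\fha}$—i.e., that nothing in the definition secretly invokes functors of infinite homological amplitude or structure outside of the pseudo-tensor category. Once this is set up properly (the cleanest formulation is to present $(\DD(S)\mod_{\tstr})^{\otimes}_{\fha}$ as the pseudo-tensor category of comodules over the coalgebra $\DD(S)$ in $(\DGCat_{\tstr})^{\otimes}_{\fha}$), the argument is formal: the renormalization endomorphism of $(\DGCat_{\tstr})^{\otimes}_{\fha}$, being a pseudo-tensor endofunctor that fixes $\DD(S)$, automatically lifts to an endofunctor of $\DD(S)$-comodules, and this is precisely the required lift.
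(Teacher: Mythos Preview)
Your outline captures the right starting point---$\DD(S)$ is a commutative algebra in $(\DGCat_{\tstr})^{\otimes}_{\fha}$ fixed by renormalization---but the comodule encoding you propose runs into a real obstruction, and you skip what turns out to be the main content of the proof.

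The first issue is that the coaction $\sC \to \DD(S) \otimes \sC$ is a unary morphism whose \emph{target} is the tensor product $\DD(S) \otimes \sC$. For this to live in $(\DGCat_{\tstr})^{\otimes}_{\fha}$, the target must itself be an object, i.e.\ almost compactly generated---but coherence is not stable under tensor product (the paper flags this explicitly in a remark just before Lemma~\ref{cohcrit}). The paper sidesteps this by transporting the \emph{action} map $\act_{\sC} : \DD(S) \otimes \sC \to \sC$ instead: this is a binary morphism $(\DD(S), \sC) \to \sC$ in the pseudo-tensor sense, so only the individual factors need to be objects. The t-exactness of $\coact_{\sC}$ is still used, but only to deduce (by adjunction) that $\act_{\sC}$ is left t-exact, which is what places the action in $(\DGCat_{\tstr})^{\otimes}_{\fha}$.

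The second and larger gap is that even once you have a $\DD(S)$-module structure on $\sC^{\ren}$, you must still check that the t-structure is \emph{compatible} with it---that $\coact_{\sC^{\ren}}$ is t-exact, not merely of finite homological amplitude. Your proposed presentation of $(\DD(S)\mod_{\tstr})^{\otimes}_{\fha}$ as ``comodules in $(\DGCat_{\tstr})^{\otimes}_{\fha}$'' conflates these two conditions: the pseudo-tensor category only records fha morphisms, so the renormalization endofunctor would \emph{a priori} land in the larger category of $\DD(S)$-modules whose coaction is merely fha. The paper devotes the bulk of its proof to this verification: first showing $\coact_{\sC^{\ren}}$ has finite homological amplitude by tracking where compact objects go, then using a commutative-square comparison between $\sC^{\ren}$ and $\sC$ (together with conservativity on eventually coconnective objects) to upgrade this to genuine t-exactness. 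This step is not formal and cannot be absorbed into the pseudo-tensor bookkeeping.
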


\begin{proof}

Observe that $\DD(S)$ is a commutative algebra object in $(\DGCat_{\tstr})^{\otimes}_{\fha}$ satisfying $\DD(S)^{\ren} \tilde{\to} \DD(S)$. Moreover, for any $\DD(S)$-module category $\sC$, the coaction functor \[ \coact_{\sC} : \sC \longrightarrow \DD(S) \otimes \sC \] is left adjoint to the action functor \[ \act_{\sC} : \DD(S) \otimes \sC \longrightarrow \sC. \] It follows that if $\sC$ is equipped with a compatible t-structure, then $\act_{\sC}$ is left t-exact, and hence the $\DD(S)$-action on $\sC$ takes place in $(\DGCat_{\tstr})^{\otimes}_{\fha}$ provided that $\sC$ is almost compactly generated. All of this implies that (\ref{renfunctorial}) is functorial on $\DD(S)$-modules; we need only check that if the $\DD(S)$-action on $\sC$ is compatible with the t-structure, then so is the induced action on $\sC^{\ren}$.

First, we claim that the functor $\coact_{\sC^{\ren}}$ has finite homological amplitude. Since $(\sC^{\ren})^+$ is generated by compact objects of $\sC^{\ren}$ under filtered colimits bounded uniformly from below, it suffices to show that $\coact_{\sC^{\ren}}$ sends compact objects to eventually coconnective objects. Observe that $\coact_{\sC^{\ren}}$ is left adjoint to the continuous functor $\act_{\sC^{\ren}}$ and hence preserves compact objects, so the claim will follow if we prove that compact objects in $\DD(S) \otimes \sC^{\ren}$ are eventually coconnective. The category $\DD(S) \otimes \sC^{\ren}$ is compactly generated by objects of the form $\sM \boxtimes c$ where $\sM$ and $c$ are compact in $\DD(S)$ in $\sC^{\ren}$ respectively, so we need only prove that objects of this form are eventually coconnective, since they generate all compact objects in $\DD(S) \otimes \sC^{\ren}$ under finite colimits and retracts. But compact objects in $\DD(S)$ and $\sC^{\ren}$ are eventually coconnective, which implies that objects $\sM \boxtimes c$ as above are eventually coconnective, by Lemma \ref{tensorexact}.

To see that $\coact_{\sC^{\ren}}$ is t-exact, consider the commutative square \[
\begin{tikzcd}
\sC^{\ren} \arrow{r} \arrow{d} & \DD(S) \otimes \sC^{\ren} \arrow{d} \\
\sC \arrow{r} & \DD(S) \otimes \sC.
\end{tikzcd} \]
The left vertical and lower horizontal functors are t-exact, and the right vertical functor is t-exact by Lemma \ref{tensorexact}. We have already shown that the upper horizontal functor has finite homological amplitude, and hence we obtain a commutative square \[
\begin{tikzcd}
(\sC^{\ren})^+ \arrow{r} \arrow{d} & (\DD(S) \otimes \sC^{\ren})^+ \arrow{d} \\
\sC^+ \arrow{r} & (\DD(S) \otimes \sC)^+.
\end{tikzcd} \]
Since $(\sC^{\ren})^+ \to \sC^+$ is an equivalence and in particular conservative, Lemma 4.6.2(3) in \cite{R5} implies that the right vertical functor is conservative. It follows immediately that $\coact_{\sC^{\ren}}$ is left t-exact. As for right t-exactness, since the t-structure on $\sC^{\ren}$ is compactly generated, it suffices to show that $\coact_{\sC^{\ren}}$ sends compact objects in $(\sC^{\ren})^{\leq 0}$ into $(\DD(S) \otimes \sC^{\ren})^{\leq 0}$. Since compact objects in $\sC^{\ren}$ are eventually coconnective, this follows from the corresponding property of $\coact_{\sC}$ using the same commutative square.

\end{proof}

\subsection{Almost ULA generation} Suppose that we are given a $\DD(S)$-module category $\sC$ with compatible t-structure.

\begin{definition}\label{d:aula}

An object $c$ in $\sC$ will be called \emph{almost ULA} over $S$ if $\oblv_{\sC}(c)$ is almost compact in $\IndCoh(S) \otimes_{\DD(S)} \sC$. We will say that $\sC$ is \emph{almost ULA generated} if $\sC$ is almost compactly generated, and moreover $(\IndCoh(S) \otimes_{\DD(S)} \sC)^{\geq 0}$ is compactly generated by objects of the form \[ \tau^{\geq 0}(\sF \otimes \oblv_{\sC}(c)) \] where $\sF$ is a perfect complex on $S$ and $c$ is almost ULA in $\sC$.

\end{definition}

As the name suggests, the renormalization of an almost ULA generated category is ULA generated.

\begin{proposition}

\label{ularen}

Suppose that a $\DD(S)$-module category $\sC$ is almost ULA generated. Then the $\DD(S)$-module category $\sC^{\ren}$ is ULA generated by the objects $\tau^{\geq n}c$ in \[ \sC^+ \cong (\sC^{\ren})^+ \subset \sC^{\ren}, \] where $n \in \bZ$ and $c$ is almost ULA in $\sC$.

\end{proposition}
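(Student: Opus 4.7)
The plan is to verify the two defining properties of ULA generation for $\sC^{\ren}$: the objects $\tau^{\geq n}c$ (for $c$ almost ULA and $n \in \bZ$) must be ULA in $\sC^{\ren}$, and they must generate $\sC^{\ren}$ under colimits. I would break this further into three substeps: (a) $\tau^{\geq n}c$ is coherent in $\sC$, hence compact in $\sC^{\ren} = \Ind(\sC^{\coh})$; (b) $\oblv_{\sC^{\ren}}(\tau^{\geq n}c)$ is compact in $\IndCoh(S) \underset{\DD(S)}{\otimes} \sC^{\ren}$; (c) the $\tau^{\geq n}c$ generate $\sC^{\ren}$ under colimits. The main tools are Lemma \ref{aclem} (for descending coherence through $\oblv_{\sC}$) and Proposition \ref{rencrys} (for the pseudo-tensor functoriality of renormalization).

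For (a), I would apply Lemma \ref{aclem} to $\oblv_{\sC}$, which is t-exact (from the preceding proposition on compatible t-structures), conservative (from the monadicity of Proposition \ref{comon}), and admits a continuous right adjoint obtained by base change from the coinduction functor $\coind_S$. By t-exactness, $\oblv_{\sC}(\tau^{\geq n}c) \simeq \tau^{\geq n}\oblv_{\sC}(c)$ is the $n$th truncation of the almost compact object $\oblv_{\sC}(c)$; it is thus eventually coconnective, and to conclude it is almost compact in the whole ambient category I would separately verify coherence of $\IndCoh(S) \underset{\DD(S)}{\otimes} \sC$ using the ULA generation hypothesis together with Lemma \ref{cohcrit}. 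Lemma \ref{aclem} then gives that $\tau^{\geq n}c$ is almost compact and (being eventually coconnective) coherent in $\sC$. For (b), I would invoke Proposition \ref{rencrys} to obtain a canonical identification $\IndCoh(S) \underset{\DD(S)}{\otimes} \sC^{\ren} \simeq (\IndCoh(S) \underset{\DD(S)}{\otimes} \sC)^{\ren}$ intertwining $\oblv_{\sC^{\ren}}$ with the renormalization of $\oblv_{\sC}$, under which $\oblv_{\sC^{\ren}}(\tau^{\geq n}c)$ corresponds to the coherent object $\tau^{\geq n}\oblv_{\sC}(c) \in \IndCoh(S) \underset{\DD(S)}{\otimes} \sC$, and hence to a compact object of the renormalization.

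For (c), the ULA generation hypothesis provides compact generators $\tau^{\geq 0}(\sF \otimes \oblv_{\sC}(c))$ of $(\IndCoh(S) \underset{\DD(S)}{\otimes} \sC)^{\geq 0}$. Using the comonadic resolution $d \simeq \Tot((\ind_{\sC}\oblv_{\sC})^{\bullet+1}d)$ available for any $d \in \sC^{\ren}$ by Proposition \ref{comon}, together with t-exactness of $\ind_{\sC}$, I would write each term of the resolution as a colimit of $\ind_{\sC}$ of these generators, transporting generation to $\sC^{\geq 0}$ up to the $\DD(S)$-action on the $\tau^{\geq 0}c$; right-completeness and shifts then extend to generation of all of $\sC^{\ren}$ by the $\tau^{\geq n}c$. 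The main obstacle in this plan is step (b): specifically, proving that $\IndCoh(S) \underset{\DD(S)}{\otimes} \sC$ is almost compactly generated so that Proposition \ref{rencrys} applies to it, and verifying that the resulting pseudo-tensor equivalence on renormalizations does intertwine the relevant $\oblv$-functors. The coherence input needed in step (a) is part of the same package, so establishing this compatibility is really the technical heart of the argument.
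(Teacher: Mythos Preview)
Your overall architecture matches the paper's: the proposition is reduced to the identification
\[
\IndCoh(S)\underset{\DD(S)}{\otimes}\sC^{\ren}\ \simeq\ \bigl(\IndCoh(S)\underset{\DD(S)}{\otimes}\sC\bigr)^{\ren},
\]
and you correctly flag this as the technical heart. However, two of your steps contain genuine gaps.

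\textbf{Step (a).} Your application of Lemma \ref{aclem} to $\oblv_{\sC}$ fails: $\oblv_S:\DD(S)\to\IndCoh(S)$ does not preserve compact objects (already for $S$ a smooth affine curve, a coherent $D$-module is not perfect over $\sO_S$), so there is no continuous right adjoint ``$\coind_S$'' to base-change. Lemma \ref{aclem} is available only for $\ind_{\sC}$, not for $\oblv_{\sC}$. In fact step (a) is unnecessary: $\tau^{\geq n}c$ lies in $\sC^+\simeq(\sC^{\ren})^+$ tautologically, and ULA-ness is a condition on $\oblv_{\sC^{\ren}}(\tau^{\geq n}c)$, not on compactness of $\tau^{\geq n}c$ itself. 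The coherence of $\IndCoh(S)\otimes_{\DD(S)}\sC$ that you need is obtained in the paper by running Lemma \ref{aclem} in the \emph{other} direction: for $d$ compact in $(\IndCoh(S)\otimes_{\DD(S)}\sC)^{\geq 0}$, the object $\ind_{\sC}(d)$ is compact in $\sC^{\geq 0}$, hence almost compact by coherence of $\sC$, and then Lemma \ref{aclem} applied to $F=\ind_{\sC}$ (which \emph{does} have the continuous right adjoint $\oblv_{\sC}$) shows $d$ is almost compact.

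\textbf{Step (b).} Proposition \ref{rencrys} only equips $\sC^{\ren}$ with a $\DD(S)$-module structure; it does \emph{not} produce the displayed equivalence. The pseudo-tensor functoriality of (\ref{renfunctorial}) applied to the binary morphism $\IndCoh(S)\otimes_{\DD(S)}\sC\to\IndCoh(S)\otimes_{\DD(S)}\sC$ yields at best a comparison map, with no reason to be an equivalence. The paper supplies this as a separate lemma: one shows that both $\ind_{\sC^{\ren}}$ and the renormalized functor $\ind_{\sC}^{\ren}:(\IndCoh(S)\otimes_{\DD(S)}\sC)^{\ren}\to\sC^{\ren}$ are comonadic over $\sC^{\ren}$, by rerunning the effective-monad argument of Proposition \ref{comon} (the action of $\End_{\DD(S)}(\IndCoh(S))$ renormalizes, and the algebra $\oblv_S\circ\ind_S$ still acts and is still effective). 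The two comonads then agree on eventually coconnective objects, hence everywhere. This argument is not a formality and is precisely what is missing from your outline.
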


This proposition is immediate from the following lemma.

\begin{lemma}

Let $\sC$ be a $\DD(S)$-module category equipped with compatible t-structure, and suppose that $\sC$ is almost compactly generated. Then $\IndCoh(S) \otimes_{\DD(S)} \sC$ is coherent.

If we assume in addition that $(\IndCoh(S) \otimes_{\DD(S)} \sC)^{\geq 0}$ is compactly generated (e.g. if $\sC$ is almost ULA generated), it follows that $\IndCoh(S) \otimes_{\DD(S)} \sC$ is almost compactly generated. In that case, the functor \[ \IndCoh(S) \underset{\DD(S)}{\otimes} \sC^{\ren} \longrightarrow \IndCoh(S) \underset{\DD(S)}{\otimes} \sC \] induced by $\sC^{\ren} \to \sC$ factors uniquely through a t-exact equivalence \[ \IndCoh(S) \underset{\DD(S)}{\otimes} \sC^{\ren} \tilde{\longrightarrow} (\IndCoh(S) \underset{\DD(S)}{\otimes} \sC)^{\ren}. \] 

\end{lemma}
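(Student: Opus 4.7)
The lemma makes three assertions: (a) $\IndCoh(S) \otimes_{\DD(S)} \sC$ is coherent; (b) under the added hypothesis that the connective subcategory is compactly generated, it is almost compactly generated; and (c) the functor $\IndCoh(S) \otimes_{\DD(S)} \sC^{\ren} \to \IndCoh(S) \otimes_{\DD(S)} \sC$ factors uniquely through a t-exact equivalence with $(\IndCoh(S) \otimes_{\DD(S)} \sC)^{\ren}$. The workhorse throughout is the adjunction $\ind_{\sC} \dashv \oblv_{\sC}$ of Proposition \ref{comon}, whose members are both t-exact and conservative, with $\oblv_{\sC}$ continuous.

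For (a), I would first verify two structural properties of the t-structure on $\IndCoh(S) \otimes_{\DD(S)} \sC$: right completeness and compatibility with filtered colimits. Both reduce to the observation that $y$ in $\IndCoh(S) \otimes_{\DD(S)} \sC$ is connective (resp.\ coconnective) iff $\ind_{\sC}(y)$ is, which is immediate from the t-exactness and conservativity of $\ind_{\sC}$. Coherence then amounts to almost compact objects being stable under truncation. Given an almost compact $c$, I note that $\ind_{\sC}(c)$ is almost compact in $\sC$ --- indeed, $\ind_{\sC}$ is t-exact, and its restriction to each bounded-below subcategory preserves compacts (being left adjoint to the continuous $\oblv_{\sC}$). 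Coherence of $\sC$ then yields that the bounded truncation $\tau^{[k,n]}(\ind_{\sC}(c)) = \ind_{\sC}(\tau^{[k,n]} c)$ is coherent in $\sC$; since $\tau^{[k,n]} c$ lies in $(\IndCoh(S) \otimes_{\DD(S)} \sC)^+$, Lemma \ref{aclem} applied to $\ind_{\sC}$ transports almost compactness back to $\tau^{[k,n]} c$, establishing the defining condition for $\tau^{\leq n} c$ to be almost compact.

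Assertion (b) is then a direct verification of the defining conditions of almost compact generation: coherence is (a), the structural t-structure properties were established in passing, and compact generation of the connective subcategory is the added hypothesis.

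For (c), I would invoke the characterization of canonical renormalizations from Proposition \ref{operadequiv}: it suffices to show $\IndCoh(S) \otimes_{\DD(S)} \sC^{\ren} \to \IndCoh(S) \otimes_{\DD(S)} \sC$ lies in $(\DGCat_{\tstr}^{\Delta^1})^{\otimes}_{\ren}$, which simultaneously yields the factorization, its uniqueness, and the fact that it is an equivalence. The four conditions to check are t-exactness, almost compact generation of both sides, compact generation of the source by its coherent objects, and equivalence on $+$ parts. The first three follow by applying (a)--(b) to $\sC^{\ren}$ (which is almost compactly generated by Proposition \ref{operadequiv}), combined with the observation that compact objects in $\IndCoh(S) \otimes_{\DD(S)} \sC^{\ren}$ are coherent: indeed $\ind_{\sC^{\ren}}$ preserves compacts (having continuous right adjoint), sending them to coherent objects of $\sC^{\ren}$, and Lemma \ref{aclem} then forces the original object to be almost compact and bounded, hence coherent. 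The main obstacle is the equivalence on $+$ parts. Here I would argue via comonadicity: for any $\DD(S)$-module $\sD$ with compatible t-structure, Proposition \ref{comon} identifies $\IndCoh(S) \otimes_{\DD(S)} \sD$ with comodules in $\sD$ over the comonad $C_{\sD} = \ind_{\sD} \circ \oblv_{\sD}$, and the $+$ part of the tensor product corresponds to comodules whose underlying $\sD$-object lies in $\sD^+$ (again using t-exactness and conservativity of $\ind_{\sD}$). The $\DD(S)$-linearity of $\sC^{\ren} \to \sC$, ensured by Proposition \ref{rencrys}, guarantees that this functor intertwines the comonads $C_{\sC^{\ren}}$ and $C_{\sC}$, both being built from the same $\DD(S)$-module datum. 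Combined with the t-exact equivalence $\sC^{\ren,+} \simeq \sC^+$, this yields the desired equivalence on $+$ parts.
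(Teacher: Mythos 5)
Your treatment of the first two assertions is essentially sound, if a bit more circuitous than necessary for coherence: the paper simply shows a compact $c \in (\IndCoh(S) \otimes_{\DD(S)} \sC)^{\geq 0}$ is almost compact by pushing forward along $\ind_{\sC}$ (compact in $\sC^{\geq 0}$, hence almost compact by coherence of $\sC$) and pulling back via Lemma \ref{aclem}; your detour through $\tau^{[k,n]}$ verifies the ``truncations of almost compacts are almost compact'' characterization instead, which is equivalent but takes an extra step. Your identification of the $+$-part of $\IndCoh(S) \otimes_{\DD(S)} \sD$ with comodules in $\sD^+$ is morally the paper's argument; the paper phrases it as saying $\ind_{\sC^{\ren}}$ and $\ind_{\sC}$ are comonadic on eventually coconnective parts and uses $(\sC^{\ren})^+ \simeq \sC^+$ to conclude the $+$-parts of the tensor products agree.

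However, your route for part (c) has a genuine gap. You invoke Proposition \ref{operadequiv} and claim it suffices to verify membership of $\IndCoh(S) \otimes_{\DD(S)} \sC^{\ren} \to \IndCoh(S) \otimes_{\DD(S)} \sC$ in $(\DGCat_{\tstr}^{\Delta^1})^{\otimes}_{\ren}$. Condition (iii) there demands that $\IndCoh(S) \otimes_{\DD(S)} \sC^{\ren}$ be \emph{compactly generated} by its coherent objects. You establish only that compact objects of $\IndCoh(S) \otimes_{\DD(S)} \sC^{\ren}$ \emph{are} coherent (via $\ind_{\sC^{\ren}}$ and Lemma \ref{aclem}), which is the easier implication; you never show that there are enough of them, i.e.\ that the category is compactly generated at all. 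This is not automatic: $\oblv_{\sC^{\ren}}$ does not preserve compacts in general (that would require a further continuous right adjoint), so the compact generators of $\sC^{\ren}$ do not obviously give compact generators of the relative tensor product. Saying ``(a)--(b) applied to $\sC^{\ren}$'' only gives coherence and compact generation of the connective part $(\IndCoh(S) \otimes_{\DD(S)} \sC^{\ren})^{\geq 0}$, which is strictly weaker. In fact, the conclusion that $\IndCoh(S) \otimes_{\DD(S)} \sC^{\ren}$ is compactly generated by coherent objects is precisely what the lemma is proving (via the identification with $(\IndCoh(S) \otimes_{\DD(S)} \sC)^{\ren}$), so assuming it is circular.

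The paper avoids this entirely. After establishing t-exactness and the $+$-part equivalence, it constructs the factoring functor $(\IndCoh(S) \otimes_{\DD(S)} \sC)^{\ren} \to \IndCoh(S) \otimes_{\DD(S)} \sC^{\ren}$ and reduces the equivalence statement to showing that $\ind_{\sC}^{\ren} \colon (\IndCoh(S) \otimes_{\DD(S)} \sC)^{\ren} \to \sC^{\ren}$ is comonadic. This is proved via the effectiveness criterion: the monad $\oblv_S \circ \ind_S$ lives in $\End_{\DD(S)}(\IndCoh(S))$, which is itself almost compactly generated, its action renormalizes functorially, and effectiveness on $(\IndCoh(S) \otimes_{\DD(S)} \sC)^{\ren}$ follows from the same filtration argument as Proposition \ref{comon}. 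Since $\ind_{\sC^{\ren}}$ is also comonadic with the same comonad (by the commuting triangles, which one checks on eventually coconnective objects where both sides agree), the two categories are identified. If you want to salvage your approach, you would need to independently establish compact generation of $\IndCoh(S) \otimes_{\DD(S)} \sC^{\ren}$ --- for instance by exploiting the rigidity of $\DD(S)$ and compact generation of $\sC^{\ren}$ and $\IndCoh(S)$ --- but that is a separate argument not present in your sketch.
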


\begin{proof}

For the first claim, it suffices to show that a compact object $c$ in $(\IndCoh(S) \otimes_{\DD(S)} \sC)^{\geq 0}$ is almost compact (equivalently, coherent). Since $\ind_{\sC}$ is t-exact with continuous and t-exact right adjoint $\oblv_{\sC}$, the object $\ind_{\sC}(c)$ is compact in $\sC^{\geq 0}$ and hence almost compact in $\sC$ by coherence of the t-structure on the latter category. Now Lemma \ref{aclem} implies that $c$ is almost compact in $\IndCoh(S) \otimes_{\DD(S)} \sC$ as desired.

Next, consider the functor \[ \IndCoh(S) \underset{\DD(S)}{\otimes} \sC^{\ren} \longrightarrow \IndCoh(S) \underset{\DD(S)}{\otimes} \sC \] induced by $\sC^{\ren} \to \sC$. This fits into the commutative square \[
\begin{tikzcd}
\IndCoh(S) \otimes_{\DD(S)} \sC^{\ren} \arrow{r} \arrow{d}{\ind_{\sC^{\ren}}} & \IndCoh(S) \otimes_{\DD(S)} \sC \arrow{d}{\ind_{\sC}} \\
\sC^{\ren} \arrow{r} & \sC
\end{tikzcd} \]
and is therefore t-exact, since the vertical functors are t-exact and conservative. For the same reason, both \[ (\IndCoh(S) \underset{\DD(S)}{\otimes} \sC^{\ren})^+ \xrightarrow{\ind_{\sC^{\ren}}} (\sC^{\ren})^+ \] and \[ (\IndCoh(S) \underset{\DD(S)}{\otimes} \sC)^+ \xrightarrow{\ind_{\sC}} \sC^+ \] are comonadic (cf. the Proof of Lemma 3.7.1 in \cite{R5}). Since $(\sC^{\ren})^+ \tilde{\to} \sC^+$, we deduce that the functor under consideration induces an equivalence \[ (\IndCoh(S) \underset{\DD(S)}{\otimes} \sC^{\ren})^+ \tilde{\longrightarrow} (\IndCoh(S) \underset{\DD(S)}{\otimes} \sC)^+. \]

It folows that the functor \[ (\IndCoh(S) \underset{\DD(S)}{\otimes} \sC)^{\ren} \longrightarrow \IndCoh(S) \underset{\DD(S)}{\otimes} \sC \] factors uniquely through a t-exact continuous functor \[ (\IndCoh(S) \underset{\DD(S)}{\otimes} \sC)^{\ren} \longrightarrow \IndCoh(S) \underset{\DD(S)}{\otimes} \sC^{\ren}, \] which we claim is an equivalence. Namely, observe that the triangles \[
\begin{tikzcd}
(\IndCoh(S) \otimes_{\DD(S)} \sC)^{\ren} \arrow{r} \arrow{dr}{\ind_{\sC}^{\ren}} & \IndCoh(S) \otimes_{\DD(S)} \sC^{\ren} \arrow{d}{\ind_{\sC^{\ren}}} \\
& \sC^{\ren}
\end{tikzcd} \]
and \[
\begin{tikzcd}
(\IndCoh(S) \otimes_{\DD(S)} \sC)^{\ren} \arrow{r} & \IndCoh(S) \otimes_{\DD(S)} \sC^{\ren} \\
& \sC^{\ren} \arrow{ul}{\oblv_{\sC}^{\ren}} \arrow{u}{\oblv_{\sC^{\ren}}}
\end{tikzcd} \]
commute, since they commute after restricting to eventually coconnective objects, and the categories $(\IndCoh(S) \otimes_{\DD(S)} \sC)^{\ren}$ and $\sC^{\ren}$ are compactly generated by coherent objects. Since the functor $\ind_{\sC^{\ren}}$ is comonadic by Proposition \ref{comon}, it suffices to show that $\ind_{\sC}^{\ren}$ is comonadic.

For this, observe that $\End_{\DD(S)}(\IndCoh(S))$ is almost compactly generated with respect to the t-structure defined in the proof of Proposition \ref{comon}, and that \[ \End_{\DD(S)}(\IndCoh(S))^{\ren} \tilde{\longrightarrow} \End_{\DD(S)}(\IndCoh(S)). \] Namely, we have an equivalence \[ \End_{\DD(S)}(\IndCoh(S)) \tilde{\longrightarrow} \IndCoh((S^2)^{\swedge}_{\Delta}) \] which is t-exact up to shift, and the t-structure on the right side has these properties. The compatibility of the $D(S)$-action with the t-structure on $\sC$ implies that the action of $\End_{\DD(S)}(\IndCoh(S))$ on $\IndCoh(S) \otimes_{\DD(S)} \sC$ takes place in $(\DGCat_{\tstr})^{\otimes}_{\fha}$, and hence (\ref{renfunctorial}) induces an action on the renormalization $(\IndCoh(S) \otimes_{\DD(S)} \sC)^{\ren}$. The monad induced by the adjunction \[ \ind_{\sC}^{\ren} : (\IndCoh(S) \underset{\DD(S)}{\otimes} \sC)^{\ren} \rightleftarrows \sC^{\ren} : \oblv_{\sC}^{\ren} \] is given by the action of the associative algebra $\oblv_S \circ \ind_S$ in $\End_{\DD(S)}(\IndCoh(S))$, since this is the case on eventually coconnective objects and $(\IndCoh(S) \otimes_{\DD(S)} \sC)^{\ren}$ is compactly generated by coherent objects. As in Proposition \ref{comon}, it follows that this monad is effective and hence that $\ind_{\sC}^{\ren}$ is comonadic.

\end{proof}

\subsection{Change of base: open embedding} Suppose that $j : U \to S$ is an open embedding and $\sC$ is a $\DD(S)$-module category with compatible t-structure. Put \[ \sC_U := \DD(U) \underset{\DD(S)}{\otimes} \sC. \] Then we have an adjunction \[ j^* \otimes \id_{\sC} : \sC = \DD(S) \underset{\DD(S)}{\otimes} \sC \rightleftarrows \sC_U : j_* \otimes \id_{\sC} \] where the right adjoint is fully faithful, and in particular $\sC_U$ admits a unique t-structure such that $j_* \otimes \id_{\sC}$ is left t-exact, or equivalently such that $j^* \otimes \id_{\sC}$ is right t-exact. Here we are abusing notation slightly by writing $j_*$ rather than $j_{\dR,*}$, but since $j$ is an open embedding, this should not cause any confusion.

\begin{lemma}

\label{openresexact}

The functor $j^* \otimes \id_{\sC} : \sC \to \sC_U$ is t-exact. If $j$ is affine, then the right adjoint $j_* \otimes \id_{\sC} : \sC_U \to \sC$ is t-exact.

\end{lemma}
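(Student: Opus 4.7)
I would organize the proof around the observation that both claims reduce to t-exactness properties of the endofunctor $(j_* j^*) \otimes_{\DD(S)} \id_\sC : \sC \to \sC$. Since $j_* \otimes_{\DD(S)} \id_\sC$ is fully faithful (because $j_* : \DD(U) \to \DD(S)$ is) and left t-exact by construction, the first claim (left t-exactness of $j^* \otimes_{\DD(S)} \id_\sC$) is equivalent to left t-exactness of the composite $(j_* j^*) \otimes_{\DD(S)} \id_\sC$. Granting the first claim, for any $x \in (\sC_U)^{\leq 0}$ I can use that $j^* \otimes_{\DD(S)} \id_\sC$ is t-exact and essentially surjective to write $x = (j^* \otimes_{\DD(S)} \id_\sC)(c)$ with $c \in \sC^{\leq 0}$; then $(j_* \otimes_{\DD(S)} \id_\sC)(x) = (j_* j^*) \otimes_{\DD(S)} \id_\sC(c)$, so the second claim reduces to right t-exactness of $(j_* j^*) \otimes_{\DD(S)} \id_\sC$ when $j$ is affine.

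To control $(j_* j^*) \otimes_{\DD(S)} \id_\sC$ on $\sC$, the plan is to transfer the question to $\DD(S) \otimes \sC$ via the coaction $\coact_\sC : \sC \to \DD(S) \otimes \sC$. This coaction is identified with $\Delta_{\dR,*} \otimes_{\DD(S)} \id_\sC$, where $\Delta_{\dR,*} : \DD(S) \to \DD(S) \otimes \DD(S)$ is fully faithful by Kashiwara's lemma, with $\DD(S)$-linear retraction $\Delta^!$; tensoring over $\DD(S)$ preserves split fully faithfulness, so $\coact_\sC$ is itself fully faithful. Combined with t-exactness of $\coact_\sC$ (which is part of the compatibility hypothesis), this yields that $\coact_\sC$ is \emph{t-conservative}: for any $y \in \sC$, one has $y \in \sC^{\leq 0}$ (respectively $\sC^{\geq 0}$) if and only if $\coact_\sC(y) \in (\DD(S) \otimes \sC)^{\leq 0}$ (respectively $(\DD(S) \otimes \sC)^{\geq 0}$). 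Indeed, if $\coact_\sC(y)$ is coconnective then $\coact_\sC(\tau^{\geq 1} y) = \tau^{\geq 1}\coact_\sC(y) = 0$, so $\tau^{\geq 1} y = 0$ by fully faithfulness; the other direction is analogous.

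The final ingredient is naturality: for any $\DD(S)$-linear endofunctor $F$ of $\DD(S)$, the square
\[
\begin{tikzcd}
\sC \arrow{r}{\coact_\sC} \arrow{d}{F \otimes_{\DD(S)} \id_\sC} & \DD(S) \otimes \sC \arrow{d}{F \otimes \id_\sC} \\
\sC \arrow{r}{\coact_\sC} & \DD(S) \otimes \sC
\end{tikzcd}
\]
commutes. This reduces to checking that $\Delta_{\dR,*} \circ F = (F \otimes \id_{\DD(S)}) \circ \Delta_{\dR,*}$, which, writing $F(\sF) = \sF \otimes^! \Psi$ for $\Psi := F(\omega_S)$, follows from the projection formula $\Delta_{\dR,*}(\sF \otimes^! \Psi) = \Delta_{\dR,*}\sF \otimes^! (\Psi \boxtimes \omega_S)$. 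Taking $F = j_* j^*$ and combining with t-conservativity of $\coact_\sC$, the desired t-exactness of $(j_* j^*) \otimes_{\DD(S)} \id_\sC$ on $\sC$ reduces to the corresponding t-exactness of $(j_* j^*) \otimes \id_\sC$ on $\DD(S) \otimes \sC$, which by Lemma \ref{tensorexact} reduces in turn to the corresponding property of $j_* j^* : \DD(S) \to \DD(S)$ itself: $j_* j^*$ is always left t-exact (composite of the t-exact $j^*$ and the left t-exact $j_*$), and is t-exact when $j$ is affine (since then $j_*$ is t-exact). The main technical point circumvented by this coaction-based strategy is that the action $\act_\sC$ is only left t-exact in general, so it cannot be used symmetrically to propagate the information back from $\DD(S) \otimes \sC$ to $\sC$; the fully faithful coaction, coming from Kashiwara, substitutes for it.
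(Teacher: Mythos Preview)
Your proof is correct and follows essentially the same approach as the paper: reduce both claims to t-exactness properties of the endofunctor $(j_*j^*)\otimes_{\DD(S)}\id_\sC$ on $\sC$, then use the fully faithful t-exact coaction $\coact_\sC$ to transfer the question to $(j_*j^*)\otimes\id_\sC$ on $\DD(S)\otimes\sC$, where it follows from Lemma~\ref{tensorexact} and the classical properties of $j_*j^*$ on $\DD(S)$. You are simply more explicit than the paper about why $\coact_\sC$ is fully faithful (Kashiwara plus the $\DD(S)$-linear retraction $\Delta^!$), why the square commutes (projection formula), and why full faithfulness plus t-exactness of $\coact_\sC$ lets you reflect (co)connectivity back to $\sC$.
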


\begin{proof}

To see that $j^* \otimes \id_{\sC}$ is t-exact, it suffices to show it is left t-exact, which is equivalent to left t-exactness of the endofunctor \[ (j_*j^*) \otimes \id_{\sC} : \sC \longrightarrow \sC_U \longrightarrow \sC. \] Similarly, if $(j_*j^*) \otimes \id_{\sC}$ is t-exact, then $j_* \otimes \id_{\sC}$ is t-exact. We have a commutative square \[
\begin{tikzcd}[row sep=large, column sep=large]
\sC \arrow{r}{\coact_{\sC}} \arrow{d}{(j_*j^*) \otimes \id_{\sC}} & \DD(S) \otimes \sC \arrow{d}{(j_*j^*) \otimes \id_{\sC}} \\
\sC \arrow{r}{\coact_{\sC}} & \DD(S) \otimes \sC
\end{tikzcd} \]
where the horizontal functors are fully faithful and t-exact. By Lemma \ref{tensorexact}, the right vertical functor is left t-exact, respectively right t-exact if the functor $j_*j^*$ is such. The lemma follows.

\end{proof}

\begin{proposition}

\label{opencoh}

If $\sC$ is almost ULA generated over $S$, then $\sC_U$ is almost ULA generated over $U$, and the functor \[ (\sC^{\ren})_U \longrightarrow \sC_U \] induced by $\sC^{\ren} \to \sC$ factors uniquely through a t-exact equivalence \[ (\sC^{\ren})_U \tilde{\longrightarrow} (\sC_U)^{\ren}. \]

\end{proposition}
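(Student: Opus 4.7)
The plan is to first establish almost ULA generation of $\sC_U$ over $U$, then construct the comparison functor via the universal property of renormalization, and finally check the equivalence on $+$-parts.

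First I identify $\IndCoh(U) \otimes_{\DD(U)} \sC_U \simeq \IndCoh(U) \otimes_{\DD(S)} \sC$, so that the natural functor from $\IndCoh(S) \otimes_{\DD(S)} \sC$ is an ``open restriction'' along $j$. Using Lemma \ref{openresexact}, the adjunction $j^* \otimes \id_{\sC} \dashv j_* \otimes \id_{\sC}$ is t-exact (or at worst $j_* \otimes \id$ is left t-exact), and $j^* \otimes \id$ preserves compact objects, so $\sC_U^{\geq 0}$ inherits compact generation from $\sC^{\geq 0}$. Coherence of $\sC_U$ follows from coherence of $\sC$ using Lemma \ref{aclem} with the conservative right adjoint $j_* \otimes \id$ on $+$-parts.

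Next I verify that if $c \in \sC$ is almost ULA, then $(j^* \otimes \id_\sC)(c)$ is almost ULA in $\sC_U$. By base change, $\oblv_{\sC_U} \circ (j^* \otimes \id_{\sC}) \simeq (j^* \otimes \id_{\sC}) \circ \oblv_{\sC}$ where the outer $j^*$ refers to the open restriction $\IndCoh(S) \otimes_{\DD(S)} \sC \to \IndCoh(U) \otimes_{\DD(S)} \sC$; so it suffices to check this open restriction preserves almost compact objects. This reduces to the fact that $j^*: \IndCoh(S) \to \IndCoh(U)$ is t-exact with continuous right adjoint $j_*$, applied in the $\sC$-tensored setting via Lemma \ref{tensorexact}. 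The generation condition in Definition \ref{d:aula} then follows because the $\geq 0$-part of $\IndCoh(U) \otimes_{\DD(U)} \sC_U$ is compactly generated by $\tau^{\geq 0}(j^*\sF \otimes \oblv_{\sC_U}((j^* \otimes \id)(c)))$ for $\sF$ perfect on $S$ and $c$ almost ULA in $\sC$, using the corresponding generation statement for $\sC$.

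For the comparison functor, note that $\sC^{\ren}$ is compactly generated by $\sC^{\ren,\coh}$, so $(\sC^{\ren})_U$ is compactly generated by $(j^* \otimes \id)(x)$ for $x$ coherent in $\sC^{\ren}$. Each such $x$ lies in $\sC^{\ren,+} \simeq \sC^+$, hence its image in $\sC_U$ lies in $\sC_U^+$. The functor $(\sC^{\ren})_U \to \sC_U$ is t-exact by Lemma \ref{openresexact}, and the preceding observation shows it sends compact generators to eventually coconnective objects; by Lemma \ref{fhaext} it factors uniquely as a t-exact continuous functor $(\sC^{\ren})_U \to (\sC_U)^{\ren}$. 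To see this is an equivalence, it suffices to check it induces an equivalence on $+$-parts, since both sides are compactly generated by coherent objects (using the result of Step 1 that $\sC_U$ is almost ULA generated, so its renormalization is ULA generated by Proposition \ref{ularen}). The right-hand side's $+$-part is $\sC_U^+$; for the left-hand side I use $\sC^{\ren,+} \tilde{\to} \sC^+$ together with the compatibility of open restriction with $+$-parts, which follows from the explicit commutative square involving the coaction in the proof of Lemma \ref{openresexact}.

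The main obstacle will be verifying that open restriction preserves almost compact objects in the $\sC$-tensored setting $\IndCoh(S) \otimes_{\DD(S)} \sC \to \IndCoh(U) \otimes_{\DD(S)} \sC$. While the unparameterized statement for $\IndCoh$ is standard, transporting it across the tensor product requires careful use of Lemma \ref{tensorexact} (for t-exactness of $j_* \otimes \id$ on the $\geq n$-parts) and possibly reducing to the affine case by covering $U$ to ensure continuity of the right adjoint on truncations. Once this technical point is secured, the remaining steps are formal manipulations with the universal properties assembled in Section \ref{s:renorm}.
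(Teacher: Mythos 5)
Your high-level strategy (establish almost ULA generation of $\sC_U$, then compare renormalizations) matches the paper's, but the coherence step has a real gap. You invoke Lemma~\ref{aclem} with ``the conservative right adjoint $j_* \otimes \id$,'' but that lemma requires a functor $F : \sC_U \to \sD$ that is \emph{t-exact}, \emph{has a continuous right adjoint}, and is conservative on $\sC_U^+$, so that one can detect almost compactness downstream. The candidate $j_* \otimes \id_{\sC}$ fails on two counts: it is only left t-exact in general (t-exact only for $j$ affine), and it has a \emph{left} adjoint rather than a continuous right adjoint. The paper sidesteps this entirely: it shows that any compact object $c$ of $(\sC_U)^{\geq 0}$ is a retract of $(j^* \otimes \id_{\sC})(c_0)$ for some compact $c_0 \in \sC^{\geq 0}$ (write $(j_* \otimes \id_{\sC})(c)$ as a filtered colimit of compacts and use compactness of $c$ and full faithfulness of $j_* \otimes \id_{\sC}$), and then uses that $j^* \otimes \id_{\sC}$ preserves almost compact objects because it is t-exact with a continuous right adjoint. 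Without this retract argument, coherence of $\sC_U$ does not follow.

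Two smaller issues. First, Lemma~\ref{fhaext} does not directly produce the factorization $(\sC^{\ren})_U \to (\sC_U)^{\ren}$; it concerns extending functors from $+$-parts of a category that already equals its own renormalization, which is exactly what you would need to verify for $(\sC^{\ren})_U$ before invoking it. Second, the paper actually builds the comparison map in the \emph{opposite} direction: it renormalizes the $j^* \dashv j_*$ adjunction to get a monadic adjunction $\sC^{\ren} \rightleftarrows (\sC_U)^{\ren}$, shows the right adjoint factors through $j_* \otimes \id_{\sC^{\ren}} : (\sC^{\ren})_U \to \sC^{\ren}$, and then identifies $(\sC_U)^{\ren} \to (\sC^{\ren})_U$ with an isomorphism of left t-exact monads on $\sC^{\ren}$ by checking on $\sC^+$. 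Your route of constructing $(\sC^{\ren})_U \to (\sC_U)^{\ren}$ directly and checking equivalence on $+$-parts is in principle workable, but it presupposes that both sides are compactly generated by their coherent objects, which in turn rests on the coherence step you have not yet closed. The paper's monad comparison avoids having to establish this upfront.
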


\begin{proof}

Suppose that $\sC$ is almost compactly generated. Since $j_* \otimes \id_{\sC} : \sC_U \to \sC$ is left t-exact and conservative, the t-structure on $\sC_U$ is compatible with filtered colimits. For the same reason, the t-structure on $\sC$ is right separated and hence right complete. To see that $(\sC_U)^{\geq 0}$ is compactly generated, consider the adjunction \[ j^* \otimes \id_{\sC} : \sC^{\geq 0} \rightleftarrows (\sC_U)^{\geq 0} : j_* \otimes \id_{\sC}. \] Since $j_* \otimes \id_{\sC}$ is continuous and fully faithful, hence conservative, it follows that $(\sC_U)^{\geq 0}$ is compactly generated.

To see that $\sC_U$ is almost compactly generated, it remains to show that the t-structure is coherent. We claim that any compact object $c$ in $(\sC_U)^{\geq 0}$ is a direct summand of an object $(j^* \otimes \id_{\sC})(c_0)$ where $c_0$ is a compact object in $\sC^{\geq 0}$. Since $\sC$ is almost compactly generated and $j^* \otimes \id_{\sC}$ preserves almost compact objects, it will follow that $c$ is almost compact in $\sC_U$. To prove the claim, write \[ (j_* \otimes \id_{\sC})(c) \cong \underset{\alpha}{\colim} \ c_{\alpha} \] as a filtered colimit of compact objects $c_{\alpha}$ in $\sC^{\geq 0}$. Then we have \[ c \cong \underset{\alpha}{\colim} \ (j^* \otimes \id_{\sC})(c_{\alpha}) \] in $(\sC_U)^{\geq 0}$. Compactness of $c$ in $(\sC_U)^{\geq 0}$ now implies that $c$ is a direct summand of $(j^* \otimes \id_{\sC})(c_{\alpha})$ for some $\alpha$.

Renormalization of the above adjunction yields a $\DD(S)$-linear adjunction \[ (j^* \otimes \id_{\sC})^{\ren} : \sC^{\ren} \rightleftarrows (\sC_U)^{\ren} : (j_* \otimes \id_{\sC})^{\ren}. \] The right adjoint is continuous and fully faithful, so in particular this adjunction is monadic. Note that $(j_* \otimes \id_{\sC})^{\ren}$ factors through \[ j_* \otimes \id_{\sC^{\ren}} : (\sC^{\ren})_U \longrightarrow \sC^{\ren}, \] since this is the case after restricting to $(\sC_U)^+$. The resulting functor \[ (\sC_U)^{\ren} \longrightarrow (\sC^{\ren})_U \] corresponds to a morphism of (left t-exact) monads on $\sC^{\ren}$, which is an isomorphism because it is such after restricting to $\sC^+$.

The assertion that $\sC_U$ is almost ULA generated over $U$ follows readily from the above.

\end{proof}

\subsection{Change of base: closed embedding} Suppose that we are given a smooth closed subvariety $i : T \to S$. Writing \[ \sC_T := \DD(T) \underset{\DD(S)}{\otimes} \sC \] for any $\DD(S)$-module category $\sC$, note that we have an adjunction \[  i_{\dR,*} \otimes \id_{\sC} : \sC_T \rightleftarrows \sC : i^! \otimes \id_{\sC}, \] with the left adjoint being fully faithful.

\begin{lemma}

The functor $i^! \otimes \id_{\sC} : \sC \to \sC_T$ sends objects ULA over $S$ to objects ULA over $T$. If $\sC$ is ULA generated over $S$, the $\sC_T$ is ULA generated over $T$ by objects of the form $(i^! \otimes \id_{\sC})(c)$, where $c$ is ULA in $\sC$.

\label{shriekresula}

\end{lemma}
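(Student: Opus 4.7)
The plan is to reduce both parts to the corresponding statements for $i^{!}: \IndCoh(S) \to \IndCoh(T)$ via the forgetful functors $\oblv_{\sC}$ and $\oblv_{\sC_T}$. The key commutative square, obtained by tensoring the base-change identity $\oblv_T \circ i^{!}_{\DD} \cong i^{!}_{\IndCoh} \circ \oblv_S$ with $\sC$ over $\DD(S)$ and using the canonical identification $\IndCoh(T) \otimes_{\DD(T)} \sC_T \cong \IndCoh(T) \otimes_{\DD(S)} \sC$, takes the form
\[
\begin{tikzcd}
\sC \arrow{r}{i^{!} \otimes \id_{\sC}} \arrow{d}{\oblv_{\sC}} & \sC_T \arrow{d}{\oblv_{\sC_T}} \\
\IndCoh(S) \underset{\DD(S)}{\otimes} \sC \arrow{r}{i^{!} \otimes \id_{\sC}} & \IndCoh(T) \underset{\DD(S)}{\otimes} \sC.
\end{tikzcd}
\]
Since $c \in \sC$ is ULA over $S$ iff $\oblv_{\sC}(c)$ is compact in the bottom-left corner, part (1) reduces to proving that the lower horizontal functor preserves compact objects.

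Because $i: T \to S$ is a smooth closed embedding, the functor $i^{!} : \IndCoh(S) \to \IndCoh(T)$ agrees with $i^{*}$ up to a cohomological shift and a tensor by the invertible sheaf $\det(\sN_{T/S})$; in particular it is continuous and admits a continuous right adjoint, given by the analogous twist of $i_{*}^{\IndCoh}$, and so preserves compact objects. This adjunction is $\DD(S)$-linear (automatic on right adjoints thanks to the rigidity of $\DD(S)$ for smooth $S$), so tensoring with $\id_{\sC}$ over $\DD(S)$ yields an adjoint pair between the relative tensor product categories; in particular $i^{!} \otimes_{\DD(S)} \id_{\sC}$ still has a continuous right adjoint, hence preserves compact objects. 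This proves part (1).

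For part (2), we observe that $i^{!} \otimes \id_{\sC}: \sC \to \sC_T$ is both continuous and essentially surjective: continuity follows since it is the tensor over $\DD(S)$ of the continuous functor $i^{!}$ with $\id_{\sC}$, and essential surjectivity follows from the fully faithfulness of $i_{\dR,*} \otimes \id_{\sC}$ (a consequence of fully faithfulness of $i_{\dR,*}$ together with rigidity of $\DD(S)$), which provides the unit equivalence $d \cong (i^{!} \otimes \id_{\sC})((i_{\dR,*} \otimes \id_{\sC})(d))$ for any $d \in \sC_T$. Writing $(i_{\dR,*} \otimes \id_{\sC})(d)$ as a colimit of ULA generators $c_\alpha$ of $\sC$ and applying the continuous functor $i^{!} \otimes \id_{\sC}$ presents $d$ as a colimit of the objects $(i^{!} \otimes \id_{\sC})(c_\alpha)$, each of which is ULA over $T$ by part (1). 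The main technical subtlety lies in ensuring that continuity, right adjoints, and fully faithfulness all survive under $-\otimes_{\DD(S)} \sC$; this rests entirely on the rigidity of $\DD(S)$, which holds because $S$ is smooth.
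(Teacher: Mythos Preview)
Your argument for part (1) is correct and matches the paper's proof: both set up the same commutative square relating $\oblv_{\sC}$, $\oblv_{\sC_T}$, and $i^!$ at the $\IndCoh$ level, and both deduce that the lower horizontal functor preserves compact objects from the fact that $i^!:\IndCoh(S)\to\IndCoh(T)$ agrees with $i^*$ up to a shifted line bundle and hence admits a continuous $\DD(S)$-linear right adjoint.

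For part (2), however, there is a gap. You write ``$(i_{\dR,*}\otimes\id_{\sC})(d)$ as a colimit of ULA generators $c_\alpha$ of $\sC$,'' but the hypothesis that $\sC$ is ULA generated over $S$ does \emph{not} say that ULA objects generate $\sC$ under plain colimits; it says that the objects $\oblv_{\sC}(c)$ for $c$ ULA compactly generate $\IndCoh(S)\otimes_{\DD(S)}\sC$, or equivalently that the ULA objects generate $\sC$ as a $\DD(S)$-module. These are not the same: acting by a noncompact object of $\DD(S)$ on a ULA object need not produce a ULA object (e.g.\ already for $\sC=\DD(S)$, the object $\ind_S(\sO_S)$ is not ULA). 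So an arbitrary object of $\sC$ is only a colimit of objects of the form $M\otimes c$ with $M\in\DD(S)$ and $c$ ULA, not of ULA objects themselves.

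The paper avoids this by running the generation argument directly at the $\IndCoh$ level: the lower horizontal functor $i^!\otimes\id_{\sC}:\IndCoh(S)\otimes_{\DD(S)}\sC\to\IndCoh(T)\otimes_{\DD(T)}\sC_T$ has image generating the target under colimits (because its right adjoint, a twist of $i_*\otimes\id_{\sC}$, is conservative), and the source is by hypothesis compactly generated by the $\oblv_{\sC}(c)$; hence the target is compactly generated by their images, which are exactly $\oblv_{\sC_T}((i^!\otimes\id_{\sC})(c))$. Your argument can be repaired along similar lines: once you note that $i^!\otimes\id_{\sC}:\sC\to\sC_T$ is $\DD(S)$-linear and essentially surjective, the images of ULA objects generate $\sC_T$ as a $\DD(S)$-module, hence as a $\DD(T)$-module, and then one passes to $\IndCoh(T)\otimes_{\DD(T)}\sC_T$ to conclude ULA generation in the required sense.
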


\begin{proof}

Consider the commutative square of symmetric monoidal categories \[
\begin{tikzcd}
\DD(S) \arrow{r}{i^!} \arrow{d}{\oblv_S} & \DD(T) \arrow{d}{\oblv_T} \\
\IndCoh(S) \arrow{r}{i^!} & \IndCoh(T).
\end{tikzcd} \]
This can also be viewed as a commutative square of $\DD(S)$-modules, which we can tensor with $\sC$ to obtain \[
\begin{tikzcd}
\sC \arrow{r}{i^! \otimes \id_{\sC}} \arrow{d}{\oblv_S} &\sC_T \arrow{d}{\oblv_T} \\
\IndCoh(S) \otimes_{\DD(S)} \sC \arrow{r}{i^! \otimes \id_{\sC}} & \IndCoh(T) \otimes_{\DD(T)} \sC_T.
\end{tikzcd} \]
Since $S$ and $T$ are smooth, the functor \[ i_* : \IndCoh(T) \longrightarrow \IndCoh(S) \] admits a left adjoint $i^{\IndCoh,*}$, which agrees with $i^!$ up to tensoring with a cohomologically shifted line bundle. Thus $i^!$ admits a $\DD(S)$-linear right adjoint, which implies that the lower horizontal functor in the above square admits a continuous right adjoint and hence preserves compact objects. It follows immediately that the upper horizontal functor preserves ULA objects.

The second claim follows from the observation that the image of \[ i^! \otimes \id_{\sC} : \IndCoh(S) \otimes_{\DD(S)} \sC \longrightarrow \IndCoh(T) \otimes_{\DD(T)} \sC_T \] generates the target under colimits, which is immediate from the corresponding property of \[ i^! : \IndCoh(S) \longrightarrow \IndCoh(T) \] (this being equivalent to the conservativity of its right adjoint, which is isomorphic to $i_*$ tensored with a cohomologically shifted line bundle).

\end{proof}

\begin{lemma}

Suppose that $\sC$ is equipped with a t-structure compatible with the $\DD(S)$-action. Then the image of the fully faithful functor \[ i_{\dR,*} \otimes \id_{\sC} : \sC_T \longrightarrow \sC \] is stable under truncations, so $\sC_T$ admits a unique t-structure such that $i_{\dR,*} \otimes \id_{\sC}$ is t-exact. Moreover, the subcategory $(\sC_T)^{\heartsuit} \subset \sC^{\heartsuit}$ is stable under taking subobjects, and the t-structure on $\sC_T$ is compatible with the $\DD(T)$-action.

\end{lemma}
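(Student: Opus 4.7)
The plan is to separate the lemma into its three claims and handle each via the recollement structure for the pair $(T, U := S \setminus T)$, where $j : U \to S$ denotes the open embedding. For the first claim, I would observe that base-changing the standard recollement $\DD(T) \hookrightarrow \DD(S) \twoheadrightarrow \DD(U)$ against $\sC$ yields a fiber sequence of endofunctors $(i_{\dR,*}i^!) \otimes \id_{\sC} \to \id_{\sC} \to (j_*j^*) \otimes \id_{\sC}$ on $\sC$. Since $j_* \otimes \id_{\sC}$ is fully faithful, an object $c \in \sC$ lies in the essential image of $i_{\dR,*} \otimes \id_{\sC}$ if and only if $(j^* \otimes \id_{\sC})(c) = 0$. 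By Lemma \ref{openresexact}, $j^* \otimes \id_{\sC}$ is t-exact, so this vanishing is preserved by all truncations; hence the image is closed under truncations, and $\sC_T$ inherits a unique t-structure making $i_{\dR,*} \otimes \id_{\sC}$ t-exact.

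For stability of $(\sC_T)^{\heartsuit} \subset \sC^{\heartsuit}$ under subobjects, let $c \in (\sC_T)^{\heartsuit}$ and let $c' \hookrightarrow c$ be a monomorphism in the abelian category $\sC^{\heartsuit}$. Since $j^* \otimes \id_{\sC}$ is t-exact, it restricts to an exact functor between hearts and in particular preserves monomorphisms, so $(j^* \otimes \id_{\sC})(c') \hookrightarrow (j^* \otimes \id_{\sC})(c) = 0$ forces $c' \in \sC_T$, and then $c' \in (\sC_T)^{\heartsuit}$. The main obstacle is the compatibility of the new t-structure with the $\DD(T)$-action, i.e., t-exactness of $\coact_{\sC_T}$. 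I would prove this by contemplating the commutative square
\[
\begin{tikzcd}
\sC_T \arrow{r}{i_{\dR,*}} \arrow{d}{\coact_{\sC_T}} & \sC \arrow{d}{\coact_{\sC}} \\
\DD(T) \otimes \sC_T \arrow{r}{i_{\dR,*} \otimes i_{\dR,*}} & \DD(S) \otimes \sC,
\end{tikzcd}
\]
whose commutativity follows from functoriality of the coaction along the coalgebra map $i_{\dR,*} : \DD(T) \to \DD(S)$ (coalgebra-ness of $i_{\dR,*}$ comes from the identity $(i \times i) \circ \Delta^T = \Delta^S \circ i$).

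The right vertical is t-exact by hypothesis, and the top horizontal is t-exact by construction; the task is to verify that the bottom horizontal is both t-exact and conservative, since such a functor reflects the t-structure and the conclusion will follow by chasing. For t-exactness, I would apply Lemma \ref{tensorexact} twice: once to $i_{\dR,*} : \DD(T) \to \DD(S)$ tensored with $\sC_T$ (using that both $\DD(T)$ and $\DD(S)$ have compactly generated t-structures), and once to $i_{\dR,*} : \sC_T \to \sC$ tensored with $\DD(S)$ (using that $\DD(S)$ has a compactly generated t-structure). For conservativity, I would note that each $i_{\dR,*}$ is fully faithful with a continuous right adjoint ($i^!$ and $i^! \otimes \id_{\sC}$ respectively, the latter by the very adjunction stated in the excerpt); the tensor product of two fully faithful functors with continuous right adjoints has tensor-product right adjoint whose unit is again an isomorphism, hence is fully faithful and in particular conservative. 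This completes the proof.
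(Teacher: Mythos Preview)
Your proposal is correct and follows essentially the same approach as the paper: both deduce stability under truncations and subobjects from the t-exactness of $j^* \otimes \id_{\sC}$ (Lemma \ref{openresexact}), and both verify compatibility with the $\DD(T)$-action via the same commutative square, establishing t-exactness of the bottom arrow by two applications of Lemma \ref{tensorexact} and conservativity via fully faithfulness of the tensor product of fully faithful functors with continuous right adjoints.
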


\begin{proof}

The stability of $\sC_T$ under truncations and of $(\sC_T)^{\heartsuit}$ under taking subobjects both follow formally from Lemma \ref{openresexact} (specifically, the fact that $j^* \otimes \id_{\sC}$ is t-exact). To see that the $\DD(T)$-action on $\sC_T$ is compatible with the t-structure, consider the commutative square \[
\begin{tikzcd}[row sep=large, column sep=large]
\sC_T \arrow{r}{i_{\dR,*} \otimes \id_{\sC}} \arrow{d}{\coact_{\sC}} & \sC \arrow{d}{\coact_{\sC}} \\
\DD(T) \otimes \sC_T \arrow{r}[yshift=0.5em]{i_{\dR,*} \otimes i_{\dR,*} \otimes \id_{\sC}} & \DD(S) \otimes \sC.
\end{tikzcd} \]
The upper horizontal and right vertical functors are t-exact, so it suffices to show that that the lower horizontal functor is conservative and t-exact. In fact, it is fully faithful, being the tensor product of two fully faithful functors which admit continuous right adjoints. To see the t-exactness, note that this functor can be decomposed as \[ \DD(T) \otimes \sC_T \xrightarrow{i_{\dR,*} \otimes \id_{\sC_T}} \DD(S) \otimes \sC_T \xrightarrow{\id_{\DD(S)} \otimes i_{\dR,*} \otimes \id_{\sC}} \DD(S) \otimes \sC. \] Since the t-structures on $\DD(S)$ and $\DD(T)$ are compactly generated, the t-exactness of these two functors follows from that of $i_{\dR,*}$ and $i_{\dR,*} \otimes \id_{\sC} : \sC_T \to \sC$ by Lemma \ref{tensorexact}.

\end{proof}

If $\sC_T$ and $\sC$ are almost ULA generated, then we obtain an adjunction \[ (i_{\dR,*} \otimes \id_{\sC})^{\ren} : (\sC_T)^{\ren} \rightleftarrows \sC^{\ren} : (i^! \otimes \id_{\sC})^{\ren}, \] which takes place in $\DD(S)$-modules by Proposition \ref{rencrys}.

\begin{proposition}

\label{renbasechange}

Suppose that $\sC$ and $\sC_T$ are almost ULA generated (with respect to $S$ and $T$ respectively). Then the functor $(i_{\dR,*} \otimes \id_{\sC})^{\ren}$ factors through an equivalence \[ (\sC_T)^{\ren} \tilde{\longrightarrow} (\sC^{\ren})_T. \]

\end{proposition}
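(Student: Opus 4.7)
The plan is to construct a natural comparison functor $\Phi \colon (\sC_T)^{\ren} \to (\sC^{\ren})_T$ and show it is an equivalence by checking eventually coconnective parts, in analogy with the proof of Proposition~\ref{opencoh}. To construct $\Phi$: by Proposition~\ref{rencrys} applied on $T$, the category $(\sC_T)^{\ren}$ naturally carries a $\DD(T)$-action. The $\DD(S)$-linear functor $(i_{\dR,*} \otimes \id_\sC)^{\ren} \colon (\sC_T)^{\ren} \to \sC^{\ren}$ from the adjunction just above the statement therefore factors uniquely through the fully faithful embedding $i_{\dR,*} \otimes \id_{\sC^{\ren}} \colon (\sC^{\ren})_T \hookrightarrow \sC^{\ren}$ (well-defined because $\sC^{\ren}$ is itself a $\DD(S)$-module with compatible t-structure by Proposition~\ref{rencrys}), yielding the desired $\Phi$.

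Next, equip $(\sC^{\ren})_T$ with its natural t-structure via the preceding closed-embedding lemma. Using Proposition~\ref{ularen}, that $\sC^{\ren}$ is ULA generated over $S$, together with Lemma~\ref{shriekresula}, I would verify that $(\sC^{\ren})_T$ is ULA generated over $T$, hence compactly generated; coherence of the t-structure on $(\sC^{\ren})_T$ can then be established by adapting the argument of the lemma immediately preceding Proposition~\ref{renbasechange} to $\sC^{\ren}$ in place of $\sC$, showing that $(\sC^{\ren})_T$ is almost ULA generated over $T$.

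The crux is showing that $\Phi$ is t-exact and induces an equivalence $(\sC_T)^{\ren,+} \tilde{\to} ((\sC^{\ren})_T)^+$. T-exactness of $\Phi$ follows because its composition with the t-exact, fully faithful, conservative functor $i_{\dR,*} \otimes \id_{\sC^{\ren}}$ coincides with $(i_{\dR,*} \otimes \id_\sC)^{\ren}$, which is t-exact by the functoriality of renormalization. For the equivalence on $+$-parts, I would use the recollement induced by the complementary open embedding $j \colon S \setminus T \hookrightarrow S$: both $\sC_T^+ \subset \sC^+$ and $((\sC^{\ren})_T)^+ \subset (\sC^{\ren})^+$ are characterized as the kernels of the open restriction functors $j^* \otimes \id$. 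Since the canonical t-exact $\DD(S)$-linear equivalence $(\sC^{\ren})^+ \tilde{\to} \sC^+$ intertwines these open restrictions (by the $\DD(S)$-linearity of the renormalization comparison and Proposition~\ref{opencoh} applied to $\sC$), it identifies the $T$-supported full subcategories on each side. Combined with $(\sC_T)^{\ren,+} \tilde{\to} \sC_T^+$ from the very definition of canonical renormalization, this gives the $+$-part equivalence induced by $\Phi$.

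Finally, since both $(\sC_T)^{\ren}$ and $(\sC^{\ren})_T$ are almost compactly generated with compact objects lying in their $+$-parts (the former by construction as $\Ind$ of coherents, the latter by Step~2), the $+$-part equivalence extends by Ind-completion, or equivalently by invoking Lemma~\ref{fhaext}, to an equivalence of the ambient categories. The main obstacle will be verifying coherence of $(\sC^{\ren})_T$ in Step~2, since coherence is not automatically preserved by base change to a closed subvariety; the key leverage is the preservation of ULA objects under $i^!$ from Lemma~\ref{shriekresula}, which together with the comonadicity-based coherence argument of the preceding lemma supplies the compact generators of $((\sC^{\ren})_T)^{\geq 0}$ needed to apply Lemma~\ref{cohcrit}.
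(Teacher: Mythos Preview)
Your approach differs from the paper's. The paper first establishes the factoring by checking directly that objects of $(\sC_T)^{\coh}$ are annihilated by $(j_*j^*) \otimes \id_{\sC^{\ren}}$ (using $(\sC^{\ren})^+ \cong \sC^+$), so their images lie in $(\sC^{\ren})_T$. It then argues essential surjectivity rather than a $+$-part equivalence: by Proposition~\ref{ularen} and Lemma~\ref{shriekresula}, $(\sC^{\ren})_T$ is ULA generated over $T$ by objects $(i^! \otimes \id_{\sC^{\ren}})(c)$ with $c$ ULA in $\sC^{\ren}$, and a compatibility square for $i_{\dR,*}i^!$ shows each such generator lies in the image of $(\sC_T)^{\coh}$. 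Your recollement argument for the $+$-part equivalence is correct and in some ways more transparent than what the paper writes; what differs is how you finish.

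Two issues. First, your Step~1 justification is too loose: the $\DD(T)$-action on $(\sC_T)^{\ren}$ does not by itself force the factoring through $(\sC^{\ren})_T$. The real reason, which you correctly invoke in Step~4 but should use already here, is the characterization of $(\sC^{\ren})_T$ as the kernel of $j^* \otimes \id_{\sC^{\ren}}$. Second, and more seriously, your final step has a gap. To run Lemma~\ref{fhaext} or Ind-completion in the reverse direction you need $(\sC^{\ren})_T$ to be its own renormalization, i.e.\ compactly generated by coherent objects. Your Step~2 only shows $(\sC^{\ren})_T$ is almost ULA generated, which controls $((\sC^{\ren})_T)^{\geq 0}$ and its $\IndCoh$-descent but does not imply the ambient category is compactly generated by eventually coconnective objects. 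The paper's explicit identification of the ULA generators of $(\sC^{\ren})_T$ as lying in $(\sC_T)^{\coh}$ is exactly the missing ingredient; you would need to incorporate this (or an equivalent statement) before your extension step can go through.
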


\begin{proof}

Note that $(\sC^{\ren})_T$ is the kernel of the monad $(j_*j^*) \otimes \id_{\sC^{\ren}}$, which fits into the commutative square \[
\begin{tikzcd}[row sep=large, column sep=large]
\sC^{\ren} \arrow{r}[yshift=0.5em]{(j_*j^*) \otimes \id_{\sC^{\ren}}} \arrow{d} & \sC^{\ren} \arrow{d} \\
\sC\arrow{r}[yshift=0.5em]{(j_*j^*) \otimes \id_{\sC}} & \sC.
\end{tikzcd} \]
Since $\sC^{\ren,+} \tilde{\to} \sC^+$, and $(\sC_T)^{\coh} \subset \sC_T$ is annihilated by $(j_*j^*) \otimes \id_{\sC}$, it follows that $(i_{\dR,*} \otimes \id_{\sC})^{\ren}$ factors through \[ i_{\dR,*} \otimes \id_{\sC^{\ren}} : (\sC^{\ren})_T \longrightarrow \sC^{\ren}. \]

To see that the resulting functor \[ (\sC_T)^{\ren} \longrightarrow (\sC^{\ren})_T \] is essentially surjective, observe that Proposition \ref{ularen} and Lemma \ref{shriekresula} together imply that $(\sC^{\ren})_T$ is ULA generated over $T$ by objects of the form $(i^! \otimes \id_{\sC^{\ren}})(c)$, where $c$ is an object in $\sC^{\ren}$ which is ULA over $S$. Now we appeal to the commutative square \[
\begin{tikzcd}[row sep=large, column sep=large]
\sC^{\ren} \arrow{r}[yshift=0.5em]{(i_{\dR,*}i^!) \otimes \id_{\sC^{\ren}}} \arrow{d} & \sC^{\ren} \arrow{d} \\
\sC\arrow{r}[yshift=0.5em]{(i_{\dR,*}i^!)\otimes \id_{\sC}} & \sC,
\end{tikzcd} \]
which shows that $((i_{\dR,*}i^!) \otimes \id_{\sC^{\ren}})(c)$ agrees with $((i_{\dR,*}i^!) \otimes \id_{\sC})(c)$ under the identification $\sC^{\ren,+} \tilde{\to} \sC^+$ (in the latter formula, we view $c$ as an object of $\sC^+$). But the object $((i_{\dR,*}i^!) \otimes \id_{\sC})(c)$ belongs to $(\sC_T)^{\coh} \subset \sC$, which shows that $(i^! \otimes \id_{\sC^{\ren}})(c)$ belongs to the essential image of $(\sC_T)^{\ren}$ as needed.

\end{proof}

\section{Preliminaries on factorization}

This section covers foundational technical material on factorization categories, algebras, and modules. What follows is mostly not original, and our purpose is to establish notation and conventions. The main exception is Proposition \ref{factcatren}, which gives conditions under which it is possible to renormalize a factorization category with respect to a t-structure.

\subsection{Ran space} We denote by $\fSet$ the category of finite sets, and write $\fSet_{\surj} \subset \fSet$ for the subcategory consisting of nonempty finite sets and surjections between them.

Recall that the Ran space of $X$ is the (set-valued) prestack defined by \[ \Map(S,\Ran) = \{ \text{nonempty subsets of } \Map(S,X_{\dR}) \}. \] Note that with this convention, the projection $\Ran \to \Ran_{\dR}$ is an isomorphism.

Given a point $\underline{x} : S \to \Ran$, i.e., $\underline{x} \subset \Map(S,X_{\dR})$, we obtain a closed subscheme \[ \Gamma_{\underline{x}} := \bigcup_{x \in \underline{x}} \Gamma_x \subset X \times S_{\cl,\red}. \] Here we take the scheme-theoretic union in $X \times S_{\cl,\red}$. It follows that $\Gamma_{\underline{x}}$ is a closed subscheme of $X \times S$, but is generally smaller than the scheme-theoretic union taken in $X \times S$; on the other hand, all of our references to $\Gamma_{\underline{x}}$ will in fact depend only on its formal completion in $X \times S$.

For any nonempty finite set $I$, there is a natural map
\begin{align*}
X^I_{\dR} &\longrightarrow \Ran \\
(x_i)_{i \in I} &\mapsto \{ x_i \ | \ i \in I \}.
\end{align*}
For any surjection $I \to J$ we have the associated diagonal map $X^J \to X^I$, which is compatible with the morphisms to $\Ran$. The resulting map \[ \underset{I \in \fSet_{\surj}^{\op}}{\colim} \ X^I_{\dR} \longrightarrow \Ran \] is an isomorphism.

Similarly, we have the \emph{lax} prestack $\Ran^{\untl}$, which parameterizes all subsets of $X_{\dR}$ (including the empty set), with morphisms given by inclusions.

\subsection{Factorization categories} We will denote by $\FactCat$ the category of unital factorization categories on $X$, with morphisms given by strictly unital factorizable functors. We can also consider the category $\FactCat^{\laxfact}$ consisting of lax factorization categories. Both of these categories admit variants in which functors are allowed to be lax unital, and together these assemble into a cartesian square \[
\begin{tikzcd}
\FactCat \arrow{r} \arrow{d} & \FactCat^{\laxfact} \arrow{d} \\
\FactCat_{\laxuntl} \arrow{r} & \FactCat^{\laxfact}_{\laxuntl}.
\end{tikzcd} \]
Here the horizontal functors are fully faithful, while the vertical functors are only $1$-fully faithful but act as the identity on objects.

\begin{rem}

We refer the reader to \S 6 of \cite{R1} for careful definitions of these categories. An alternative but equivalent set of definitions can be found in \S 10 of \cite{charles-lin} (see Remark 10.5.11 there, which compares the two).

\end{rem}

By definition, we have a forgetful functor \[ \FactCat^{\laxfact}_{\laxuntl} \longrightarrow \ShvCat(\Ran^{\untl}). \] The category $\FactCat^{\laxfact}_{\laxuntl}$ admits a natural symmetric monoidal structure which makes the forgetful functor symmetric monoidal. This restricts to a symmetric monoidal structure on each of the $1$-full subcategories of $\FactCat^{\laxfact}_{\laxuntl}$ considered above.

Given an object $\sC$ in $\FactCat^{\laxfact}_{\laxuntl}$ and a prestack $Z$ mapping to $\Ran^{\untl}$, we will write \[ \sC_Z := \Gamma(Z,\sC) \] for the resulting $\QCoh(Z)$-module category. For any finite set $I$, we use the symmetric monoidal equivalences $\QCoh(X^I) \cong \IndCoh(X^I)$ and $\QCoh(X^I_{\dR}) \cong \DD(X^I)$ to view $\sC_{X^I_{\dR}}$ as a $\DD(X^I)$-module and \[ \sC_{X^I} \cong \IndCoh(X^I) \underset{\DD(X^I)}{\otimes} \sC_{X^I_{\dR}} \] as an $\IndCoh(X^I)$-module.

\subsection{A combinatorial presentation of sheaves of categories on $\Ran^{\untl}$} Recall from \cite{R1} that for a lax prestack $Y$, there is a certain $1$-full subcategory \[ \ShvCat(Y)^{\strict} \subset \ShvCat(Y) \] (denoted there by ``naive" rather than ``strict") with the same objects but fewer morphisms. In the case of unital Ran space, the subcategory \[ \ShvCat(\Ran^{\untl})^{\strict} \subset \ShvCat(\Ran^{\untl}) \] contains only those morphisms which are ``strictly unital," i.e., \[ \FactCat^{\laxfact} = \FactCat^{\laxfact}_{\laxuntl} \underset{\ShvCat(\Ran^{\untl})}{\times} \ShvCat(\Ran^{\untl})^{\strict}. \]

From the colimit presentation \[ \Ran = \underset{I \in \fSet^{\op}_{\surj}}{\colim} X_{\dR}^I \] and the $1$-affineness of $X^I_{\dR}$, we obtain a limit presentation \[ \ShvCat(\Ran) = \lim_{I \in \fSet_{\surj}} \DD(X^I)\mod. \] Here the transition functor $\DD(X^I)\mod \to \DD(X^J)\mod$ is given by extension of scalars along the restriction functor $\DD(X^I) \to \DD(X^J)$ for any surjection $I \to J$, corresponding to inverse image of sheaves of categories along $X^J_{\dR} \to X^I_{\dR}$. The following result supplies an analogous presentation of $\ShvCat(\Ran^{\untl})^{\strict}$ as a lax limit over the category $\fSet$.

As a reminder, given an essentially small category $\sI$ and a functor
\begin{align*}
\sI &\longrightarrow \Cat \\
i & \mapsto \sC_i,
\end{align*}
the lax colimit is by definition the cocartesian fibration \[ \underset{i \in \sI}{\operatorname{lax} \, \colim} \ \sC_i \longrightarrow \sI \] obtained by unstraightening. The lax limit is the category of sections \[ \underset{i \in \sI}{\operatorname{lax} \, \lim} \ \sC_i = \Sect(\sI,\underset{i \in \sI}{\operatorname{lax} \, \colim} \ \sC_i). \]

\begin{proposition}

\label{sheafrancomb}

There is a canonical fully faithful embedding of symmetric monoidal categories \[ \ShvCat(\Ran^{\untl})^{\strict} \longrightarrow \underset{I \in \fSet}{\operatorname{lax} \, \lim} \ \DD(X^I)\mod. \] The image consists of those objects $\{ \sC_{X^I_{\dR}} \}$ such that, for any surjection $I \to J$, the functor \[ \DD(X^J) \underset{\DD(X^I)}{\otimes} \sC_{X^I_{\dR}} \longrightarrow \sC_{X^J_{\dR}} \] is an equivalence.

\end{proposition}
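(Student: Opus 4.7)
The plan is to realize $\Ran^{\untl}$ as a lax colimit of the smooth schemes $X^I$ for $I \in \fSet$, then apply the descent formalism for sheaves of categories on lax prestacks from \cite{R1} together with the $1$-affineness of $X^I_{\dR}$.

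Concretely, I would use the presentation
\[
\Ran^{\untl} \simeq \operatorname{lax} \, \underset{I \in \fSet^{\op}}{\colim} \, X^I_{\dR},
\]
where the structure map sends $(x_i)_{i \in I}$ to the subset $\{x_i \mid i \in I\}$ (with $I = \emptyset$ giving the unit, the empty subset). A morphism $\psi : I \to J$ in $\fSet$ induces a map $X^{\psi} : X^J \to X^I$ defined by $(y_j) \mapsto (y_{\psi(i)})_i$, together with the comparison $2$-cell $\{y_{\psi(i)}\}_i \subseteq \{y_j\}_j$ in $\Ran^{\untl}$; this cell is an equality of subsets when $\psi$ is surjective, and a proper inclusion otherwise. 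Restricting to $\fSet_{\surj} \subset \fSet$ recovers the non-unital colimit presentation of $\Ran$ recalled in the paper.

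Applying $\ShvCat$---which turns lax colimits of lax prestacks into lax limits of DG categories---and using $1$-affineness of $X^I_{\dR}$ to identify $\ShvCat(X^I_{\dR}) \simeq \DD(X^I)\mod$, I obtain a fully faithful symmetric monoidal embedding
\[
\ShvCat(\Ran^{\untl})^{\strict} \hookrightarrow \operatorname{lax} \, \underset{I \in \fSet}{\lim} \, \DD(X^I)\mod.
\]
The symmetric monoidal structure on the target is induced by disjoint union $\sqcup : \fSet \times \fSet \to \fSet$, which matches the factorization $X^{I \sqcup J} = X^I \times X^J$ on the geometric side. The restriction to the strict subcategory on the source reflects the fact that the embedding naturally captures strictly unital morphisms.

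For the image description, observe that in the lax colimit presentation, the $2$-cells associated to surjections in $\fSet$ are invertible. Sheaves of categories on $\Ran^{\untl}$ must restrict via honest equivalences along these invertible $2$-cells, which translates into base change along the corresponding diagonal embeddings $X^J \hookrightarrow X^I$; this is precisely the condition that
\[
\DD(X^J) \underset{\DD(X^I)}{\otimes} \sC_{X^I_{\dR}} \tilde{\longrightarrow} \sC_{X^J_{\dR}}
\]
is an equivalence. Conversely, any lax-limit object satisfying this condition descends to an honest sheaf of categories on $\Ran^{\untl}$ with the lax-unital structure supplied by the non-surjective transitions. The main delicacy lies in establishing the fully faithful embedding itself, which relies on the descent results of \cite{R1} for sheaves of categories over lax prestacks; once that is granted, the image characterization and symmetric monoidal compatibility are formal.
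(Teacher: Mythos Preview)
Your proposal is correct and aligns with the paper's approach: the paper's proof is simply a citation to Corollary 4.6.1 in \cite{R1}, and your sketch unpacks precisely the ingredients behind that result (the lax colimit presentation of $\Ran^{\untl}$ over $\fSet^{\op}$, $1$-affineness of $X^I_{\dR}$, and the descent formalism for sheaves of categories on lax prestacks from \cite{R1}). There is no substantive difference in strategy; you have essentially written out what the cited corollary contains.
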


\begin{proof}

This is Corollary 4.6.1 in \cite{R1}.

\end{proof}

\subsection{A combinatorial presentation of factorization categories} Following \emph{loc. cit.}, we write $\Part \subset \Part_{\untl}$ for the twisted arrows categories of $\fSet_{\surj} \subset \fSet$. For any $p : I \to J$ in $\Part_{\untl}$, there is a corresponding open subscheme $U(p) \subset X^I$, and given $\epsilon : p_1 \to p_2$ in $\Part_{\untl}$, there is a unique morphism $U(\epsilon) : U(p_2) \to U(p_1)$ which makes the following square commute: \[
\begin{tikzcd}
U(p_2) \arrow{r}{U(\epsilon)} \arrow{d} & U(p_1) \arrow{d} \\
X^{I_2} \arrow{r} & X^{I_1}.
\end{tikzcd} \]
Thus we obtain a functor $U$ from $\Part_{\untl}^{\op}$ into schemes.

If we equip $\Part_{\untl}$ with the symmetric monoidal structure given by $(p_1,p_2) \mapsto p_1 \sqcup p_2$, then $U$ is canonically oplax symmetric monoidal, since \[ U(p_1 \sqcup p_2) \subset U(p_1) \times U(p_2) \] as subschemes of $X^{I_1 \sqcup I_2} = X^{I_1} \times X^{I_2}$. This endows the resulting functor
\begin{align*}
\Part_{\untl} &\longrightarrow \Cat \\
p &\mapsto \DD(U(p))\mod
\end{align*}
with a lax symmetric monoidal structure, and hence its unstraightening
\begin{equation}
\underset{p \in \Part_{\untl}}{\operatorname{lax} \, \colim} \ \DD(U(p))\mod \longrightarrow \Part_{\untl}
\label{partstraight}
\end{equation}
is naturally symmetric monoidal (in particular, the lax colimit admits a symmetric monoidal structure).

\begin{proposition}

\label{factcatcomb}

There is canonical equivalence from $\FactCat^{\laxfact}$ to the full subcategory of lax symmetric monoidal sections of (\ref{partstraight}) satisfying the following conditions:
\begin{enumerate}[(i)]
\item morphisms in $\Part \subset \Part_{\untl}$ map to cocartesian morphisms;
\item morphisms between objects of the form $\varnothing \to I$ map to cocartesian morphisms.
\end{enumerate}

\end{proposition}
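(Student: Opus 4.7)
The plan is to deduce this presentation directly from Proposition \ref{sheafrancomb} by refining the underlying-sheaf-of-categories data with the combinatorial structure that $\Part_{\untl}$ carries as the twisted arrow category of $\fSet$. The key point is that $\Part_{\untl}$ organizes precisely the open strata $U(p) \subset X^I$ that matter for factorization: an object $p \colon I \to J$ records both an indexing set $I$ (via the $\DD(X^I)$-module data) and a partition of $I$ (via the fibers of $p$), so that $U(p) \subset X^I$ is the locus where points in distinct fibers are disjoint. The oplax symmetric monoidal structure on $U$ coming from $U(p_1 \sqcup p_2) \subset U(p_1) \times U(p_2)$ is exactly what produces tensor-product comparison maps on the categorical side.

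First I would construct the functor from $\FactCat^{\laxfact}$ to lax symmetric monoidal sections of \eqref{partstraight}. Given $\sC$, set $\sC(p) := \sC_{U(p)} \in \DD(U(p))\mod$; the lax symmetric monoidal structure on $p \mapsto \sC(p)$ is produced by the lax factorization structure of $\sC$, using compatibility of pullback along $U(p_1 \sqcup p_2) \hookrightarrow U(p_1) \times U(p_2)$ with the factorization equivalences on $\Ran^{\untl}$. I would then check that this section satisfies (i) and (ii): condition (i) records the fact that for morphisms $\epsilon \colon p_1 \to p_2$ \emph{in $\Part$} (i.e. among nonempty surjections) the restriction map $\sC_{U(p_1)} \to \sC_{U(p_2)}$ induces an equivalence after extension of scalars along $\DD(U(p_1)) \to \DD(U(p_2))$—this is precisely the sheaf-of-categories condition from Proposition \ref{sheafrancomb} restricted to opens, and it is exactly cocartesianness in the unstraightening. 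Condition (ii), about morphisms of the form $\varnothing \to I$, is where the \emph{strict} unitality of morphisms in $\FactCat^{\laxfact}$ is encoded.

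To produce the inverse, starting from a section satisfying (i) and (ii), I would first forget down to $\fSet$ (keeping only the identity partitions $\id_I$, so that $U(\id_I) = X^I$ in the relevant sense) to obtain the underlying object of $\ShvCat(\Ran^{\untl})^{\strict}$ via Proposition \ref{sheafrancomb}; condition (i) guarantees the required equivalences $\DD(X^J) \otimes_{\DD(X^I)} \sC_{X^I_{\dR}} \tilde{\to} \sC_{X^J_{\dR}}$. The residual data on non-identity partitions $p \colon I \to J$ together with the lax symmetric monoidal structure then provides the factorization equivalences over the factorization opens, and condition (ii) upgrades the associated unital structure from lax to strict.

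The main obstacle will be the bookkeeping to match, one-for-one, the symmetric monoidal and cocartesian conditions on the combinatorial side with the axioms of lax factorizability and strict unitality on the categorical side; in practice this is essentially a repackaging, and I expect to be able to cite Corollary 4.6.1 of \cite{R1} as the sheaf-of-categories input and then verify the bijection of structures as a formal consequence of unstraightening (cf. also \S 10 of \cite{charles-lin}).
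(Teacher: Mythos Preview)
The paper's own proof is simply a citation to Proposition-Construction 8.4.1 (and \S 8.12) of \cite{R1}, so your sketch is considerably more explicit. Your overall strategy---extract the underlying sheaf of categories via Proposition \ref{sheafrancomb}, then identify the residual combinatorial data carried by $\Part_{\untl}$ with the (lax) factorization structure and the strict unitality---is sound and is essentially how one unpacks the cited result.

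There is, however, one concrete slip. You write ``keeping only the identity partitions $\id_I$, so that $U(\id_I) = X^I$,'' but $U(\id_I) \subset X^I$ is the open locus of pairwise \emph{distinct} points: for $\id_I$ every fiber is a singleton, so the disjointness condition forces all coordinates apart. The partition with $U(p) = X^I$ is the terminal one $p \colon I \to \{*\}$, where there is a single fiber and hence no constraint. Restricting to the full subcategory of such terminal partitions is what recovers $\fSet$ (a morphism $(I_1 \to *) \to (I_2 \to *)$ in $\Part_{\untl}$ is exactly a map $I_1 \to I_2$ in $\fSet$), and on this subcategory the section yields the data of Proposition \ref{sheafrancomb}. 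By contrast, restricting to identity partitions only sees the maximal groupoid of finite sets, so you would not recover the structure functors needed there. With this correction your argument goes through and matches the content of the reference the paper invokes.
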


\begin{proof}

This is a slight variant of Proposition-Construction 8.4.1 (cf. also \S 8.12) in \cite{R1}.

\end{proof}

\subsection{Renormalization of factorization categories} We now formulate conditions under which it is possible to renormalize a factorization category with respect to a given t-structure.

\begin{proposition}

\label{sheafranren}

Let $\sC$ be a sheaf of categories on $\Ran^{\untl}$, and suppose we are given a t-structure on $\sC_{X^I_{\dR}}$ for each finite set $I$, compatible with the $\DD(X^I)$-action. Assume also that the following conditions are satisfied:
\begin{enumerate}[(i)]
\item $\sC_{X^I_{\dR}}$ is almost ULA generated for any $I$ in $\fSet$;
\item for any surjection $I \to J$, the functor $\sC_{X^J_{\dR}} \to \sC_{X^I_{\dR}}$ left adjoint to the structure functor is t-exact;
\item for any injection $I \to J$, the structure functor $\sC_{X^I_{\dR}} \to \sC_{X^J_{\dR}}$ has finite homological amplitude.
\end{enumerate}
Then there exists a sheaf of categories $\sC^{\ren}$ on $\Ran^{\untl}$ equipped with a morphism $\sC^{\ren} \to \sC$, such that for any finite set $I$ the functor $(\sC^{\ren})_{X^I_{\dR}} \to \sC_{X^I_{\dR}}$ realizes $(\sC^{\ren})_{X^I_{\dR}}$ as the renormalization of $\sC_{X^I_{\dR}}$ constructed in Proposition \ref{rencrys}.

\end{proposition}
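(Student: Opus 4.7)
The plan is to leverage the combinatorial presentation of sheaves of categories on $\Ran^{\untl}$ given by Proposition~\ref{sheafrancomb}, which identifies $\ShvCat(\Ran^{\untl})^{\strict}$ with a $1$-full subcategory of the lax limit over $\fSet$ of the categories $\DD(X^I)\mod$. In this presentation, $\sC$ is described by the system of $\DD(X^I)$-module categories $\sC_{X^I_{\dR}}$ together with structure functors $\sC_{X^I_{\dR}} \to \sC_{X^J_{\dR}}$ for each arrow $I \to J$ in $\fSet$, subject to the requirement that for surjections the induced map on extensions of scalars is an equivalence. The idea is to apply the functorial renormalization levelwise (as enhanced to crystals of categories in Proposition~\ref{rencrys}) and then verify that the strict condition survives.

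First, using Proposition~\ref{rencrys}, we produce $(\sC^{\ren})_{X^I_{\dR}}$ as a $\DD(X^I)$-module category with compatible t-structure for each $I$, equipped with a t-exact functor to $\sC_{X^I_{\dR}}$. To propagate structure functors, I would appeal to the fact that (\ref{renfunctorial}) is a morphism of pseudo-tensor categories supported on functors of finite homological amplitude: condition (iii) directly ensures this for structure functors associated to injections, while for surjections $I \twoheadrightarrow J$ the structure functor corresponds to $!$-restriction along the diagonal $X^J \hookrightarrow X^I$, which (both sides being smooth) differs from a right-t-exact $*$-functor by a cohomological shift and hence has finite amplitude. Thus the renormalized structure functors are defined coherently, yielding a lax section of the relevant fibration over $\fSet$ whose value at each $X^I_{\dR}$ is $(\sC^{\ren})_{X^I_{\dR}}$.

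The crux of the proof is to verify the strict condition: for each surjection $I \twoheadrightarrow J$, the induced map
\[
\DD(X^J) \underset{\DD(X^I)}{\otimes} (\sC^{\ren})_{X^I_{\dR}} \longrightarrow (\sC^{\ren})_{X^J_{\dR}}
\]
is an equivalence. This is exactly the content of Proposition~\ref{renbasechange} applied to the smooth closed embedding $X^J \hookrightarrow X^I$: hypothesis (i) provides the almost ULA generation needed on both sides, while hypothesis (ii), which guarantees t-exactness of the left adjoint to the structure functor, matches the setting of the closed-embedding lemma (where $i_{\dR,*} \otimes \id_{\sC}$ is t-exact) and ensures that the given t-structure on $\sC_{X^J_{\dR}}$ is the one coming from base change. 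The main obstacle I expect is not any single identification, each of which is handed to us by Proposition~\ref{renbasechange}, but rather organizing the $\infty$-categorical coherence of the diagram over $\fSet$. I would handle this uniformly by working throughout in the pseudo-tensor category $(\DD(X^I)\mod_{\tstr})^{\otimes}_{\fha}$ (for each $I$) and its fibered-over-$\fSet$ enhancement, so that Proposition~\ref{operadequiv} produces a compatible system automatically; the pointwise strict-condition check via Proposition~\ref{renbasechange} then completes the construction of $\sC^{\ren}$ as an object of $\ShvCat(\Ran^{\untl})^{\strict} \subset \ShvCat(\Ran^{\untl})$.
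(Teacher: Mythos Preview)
Your proposal is correct and follows essentially the same route as the paper: use the combinatorial presentation of Proposition~\ref{sheafrancomb}, renormalize levelwise via Proposition~\ref{rencrys} after checking that all structure functors have finite homological amplitude, and then invoke Proposition~\ref{renbasechange} (together with Proposition~\ref{ularen}, which it uses internally) to verify the strict condition for surjections. One small point to tighten: your finite-amplitude argument for surjections appeals to smoothness to relate $i^!$ and $i^*$, but the right t-exactness of the tensored $*$-functor $i^*_{\dR}\otimes\id_{\sC}$ is not automatic from smoothness alone---it follows because this functor is left adjoint to $i_{\dR,*}\otimes\id_{\sC}$, which is t-exact precisely by condition~(ii).
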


\begin{proof}

The almost ULA generation condition implies in particular that each $\sC_{X^I_{\dR}}$ is almost compactly generated, and conditions (ii-iii) imply that the structure functor $\DD(X^I) \to \DD(X^J)$ has finite homological amplitude for any map of finite sets $I \to J$. Applying the construction of Proposition \ref{rencrys}, we obtain an object of \[ \underset{I \in \fSet}{\operatorname{lax} \, \lim} \ \DD(X^I)\mod, \] which lies in the image of $\ShvCat(\Ran^{\untl})^{\strict}$ (cf. Proposition \ref{sheafrancomb}) by Propositions \ref{ularen} and \ref{renbasechange}.

\end{proof}

\begin{proposition}

\label{factcatren}

Let $\sC$ be an object of $\FactCat^{\laxfact}$. Suppose that we are given a t-structure on $\sC_{X^I_{\dR}}$ for any finite set $I$ satisfying the conditions of Proposition \ref{sheafranren}, and that for any finite sets $I_1,I_2$, the image of the composite functor \[ \sC_{X^{I_1}_{\dR}}^+ \times \sC_{X^{I_2}_{\dR}}^+ \longrightarrow \sC_{X^{I_1}_{\dR}} \otimes \sC_{X^{I_2}_{\dR}} \longrightarrow \DD((X^{I_1} \times X^{I_2})_{\disj}) \underset{\DD(X^{I_1 \sqcup I_2})}{\otimes} \sC_{X^{I_1 \sqcup I_2}_{\dR}} \] is contained in \[ (\DD((X^{I_1} \times X^{I_2})_{\disj}) \underset{\DD(X^{I_1 \sqcup I_2})}{\otimes} \sC_{X^{I_1 \sqcup I_2}_{\dR}})^+. \] Then the sheaf of categories $\sC^{\ren}$ on $\Ran^{\untl}$ from Proposition \ref{sheafranren} naturally lifts to an object of $\FactCat^{\laxfact}$.

\end{proposition}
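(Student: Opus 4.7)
The plan is to use the combinatorial presentation of Proposition \ref{factcatcomb} together with the pseudo-tensor functoriality of renormalization from Proposition \ref{rencrys}. By Proposition \ref{sheafranren}, we already have $\sC^{\ren}$ as an object of $\ShvCat(\Ran^{\untl})^{\strict}$; equivalently, via Proposition \ref{sheafrancomb}, as a strict section of the lax limit $\underset{I \in \fSet}{\operatorname{lax}\,\lim}\,\DD(X^I)\mod$. What remains is to upgrade this to a lax symmetric monoidal section of the unstraightening (\ref{partstraight}) satisfying the cocartesianness conditions (i) and (ii) of Proposition \ref{factcatcomb}.

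The next step is to interpret the hypothesis as the assertion that the factorization structure maps of $\sC$ lie in the $\fha$ pseudo-tensor subcategory. Unwinding the lax symmetric monoidal section encoding $\sC$, the binary factorization functors take the form
\[
\sC_{X^{I_1}_{\dR}} \otimes \sC_{X^{I_2}_{\dR}} \longrightarrow \DD((X^{I_1} \times X^{I_2})_{\disj}) \underset{\DD(X^{I_1 \sqcup I_2})}{\otimes} \sC_{X^{I_1 \sqcup I_2}_{\dR}},
\]
and the explicit hypothesis is exactly that these are morphisms in $(\DD(X^{I_1 \sqcup I_2})\mod_{\tstr})^{\otimes}_{\fha}$. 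Higher-arity structure maps reduce to iterated binaries, so they inherit this property. Applying the morphism of pseudo-tensor categories from Proposition \ref{rencrys} (or rather, its obvious extension over a base scheme that varies over $\fSet$) to the entire lax symmetric monoidal section produces analogous structure maps for the collection $\{(\sC^{\ren})_{X^I_{\dR}}\}$, assembling into a lax symmetric monoidal section of (\ref{partstraight}).

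It then remains to verify the two cocartesianness conditions. Condition (i), concerning morphisms in $\Part \subset \Part_{\untl}$, amounts to strict factorizability on the disjoint locus; this follows from Proposition \ref{opencoh}, which says that renormalization commutes with base change to open subschemes, so the equivalence on the disjoint locus for $\sC$ renormalizes to the corresponding equivalence for $\sC^{\ren}$. Condition (ii), for morphisms out of $\varnothing$, encodes compatibility with units: the unit object of each $\sC_{X^I_{\dR}}$ is coherent (being ULA over $X^I$) and hence survives in $(\sC^{\ren})_{X^I_{\dR}}$ via $\sC^{\ren,+} \xrightarrow{\sim} \sC^+$, so the unit structure transports from $\sC$ to $\sC^{\ren}$ and the required cocartesianness is automatic.

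The main obstacle is organizational rather than conceptual: one must coherently coordinate the pseudo-tensor functoriality of Proposition \ref{rencrys} with the combinatorial presentation of $\FactCat^{\laxfact}$ over the category $\Part_{\untl}$, checking that each layer of higher compatibility in the lax symmetric monoidal section lives in the $\fha$ pseudo-tensor category so that the renormalization endomorphism acts on it. The key input that makes this work is the rigidity of $\DD(X^I)$, which ensures renormalization stays within $\DD(X^I)$-modules, together with the finite-amplitude hypotheses on the structure functors, which propagate from pairs to arbitrary tuples by induction once the binary case is granted.
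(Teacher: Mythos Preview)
Your overall strategy---use Proposition \ref{factcatcomb} to present $\sC$ as a lax symmetric monoidal section over $\Part_{\untl}$, then renormalize termwise via pseudo-tensor functoriality---matches the paper's. However, there are two places where your execution diverges from the paper and becomes imprecise.

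First, the paper applies (\ref{renfunctorial}), the endomorphism of $(\DGCat_{\tstr})^{\otimes}_{\fha}$, rather than Proposition \ref{rencrys}. The distinction matters: the section over $\Part_{\untl}$ assigns to each $p$ a $\DD(U(p))$-module with the base $U(p)$ varying, whereas Proposition \ref{rencrys} is stated for a fixed base $S$. The paper sidesteps this by treating the $\DD(U(p))$-module structure itself as part of the pseudo-tensor data in $(\DGCat_{\tstr})^{\otimes}_{\fha}$ (since $\DD(U(p))^{\ren}=\DD(U(p))$), so that (\ref{renfunctorial}) acts on the entire structure at once. Your phrase ``its obvious extension over a base scheme that varies over $\fSet$'' gestures at this but does not pin it down, and the index category is $\Part_{\untl}$, not $\fSet$. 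The paper also explicitly checks, via commutative squares over the maps $X^{I_1}_{\dR} \to X^{I_2}_{\dR}$ and over the disjoint loci, that the structure functors $\sC_{U(p_1)} \to \sC_{U(p_2)}$ and the lax monoidal maps $\sC_{U(p_1)} \otimes \sC_{U(p_2)} \to \sC_{U(p_1 \sqcup p_2)}$ lie in the $\fha$ pseudo-tensor subcategory; you assert this but skip the verification.

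Second, and more substantively, you misread condition (i) of Proposition \ref{factcatcomb}. It does not encode ``strict factorizability on the disjoint locus''; it asks that morphisms in $\Part$ map to cocartesian edges. A morphism $\epsilon: p_1 \to p_2$ in $\Part$ involves a surjection $I_1 \twoheadrightarrow I_2$, so the induced map $U(p_2) \to U(p_1)$ lies over the \emph{closed} diagonal embedding $X^{I_2} \hookrightarrow X^{I_1}$. Verifying cocartesianness for $\sC^{\ren}$ therefore requires that renormalization commute with base change along closed embeddings, which is Proposition \ref{renbasechange}, not Proposition \ref{opencoh}. The paper cites \ref{renbasechange} for exactly this reason. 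Proposition \ref{opencoh} is used earlier in the paper's argument, to establish that each $\sC_{U(p)}$ is almost ULA generated and that $(\sC^{\ren})_{U(p)} \simeq (\sC_{U(p)})^{\ren}$, but it does not by itself yield condition (i).
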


\begin{proof}

Apply Proposition \ref{factcatcomb} to view $\sC$ as a lax symmetric monoidal section of (\ref{partstraight}). By Proposition \ref{opencoh}, for any object $p : I \to J$ in $\Part_{\untl}$, the category \[ \sC_{U(p)} = \DD(U(p)) \underset{\DD(X^I)}{\otimes} \sC_{X^I_{\dR}} \] is almost ULA generated over $U(p)$ with respect to the unique t-structure that makes $\sC_{X^I_{\dR}} \to \sC_{U(p)}$ t-exact. Given objects $p_1 : I_1 \to J_1$, $p_2 : I_2 \to J_2$ and a morphism $\epsilon : p_1 \to p_2$ in $\Part_{\untl}$, we claim that the corresponding structure functor $\sC_{U(p_1)} \to \sC_{U(p_2)}$ has finite homological amplitude. This follows from hypotheses (ii-iii) of Proposition \ref{sheafranren} and the commutative square \[
\begin{tikzcd}
\sC_{X^{I_1}_{\dR}} \arrow{r} \arrow{d} & \sC_{X^{I_2}_{\dR}} \arrow{d} \\
\sC_{U(p_1)} \arrow{r} & \sC_{U(p_2)},
\end{tikzcd} \]
since the vertical functors are t-exact with fully faithful and left t-exact right adjoints. Similarly, the structure functor $\sC_{U(p_1)} \otimes \sC_{U(p_2)} \to \sC_{U(p_1 \sqcup p_2)}$ coming from the lax symmetric monoidal structure sends $\sC_{U(p_1)}^+ \times \sC_{U(p_2)}^+$ into $\sC_{U(p_1 \sqcup p_2)}^+$, which we can see from the commutative square \[
\begin{tikzcd}
\sC_{X^{I_1}_{\dR}} \otimes \sC_{X^{I_1}_{\dR}} \arrow{r} \arrow{d} & \DD((X^{I_1} \times X^{I_2})_{\disj}) \underset{\DD(X^{I_1 \sqcup I_2})}{\otimes} \sC_{X^{I_1 \sqcup I_2}_{\dR}} \arrow{d} \\
\sC_{U(p_1)} \otimes \sC_{U(p_2)} \arrow{r} & \sC_{U(p_1 \sqcup p_2)}.
\end{tikzcd} \]

Combining the above observations, it follows that applying (\ref{renfunctorial}) termwise to the categories $\sC_{U(p)}$ yields another lax symmetric monoidal section of (\ref{partstraight}). This section satisfies condition (i) of Proposition \ref{factcatcomb} by Proposition \ref{renbasechange}, and condition (ii) is trivial to verify.

\end{proof}

Let $(\FactCat^{\laxfact})^{\otimes}_{\tstr}$ be the pseudo-tensor category whose objects consist of $\sC$ in $\FactCat^{\laxfact}$ together with a t-structure on $\sC_{X^I_{\dR}}$ for every finite set $I$, and whose $n$-ary morphisms are the same as those in the symmetric monoidal category $\FactCat^{\laxfact}$, i.e., we do not impose any t-exactness conditions on functors.

Denote by \[ (\FactCat^{\laxfact})^{\otimes}_{\aULA} \subset (\FactCat^{\laxfact})^{\otimes}_{\tstr} \] the pseudo-tensor subcategory whose objects $\sC$ satisfy the conditions of Propositions \ref{sheafranren} and \ref{factcatren}, and whose $n$-ary morphisms $(\sC_1,\cdots \!,\sC_n) \to \sD$ satisfy the condition that the image of \[ (\sC_1)_{X^I_{\dR}}^+ \times \cdots \times (\sC_n)_{X^I_{\dR}}^+ \longrightarrow (\sC_1)_{X^I_{\dR}} \underset{\DD(X^I)}{\otimes} \cdots \underset{\DD(X^I)}{\otimes} (\sC_n)_{X^I_{\dR}} \longrightarrow \sD_{X^I_{\dR}} \] is contained in $\sD_{X^I_{\dR}}^+$ for any finite set $I$.

\begin{proposition}

\label{monfactcatren}

The assignment $\sC \mapsto \sC^{\ren}$ of Proposition \ref{factcatren} naturally lifts to a morphism of pseudo-tensor categories \[ (\FactCat^{\laxfact})^{\otimes}_{\aULA} \longrightarrow (\FactCat^{\laxfact})^{\otimes}_{\aULA}. \]

\end{proposition}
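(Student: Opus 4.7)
The plan is to leverage the combinatorial presentation of $\FactCat^{\laxfact}$ from Proposition \ref{factcatcomb} in order to reduce the statement to the already-established pseudo-tensor functoriality of renormalization for $\DD(S)$-module categories. Concretely, I would fix an $n$-ary morphism $(\sC_1,\dots,\sC_n) \to \sD$ in $(\FactCat^{\laxfact})^{\otimes}_{\aULA}$, translate it via Proposition \ref{factcatcomb} into a compatible family of $n$-ary morphisms between the values $(\sC_i)_{U(p)}$ and $\sD_{U(p)}$ indexed by $p \in \Part_{\untl}$, and then produce the renormalized $n$-ary morphism termwise using the pseudo-tensor refinement of Proposition \ref{rencrys}.

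First, I would observe that the hypothesis on $n$-ary morphisms in $(\FactCat^{\laxfact})^{\otimes}_{\aULA}$ -- namely that $(\sC_1)^+_{X^I_{\dR}} \times \cdots \times (\sC_n)^+_{X^I_{\dR}}$ lands in $\sD^+_{X^I_{\dR}}$ -- is exactly the defining condition for an $n$-ary morphism in the $\DD(X^I)$-linear pseudo-tensor category $(\DD(X^I)\mod_{\tstr})^{\otimes}_{\fha}$. Together with the almost ULA generation of each $(\sC_i)_{X^I_{\dR}}$ and $\sD_{X^I_{\dR}}$ (guaranteed by the conditions of Proposition \ref{sheafranren}), Proposition \ref{rencrys} applies to produce a canonical renormalized $n$-ary $\DD(X^I)$-linear morphism $(\sC_1^{\ren})_{X^I_{\dR}} \otimes_{\DD(X^I)} \cdots \otimes_{\DD(X^I)} (\sC_n^{\ren})_{X^I_{\dR}} \to \sD^{\ren}_{X^I_{\dR}}$, still in $(\DD(X^I)\mod_{\tstr})^{\otimes}_{\fha}$. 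The analogous statement for $\sC_{U(p)}$ with $p:I\to J$ in $\Part_{\untl}$ follows by combining this with Proposition \ref{opencoh}, which identifies $(\sC_{X^I_{\dR}})_{U(p)}^{\ren}$ with $(\sC_{U(p)})^{\ren}$.

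Next I would check that these termwise renormalizations assemble into a lax symmetric monoidal section of (\ref{partstraight}). This amounts to two compatibilities: for each morphism $\epsilon:p_1 \to p_2$ in $\Part_{\untl}$, one must produce the renormalized structure functor from the given one, and for each pair $(p_1,p_2)$ one must produce a renormalized lax symmetric monoidal structure map. Both of these are instances of unary and binary morphisms in $(\DGCat_{\tstr}^{\Delta^1})^{\otimes}_{\ren}$, so the pseudo-tensor equivalence of Proposition \ref{operadequiv} produces them canonically and functorially. The key point is precisely the pseudo-tensor nature of that equivalence: higher coherences between the structure functors, the lax symmetric monoidal data, and the given $n$-ary morphism $(\sC_1,\dots,\sC_n)\to\sD$ all assemble automatically, without requiring separate verification beyond the finite homological amplitude and $+$-boundedness conditions already checked in the proof of Proposition \ref{factcatren}.

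Finally, I would verify that the resulting lax symmetric monoidal section satisfies conditions (i) and (ii) of Proposition \ref{factcatcomb}, so that it really defines an object and a morphism in $\FactCat^{\laxfact}$. Condition (i) -- that morphisms in $\Part$ go to cocartesian morphisms -- is Proposition \ref{renbasechange} applied to the diagonal embeddings $X^J \hookrightarrow X^I$, exactly as in the proof of Proposition \ref{factcatren}; condition (ii) concerning morphisms out of $\varnothing$ is again immediate since the conditions on the renormalization are local on $\Ran^{\untl}$. The main obstacle I anticipate is bookkeeping: checking that the pseudo-tensor structure on the renormalization genuinely intertwines the Day-convolution-style lax symmetric monoidal structure on $(\FactCat^{\laxfact})^{\otimes}$ coming from factorization with the naive pseudo-tensor structure on $(\DD(X^I)\mod_{\tstr})^{\otimes}_{\fha}$ termwise. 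This is where the hypothesis on the composite $(\sC_1)^+_{X^{I_1}_{\dR}} \times (\sC_2)^+_{X^{I_2}_{\dR}} \to (\DD((X^{I_1}\times X^{I_2})_{\disj}) \otimes_{\DD(X^{I_1\sqcup I_2})} \sC_{X^{I_1\sqcup I_2}_{\dR}})^+$ from Proposition \ref{factcatren} -- generalized to an $n$-ary version for $\sD$ -- is used; once that $+$-boundedness is in place, everything else is formal from Proposition \ref{operadequiv}.
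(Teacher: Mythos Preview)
Your proposal is correct and follows essentially the same approach as the paper. The paper's proof is a one-liner: since the construction of Proposition \ref{factcatren} was already obtained by applying the pseudo-tensor morphism (\ref{renfunctorial}) termwise to the lax symmetric monoidal sections of (\ref{partstraight}), the pseudo-tensor functoriality on $n$-ary morphisms comes for free, and what you have written is a careful unpacking of exactly that observation.
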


\begin{proof}

This is immediate, since the construction of Proposition \ref{factcatren} is induced by the morphism of pseudo-tensor categories (\ref{renfunctorial}).

\end{proof}

\subsection{Dualizable factorization categories} The following is an explicit dualizability criterion for factorization categories.

\begin{proposition}

Suppose that $\sC$ in $\FactCat$ has the property that $\sC_{X^I_{\dR}}$ is dualizable as a $\DD(X^I)$-module for any finite set $I$. Assume also that for any injection $J \to I$, the unital structure functor \[ \DD(X^I) \underset{\DD(X^J)}{\otimes} \sC_{X^J_{\dR}} \longrightarrow \sC_{X^I_{\dR}} \] admits a $\DD(X^I)$-linear right adjoint. Then $\sC$ is dualizable as an object of $\FactCat_{\laxuntl}$.

\label{dualfactprop}

\end{proposition}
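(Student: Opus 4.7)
The plan is to construct the dual $\sC^\vee$ via levelwise $\DD(X^I)$-linear (or more generally $\DD(U(p))$-linear) dualization and then assemble these fibers into a factorization category using the combinatorial description in Proposition \ref{factcatcomb}. First, I would view $\sC$ as a lax symmetric monoidal section of (\ref{partstraight}) and set $(\sC^\vee)_{U(p)} := (\sC_{U(p)})^{\vee,\DD(U(p))}$ for each $p \in \Part_{\untl}$. This is well-defined: dualizability of $\sC_{X^I_{\dR}}$ over $\DD(X^I)$ is preserved under base change along the symmetric monoidal functor $\DD(X^I) \to \DD(U(p))$, and one has $(\sC^\vee)_{U(p)} \simeq \DD(U(p)) \otimes_{\DD(X^I)} (\sC_{X^I_{\dR}})^{\vee,\DD(X^I)}$.

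The next step is to equip $\sC^\vee$ with transition functors over morphisms $\epsilon : p_1 \to p_2$ in $\Part_{\untl}$. When $\epsilon$ lies in $\Part$ (or is a disjoint-union morphism from $\varnothing \to I$), the corresponding structure functor for $\sC$ is an equivalence, so dualizing directly yields an equivalence providing the cocartesian transition for $\sC^\vee$. For a general morphism in $\Part_{\untl}$, the relevant structure functor is a composite of a cocartesian morphism with a unital structure functor of the form in the hypothesis; by the assumption, such unital structure functors admit $\DD(X^I)$-linear right adjoints (and hence $\DD(U(p_2))$-linear right adjoints after base change). Dualizing these right adjoints using the dualizability of $\sC_{U(p_i)}$ yields $\DD(U(p_2))$-linear functors $\DD(U(p_2)) \otimes_{\DD(U(p_1))} (\sC^\vee)_{U(p_1)} \to (\sC^\vee)_{U(p_2)}$ in the right direction to serve as transitions for $\sC^\vee$.

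To upgrade this fiberwise data to a lax symmetric monoidal section, I would use that dualization of dualizable $\DD(U(p))$-modules is monoidal, in the sense that $(\sM \otimes_{\DD(U(p))} \sN)^\vee \simeq \sM^\vee \otimes_{\DD(U(p))} \sN^\vee$. Combined with passage to right adjoints for non-invertible transitions, this produces the needed lax symmetric monoidal structure; the conditions in Proposition \ref{factcatcomb} are automatic, because the relevant structure maps of $\sC$ are equivalences and hence so are their duals, making $\sC^\vee$ genuinely factorizable (i.e., an object of $\FactCat_{\laxuntl}$ and not merely $\FactCat^{\laxfact}_{\laxuntl}$). For the duality data itself, the fiberwise $\DD(X^I)$-linear duality supplies unit and counit morphisms $\DD(X^I) \to \sC_{X^I_{\dR}} \otimes_{\DD(X^I)} (\sC^\vee)_{X^I_{\dR}}$ and $(\sC^\vee)_{X^I_{\dR}} \otimes_{\DD(X^I)} \sC_{X^I_{\dR}} \to \DD(X^I)$, which assemble into morphisms $\e \to \sC \otimes \sC^\vee$ and $\sC^\vee \otimes \sC \to \e$ in $\FactCat_{\laxuntl}$, where $\e$ is the unit factorization category. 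Compatibility of the unit and counit with cocartesian transitions is natural; compatibility with the lax unital transitions reduces precisely to the unit/counit of the adjunction between each unital structure functor and its right adjoint. The triangle identities hold levelwise from the fiberwise duality.

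The main obstacle will be the coherent assembly of the fiberwise duals into an honest lax symmetric monoidal section of (\ref{partstraight}): one must verify that the passage to right adjoints along non-cocartesian morphisms in $\Part_{\untl}$ is suitably compatible with the disjoint-union tensor structure on $\Part_{\untl}$, i.e., that the appropriate Beck-Chevalley/base-change squares hold at the level of right adjoints and remain coherent after dualization. The hypothesis on $\DD(X^I)$-linear right adjoints is exactly what is needed to convert the contravariance of dualization on functors back into covariance suitable for defining $\sC^\vee$ as a section pointing in the same direction as $\sC$, and it is also exactly the condition that survives base change to each $\DD(U(p))$.
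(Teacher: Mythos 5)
Your proof is correct and follows essentially the same route as the paper: dualize fiberwise using $\DD(X^I)$-linear duality, and use the hypothesized right adjoints to convert the dualized structure functors (which point the wrong way) back into transitions of the correct variance for $\sC^\vee$. The only organizational difference is that the paper first establishes dualizability of the underlying sheaf of categories on $\Ran^{\untl}$ (working over $\fSet$, using the surjection--injection factorization to reduce to the hypothesis) and only afterward observes that the (lax or strict) factorization structure passes to $\sC^\vee$, whereas you build the dual in one pass directly as a lax symmetric monoidal section over $\Part_{\untl}$ via Proposition \ref{factcatcomb}; both proofs leave the same coherence bookkeeping implicit.
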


\begin{proof}

First, observe that the hypothesis in the proposition implies that \[ \DD(X^I) \underset{\DD(X^J)}{\otimes} \sC_{X^J_{\dR}} \longrightarrow \sC_{X^I_{\dR}} \] admits a $\DD(X^I)$-linear right adjoint for any (not necessarily injective) map $I \to J$ in $\fSet$. Namely, any such map can written as the composition of a surjection and an injection, and the functor in question is an equivalence if $I \to J$ is surjective. It follows that the sheaf of categories underlying $\sC$ is dualizable as an object of $\ShvCat(\Ran^{\untl})$, with dual object given by dualizing termwise and then passing to left adjoints of the structure functors.

A lax factorization structure on $\sC$ induces an oplax factorization structure on $\sC^{\vee}$, and if the factorization structure on $\sC$ is strict then so is the one on $\sC^{\vee}$. It is not difficult to see that the evaluation \[ \sC^{\vee} \otimes \sC \longrightarrow \Vect \] and coevaluation \[ \Vect \longrightarrow \sC^{\vee} \otimes \sC \] morphisms lift from $\ShvCat(\Ran^{\untl})$ to $\FactCat_{\laxuntl}$.

\end{proof}

\begin{rem}

We emphasize that it is necessary to take the dual of $\sC$ in $\FactCat_{\laxuntl}$ rather than $\FactCat$. The point is that the coevaluation functor \[ \Vect \longrightarrow \sC^{\vee} \otimes \sC \] is typically only lax unital.

\end{rem}

\subsection{Commutative factorization categories}\label{comfactcat} As shown in \cite{R1} \S 7.5, one can functorially attach an object of $\FactCat$ to any symmetric monoidal DG category. We denote the associated factorization category by the same symbol to avoid cluttering the notation.

Let $\sC$ be a symmetric monoidal category. For any finite set $I$, we have a symmetric monoidal functor \[ \Loc_{X^I} : \sC^{\otimes I} \longrightarrow \sC_{X^I_{\dR}}, \] defined in Remark 6.8.4 of \cite{R2}.

\begin{proposition}

\label{ulagenprop}

Suppose that $\sC$ is a rigid symmetric monoidal category whose underlying DG category is compactly generated.
\begin{enumerate}[(i)]

\item The functor $\Loc_{X^I}$ sends compact objects to objects ULA over $X^I$. If $\{ c_{\alpha} \}$ is a set of compact generators for $\sC^{\otimes I}$, then $\{ \Loc_{X^I}(c_{\alpha}) \}$ is a set of ULA generators for $\sC_{X^I_{\dR}}$.

\item The factorization category attached to $\sC$ is dualizable and self-dual as an object of $\FactCat_{\laxuntl}$.

\end{enumerate}

\end{proposition}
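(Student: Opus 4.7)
The plan is to leverage the rigidity of $\sC$, which makes compact and dualizable coincide, and to transport these properties along the symmetric monoidal functor $\Loc_{X^I}$. For part (i), I would first observe that for a compact $c \in \sC^{\otimes I}$, rigidity produces a dual $c^\vee$, and then $\Loc_{X^I}(c)$ becomes dualizable in $\sC_{X^I_{\dR}}$ as a symmetric monoidal category with dual $\Loc_{X^I}(c^\vee)$. The crucial structural input is that $\sC_{X^I_{\dR}}$ is canonically a commutative algebra in $\DD(X^I)\mod$ via the unit map $\DD(X^I) \to \sC_{X^I_{\dR}}$; this means the left adjoint $(-) \otimes \Loc_{X^I}(c^\vee)$ to $(-) \otimes \Loc_{X^I}(c)$ is automatically $\DD(X^I)$-linear and continuous. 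Passing to $\IndCoh(X^I) \otimes_{\DD(X^I)} \sC_{X^I_{\dR}}$ and running the same argument yields that $\oblv_{\sC_{X^I_{\dR}}}(\Loc_{X^I}(c))$ is compact, which is exactly the ULA condition.

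For the generation claim in (i), I would use a rigidity-type expansion: writing $\e \in \sC^{\otimes I}$ as a suitable colimit $\colim_\alpha c_\alpha^\vee \otimes c_\alpha$ for compact generators $\{c_\alpha\}$ (and their duals), one realizes any object $c' \in \sC_{X^I_{\dR}}$ as a colimit of $c' \otimes \Loc_{X^I}(c_\alpha^\vee) \otimes \Loc_{X^I}(c_\alpha)$. Since $(-) \otimes \Loc_{X^I}(c_\alpha^\vee)$ is $\DD(X^I)$-linear, this exhibits $c'$ as built from the $\Loc_{X^I}(c_\alpha)$ under the $\DD(X^I)$-action and colimits, yielding ULA generation.

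For part (ii), I would invoke Proposition \ref{dualfactprop}. Dualizability of $\sC_{X^I_{\dR}}$ over $\DD(X^I)$ is immediate from the ULA generation established in (i), since any ULA-generated $\DD(X^I)$-module category is dualizable over $\DD(X^I)$. The required right adjoint to the unital structure functor $\DD(X^I) \otimes_{\DD(X^J)} \sC_{X^J_{\dR}} \to \sC_{X^I_{\dR}}$ for an injection $J \hookrightarrow I$ exists because this functor is given by multiplication with $\e \in \sC$ in the coordinates $I \setminus J$, and $\e$ is compact with continuous right adjoint by rigidity — this base changes to a $\DD(X^I)$-linear right adjoint. Self-duality then follows by spreading the standard self-duality $\sC \simeq \sC^\vee$ coming from rigidity over $X^I_{\dR}$, compatibly (at the lax-unital level flagged in the remark following Proposition \ref{dualfactprop}) with the factorization structure.

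The main obstacle I anticipate is the clean passage from monoidal dualizability in $\sC_{X^I_{\dR}}$ to genuine $\DD(X^I)$-linear dualizability in the ULA sense — that is, ensuring the duality data is strictly $\DD(X^I)$-linear and not merely lax. The key leverage is that $\DD(X^I)$ sits inside $\sC_{X^I_{\dR}}$ precisely as the unit of the commutative algebra structure in $\DD(X^I)\mod$ coming from the commutative factorization setup of \S\ref{comfactcat}, which forces the relevant adjoints to be $\DD(X^I)$-linear automatically.
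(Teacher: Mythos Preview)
Your approach for part (ii) is essentially the same as the paper's: both invoke Proposition~\ref{dualfactprop}, check $\DD(X^I)$-dualizability of $\sC_{X^I_{\dR}}$ (the paper cites \cite{R2} Corollary 6.18.2; you deduce it from ULA generation), and verify that the unital structure functor preserves ULA objects. The paper makes the last point explicit via the commutative square relating $\Loc_{X^J}$ and $\Loc_{X^I}$, but your ``insert the unit in the $I\setminus J$ coordinates'' description is the same mechanism.

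There is, however, a genuine gap in your argument for part (i). You correctly observe that $\Loc_{X^I}(c)$ is dualizable in the symmetric monoidal category $\sC_{X^I_{\dR}}$, and hence so is $\oblv(\Loc_{X^I}(c))$ in $\IndCoh(X^I)\otimes_{\DD(X^I)}\sC_{X^I_{\dR}}$. But dualizability implies compactness only when the monoidal unit is compact. You never address why the unit $\e_{\sC_{X^I_{\dR}}} = \Loc_{X^I}(\e_{\sC^{\otimes I}})$ is ULA over $X^I$. This is exactly the nontrivial content: it is Lemma 6.16.2 of \cite{R2} (cf.\ the proof of Lemma~\ref{rigidrightadj} in this paper), and its proof genuinely uses the explicit construction of $\sC_{X^I_{\dR}}$ rather than abstract rigidity. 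Without it, your reduction is circular: every dualizable object is ULA iff the unit is, and the unit is one of the objects whose ULA property you are trying to establish.

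Your generation argument in (i) has a related issue. Writing $c' \cong \colim_\alpha (c' \otimes \Loc_{X^I}(c_\alpha^\vee)) \otimes \Loc_{X^I}(c_\alpha)$ exhibits $c'$ as a colimit of objects of the form $d_\alpha \otimes \Loc_{X^I}(c_\alpha)$ with $d_\alpha \in \sC_{X^I_{\dR}}$, not with $d_\alpha \in \DD(X^I)$. This shows the $\Loc_{X^I}(c_\alpha)$ generate $\sC_{X^I_{\dR}}$ as a module over itself, which is trivial; it does not show they generate it as a $\DD(X^I)$-module. The actual argument (in the proof of Theorem 6.7.1 of \cite{R2}) again relies on the specific structure of $\sC_{X^I_{\dR}}$.
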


\begin{proof}

For (i), see Proposition 6.16.1 and the proof of Theorem 6.7.1 in \emph{loc. cit.}

By Corollary 6.18.2 of \cite{R2}, the $\DD(X^I)$-module category $\sC_{X^I_{\dR}}$ is dualizable and self-dual for any finite set $I$. According to Proposition \ref{dualfactprop}, to prove (ii) it remains to show that for any injection $J \to I$, the structure map \[ \DD(X^I) \underset{\DD(X^J)}{\otimes} \sC_{X^J_{\dR}} \longrightarrow \sC_{X^I_{\dR}} \] admits a $\DD(X^I)$-linear right adjoint, or equivalently preserves ULA objects. This follows from the commutative square \[
\begin{tikzcd}
\sC^{\otimes J} \arrow{r} \arrow{d}{\id \otimes \Loc_{X^J}} & \sC^{\otimes I} \arrow{d}{\Loc_{X^I}} \\
\DD(X^I) \underset{\DD(X^J)}{\otimes} \sC_{X^J_{\dR}} \arrow{r} & \sC_{X^I_{\dR}},
\end{tikzcd} \]
where the upper horizontal arrow acts as the identity on $\sC^{\otimes J}$ and inserts the unit in $\sC^{\otimes (I \setminus J)}$.

\end{proof}

The following lemma will also be useful.

\begin{lemma}

\label{rigidrightadj}

Let $F : \sC \to \sD$ be a symmetric monoidal functor. Assume that $\sC$ is rigid and compactly generated, and that the unit object in $\sD$ is compact, so that in particular the right adjoint $F^{\RR}$ of $F$ is continuous. Then the morphism in $\FactCat$ arising from $F$ admits a right adjoint in $\FactCat_{\laxuntl}$, which in particular agrees with $F^{\RR}$ fiberwise.

\end{lemma}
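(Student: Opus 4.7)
The strategy is to construct the right adjoint termwise via the combinatorial presentation of factorization categories (Proposition \ref{factcatcomb}), relying crucially on the self-duality of $\sC$ in $\FactCat_{\laxuntl}$ established in Proposition \ref{ulagenprop}(ii) and the dualizability criterion of Proposition \ref{dualfactprop}. As a starting point, I would record that the hypotheses force $F$ to preserve compact objects: rigidity of $\sC$ identifies compact with dualizable, symmetric monoidal functors preserve dualizables, and compactness of the unit of $\sD$ promotes dualizables to compact objects in $\sD$ via $\Hom_{\sD}(d,-) \simeq \Hom_{\sD}(1_{\sD}, d^{\vee} \otimes -)$. Hence $F^{\RR}$ is continuous. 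The same reasoning applied to every tensor power yields continuous right adjoints $(F^{\otimes I})^{\RR} \simeq (F^{\RR})^{\otimes I}$ for each finite set $I$. Rigidity of $\sC$ and of $\DD(X^I)$ further allows one to promote lax symmetric monoidal and lax $\DD(X^I)$-linear structures on right adjoints to strict ones, by the general principle reviewed in the introduction.

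Next, I would construct the fiberwise right adjoint $F_{X^I_{\dR}}^{\RR}: \sD_{X^I_{\dR}} \to \sC_{X^I_{\dR}}$ for each finite set $I$, and more generally over each open $U(p) \subset X^I$ for $p \in \Part_{\untl}$. Continuity follows by combining the symmetric monoidality of the Loc construction (Proposition \ref{ulagenprop}(i)) with continuity of $(F^{\otimes I})^{\RR}$: the ULA compact generators of $\sC_{X^I_{\dR}}$ are of the form $\Loc_{X^I}(c)$ with $c$ compact in $\sC^{\otimes I}$, and $F_{X^I_{\dR}}(\Loc_{X^I}(c)) = \Loc_{X^I}(F^{\otimes I}(c))$ remains compact in $\sD_{X^I_{\dR}}$. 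By Proposition \ref{factcatcomb}, once the fiberwise right adjoints are shown to intertwine the transition functors along morphisms in $\Part_{\untl}$, they assemble into a lax symmetric monoidal section of (\ref{partstraight}). The factorization cocartesian condition over $\Part \subset \Part_{\untl}$ is inherited from $F$ via the observation that right adjoints of equivalences are equivalences, while the unit cocartesian condition at morphisms $\varnothing \to I$ becomes only lax under passage to right adjoints---this is precisely what places $F^{\RR}$ in $\FactCat_{\laxuntl}$ rather than $\FactCat$. The adjunction data (unit, counit, and triangle identities) are then produced fiberwise and transferred to the factorization level via the same combinatorial presentation, and fiberwise agreement with $F^{\RR}$ is built into the construction.

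The main obstacle is the verification of the Beck-Chevalley-type base change compatibilities under this combinatorial description: one must check that the fiberwise right adjoints over different $U(p)$ intertwine the transition functors associated to morphisms $\epsilon: p_1 \to p_2$ in $\Part_{\untl}$. These compatibilities, together with the lax unital structure on $F^{\RR}$, are ultimately controlled by the coevaluation $\Vect \to \sC \otimes \sC$ realizing the self-duality of $\sC$ in $\FactCat_{\laxuntl}$; since that coevaluation is itself only lax unital, the asymmetry with the strictly unital counit accounts for the target category being $\FactCat_{\laxuntl}$ rather than $\FactCat$. The dualizability framework of Proposition \ref{dualfactprop} provides the formal setting in which all these compatibilities reduce to properties of the self-duality data on $\sC$, avoiding case-by-case verifications.
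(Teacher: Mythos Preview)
Your approach is substantially more elaborate than the paper's, and there is a gap in the key step.

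The paper's proof is a single observation: it suffices to show that for each finite set $I$, the $\DD(X^I)$-linear functor $F : \sC_{X^I_{\dR}} \to \sD_{X^I_{\dR}}$ preserves ULA objects. Once this is known, the right adjoint is automatically continuous and $\DD(X^I)$-linear, and the passage to a morphism in $\FactCat_{\laxuntl}$ is formal (right adjoints of structure maps assemble laxly). No combinatorial presentation, no Beck--Chevalley verification, and no appeal to the self-duality of $\sC$ in $\FactCat_{\laxuntl}$ is needed; Proposition~\ref{ulagenprop}(ii) plays no role.

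To check preservation of ULA objects, the paper uses the commutative square with $\Loc_{X^I}$ and argues as follows: the composite $\sC^{\otimes I} \xrightarrow{F^{\otimes I}} \sD^{\otimes I} \xrightarrow{\Loc_{X^I}} \sD_{X^I_{\dR}}$ is symmetric monoidal; compact objects of $\sC^{\otimes I}$ are dualizable by rigidity, hence their images are dualizable in $\sD_{X^I_{\dR}}$; and the unit of $\sD_{X^I_{\dR}}$ is ULA (this is where compactness of the unit in $\sD$ enters, via Lemma~6.16.2 of \cite{R2}). Since tensoring with a dualizable object preserves the ULA property, the images are ULA.

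Your argument for the same step is that $F^{\otimes I}(c)$ is compact in $\sD^{\otimes I}$ and then that $\Loc_{X^I}(F^{\otimes I}(c))$ ``remains compact in $\sD_{X^I_{\dR}}$.'' This does not follow from the stated hypotheses: Proposition~\ref{ulagenprop}(i) applies only to rigid compactly generated symmetric monoidal categories, and $\sD$ is not assumed rigid. Moreover, what is actually required is preservation of \emph{ULA} objects (to get $\DD(X^I)$-linearity of the right adjoint), not merely of compact objects (which would only give continuity). The fix is precisely the paper's argument: work with dualizability rather than compactness on the target side, and invoke the ULA property of the unit in $\sD_{X^I_{\dR}}$.
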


\begin{proof}

It suffices to prove that for any finite set $I$, the $\DD(X^I)$-linear functor \[ F : \sC_{X^I_{\dR}} \longrightarrow \sD_{X^I_{\dR}} \] preserves ULA objects. Using the commutative square \[
\begin{tikzcd}
\sC^{\otimes I} \arrow{r}{F^{\otimes I}} \arrow{d}{\Loc_{X^I}} & \sD^{\otimes I} \arrow{d}{\Loc_{X^I}} \\
\sC_{X^I_{\dR}} \arrow{r}{F} & \sD_{X^I_{\dR}}
\end{tikzcd} \]
and Proposition \ref{ulagenprop}, it is enough to prove that the composition \[ \sC^{\otimes I} \xrightarrow{F^{\otimes I}} \sD^{\otimes I} \xrightarrow{\Loc_{X^I}} \sD_{X^I_{\dR}} \] sends compact objects to objects ULA over $X^I$. This functor is symmetric monoidal, and the proof of Lemma 6.16.2 in \cite{R2} shows that the unit object in $\sD_{X^I_{\dR}}$ is ULA, so the claim follows from the rigidity of $\sC^{\otimes I}$.

\end{proof}

\subsection{Factorization algebras}\label{factalgsec} Fix $\sC$ in $\FactCat^{\laxfact}$. The category $\FactAlg(\sC)$ of (unital) factorization algebras in $\sC$ has several equivalent definitions available in the literature; for our use in the sequel, we will record still another. It is a unital variant of the definition given in \cite{FG}.

The lax prestack $\Ran^{\untl}$ is a commutative monoid in correspondences under the binary operation given by \[
\begin{tikzcd}
& (\Ran^{\untl} \times \Ran^{\untl})_{\disj} \arrow{dl}[swap]{j} \arrow{dr}{u \circ j} & \\
\Ran^{\untl} \times \Ran^{\untl} & & \Ran^{\untl}.
\end{tikzcd} \]
Here \[ (\Ran^{\untl} \times \Ran^{\untl})_{\disj} \subset \Ran^{\untl} \times \Ran^{\untl} \] is the open subspace consisting of pairs $(\underline{x},\underline{x}')$ such that \[ \Gamma_{\underline{x}} \cap \Gamma_{\underline{x}'} = \varnothing, \] $j$ is the open inclusion, and $u : \Ran^{\untl} \times \Ran^{\untl} \to \Ran^{\untl}$ is the operation of union $(\underline{x},\underline{x}') \mapsto \underline{x} \cup \underline{x}'$.

Note that $u \circ j$ is fibered in groupoids (as opposed to categories), and is moreover schematic \'{e}tale. It follows that we have a well-behaved direct image functor along $u \circ j$, and hence the category $\sC_{\Ran^{\untl}}$ acquires a symmetric monoidal structure $\otimes^{\ch}$ with binary operation given by pull-push along the above correspondence.

Then $\FactAlg(\sC)$ can be realized as the following full subcategory of $\CocomCoalg^{\otimes^{\ch}}(\sC_{\Ran^{\untl}})$. Given $A$ in the latter category, the binary operation is a morphism \[ A \longrightarrow (u \circ j)_*j^!(A \boxtimes A), \] and since $u \circ j$ is \'{e}tale this corresponds by adjunction to a morphism \[ (u \circ j)^!A \longrightarrow j^!(A \boxtimes A). \] The object $A$ belongs to $\FactAlg(\sC)$ if and only if the latter map is an isomorphism.

Suppose $\sC$ that is a constant commutative factorization category, i.e., it arises from the same-named symmetric monoidal DG category as in \S\ref{comfactcat}. For any commutative algebra $A$ in the symmetric monoidal category $\sC$, there is an associated factorization algebra $A_{\Ran}$ in the factorization category $\sC$, which we will simply denote by $A$ when no confusion is likely to result. Note that $A_{\Ran}|^!_X = A \otimes \omega_X$.

\subsection{Factorization modules} We continue to let $\sC$ denote a fixed object of $\Fact^{\laxfact}$.

Given a map of (lax) prestacks $p : Z \to \Ran^{\untl}$, we denote by $\Ran^{\untl}_Z$ the associated factorization module space. Explicitly, it is the lax prestack defined as follows: a point $S \to \Ran^{\untl}_Z$ consists of a pair $(z,\underline{x})$ where $z : S \to Z$, $\underline{x} : S \to \Ran^{\untl}$, and $p(z) \subset \underline{x}$, with a unique morphism $(z,\underline{x}) \to (z',\underline{x}')$ if $z = z'$ and $\underline{x} \subset \underline{x}'$. The space $\Ran_Z^{\untl}$ admits a natural action of $\Ran^{\untl}$ in correspondences with respect to the above monoid structure. Namely, the action is given by the correspondence \[
\begin{tikzcd}
& (\Ran^{\untl} \times \Ran^{\untl}_Z)_{\disj} \arrow{dl}[swap]{j} \arrow{dr}{u \circ j} & \\
\Ran^{\untl} \times \Ran^{\untl}_Z & & \Ran^{\untl}_Z,
\end{tikzcd} \]
where we have abused notation slightly by using the same symbols for the obvious variants of the maps $u$ and $j$.

Again $u \circ j$ is schematic \'{e}tale, whence $\sC_{\Ran^{\untl}}$ acts on $\sC_{\Ran^{\untl}_Z}$ with respect to $\otimes^{\ch}$, with the action map given by pull-push along the above correspondence. The category of factorization $A$-modules $A\modfact(\sC)_Z$ over $Z$ can be realized as the full subcategory of \[ A\comod^{\otimes^{\ch}}(\sC_{\Ran^{\untl}_Z}) \] consisting of those $M$ such that the map \[ (u \circ j)^!M \longrightarrow j^!(A \boxtimes M) \] determined by the comodule structure is an isomorphism. The projection $\Ran^{\untl}_Z \to Z$ induces an action of $\QCoh(Z)$ on $A\modfact(\sC)_Z$.

Writing $\Delta_Z : Z \to \Ran^{\untl}_Z$ for the diagonal map, we think of the composite \[ \oblv_A : A\modfact(\sC)_Z \xrightarrow{\oblv} \sC_{\Ran^{\untl}_Z} \xrightarrow{\Delta^!_Z} \sC_Z \] as the forgetful functor; in particular, it is conservative. For $A = \unit_{\sC}$, the functor \[ \oblv_{\unit_{\sC}} : \unit_{\sC}\modfact(\sC) \longrightarrow \sC \] is an equivalence.

Letting $Z$ vary over all affine schemes mapping to $\Ran^{\untl}$, the categories $A\modfact(\sC)_Z$ assemble into an object $A\modfact(\sC)$ of $\FactCat^{\laxfact}$, with the (lax) factorization structure given by ``external fusion" (cf. \S 8.14 of \cite{R1}). 

There is a canonical object $\Vac_{A,Z}$ in $A\modfact(\sC)_Z$ with \[ \oblv_A(\Vac_{A,Z}) = A_Z := A|^!_Z\] in $\sC_Z$, called the vacuum factorization $A$-module over $Z$. The object $\Vac_{A,\Ran^{\untl}}$, which we will denote simply by $\Vac_A$ when there is no risk of confusion, is the unit for the factorization structure on $A\modfact(\sC)$, and in particular has a natural structure of factorization algebra in $A\modfact(\sC)$.

\subsection{Functoriality of factorization modules} Consider the category $(\FactCat_{\laxuntl}^{\laxfact})^{\FactAlg^{\op}}$. At the level of objects and $1$-morphisms, it consists of pairs $(\sC,A)$ with $\sC$ in $\FactCat^{\laxfact}$ and $A$ in $\FactAlg(\sC)$, and a morphism $(\sC,A) \to (\sD,B)$ is given by $F : \sC \to \sD$ in $\FactCat_{\laxuntl}^{\laxfact}$ and $B \to F(A)$ in $\FactAlg(\sD)$.

The construction $(\sC,A) \mapsto A\modfact(\sC)$ extends to a functor
\begin{equation}
(\FactCat_{\laxuntl}^{\laxfact})^{\FactAlg^{\op}} \longrightarrow \FactCat^{\laxfact}_{\laxuntl}.
\label{factmodfunct}
\end{equation}
In particular, given a morphism $(\sC,A) \to (\sD,B)$ consisting of $F : \sC \to \sD$ and $B \to F(A)$, the functor obtained by applying (\ref{factmodfunct}) can be expressed as a composition \[ A\mod^{\fact}(\sC) \longrightarrow F(A)\mod^{\fact}(\sD) \longrightarrow B\mod^{\fact}(\sD). \] Here the first functor makes the square \[
\begin{tikzcd}
A\mod^{\fact}(\sC) \arrow{r} \arrow{d}{\oblv_A} & F(A)\mod^{\fact}(\sD) \arrow{d}{\oblv_{F(A)}} \\
\sC \arrow{r}{F} & \sD
\end{tikzcd} \]
commute, and the second functor is restriction of scalars along $B \to F(A)$ (cf. \cite{charles-lin} \S 10.6).

\subsection{A transitivity principle} Fix $\sC$ in $\FactCat^{\laxfact}$ and a morphism $A \to B$ in $\FactAlg(\sC)$. Apply (\ref{factmodfunct}) to the morphism \[  (B\modfact(\sC),\Vac_B) \longrightarrow (A\modfact(\sC),\res^B_A(\Vac_B)) \] to obtain a morphism
\begin{equation}
\label{factmodtranseq}
B\modfact(\sC) \cong \Vac_B\modfact(B\modfact(\sC)) \longrightarrow \res^B_A(\Vac_B)\modfact(A\modfact(\sC))
\end{equation}
in $\FactCat^{\laxfact}$.

\begin{lemma}

\label{factmodtrans}

The functor (\ref{factmodtranseq}) is an equivalence.

\end{lemma}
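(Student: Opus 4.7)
The plan is to deduce the lemma from a Barr–Beck-style comonadic descent argument by exhibiting both sides as comodule categories for matching comonads on $A\modfact(\sC)$.

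First I would verify that the restriction functor $\res^B_A : B\modfact(\sC) \to A\modfact(\sC)$ is comonadic. Conservativity follows because $\oblv_A \circ \res^B_A \simeq \oblv_B$, and the latter is conservative by the general properties of $\oblv_A$ recalled in the previous section. For the right adjoint: the map $A \to B$ of factorization algebras gives rise to a continuous right adjoint $\coind^B_A$, the chiral cotensor along $A \to B$, which is well defined at the level of comodule categories because $u \circ j$ is schematic \'etale and so $(u\circ j)^!$ has the expected adjoints. Similarly, the forgetful functor $\oblv_{\res^B_A(\Vac_B)}$ is comonadic by the analogous argument applied inside the lax factorization category $A\modfact(\sC)$ with its external fusion structure.

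Next I would identify the two induced comonads on $A\modfact(\sC)$. The comonad for the right-hand side is by construction $\res^B_A(\Vac_B) \otimes^{\ch} (-)$, using the chiral tensor $\otimes^{\ch}$ on $A\modfact(\sC)_{\Ran^{\untl}}$ coming from external fusion. The comonad $\res^B_A \circ \coind^B_A$ on the left-hand side assigns to an $A$-factorization module its cofree lift to a $B$-factorization module. The two agree because (i) external fusion in $A\modfact(\sC)$, after applying $\oblv_A$, recovers the original chiral tensor on $\sC_{\Ran^{\untl}}$, and (ii) the underlying factorization algebra of $\res^B_A(\Vac_B)$ along $\oblv_A$ is canonically $B$. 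Together these identify the two comonads with their comultiplication and counit data, and by inspection \eqref{factmodtranseq} is precisely the functor of comodule categories induced by this identification, hence an equivalence.

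The main obstacle is the identification of the comonads: one has to carefully unwind the definition of external fusion on $A\modfact(\sC)$ from \S 8.14 of \cite{R1} and its interaction with the chiral tensor structure on $\sC_{\Ran^{\untl}}$. Concretely, one must verify that the factorization condition encoded by the coalgebra structure on $\res^B_A(\Vac_B)$ inside $(A\modfact(\sC)_{\Ran^{\untl}}, \otimes^{\ch})$ matches the factorization condition for the cofree $B$-factorization module structure on an $A$-factorization module. Once this compatibility is secured, the rest of the argument is formal and Barr–Beck yields the equivalence.
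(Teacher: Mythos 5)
The paper's proof is a one-line functoriality argument: since $\oblv_A$ sends $\res^B_A(\Vac_B)$ to $B$, one applies the functor (\ref{factmodfunct}) to the morphism $(A\modfact(\sC),\res^B_A(\Vac_B)) \to (\sC,B)$ in $(\FactCat_{\laxuntl}^{\laxfact})^{\FactAlg^{\op}}$ and observes that the result is inverse to (\ref{factmodtranseq}). Both the functor and its inverse are instances of the same construction, and the check that they invert each other follows formally from functoriality and the compatibility with the conservative forgetful functors to $\sC$. This is a soft categorical argument with no monadicity input.

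Your route via double Barr--Beck is genuinely different, but it has several gaps that stop it short of a proof. First, you assert that $\res^B_A$ admits a \emph{continuous} right adjoint $\coind^B_A$ ``the chiral cotensor.'' Even at the level of bare $\otimes^{\ch}$-comodule categories, the right adjoint to restriction of scalars along a coalgebra morphism is typically constructed as a cotensor product, which is a totalization and hence not obviously continuous; and it is a further nontrivial claim that this right adjoint preserves the full subcategory of factorization modules cut out by the factorization condition. Neither of these is addressed. Second, comonadicity requires more than conservativity and the existence of a right adjoint: you also need the functor to preserve totalizations of cosimplicial objects that become split after applying it, and you don't touch this. Third, and most importantly, you yourself flag the identification of the two comonads as ``the main obstacle'' and then do not actually prove it --- you only indicate what would need to be checked. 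In a setting this delicate (external fusion, chiral operations, the factorization condition), that identification is precisely where the content lies, and asserting it is not the same as proving it. Even granting (i) and (ii), the argument is incomplete without (iii).

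By contrast, the paper's construction sidesteps all comonadicity questions: the inverse is produced by the same functorial machinery that produced the original map, and the verification that the two compose to identities is a formal diagram chase against the conservative forgetful functors. If you wish to salvage the Barr--Beck approach, you would at minimum need to (a) establish that $\res^B_A$ and $\oblv_{\res^B_A(\Vac_B)}$ each admit continuous right adjoints landing in the correct full subcategories, (b) verify the preservation of split totalizations, and (c) carry out the comonad identification in full; at that point the argument is substantially longer than the paper's.
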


\begin{proof}

Note that the forgetful functor $\oblv_A : A\modfact(\sC) \to \sC$ sends $\res^B_A(\Vac_B) \mapsto B$. Applying (\ref{factmodfunct}) to the resulting morphism \[ (A\modfact(\sC),\res^B_A(\Vac_B)) \longrightarrow (\sC,B) \] yields the inverse to (\ref{factmodtranseq}).

\end{proof}

\subsection{Functoriality of monoidal structures} We observe that $(\FactCat_{\laxuntl}^{\laxfact})^{\FactAlg^{\op}}$ admits a natural symmetric monoidal structure, given on objects by \[ (\sC,A) \otimes (\sD,B) = (\sC \otimes \sD,A \boxtimes B). \] Moreover, the functor (\ref{factmodfunct}) is naturally lax symmetric monoidal: in particular, for $(\sC,A)$ and $(\sD,B)$ in $(\FactCat_{\laxuntl}^{\laxfact})^{\FactAlg^{\op}}$ we have a canonical morphism \[ A\mod^{\fact}(\sC) \otimes B\mod^{\fact}(\sD) \longrightarrow (A \boxtimes B)\mod^{\fact}(\sC \otimes \sD) \] in $\FactCat^{\laxfact}$, which intertwines the forgetful functors to $\sC \otimes \sD$.

Observe that the projection $(\FactCat_{\laxuntl}^{\laxfact})^{\FactAlg^{\op}} \to \FactCat_{\laxuntl}^{\laxfact}$ is symmetric monoidal, and in particular induces a functor \[ \AssocAlg((\FactCat_{\laxuntl}^{\laxfact})^{\FactAlg^{\op}}) \longrightarrow \AssocAlg(\FactCat_{\laxuntl}^{\laxfact}). \] This functor admits a canonical section
\begin{equation}
\AssocAlg(\FactCat_{\laxuntl}^{\laxfact}) \longrightarrow \AssocAlg((\FactCat_{\laxuntl}^{\laxfact})^{\FactAlg^{\op}})
\label{monfactcatsect}
\end{equation}
given on objects by $\sA \mapsto (\sA,\e_{\sA})$, where $\e_{\sA}$ denotes the monoidal unit of $\sA$ viewed as a factorization algebra.

The lax symmetric monoidal structure on (\ref{factmodfunct}) gives rise to a functor \[ \AssocAlg((\FactCat_{\laxuntl}^{\laxfact})^{\FactAlg^{\op}}) \longrightarrow \AssocAlg(\FactCat^{\laxfact}_{\laxuntl}). \] Note that the composition of this functor with (\ref{monfactcatsect}) factors through the $1$-full subcategory \[ \AssocAlg(\FactCat^{\laxfact}) \longrightarrow \AssocAlg(\FactCat^{\laxfact}_{\laxuntl}), \] and hence can be viewed as a functor
\begin{align}
\label{factmodunit}
\AssocAlg(\FactCat_{\laxuntl}^{\laxfact}) &\longrightarrow \AssocAlg(\FactCat^{\laxfact}) \\
\sA &\mapsto \e_{\sA}\modfact(\sA). \nonumber
\end{align}
Namely, the factorization unit and the monoidal unit in $\e_{\sA}\modfact(\sA)$ coincide, both being the vacuum module $\Vac_{\e_{\sA}}$.

\subsection{Unital vs. non-unital factorization algebras}\label{nonuntlalgsec} In \S\S\ref{nonuntlalgsec}-\ref{chirenvsec}, we fix $\sC$ in $\FactCat^{\laxfact}$.

We will write \[ \Ran_{\varnothing} := \Ran \sqcup \{ \varnothing \}, \] which is the monoid obtained from the semigroup $\Ran$ by adjoining an identity element. Note that unital factorization algebras on $\Ran_{\varnothing}$ are equivalent to non-unital factorization algebras on $\Ran$.

Consider the canonical morphism of lax prestacks $\phi : \Ran_{\varnothing} \to \Ran^{\untl}$. The inverse image functor \[ \phi^! : \sC_{\Ran^{\untl}} \longrightarrow \sC_{\Ran_{\varnothing}} \] induces a forgetful functor \[ \OblvUnit : \FactAlg(\sC) \longrightarrow \FactAlg^{\nonuntl}(\sC). \]

\begin{proposition}

\label{addunitprop}

The functor $\OblvUnit$ admits a left adjoint $\AddUnit$. The comonad \[ \AddUnit \circ \OblvUnit : \FactAlg(\sC) \longrightarrow \FactAlg(\sC) \] is given by $A \mapsto A \times \unit_{\sC}$.

\end{proposition}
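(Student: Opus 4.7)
The plan has two parts: first construct $\AddUnit$ as a left adjoint to $\OblvUnit$, then identify the resulting comonad $\AddUnit \circ \OblvUnit$.

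To construct $\AddUnit$, given a non-unital factorization algebra $B$ I would define $\AddUnit(B)$ to be the factorization algebra on $\Ran^{\untl}$ whose underlying sheaf is $B \oplus \unit_{\sC}$ on the nonempty part of $\Ran^{\untl}$ and whose value at $\varnothing$ is $\unit_{\sC}$, with the inclusion morphisms acting via the canonical inclusion of $\unit_{\sC}$ into the second summand. The cocommutative coalgebra structure for $\otimes^{\ch}$ is obtained by extending diagonally from the coalgebra structures on $B$ and on $\unit_{\sC}$, with no cross terms, and the factorization condition is verified summand-wise. The universal property then follows from the observation that a unital morphism $\AddUnit(B) \to A$ is determined on the $\unit_{\sC}$-summand by the unit morphism of $A$ (forced by compatibility with the $\varnothing$-inclusion morphisms), while on the $B$-summand it is a free choice of non-unital morphism $B \to \OblvUnit(A)$.

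For the comonad formula $\AddUnit \circ \OblvUnit(A) \cong A \times \unit_{\sC}$, the construction above endows $\AddUnit(\OblvUnit(A))$ with two canonical morphisms in $\FactAlg(\sC)$: the counit $\varepsilon : \AddUnit(\OblvUnit(A)) \to A$ from the adjunction, and the augmentation $\alpha : \AddUnit(\OblvUnit(A)) \to \unit_{\sC}$ given by projection to the second summand. Combined they produce a morphism $(\varepsilon, \alpha) : \AddUnit(\OblvUnit(A)) \to A \times \unit_{\sC}$. I would verify this is an isomorphism by working on the underlying sheaves and displaying it on each nonempty stratum as the shear
\[ \begin{pmatrix} \id & u_A \\ 0 & \id \end{pmatrix} : A \oplus \unit_{\sC} \longrightarrow A \oplus \unit_{\sC}, \]
where $u_A : \unit_{\sC} \to A$ is the unit morphism of the unital factorization algebra $A$; this is invertible with inverse obtained by negating $u_A$. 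Compatibility with the two factorization algebra structures then mirrors the classical commutative algebra computation showing $k \oplus B \cong B \times k$ for $B$ unital.

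The main technical obstacle is establishing rigorously in the DG setting that the candidate $\AddUnit(B) = B \oplus \unit_{\sC}$ carries a coherent factorization algebra structure --- in particular, that the diagonal splitting of the coalgebra structure for $\otimes^{\ch}$ is compatible with the inclusion morphisms of $\Ran^{\untl}$ and with the factorization condition on all disjoint restrictions. This should be viewed as the factorization-theoretic analogue of the fact that for a non-unital commutative algebra $B$, the direct sum $k \oplus B$ acquires a canonical unital algebra structure; once this is set up, both the adjunction statement and the comonad formula unwind directly from the coalgebra data.
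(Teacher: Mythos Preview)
Your explicit description of $\AddUnit(B)$ is incorrect, and this error propagates through the rest of the argument. You claim the underlying object of $\AddUnit(B)$ on the nonempty part of $\Ran^{\untl}$ is $B \oplus \unit_{\sC}$, with the coalgebra structure ``diagonal with no cross terms,'' and that the factorization condition can be verified summand-wise. But the factorization condition fails for this object: over $(X \times X)_{\disj}$, the condition requires
\[
(u \circ j)^!\bigl(B \oplus \unit_{\sC}\bigr) \;\tilde{\longrightarrow}\; j^!\bigl((B \oplus \unit_{\sC}) \boxtimes (B \oplus \unit_{\sC})\bigr),
\]
and the right-hand side has four summands while the left has only two (namely $j^!(B\boxtimes B)$ and $j^!(\unit_{\sC}\boxtimes\unit_{\sC})$, by factorization of $B$ and of $\unit_{\sC}$). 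The cross terms $j^!(B \boxtimes \unit_{\sC})$ and $j^!(\unit_{\sC} \boxtimes B)$ do not vanish, so your diagonal coalgebra structure does not define a factorization algebra. Put differently, in the classical analogy $k \oplus B$ that you invoke, the multiplication has cross terms $(a,b)(a',b') = (aa', ab' + a'b + bb')$; the same is true here. The formula $\AddUnit(B)|^!_X = B_X \oplus \unit_{\sC_X}$ holds only along the main diagonal $X \hookrightarrow \Ran$, and over higher $X^I$ the underlying sheaf is genuinely more complicated.

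The paper avoids this by not constructing $\AddUnit$ explicitly: it cites an external result asserting that $\phi^!: \sC_{\Ran^{\untl}} \to \sC_{\Ran_{\varnothing}}$ admits a left adjoint, and observes that both functors are compatible with factorization so the adjunction lifts. For the comonad formula, the paper constructs the map $\AddUnit(\OblvUnit(A)) \to A \times \unit_{\sC}$ much as you do (counit paired with the map corresponding under adjunction to the \emph{zero} morphism $\OblvUnit(A)\to\OblvUnit(\unit_{\sC})$, rather than a ``projection to the second summand'' which does not exist globally), but then checks it is an isomorphism by reducing via factorization to the main diagonal and invoking a cited formula there. Your shear-matrix argument would in fact be a valid way to finish this last step on the diagonal, once the map has been constructed correctly.
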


\begin{proof}

For the existence of the left adjoint $\AddUnit$, we apply Proposition 4.4.2 of \cite{atiyah-bott}, which says that\footnote{We warn the reader that the version of $\Ran^{\untl}$ used in \emph{loc. cit.} does not include $\varnothing$, which causes some formulas to appear different from ours.} $\phi^!$ admits a left adjoint. Both functors are compatible with factorization structures and therefore lift to the desired adjunction \[ \AddUnit : \FactAlg^{\nonuntl}(\sC) \rightleftarrows \FactAlg(\sC) : \OblvUnit. \]

Given a unital factorization algebra $A$ in $\sC$, there is a natural morphism \[ \AddUnit(\OblvUnit(A)) \longrightarrow \unit_{\sC} \] in $\FactAlg(\sC)$ corresponding to the zero morphism \[ \OblvUnit(A) \longrightarrow \OblvUnit(\unit_{\sC}) \] in $\FactAlg^{\nonuntl}(\sC)$. To show that the resulting morphism \[ \AddUnit(\OblvUnit(A)) \longrightarrow A \times \unit_{\sC} \] is an isomorphism, it suffices to check that its restriction along the main diagonal is an isomorphism in $\sC_X$. The (appropriately modified) formula in Proposition 4.3.6 of \emph{loc. cit.} shows that \[ \AddUnit(\OblvUnit(A))|^!_X = A_X \times \unit_{\sC_X}, \] as needed.

\end{proof}

\subsection{Unital vs. non-unital factorization modules} Fix $Z \to \Ran$.

\begin{proposition}

\label{untlmodequiv}

For any $A$ in $\FactAlg^{\nonuntl}(\sC)$, the composition
\begin{align*}
\AddUnit(A)\modfact(\sC)_Z &\longrightarrow \OblvUnit(\AddUnit(A))\mod^{\fact,\nonuntl}(\sC)_Z \\
&\longrightarrow A\mod^{\fact,\nonuntl}(\sC)_Z
\end{align*}
is an equivalence.

\end{proposition}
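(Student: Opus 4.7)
The plan is to recognize this composition as the factorization analog of the classical equivalence (for a non-unital associative algebra $A$ with formal unitization) between unital modules over the unitization and non-unital modules over $A$. The guiding heuristic is that the unit axiom in a unital $\widetilde{A}$-module forces the formally adjoined unit of $\widetilde{A} := \AddUnit(A)$ to act as the identity, so the only genuine datum carried by such a module is its underlying non-unital $A$-action.

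First, I would unpack the two arrows: the first is the functor on factorization modules induced (via (\ref{factmodfunct})) by the transition from unital to non-unital factorization algebras, and the second is restriction of scalars along the adjunction unit $A \to \OblvUnit(\AddUnit(A))$ in $\FactAlg^{\nonuntl}(\sC)$. To show the composition is an equivalence, I would construct an explicit inverse
\[ G: A\mod^{\fact,\nonuntl}(\sC)_Z \longrightarrow \AddUnit(A)\modfact(\sC)_Z \]
sending a non-unital $A$-module $M$ on $\Ran_Z$ to the $\sC$-valued section $\widetilde{M}$ of the lax prestack $\Ran^{\untl}_Z$ with the same values $\widetilde{M}_{(z,\underline{x})} := M_{(z,\underline{x})}$. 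The structural morphisms along inclusions $(z,\underline{x}) \subset (z,\underline{x}')$ in $\Ran^{\untl}_Z$ are defined using the observation that $\underline{y} := \underline{x}' \setminus \underline{x}$ is automatically disjoint from $p(z) \subset \underline{x}$, so the factorization axiom identifies $\widetilde{M}_{(z,\underline{x}')}$ with $\widetilde{A}_{\underline{y}} \otimes \widetilde{M}_{(z,\underline{x})}$, and we define the transition morphism as $u_{\underline{y}} \otimes \id$ where $u_{\underline{y}} : \unit_{\sC} \to \widetilde{A}_{\underline{y}}$ is the unit of the unital factorization algebra $\widetilde{A}$.

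Next, I would verify that $G$ is inverse to the composition. That the composition followed by $G$ is the identity on $A\mod^{\fact,\nonuntl}(\sC)_Z$ is essentially tautological, as $G$ extends along inclusions and the composition then restricts back via $\Delta_Z^!$. In the other direction, for a unital $\widetilde{A}$-module $N$, the unit axiom forces the transition maps along inclusions in $\Ran^{\untl}_Z$ to take exactly the form $u_{\underline{y}} \otimes \id_N$, so $G$ correctly recovers $N$ from its underlying non-unital $A$-module.

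The main obstacle is to carry out this construction coherently at the $\infty$-categorical level, showing that $G$ respects all higher morphism data of the lax prestack $\Ran^{\untl}_Z$. An alternative strategy that sidesteps much of this bookkeeping is to present both categories via the coalgebra-comodule description of \S \ref{factalgsec}, establish that both forgetful functors to $\sC_Z$ are comonadic, and then compare the resulting comonads directly; the key input is that the adjoined unit factor of $\OblvUnit \AddUnit A$ (cf. the formula in Proposition \ref{addunitprop}) is forced to act trivially on modules whose support maps to $Z \to \Ran$, so that the two comonads coincide.
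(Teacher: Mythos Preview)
Your main approach is morally the same as the paper's, but the paper packages it more efficiently. Rather than building the inverse $G$ by hand and then confronting the $\infty$-categorical coherence problem you flag, the paper observes that the restriction $\phi_Z^! : \sC_{\Ran^{\untl}_Z} \to \sC_{\Ran_Z}$ admits a left adjoint, by the same argument as in Proposition~\ref{addunitprop} (ultimately citing \cite{atiyah-bott}). This left adjoint is automatically compatible with the factorization $\sC$-module structures and with $\AddUnit$, so it lifts directly to the desired inverse functor on module categories. Your explicit description of $G$ in terms of inserting unit maps $u_{\underline{y}} \otimes \id$ along inclusions is precisely what this left adjoint does on field-valued points; the advantage of invoking it as an adjoint is that all higher coherence comes for free, so the obstacle you identify simply disappears.

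Two minor cautions about your explicit construction. First, the description ``$\underline{y} := \underline{x}' \setminus \underline{x}$ is disjoint from $\underline{x}$'' is only literally correct over field-valued points; over a general base $S$ the set-theoretic complement need not be well-defined and the factorization identification holds only on the open disjoint locus, so one really does need the adjoint-functor or Kan-extension formalism to make the extension precise. Second, your alternative comonad-comparison strategy would also work, but it is not what the paper does and is more laborious than simply invoking $(\phi_Z^!)^{\LL}$.
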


\begin{proof}

Similarly to the proof of Proposition \ref{addunitprop}, we see that \[ \phi_Z^! : \sC_{\Ran^{\untl}_Z} \longrightarrow \sC_{\Ran_Z} \] admits a left adjoint. This left adjoint is moreover compatible with factorization $\sC$-module structures and the operation $\AddUnit$, hence lifts a functor \[ A\mod^{\fact,\nonuntl}(\sC)_Z \longrightarrow \AddUnit(A)\modfact(\sC)_Z \] inverse to the functor in the proposition.

\end{proof}

\begin{lemma}

For any $A_1$, $A_2$ in $\FactAlg(\sC)$, the functor \[ \res^{A_1 \times A_2}_{A_1} \oplus \res^{A_1 \times A_2}_{A_2} : A_1\modfact(\sC)_Z \times A_2\modfact(\sC)_Z \longrightarrow (A_1 \times A_2)\modfact(\sC)_Z \] is an equivalence.

\end{lemma}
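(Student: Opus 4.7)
The plan is to construct an explicit inverse functor $\Psi:(A_1\times A_2)\modfact(\sC)_Z \to A_1\modfact(\sC)_Z \times A_2\modfact(\sC)_Z$ via a canonical idempotent decomposition of any $(A_1\times A_2)$-module induced by the coalgebra structure of the product. Given $M\in (A_1\times A_2)\modfact(\sC)_Z$ with coaction $\rho:M\to (A_1\times A_2)\otimes^{\ch}M$, I would use the projections $\pi_i:A_1\times A_2\to A_i$ in $\FactAlg(\sC)$ to form component coactions $\rho_i:=(\pi_i\otimes\id_M)\circ\rho:M\to A_i\otimes^{\ch}M$, and then use the counits $\epsilon_i:A_i\to\unit$ (coming from the unital structure) to form endomorphisms
\[
e_i := (\epsilon_i\otimes\id_M)\circ\rho_i : M\longrightarrow M.
\]
The counit axiom for $M$ over $A_1\times A_2$, together with the identification of the counit of the product in terms of the $\epsilon_i$ and $\pi_i$, gives $e_1+e_2=\id_M$, while the coassociativity axiom combined with the explicit ``block-diagonal'' form of the comultiplication $\Delta_{A_1\times A_2}$ yields the orthogonality relations $e_i\circ e_j=\delta_{ij}e_i$. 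In the stable $\infty$-category $\sC_{\Ran^{\untl}_Z}$, these orthogonal idempotents produce a canonical decomposition $M\cong M_1\oplus M_2$ with $M_i:=\mathrm{image}(e_i)$, and each $M_i$ inherits a factorization $A_i$-module structure from the restriction of $\rho_i$; the factorization condition on $M$ over $A_1\times A_2$ decomposes accordingly. This defines $\Psi(M):=(M_1,M_2)$.

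Verifying that $\Psi$ is inverse to the functor of the lemma is then essentially formal. The composite $(M_1,M_2)\mapsto \res^{A_1\times A_2}_{A_1}(M_1)\oplus\res^{A_1\times A_2}_{A_2}(M_2)\mapsto (M_1,M_2)$ recovers the pair since the idempotent $e_i$ acts as the identity on $\res^{A_1\times A_2}_{A_i}(M_i)$ and as zero on $\res^{A_1\times A_2}_{A_j}(M_j)$ for $j\neq i$ (the latter uses that the $A_j$-component of the coaction on $\res^{A_1\times A_2}_{A_i}(M_i)$ factors through the unit, so composition with the counit vanishes). The reverse composite is the identity by the decomposition just constructed.

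The main obstacle I anticipate is the concrete identification of the product $A_1\times A_2$ in $\FactAlg(\sC)$ needed to execute the idempotent argument. The forgetful functor from cocommutative coalgebras to the underlying monoidal category is not product-preserving in general (for instance, in $\CocomCoalg(\Vect)$, the categorical product is the tensor product rather than the direct sum), so the precise structure of $A_1\times A_2$---in particular the form of its counit and comultiplication needed to justify both $e_1+e_2=\id_M$ and $e_i\circ e_j=\delta_{ij}e_i$---must be extracted carefully from the universal property. In the factorization setting, one expects the factorization condition on any $B\in\FactAlg(\sC)$ mapping to $A_1\oplus A_2$ to constrain the cross-term obstructions $(f_i\otimes f_j)\circ\Delta_B^{\ch}:B\to A_i\otimes^{\ch}A_j$ for $i\neq j$ in a way that makes the block-diagonal coalgebra structure realize the categorical product. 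Once this structural fact is in place, the rest of the proof is purely formal manipulation with idempotents in a stable $\infty$-category.
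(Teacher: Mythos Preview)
Your idempotent construction has a genuine gap. The ``counits $\epsilon_i: A_i \to \unit$ coming from the unital structure'' do not exist in the direction you need: the unital structure of a factorization algebra is a map $\unit_{\sC} \to A_i$, not the reverse. If instead you take $\epsilon_i$ to be the honest counit of $A_i$ as a $\otimes^{\ch}$-coalgebra (the unique coalgebra map to the monoidal unit of $\otimes^{\ch}$), then each composite $\epsilon_i \circ \pi_i : A_1 \times A_2 \to \unit_{\otimes^{\ch}}$ is a coalgebra map to the terminal coalgebra, hence coincides with the counit $\epsilon_{A_1 \times A_2}$ for \emph{both} values of $i$. The comodule counit axiom then forces $e_1 = e_2 = \id_M$, so $e_1 + e_2 = 2\cdot\id_M$ rather than $\id_M$, and no orthogonal decomposition results. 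You correctly identify that the structure of $A_1 \times A_2$ is the crux of the matter, but you do not resolve it: the obstacle is precisely that the categorical product of cocommutative coalgebras is their \emph{tensor} product $A_1 \otimes^{\ch} A_2$ (with projections given by applying the other factor's counit), and your formula for $e_i$ is blind to the difference between the two factors in such a product.

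The paper's argument is structurally different and much terser. Rather than producing idempotents on an individual comodule $M$, it invokes directly a general categorical principle: the assignment $C \mapsto C\comod^{\otimes^{\ch}}(\sC_{\Ran_Z})$ carries the categorical product $A_1 \times A_2$ to the product of comodule categories. This is asserted as the ``formal argument, valid for comodules over any coalgebra,'' and it establishes the equivalence already at the level of all $\otimes^{\ch}$-comodules in $\sC_{\Ran_Z}$; one then checks separately that the factorization condition cuts out corresponding full subcategories on each side. No explicit splitting of objects is performed.
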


\begin{proof}

A formal argument, valid for comodules over any coalgebra, shows that \[ \res^{A_1 \times A_2}_{A_1} \oplus \res^{A_1 \times A_2}_{A_2} : A_1\comod^{\otimes^{\ch}}(\sC_{\Ran_Z}) \times A_2\comod^{\otimes^{\ch}}(\sC_{\Ran_Z}) \longrightarrow (A_1 \times A_2)\comod^{\otimes^{\ch}}(\sC_{\Ran_Z}) \] is an equivalence. One then checks that the factorization condition is preserved under this equivalence.

\end{proof}

Combining the lemma with Propositions \ref{addunitprop} and \ref{untlmodequiv}, we obtain the following useful result.

\begin{proposition}

\label{oblvunitff}

For any $A$ in $\FactAlg(\sC)$, the forgetful functor \[ A\modfact(\sC)_Z \longrightarrow \OblvUnit(A)\mod^{\fact,\nonuntl}(\sC)_Z \] factors canonically as \[
\begin{tikzcd}
A\modfact(\sC)_Z \arrow{r}{(\id,0)} \arrow{dr} & A\modfact(\sC)_Z \times \sC_Z \arrow{d} \\
& \OblvUnit(A)\mod^{\fact,\nonuntl}(\sC)_Z
\end{tikzcd} \]
where the vertical functor is an equivalence. In particular, this forgetful functor is fully faithful.

\end{proposition}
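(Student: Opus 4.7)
The proof will be a direct diagram chase combining the three preceding results. First, I would assemble the chain of equivalences
\[ \OblvUnit(A)\mod^{\fact,\nonuntl}(\sC)_Z \simeq \AddUnit(\OblvUnit(A))\modfact(\sC)_Z \simeq (A \times \unit_{\sC})\modfact(\sC)_Z \simeq A\modfact(\sC)_Z \times \unit_{\sC}\modfact(\sC)_Z, \]
where the first equivalence is Proposition \ref{untlmodequiv} applied to the non-unital factorization algebra $\OblvUnit(A)$, the second is induced by Proposition \ref{addunitprop}, and the third is the Lemma with $A_1 = A$, $A_2 = \unit_{\sC}$. Composing with the previously established equivalence $\unit_{\sC}\modfact(\sC)_Z \simeq \sC_Z$ gives the target decomposition $A\modfact(\sC)_Z \times \sC_Z$.

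Next, I would identify the forgetful functor under this equivalence. By functoriality of factorization modules \eqref{factmodfunct}, the counit $\epsilon : \AddUnit(\OblvUnit(A)) \to A$ of the adjunction from Proposition \ref{addunitprop} induces a restriction-of-scalars functor $A\modfact(\sC)_Z \to \AddUnit(\OblvUnit(A))\modfact(\sC)_Z$. Unwinding the construction of the equivalence in Proposition \ref{untlmodequiv}, the composition with $\AddUnit(\OblvUnit(A))\modfact(\sC)_Z \tilde{\to} \OblvUnit(A)\mod^{\fact,\nonuntl}(\sC)_Z$ recovers the forgetful functor $\oblv : A\modfact \to \OblvUnit(A)\mod^{\fact,\nonuntl}$. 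Under the identification $\AddUnit(\OblvUnit(A)) = A \times \unit_{\sC}$ from Proposition \ref{addunitprop}, the counit $\epsilon$ corresponds to the first projection $\pi_1 : A \times \unit_{\sC} \to A$, so the functor under consideration is $\res^{A\times \unit_{\sC}}_A$.

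Finally, using the explicit description of the Lemma's equivalence as $\res^{A \times \unit_{\sC}}_A \oplus \res^{A \times \unit_{\sC}}_{\unit_{\sC}}$, the pair $(M, 0) \in A\modfact \times \unit_{\sC}\modfact$ is sent to $\res^{A \times \unit_{\sC}}_A(M) \oplus 0 = \res^{A \times \unit_{\sC}}_A(M)$, so the forgetful functor factors as $M \mapsto (M, 0)$ as claimed. Full faithfulness is then immediate from
\[ \Map_{A\modfact \times \sC_Z}((M, 0), (M', 0)) \simeq \Map_{A\modfact}(M, M') \times \Map_{\sC_Z}(0, 0) \simeq \Map_{A\modfact}(M, M'), \]
since the mapping space out of the zero object is contractible.

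The only subtle step is the identification of $\epsilon$ with the first projection $\pi_1$; this is essentially built into the proof of Proposition \ref{addunitprop}, where the counit morphism $\AddUnit(\OblvUnit(A)) \to A$ was produced as the map corresponding to the zero morphism $\OblvUnit(A) \to \OblvUnit(\unit_{\sC})$. Given the other ingredients, the only real content of this proposition is this bookkeeping, which is why it is stated as an immediate consequence.
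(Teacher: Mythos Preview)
Your proof is correct and follows exactly the approach the paper intends: the proposition is stated in the paper as an immediate consequence of combining Proposition~\ref{addunitprop}, Proposition~\ref{untlmodequiv}, and the preceding lemma, and you have carefully unwound precisely that combination. Your identification of the counit with the first projection (and the use of the triangle identity to check that restriction along the counit followed by the equivalence of Proposition~\ref{untlmodequiv} recovers the forgetful functor) is the right bookkeeping.
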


\subsection{Chiral algebras} Replacing $\Ran^{\untl}$ by $\Ran$ and repeating the relevant portions of \S\ref{factalgsec} \emph{mutatis mutandi}, we obtain a non-unital symmetric monoidal structure $\otimes^{\ch}$ on $\sC_{\Ran}$ (cf. \cite{FG}). In particular, we can realize the category of non-unital factorization algebras in $\sC$ as the full subcategory \[ \FactAlg^{\nonuntl}(\sC) \subset \CocomCoalg^{\otimes^{\ch},\nonuntl}(\sC_{\Ran}) \] of coalgebras satisfying the factorization condition, as in \emph{loc. cit.}

On the other hand, we can consider the category of Lie algebras $\LieAlg^{\otimes^{\ch}}(\sC_{\Ran})$. The category of chiral algebras is defined by \[ \LieAlg^{\ch}(\sC) := \LieAlg^{\otimes^{\ch}}(\sC_{\Ran}) \underset{\sC_{\Ran}}{\times} \sC_{X_{\dR}}, \] i.e., it is the full subcategory of $\LieAlg^{\otimes^{\ch}}(\sC_{\Ran})$ consisting of Lie algebras whose underlying object of $\sC_{\Ran}$ is supported on the main diagonal $\Delta : X_{\dR} \to \Ran$.

\begin{rem}

From our perspective chiral algebras, being Lie algebras, are necessarily viewed as non-unital.

\end{rem}

In \emph{loc. cit.}, it is shown that the functor \[ \Delta^![-1] : \sC_{\Ran} \longrightarrow \sC_{X_{\dR}} \] lifts to an equivalence \[ \FactAlg^{\nonuntl}(\sC) \tilde{\longrightarrow} \LieAlg^{\ch}(\sC), \] which will be used frequently below. The inverse is given by the (non-unital) homological Chevalley complex with respect to $\otimes^{\ch}$, which we denote by $B \mapsto \CC^{\ch}_+(B)$.

\subsection{Chiral modules} For any $Z \to \Ran$, we can define the non-unital factorization module space $\Ran_Z$ similarly to $\Ran^{\untl}_Z$. The category $\sC_{\Ran_Z}$ is a non-unital module for $\sC_{\Ran}$ with respect to $\otimes^{\ch}$. In particular, given a non-unital factorization algebra $A$, the category of non-unital factorization $A$-modules over $Z$ can be realized as the full subcategory \[ A\mod^{\fact,\nonuntl}(\sC)_Z \subset A\comod^{\otimes^{\ch},\nonuntl}(\sC_{\Ran_Z}) \] consisting of comodules satisfying the factorization condition, analogously to the unital case.

Given a chiral algebra $B$ in $\sC$, the category of chiral $B$-modules over $Z$ is by definition \[ B\mod^{\ch}(\sC)_Z := B\mod^{\otimes^{\ch}}(\sC_{\Ran_Z}) \underset{\sC_{\Ran_Z}}{\times} \sC_Z, \] i.e., it is the full subcategory of $B\mod^{\otimes^{\ch}}(\sC_{\Ran_Z})$ consisting of those modules whose underlying object of $\sC_{\Ran_Z}$ is supported on the diagonal $\Delta_Z : Z \to \Ran_Z$.

It is shown in \cite{FG} that for any $A$ in $\FactAlg^{\nonuntl}(\sC)$, the functor \[ \Delta_Z^! : \sC_{\Ran_Z} \longrightarrow \sC_Z \] lifts to a $\QCoh(Z)$-linear equivalence \[ A\mod^{\fact,\nonuntl}(\sC)_Z \tilde{\longrightarrow} (\Delta^!A[-1])\mod^{\ch}(\sC)_Z. \] The inverse is given by $M \mapsto \CC^{\ch}_+(\Delta^!A[-1],M)$, the (non-unital) homological Chevalley complex of the chiral algebra $\Delta^!A[-1]$ with coefficients.

\subsection{Lie-$*$ algebras and modules} The union map $u : \Ran \times \Ran \to \Ran$ endows $\Ran$ with another commutative semigroup structure. If $\sC$ is a \emph{commutative} factorization category, then there is an associated non-unital symmetric monoidal structure on $\sC_{\Ran}$ denoted by $\otimes^*$. The category of Lie-$*$ algebras in $\sC$ is defined by \[ \LieAlg^*(\sC) := \LieAlg^{\otimes^*}(\sC_{\Ran}) \underset{\sC_{\Ran}}{\times} \sC_{X_{\dR}}. \]

Given $Z \to \Ran$, the category $\sC_{\Ran_Z}$ acquires a structure of non-unital $\sC_{\Ran}$-module with respect to $\otimes^*$. For any $L$ in $\LieAlg^*(\sC)$, we can therefore consider the $\QCoh(Z)$-module category \[ L\mod^*(\sC)_Z := L\mod^{\otimes^*}(\sC_{\Ran_Z}) \underset{\sC_{\Ran_Z}}{\times} \sC_Z. \] Like factorization modules, these assemble into an object $L\mod^*(\sC)$ of $\FactCat^{\laxfact}$. This object has the feature that the functor \[ \triv_L : \sC \longrightarrow  L\mod^*(\sC) \] assigning the trivial $L$-module structure is a morphism in $\FactCat^{\laxfact}$, and in particular the unit object in $L\mod^*(\sC)$ is $\triv_L(\unit_{\sC})$.

\subsection{Chiral envelope}\label{chirenvsec} There is a natural map $\otimes^* \to \otimes^{\ch}$, which gives rise to a forgetful functor \[ \oblv^{\ch \to *} : \LieAlg^{\ch}(\sC) \longrightarrow \LieAlg^*(\sC). \] This functor admits a left adjoint \[ \UU^{* \to \ch} : \LieAlg^*(\sC) \longrightarrow \LieAlg^{\ch}(\sC), \] the functor of chiral enveloping algebra.

For any $B$ in $\LieAlg^{\ch}(\sC)$ and $Z \to \Ran$, the map $\otimes^* \to \otimes^{\ch}$ gives rise to a $\QCoh(Z)$-linear forgetful functor \[ B\mod^{\ch}(\sC)_Z \longrightarrow \oblv^{\ch \to *}(B)\mod^*(\sC)_Z. \] For $L$ in $\LieAlg^*(\sC)$, the composite \[ \oblv_L^{\ch \to *} : \UU^{* \to \ch}(L)\mod^{\ch}(\sC)_Z \longrightarrow \oblv^{\ch \to *}(\UU^{* \to \ch}(L))\mod^*(\sC))_Z \xrightarrow{\res} L\mod^*(\sC)_Z \] admits a left adjoint \[ \ind_L^{* \to \ch} : L\mod^*(\sC)_Z \longrightarrow \UU^{* \to \ch}(L)\mod^{\ch}(\sC)_Z. \]

We define \[ \UU^{* \to \fact}(L) := \AddUnit(\CC_+^{\ch}(\UU^{* \to \ch}(L))), \] an object in $\FactAlg(\sC)$. By Proposition \ref{untlmodequiv}, we have an equivalence \[ \UU^{* \to \ch}(L)\mod^{\ch}(\sC)_Z \tilde{\longrightarrow} \UU^{* \to \fact}(L)\modfact(\sC)_Z, \] and in particular as $Z$ varies we can upgrade $\UU^{* \to \ch}(L)\mod^{\ch}(\sC)$ to an object of $\FactCat^{\laxfact}$. With respect to this factorization structure, the functors above lift to an adjunction \[ \ind_L^{* \to \ch} : L\mod^*(\sC) \rightleftarrows \UU^{* \to \ch}(L)\mod^{\ch}(\sC) : \oblv^{\ch \to *} \] in $\FactCat^{\laxfact}_{\laxuntl}$, with $\ind^{* \to \ch}$ being strictly unital.

\section{A local acyclity theorem}

The main goal of this section is to prove Theorem \ref{vacacthm}, which says in particular that the vacuum factorization module for a connective commutative algebra almost of finite type is almost ULA. From this we deduce Corollary \ref{factmodcoh}, which says that the category of factorization modules over a connective commutative algebra of finite type satisfies the conditions of Proposition \ref{factcatren} and hence can be renormalized, provided that the same is true for the category of commutative modules over that algebra.

\subsection{Representations as a factorization category} Below we collect some facts about the factorization category $\Rep(H)$, where $H$ is an algebraic group.

The forgetful functor $\oblv_H : \Rep(H) \to \Vect$ is symmetric monoidal, and hence gives rise to a morphism in $\FactCat$.

\begin{proposition}

For any finite set $I$, the functor \[ \oblv_H : \Rep(H)_{X^I_{\dR}} \longrightarrow \DD(X^I) \] is conservative, and there is a unique t-structure on $\Rep(H)_{X^I_{\dR}}$ which makes this functor t-exact. The morphism $\oblv_H$ admits a right adjoint $\coind_H$ in $\FactCat_{\laxuntl}$, and for any finite set $I$, the functor \[ \coind_H : \DD(X^I) \longrightarrow \Rep(H)_{X^I_{\dR}} \] is t-exact.

\end{proposition}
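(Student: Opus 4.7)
The plan is to deduce all four claims from Lemma \ref{rigidrightadj} together with the classical Barr--Beck description of $\Rep(H) = \QCoh(BH)$ as comodules over the Hopf algebra $\sO_H$. First I would observe that $\Rep(H)$ is rigid symmetric monoidal and compactly generated, and that the unit object of $\Vect$ is compact. Lemma \ref{rigidrightadj} applied to $\oblv_H : \Rep(H) \to \Vect$ then immediately produces a morphism $\coind_H : \Vect \to \Rep(H)$ in $\FactCat_{\laxuntl}$ right adjoint to $\oblv_H$, and identifies its fiber at $X^I_{\dR}$ with the ordinary coinduction functor fiberwise.

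Next I would establish conservativity of $\oblv_H$ on each fiber. The functor $\oblv_H : \Rep(H) \to \Vect$ is classically comonadic, exhibiting $\Rep(H)$ as $\sO_H$-comodules in $\Vect$. Since comonadicity is preserved under the passage from a rigid compactly generated commutative algebra in $\DGCat$ to its associated commutative factorization category (the construction of \S\ref{comfactcat} being compatible with tensoring and base change), the induced functor $\oblv_H : \Rep(H)_{X^I_{\dR}} \to \DD(X^I)$ is comonadic, with comonad given by convolution with $\sO_H$; in particular it is conservative.

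For the t-structure I would declare an object $c$ of $\Rep(H)_{X^I_{\dR}}$ to be connective (resp. coconnective) exactly when $\oblv_H(c)$ is such in $\DD(X^I)$. To see that this defines a t-structure, use the comonadic identification: because $\sO_H$ is a flat $k$-module, tensoring with $\sO_H$ is t-exact on $\Vect$, and therefore the comonad $\sO_H \otimes -$ on $\DD(X^I)$ is t-exact by Lemma \ref{tensorexact}. Transporting the t-structure along the comonadicity equivalence then gives the unique t-structure making $\oblv_H$ t-exact. T-exactness of $\coind_H$ follows at once: left t-exactness holds because $\coind_H$ is right adjoint to a t-exact functor, while right t-exactness is read off from the t-exactness of the composite $\oblv_H \circ \coind_H$ together with the t-exactness and conservativity of $\oblv_H$.

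The only mildly delicate step is the persistence of comonadicity in the factorization setting; this should be formal from the construction of the commutative factorization category attached to $\Rep(H)$ and the fact that each $\Rep(H)_{X^I_{\dR}}$ is obtained by base change from $\Rep(H) \otimes \DD(X^I)$ along an appropriate symmetric monoidal structure, but it requires explicitly tracking the action of the comonad and verifying it against the description of $\Rep(H)_{X^I_{\dR}}$ provided by $\Loc_{X^I}$ as in Proposition \ref{ulagenprop}. Everything else is a direct consequence of Lemma \ref{rigidrightadj}.
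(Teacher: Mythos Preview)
Your overall strategy is sound: Lemma \ref{rigidrightadj} does produce the factorizable right adjoint $\coind_H$, and comonadicity of $\oblv_H$ over each $X^I_{\dR}$ is the right mechanism for conservativity and for transporting the t-structure. The paper itself simply cites \cite{R2} Propositions 6.22.1 and 6.24.1, so there is no in-paper argument to compare against.

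There is, however, a genuine gap. You assert that the comonad $\oblv_H \circ \coind_H$ on $\DD(X^I)$ is ``$\sO_H \otimes -$'' and appeal to Lemma \ref{tensorexact} with $F = \sO_H \otimes_k (-) : \Vect \to \Vect$ and $\sD = \DD(X^I)$. This would require $\Rep(H)_{X^I_{\dR}} \simeq \Rep(H) \otimes \DD(X^I)$, which is false for $\#I > 1$: factorization forces $\Rep(H)_{X^I_{\dR}}$ to restrict to $\Rep(H)^{\otimes J} \otimes \DD(Z(p))$ over the stratum $Z(p)$ indexed by a surjection $p : I \to J$, so the comonad there is $\sO_H^{\otimes J} \otimes_k (-)$, not $\sO_H \otimes_k (-)$. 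The comonad is still t-exact, but proving it requires an actual argument---e.g.\ stratum by stratum via the Cousin filtration, or by identifying the comonad as $(-) \otimes^! (\sO_H)_{X^I_{\dR}}$ and controlling the amplitude of that $\otimes^!$-operation using something like Lemma \ref{tstrlem1}. Your single invocation of Lemma \ref{tensorexact} does not suffice, and the gap propagates to your argument for right t-exactness of $\coind_H$. You flag the ``persistence of comonadicity'' as the delicate step, but the real delicacy is the \emph{identification} of the comonad once you have it.
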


\begin{proof}

See Propositions 6.22.1 and 6.24.1 of \cite{R2}.

\end{proof}

\begin{proposition}

\label{repulagen}

The factorization category $\Rep(H)$, with the t-structure on $\Rep(H)_{X^I_{\dR}}$ defined as above for each finite set $I$, satisfies the conditions of Proposition \ref{factcatren}, and \[ \Rep(H)^{\ren} \longrightarrow \Rep(H) \] is an isomorphism.

\end{proposition}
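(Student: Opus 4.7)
The plan is to check each of the conditions of Proposition \ref{factcatren} for the commutative factorization category $\Rep(H)$, using the fact that $\Rep(H)$ is rigid symmetric monoidal and compactly generated by its finite-dimensional representations, and then separately verify that the canonical renormalization map is an isomorphism. The key inputs are Proposition \ref{ulagenprop} and the explicit description of structure functors for a commutative factorization category.

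First I would verify condition (i) of Proposition \ref{sheafranren} (almost ULA generation): by Proposition \ref{ulagenprop}(i), since $\Rep(H)$ is rigid and compactly generated, the $\DD(X^I)$-module $\Rep(H)_{X^I_{\dR}}$ is ULA generated by the objects $\Loc_{X^I}(V)$ for $V \in (\Rep(H)^{\otimes I})^c$. Because such $V$ is a finite-dimensional representation, $\Loc_{X^I}(V)$ is cohomologically bounded; being ULA, it is in particular compact. Hence $\Rep(H)_{X^I_{\dR}}$ is almost compactly generated, and the compact generation of $(\IndCoh(X^I) \otimes_{\DD(X^I)} \Rep(H)_{X^I_{\dR}})^{\geq 0}$ by $\tau^{\geq 0}(\sF \otimes \oblv_{\Rep(H)}(\Loc_{X^I}(V)))$ with $\sF$ perfect follows from compact generation of $\IndCoh(X^I)$ by $\Coh(X^I) = \Perf(X^I)$ (since $X^I$ is smooth). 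Next I would verify conditions (ii)–(iii): for a surjection $p : I \twoheadrightarrow J$, the structure functor $\Rep(H)_{X^I_{\dR}} \to \Rep(H)_{X^J_{\dR}}$ corresponds to $!$-pullback along the diagonal $\Delta_p : X^J \hookrightarrow X^I$ on the $\DD$-factor, whose left adjoint is $\Delta_{p,\dR,*}$; this is t-exact (closed embedding between smooth varieties, Kashiwara), and by Lemma \ref{tensorexact} the induced functor on $\DD(X^J) \otimes_{\DD(X^I)} \Rep(H)_{X^I_{\dR}} \simeq \Rep(H)_{X^J_{\dR}}$ remains t-exact. For an injection $J \hookrightarrow I$, the structure functor in the factorization direction is induced by the unit of $\Rep(H)$ on the new coordinates together with $!$-pullback along the projection $X^I \to X^J$, which has finite cohomological amplitude (bounded by the relative dimension).

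Third, I would verify the factorization compatibility of Proposition \ref{factcatren}: after restricting to the disjoint locus $(X^{I_1} \times X^{I_2})_{\disj}$, the factorization structure map identifies with the external product induced by the symmetric monoidal structure on $\Rep(H)$ and by $\boxtimes$ of $\DD$-modules. Since tensor product in $\Rep(H)$ is t-exact (because $\oblv_H$ is t-exact and symmetric monoidal), and external $\boxtimes$ of $\DD$-modules on distinct factors is t-exact, the image of $(\Rep(H)_{X^{I_1}_{\dR}})^+ \times (\Rep(H)_{X^{I_2}_{\dR}})^+$ lies in the connective-plus part as required. The hypotheses of Propositions \ref{sheafranren} and \ref{factcatren} are then all in place, producing $\Rep(H)^{\ren}$ in $\FactCat^{\laxfact}$.

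For the second claim, that $\Rep(H)^{\ren} \to \Rep(H)$ is an isomorphism, it suffices to check this fiberwise on each $X^I_{\dR}$, which amounts to showing that $\Rep(H)_{X^I_{\dR}}$ is compactly generated by coherent objects. But the generators $\Loc_{X^I}(V)$ for $V \in (\Rep(H)^{\otimes I})^c$ already produced above are compact and bounded, hence coherent, and tensoring with a coherent $\DD$-module preserves this class while exhausting all compact generators (using the $\DD(X^I)$-action). Thus $\Rep(H)_{X^I_{\dR}}^{\ren} = \Ind(\Rep(H)_{X^I_{\dR}}^{\coh}) \tilde{\to} \Rep(H)_{X^I_{\dR}}$. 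The main obstacle I anticipate is the first step: carefully identifying the structure functors of the commutative factorization category $\Rep(H)$ in both $\Part$-directions (surjections giving sheaf-of-categories transitions, and injections giving unit insertions in the factorization direction) and matching them to explicit $\DD$-module functors to verify the t-exactness and amplitude assertions cleanly; once this bookkeeping is in place, the remaining verifications are formal consequences of Proposition \ref{ulagenprop} and Lemma \ref{tensorexact}.
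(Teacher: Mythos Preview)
Your overall strategy---using Proposition~\ref{ulagenprop} to produce ULA generators $\Loc_{X^I}(V)$ and checking the remaining conditions by hand---matches the paper's, but there is a genuine gap in the first step. You assert ``Hence $\Rep(H)_{X^I_{\dR}}$ is almost compactly generated'' immediately after exhibiting bounded ULA generators, but almost compact generation (in the paper's sense) includes \emph{coherence} of the t-structure: every compact object of $\Rep(H)_{X^I_{\dR}}^{\geq 0}$ must be almost compact in $\Rep(H)_{X^I_{\dR}}$. Having a generating set of bounded compact objects does not by itself imply this; an arbitrary compact object of $\sC^{\geq 0}$ need not lie in the thick subcategory generated by your $\Loc_{X^I}(V)$'s. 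The paper closes this gap by invoking the $\DD(X^I)$-linear adjunction $\oblv_H \dashv \coind_H$ with both functors t-exact: if $M$ is compact in $\Rep(H)_{X^I_{\dR}}^{\geq 0}$ then $\oblv_H(M)$ is compact in $\DD(X^I)^{\geq 0}$, hence almost compact since $\DD(X^I)$ is coherent, and then Lemma~\ref{aclem} pushes this back to $M$. You should insert this argument (or an equivalent one via Lemma~\ref{cohcrit}).

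Relatedly, the claim ``being ULA, it is in particular compact'' is not automatic and needs justification; it is true here because $\Loc_{X^I}$ is symmetric monoidal so $\Loc_{X^I}(V)$ is dualizable, and the unit is compact, but you should say why. Finally, for conditions (ii)--(iii) and the factorization compatibility, your explicit unpacking is correct in spirit (modulo the direction of the projection $X^J \to X^I$ for $I \hookrightarrow J$), but the paper's approach is more economical: since $\oblv_H : \Rep(H)_{X^I_{\dR}} \to \DD(X^I)$ is t-exact, conservative, and compatible with all the structure maps, each of these conditions is inherited directly from $\DD(X^I)$ without needing to identify the functors by hand.
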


\begin{proof}

The t-structure on $\Rep(H)_{X^I_{\dR}}$ inherits compatibility with filtered colimits and right completeness from $\DD(X^I)$ via the conservative and t-exact functor $\oblv_H$. To see that this t-structure is almost compactly generated, it remains to prove that if an object $M$ is compact in $\Rep(H)_{X^I_{\dR}}^{\geq 0}$, then it is almost compact in $\Rep(H)_{X^I_{\dR}}$. By the previous proposition, we have the $\DD(X^I)$-linear adjunction \[ \oblv_H : \Rep(H)_{X^I_{\dR}} \rightleftarrows \DD(X^I) : \coind_H \] with both functors t-exact. Thus $\oblv_H(M)$ is compact in $\DD(X^I)^{\geq 0}$, hence almost compact because $\DD(X^I)$ is almost compactly generated. Now Lemma \ref{aclem} implies that $M$ is almost compact.

If $\{ V_{\alpha} \}$ is a set of compact generators for $\Rep(H)^{\otimes I}$, then $\{ \Loc_{X^I}(V_{\alpha}) \}$ is a set of ULA generators for $\Rep(H)_{X^I_{\dR}}$. Note that $\Loc_{X^I}$ is t-exact up to shift, and in particular each of these objects is eventually coconnective. It follows that $\Rep(H)_{X^I_{\dR}}$ is almost ULA generated and that \[ (\Rep(H)_{X^I_{\dR}})^{\ren} \tilde{\longrightarrow} \Rep(H)_{X^I_{\dR}}. \]

It is not difficult to see that the other conditions in Propositions \ref{sheafranren} and \ref{factcatren} are are inherited from $\DD(X^I)$ via $\oblv_H$.

\end{proof}

\subsection{The t-structure on factorization modules} Fix a commutative algebra $A$ in $\Rep(H)^{\leq 0}$. Recall that we denote the associated factorization algebra in $\Rep(H)$ by the same symbol, and in particular we can consider $A\modfact(\Rep(H))$, an object in $\FactCat^{\laxfact}$.

\begin{proposition}

\label{tstrcomalg}

For any finite set $I$, there is a unique t-structure on $A\modfact(\Rep(H))_{X^I_{\dR}}$ which makes the forgetful functor \[ \oblv^{\fact} : A\modfact(\Rep(H))_{X^I_{\dR}} \longrightarrow \Rep(H)_{X^I_{\dR}} \] t-exact.

\end{proposition}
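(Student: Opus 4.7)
The forgetful functor $\oblv^{\fact}$ is conservative (as noted in the discussion of factorization modules). Consequently, if it is t-exact for some t-structure on $A\modfact(\Rep(H))_{X^I_{\dR}}$, that t-structure is uniquely characterized: an object $M$ is coconnective if and only if $\oblv^{\fact}(M)$ is, since $\oblv^{\fact}(\tau^{<0} M) = \tau^{<0} \oblv^{\fact}(M) = 0$ in the coconnective case, whence $\tau^{<0} M = 0$ by conservativity. The analogous reasoning applies to connective objects.

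\textbf{Existence via monadic transfer.} The plan is to define
\[ (A\modfact(\Rep(H))_{X^I_{\dR}})^{\leq 0} := \{ M : \oblv^{\fact}(M) \in (\Rep(H)_{X^I_{\dR}})^{\leq 0} \}, \]
and similarly for the coconnective subcategory, then verify the t-structure axioms. The principal tool is the left adjoint $\ind^{\fact}$ to $\oblv^{\fact}$ provided by induction of scalars along the unit map $\unit \to A$ in $\FactAlg(\Rep(H))$, as in the section on functoriality of factorization modules. The adjunction $\ind^{\fact} \dashv \oblv^{\fact}$ is monadic by the conservativity of $\oblv^{\fact}$. Granting right t-exactness of $\ind^{\fact}$ (see next paragraph), the Hom-vanishing axiom reduces via the adjunction to Hom-vanishing in $\Rep(H)_{X^I_{\dR}}$, since the candidate connective subcategory is generated under colimits by $\ind^{\fact}$-images of connective objects; truncations are then constructed by standard monadic arguments, identifying $A\modfact(\Rep(H))_{X^I_{\dR}}$ with algebras over the monad $\oblv^{\fact} \circ \ind^{\fact}$ and using that the monad preserves the connective subcategory of $\Rep(H)_{X^I_{\dR}}$.

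\textbf{Main obstacle: right t-exactness of $\ind^{\fact}$.} The technical heart of the argument is showing that the monad $\oblv^{\fact} \circ \ind^{\fact}$ preserves $(\Rep(H)_{X^I_{\dR}})^{\leq 0}$; this is what forces the connectivity hypothesis $A \in \Rep(H)^{\leq 0}$. The cleanest route is to invoke Proposition \ref{oblvunitff} to reduce to the non-unital setting, then apply the chiral-Lie equivalence of the section on chiral modules and the chiral envelope construction of \S\ref{chirenvsec} to express the free functor explicitly. Since the chiral tensor product $\otimes^{\ch}$ is assembled from the t-exact operations $j^!$ (open restriction) and $(u \circ j)_*$ (schematic étale pushforward) on the Ran space, and since $A$ is connective, one can propagate right t-exactness through each step of the construction. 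The bookkeeping is somewhat delicate because of the cohomological shifts built into the Francis-Gaitsgory identification $A \leftrightarrow \Delta^! A[-1]$ and into the construction of the chiral envelope, but these affect only the normalization of the t-structure and not the validity of right t-exactness; tracking these shifts is the main computational burden of the proof.
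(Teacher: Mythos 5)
Your uniqueness argument (conservativity of $\oblv^{\fact}$ forces the t-structure to be detected on the underlying category) is correct and matches the paper's implicit reasoning.

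For existence, the paper takes a more direct route that avoids any left adjoint. After reducing to non-unital modules via Proposition \ref{oblvunitff} and passing to the chiral picture, it shows that the truncation $\tau^{\leq 0}M$ of a chiral module $M$ — the truncation taken in the underlying category $\Rep(H)_{X^I_{\dR}}$ — carries a canonical chiral $(A\otimes\omega_X[-1])$-module structure making $\tau^{\leq 0}M \to M$ a module map. The verification comes down to two facts: that $j_*j^*((A\otimes\omega_X[-1])\boxtimes\tau^{\leq 0}M)$ is connective (because $j$ is an affine open embedding and $A$ is connective), and that $i_{\dR,*}p^!$ is t-exact, which is precisely Lemma \ref{finflattexact} and uses that the projection $p : Z_I \to X^I$ from the incidence divisor is finite flat between classical lci schemes. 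Your proposal does not engage with this lemma, yet it is the technical backbone of the existence proof.

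Your monadic transfer plan could work in principle, but the sketch has two genuine gaps. First, the left adjoint $\ind^{\fact}$ is not supplied by the functoriality-of-factorization-modules section you cite, which only gives restriction of scalars; its existence follows from the equivalence with chiral (Lie) modules, but this needs to be established, and the resulting free functor is not easy to describe explicitly. Second, your account of the "main obstacle" conflates the free module functor with the chiral envelope $\UU^{*\to\ch}$ of the cited section — the latter produces chiral algebras from Lie-$*$ algebras, not free chiral modules — and your claim that $\otimes^{\ch}$ is built from t-exact operations overlooks that the module structure map lands in $i_{\dR,*}p^!(-)$, whose t-exactness is nontrivial and is exactly what Lemma \ref{finflattexact} supplies. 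The "delicate bookkeeping" you defer is where the actual content of the proof lives; as written, the proposal recasts the problem in plausible-looking monadic language without doing the essential verification.
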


We will need the following lemma.

\begin{lemma}

\label{finflattexact}

If $Y$ and $Z$ are classical local complete intersection schemes and $f : Y \to Z$ is finite flat, then $f^! : \DD(Z) \to \DD(Y)$ is t-exact.

\end{lemma}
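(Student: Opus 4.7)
The plan is to reduce the t-exactness of $f^!_{\DD}$ to the t-exactness of its $\IndCoh$ counterpart, and then apply Grothendieck duality together with the lci hypothesis.

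First I would use that the forgetful functor $\oblv_Y : \DD(Y) \to \IndCoh(Y)$ is conservative and, with standard normalizations of the two t-structures for classical finite-type schemes, is t-exact; since $Y$ and $Z$ have the same dimension ($f$ being finite and flat), any dimension-dependent normalization cancels in the comparison. Combined with the standard identification $\oblv_Y \circ f^!_{\DD} \simeq f^!_{\IndCoh} \circ \oblv_Z$ (which holds for morphisms of the type under consideration), t-exactness of $f^!_{\DD}$ is equivalent to t-exactness of $f^!_{\IndCoh} : \IndCoh(Z) \to \IndCoh(Y)$.

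For finite $f$, Grothendieck duality yields a canonical isomorphism of functors \[ f^!_{\IndCoh}(\mathcal{F}) \;\simeq\; f^*(\mathcal{F}) \otimes_{\sO_Y} \omega_{Y/Z}, \] where $\omega_{Y/Z} := f^!(\sO_Z)$ is the relative dualizing complex. The crux of the argument is to show that $\omega_{Y/Z}$ is a line bundle concentrated in cohomological degree $0$. Since $Y$ and $Z$ are both lci over $\Spec k$ and $f$ is flat, the two-out-of-three property for lci morphisms implies that $f$ itself is lci; being finite flat, it has relative dimension $0$. The relative dualizing complex of an lci morphism of relative dimension $0$ is an invertible $\sO_Y$-module in degree $0$ (equivalently, $f$ is Gorenstein of relative dimension $0$), which is the desired statement.

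Given this, $f^*$ is t-exact on $\IndCoh$ because $f$ is flat, and tensoring with a line bundle in degree $0$ is t-exact; hence $f^!_{\IndCoh}$ is t-exact, and therefore so is $f^!_{\DD}$. The main obstacle, which I view as mostly a routine bookkeeping matter, is pinning down the precise compatibility between the $\DD$ and $\IndCoh$ t-structures via $\oblv$ in the possibly non-smooth lci setting; the Gorenstein hypothesis on both $Y$ and $Z$ ensures that the dualizing complexes on each side are shifted line bundles and that the shifts match, so no pathologies arise in the comparison.
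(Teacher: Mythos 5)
Your overall reduction — from $f^!_{\DD}$ to $f^!_{\IndCoh}$ via $\oblv$, then using the Gorenstein property of $f$ — is the same route the paper takes, and the second half of your argument (left t-exactness from finiteness, right t-exactness because $\omega_{Y/Z}$ is a line bundle in degree $0$) is essentially identical to the paper's. One small caution on that half: the cancellation property for lci morphisms usually requires the \emph{second} map to be smooth, not merely lci, so invoking ``two-out-of-three for lci'' to conclude $f$ is lci is not quite on solid ground. But you don't actually need $f$ lci — you only need $f$ Gorenstein of relative dimension $0$, which follows immediately from $\omega_{Y/Z} \simeq \omega_Y \otimes f^*\omega_Z^{-1}$ being an invertible sheaf (both $Y$ and $Z$ Gorenstein) placed in degree $0$ (finite flat implies $\dim Y = \dim Z$). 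So the conclusion is fine even if the cited lemma isn't.

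The real gap is the step you dismiss as ``routine bookkeeping'': that $\oblv_Y : \DD(Y) \to \IndCoh(Y)$ is t-exact. This is \emph{not} a formal normalization issue and does not hold for an arbitrary classical finite-type scheme. For any scheme $\oblv_Y$ is left t-exact (it is right adjoint to the right-t-exact $\ind_Y$), but right t-exactness genuinely requires a hypothesis, and this is precisely where the lci assumption enters the paper's proof. The paper's argument: locally choose a regular closed embedding $i: Y \to W$ with $W$ smooth; the composite $\oblv_W \circ i_{\dR,*}$ is t-exact and carries a nonnegative filtration with associated graded $\Sym_{\sO_W}\NN(Y/W) \otimes i_*\oblv_Y(-)$; since $i_*\oblv_Y(-)$ appears as the zeroth graded piece (equivalently a direct summand of the associated graded) and $i_*$ is t-exact and conservative, right t-exactness of $\oblv_Y$ follows. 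Your proposal would need to supply this argument (or an equivalent one) rather than wave at ``standard normalizations'' and matching shifts; the Gorenstein condition alone does not obviously give it, since what is used is the existence of a regular embedding into a smooth scheme, i.e., lci-ness, not just the invertibility of the dualizing complex.
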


\begin{proof}

First, we claim that the forgetful functor \[ \oblv_Y : \DD(Y) \longrightarrow \IndCoh(Y) \] is t-exact. Recall that for any scheme $Y$ the functor $\oblv_Y$ is left t-exact, so it suffices to prove right t-exactness. Since the claim is Zariski local on $Y$, we can assume that there exists a regular closed embedding $i : Y \to W$ where $W$ is smooth. The composite functor \[ \DD(Y) \xrightarrow{i_{\dR,*}} \DD(W) \xrightarrow{\oblv_W} \IndCoh(W) \] is t-exact, and admits a nonnegative filtration with associated graded \[ \Sym_{\sO_W}(\NN(Y/W)) \otimes i_*\oblv_Y(-) \] (here $\NN(Y/W)$ denotes the normal bundle, and the tensor product denotes the action of $\QCoh(W)$ on $\IndCoh(W)$). Since this associated graded contains $i_*\oblv_Y(-)$ as a direct summand and $i_*$ is t-exact and conservative, it follows that $\oblv_Y$ is right t-exact.

Using the commutative square \[
\begin{tikzcd}
\DD(Z) \arrow{r}{f^!} \arrow{d}{\oblv_Z} & \DD(Y) \arrow{d}{\oblv_Y} \\
\IndCoh(Z) \arrow{r}{f^!} & \IndCoh(Y),
\end{tikzcd} \]
it is therefore enough to prove that $f^! : \IndCoh(Z) \longrightarrow \IndCoh(Y)$ is t-exact. This functor is left t-exact because $f$ is finite. On the other hand, since $Y$ is Gorenstein, so is the morphism $f$, which implies right t-exactness.

\end{proof}

Now we can prove the proposition.

\begin{proof}[Proof of Proposition \ref{tstrcomalg}]

First, we claim that if such a t-structure exists on \[ A\mod^{\fact,\nonuntl}(\Rep(H))_{X^I_{\dR}}, \] then the proposition will follow. Namely, by Proposition \ref{oblvunitff} we have \[ A\mod^{\fact,\nonuntl}(\Rep(H))_{X^I_{\dR}} \cong A\modfact(\Rep(H))_{X^I_{\dR}} \times \Rep(H)_{X^I_{\dR}}. \] It therefore suffices to observe that the direct factor $0 \times  \Rep(H)_{X^I_{\dR}}$ is stable under truncation functors.

Thus we are reduced to proving the proposition for non-unital factorization modules. In that situation, we have the equivalence \[ A\mod^{\fact,\nonuntl}(\Rep(H))_{X^I_{\dR}} \tilde{\longrightarrow} (A \otimes \omega_X[-1])\mod^{\ch}(\Rep(H))_{X^I_{\dR}}, \] which intertwines the forgetful functors to $\Rep(H)_{X^I_{\dR}}$. If $M$ is an object of \[ (A \otimes \omega_X[-1])\mod^{\ch}(\Rep(H))_{X^I_{\dR}}, \] we claim that the truncation $\tau^{\leq 0} M$ taken in $\Rep(H)_{X^I_{\dR}}$ admits a chiral $(A \otimes \omega_X[-1])$-module structure compatible with the morphism $\tau^{\leq 0}M \to M$. It will then follow that\[ \Hom_{(A \otimes \omega_X[-1])\mod^{\ch}(\Rep(H))_{X^I_{\dR}}}(N,\tau^{\leq 0} M) \tilde{\longrightarrow} \Hom_{(A \otimes \omega_X[-1])\mod^{\ch}(\Rep(H))_{X^I_{\dR}}}(N,M) \] for any $N$ which is connective in $\Rep(H)_{X^I_{\dR}}$, since \[ \Hom_{\Rep(H)_{X^I_{\dR}}}(N,\tau^{>0}M) = 0 \] and $\oblv_{A \otimes \omega_X[-1]}$ is conservative.

Let $i : Z_I \to X \times X^I$ denote the inclusion of the incidence divisor and $j : (X \times X^I)_{\disj} \to X \times X^I$ its complement, and write $p : Z_I \to X^I$ for the projection. Note that by Lemma \ref{finflattexact}, the functor \[ i_{\dR,*}p^! : \Rep(H)_{X^I_{\dR}} \longrightarrow \Rep(H)_{X_{\dR}} \otimes \Rep(H)_{X^I_{\dR}} \] is t-exact. The structure map of $M$ is a morphism \[ j_*j^*((A \otimes \omega_X[-1]) \boxtimes M) \longrightarrow i_{\dR,*}p^! M \] in $\Rep(H)_{X_{\dR}} \otimes \Rep(H)_{X^I_{\dR}}$. The object \[ j_*j^*((A \otimes \omega_X[-1]) \boxtimes \tau^{\leq 0} M) \] is connective, and hence the composition \[ j_*j^*((A \otimes \omega_X[-1]) \boxtimes \tau^{\leq 0} M) \longrightarrow j_*j^*((A \otimes \omega_X[-1]) \boxtimes M) \longrightarrow i_{\dR,*}p^! M \] factors uniquely through \[ \tau^{\leq 0}i_{\dR,*}p^! M = i_{\dR,*}p^!\tau^{\leq 0} M. \] The same reasoning applies to $n$-ary chiral operations for $n > 1$, which yields the desired chiral module structure on $\tau^{\leq 0}M$.

\end{proof}

\subsection{Commutative factorization modules} We continue to assume that $A$ is a commutative algebra in $\Rep(H)^{\leq 0}$.

\begin{lemma}

\label{tstrlem1}

For any finite set $I$, we have $H^i(A_{X^I_{\dR}}) = 0$ for all $i > -\# I$. If $A$ is eventually coconnective, then so is $A_{X^I_{\dR}}$.

\end{lemma}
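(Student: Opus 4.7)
The plan is to identify $A_{X^I_{\dR}}$ explicitly via the $\Loc$ construction of \S\ref{comfactcat}, and then read off the amplitude bound from the basic interaction of $\Loc$ with t-structures. By the construction of the constant factorization algebra $A_{\Ran}$ attached to a commutative algebra $A \in \ComAlg(\Rep(H))$, the $!$-restriction $A_{X^I_{\dR}} = A_{\Ran}|^!_{X^I_{\dR}}$ is obtained by applying the symmetric monoidal functor
\[ \Loc_{X^I} : \Rep(H)^{\otimes I} \longrightarrow \Rep(H)_{X^I_{\dR}} \]
from Remark 6.8.4 of \cite{R2} to the $I$-fold external tensor product $A^{\boxtimes I} \in \Rep(H)^{\otimes I}$, which inherits its commutative algebra structure from $A$.

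The second ingredient, already exploited in the proof of Proposition \ref{repulagen}, is that $\Loc_{X^I}$ is t-exact up to a cohomological shift by $-\#I$. The shift is pinned down by the case $\#I = 1$, where $\Loc_X(V) = V \otimes \omega_X$ and the dualizing object $\omega_X$ sits in cohomological degree $-1$ (as required by the normalization $A_{\Ran}|^!_X = A \otimes \omega_X$ recorded in \S\ref{factalgsec}); the general case follows by symmetric monoidality of $\Loc$ together with the compatibility of external products with t-structures supplied by Lemma \ref{tensorexact}.

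Granted these two inputs, both claims are immediate. Since $A$ lies in $\Rep(H)^{\leq 0}$, iterated application of Lemma \ref{tensorexact}(i) places $A^{\boxtimes I}$ in $(\Rep(H)^{\otimes I})^{\leq 0}$; applying $\Loc_{X^I}$ then lands $A_{X^I_{\dR}} \cong \Loc_{X^I}(A^{\boxtimes I})$ in cohomological degrees $\leq -\#I$, which is the desired vanishing. If $A$ is in addition eventually coconnective, then the same exterior-product argument, now invoking Lemma \ref{tensorexact}(ii) (applicable because the t-structure on $\Rep(H)$ is compactly generated), shows that $A^{\boxtimes I}$ is eventually coconnective in $\Rep(H)^{\otimes I}$, whence so is $A_{X^I_{\dR}} = \Loc_{X^I}(A^{\boxtimes I})$.

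The only step requiring genuine care -- and the one I would expect to be the main technical obstacle -- is the identification $A_{X^I_{\dR}} \cong \Loc_{X^I}(A^{\boxtimes I})$, which unwinds the abstract construction of the constant factorization algebra attached to a commutative algebra. Once this identification is granted, the remainder of the argument amounts to bookkeeping the shift produced by $\omega_{X^I}$ and the elementary t-structure behavior of exterior products.
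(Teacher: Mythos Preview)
The identification $A_{X^I_{\dR}} \cong \Loc_{X^I}(A^{\boxtimes I})$, which you correctly flag as the crux of the argument, is false. To see this, restrict along the main diagonal $\Delta : X_{\dR} \to X^I_{\dR}$. On the one hand, $\Delta^! A_{X^I_{\dR}}$ is the value of the factorization algebra over $X$, namely $A \otimes \omega_X$. On the other hand, $\Loc$ is compatible with the symmetric monoidal structure functors, so $\Delta^! \Loc_{X^I}(A^{\boxtimes I}) = \Loc_X(A^{\otimes \#I}) = A^{\otimes \#I} \otimes \omega_X$. These agree only when $A^{\otimes \#I} \cong A$, which fails for essentially any nontrivial $A$. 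The object $\Loc_{X^I}(A^{\boxtimes I})$ is the correct answer on the open stratum where all points are distinct, but it does not account for the collisions.

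The paper's argument instead uses the Cousin filtration associated to the diagonal stratification of $X^I$: the $!$-restriction of $A_{X^I_{\dR}}$ to the stratum $Z(p)$ indexed by a surjection $p : I \twoheadrightarrow J$ is $A^{\otimes J} \otimes \omega_{Z(p)}$, so the Cousin subquotients have the form $A^{\otimes J} \otimes j_* j^! \omega_{Z(p)}$ with $J$ ranging over \emph{all} quotients of $I$. For the second assertion, eventual coconnectivity of $A$ implies the same for each $A^{\otimes J}$, and the D-modules $j_* j^! \omega_{Z(p)}$ are bounded, so every subquotient is eventually coconnective. The first assertion follows by the same filtration (or the argument of \cite{R2} Lemma 6.24.3): each $A^{\otimes J}$ is connective, and one checks the relevant shift for $j_* j^! \omega_{Z(p)}$. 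Your amplitude bookkeeping via Lemma \ref{tensorexact} is exactly what is needed for each individual stratum; the missing ingredient is the stratification itself.
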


\begin{proof}

It suffices to prove both claims after applying the t-exact and conservative functor \[ \oblv_H : \Rep(H)_{X^I_{\dR}} \longrightarrow \DD(X^I), \] so we can assume that $H$ is trivial.

The first assertion can be proved by the same argument as Lemma 6.24.3 of \cite{R2}.

The second assertion is clear from the Cousin filtration, whose subquotients have the form $A^{\otimes J} \otimes j_*j^!\omega_{Z(p)}$ where $p : I \to J$ is surjective and $j : Z(p) \to X^I$ is the inclusion of the corresponding stratum.

\end{proof}

The symmetric monoidal functor \[ \ind_A : \Rep(H) \longrightarrow A\mod(\Rep(H)) \] gives rise to a morphism of factorization categories. Since these symmetric monoidal categories are rigid, Lemma \ref{rigidrightadj} says that $\ind_A$ admits a right adjoint \[ \oblv_A : A\mod(\Rep(H)) \longrightarrow \Rep(H) \] in $\FactCat_{\laxuntl}$.

\begin{proposition}

\label{comfactmodtstr}

For any finite set $I$, the category $A\mod(\Rep(H))_{X^I_{\dR}}$ admits a unique t-structure such that \[ \oblv_A : A\mod(\Rep(H))_{X^I_{\dR}} \longrightarrow \Rep(H)_{X^I_{\dR}} \] is t-exact.

\end{proposition}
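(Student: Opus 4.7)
The plan is to identify the fiber $A\mod(\Rep(H))_{X^I_{\dR}}$ with the category of modules over a connective commutative algebra in $\Rep(H)_{X^I_{\dR}}$, after which the desired t-structure follows from standard theory.

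First, the strong symmetric monoidal functor $\ind_A : \Rep(H) \to A\mod(\Rep(H))$ induces a morphism in $\FactCat$. Lemma \ref{rigidrightadj} applies, since $\Rep(H)$ is rigid and compactly generated and the unit $A \in A\mod(\Rep(H))$ is compact (the latter because $\Hom_{A\mod(\Rep(H))}(A,-) \cong \Hom_{\Rep(H)}(\e,\oblv_A(-))$ is continuous). Thus $\ind_A$ admits a right adjoint $\oblv_A$ in $\FactCat_{\laxuntl}$, yielding fiberwise over $X^I_{\dR}$ a continuous adjunction $\ind_A \dashv \oblv_A$. Since $\Rep(H)$ is rigid and $\ind_A$ is strong symmetric monoidal, $\oblv_A$ is canonically $\Rep(H)_{X^I_{\dR}}$-linear, so the induced monad $\oblv_A \circ \ind_A$ is tensor product with $A_{X^I_{\dR}} := \oblv_A \ind_A(\e)$.

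Next, I invoke Barr--Beck--Lurie to obtain an equivalence
\[ A\mod(\Rep(H))_{X^I_{\dR}} \simeq A_{X^I_{\dR}}\mod(\Rep(H)_{X^I_{\dR}}) \]
under which $\oblv_A$ becomes the usual forgetful functor. The required conservativity of $\oblv_A$ on the fiber follows because $A\mod(\Rep(H))_{X^I_{\dR}}$ is generated under colimits by the image of $\Loc_{X^I}$ from $A\mod(\Rep(H))^{\otimes I}$, and $\oblv_A$ is compatible with $\Loc_{X^I}$ in each tensor factor, reducing to the evident conservativity of the ordinary forgetful functor out of $A\mod(\Rep(H))^{\otimes I}$.

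Finally, Lemma \ref{tstrlem1} gives $A_{X^I_{\dR}} \in \Rep(H)_{X^I_{\dR}}^{\leq -\# I}$, in particular connective. Since the symmetric monoidal structure on $\Rep(H)_{X^I_{\dR}}$ (inherited via $\Loc_{X^I}$ from $\Rep(H)^{\otimes I}$) is compatible with its t-structure, the monad $A_{X^I_{\dR}} \otimes -$ is right t-exact. Standard theory of modules over a connective commutative algebra in a presentable stable symmetric monoidal category with compatible t-structure then supplies a unique t-structure on $A_{X^I_{\dR}}\mod(\Rep(H)_{X^I_{\dR}})$ with $M$ (co)connective iff $\oblv M$ is (co)connective; in particular, the forgetful functor is t-exact, and uniqueness is immediate. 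I expect the main obstacle to be the identification in the middle step --- specifically, pinning down that the fiber of the commutative factorization category attached to $A\mod(\Rep(H))$ is monadic over $\Rep(H)_{X^I_{\dR}}$ with monad exactly tensoring by the restricted factorization algebra $A_{X^I_{\dR}}$.
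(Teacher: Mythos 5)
Your overall strategy matches the paper's: identify the monad $\oblv_A \circ \ind_A$ with tensoring by $A_{X^I_{\dR}}$, use Lemma \ref{tstrlem1} to control the amplitude of $A_{X^I_{\dR}}$, conclude right t-exactness of the monad, then invoke the standard theory of modules over a right t-exact monad. The structure and the cited inputs (Lemma \ref{rigidrightadj} for the existence of the factorization right adjoint, Lemma \ref{tstrlem1} for the degree bound on the restricted factorization algebra) are the same ones the paper relies on.

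However, there is a genuine error in your final step. You quote Lemma \ref{tstrlem1} to get $A_{X^I_{\dR}} \in \Rep(H)_{X^I_{\dR}}^{\leq -\#I}$, but then discard this precision, retaining only ``in particular connective,'' and claim the monad is right t-exact because ``the symmetric monoidal structure on $\Rep(H)_{X^I_{\dR}}$ is compatible with its t-structure.'' That compatibility claim is false: the $\otimes^!$-product on $\DD(X^I)$ (and hence on $\Rep(H)_{X^I_{\dR}}$) has cohomological amplitude $\#I$, not $0$. For example on $X$ itself, $k_X = \omega_X[-1]$ is in cohomological degree $0$ but $k_X \otimes^! k_X = \omega_X[-2]$ is in degree $1$. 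Your appeal to $\Loc_{X^I}$ does not save this, since $\Loc_{X^I}$ is only t-exact up to a shift by $\#I$. The correct reasoning --- which is exactly what the paper does --- uses the full strength of the bound: $A_{X^I_{\dR}}$ sits in degrees $\leq -\#I$, the tensor product raises degrees by at most $\#I$, and these two shifts cancel, so $A_{X^I_{\dR}} \otimes^! (-)$ is right t-exact. Connectivity of $A_{X^I_{\dR}}$ alone would not suffice. Your conservativity/monadicity discussion is serviceable (the image of $\ind_A$ does generate the fiber under colimits, which gives conservativity of $\oblv_A$), though as written it is imprecise about what is being reduced to what. This point the paper glosses over, so your instinct to address it is reasonable.
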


\begin{proof}

The functor $\oblv_A$ admits the $\DD(X^I)$-linear left adjoint $\ind_A$, and the monad $\oblv_A \circ \ind_A$ on $\Rep(H)_{X^I_{\dR}}$ is given by tensoring with the commutative algebra $A_{X^I}$. Since the tensor product in $\DD(X^I)$ is has cohomological amplitude $\# I$, the same holds for $\Rep(H)_{X^I_{\dR}}$, so Lemma \ref{tstrlem1} implies that this monad is right t-exact. The proposition follows.

\end{proof}

The forgetful functor from commutative to factorization $A$-modules is a morphism \[ \oblv^{\comfact} : A\mod(\Rep(H)) \longrightarrow A\modfact(\Rep(H)) \] in $\FactCat^{\laxfact}$. It is compatible with the forgetful functors to $\Rep(H)$, and in particular the functor \[ \oblv^{\comfact} : A\mod(\Rep(H))_{X^I_{\dR}} \longrightarrow A\modfact(\Rep(H))_{X^I_{\dR}} \] is t-exact for any finite set $I$.

Note that the unitality of $\oblv^{\comfact}$ implies that \[ \oblv^{\comfact}A_{\Ran} = \Vac_A. \]

\subsection{Factorization modules in the heart} Let $\sA$ be a symmetric monoidal DG category, $\sC$ an $\sA$-module category, and $\sC_0 \subset \sC$ a (not necessarily stable) full subcategory. For any associative algebra $A$ in $\sA$, we can consider \[ A\mod(\sC_0) := A\mod(\sC) \underset{\sC}{\times} \sC_0. \]

We will use the following formal lemma.

\begin{lemma}

\label{resequivlem}

Suppose that $A \to B$ is a morphism of associative algebras in $\sA$ with the property that \[ \Hom_{\sC}(B^{\otimes n} \otimes M,N) \longrightarrow \Hom_{\sC}(A^{\otimes n} \otimes M,N) \] is an isomorphism for all $n \geq 1$ and all $M,N$ in $\sC_0$. Then the restriction of scalars functor \[ B\mod(\sC_0) \longrightarrow A\mod(\sC_0) \] is an equivalence.

\end{lemma}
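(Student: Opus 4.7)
The plan is to deduce both fully faithfulness and essential surjectivity of the restriction functor from the standard cosimplicial description of mapping spaces and module structures via the monadic bar resolution.

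For fully faithfulness, for any $M \in A\mod(\sC)$ the bar resolution exhibits $M$ as the geometric realization $|\on{Bar}_\bullet(A,A,M)|$ of the simplicial $A$-module with $\on{Bar}_n = A \otimes A^{\otimes n} \otimes M$ (the free $A$-module on $A^{\otimes n} \otimes M$). Together with the free-forget adjunction, this gives, for any $N \in A\mod(\sC)$,
\[
\Map_{A\mod(\sC)}(M,N) \simeq \Tot_{[n] \in \Delta}\Map_{\sC}(A^{\otimes n} \otimes M, N),
\]
with cosimplicial structure built from the multiplication and unit of $A$ together with the action maps on $M$ and $N$. The analogous identity computes the mapping space on the $B$-side, and the algebra map $\phi: A \to B$ induces a comparison of the two cosimplicial objects. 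For $M,N \in \sC_0$, this comparison is a termwise equivalence at each level $n \geq 1$ by hypothesis and is the identity at $n = 0$; the resulting equivalence of totalizations is exactly fully faithfulness of restriction.

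For essential surjectivity, I would use the parallel description of the fiber of $A\mod(\sC_0) \to \sC_0$ over a fixed $M \in \sC_0$ --- the space of $A$-module structures on $M$ --- as a cosimplicial totalization whose $n$-th term involves $\Map_{\sC}(A^{\otimes n} \otimes M, M)$, encoding the iterated action together with all higher coherences. The hypothesis identifies this space with its $B$-analogue, so every $A$-module structure on an object of $\sC_0$ lifts uniquely to a $B$-module structure. The $1$-categorical shadow is transparent: the $n=1$ bijection lifts $\mu_A : A \otimes M \to M$ uniquely to $\tilde\mu_B : B \otimes M \to M$, while the $n=2$ bijection, applied to the associativity equation for $\mu_A$, promotes it to the associativity equation for $\tilde\mu_B$.

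The one point that lies slightly outside the scope of the hypothesis is the unitality axiom, which corresponds to the $n = 0$ term. This is automatic for a formal reason: since $\phi$ is a map of algebras, $\phi \circ 1_A = 1_B$, and therefore $B$-unitality of $\tilde\mu_B$ reduces to $A$-unitality of $\mu_A$. In the $\infty$-categorical setting the same identity controls the cosimplicial comparison at degeneracies, so the equivalence of totalizations encompasses the unitality data without any additional input beyond the stated hypothesis.
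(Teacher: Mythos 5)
The paper gives no proof for this lemma (it is presented as ``formal''), so there is no paper argument to compare against; I'll assess the proposal on its own terms.

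Your full faithfulness argument is correct. Working from the monadic bar resolution, the cleanest way to see the comparison as levelwise is to map the two-sided bar objects $\on{Bar}_{\bullet}(B,A,M)$ and $\on{Bar}_{\bullet}(B,B,M)$ into $N$ in $B\mod(\sC)$: the first totalization computes $\Map_{A\mod}(M,N)$ via adjunction for $B \otimes_A(-)$, the second computes $\Map_{B\mod}(M,N)$, the comparison is induced by $\phi^{\otimes n}\otimes\id_M$, and each level is an equivalence by hypothesis for $n\geq 1$ and trivially for $n=0$. This is the content of what you wrote, just made explicit.

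Your essential surjectivity argument has the right idea but the stated cosimplicial formula is not literally well-defined: a cosimplicial object $[n]\mapsto\Map_{\sC}(A^{\otimes n}\otimes M,M)$ cannot have its ``action'' cofaces specified without a pre-existing $A$-module structure on $M$, which is precisely what is being parametrized. The standard precise formulation passes through the endomorphism algebra $\uEnd_{\sC}(M)\in\AssocAlg(\sA)$: the space of $A$-module structures on $M$ is $\Map_{\AssocAlg(\sA)}(A,\uEnd_{\sC}(M))$, and the monadic bar resolution of $A$ along the free--forget adjunction $\sA\rightleftarrows\AssocAlg(\sA)$ exhibits this as a totalization whose $n$-th term is a \emph{product} of spaces $\Map_{\sA}(A^{\otimes m},\uEnd_{\sC}(M))\simeq\Map_{\sC}(A^{\otimes m}\otimes M,M)$, indexed by the tensor powers appearing in $(\oblv\circ\mathrm{free})^{n}(A)$. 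The hypothesis covers all $m\geq 1$, and $m=0$ contributes the identity on $\Map_{\sC}(M,M)$; this is exactly where your observation about unitality lives, so that part of your discussion is on target. With this correction the argument is sound.
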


Let us fix a finite set $I$.

\begin{lemma}

\label{tstrlem2}

Fix $m \geq 0$. Let $A \to B$ be a morphism of connective commutative algebras in $\Rep(H)$ such that $H^i(A) \tilde{\to} H^i(B)$ for all $i \geq -m$. We have \[ H^i(A_{X^I_{\dR}}) \tilde{\longrightarrow} H^i(B_{X^I_{\dR}}) \] for all $i \geq -\# I - m$.

\end{lemma}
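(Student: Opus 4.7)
The plan is to bound the cohomological amplitude of $D := \cofib(A_{X^I_{\dR}} \to B_{X^I_{\dR}})$ by extending the Cousin-filtration argument used in the proof of Lemma \ref{tstrlem1}.

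First, I set $C := \cofib(A \to B)$ in $\Rep(H)$; the hypothesis $H^i(A) \tilde{\to} H^i(B)$ for $i \geq -m$ is equivalent to $C \in \Rep(H)^{\leq -m-1}$. The construction $A \mapsto A_{X^I_{\dR}}$ is functorial on connective commutative algebras, so $A_{X^I_{\dR}} \to B_{X^I_{\dR}}$ respects the Cousin filtrations on both sides. Consequently $D$ inherits a finite filtration whose subquotient indexed by a surjection $p : I \to J$ is
\[
\cofib(A^{\otimes J} \to B^{\otimes J}) \otimes j_*j^!\omega_{Z(p)}.
\]

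Next, I will control $\cofib(A^{\otimes J} \to B^{\otimes J})$. Filtering $f^{\otimes J}$ by inserting $B$ one tensor factor at a time produces a length-$\#J$ filtration of this cofiber whose subquotients take the form $A^{\otimes a} \otimes C \otimes B^{\otimes b}$ with $a + b = \#J - 1$. Since the internal tensor product on $\Rep(H)$ is t-exact in each variable and $A, B \in \Rep(H)^{\leq 0}$ while $C \in \Rep(H)^{\leq -m-1}$, every such subquotient, and hence $\cofib(A^{\otimes J} \to B^{\otimes J})$, belongs to $\Rep(H)^{\leq -m-1}$.

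Finally, applying the same cohomological estimate used in the proof of Lemma \ref{tstrlem1} (now with $\cofib(A^{\otimes J} \to B^{\otimes J})$ in place of $A^{\otimes J}$) shows that each subquotient of $D$, and therefore $D$ itself, lies in $\Rep(H)_{X^I_{\dR}}^{\leq -\#I - m - 1}$. The long exact sequence in cohomology then yields the asserted isomorphism range.

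The only substantive bookkeeping is the naturality of the Cousin filtration in the commutative-algebra variable, namely that the $p$-th associated graded of $A_{X^I_{\dR}} \to B_{X^I_{\dR}}$ really is $f^{\otimes J} \otimes \id$ tensored with $j_*j^!\omega_{Z(p)}$; this follows from the manifest functoriality of the underlying stratification of $X^I$ but should be spelled out, since Lemma \ref{tstrlem1} only used the filtration for a single algebra. Every other step is a direct reuse of ingredients from Lemma \ref{tstrlem1}, so I do not anticipate serious obstacles.
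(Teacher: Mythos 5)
The main gap is in the final step, where you claim that each Cousin subquotient of $D$ lies in degree $\leq -\#I - m - 1$. The subquotient indexed by $p : I \twoheadrightarrow J$ has the form $\cofib(A^{\otimes J} \to B^{\otimes J}) \otimes j_{\dR,*}\omega_{Z(p)}$, and since the stratum $Z(p)$ has dimension $\#J$ (with $j$ an affine-open-into-closed composite, so $j_{\dR,*}$ t-exact), this subquotient lives in degrees $\leq -\#J - m - 1$, \emph{not} $\leq -\#I - m - 1$. For the deepest stratum ($\#J = 1$) this only gives $\leq -m - 2$, which is strictly weaker than what the lemma needs when $\#I \geq 2$. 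In other words, the Cousin pieces of $A_{X^I_{\dR}}$ are \emph{not} all concentrated in degrees $\leq -\#I$: already for $A = k$ and $\#I = 2$, the piece on the main diagonal is $\Delta_{\dR,*}\omega_X$, in degree $-1$, even though $\omega_{X^2}$ itself sits in degree $-2$. The $\leq -\#I$ bound in Lemma \ref{tstrlem1} therefore cannot be read off termwise from the Cousin filtration; indeed, that part of \ref{tstrlem1} is cited to \cite{R2} Lemma 6.24.3, whereas the filtration in \ref{tstrlem1} is only invoked for the (much softer) eventual-coconnectivity statement.

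What makes the actual proof work is precisely the cancellation that your argument ignores: the paper shows that the connecting homomorphism in the Cousin long exact sequence, e.g.\ $H^i(A \otimes A \otimes j_*j^*\omega_{X^2}) \to H^{i+1}(A \otimes \Delta_{\dR,*}\omega_X)$, is \emph{surjective}, and this surjectivity is deduced from surjectivity of the multiplication-type map $H^{i+2}(A \otimes A) \to H^{i+2}(A)$. That surjectivity lets one identify $H^i(A_{X^I_{\dR}})$ with a kernel inside $H^i$ of the open-stratum contribution, and then compare $A$ with $B$ there; a purely degree-counting bound on each filtration piece cannot substitute for this. Your reduction to $H$ trivial and your telescoping bound on $\cofib(A^{\otimes J} \to B^{\otimes J})$ are both fine, but to close the argument you would need to reproduce the connecting-map surjectivity (or some equivalent use of the algebra structure on $A$ and $B$), not merely the coarse amplitude of the subquotients.
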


\begin{proof}

As in the proof of Lemma \ref{tstrlem1}, we can assume that $H$ is trivial.

Let us treat the case $\# I = 2$ for simplicity, the general case being similar. Fix $i \geq -2-m$. Our hypothesis implies that \[ H^{i+1}(A \otimes \Delta_{\dR,*}\omega_X) = H^{i+2}(A) \otimes \Delta_{\dR,*}\omega_X[-1] \longrightarrow H^{i+2}(B) \otimes \Delta_{\dR,*}\omega_X[-1] = H^{i+1}(B \otimes \Delta_{\dR,*}\omega_X)\] is an isomorphism. Since $A$ and $B$ are connective, we have \[ H^{i+2}(A \otimes A) \tilde{\longrightarrow} H^{i+2}(B \otimes B), \] from which we deduce that \[ H^i(A \otimes A \otimes j_*j^*\omega_{X^2}) \tilde{\longrightarrow} H^i(B \otimes B \otimes j_*j^*\omega_{X^2}). \] On the other hand, the surjectivity of $H^{i+2}(A \otimes A) \to H^{i+2}(A)$ implies that the connecting homomorphism \[ H^i(A \otimes A \otimes j_*j^*\omega_{X^2}) \longrightarrow H^{i+1}(A \otimes \Delta_{\dR,*}\omega_X) \] for the Cousin triangle is surjective, and likewise for $B$, which implies the desired isomorphism.

\end{proof}

\begin{lemma}

In the situation of the previous lemma, the restriction of scalars functors \[ B\mod(\Rep(H))_{X^I_{\dR}}^{\leq 0, \geq -m} \longrightarrow A\mod(\Rep(H))_{X^I_{\dR}}^{\leq 0, \geq -m} \] and \[ B\modfact(\Rep(H))_{X^I_{\dR}}^{\leq 0, \geq -m} \longrightarrow A\modfact(\Rep(H))_{X^I_{\dR}}^{\leq 0, \geq -m} \] are equivalences.

\label{tstrlem3}

\end{lemma}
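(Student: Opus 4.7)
The plan is to reduce both statements to the formal Lemma \ref{resequivlem}, with the substantive content being a cohomological vanishing estimate for certain mapping spaces. In each case, Lemma \ref{tstrlem2} controls the cofiber of $A \to B$, and we combine this with the cohomological amplitude of the relevant tensor product to show the required cofibers have trivial mapping space into objects of $\sC^{\geq -m}$.

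For the commutative case, the proof of Proposition \ref{comfactmodtstr} identifies $A\mod(\Rep(H))_{X^I_{\dR}}$ with the category of modules over the commutative algebra $A_{X^I_{\dR}}$ in $\sC := \Rep(H)_{X^I_{\dR}}$, compatibly for $A \to B$. Apply Lemma \ref{resequivlem} with $\sC_0 := \sC^{[-m, 0]}$; the required hypothesis is that
\[ \Map_{\sC}(\cofib(A_{X^I_{\dR}}^{\otimes n} \to B_{X^I_{\dR}}^{\otimes n}) \otimes M,\, N) \simeq * \]
for all $n \geq 1$ and $M, N \in \sC_0$. Lemma \ref{tstrlem1} gives $A_{X^I_{\dR}}, B_{X^I_{\dR}} \in \sC^{\leq -\#I}$, while Lemma \ref{tstrlem2} gives $C := \cofib(A_{X^I_{\dR}} \to B_{X^I_{\dR}}) \in \sC^{\leq -\#I - m - 1}$. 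Realize $A^{\otimes n} \to B^{\otimes n}$ as the composition of $n$ maps swapping one factor at a time; each successive cofiber has the form $B^{\otimes k} \otimes C \otimes A^{\otimes (n-k-1)}$, and using the cohomological amplitude $\#I$ of $\otimes$ in $\sC$ (noted in the proof of Proposition \ref{comfactmodtstr}), each such piece lies in $\sC^{\leq -\#I - m - 1}$. Hence so does the total cofiber, and tensoring with $M \in \sC^{\leq 0}$ yields an object in $\sC^{\leq -m - 1}$; for $N \in \sC^{\geq -m}$, the $t$-structure axiom then gives the required contractibility.

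For the factorization case, I would first use Proposition \ref{oblvunitff} to identify $A\modfact(\Rep(H))_{X^I_{\dR}}$ as a direct factor of $\OblvUnit(A)\mod^{\fact,\nonuntl}(\Rep(H))_{X^I_{\dR}}$, compatibly for $A$ and $B$, reducing to the non-unital case. Applying the chiral module equivalence, the problem becomes: for the chiral algebra morphism $L_A := \Delta^!A[-1] \to \Delta^!B[-1] =: L_B$ in $\Rep(H)$, restriction of scalars $L_B\mod^\ch \to L_A\mod^\ch$ on the truncated range is an equivalence. Chiral modules can be realized as associative modules over the chiral enveloping algebra $\UU^{*\to\ch}(L)$ in $(\Rep(H)_{\Ran_{X^I}}, \otimes^\ch)$ acting on $\Rep(H)_{\Ran_{X^I}}$, so Lemma \ref{resequivlem} again applies. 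The PBW filtration $\gr \UU^{*\to\ch}(L) = \Sym^\ch(L)$, combined with the induction/restriction adjunction of \S\ref{chirenvsec}, reduces the cohomological bounds on enveloping-algebra tensor powers to bounds on iterated $\otimes^\ch$-products of $L_A$, $L_B$, and $\cofib(L_A \to L_B) = \cofib(A \to B) \otimes \omega_X[-1]$. Since $\cofib(A \to B) \in \Rep(H)^{\leq -m-1}$, the argument parallels the commutative case.

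The main technical obstacle is the factorization case: controlling cohomological amplitude under the chiral operation $\otimes^\ch$ on $\Rep(H)_{\Ran_{X^I}}$, which involves pull-push along the \'etale morphism $u \circ j$ from \S\ref{factalgsec}. This requires tracking the behavior of $j_*j^!$ on the disjoint locus, parallel to the Cousin-filtration arguments used in the proofs of Lemmas \ref{tstrlem1} and \ref{tstrlem2}. The cleanest route is probably to first establish a chiral analogue of Lemma \ref{tstrlem1} for iterated $\otimes^\ch$-products of a connective chiral algebra, then feed this into the estimate on $\UU^{*\to\ch}$ described above; everything else is formal from Lemma \ref{resequivlem}.
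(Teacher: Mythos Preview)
Your commutative case is the same as the paper's.

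For the factorization case, your reduction to non-unital modules via Proposition~\ref{oblvunitff} and then to chiral modules is exactly what the paper does. After that point you take an unnecessary detour. First, a notational issue: $\UU^{*\to\ch}$ in this paper denotes the chiral envelope of a Lie-$*$ algebra (\S\ref{chirenvsec}), and does not apply to $L_A = A \otimes \omega_X[-1]$, which is already a chiral (i.e.\ Lie-in-$\otimes^{\ch}$) algebra. You presumably intend the ordinary universal enveloping associative algebra of $L_A$ in $(\Rep(H)_{\Ran_{X^I}},\otimes^{\ch})$; with that reading your PBW argument would go through, but it is superfluous. Lemma~\ref{resequivlem}, whose proof via the bar resolution works for any operad, applies directly to the chiral Lie setting, with the hypothesis phrased in terms of $L^{\otimes^{\ch} n} \otimes^{\ch} M$ rather than an enveloping algebra.

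The paper then dispatches the required cohomological estimate in one line, using precisely the \'etaleness of $(\Ran \times \Ran_{X^I})_{\disj} \to \Ran_{X^I}$ that you flag at the end. Because $u\circ j$ is \'etale, the $!$-restriction of $(A\otimes\omega_X[-1])^{\otimes^{\ch} n}\otimes^{\ch} M$ to each $X^{I'\sqcup I}$ is governed directly by the connectivity of $A\otimes\omega_X[-1]$ and of $M$: \'etale pushforward is t-exact, so the isomorphism of $H^i$ for $i\geq -m$ follows immediately from the hypotheses on $A\to B$. There is no need for a separate ``chiral analogue of Lemma~\ref{tstrlem1}'' or Cousin-type bookkeeping. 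Your plan would succeed, but the paper's argument is one sentence where yours sketches a further subsidiary lemma.
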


\begin{proof}

For the commutative case, we saw in the proof of Proposition \ref{comfactmodtstr} that \[ A_{X^I_{\dR}}\mod(\Rep(H)_{X^I_{\dR}}) \tilde{\longrightarrow} A\mod(\Rep(H))_{X^I_{\dR}}. \] For any $M$ in $\Rep(H)_{X^I_{\dR}}^{\leq 0,\geq -m}$ and any $n \geq 1$, Lemmas \ref{tstrlem1} and \ref{tstrlem2} imply that \[ H^i(A_{X^I_{\dR}}^{\otimes n} \overset{!}{\otimes} M) \tilde{\longrightarrow} H^i(B_{X^I_{\dR}}^{\otimes n} \overset{!}{\otimes} M) \] for all $i \geq -m$, since the tensor product in $\Rep(H)_{X^I_{\dR}}$ has cohomological amplitude $\#I$. Now apply Lemma \ref{resequivlem}.

For the second equivalence, note that it suffices to prove the corresponding claim for non-unital factorization modules. Namely, by Proposition \ref{oblvunitff} and the definition of the t-structure on unital modules, we have \[ (A\modfact_{X^I_{\dR}})^{\leq 0, \geq -m} = A\modfact_{X^I_{\dR}} \cap (A\mod^{\fact,\nonuntl}_{X^I_{\dR}})^{\leq 0, \geq -m}. \]

As for the non-unital case, we use the equivalence \[ A\mod^{\fact,\nonuntl}(\Rep(H))_{X^I_{\dR}} \tilde{\longrightarrow} (A \otimes \omega_X[-1])\mod^{\ch}(\Rep(H))_{X^I_{\dR}},\] which commutes with the forgetful functors to $\Rep(H)_{X^I_{\dR}}$ and in particular is t-exact. The hypotheses imply that $A \otimes \omega_X[-1]$ and $B \otimes \omega_X[-1]$ are connective in $\DD(X)$, and that \[ H^i(A \otimes \omega_X[-1]) \tilde{\longrightarrow} H^i(B \otimes \omega_X[-1]) \] for $i \geq -m$. As previously remarked, the map \[ (\Ran \times \Ran_{X^I})_{\disj} \longrightarrow \Ran_{X^I} \] given by disjoint union is \'{e}tale. It follows that for any $M$ in $\Rep(H)_{X^I_{\dR}}^{\leq 0}$, any $n \geq 1$, and any nonempty finite set $I'$, we have \[ H^i(((A \otimes \omega_X[-1])^{\otimes n} \overset{\ch}{\otimes} M)|^!_{X^{I' \sqcup I}}) \tilde{\longrightarrow} H^i(((B \otimes \omega_X[-1])^{\otimes n} \overset{\ch}{\otimes} M)|^!_{X^{I' \sqcup I}}) \] for all $i \geq -m$. Here we are restricting along the natural map $X^{I' \sqcup I} \to \Ran_{X^I}$, and $\overset{\ch}{\otimes}$ denotes the chiral action of $\Rep(H)_{\Ran}$ on $\Rep(H)_{\Ran_{X^I_{\dR}}}$. Now the claim follows from Lemma \ref{resequivlem}.

\end{proof}

\begin{proposition}

\label{comfactequivheart}

For any commutative algebra $A$ in $\Rep(H)^{\leq 0}$, the t-exact functor $\oblv^{\comfact}$ restricts to an equivalence \[ (A\mod(\Rep(H))_{X^I_{\dR}})^{\heartsuit} \tilde{\longrightarrow} (A\modfact(\Rep(H))_{X^I_{\dR}})^{\heartsuit}. \]

\end{proposition}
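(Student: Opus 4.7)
The plan is to first reduce to the case where $A$ is concentrated in degree zero via Lemma \ref{tstrlem3}, and then to match the two module structures directly in the classical case. The t-exactness of $\oblv^{\comfact}$ has already been noted, so both sides of the proposed equivalence make sense; the real work lies in the classical case.

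For the reduction, I will apply Lemma \ref{tstrlem3} to the canonical map $A \to H^0(A)$ with $m = 0$, noting that $H^i(A) \tilde{\to} H^i(H^0(A))$ for $i \geq 0$ is automatic. Functoriality of $\oblv^{\comfact}$ in the algebra variable yields a commutative square
\[ \begin{tikzcd}
H^0(A)\mod(\Rep(H))_{X^I_{\dR}}^{\heartsuit} \arrow{r}{\oblv^{\comfact}} \arrow{d}{\res} & H^0(A)\modfact(\Rep(H))_{X^I_{\dR}}^{\heartsuit} \arrow{d}{\res} \\
A\mod(\Rep(H))_{X^I_{\dR}}^{\heartsuit} \arrow{r}{\oblv^{\comfact}} & A\modfact(\Rep(H))_{X^I_{\dR}}^{\heartsuit}
\end{tikzcd} \]
in which both vertical arrows are equivalences by Lemma \ref{tstrlem3}. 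Hence we may assume $A \in \Rep(H)^{\heartsuit}$.

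For classical $A$, both categories admit conservative t-exact forgetful functors to $\Rep(H)_{X^I_{\dR}}^{\heartsuit}$ that commute with $\oblv^{\comfact}$, so the question becomes whether, for each $M \in \Rep(H)_{X^I_{\dR}}^{\heartsuit}$, the data of a commutative $A$-action on $M$ agrees canonically with that of a factorization $A$-module structure. Via the chiral-factorization correspondence of \S\ref{chirenvsec}, the latter is encoded by chiral operation maps $j_*j^!\bigl((A \otimes \omega_X[-1])^{\boxtimes n} \boxtimes M\bigr) \to \Delta_{\dR,*}M$. The main obstacle, which is the crux of the proof, is to show that when $A$ and $M$ are both in the heart these chiral operations are determined by the commutative action. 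I expect this to reduce to the same Cousin-filtration bookkeeping used in Lemmas \ref{tstrlem1}--\ref{tstrlem3}: because the higher operations sit in strictly negative cohomological degrees after truncation, they impose no additional data in the heart beyond what the commutative structure on the diagonal specifies.
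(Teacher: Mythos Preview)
Your reduction to classical $A$ via Lemma \ref{tstrlem3} is exactly what the paper does. For the classical case your outline is in the right spirit but remains vague at the crucial point; the paper makes this step precise as follows. After passing to chiral modules, one characterizes the essential image of $\oblv^{\comfact}$ on the heart: a chiral $(A\otimes\omega_X[-1])$-module $M$ lies in it iff the composite
\[
(A\otimes\omega_X[-1])\boxtimes M \longrightarrow j_*j^*\bigl((A\otimes\omega_X[-1])\boxtimes M\bigr) \longrightarrow i_{\dR,*}p^! M
\]
vanishes (here $i:Z_I\hookrightarrow X\times X^I$ is the incidence divisor). By adjunction this corresponds to a map $i_{\dR}^*\bigl((A\otimes\omega_X[-1])\boxtimes M\bigr)=A\otimes M[1]\to M$, which is zero for pure degree reasons when $A$ is classical and $M$ lies in the heart. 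This replaces your ``Cousin-filtration bookkeeping'' with a single concrete vanishing; once you sharpen your sketch to this observation, your proof coincides with the paper's.
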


\begin{proof}

Consider the commutative square \[
\begin{tikzcd}[row sep=large, column sep=large]
H^0(A)\mod(\Rep(H))_{X^I_{\dR}}^{\heartsuit} \arrow{r}[yshift=0.5em]{\oblv^{\comfact}} \arrow{d} & H^0(A)\modfact(\Rep(H))_{X^I_{\dR}}^{\heartsuit} \arrow{d} \\
A\mod(\Rep(H))_{X^I_{\dR}}^{\heartsuit} \arrow{r}{\oblv^{\comfact}} & A\modfact(\Rep(H))_{X^I_{\dR}}^{\heartsuit}
\end{tikzcd} \]
where the vertical functors, given by restriction of scalars along $A \to H^0(A)$, are equivalences by Lemma \ref{tstrlem3}. Thus we can assume that $A$ is classical, in which case it is immediate from the definitions that $\oblv^{\comfact}$ is fully faithful at the level of abelian categories. It remains to prove essential surjectivity.

Recall the t-exact equivalence \[ A\mod^{\fact,\nonuntl}(\Rep(H))_{X^I_{\dR}} \tilde{\longrightarrow} (A \otimes \omega_X[-1])\mod^{\ch}(\Rep(H))_{X^I_{\dR}}, \] and that unital factorization modules form a full subcategory of the left side. Writing \[ i : Z_I \to X \times X^I \quad \text{and} \quad p : Z_I \to X^I \] for the inclusion of the incidence divisor and the projection as before, the structure map of a chiral module $M$ has the form \[ j_*j^*((A \otimes \omega_X[-1]) \boxtimes M) \longrightarrow i_{\dR,*}p^! M. \] Objects in the essential image of \[ \oblv^{\com \to \fact} : A\mod(\Rep(H))_{X^I_{\dR}}^{\heartsuit} \longrightarrow A\modfact(\Rep(H))_{X^I_{\dR}}^{\heartsuit} \] correspond under the above equivalence to those $M$ such that the composite map
\begin{equation}
(A \otimes \omega_X[-1]) \boxtimes M \longrightarrow j_*j^*((A \otimes \omega_X[-1]) \boxtimes M) \longrightarrow i_{\dR,*}p^! M
\label{comobstr}
\end{equation}
vanishes. But notice that the functor $i^*_{\dR}$ is well-defined on $(A \otimes \omega_X[-1]) \boxtimes M$, and moreover \[ i^*_{\dR}((A \otimes \omega_X[-1]) \boxtimes M) = A \otimes M[1]. \] Since $A$ is classical and the D-module underlying $M$ belongs to $\DD(X^I)^{\heartsuit}$, the map \[ A \otimes M[1] \longrightarrow M \] corresponding to (\ref{comobstr}) vanishes. The proposition follows immediately.

\end{proof}

\subsection{A local acyclity criterion} As above, we let $A$ denote a commutative algebra in $\Rep(H)^{\leq 0}$. For any finite set $I$, the functor \[ \oblv^{\comfact} : A\mod(\Rep(H))_{X^I_{\dR}} \longrightarrow A\modfact(\Rep(H))_{X^I_{\dR}} \] is t-exact, and hence its (not necessarily continuous) right adjoint $\coind^{\fact \to \com}$ is left t-exact.

\begin{proposition}

\label{vacacchar}

For any finite set $I$, the following conditions are equivalent:
\begin{enumerate}[(i)]
\item the $\DD(X^I)$-linear functor \[ \oblv^{\comfact} : A\mod(\Rep(H))_{X^I_{\dR}} \longrightarrow A\modfact(\Rep(H))_{X^I_{\dR}} \] preserves almost ULA objects;
\item the functor \[ \coind^{\fact \to \com} :  A\modfact(\Rep(H))_{X^I_{\dR}}^+ \longrightarrow A\mod(\Rep(H))_{X^I_{\dR}}^+ \] preserves filtered colimits bounded uniformly from below and is $\DD(X^I)^+$-linear;
\item the vacuum module $\Vac_{A,X^I_{\dR}}$ is almost ULA in \[ A\modfact(\Rep(H))_{X^I_{\dR}}. \]
\end{enumerate}
Moreover, these equivalent conditions hold for the pair $(A,H)$ if and only if they hold for $(A,1)$, i.e., $A$ viewed as a commutative algebra in $\Vect^{\leq 0}$.

\end{proposition}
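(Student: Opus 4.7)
The plan is to prove the cyclic implications (i) $\Rightarrow$ (iii) $\Rightarrow$ (i), then (i) $\Leftrightarrow$ (ii), and finally reduce the equivalence of these conditions for $(A,H)$ to the same for $(A,1)$. The implication (i) $\Rightarrow$ (iii) should be almost immediate. Under the monadic equivalence $A\mod(\Rep(H))_{X^I_{\dR}} \cong A_{X^I_{\dR}}\mod(\Rep(H)_{X^I_{\dR}})$ used in the proof of Proposition \ref{comfactmodtstr}, the regular $A_{X^I_{\dR}}$-module is the unit; its image after base change along $\ind_{X^I}$ is again a unit module, hence compact and in particular almost compact, so $A_{X^I_{\dR}}$ is almost ULA in the commutative module category. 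Because $\oblv^{\comfact}$ is strictly unital as a morphism in $\FactCat^{\laxfact}$, it sends this module to $\Vac_{A,X^I_{\dR}}$, giving (iii).

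For the essential converse (iii) $\Rightarrow$ (i), I would exploit the fact that $A\mod(\Rep(H))_{X^I_{\dR}}$ is generated by the unit $A_{X^I_{\dR}}$ under the natural $\Rep(H)_{X^I_{\dR}}$-action, and that $\oblv^{\comfact}$ is linear for this action. Base-changing over $\DD(X^I)$ with $\IndCoh(X^I)$ yields a functor
\[ G : A_{X^I}\mod(\Rep(H)_{X^I}) \longrightarrow \IndCoh(X^I) \underset{\DD(X^I)}{\otimes} A\modfact(\Rep(H))_{X^I_{\dR}}, \]
which is $\Rep(H)_{X^I}$-linear and $t$-exact (the latter via Lemma \ref{tensorexact} applied to $\oblv^{\comfact}$). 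Hypothesis (iii) says $G$ sends the unit to an almost compact object. Since $\Rep(H)_{X^I}$ is rigid symmetric monoidal (cf. Proposition \ref{ulagenprop}), tensoring an almost compact object with a compact object preserves almost compactness; applied to compact generators $A_{X^I} \otimes V$ of the source (as $V$ ranges over compact generators of $\Rep(H)_{X^I}$) together with Lemma \ref{aclem} to handle truncations, this forces $G$ to preserve almost compact objects, which is exactly (i).

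For (i) $\Leftrightarrow$ (ii), I would invoke the standard principle that a continuous $t$-exact functor preserves almost compact objects if and only if its right adjoint preserves filtered colimits bounded uniformly from below. The right adjoint of $G$ is computed from $\coind^{\fact\to\com}$ through its oplax $\DD(X^I)$-linear structure, and the two conditions in (ii) — $\DD(X^I)^+$-linearity and bounded-below continuity on $+$-parts — are precisely what is needed for this computation to work and for the resulting right adjoint to be bounded-below continuous. Finally, for independence from $H$: the conservative $t$-exact functor $\oblv_H$ extends to conservative $t$-exact functors on both the commutative and factorization module categories that intertwine $\oblv^{\comfact}$ and the base-change functors, so Lemma \ref{aclem} transports the almost compactness statements in both directions. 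The main obstacle will be justifying (i) $\Leftrightarrow$ (ii) rigorously, since this requires a careful analysis of how the oplax $\DD(X^I)$-linear structure on $\coind^{\fact\to\com}$ interacts with base change to $\IndCoh(X^I)$ — essentially showing that the two conditions in (ii) together are what promote $\coind^{\fact\to\com}$ to a genuine $\DD(X^I)^+$-linear right adjoint on $+$-parts.
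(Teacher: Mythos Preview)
Your overall strategy is sound, and most of the pieces are in place, but you have misidentified the main obstacle and taken a more roundabout route than the paper for (iii) $\Rightarrow$ (i).

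The equivalence (i) $\Leftrightarrow$ (ii) is not an obstacle at all: it is exactly Lemma \ref{aulapreslem}, applied with $F = \oblv^{\comfact}$ and $\sC = A\mod(\Rep(H))_{X^I_{\dR}}$, which is ULA generated because $\Rep(H)_{X^I_{\dR}}$ is (Proposition \ref{ulagenprop}) and $\ind_A$ preserves ULA objects. There is no need to analyze the oplax $\DD(X^I)$-linear structure on $\coind^{\fact \to \com}$ by hand; that lemma already packages the argument. So your closing worry is misplaced.

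Your direct argument for (iii) $\Rightarrow$ (i) via $\Rep(H)_{X^I}$-linearity can be made to work, but the invocation of Lemma \ref{aclem} ``to handle truncations'' is wrong: that lemma goes in the opposite direction. What you actually need is that tensoring by a compact (hence dualizable) $V$ in $\Rep(H)_{X^I}$ preserves almost compactness in the target, which in turn requires that $(-) \otimes V^*$ have finite t-amplitude on $A\modfact(\Rep(H))_{X^I}$. This holds because compact objects of $\Rep(H)_{X^I}$ are images under $\Loc_{X^I}$ of bounded complexes of $H$-representations and the t-structure on factorization modules is detected by the forgetful functor to $\Rep(H)_{X^I}$, but you should say so explicitly.

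The paper reverses the order of your last two steps, and this makes (iii) $\Rightarrow$ (i) cleaner. It first uses the $\oblv_H$/$\coind_H$ square (both functors t-exact and $\DD(X^I)$-linear) together with Lemma \ref{aclem} to show that condition (iii) for $(A,H)$ is equivalent to condition (iii) for $(A,1)$. Only then does it prove (iii) $\Rightarrow$ (i) in the case $H = 1$: after localizing so that $X$ is affine, $A\mod_{X^I}$ has the single compact generator $A_{X^I}$, so $(A\mod_{X^I})^{\geq m}$ is compactly generated by the objects $\tau^{\geq m}(A_{X^I}[-i])$, and t-exactness of $\oblv^{\comfact}$ carries these to $\tau^{\geq m}(\Vac_{A,X^I}[-i])$, which are compact in $(A\modfact_{X^I})^{\geq m}$ by hypothesis. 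Your approach trades this single-generator argument for a rigidity-plus-amplitude check on $\Rep(H)_{X^I}$; both are valid, but the paper's route uses the reduction to $H = 1$ as the structural simplification rather than as a separate afterthought.
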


We will apply the following lemma.

\begin{lemma}

\label{aulapreslem}

Suppose that $\sC$ and $\sD$ are $\DD(S)$-modules with compatible t-structures, and that $F : \sC \to \sD$ is $\DD(S)$-linear and t-exact. In particular, the (not necessarily continuous) right adjoint $F^{\RR} : \sD \longrightarrow \sC$ is left t-exact. If \[ F^{\RR} : \sD^+ \longrightarrow \sC^+ \] preserves filtered colimits bounded uniformly from below and is $\DD(S)^+$-linear, then $F$ preserves almost ULA objects. If we assume in addition that $\sC$ is ULA generated, then the converse holds.

\end{lemma}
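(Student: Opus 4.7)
The plan is to translate the question into one about the induced functor $\id \otimes F : \sC' \to \sD'$, where $\sC' := \IndCoh(S) \otimes_{\DD(S)} \sC$ and $\sD' := \IndCoh(S) \otimes_{\DD(S)} \sD$, using the defining property that $c \in \sC$ is almost ULA iff $\oblv_\sC(c)$ is almost compact in $\sC'$. Two commutative squares relate these: first, $(\id \otimes F) \circ \oblv_\sC = \oblv_\sD \circ F$, and second, $\ind_\sD \circ (\id \otimes F) = F \circ \ind_\sC$. Passing to right adjoints in the latter yields the key identity
\[ (\id \otimes F)^R \circ \oblv_\sD \;=\; \oblv_\sC \circ F^R. \]

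For the forward implication, assume $F^R|_{\sD^+}$ preserves filtered colimits bounded uniformly from below and is $\DD(S)^+$-linear. Given $c \in \sC$ almost ULA, the first commutative square reduces the task to showing that $(\id \otimes F)(\oblv_\sC(c))$ is almost compact in $\sD'$. By adjunction, this is equivalent to the assertion that $(\id \otimes F)^R|_{\sD'^+}$ preserves filtered colimits bounded uniformly from below. Under the $\DD(S)^+$-linearity hypothesis, $F^R|_{\sD^+}$ base-changes along $\DD(S)^+ \to \IndCoh(S)^+$ to yield a functor $\sD'^+ \to \sC'^+$; the key identity above, combined with compatibility of the t-structures with the $\DD(S)$-action, shows that this base-changed functor agrees with $(\id \otimes F)^R|_{\sD'^+}$, whence the bounded-below colimit-preservation is inherited from $F^R|_{\sD^+}$.

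For the converse, assume $\sC$ is ULA generated and $F$ preserves almost ULA. For $c \in \sC$ ULA and a bounded-below filtered diagram $\{ d_\alpha \}$ in $\sD^+$, the adjunction $\Hom_\sC(c, F^R(d_\alpha)) = \Hom_\sD(F(c), d_\alpha)$ together with the almost ULA property of $F(c)$ (which makes $\oblv_\sD F(c)$ almost compact in $\sD'$) implies that this Hom pairing commutes with the colimit. ULA generation of $\sC$ then propagates this to $F^R$ itself. The $\DD(S)^+$-linearity of $F^R|_{\sD^+}$ is obtained by the same pattern, using that the $\DD(S)$-action is compatible with the t-structures on both sides and hence visible through pairings against ULA objects via the adjunction.

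The main obstacle is the forward direction: establishing that $(\id \otimes F)^R|_{\sD'^+}$ coincides with the base change of $F^R|_{\sD^+}$ along $\DD(S)^+ \to \IndCoh(S)^+$. This is precisely where the $\DD(S)^+$-linearity hypothesis (rather than full $\DD(S)$-linearity, which one does not have) carries the weight, and one needs to verify that $\sD'^+$ is sufficiently controlled by the image of $\oblv_\sD(\sD^+)$ under the $\IndCoh(S)^+$-action to make the comparison.
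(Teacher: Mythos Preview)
Your forward direction is overcomplicated and runs into a self-identified obstacle that the paper avoids entirely. The paper's argument is: the hypothesis that $F^{\RR}|_{\sD^+}$ preserves bounded-below filtered colimits immediately gives that $F$ preserves almost compact objects. Then one uses the square
\[
\begin{tikzcd}
\IndCoh(S) \otimes_{\DD(S)} \sC \arrow{r}{\id \otimes F} \arrow{d}{\ind_{\sC}} & \IndCoh(S) \otimes_{\DD(S)} \sD \arrow{d}{\ind_{\sD}} \\
\sC \arrow{r}{F} & \sD
\end{tikzcd}
\]
of t-exact functors, with the vertical $\ind$ functors conservative and admitting continuous right adjoints $\oblv$. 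Lemma~\ref{aclem} applied to $\ind_{\sD}$ then shows that $\id \otimes F$ preserves almost compact objects: if $x$ is almost compact in $\sC'$, then $\ind_{\sD}((\id \otimes F)(x)) = F(\ind_{\sC}(x))$ is almost compact, whence $(\id \otimes F)(x)$ is. This immediately gives that $F$ preserves almost ULA via $\oblv_{\sD} \circ F = (\id \otimes F) \circ \oblv_{\sC}$. Note that the $\DD(S)^+$-linearity hypothesis is not used at all in this direction; your attempt to invoke it via a base-change comparison of $(\id \otimes F)^{\RR}$ with $F^{\RR}$ is unnecessary, and the obstacle you flag is a non-issue.

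Your converse direction has a genuine gap. You write that for $c$ ULA, the almost ULA property of $F(c)$ implies $\Hom_{\sD}(F(c), d_\alpha)$ commutes with the bounded-below filtered colimit. But almost ULA of $F(c)$ only says $\oblv_{\sD}(F(c))$ is almost compact in $\sD'$; it does \emph{not} say $F(c)$ is almost compact in $\sD$, which is what your Hom argument needs. The paper instead argues in $\sC'$ and $\sD'$: since $\sC'$ is generated as an $\IndCoh(S)$-module by objects $\oblv_{\sC}(c)$ with $c$ ULA, and since $(\id\otimes F)(\oblv_{\sC}(c)) = \oblv_{\sD}(F(c))$ is almost compact by hypothesis, one concludes that $\id \otimes F$ preserves almost compact objects. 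Then the same square (now read downward) shows $F$ preserves almost compact objects, giving the colimit-preservation of $F^{\RR}|_{\sD^+}$. For the $\DD(S)^+$-linearity, the paper passes to right adjoints in the square, reduces to showing $(\id \otimes F)^{\RR}|_{(\sD')^+}$ is $\IndCoh(S)^+$-linear, and then localizes to affine $S$ where $\IndCoh(S)^+$ is generated by the unit under finite colimits and bounded-below filtered colimits.
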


\begin{proof}

Cf. Proposition B.7.1 in \cite{R2} for the corresponding statement about ULA objects. Assume that $F^{\RR}$ preserves filtered colimits bounded uniformly from below, which implies that $F$ preserves almost compact objects. We have the commutative square of t-exact functors \[
\begin{tikzcd}
\IndCoh(S) \otimes_{\DD(S)} \sC \arrow{r}{\id \otimes F} \arrow{d}{\ind_{\sC}} & \IndCoh(S) \otimes_{\DD(S)} \sD \arrow{d}{\ind_{\sD}} \\
\sC \arrow{r}{F} & \sD,
\end{tikzcd} \]
where the vertical functors are conservative and have the continuous right adjoints $\oblv_{\sC}$ and $\oblv_{\sD}$. Thus Lemma \ref{aclem} implies that the upper horizontal functor preserves almost compact objects, from which it follows that $F$ preserves almost ULA objects.

Now assume that $\sC$ is ULA generated and that $F$ preserves almost ULA objects. The latter assumption implies that for any ULA object $c$ in $\sC$, the object \[ F(\oblv_{\sC}(c)) = \oblv_{\sD}(F(c)) \] is almost compact. Since $\IndCoh(S) \otimes_{\DD(S)} \sC$ is generated as an $\IndCoh(S)$-module by objects of the form $\oblv_{\sC}(c)$, where $c$ is ULA in $\sC$, it follows that \[ \id \otimes F : \IndCoh(S) \underset{\DD(S)}{\otimes} \sC \longrightarrow \IndCoh(S) \underset{\DD(S)}{\otimes} \sD  \] preserves almost compact objects, and hence $F$ does as well by the above commutative square. It follows immediately that \[ F^{\RR} : \sD^+ \longrightarrow \sC^+ \] preserves filtered colimits bounded uniformly from below. This functor is automatically lax $\DD(S)^+$-linear, so it remains to prove that this structure is strict. Passing to right adjoints in the square above, we obtain another commutative square \[
\begin{tikzcd}
\sD^+ \arrow{r}{F^{\RR}} \arrow{d}{\oblv_{\sC}} & \sC^+ \arrow{d}{\oblv_{\sD}} \\
(\IndCoh(S) \otimes_{\DD(S)} \sD)^+ \arrow{r}{(\id \otimes F)^{\RR}} & (\IndCoh(S) \otimes_{\DD(S)} \sC)^+,
\end{tikzcd} \]
where the vertical functors are conservative and $\DD(S)^+$-linear. Thus it suffices to show that the lower horizontal functor is strictly $\IndCoh(S)^+$-linear. The claim is local on $S$, which we can therefore assume is affine. But then $\IndCoh(S)^+$ is generated by the unit object under finite colimits and filtered colimits bounded uniformly from below.

\end{proof}

\begin{proof}[Proof of Proposition \ref{vacacchar}]

Since $A\mod(\Rep(H))_{X^I_{\dR}}$ is ULA generated, Lemma \ref{aulapreslem} implies that (i) is equivalent to (ii).

Condition (i) implies (iii) because the unit object in $A\mod(\Rep(H))_{X^I_{\dR}}$ is ULA.

Consider the commutative square of $\DD(X^I)$-modules \[
\begin{tikzcd}
A\mod(\Rep(H))_{X^I_{\dR}} \arrow{d}{\oblv^{\comfact}} \arrow{r}{\oblv_H} & A\mod_{X^I_{\dR}} \arrow{d}{\oblv^{\comfact}} \\ 
A\modfact(\Rep(H))_{X^I_{\dR}} \arrow{r}{\oblv_H} & A\modfact_{X^I_{\dR}}.
\end{tikzcd} \]
The horizontal functors are t-exact and conservative, and both admit $\DD(X^I)$-linear and t-exact right adjoints, induced by the right adjoint \[ \coind_H : \DD(X^I) \longrightarrow \Rep(H)_{X^I_{\dR}} \] of $\oblv_H$. In particular, the horizontal functors preserve almost ULA objects. Applying Lemma \ref{aclem}, it follows immediately that (ii) implies (i), and that (iii) is equivalent to (iv).

As shown in the proof of Proposition \ref{comfactmodtstr}, the object $A_{X^I}$ is ULA, hence almost ULA, in $A\modfact(\Rep(H))_{X^I_{\dR}}$. Thus (i) implies (iii), and for the same reason (ii) implies (iv).

To complete the proof, it suffices to show that (iv) implies (ii). We will prove that the $\IndCoh(X^I)$-linear functor \[ \oblv^{\comfact} : A\mod_{X^I} \longrightarrow A\modfact_{X^I} \] preserves almost compact objects, which will follow if we can show that \[ \oblv^{\comfact} : A\mod_{X^I}^{\geq m} \longrightarrow (A\modfact_{X^I})^{\geq m} \] preserves compact objects for any $m \in \bZ$. As the claim is local on $X$, we can assume for simplicity that $X$ is affine. Then the DG category $A\mod_{X^I}$ is compactly generated by the object $A_{X^I}$, and hence the (non-stable) category $A\mod_{X^I}^{\geq m}$ is compactly generated by the objects $\tau^{\geq m}(A_{X^I}[-i])$ for $i \geq 0$. Since $\oblv^{\comfact}$ is t-exact, we have \[ \oblv^{\comfact}\tau^{\geq m}(A_{X^I}[-i]) = \tau^{\geq m}(\Vac_{A,X^I}), \] and the right side is compact in $(A\modfact_{X^I})^{\geq m}$ by hypothesis.

\end{proof}

\subsection{Local acyclicity of the vacuum module} Continue to let $A$ denote a commutative algebra in $\Rep(H)^{\leq 0}$. We will say that $A$ is (almost) of finite type if the underlying commutative algebra in $\Vect^{\leq 0}$ is.

\begin{theorem}

If $A$ is almost of finite type, then the equivalent conditions of Proposition \ref{vacacchar} hold.

\label{vacacthm}

\end{theorem}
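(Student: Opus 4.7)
By the last assertion of Proposition~\ref{vacacchar}, we may assume $H = 1$, so $A$ is a connective commutative algebra in $\Vect$ almost of finite type. We aim to verify condition (iii), namely that $\Vac_{A,X^I_{\dR}}$ is almost ULA. Working locally on $X$ (which we may take to be affine), and arguing as in the proof of (iv)~$\Rightarrow$~(ii) of \emph{loc.~cit.}, it suffices to show that $\tau^{\geq -m}\Vac_{A,X^I}$ is compact in $(A\modfact_{X^I})^{\geq -m}$ for every $m \geq 0$.

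The plan is first to reduce to the case where $A$ is eventually coconnective, and then to treat that case via the Cousin filtration. For fixed $m$, let $B := \tau^{\geq -m - \#I}A$ denote the Postnikov truncation of $A$ as a connective commutative algebra. The canonical map $A \to B$ induces an isomorphism on $H^i$ for $i \geq -m - \#I$, so by Lemma~\ref{tstrlem2}, the induced map $H^i(A_{X^I_{\dR}}) \to H^i(B_{X^I_{\dR}})$ is an isomorphism for $i \geq -m$. Lemma~\ref{tstrlem3} then yields an equivalence of $[-m,0]$-bounded factorization module subcategories (which transfers to the corresponding $\IndCoh$-linear versions upon applying $\IndCoh(X^I) \otimes_{\DD(X^I)} -$), intertwining $\tau^{\geq -m}\Vac_{A,X^I}$ with $\tau^{\geq -m}\Vac_{B,X^I}$. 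Since $B$ is eventually coconnective and almost of finite type, it is of finite type over $k$, so we may replace $A$ by $B$.

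For such $B$, $\Vac_{B,X^I}$ has bounded cohomology by Lemma~\ref{tstrlem1}. The Cousin filtration on $B_\Ran|^!_{X^I}$ (from the proof of Lemma~\ref{tstrlem1}) expresses $\Vac_{B,X^I}$ as a finite iterated extension of subquotients of the form $B^{\otimes J} \otimes j_*j^!\omega_{Z(p)}$ for surjections $p : I \twoheadrightarrow J$ and stratum inclusions $j : Z(p) \hookrightarrow X^I$. Via Proposition~\ref{comfactequivheart}, the heart of $B\modfact_{X^I}$ coincides with that of $B\mod_{X^I}$, which for $B$ of finite type is the abelian category of quasicoherent sheaves on the Noetherian scheme $\Spec B \times X^I$. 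Each cohomology of the above subquotients is finitely generated over this Noetherian algebra, and hence is compact in this heart.

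The main technical obstacle is to ascend from compactness of the cohomologies in the heart to almost compactness of $\Vac_{B,X^I}$ in the full DG category $B\modfact_{X^I}$. Our plan is to establish coherence of the t-structure on $B\modfact_{X^I}$ via Lemma~\ref{cohcrit} by showing that any compact object in the heart is almost compact in $B\modfact_{X^I}$; this step rests on a careful comparison with $B\mod_{X^I}$ through the t-exact forgetful functor $\oblv^{\comfact}$, combined with the bounded structure of the Cousin filtration to run an induction on cohomological amplitude. Once coherence of the t-structure is in hand, the bounded amplitude of $\Vac_{B,X^I}$ together with the heart-compactness of its cohomologies immediately implies almost compactness, completing the proof.
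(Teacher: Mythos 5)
Your reductions to $H = 1$ (via Proposition~\ref{vacacchar}) and to a bounded truncation of $A$ (via Lemmas~\ref{tstrlem1}--\ref{tstrlem3}) parallel the paper's opening moves; the paper replaces $A$ by a finitely presented approximant $B_n$ from Lemma~\ref{fpres} where you use $\tau^{\geq -m-\#I}A$, but that difference is not yet a problem. The gap is in the final step. You propose to prove coherence of the t-structure on $B\modfact_{X^I}$ and then deduce almost compactness of the truncated vacuum from bounded amplitude plus compactness of its cohomologies in the heart. But for bounded $B$, proving coherence here essentially \emph{is} the theorem --- the truncated vacuum is a bounded complex with heart-compact cohomologies, so its almost compactness and the coherence claim stand or fall together --- and the proposal offers no independent route to it. The paper's Corollary~\ref{factmodcoh} does pass from the theorem to coherence of $B\modfact$, but only under the added hypothesis that the commutative module category $B\mod_{X^I_{\dR}}$ is coherent, which, as the paper warns just after that corollary, generally fails for trivial $H$, i.e., exactly after your first reduction. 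The Cousin filtration cannot supply the missing leverage: it filters the \emph{underlying} D-module of $\Vac_{B,X^I}$ in $\IndCoh(X^I)$ and says nothing about almost compactness in the factorization module category, which is governed by the chiral module structure that the Cousin filtration does not see. (Your description of the heart as $\QCoh(\Spec B \times X^I)$ is also off: by Proposition~\ref{comfactequivheart} the heart is $B_{X^I_{\dR}}$-modules in $\DD(X^I)^{\heartsuit}$, where $B_{X^I_{\dR}}$ is the $!$-localization of $B$, not $B \otimes \sO_{X^I}$.)

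The paper avoids any appeal to coherence. Its engine is Lemma~\ref{symalglem}, treating $B = \Sym V$: there $B\modfact$ is identified with chiral modules over the chiral envelope of the abelian Lie-$*$ algebra $V \otimes \omega_X[-1]$, and the trivial module has an explicit PBW-type filtration whose graded pieces $M_n$ are individually ULA and escape to low cohomological degree as $n \to \infty$; this filtration, not Noetherianity, is what produces almost compactness. Lemma~\ref{assocgrlem} then transports the conclusion along nonnegative filtrations of $B$, giving Lemma~\ref{fpthm} for arbitrary finitely presented algebras, and Lemma~\ref{fpres} supplies finitely presented approximants for $A$. ``Finite type'' alone is not enough: the induction genuinely needs the cell structure packaged into ``finitely presented'' so that the base case is a polynomial algebra, and your $\tau^{\geq -m-\#I}A$ need not be finitely presented in this sense. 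You would need either a new argument handling eventually coconnective finite type algebras directly, or a further reduction to the finitely presented case.
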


Under some additional hypotheses, the theorem will allow us to renormalize the factorization category $A\modfact(\Rep(H))$.

\begin{corollary}

\label{factmodcoh}

If $A$ is of finite type and $A\mod(\Rep(H))_{X^I_{\dR}}$ is coherent for every finite set $I$, then $A\modfact(\Rep(H))$ satisfies the hypotheses of Theorem \ref{factcatren}. In particular, under these hypotheses $A\modfact(\Rep(H))_{X^I_{\dR}}$ is almost ULA generated for every finite set $I$.

\end{corollary}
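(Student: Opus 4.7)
The plan is to verify the hypotheses of Propositions \ref{sheafranren} and \ref{factcatren} for $A\modfact(\Rep(H))$ equipped with the t-structure of Proposition \ref{tstrcomalg}. The essential analytic input is Theorem \ref{vacacthm}: since $A$ is of finite type, Proposition \ref{vacacchar} guarantees that the t-exact forgetful functor \[ \oblv^{\comfact} : A\mod(\Rep(H))_{X^I_{\dR}} \longrightarrow A\modfact(\Rep(H))_{X^I_{\dR}} \] preserves almost ULA objects and that its right adjoint $\coind^{\fact \to \com}$ commutes on eventually coconnective subcategories with filtered colimits bounded uniformly from below. By t-exactness, the latter property is equivalent to $\oblv^{\comfact}$ preserving almost compact objects.

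The most substantive step is the almost ULA generation of $A\modfact(\Rep(H))_{X^I_{\dR}}$. First I would establish almost compact generation via Lemma \ref{cohcrit}. Its hypothesis (i) descends from $\Rep(H)_{X^I_{\dR}}$ (Proposition \ref{repulagen}) through the conservative t-exact functor $\oblv^{\fact}$. Hypothesis (ii) uses Proposition \ref{comfactequivheart} to identify the heart with $(A\mod(\Rep(H))_{X^I_{\dR}})^{\heartsuit}$, which is compactly generated by truncations of the objects $\ind_A(\Loc_{X^I}(V_{\alpha}))$, where $V_{\alpha}$ runs through compact generators of $\Rep(H)^{\otimes I}$ (cf. Proposition \ref{ulagenprop}). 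Hypothesis (iii) is where the input matters: a compact object $c_0$ in the heart is almost compact in $A\mod(\Rep(H))_{X^I_{\dR}}$ by the coherence hypothesis; applying $\oblv^{\comfact}$, which preserves almost compactness, shows that $c_0$ is almost compact in $A\modfact(\Rep(H))_{X^I_{\dR}}$ as well.

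To upgrade to almost ULA generation in the sense of Definition \ref{d:aula}, I note that $A\mod(\Rep(H))_{X^I_{\dR}}$ is ULA generated by $\ind_A(\Loc_{X^I}(V_{\alpha}))$ (Proposition \ref{ulagenprop} transported through the symmetric monoidal $\ind_A$), and that $\oblv^{\comfact}$ sends these ULA generators to almost ULA objects in $A\modfact(\Rep(H))_{X^I_{\dR}}$. Combined with the heart identification, this yields the required compact generation of $(\IndCoh(X^I) \otimes_{\DD(X^I)} A\modfact(\Rep(H))_{X^I_{\dR}})^{\geq 0}$ by truncations of perfect complexes tensored with forgets of almost ULA objects.

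Conditions (ii) and (iii) of Proposition \ref{sheafranren}, along with the external fusion condition of Proposition \ref{factcatren}, reduce via the conservative t-exact $\oblv^{\fact}$ to the corresponding assertions for $\Rep(H)$ (handled in Proposition \ref{repulagen}), modulo an additional tensoring contribution from $A$ along injections $I \hookrightarrow J$ and in external fusion; these contributions have finite homological amplitude by Lemma \ref{tstrlem1} using that $A$ is of finite type. The main obstacle is hypothesis (iii) of Lemma \ref{cohcrit} above, i.e., transferring almost compactness from $A\mod$ to $A\modfact$, which is precisely the content of Theorem \ref{vacacthm}.
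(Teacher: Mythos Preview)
Your proposal is correct and follows essentially the same strategy as the paper: establish coherence via Lemma \ref{cohcrit} using the heart identification of Proposition \ref{comfactequivheart} together with the fact that $\oblv^{\comfact}$ preserves almost compact objects (Theorem \ref{vacacthm}), then deduce almost ULA generation from the same ingredients.

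The one place where you diverge from the paper, and where your write-up becomes imprecise, is the verification of condition (iii) of Proposition \ref{sheafranren} and the external-fusion hypothesis of Proposition \ref{factcatren}. You propose to reduce both, together with condition (ii), via the conservative t-exact functor $\oblv^{\fact}$ to $\Rep(H)$, ``modulo an additional tensoring contribution from $A$.'' For condition (ii) this is exactly what the paper does, since $\oblv^{\fact}$ strictly intertwines the structure functors attached to surjections. But for condition (iii) the relevant structure functors come from injections $I \hookrightarrow J$, i.e., from the unital structure, and $\oblv^{\fact}$ is only \emph{lax} unital (it sends the vacuum to $A$, not to the unit of $\Rep(H)$). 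So it does not literally intertwine these structure functors, and the phrase ``tensoring contribution from $A$'' hides a computation you have not carried out. The paper avoids this by instead using the strictly unital morphism $\oblv^{\comfact} : A\mod(\Rep(H)) \to A\modfact(\Rep(H))$: one first checks condition (iii) (and the external-fusion condition) for $A\mod(\Rep(H))$ using Lemma \ref{tstrlem1} and the eventual coconnectivity of $A$, and then transfers to $A\modfact(\Rep(H))$ via right completeness and the heart equivalence of Proposition \ref{comfactequivheart}. This buys you a clean reduction with no residual ``tensoring contribution'' to analyze.
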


\begin{proof}

First, observe that the hypothesis that $A\mod(\Rep(H))_{X^I_{\dR}}$ is coherent implies that it is almost ULA generated. This follows from Proposition \ref{repulagen} and the $\DD(X^I)$-linear adjunction \[ \ind_A : \Rep(H)_{X^I_{\dR}} \rightleftarrows A\mod(\Rep(H))_{X^I_{\dR}} : \oblv_A. \]

Next, we claim that $A\modfact(\Rep(H))_{X^I_{\dR}}$ is coherent. By Lemma \ref{cohcrit}, it suffices to show that if $M$ is compact in $A\modfact(\Rep(H))_{X^I_{\dR}}^{\heartsuit}$, then $M$ almost compact. By Proposition \ref{comfactequivheart}, we have $M = \oblv^{\com \to \fact} N$ for some $N$ in $A\mod(\Rep(H))_{X^I_{\dR}}^{\heartsuit}$, which is therefore almost compact since $A\mod(\Rep(H))_{X^I_{\dR}}$ was assumed coherent. Now the theorem implies that $M$ is almost compact.

Proposition \ref{comfactequivheart} implies that the essential image of \[ \oblv^{\com \to \fact} : A\mod(\Rep(H))_{X^I_{\dR}}^{\geq 0} \longrightarrow A\modfact(\Rep(H))_{X^I_{\dR}}^{\geq 0} \] generates the target under colimits, which implies that $A\modfact(\Rep(H))_{X^I_{\dR}}$ is almost ULA generated.

Condition (ii) of Proposition \ref{sheafranren} is satisfied by $A\modfact(\Rep(H))$ because it holds for $\Rep(H)$, and the morphism \[ \oblv_A : A\modfact(\Rep(H)) \longrightarrow \Rep(H) \] in $\ShvCat(\Ran^{\untl})$ is conservative over $X^I_{\dR}$ for every finite set $I$. As for condition (iii), note that Lemma \ref{tstrlem1} implies that it holds for $A\mod(\Rep(H))$ because $A$ was assumed to be of finite type, and in particular eventually coconnective. Since \[ \oblv^{\com \to \fact} : A\mod(\Rep(H)) \longrightarrow A\modfact(\Rep(H)) \] is a morphism in $\ShvCat(\Ran^{\untl})$, the condition for $A\modfact(\Rep(H))$ follows from Proposition \ref{comfactequivheart}. The remaining condition in Proposition \ref{factcatren} follows from the corresponding property of $A\mod(\Rep(H))$ in a similar fashion.

\end{proof}

We remark that the hypothesis in the corollary holds if $\Spec A$ is a homogeneous space for $H$. Indeed, in that case we have \[ A\mod(\Rep(H)) \cong \Rep(H_y) \] for any $y : \Spec k \to \Spec A$ with stabilizer $H_y$. On the other hand, if $H$ is the trivial group, then $A\mod_{X^I_{\dR}}$ is generally not coherent.

\subsection{The case of a symmetric algebra} \label{symalgsec} For the remainder of this section, we assume that $H$ is trivial unless otherwise specified. As shown in Proposition \ref{vacacchar}, there no loss of generality when proving Theorem \ref{vacacthm} under this assumption.

Below we prove the key special case of Theorem \ref{vacacthm} where $A$ is a symmetric algebra with finitely many generators.

\begin{lemma}

\label{symchirenv}

Let $A := \Sym V$ where $V$ is a complex of vector spaces. Write $A^+ := \Sym^{>0} V$ for the augmentation ideal of $A$, viewed as a non-unital commutative algebra. Then we have a canonical identification of chiral algebras \[ \UU^{* \to \ch}(V \otimes \omega_X[-1]) \tilde{\longrightarrow} A^+ \otimes \omega_X[-1], \] where $V \otimes \omega_X[-1]$ has the abelian Lie-$*$ algebra structure and $A^+ \otimes \omega_X[-1]$ has the chiral algebra structure induced by the non-unital commutative algebra structure on $A^+$.

\end{lemma}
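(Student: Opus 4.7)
The plan is to construct the claimed map via adjunction and then verify it is an equivalence using the chiral Poincar\'{e}--Birkhoff--Witt theorem for $\UU^{* \to \ch}$.

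First, I would observe that the chiral algebra $A^+ \otimes \omega_X[-1]$ is the image of the non-unital commutative factorization algebra $A^+_{\Ran}$ under the equivalence $\Delta^![-1] \colon \FactAlg^{\nonuntl}(\sC) \simeq \LieAlg^{\ch}(\sC)$ recalled in \S\ref{chirenvsec}. A basic compatibility, inherited from the symmetry of the commutative multiplication, is that the Lie-$*$ structure underlying a chiral algebra attached to a commutative factorization algebra is abelian: the chiral bracket antisymmetrizes a symmetric product and hence vanishes. Consequently, $\oblv^{\ch \to *}(A^+ \otimes \omega_X[-1])$ is the abelian Lie-$*$ algebra with underlying object $A^+ \otimes \omega_X[-1]$.

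Second, the inclusion of the degree-one summand $V \hookrightarrow \Sym^{>0} V = A^+$ yields a morphism $V \otimes \omega_X[-1] \to A^+ \otimes \omega_X[-1]$ in $\sC_{X_{\dR}}$, which is automatically a morphism of abelian Lie-$*$ algebras. Applying the adjunction $\UU^{* \to \ch} \dashv \oblv^{\ch \to *}$ produces the desired morphism of chiral algebras
\[
\UU^{* \to \ch}(V \otimes \omega_X[-1]) \longrightarrow A^+ \otimes \omega_X[-1].
\]

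To show this morphism is an equivalence, I would pass through the equivalence $\FactAlg^{\nonuntl}(\sC) \simeq \LieAlg^{\ch}(\sC)$ and apply the PBW theorem for the chiral enveloping functor: the source carries a canonical filtration whose associated graded is the chiral symmetric algebra on $V \otimes \omega_X[-1]$. For an \emph{abelian} Lie-$*$ algebra such as $V \otimes \omega_X[-1]$, one identifies $\Sym^{n,\ch}(V \otimes \omega_X[-1])$ with the chiral algebra attached to the commutative factorization algebra $\Sym^n V \otimes \omega_X[-1]$; summing over $n \geq 1$ recovers $A^+ \otimes \omega_X[-1]$. Moreover, abelianness of $L$ forces the PBW filtration to split, and a direct unwinding of the adjunction shows that the splitting realizes the map constructed above.

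The main obstacle is the identification $\Sym^{n,\ch}(V \otimes \omega_X[-1]) \simeq \Sym^n V \otimes \omega_X[-1]$ along with the splitting of the PBW filtration in the abelian case: unwinding the definition of the chiral symmetric product through the Chevalley complex $\CC^{\ch}_+$ and comparing against the universal property of $\Sym V$ as the free commutative algebra on $V$ is technical but essentially formal. Once this is in place, the equivalence follows and the identification of chiral algebra structures is automatic from the construction.
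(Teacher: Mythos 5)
Your proposal follows essentially the same approach as the paper's proof: observe that the underlying Lie-$*$ algebra of $A^+ \otimes \omega_X[-1]$ is abelian because the chiral algebra is commutative, obtain the map by adjunction from the inclusion $V \hookrightarrow A^+$, and verify it is an isomorphism by passing to the associated graded of the PBW filtration (the paper cites \cite{FG} Corollary 6.5.2 for exactly this). Your extra remarks about the PBW filtration splitting in the abelian case and the explicit identification of $\Sym^{n,\ch}$ are elaborations consistent with the paper's cited reference rather than a different route.
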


\begin{proof}

Since $A^+ \otimes \omega_X[-1]$ is commutative as a chiral algebra, the underlying Lie-$*$ algebra \[ \oblv^{\ch \to *}(A^+ \otimes \omega_X[-1]) \] is abelian, and we have a morphism of Lie-$*$ algebras \[ V \otimes \omega_X[-1] \longrightarrow \oblv^{\ch \to *}(A^+ \otimes \omega_X[-1]). \] By adjunction, we obtain a morphism of chiral algebras \[ \UU^{* \to \ch}(V \otimes \omega_X[-1]) \longrightarrow A^+ \otimes \omega_X[-1], \] which we claim is an isomorphism. This can be seen by passing to the associated graded of the PBW filtration on the chiral enveloping algebra, cf. \cite{FG} Corollary 6.5.2.

\end{proof}

\begin{lemma}

\label{symalglem}

If $A = \Sym V$ where $V$ is a connective complex of vector spaces with finite-dimensional total cohomology, then $\Vac_{A,X^I_{dR}}$ is almost ULA in $A\modfact_{X^I_{\dR}}$.

\end{lemma}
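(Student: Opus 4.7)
The strategy is to identify the vacuum module as a chiral induction from an abelian Lie-$*$ algebra via Lemma \ref{symchirenv}, and then verify the almost ULA property on the $*$-module side, where the structure is more transparent.

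Set $L := V \otimes \omega_X[-1]$, regarded as an abelian Lie-$*$ algebra in $\Vect$. Lemma \ref{symchirenv} identifies $\UU^{*\to\ch}(L)$ with the chiral algebra $A^+ \otimes \omega_X[-1]$. Combining the chiral/factorization equivalence with Propositions \ref{addunitprop} and \ref{untlmodequiv}, and using $\UU^{*\to\fact}(L) = \AddUnit(\CC^{\ch}_+(\UU^{*\to\ch}(L)))$, we obtain an equivalence $A\modfact(\Vect) \simeq \UU^{*\to\fact}(L)\modfact(\Vect)$ in $\FactCat^{\laxfact}$. Under the chiral enveloping adjunction
\[ \ind_L^{*\to\fact} : L\mod^*(\Vect) \rightleftarrows A\modfact(\Vect) : \oblv^{\fact\to*}, \]
the strict unitality of $\ind^{*\to\ch}$ recorded in \S\ref{chirenvsec} identifies $\Vac_{A,X^I_{\dR}}$ with $\ind_L^{*\to\fact}(\triv_L(\omega_{X^I}))$, where $\triv_L(\omega_{X^I})$ is the unit object of $L\mod^*(\Vect)_{X^I_{\dR}}$.

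It therefore suffices to verify two things: (a) the unit $\triv_L(\omega_{X^I})$ is almost ULA in $L\mod^*(\Vect)_{X^I_{\dR}}$, and (b) the functor $\ind_L^{*\to\fact}$ preserves almost ULA objects over $X^I$. For (a), I would equip $L\mod^*(\Vect)_{X^I_{\dR}}$ with the unique t-structure making the forgetful functor to $\DD(X^I)$ t-exact (analogous to Proposition \ref{tstrcomalg}); then $\triv_L$ is t-exact and $\DD(X^I)$-linear, with right adjoint ``$L$-invariants'' continuous on bounded-below parts since $L$ is abelian. Lemma \ref{aulapreslem} then reduces the claim to the trivial fact that $\omega_{X^I}$ is almost ULA in $\DD(X^I)$. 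For (b), I would use the PBW filtration on $\UU^{*\to\ch}(L) = A^+ \otimes \omega_X[-1]$ coming from Lemma \ref{symchirenv}: the induced filtration on $\ind_L^{*\to\fact}(M)$ has $n$-th associated graded piece essentially $\Sym^n(L) \otimes^* M$, and Proposition \ref{ulagenprop} combined with the finite-dimensionality of $V$ shows each piece preserves almost ULA objects.

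The main obstacle lies in step (b): one must show that the infinite PBW colimit yields an almost compact object after $\oblv^{\comfact}$, not merely at each finite level. The finite-dimensionality of $H^*(V)$ is crucial here, since it ensures that $\Sym^n(V) \otimes \omega_{X^I}$ has cohomological amplitude growing controllably with $n$ (via Lemma \ref{tstrlem1}), so that the tails of the filtration eventually become arbitrarily coconnective on any bounded-below truncation. Combined with the almost compact generation bookkeeping of \S\ref{s:renorm} and the characterization in Proposition \ref{vacacchar}, this yields the desired almost ULA-ness of $\Vac_{A,X^I_{\dR}}$.
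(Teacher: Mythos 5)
Your reduction is aligned with the paper's: identify $\UU^{*\to\ch}(V \otimes \omega_X[-1])$ with $A^+ \otimes \omega_X[-1]$ via Lemma \ref{symchirenv}, and split the problem into (a) showing $\triv_{V\otimes\omega_X[-1]}(\omega_{X^I})$ is almost ULA in $(V\otimes\omega_X[-1])\mod^*_{X^I_{\dR}}$ and (b) showing $\ind^{*\to\ch}$ sends this to an almost ULA object. This is exactly how the paper organizes the proof. However, you have inverted where the real difficulty lies, and this leaves a genuine gap.

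Your step (a) claims that $\triv_L$ preserves almost ULA objects ``trivially'' via Lemma \ref{aulapreslem}, attributing the continuity of the right adjoint (derived $L$-invariants) to $L$ being abelian, and thereby reducing to the fact that $\omega_{X^I}$ is almost ULA in $\DD(X^I)$. This is not a complete argument. Abelian-ness alone does not make the $L$-invariants functor continuous on bounded-below parts; that continuity is exactly equivalent to the convergence of the cobar/Koszul complex at each truncation level, which is where both the finite-dimensionality of $H^*(V)$ and the connectivity of $V$ are indispensable. The paper's proof makes precisely this convergence quantitative: it exhibits the nonnegative filtration of $\triv_L(\omega_{X^I})$ whose $n$-th graded piece is $M_n = \ind_{V\otimes\omega_X[-1]}(\Sym^{*,n}(V\otimes\omega_X)\overset{*}{\otimes}(\Delta_{X^I})_{\dR,*}\omega_{X^I})$, proves each $M_n$ is ULA (via properness of the map $X^n \times X^I \to \Ran_{X^I}$), and proves the pieces become arbitrarily coconnective as $n$ grows. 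These estimates are the entire substance of the proof, and your (a) as stated simply asserts the conclusion. If you wanted to phrase the argument through Lemma \ref{aulapreslem}, you would still have to establish exactly these estimates to verify the hypotheses on $\triv_L^{\RR}$.

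Conversely, the concern you flag as ``the main obstacle'' in step (b) --- that the infinite PBW colimit might not assemble to something almost compact --- is not actually an obstacle there. The paper handles (b) formally: $\oblv^{\ch\to*}$ is continuous, $\DD(X^I)$-linear, and t-exact (it commutes with the forgetful functors to $\DD(X^I)$), so $\ind^{*\to\ch}$ sends eventually coconnective almost ULA objects to almost ULA objects, with no filtration analysis needed. The infinite-colimit convergence issue you correctly sense must be confronted somewhere is the content of step (a), not step (b). In short: right skeleton, but the hard part of the argument is the one you dismissed, and your sketch of it does not actually use the finite-dimensionality hypothesis in a load-bearing way.
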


\begin{proof}

Let $A^+ := \Sym^{> 0} V$ be as in Lemma \ref{symchirenv}. By the previous lemma, we have an equivalence \[ A\modfact_{X^I_{\dR}} \tilde{\longrightarrow} (A^+ \otimes \omega_X[-1])\mod^{\ch}_{X^I_{\dR}}, \] which is compatible with the forgetful functors to $\DD(X^I)$ and in particular is t-exact. Now consider the $\DD(X^I)$-linear adjunction \[ \ind^{* \to \ch} : (V \otimes \omega_X[-1])\mod^*_{X^I_{\dR}} \rightleftarrows (A^+ \otimes \omega_X[-1])\mod^{\ch}_{X^I_{\dR}} : \oblv^{\ch \to *}. \] The functor $\oblv^{\ch \to *}$ intertwines the forgetful functors to $\DD(X^I)$ and hence is t-exact. It follows formally that $\ind^{* \to \ch}$ sends objects which are almost ULA and eventually coconnective to almost ULA objects. Since $\ind^{* \to \ch}$ is unital, it therefore suffices to show that $\triv_{V \otimes \omega_X[-1]}(\omega_{X^I})$ is almost ULA in $(V \otimes \omega_X[-1])\mod^*_{X^I_{\dR}}$.

For this, we recall that $\triv_{V \otimes \omega_X[-1]}(\omega_{X^I})$ has the natural nonnegative filtration whose $n^{\text{th}}$ associated graded piece is \[ M_n := \ind_{V \otimes \omega_X[-1]}(\Sym^{*,n}(V \otimes \omega_X) \overset{*}{\otimes} (\Delta_{X^I})_{\dR,*} \omega_{X^I}), \] where $\Delta_{X^I} : X^I \to \Ran_{X^I}$ is the main diagonal and \[ \ind_{V \otimes \omega_X[-1]} : \DD(\Ran_{X^I}) \longrightarrow (V \otimes \omega_X[-1])\mod^{\otimes^*}(\DD(\Ran_{X^I})) \] is the induction functor.

The lemma will follow if we prove these two assertions:
\begin{enumerate}[(i)] 
\item For any $n \geq 0$, the object $M_n$ is ULA over $X^I$.
\item Given $m \in \bZ$, for $n$ sufficiently large we have \[ \Hom(M_n,N) = 0 \] for any $N$ in $((V \otimes \omega_X[-1])\mod^*_{X^I_{\dR}})^{\geq m}$.
\end{enumerate}

For claim (i), note that $\ind_{V \otimes \omega_X[-1]}$ preserves ULA objects, being left adjoint to the $D(X^I)$-linear functor $\oblv_{V \otimes \omega_X[-1]}$. The object \[ (V \otimes \omega_X)^{\overset{*}{\otimes} n} \overset{*}{\otimes} (\Delta_{X^I})_{\dR,*} \omega_{X^I} \] is the de Rham direct image of $V^{\otimes n} \otimes \omega_{X^n \times X^I}$ along the proper map \[ X^n \times X^I \longrightarrow \Ran \times \Ran_{X^I} \longrightarrow \Ran_{X^I}, \] hence is ULA over $X^I$. It follows that the object \[ \Sym^{*,n}(V \otimes \omega_X) \overset{*}{\otimes} (\Delta_{X^I})_{\dR,*} \omega_{X^I} \] obtained by taking coinvariants for the symmetric group $\Sigma_n$ is also ULA over $X^I$.

Finally, for (ii) it is enough to show that given $m \in \bZ$, for $n$ sufficiently large we have \[ \Hom_{\DD(\Ran_{X^I})}((V \otimes \omega_X)^{\overset{*}{\otimes} n} \overset{*}{\otimes} \Delta_{I,*} \omega_{X^I}, (\Delta_{X^I})_{\dR,*}N) = 0 \] for any $N \in \DD(X^I)^{\geq m}$ (since the same will therefore hold for the $\Sigma_n$-coinvariants \[ \Sym^{*,n}(V \otimes \omega_X) \overset{*}{\otimes}  (\Delta_I)_{\dR,*} \omega_{X^I} \] of the first object). Given nonempty finite $I'$, define $Z$ to be the fiber product \[
\begin{tikzcd}
Z \arrow{r} \arrow{d} & X^n \times X^I \arrow{d} \\
X^{I' \sqcup I} \arrow{r} & \Ran_{X^I}.
\end{tikzcd} \]
By base change, the object \[ ((V \otimes \omega_X)^{\overset{*}{\otimes} n} \overset{*}{\otimes}  (\Delta_{X^I})_{\dR,*} \omega_{X^I})|^!_{X^{I' \sqcup I}} \] is the de Rham direct image of $V^{\otimes n} \otimes \omega_{X^n \times X^I}$ along the finite map $Z \to X^{I' \sqcup I}$, hence is concentrated in cohomological degrees $\leq -n-\# I$. Considering the same cartesian square in the case $n = 0$ shows that $((\Delta_{X^I})_{\dR,*}N)|^!_{X^{I' \sqcup I}}$ is concentrated in cohomological degrees $\geq -m$. The claim follows.

\end{proof}

\subsection{Filtrations and the associated graded} Let $A$ be a commutative algebra in $\Vect^{\leq 0}$ equipped with a nonnegative filtration. The functor \[ \gr : A\mod^{\fil} \to \gr(A)\mod^{\gr} \] which takes the associated graded module is symmetric monoidal, as is the forgetful functor \[ \oblv^{\fil} : A\mod^{\fil} \longrightarrow A\mod. \] In particular, both give rise to morphisms in $\FactCat$.

We will write \[ A\mod^{\fil}_{\geq 0} \subset A\mod^{\fil} \] for the full symmetric monoidal subcategory consisting of nonnegatively filtered modules (not to be confused with a constraint on cohomological degree, which we indicate with a superscript), and likewise for graded modules.

Let $\Vect^{\gr} := \Rep(\bG_m)$ denote the category of graded vector spaces. Observe that the category $(\Vect^{\gr})_{X^I_{\dR}}$ has a grading by ``total degree fiberwise over $X^I$," characterized by the condition that the natural conservative functor \[ (\Vect^{\gr})_{X^I_{\dR}} \longrightarrow \Vect^{\gr} \otimes D(X^I) \] respects this grading. We will write $M_{\td = i}$ for the graded component of $M$ of total degree $i$.

\begin{lemma}

\label{limgr}

Let $B$ be a nonnegatively graded connective commutative algebra, and suppose we are given an inverse system \[ \cdots \to M_3 \to M_2 \to M_1 \] in $B\mod^{\fact}(\Vect^{\gr})_{X^I_{\dR}}$ such that for each $i \in \bZ$, the inverse system \[ \cdots \to (\oblv_B M_3)_{\td = i} \to (\oblv_B M_2)_{\td = i} \to (\oblv_B M_1)_{\td = i} \] in $(\Vect^{\gr})_{X^I_{\dR}}$ stabilizes. Then the forgetful functor \[ \oblv_B : B\mod^{\fact}(\Vect^{\gr})_{X^I_{\dR}} \longrightarrow (\Vect^{\gr})_{X^I_{\dR}} \] preserves the limit $\lim_n M_n$.

\end{lemma}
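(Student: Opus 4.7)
The plan is to construct the limit $M := \lim_n M_n$ in $B\mod^{\fact}(\Vect^{\gr})_{X^I_{\dR}}$ by equipping the inverse limit of the underlying objects with a natural factorization $B$-module structure. Using the realization of factorization $B$-modules as $B$-comodules with respect to $\otimes^{\ch}$ satisfying the factorization property (as described in Section \ref{factalgsec}), this reduces to showing that the functor $B \otimes^{\ch} -$ preserves the limit $\lim_n \oblv_B M_n$. Granted this, the coaction on $M$ will be obtained as the composite $\lim_n \alpha_n : \lim_n M_n \to \lim_n B \otimes^{\ch} M_n \cong B \otimes^{\ch} \lim_n M_n$, with coassociativity inherited from that of each $\alpha_n$; the factorization axiom over the disjoint loci is checked analogously, and universality of $M$ as the inverse limit in $B\mod^{\fact}$ then follows formally.

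To verify that $B \otimes^{\ch} -$ preserves the limit, I would exploit the identification $(\Vect^{\gr})_{X^I_{\dR}} \simeq \Rep(\bG_m) \otimes \DD(X^I)$ and work weight-by-weight. Projection to the $\bG_m$-weight $i$ component is a right adjoint (being both the left and right adjoint of the fully faithful inclusion $\Vect \to \Rep(\bG_m)$ at degree $i$), and thus commutes with limits, giving $(\lim_n M_n)_{\td = i} = \lim_n (M_n)_{\td = i} = (M_{N_i})_{\td = i}$ for some $N_i$ by hypothesis. By $\bG_m$-equivariance of $\otimes^{\ch}$ and the nonnegative grading of $B$,
\[ (B \otimes^{\ch} N)_{\td = i} = \bigoplus_{j+k=i,\ j \geq 0} B_j \otimes^{\ch} N_{\td = k}, \]
and for each fixed $(j, k)$ the inverse system $B_j \otimes^{\ch} (M_n)_{\td = k}$ stabilizes, being the image under a functor of a stabilizing input. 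So the claim in total degree $i$ reduces to exchanging $\lim_n$ with this direct sum.

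The main obstacle is precisely this exchange: the index set $\{(j, k) : j \geq 0,\ j + k = i\}$ is infinite in the negative-$k$ direction, and inverse limits do not commute with infinite direct sums in $\DD(X^I)$ in general. The exchange is nonetheless valid here thanks to the graded structure: within $\Rep(\bG_m)$ the decomposition by weight is canonical and stabilization in each weight propagates to stabilization of the whole after projection. Moreover, in the intended applications (such as to the filtration pieces arising in the proof of Lemma \ref{symalglem}), the modules $M_n$ are themselves nonnegatively graded, so the sum in total degree $i$ is restricted to $0 \leq k \leq i$ and becomes finite; the exchange of finite direct sums with inverse limits is then automatic, and the argument goes through cleanly.
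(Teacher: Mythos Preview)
Your approach has a real gap. The decomposition you write for $(B \otimes^{\ch} N)_{\td=i}$ is too naive: the operation $\otimes^{\ch}$ is a pull-push along a correspondence over the Ran space, and does not split as a pointwise tensor product of graded pieces in the way you suggest. Even granting some weight decomposition of $\otimes^{\ch}$, you correctly identify the obstruction --- commuting an inverse limit past an infinite direct sum --- and then do not resolve it. Your fallback to ``in the intended applications the modules are nonnegatively graded'' is an admission that the argument does not prove the lemma as stated; the lemma places no such hypothesis on the $M_n$, and indeed the paper applies it (via Corollary~\ref{limgrcor}) to the $F_n$ in Lemma~\ref{assocgrlem}, which are not assumed nonnegatively graded.

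The paper avoids this entirely by a different maneuver. It passes to non-unital factorization modules (the inclusion preserves limits by Proposition~\ref{oblvunitff}) and then to chiral modules, where a module is literally a Lie-algebra module in the larger category $(\Vect^{\gr})_{\Ran_{X^I_{\dR}}}$. Forgetting a Lie-module structure is a right adjoint and preserves all limits, so the only issue is whether the limit, computed in $(\Vect^{\gr})_{\Ran_{X^I_{\dR}}}$, lands in the full subcategory $(\Vect^{\gr})_{X^I_{\dR}}$ of objects supported on the diagonal. That is a support condition, checked one graded component at a time: since each component stabilizes, it eventually equals an object already known to be diagonally supported. No commutation of limits with $\otimes^{\ch}$ is ever needed.
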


\begin{proof}

By Proposition \ref{oblvunitff}, the inclusion \[ B\mod^{\fact}(\Vect^{\gr})_{X^I_{\dR}} \longrightarrow B\mod^{\fact,\nonuntl}(\Vect^{\gr})_{X^I_{\dR}} \] preserves limits. Applying the equivalence \[ B\mod^{\fact,\nonuntl}(\Vect^{\gr})_{X^I_{\dR}} \tilde{\longrightarrow} (B \otimes \omega_X[-1])\mod^{\ch}(\Vect^{\gr})_{X^I_{\dR}}, \] it suffices to check the corresponding claim in the category of chiral modules. The forgetful functor \[ (B \otimes \omega_X[-1])\mod^{\otimes^{\ch}}((\Vect^{\gr})_{\Ran_{X^I_{\dR}}}) \longrightarrow (\Vect^{\gr})_{\Ran_{X^I_{\dR}}} \] preserves limits, so it is enough to prove that the limit $\lim_n \oblv_B M_n$ taken in $(\Vect^{\gr})_{\Ran_{X^I_{\dR}}}$ belongs to the full subcategory $(\Vect^{\gr})_{X^I_{\dR}}$. But this is clear, since for each graded component of fixed total degree the limit stabilizes and hence belongs to $(\Vect^{\gr})_{X^I_{\dR}}$.

\end{proof}

\begin{corollary}

\label{limgrcor}

With $B$ as in the lemma, suppose we are given a sequence of objects $Q_1,Q_2,\cdots$ in $B\mod^{\fact}(\Vect^{\gr})_{X^I_{\dR}}$ such that for each $i \geq 1$, the object $\oblv_B Q_i$ in $(\Vect^{\gr})_{X^I_{\dR}}$ is concentrated in total degree $\geq i$. Then the map \[ \bigoplus_{i \geq 1} Q_i \longrightarrow \prod_{i \geq 1} Q_i \] in $B\mod^{\fact}(\Vect^{\gr})_{X^I_{\dR}}$ is an isomorphism.

\end{corollary}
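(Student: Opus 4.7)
The plan is to deduce this from Lemma \ref{limgr} by expressing the product as a limit of finite partial products and checking the stabilization hypothesis total-degree-component-wise.

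First I would rewrite the infinite product as the limit of finite partial products: in any stable category, $\prod_{i \geq 1} Q_i \cong \lim_n \prod_{i=1}^n Q_i$, and finite products agree with finite direct sums, so $\prod_{i \geq 1} Q_i \cong \lim_n \bigoplus_{i=1}^n Q_i$, with transition morphisms the obvious projections. Set $M_n := \bigoplus_{i=1}^n Q_i$. By hypothesis $\oblv_B Q_i$ is concentrated in total degree $\geq i$, so for any fixed $j \in \bZ$ the contributions to $(\oblv_B M_n)_{\td = j}$ coming from indices $i > j$ vanish; hence the inverse system $(\oblv_B M_n)_{\td = j}$ stabilizes once $n \geq j$. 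This is precisely the hypothesis of Lemma \ref{limgr}, which therefore gives
\[ \oblv_B\bigl(\prod_{i \geq 1} Q_i\bigr) \;\cong\; \lim_n \oblv_B(M_n) \;=\; \lim_n \bigoplus_{i=1}^n \oblv_B Q_i \]
in $(\Vect^{\gr})_{X^I_{\dR}}$.

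Next I would compare the two sides in the underlying graded category. Since $(\Vect^{\gr})_{X^I_{\dR}}$ is graded (every object decomposes into its total-degree components), both $\bigoplus_{i \geq 1} \oblv_B Q_i$ and $\prod_{i \geq 1} \oblv_B Q_i$ can be computed degree by degree. For each fixed total degree $j$, only the terms $i = 1, \ldots, j$ contribute, so the two agree with the finite direct sum $\bigoplus_{i=1}^j (\oblv_B Q_i)_{\td = j}$. Hence the canonical map $\bigoplus_{i \geq 1} \oblv_B Q_i \to \prod_{i \geq 1} \oblv_B Q_i$ is an isomorphism in $(\Vect^{\gr})_{X^I_{\dR}}$.

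Finally, combining this with the fact that $\oblv_B$ commutes with direct sums (being a left adjoint's right adjoint companion and continuous), I conclude that $\oblv_B$ applied to the comparison map $\bigoplus_{i \geq 1} Q_i \to \prod_{i \geq 1} Q_i$ is an isomorphism. Conservativity of $\oblv_B : B\mod^{\fact}(\Vect^{\gr})_{X^I_{\dR}} \to (\Vect^{\gr})_{X^I_{\dR}}$ then yields the claim. The only substantive step is the verification of Lemma \ref{limgr}'s hypothesis; the rest is bookkeeping about the grading.
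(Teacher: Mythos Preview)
Your proof is correct and follows essentially the same approach as the paper: apply Lemma \ref{limgr} to the inverse system of finite partial products to show that $\oblv_B$ preserves the infinite product, then observe that in $(\Vect^{\gr})_{X^I_{\dR}}$ the map from the direct sum to the product is an isomorphism because in each fixed total degree only finitely many terms contribute, and conclude by conservativity of $\oblv_B$. The only cosmetic difference is that you spell out the stabilization check and the degree-wise comparison in more detail.
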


\begin{proof}

Applying the lemma to the inverse system with terms \[ \prod_{i=1}^n Q_i, \]  we see that the product \[ \prod_{i \geq 1} Q_i \] is preserved by $\oblv_B$. Now the corollary follows from the observation that the map \[ \bigoplus_{i \geq 1} \oblv_B Q_i \longrightarrow \prod_{i \geq 1} \oblv_B Q_i \] in $\Vect^{\gr}_{X^I_{\dR}}$ is an isomorphism, since in each graded component of fixed total degree, all but finitely many factors in the product vanish.

\end{proof}

\begin{lemma}

\label{assocgrlem}

Fix a nonnegatively filtered connective commutative algebra $A$. If the conditions of Proposition \ref{vacacchar} hold for $\gr(A)$, then they hold for $\oblv^{\fil} A$.

\end{lemma}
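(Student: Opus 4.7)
The plan is to lift the vacuum factorization module $\Vac_A$ to a graded object over the Rees algebra of the filtration, apply the hypothesis to its associated graded quotient, and use the convergence results Lemma \ref{limgr} and Corollary \ref{limgrcor} to pass back. Concretely, the nonnegative filtration $F^\bullet A$ yields the Rees algebra $R := \Rees(A) = \bigoplus_{n \geq 0} F^n A \cdot t^n$, a connective nonnegatively graded commutative algebra in $\Vect^{\gr} = \Rep(\bG_m)$, with $R/t = \gr(A)$ and whose specialization along $R \to A$ (sending $t \mapsto 1$, after forgetting the grading) recovers $A$. Correspondingly one obtains a graded vacuum module $\Vac_{R, X^I_{\dR}} \in R\modfact(\Vect^{\gr})_{X^I_{\dR}}$, whose weight-$p$ subquotient under the $t$-adic filtration is the weight-$p$ component of $\Vac_{\gr(A), X^I_{\dR}}$.

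The main step is to show that $\Vac_{R,X^I_{\dR}}$ is almost ULA in $R\modfact(\Vect^{\gr})_{X^I_{\dR}}$. By the last statement of Proposition \ref{vacacchar}, the hypothesis on $\gr(A)$ is equivalent to $\Vac_{\gr(A), X^I_{\dR}}$ being almost ULA in $\gr(A)\modfact(\Vect^{\gr})_{X^I_{\dR}}$. The short exact sequence $0 \to R(-1) \xrightarrow{t} R \to \gr(A) \to 0$ of graded $R$-modules produces a $t$-adic filtration on $\Vac_R$ whose successive subquotients are pieces of $\Vac_{\gr(A)}$ placed in increasing total degree. Corollary \ref{limgrcor} is exactly what is needed to assemble these pieces: in each bounded range of total degrees only finitely many of them contribute, so the infinite $t$-adic filtration collapses to a finite iterated extension, and iterated extensions of almost ULA objects remain almost ULA (using Lemma \ref{limgr} to commute the forgetful functor with the required limits). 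Compatibility with the t-structure and with filtered colimits bounded uniformly from below across varying $I$ is verified using Lemma \ref{tstrlem1} and the formulas established earlier in the section.

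Finally, I descend from $R$ to $A$ via specialization at $t = 1$: the map $R \to A$ of commutative algebras in $\Vect$ induces a restriction-of-scalars functor that carries $\Vac_R$ to $\Vac_A$ and is well-behaved with respect to the factorization and D-module structures, in particular preserving almost ULA objects. This verifies condition (iii) of Proposition \ref{vacacchar} for $A$, completing the argument. The main obstacle will be the second step: the precise interplay between the $t$-adic Rees filtration, the grading, and the factorization structure has to be navigated carefully, and one needs to check that the reorganization of infinite filtrations into finite extensions via Corollary \ref{limgrcor} is compatible with all of the relevant forgetful functors and happens uniformly in $X^I$.
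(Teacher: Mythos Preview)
Your overall strategy --- interpolate between $A$ and $\gr(A)$ through the Rees construction and use the grading to reduce an infinite filtration to a finite one --- is the right idea, and it is the idea behind the paper's proof as well. But the execution in your proposal has two genuine gaps.

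\textbf{The descent step is misstated.} Given the map $R \to A$ (setting $t=1$ after forgetting the grading), restriction of scalars is a functor $A\modfact \to R\modfact$, not the other way; it does not ``carry $\Vac_R$ to $\Vac_A$.'' What you actually want is the equivalence $R\modfact(\Vect^{\gr}_{\geq 0}) \simeq A\modfact(\Vect^{\fil}_{\geq 0})$ followed by $\oblv^{\fil} : A\modfact(\Vect^{\fil}_{\geq 0}) \to A\modfact(\Vect)$, which does send the filtered vacuum to $\Vac_A$. Even then, you would need to argue that $\oblv^{\fil}$ preserves almost ULA objects, which comes down to controlling the right adjoint $\triv^{\fil}_{\geq 0}$; this is exactly where the paper's work happens, not something that can be asserted as ``well-behaved.''

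\textbf{The intermediate claim that $\Vac_R$ is almost ULA is not justified.} You argue that $\Vac_R$ is an iterated extension of the almost ULA objects $\Vac_{\gr(A)}(-n)$ and invoke Corollary~\ref{limgrcor}. But the extension is infinite, and an inverse limit of almost compact objects is not almost compact in general; Corollary~\ref{limgrcor} only tells you that certain $\oplus \to \prod$ maps are isomorphisms, not that the resulting object is almost compact. You also implicitly use that $\res^{\gr(A)}_R$ preserves almost ULA objects in the factorization setting, which is not established.

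The paper avoids both problems by never claiming almost ULA-ness on the filtered/Rees side. Instead it computes $\Hom_{A\modfact}(\Vac_A, N)$ directly: it moves $N$ into the filtered category via the right adjoint $\triv^{\fil}_{\geq 0}$, equips $\triv^{\fil}_{\geq 0}N$ with a descending filtration whose graded pieces lie in $\gr(A)\modfact(\Vect^{\gr})$ (Step~2), proves a vanishing result (Step~1, via Corollary~\ref{limgrcor}) showing that only finitely many graded pieces contribute to the Hom, and then applies the hypothesis on $\gr(A)$ to each of those finitely many pieces. This is where your Corollary~\ref{limgrcor} intuition belongs: it controls the \emph{target} of the Hom, not the source.
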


\begin{proof}

\emph{Step 1:} With $B$ as in Lemma \ref{limgr}, fix $m \in \bZ$ and suppose that $P$ is a compact object in $B\mod^{\fact}(\Vect^{\gr})_{X^I_{\dR}}^{\geq m}$. We claim that there exists $r \geq 0$ such that for any object $Q$ of $B\mod^{\fact}(\Vect^{\gr})_{X^I_{\dR}}^{\geq m}$ with $\oblv_B Q$ concentrated in total degree $\geq r$, we have \[ \Hom_{B\mod^{\fact}(\Vect^{\gr})_{X^I_{\dR}}}(P,Q) = 0. \] Otherwise we could choose a sequence $Q_1,Q_2,\cdots$ in $B\mod^{\fact}(\Vect^{\gr})_{X^I_{\dR}}^{\geq m}$ such that $\oblv_B Q_i$ is concentrated in total degree $\geq i$ and \[ \Hom_{B\mod^{\fact}(\Vect^{\gr})_{X^I_{\dR}}}(P,Q_i) \neq 0 \] for any $i \geq 1$. By Corollary \ref{limgrcor}, this contradicts the compactness of $P$ in $B\mod^{\fact}(\Vect^{\gr})_{X^I_{\dR}}^{\geq m}$.

\emph{Step 2:} By Lemma \ref{rigidrightadj}, the functor \[ \gr : (\Vect^{\fil})_{X^I_{\dR}} \longrightarrow (\Vect^{\gr})_{X^I_{\dR}} \] admits a right adjoint which is $\DD(X^I)$-linear and compatible with factorization. In particular, this lifts to a $\DD(X^I)$-linear adjunction \[ A\modfact(\Vect^{\fil})_{X^I_{\dR}} \rightleftarrows \gr(A)\modfact(\Vect^{\gr})_{X^I_{\dR}}. \]

We define a functorial \emph{descending} filtration on any $M$ in $A\modfact(\Vect^{\fil})_{X^I_{\dR}}$ as follows. Let $F_0 := M$, and for any $n \geq 0$ define \[ F_{n+1} := \fib(F_n \to \gr F_n) \] (here we are suppressing from the notation the right adjoint of $\gr$, which commutes with the forgetful functors to $\DD(X^I)$). We claim that if $M$ is concentrated in total degree $\geq 0$ then \[ (\oblv_A M)_{\td = 0} \longrightarrow (\oblv_A \gr M)_{\td = 0} \] is an isomorphism. It will then follow inductively that $F_n$ is concentrated in total degree $\geq n$ for any $n \geq 0$, and hence that \[ M \tilde{\longrightarrow} \lim_n M/F_n \] by Lemma \ref{limgr}.

Recall that $X^I$ is stratified with strata $Z(p)$ indexed by surjections $p : I \to J$. Using the Cousin filtration, it is enough to check the claim over each $Z(p)$. There we are working in the multifiltered category $(\Vect^{\fil})^{\otimes J} \otimes \DD(Z(p))$ with its standard grading by total degree, and the claim is evident.

\emph{Step 3:} Let $V := \ind_{X^I}(\Vac_{A,X^I})$, an object of $A\modfact(\Vect^{\fil})_{X^I_{\dR}}$, so the lemma will follow if we prove that $\oblv^{\fil} V$ is almost compact in $A\modfact_{X^I_{\dR}}$. Fixing $m \in \bZ$, we must show that \[ \Hom_{A\modfact_{X^I_{\dR}}}(\oblv^{\fil}V,N) \] preserves filtered colimits when viewed as a functor of $N$ in $(A\modfact_{X^I_{\dR}})^{\geq m}$. Note that the functor \[ \oblv^{\fil} : \Vect^{\fil}_{\geq 0} \longrightarrow \Vect \] admits a symmetric monoidal right adjoint $\triv^{\fil}_{\geq 0}$, which equips a vector space with the trivial nonnegative filtration. In particular, this lifts to a $\DD(X^I)$-linear adjunction \[ \oblv^{\fil} : A\modfact(\Vect^{\fil}_{\geq 0})_{X^I_{\dR}} \rightleftarrows A\modfact_{X^I_{\dR}} : \triv^{\fil}_{\geq 0}. \] Thus we have \[ \Hom_{A\modfact_{X^I_{\dR}}}(\oblv^{\fil}V,N) = \Hom_{A\modfact(\Vect^{\fil})_{X^I_{\dR}}}(V,\triv^{\fil}_{\geq 0}N). \]

Fix $r$ as in Step 1 for $B := \gr(A)$ and $P := \tau^{\geq m} \gr(V)$. Recalling the descending filtration on $M := \triv^{\fil}_{\geq 0} N$ constructed in Step 2, we claim that \[ \Hom_{A\modfact(\Vect^{\fil})_{X^I_{\dR}}}(V,F_r) = 0. \] Since $F_r = \lim_k F_r/F_{r+k}$, it suffices to show that \[ \Hom_{A\modfact(\Vect^{\fil})_{X^I_{\dR}}}(V,F_n/F_{n+1}) = 0 \] for any $n \geq r$. By construction we have 
\begin{align*}
\Hom_{A\modfact(\Vect^{\fil})_{X^I_{\dR}}}(V,F_n/F_{n+1}) &= \Hom_{A\modfact(\Vect^{\fil})_{X^I_{\dR}}}(V,\gr F_n) \\
&= \Hom_{\gr(A)\modfact(\Vect^{\gr})_{X^I_{\dR}}}(\gr(V),\gr F_n).
\end{align*}
Note that $M = \triv^{\fil}_{\geq 0} N$ implies that $\gr F_n$ is concentrated in cohomological degrees $\geq m$ for any $n \geq 0$ (this can be checked over each stratum $Z(p)$, where it becomes an obvious statement about the trivial nonnegative multifiltration). Now the claim follows from Step 1.

\emph{Step 4:} Continuing the notation of Step 3, we have \[ \Hom_{A\modfact_{X^I_{\dR}}}(\oblv^{\fil}V,N) = \Hom_{A\modfact(\Vect^{\fil})_{X^I_{\dR}}}(V,M/F_r). \] Thus it suffices to show that for any $n \geq 0$, \[ \Hom_{A\modfact(\Vect^{\fil})_{X^I_{\dR}}}(V,F_n/F_{n+1}) = \Hom_{\gr(A)\modfact(\Vect^{\gr})_{X^I_{\dR}}}(\gr(V),\gr F_n) \] preserves filtered colimits when viewed as a functor of $N$ in $(A\modfact_{X^I_{\dR}})^{\geq m}$. Since $\gr F_n$ belongs to $\gr(A)\modfact(\Vect^{\gr})_{X^I_{\dR}}^{\geq m}$ by Step 3, we are done by the hypothesis on $\gr(A)$.

\end{proof}

\subsection{Finitely presented algebras} A connective commutative algebra $A$ is called $0$\emph{-finitely presented} if it is isomorphic to a classical finitely generated polynomial algebra. For $n \geq 1$, we say that $A$ is $n$\emph{-finitely presented} if \[ A \cong B \underset{\Sym(V[n-1])}{\otimes} k \] where $B$ is $(n-1)$-finitely presented and $V$ is a finite-dimensional classical vector space equipped with a $k$-linear map $V[n-1] \to B$. If $A$ is $n$-finitely presented for some $n \geq 0$, we say that $A$ is \emph{finitely presented}.

For example, an algebra $A$ is $1$-finitely presented if and only if its spectrum is a derived global complete intersection.

\begin{lemma}

\label{fpthm}

The conditions of Proposition \ref{vacacchar} hold for $A$ finitely presented.

\end{lemma}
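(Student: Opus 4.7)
The plan is to prove a strengthened statement by induction on $n$: \emph{for every $n$-finitely presented algebra $A$ and every connective complex $W$ of vector spaces with finite-dimensional total cohomology, the algebra $A \otimes \Sym(W)$ satisfies the conditions of Proposition \ref{vacacchar}.} The lemma itself is the case $W = 0$. The reason for enlarging the class of algebras in this way is that the Koszul-type filtration used in the inductive step will enlarge $W$ by an additional summand, so the statement must be robust under this operation.

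For the base case $n = 0$, we have $A = \Sym(V_0)$ for some classical finite-dimensional $V_0$, so $A \otimes \Sym(W) = \Sym(V_0 \oplus W)$. Since $V_0 \oplus W$ is connective with finite-dimensional total cohomology, Lemma \ref{symalglem} applies directly.

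For the inductive step, write $A = B \otimes_{\Sym(V_0[n-1])} k$ with $B$ being $(n-1)$-finitely presented and equipped with a $k$-linear map $V_0[n-1] \to B$; set $R := \Sym(V_0[n-1])$. The Koszul resolution identifies $k$ with $R \otimes \Sym(V_0[n])$ as a chain complex of $R$-modules, with Koszul differential induced by $\id_{V_0}$. Tensoring over $R$ with $B \otimes \Sym(W)$ and using flatness of $\Sym(W)$ over $k$, we obtain a quasi-isomorphism
\[ A \otimes \Sym(W) \simeq B \otimes \Sym(W \oplus V_0[n]), \]
where the right side carries the Koszul differential induced by $V_0[n-1] \to B$. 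Equip $A \otimes \Sym(W)$ with the ascending, multiplicative, nonnegative filtration
\[ F^p := B \otimes \Sym(W) \otimes \Sym^{\leq p}(V_0[n]), \qquad p \geq 0. \]
Since the Koszul differential strictly lowers the $\Sym(V_0[n])$-degree, it vanishes on the associated graded, yielding the identification of commutative algebras
\[ \gr(A \otimes \Sym(W)) \cong B \otimes \Sym(W \oplus V_0[n]). \]

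Now $W \oplus V_0[n]$ is a connective complex with finite-dimensional total cohomology, so the inductive hypothesis, applied to the $(n-1)$-finitely presented algebra $B$ together with this complex, shows that $B \otimes \Sym(W \oplus V_0[n])$ satisfies the conditions of Proposition \ref{vacacchar}. Lemma \ref{assocgrlem} then propagates these conditions to $A \otimes \Sym(W)$, completing the induction. The main technical point I expect to dwell on is realizing the Koszul filtration honestly as a filtration of derived commutative algebras so that Lemma \ref{assocgrlem} directly applies; this is a standard construction obtained by filtering the Koszul resolution of $k$ over $R$ by symmetric power of the generating complex, extending $R$-linearly, and then taking the derived tensor product with $B \otimes \Sym(W)$. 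The rest of the argument is essentially formal bookkeeping.
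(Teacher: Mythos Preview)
Your proposal is correct and follows essentially the same approach as the paper: both strengthen the inductive hypothesis to algebras of the form (\,$n$-finitely presented\,)$\,\otimes\,\Sym(V)$ with $V$ connective of finite total cohomology, handle the base case via Lemma~\ref{symalglem}, and in the inductive step use the Koszul filtration on $B\otimes_{\Sym(V_0[n-1])}k$ to identify the associated graded with $C\otimes\Sym(V_0[n])$ (tensored with the extra $\Sym$ factor) and then invoke Lemma~\ref{assocgrlem}. The only differences are notational.
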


\begin{proof}

We prove by induction on $n$ that the conditions hold for any $A$ of the form $B \otimes \Sym V$, where $B$ is $n$-finitely presented and $V$ is a connective complex of vector spaces with finite-dimensional total cohomology (for varying $n$, this statement is equivalent to the assertion of the lemma). The case $n = 0$ is precisely Lemma \ref{symalglem}. In general, we have \[ B \cong C \underset{\Sym(W[n-1])}{\otimes} k \] where $C$ is $(n-1)$-finitely presented and $W$ is finite-dimensional and classical. Thus $B$ admits a nonnegative filtration with associated graded algebra \[ \gr B \cong C \otimes \Sym(W[n]). \] Consequently $A$ admits a nonnegative filtration with associated graded \[ \gr A \cong C \otimes \Sym(W[n]) \otimes \Sym V \cong C \otimes \Sym(W[n] \oplus V). \] Theorem \ref{vacacthm} holds for $\gr A$ by the inductive hypothesis, whence the theorem holds for $A$ by Lemma \ref{assocgrlem}.

\end{proof}

\subsection{Conclusion of the proof} We will use the following (straightforward) approximation lemma to prove Theorem \ref{vacacthm}.

\begin{lemma}

\label{fpres}

If $A$ is connective and almost of finite type, there exists a directed system \[ B_0 \longrightarrow B_1 \longrightarrow B_2 \longrightarrow \cdots \] of connective commutative algebras mapping to $A$ such that
\begin{enumerate}
\item $B_n$ is $n$-finitely presented for every $n \geq 0$,
\item $H^{-i}(B_n) \tilde{\to} H^{-i}(A)$ for $i < n$, 
\item $H^{-n}(B_n) \to H^{-n}(A)$ is surjective.
\end{enumerate}
In particular $\underset{n}{\colim} \ B_n \tilde{\to} A$.

\end{lemma}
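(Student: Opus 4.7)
The plan is to construct the $B_n \to A$ inductively, by attaching ``cells'' to match the cohomology of $A$ in progressively higher degrees, in analogy with a CW approximation. For $n = 0$: since $A$ is almost of finite type, $H^0(A)$ is a finitely generated $k$-algebra; I let $B_0$ be the polynomial ring on chosen generators with map $B_0 \to A$ realizing them, giving a $0$-finitely presented algebra with $H^0(B_0) \to H^0(A)$ surjective.

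For the inductive step, I assume $B_n \to A$ satisfies (1)--(3) and first observe, by induction on $n$ using a Koszul model for each derived pushout, that $H^0(B_n)$ is Noetherian and each $H^{-i}(B_n)$ is finitely generated over it. In particular, the kernel $K := \ker(H^{-n}(B_n) \to H^{-n}(A))$ and the cokernel $Q$ of $H^{-(n+1)}(B_n) \to H^{-(n+1)}(A)$ are finitely generated over $H^0(A)$. I pick cocycles $z_1, \ldots, z_s \in B_n$ of cohomological degree $-n$ whose classes generate $K$, and lifts $a_1, \ldots, a_t \in H^{-(n+1)}(A)$ of generators of $Q$; setting $V := k^s$ and $U := k^t$, I define the $(n+1)$-finitely presented algebra
\[
B_{n+1} := B_n \otimes_{\Sym((V \oplus U^*)[n])} k,
\]
where $V[n] \to B_n$ sends the basis to the $z_j$ and $U^*[n] \to B_n$ is zero. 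The composite $V[n] \to B_n \to A$ is null on cohomology, hence null-homotopic as $V[n]$ is concentrated in a single cohomological degree; the extension $B_{n+1} \to A$ is then determined by a choice of null-homotopy, and under the Koszul identification $B_{n+1} \simeq B_n \otimes_k \Sym((V^* \oplus U)[n+1])$, this choice corresponds to a map $(V^* \oplus U)[n+1] \to A$ whose $U[n+1]$-component I specify by the lifts $a_\ell$.

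The cohomological verification proceeds via the Koszul model above with differential $v_j^* \mapsto z_j$, $u_\ell \mapsto 0$: for $i < n$ only the $\Sym^0$ summand contributes, giving $H^{-i}(B_{n+1}) \cong H^{-i}(B_n) \cong H^{-i}(A)$; for $i = n$ the Koszul differential on $V^* \otimes H^0(B_n)$ quotients out precisely $K$, so $H^{-n}(B_{n+1}) \cong H^{-n}(A)$; and for $i = n+1$ the summand $H^0(A) \otimes U \subset H^{-(n+1)}(B_{n+1})$ maps onto the generators of $Q$, yielding surjectivity. The colimit claim $\colim_n B_n \tilde{\longrightarrow} A$ then follows because cohomology commutes with filtered colimits and $H^{-i}(B_n)$ stabilizes at $H^{-i}(A)$ for $n > i$. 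I expect the main obstacle to be the Koszul cohomology bookkeeping, which is routine but requires careful tracking of the several $\Sym$-degrees and the differential between them.
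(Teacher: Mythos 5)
Your argument is correct. The paper states Lemma~\ref{fpres} without proof, labeling it a ``straightforward approximation lemma,'' so there is no in-paper argument to compare against: your CW-style cell-attachment construction is the natural one and it does the job, including the key preliminary step of establishing by induction that $H^0(B_n)$ is Noetherian with each $H^{-i}(B_n)$ finitely generated over it.

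Two small slips are worth tidying, neither affecting the argument. First, the Koszul model for $B_n \otimes_{\Sym(W[n])} k$ with $W = V \oplus U^*$ has underlying graded object $B_n \otimes_k \Sym(W[n+1])$, i.e.\ the new generators in degree $-(n+1)$ carry $W$ itself rather than its dual; since everything is finite-dimensional this changes nothing in the cohomology counting. Second, for $n=0$ the kernel $K = \ker(H^0(B_0) \to H^0(A))$ is a finitely generated ideal of the Noetherian ring $H^0(B_0)$ but is not an $H^0(A)$-module; what you actually use at every stage (and what Noetherianity of $H^0(B_n)$ supplies) is finite generation over $H^0(B_n)$, which suffices to pick the cocycles $z_j$. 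With these corrections, the bookkeeping you sketch is sound: the summands $B_n \otimes \Sym^{\geq 1}(W[n+1])$ sit in degrees $\leq -(n+1)$, so only the linear Koszul differential can affect $H^{-i}$ for $i \leq n+1$, giving $H^{-i}(B_{n+1}) \cong H^{-i}(B_n)$ for $i < n$, then $H^{-n}(B_{n+1}) \cong H^{-n}(B_n)/K \cong H^{-n}(A)$, and finally surjectivity onto $H^{-(n+1)}(A)$ via the freely attached $U^*$-generators and the choice of null-homotopy hitting the lifts $a_\ell$ of generators of $Q$.
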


Finally, we can prove the theorem.

\begin{proof}[Proof of Theorem \ref{vacacthm}]

Choose a filtration \[ \underset{n}{\colim} \ B_n \tilde{\to} A \] as in Lemma \ref{fpres}. Fix $m \geq 0$. Lemma \ref{tstrlem3} implies that for $n > m$, the restriction of scalars induces an equivalence \[ (A\modfact_{X^I_{\dR}})^{\leq 0, \geq -m} \tilde{\longrightarrow} (B_n\modfact_{X^I_{\dR}})^{\leq 0, \geq -m}. \] Moreover, by Lemmas \ref{tstrlem1} and \ref{tstrlem2}, this equivalence sends \[ \tau^{\geq -m}(\Vac_{A,X^I_{\dR}}[-\# I]) \mapsto \tau^{\geq -m}(\Vac_{B_n,X^I_{\dR}}[-\# I]). \] The inclusion \[ (A\modfact_{X^I})^{\leq 0, \geq -m} \longrightarrow (A\modfact_{X^I})^{\geq -m} \] preserves compact objects because the t-structure on $A\modfact_{X^I}$ is compatible with filtered colimits. Since $\Vac_{B_n,X^I_{\dR}}$ is almost ULA by Lemma \ref{fpthm}, it follows that \[ \tau^{\geq -m}(\Vac_{A,X^I}[-\# I]) \] is compact in $(A\modfact_{X^I})^{\geq -m}$. Thus $\Vac_{A,X^I_{\dR}}$ is almost ULA as desired.

\end{proof}

\section{Spectral Hecke categories}\label{s:spec-hecke}

In this section, we construct the factorization categories which appear on the spectral (a.k.a. Langlands dual) side. These arise as renormalizations of certain categories of factorization modules.

\subsection{The factorization algebra $\Upsilon(\mathfrak{n}_P)$} Recall that the Lie bracket on $\mathfrak{n}_P$ induces the structure of Lie-$*$ algebra on the constant sheaf $\mathfrak{n}_P \otimes k_X$, and hence we have the object $(\mathfrak{n}_P \otimes k_X)\text{-mod}^*(\Rep(M))$ in $\FactCat^{\laxfact}$. On the other hand, we can consider the object of $\FactCat$ attached to the symmetric monoidal category $\mathfrak{n}_P\mod(\Rep(M))$.

\begin{proposition}

There is a canonical isomorphism \[ (\mathfrak{n}_P \otimes k_X)\mod^*(\Rep(M)) \tilde{\longrightarrow} \mathfrak{n}_P\mod(\Rep(M)) \] $\FactCat^{\laxfact}$. In particular, the left side belongs to $\FactCat$.

\end{proposition}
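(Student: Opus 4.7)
The plan is to construct a morphism in $\FactCat$ (not merely in $\FactCat^{\laxfact}$) from the commutative factorization category $\mathfrak{n}_P\mod(\Rep(M))$ to the Lie-$*$ module category $(\mathfrak{n}_P \otimes k_X)\mod^*(\Rep(M))$, and then verify it is an equivalence fiberwise. This simultaneously yields the isomorphism and the ``in particular'' clause, since the target of a strict-factorization morphism whose inverse is defined pointwise will inherit a genuine factorization structure.

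For the comparison morphism, I would exploit that $\Rep(M) \to \mathfrak{n}_P\mod(\Rep(M))$ (trivial action) is symmetric monoidal and hence yields a morphism in $\FactCat$, so that $\mathfrak{n}_P\mod(\Rep(M))$ becomes a commutative factorization module category over $\Rep(M)$. The Lie algebra $\mathfrak{n}_P$ of $\Rep(M)$ acts tautologically on the distinguished object $\ind_{\mathfrak{n}_P}(\e) \in \mathfrak{n}_P\mod(\Rep(M))$, and spreading this along $X$ via the commutative factorization structure produces a Lie-$*$ action of $\mathfrak{n}_P \otimes k_X$ on the image of $\ind_{\mathfrak{n}_P}(\e)$ in $(\mathfrak{n}_P \otimes k_X)\mod^*(\Rep(M))$. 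Using that $\mathfrak{n}_P\mod(\Rep(M))$ is generated as a symmetric monoidal $\Rep(M)$-module category by this single object, together with the fact that (being symmetric monoidal) it is rigid-over-$\Rep(M)$ up to the action by $\mathfrak{n}_P$, one extends this datum to a morphism of factorization $\Rep(M)$-module categories in $\FactCat$.

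To verify that this morphism is an equivalence, I would use Proposition \ref{factcatcomb} to reduce to a check on each $U(p)$ for $p : I \to J$ in $\Part_{\untl}$. On the right-hand side, the commutative factorization structure (cf.\ \S\ref{comfactcat} and Proposition \ref{ulagenprop}) identifies $\mathfrak{n}_P\mod(\Rep(M))_{U(p)}$ with the base change to $U(p)$ of $\mathfrak{n}_P^{\otimes J}\mod$ internal to $\Rep(M)^{\otimes J}$. On the left-hand side, over the disjoint locus $U(p)$ the operation $\otimes^*$ localizes: a Lie-$*$ action of $\mathfrak{n}_P \otimes k_X$ on an object supported over $U(p)$ is, by the projection formula and the definition of $\otimes^*$ via the union correspondence, the same datum as $J$ commuting actions of $\mathfrak{n}_P$ on the corresponding object of $\Rep(M)^{\otimes J}_{U(p)}$. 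Matching these two descriptions is tautological, and checking that the comparison morphism constructed above realizes this identification amounts to tracking the unit.

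The main obstacle will be keeping track of the coherence of the Lie-$*$-module structure against the commutative-algebra-module structure across morphisms in $\Part_{\untl}$, in particular along non-injective morphisms where one must verify that diagonal restrictions of Lie-$*$ modules over $U(p_1)$ agree with ``collision'' modules where several copies of $\mathfrak{n}_P$ get merged into a single copy acting through the Lie bracket. Over the open part $U(p)$ this is automatic since the colliding locus has been removed, and then the cocartesian/unital conditions of Proposition \ref{factcatcomb} promote the local equivalence to a global one; this is what both gives the equivalence and certifies that the lax-factorization structure on the left is strict, yielding the ``in particular'' clause.
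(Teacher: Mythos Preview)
The paper's proof is simply a citation to Lemma 7.7.1 of \cite{R2}, so there is no argument in the paper to compare against; your proposal is attempting to supply the actual content.

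Your overall strategy---construct a comparison morphism over $\Rep(M)$ and verify it is an equivalence over each $U(p)$---is reasonable, and the identification over the disjoint locus (a Lie-$*$ module for the constant Lie-$*$ algebra $\mathfrak{n}_P \otimes k_X$ on an object of $\Rep(M)_{U(p)}$ amounts to $J$ commuting $\mathfrak{n}_P$-actions) is the right picture. However, the construction of the comparison functor has a genuine gap. You propose to equip the single object $\ind_{\mathfrak{n}_P}(\e)$ with a Lie-$*$ module structure and then ``extend this datum to a morphism of factorization $\Rep(M)$-module categories'' using that $\mathfrak{n}_P\mod(\Rep(M))$ is ``generated as a symmetric monoidal $\Rep(M)$-module category by this single object'' and is ``rigid-over-$\Rep(M)$ up to the action by $\mathfrak{n}_P$.'' Neither of these phrases has a precise meaning that would yield a functor: $\ind_{\mathfrak{n}_P}(\e) = U(\mathfrak{n}_P)$ is not the monoidal unit, the category is not generated from it by the monoidal operations alone, and in any case a structure on one object does not propagate to a functor without further input.

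A cleaner route is to work directly with the forgetful functors. Both $(\mathfrak{n}_P \otimes k_X)\mod^*(\Rep(M))$ and $\mathfrak{n}_P\mod(\Rep(M))$ admit conservative forgetful morphisms to $\Rep(M)$ in $\FactCat^{\laxfact}$ (the latter from the symmetric monoidal forgetful functor), and both are monadic over $\Rep(M)$. The monad on the commutative side is tensoring with $U(\mathfrak{n}_P)$; the content of the statement is that for a \emph{constant} Lie-$*$ algebra $\mathfrak{n}_P \otimes k_X$, the free Lie-$*$ module monad on $\Rep(M)_{X^I_{\dR}}$ is likewise given by the action of the factorization algebra attached to $U(\mathfrak{n}_P)$. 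This is what actually needs to be checked, and it is where the ``collision'' issue you flag enters: one must verify that the Lie-$*$ operations along the incidence divisors reproduce the ordinary Lie action, including the bracket relations. Your last paragraph correctly identifies this as the main obstacle but then dismisses it as ``automatic over $U(p)$''---that is precisely the step that requires an argument.
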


\begin{proof}

This is Lemma 7.7.1 in \cite{R2}.

\end{proof}

In what follows we will pass freely between $(\mathfrak{n}_P \otimes k_X)\text{-mod}^*(\Rep(M))$ and $\mathfrak{n}_P\mod(\Rep(M))$.

Following \cite{R2}, we write \[ \Upsilon(\mathfrak{n}_P,-) : \mathfrak{n}_P\mod(\Rep(M)) \longrightarrow \Rep(M) \] for the morphism in $\FactCat_{\laxuntl}$ given by the composition \[ \mathfrak{n}_P\mod(\Rep(M)) \xrightarrow{\ind^{* \to \ch}} \UU^{* \to \ch}(\mathfrak{n}_P \otimes k_X)\text{-mod}^{\ch}(\Rep(M)) \xrightarrow{\oblv} \Rep(M) \] (this is the chiral analogue of the $\bE_2$-monoidal functor denoted by $\Chev_{\Upsilon}$ in the introduction).

Denote by $\Upsilon(\mathfrak{n}_P)$ the factorization algebra in $\Rep(M)$ obtained by applying $\Upsilon(\mathfrak{n}_P,-)$ to the unit object in $\mathfrak{n}_P\mod(\Rep(M))$. In particular, applying (\ref{factmodfunct}) to $\Upsilon(\mathfrak{n}_P,-)$ yields a morphism
\begin{equation}
\label{indups}
\mathfrak{n}_P\mod(\Rep(M)) \longrightarrow \Upsilon(\mathfrak{n}_P)\modfact(\Rep(M))
\end{equation}
in $\FactCat^{\laxfact}$.

Since $\ind^{* \to \ch}$ is strictly unital, we have an identification \[ \UU^{* \to \fact}(\mathfrak{n}_P \otimes k_X) \tilde{\longrightarrow} \Upsilon(\mathfrak{n}_P) \] in $\FactAlg(\Rep(M))$ and a commutative triangle \[
\begin{tikzcd}
& \mathfrak{n}_P\mod(\Rep(M)) \arrow{dl}[swap]{(\ref{indups})} \arrow{dr}{\ind^{* \to \ch}} & \\
\Upsilon(\mathfrak{n}_P)\modfact(\Rep(M)) \arrow{rr}{\sim} & & \UU^{* \to \ch}(\mathfrak{n}_P \otimes k_X)\mod^{\ch}(\Rep(M))
\end{tikzcd} \]
in $\FactCat^{\laxfact}$.

Making this identification, we have an adjunction
\begin{equation}
\label{chiralind}
\ind^{* \to \ch} : \mathfrak{n}_P\mod(\Rep(M)) \rightleftarrows \Upsilon(\mathfrak{n}_P)\modfact(\Rep(M)) : \oblv^{\ch \to *}
\end{equation}
in $\FactCat_{\laxuntl}^{\laxfact}$. Note that the right adjoint is compatible with the forgetful functors to $\Rep(M)$, and in particular is conservative. This adjunction is therefore monadic, which implies that the category $\Upsilon(\mathfrak{n}_P)\modfact(\Rep(M))$ belongs to $\FactCat$, i.e., factorizes strictly. In particular, the adjunction (\ref{chiralind}) takes place in $\FactCat_{\laxuntl}$, with the left adjoint being strictly unital.

\subsection{An integrability condition on $\Upsilon(\mathfrak{n}_P)$-modules} Note that the symmetric monoidal functor \[ \res^{N_P}_{\mathfrak{n}_P} : \Rep(P) \longrightarrow \mathfrak{n}_P\mod(\Rep(M)) \] is fully faithful because $\mathfrak{n}_P$ is nilpotent.

\begin{lemma}

For any finite set $I$, the functor \[ \res^{N_P}_{\mathfrak{n}_P} : \Rep(P)_{X^I_{\dR}} \longrightarrow \mathfrak{n}_P\mod(\Rep(M))_{X^I_{\dR}} \] is fully faithful.

\end{lemma}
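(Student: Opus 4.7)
The plan is to reduce the factorization-level fully faithfulness to the fully faithfulness of the underlying symmetric monoidal functor $\res^{N_P}_{\mathfrak{n}_P} \colon \Rep(P) \to \mathfrak{n}_P\mod(\Rep(M))$, by exhibiting a continuous right adjoint that propagates through the constant-factorization construction.

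First I would observe that $\res^{N_P}_{\mathfrak{n}_P}$ admits a continuous right adjoint $R$ in $\DGCat$, namely the functor of taking the maximal $\mathfrak{n}_P$-locally nilpotent subobject of an $\mathfrak{n}_P$-module: for $N_P$ unipotent, this is a filtered colimit of kernels of iterated $\mathfrak{n}_P$-actions, hence continuous. Fully faithfulness of $\res^{N_P}_{\mathfrak{n}_P}$ says exactly that the unit $\id_{\Rep(P)} \to R \circ \res^{N_P}_{\mathfrak{n}_P}$ is an equivalence.

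Next I would exploit the combinatorial presentation of the constant factorization category afforded by Proposition \ref{factcatcomb}: the factorization categories associated to $\Rep(P)$ and $\mathfrak{n}_P\mod(\Rep(M))$ correspond to lax symmetric monoidal sections $p \mapsto \Rep(P)^{\otimes J} \otimes \DD(U(p))$ and $p \mapsto \mathfrak{n}_P\mod(\Rep(M))^{\otimes J} \otimes \DD(U(p))$ over $\Part_{\untl}$ (for $p \colon I \to J$). The key general fact to verify is that $-\otimes\sE$ preserves fully-faithful embeddings admitting continuous right adjoints, so each termwise comparison is fully faithful with continuous right adjoint $R^{\otimes J} \otimes \id_{\DD(U(p))}$. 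Passing to lax symmetric monoidal sections (i.e., to the $X^I_{\dR}$-fibers) then preserves fully faithfulness, since $\Hom$ in the section category is computed as a limit of the fiberwise $\Hom$'s.

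The hard part will be propagating the adjunction and fully faithfulness through the lax-limit construction of Proposition \ref{factcatcomb} cleanly. The standard technology from Proposition \ref{ulagenprop} and Lemma \ref{rigidrightadj} relies on rigidity of the source symmetric monoidal category, which is unavailable here since $\Rep(P)$ is not rigid ($P$ not being reductive). Once one verifies that the fiberwise adjunctions assemble into a section of the factorization-category unstraightening, however, the resulting $\DD(X^I)$-linear right adjoint $R_{X^I_{\dR}}$ to $\res^{N_P}_{\mathfrak{n}_P,\, X^I_{\dR}}$ inherits the property that its unit is an equivalence, yielding the desired fully faithfulness.
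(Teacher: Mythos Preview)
Your central premise—that $\Rep(P)$ is not rigid because $P$ is not reductive—is incorrect, and this misconception leads you to dismiss precisely the tool the paper uses. For any affine algebraic group $H$ over a field of characteristic zero, $\Rep(H) = \QCoh(\bB H)$ is a rigid symmetric monoidal DG category: it is compactly generated by finite-dimensional representations, these coincide with the dualizable objects, and the trivial representation is compact. For $H = P$ a parabolic, compactness of the unit follows concretely from the fact that derived $N_P$-invariants are computed by the finite Chevalley--Eilenberg complex (so $\RHom_{\Rep(N_P)}(k,-)$ has bounded amplitude and preserves filtered colimits), combined with exactness of $M$-invariants for the reductive Levi.

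The paper's proof therefore applies Lemma~\ref{rigidrightadj} directly: since $\Rep(P)$ is rigid and the unit in $\mathfrak{n}_P\mod(\Rep(M))$ is compact, the morphism $\res^{N_P}_{\mathfrak{n}_P}$ in $\FactCat$ admits a right adjoint in $\FactCat_{\laxuntl}$. The unit of this adjunction is a factorizable natural transformation; to check it is an isomorphism over $X^I_{\dR}$ it suffices, by compatibility with factorization, to check over $X$, where it is the already-known fully faithfulness of $\res^{N_P}_{\mathfrak{n}_P} \colon \Rep(P) \to \mathfrak{n}_P\mod(\Rep(M))$. This is the entire argument.

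Your proposed workaround via Proposition~\ref{factcatcomb} is not wrong in spirit, but it is unnecessarily indirect, and the ``hard part'' you flag—assembling the termwise right adjoints into a factorization-level adjunction compatible with the structure maps—is exactly what Lemma~\ref{rigidrightadj} packages. Your description of how to extract fully faithfulness over $X^I_{\dR}$ from the combinatorial presentation is also imprecise: $\sC_{X^I_{\dR}}$ is a single fiber of the lax symmetric monoidal section (the value at $p = \id_I$), not itself a limit of fiberwise categories, so the sentence about ``$\Hom$ in the section category being a limit of fiberwise $\Hom$'s'' does not directly address the claim.
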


\begin{proof}

Since the symmetric monoidal category $\Rep(P)$ is rigid and the unit object in $\mathfrak{n}_P\mod(\Rep(M))$ is compact, Lemma \ref{rigidrightadj} says that corresponding morphism in $\FactCat$ admits a right adjoint in $\FactCat_{\laxuntl}$. The unit of this adjunction is an isomorphism because it is compatible with factorization and is an isomorphism over $X$.

\end{proof}

In what follows, we will freely identify $\Rep(P)_{X^I_{\dR}}$ with its essential image in $\mathfrak{n}_P\mod(\Rep(M))_{X^I_{\dR}}$.

\begin{proposition}

\label{indchint}

The monad on $\mathfrak{n}_P\mod(\Rep(M))_{X^I_{\dR}}$ arising from the adjunction (\ref{chiralind}) preserves the full subcategory $\Rep(P)_{X^I_{\dR}}$ for any finite set $I$.

\end{proposition}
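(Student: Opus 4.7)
The strategy I would take is to lift the entire adjunction (\ref{chiralind}) from $\Rep(M)$ to $\Rep(P)$, using that $\mathfrak{n}_P$ is itself a Lie algebra in $\Rep(P)$ via the adjoint action of $P$, and that the forgetful functor $\Rep(P) \to \Rep(M)$ is symmetric monoidal. The point is that once everything is organized naturally over $\Rep(P)$, the monad in question will automatically restrict to $\Rep(P)_{X^I_{\dR}}$.

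First I would observe that $\mathfrak{n}_P$ lifts canonically to a Lie algebra object in $\Rep(P)$ (as an ideal in $\mathfrak{p}$, with $P$ acting by the adjoint representation), and consequently $\mathfrak{n}_P \otimes k_X$ lifts to a Lie-$*$ algebra in the factorization category attached to $\Rep(P)$. Forming the chiral enveloping algebra $\UU^{* \to \ch}(\mathfrak{n}_P \otimes k_X)$ and then applying $\AddUnit \circ \CC^{\ch}_+$ as in \S\ref{chirenvsec} inside $\Rep(P)$ yields a factorization algebra $\widetilde{\Upsilon(\mathfrak{n}_P)} \in \FactAlg(\Rep(P))$ whose image under the symmetric monoidal morphism $\Rep(P) \to \Rep(M)$ in $\FactCat$ is $\Upsilon(\mathfrak{n}_P)$. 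Applying (\ref{factmodfunct}) to this morphism in $\FactCat$ produces a commutative square in $\FactCat_{\laxuntl}^{\laxfact}$ whose horizontal rows are the adjunctions (\ref{chiralind}) formed in $\Rep(P)$ and $\Rep(M)$ respectively, and whose vertical arrows are the restriction functors.

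Taking sections over $X^I_{\dR}$, the left-hand vertical functor is the fully faithful embedding $\Rep(P)_{X^I_{\dR}} \to \mathfrak{n}_P\mod(\Rep(M))_{X^I_{\dR}}$ established in the preceding lemma. Given $V \in \Rep(P)_{X^I_{\dR}}$ regarded as an object of $\mathfrak{n}_P\mod(\Rep(M))_{X^I_{\dR}}$, I would lift $V$ to the $\Rep(P)$-side, apply the lifted monad $\widetilde{\oblv}^{\ch \to *} \circ \widetilde{\ind}^{* \to \ch}$ there, and then project back down. Commutativity of the square then forces the image under the original monad to lie in the essential image of $\Rep(P)_{X^I_{\dR}} \to \mathfrak{n}_P\mod(\Rep(M))_{X^I_{\dR}}$, which is the desired conclusion.

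The main technical obstacle is the naturality of the entire chiral-envelope construction from \S\ref{chirenvsec} under symmetric monoidal morphisms of commutative factorization categories. In particular, one must check that the Lie-$*$ to chiral induction, the identification $\UU^{* \to \ch}(L)\mod^{\ch} \simeq \UU^{* \to \fact}(L)\modfact$ coming from Proposition \ref{untlmodequiv}, and the passage from non-unital to unital factorization algebras via $\AddUnit$ (Proposition \ref{addunitprop}) are all compatible with the forgetful functor $\Rep(P) \to \Rep(M)$. None of this should present a genuine difficulty, but laying out the compatibilities carefully is where the work sits; everything else in the argument is purely formal.
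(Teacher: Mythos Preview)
Your approach has a genuine gap at the step where you identify the left-hand vertical functor with the embedding $\Rep(P)_{X^I_{\dR}} \hookrightarrow \mathfrak{n}_P\mod(\Rep(M))_{X^I_{\dR}}$. The source of the adjunction (\ref{chiralind}) formed in $\Rep(P)$ is $\mathfrak{n}_P\mod(\Rep(P))$, the category of $P$-representations equipped with an \emph{additional} $P$-equivariant $\mathfrak{n}_P$-action; this is strictly larger than $\Rep(P)$. The vertical functor induced by $\res^P_M$ sends $(V,\alpha)$ to $(V|_M,\alpha)$, and for this to lie in the image of $\res^{N_P}_{\mathfrak{n}_P}$ one needs $\alpha$ to integrate to an $N_P$-action. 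But a $P$-equivariant $\mathfrak{n}_P$-action need not be locally nilpotent: already for $P=B$ in $SL_2$, taking $V=\bigoplus_{n\geq 0} k_{-2n}$ with trivial $N$-action and $\alpha$ the evident shift gives a counterexample. So the commutative square you build does not, by itself, force the monad to preserve $\Rep(P)_{X^I_{\dR}}$.

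The paper's argument takes a different route, via a PBW-type filtration (Lemma \ref{indchfiltr}): the monad $\oblv^{\ch\to *}\ind^{*\to\ch}$ carries a canonical filtration whose associated graded is $\Sym^!(\mathfrak{n}_P\otimes k_X[1])_{X^I_{\dR}} \otimes^! (-)$, with $\mathfrak{n}_P$ given the adjoint action. Since $\mathfrak{n}_P\otimes k_X$ lies in $\Rep(P)_{X_{\dR}}$, so does this symmetric algebra, and as $\Rep(P)_{X^I_{\dR}}$ is a monoidal subcategory closed under colimits, the conclusion follows. Interestingly, the proof of Lemma \ref{indchfiltr} itself uses a ``lift and project'' maneuver in the spirit of your proposal, but it lifts to $L\mod^*(\sC)$ rather than to $\Rep(P)$; the crucial input there is that $L\mod^*(L\mod^*(\sC)) \simeq L\mod^*(\sC)$ and similarly for chiral modules, which has no analogue for the pair $(\Rep(P),\Rep(M))$. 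Your instinct to exploit the $P$-equivariance of $\mathfrak{n}_P$ is correct, but the paper packages it through the PBW filtration rather than a direct categorical lift.
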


We will apply a general lemma, which we now prepare to state. Let $\sC$ be a an object of $\ComAlg(\FactCat^{\laxfact})$ and $L$ a Lie-$*$ algebra in $\sC$. Then we have adjoint functors \[ \ind^{* \to \ch} : L\mod^*(\sC) \rightleftarrows \UU^{* \to \ch}(L)\mod^{\ch}(\sC) : \oblv^{\ch \to *} \] in $\FactCat_{\laxuntl}^{\laxfact}$, with the left adjoint being strictly unital.

The factorization structure on $L\mod^*(\sC)$ is commutative, i.e., it lifts to $\ComAlg(\FactCat^{\laxfact})$, compatibly with the forgetful functor \[ \oblv_L : L\mod^*(\sC) \longrightarrow \sC. \] In particular, for any $Z \to \Ran$ the category \[ L\mod^*(\sC)_Z \] is equipped with a symmetric monoidal structure lifting the one on $\sC_Z$, which will also be denoted by $\otimes^!$.

Viewing $L$ with the adjoint action as an object of the symmetric monoidal category $L\mod^*(\sC)_{X_{\dR}}$, the symmetric algebra $\Sym^!(L[1])$ is a commutative algebra in this category and hence gives rise to a commutative factorization algebra in $L\mod^*(\sC)$.

\begin{lemma}

\label{indchfiltr}

The monad $\oblv^{\ch \to *}\ind^{* \to \ch}$ on $L\mod^*(\sC)_{X^I_{\dR}}$ admits a canonical filtration with associated graded isomorphic to \[ \Sym^!(L[1])_{X^I_{\dR}} \overset{!}{\otimes} (-). \]

\end{lemma}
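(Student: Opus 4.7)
The plan is to descend from the PBW filtration on the chiral enveloping algebra $\UU^{* \to \ch}(L)$, which is \cite{FG} Corollary 6.5.2 (already cited in the proof of Lemma \ref{symchirenv}). That result provides a canonical filtration on $\UU^{* \to \ch}(L)$ whose associated graded is the commutative chiral algebra attached to $\Sym^*(L[1])$, the shift by $[1]$ being forced by the $\Delta^![-1]$ convention in passing between chiral and factorization algebras.

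First, I would unwind the monad. Via the chiral-to-factorization dictionary and Proposition \ref{untlmodequiv}, $\ind^{* \to \ch}(M)$ is computed as an appropriate chiral tensor product of the $*$-underlying object of $\UU^{* \to \ch}(L)$ with $M$, equipped with a canonical chiral module structure; composing with $\oblv^{\ch \to *}$ and $!$-restricting to the main diagonal of $\Ran_{X^I}$ gives the monad of the lemma. Because chiral operations are exact and biadditive, the PBW filtration on $\UU^{* \to \ch}(L)$ induces a functorial filtration on the monad.

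Second, I would identify the associated graded. On associated graded the chiral algebra becomes the commutative chiral algebra associated to $\Sym^*(L[1])$; chiral induction along a commutative chiral algebra coming from a commutative $*$-algebra $A$ reduces to tensoring with $A$ in the symmetric monoidal structure on $L\mod^*(\sC)$ as a commutative factorization category. Upon $!$-restriction to $X^I_{\dR}$ the latter structure becomes $\otimes^!$ and $A = \Sym^*(L[1])$ becomes $\Sym^!(L[1])_{X^I_{\dR}}$, yielding the claimed formula. The comparison between commutative and factorization induction required here is of the same general type underlying Proposition \ref{comfactequivheart}.

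The main obstacle is the second step: rigorously identifying the $L^*$-module structure on the associated graded of $\oblv^{\ch \to *}\ind^{* \to \ch}(M)$ with the structure on $\Sym^!(L[1])_{X^I_{\dR}} \otimes^! M$ coming from the adjoint $L$-action on $\Sym^!(L[1])$ viewed as an object of $L\mod^*(\sC)_{X_{\dR}}$. This amounts to a careful reading of the PBW theorem at the level of action maps, and requires tracking the $!$-versus-$*$ bookkeeping on and off the main diagonal.
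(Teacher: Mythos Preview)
Your first step is essentially the same as the paper's: one records the ``weak'' PBW assertion that the composite $\oblv_{\UU^{* \to \ch}(L)}\ind^{* \to \ch}$, as a functor $L\mod^*(\sC)_{X^I_{\dR}} \to \sC_{X^I_{\dR}}$, carries a filtration with associated graded $\oblv_L(\Sym^!(L[1])_{X^I_{\dR}} \otimes^! (-))$. This is indeed a variant of \cite{FG} Corollary 6.5.2.

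Where you and the paper diverge is exactly the obstacle you flag. You propose to identify the $L$-module structure on the associated graded by direct inspection of the action maps in the PBW theorem. The paper instead uses a bootstrap: the adjoint action upgrades $L$ to a Lie-$*$ algebra \emph{in} $L\mod^*(\sC)$, and one has canonical identifications
\[
L\mod^*(L\mod^*(\sC)) \cong L\mod^*(\sC), \qquad \UU^{* \to \ch}(L)\mod^{\ch}(L\mod^*(\sC)) \cong \UU^{* \to \ch}(L)\mod^{\ch}(\sC)
\]
induced by $\oblv_L$, under which the monad $\oblv^{\ch \to *}\ind^{* \to \ch}$ on $L\mod^*(\sC)$ becomes the ``weak'' composite for $L$ viewed inside $L\mod^*(\sC)$. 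Applying the weak PBW assertion there yields the filtration together with the correct $L$-module structure for free. This avoids the action-map bookkeeping entirely; your approach could in principle be carried out, but the paper's trick is what makes the proof short.
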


\begin{proof}

First, we record the apparently weaker assertion that the functor \[ \oblv_{\UU^{* \to \ch}(L)}\ind^{* \to \ch} : L\mod^*(\sC)_{X^I_{\dR}} \longrightarrow \sC_{X^I_{\dR}} \] is filtered with associated graded \[ \oblv_L(\Sym^!(L[1])_{X^I_{\dR}} \overset{!}{\otimes} (-)). \] This is a variant of Corollary 6.5.2 in \cite{FG} and is proved in the same way.

Now we will deduce the lemma. Observe that the adjoint action canonically upgrades $L$ to a Lie-$*$ algebra in $L\mod^*(\sC)$. It follows that $\UU^{* \to \ch}(L)$ upgrades to a chiral algebra in $L\mod^*(\sC)$. Moreover, the morphism \[ \oblv_L : L\mod^*(\sC) \longrightarrow \sC \] in $\ComAlg(\FactCat^{\laxfact})$ induces isomorphisms \[ L\mod^*(L\mod^*(\sC)) \tilde{\longrightarrow} L\mod^*(\sC) \] and \[ \UU^{* \to \ch}(L)\mod^{\ch}(L\mod^*(\sC)) \tilde{\longrightarrow} \UU^{* \to \ch}(L)\mod^{\ch}(\sC). \] These fit into a commutative square \[
\begin{tikzcd}
L\mod^*(L\mod^*(\sC)) \arrow{r}{\ind^{* \to \ch}} \arrow{d}[anchor=south, rotate=90]{\sim} & \UU^{* \to \ch}(L)\mod^{\ch}(L\mod^*(\sC)) \arrow{d}[anchor=south, rotate=90]{\sim} \\
L\mod^*(\sC) \arrow{r}{\ind^{* \to \ch}} & \UU^{* \to \ch}(L)\mod^{\ch}(\sC)
\end{tikzcd} \]
in $\FactCat^{\laxfact}$, so the lemma follows from the assertion above applied to $L$ as a Lie-$*$ algebra in $L\mod^*(\sC)$.

\end{proof}

\begin{proof}[Proof of Proposition \ref{indchint}]

Since $\mathfrak{n} \otimes k_X$ belongs to $\Rep(P)_{X_{\dR}}$, it follows that \[ \Sym^!(\mathfrak{n} \otimes k_X[1])_{X^I_{\dR}} \] belongs to $\Rep(P)_{X^I_{\dR}}$. Now apply Lemma \ref{indchfiltr} to deduce the proposition.

\end{proof}

For any finite set $I$, define \[ \Upsilon(\mathfrak{n}_P)\mod_0^{\fact}(\Rep(M))_{X^I_{\dR}} \subset \Upsilon(\mathfrak{n}_P)\modfact(\Rep(M))_{X^I_{\dR}} \] to be the full subcategory generated by the image of $\Rep(P)_{X^I_{\dR}}$ under $\ind^{* \to \ch}$. In view of Proposition \ref{indchint}, it follows that these assemble into an object of $\FactCat$, and we have a monadic adjunction
\begin{equation}
\label{chiralind2}
\ind^{* \to \ch} : \Rep(P) \rightleftarrows \Upsilon(\mathfrak{n}_P)\mod_0^{\fact}(\Rep(M)) : \oblv^{\ch \to *}
\end{equation}
in $\FactCat_{\laxuntl}$ with strictly unital left adjoint.

\subsection{The t-structure on integrable $\Upsilon(\mathfrak{n}_P)$-modules} We equip \[ \Upsilon(\mathfrak{n}_P)\mod_0^{\fact}(\Rep(M))_{X^I_{\dR}} \] with the unique t-structure such that $\oblv^{\ch \to *}$ is left t-exact. Equivalently, the connective objects are generated by the image of $\Rep(P)^{\leq 0}$ under $\ind^{* \to \ch}$. Proposition 7.11.1(2) of \cite{R2} says that $\ind^{* \to \ch}$ is actually t-exact.

\begin{proposition}

\label{upscoh}

For any finite set $I$, the t-structure on \[ \Upsilon(\mathfrak{n}_P)\mod_0^{\fact}(\Rep(M))_{X^I_{\dR}} \] is coherent.

\end{proposition}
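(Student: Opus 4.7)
The plan is to exploit the monadic adjunction (\ref{chiralind2}),
\[ \ind^{* \to \ch} : \sA := \Rep(P)_{X^I_{\dR}} \rightleftarrows \sC := \Upsilon(\mathfrak{n}_P)\mod_0^{\fact}(\Rep(M))_{X^I_{\dR}} : \oblv^{\ch \to *}, \]
in which $\ind^{* \to \ch}$ is t-exact (by Proposition 7.11.1(2) of \cite{R2}, as recalled just before the proposition) while $\oblv^{\ch \to *}$ is continuous, conservative, and left t-exact by the normalization of the t-structure on $\sC$. Coherence of the t-structure on $\sA$ is provided by Proposition \ref{repulagen}, which in particular says that $\sA$ is almost compactly generated; the plan is to transfer this coherence along the adjunction.

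First I would observe that for every integer $n$ the adjunction restricts to a monadic adjunction $\ind^{* \to \ch} : \sA^{\geq n} \rightleftarrows \sC^{\geq n} : \oblv^{\ch \to *}$ with continuous, conservative right adjoint, since $\ind^{* \to \ch}$ is t-exact and $\oblv^{\ch \to *}$ is left t-exact. Because $\sA^{\geq n}$ is compactly generated (a consequence of almost compact generation of $\sA$) and $\ind^{* \to \ch}$ preserves compact objects (its right adjoint being continuous), a standard argument shows that $\sC^{\geq n}$ is presentable and compactly generated by the free modules $\ind^{* \to \ch}(Q)$ for $Q$ compact in $\sA^{\geq n}$. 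In particular, any compact object of $\sC^{\geq 0}$ is a retract of a finite colimit of such $\ind^{* \to \ch}(Q)$ with $Q \in \sA^{\geq 0}$ compact.

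To conclude, I appeal once more to Proposition \ref{repulagen}: the t-structure on $\sA$ is coherent, so any $Q$ compact in $\sA^{\geq 0}$ is automatically compact in $\sA^{\geq -1}$. By the adjunction on $\geq -1$ subcategories, $\ind^{* \to \ch}(Q)$ is then compact in $\sC^{\geq -1}$. Since finite colimits and retracts of compact objects remain compact, the compact objects of $\sC^{\geq 0}$ described above are all compact in $\sC^{\geq -1}$, which is precisely coherence of the t-structure on $\sC$. The main technical point is the compact generation statement for $\sC^{\geq n}$; this reduces to a standard fact about monadic adjunctions over compactly generated $\infty$-categories with continuous conservative right adjoint, so the obstacle is bookkeeping rather than substantive.
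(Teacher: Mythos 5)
Your strategy—transporting coherence directly along the monadic adjunction (\ref{chiralind2})—has a genuine gap at the final step, and the gap cannot be closed without an additional hypothesis, because the transfer you are attempting is false in the stated generality.

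The problematic assertion is ``Since finite colimits and retracts of compact objects remain compact, the compact objects of $\sC^{\geq 0}$ described above are all compact in $\sC^{\geq -1}$.'' The finite colimits in your description of compact objects of $\sC^{\geq 0}$ are computed \emph{in} $\sC^{\geq 0}$, and colimits there are truncated: for a finite diagram $D$ in $\sC^{\geq 0}$ one has
\[ \colim_{\sC^{\geq 0}} D \;\cong\; \tau^{\geq 0}\big(\colim_{\sC^{\geq -1}} D\big). \]
The untruncated colimit $\colim_{\sC^{\geq -1}} D$ is indeed compact in $\sC^{\geq -1}$, being a finite colimit of the $\ind^{* \to \ch}(Q)$'s which you correctly showed are compact there. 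But $\tau^{\geq 0}$ is a \emph{left} adjoint $\sC^{\geq -1} \to \sC^{\geq 0}$, so it carries $\sC^{\geq -1}$-compact objects to $\sC^{\geq 0}$-compact objects, not back again. To conclude that $\tau^{\geq 0}(\colim D)$ is compact in $\sC^{\geq -1}$ you would need, in effect, $H^{-1}(\colim D)$ to be compact in $\sC^{\geq 0}$—which is precisely a coherence statement of the type being proved. Put differently, the inclusion $\sC^{\geq 0} \hookrightarrow \sC^{\geq -1}$ does not preserve finite colimits, so ``retract of a finite colimit of compacts'' on the source does not translate to the target.

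That no amount of bookkeeping fixes this is shown by a counterexample satisfying all the hypotheses you actually invoke. Take $\sA = \Vect$ and $\sC = R\mod$ for a classical non-coherent commutative ring $R$, with the monadic adjunction $R \otimes (-) : \Vect \rightleftarrows R\mod : \oblv$. The left adjoint is t-exact, the right adjoint is continuous, conservative and t-exact, and the t-structure on $\Vect$ is coherent; nevertheless the t-structure on $R\mod$ is not coherent when $R$ is not coherent (a finitely generated but not finitely presented ideal $I \subset R$ is compact in $R\mod^{\geq 0}$ but not in $R\mod^{\geq -1}$). So your argument, if correct, would prove a false statement.

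The paper's proof avoids this by invoking Lemma \ref{upscohlem}, whose crucial extra hypothesis beyond yours is that the t-exact functor be \emph{fully faithful on hearts}. For (\ref{chiralind2}) this is supplied by Lemma 7.16.1 of \cite{R2}: $\ind^{* \to \ch}$ restricts to a fully faithful functor
\[ \Rep(P)_{X^I_{\dR}}^{\heartsuit} \longrightarrow \big(\Upsilon(\mathfrak{n}_P)\mod_0^{\fact}(\Rep(M))_{X^I_{\dR}}\big)^{\heartsuit}. \]
This hypothesis excludes the $R\mod$ counterexample (since $R \otimes (-)$ is fully faithful on hearts only when $R = k$), and it is what allows one to argue at the level of the heart: every compact object of the target's heart has a finite filtration with graded pieces $\ind^{* \to \ch}(c)$, $c$ compact in $\Rep(P)_{X^I_{\dR}}^{\heartsuit}$, and almost-compactness propagates through extensions. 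Your argument never uses this input, and that is where it goes wrong.
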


We will apply the following general lemma.

\begin{lemma}

\label{upscohlem}

Let $F : \sC \to \sD$ be a t-exact functor whose restriction \[ F : \sC^{\heartsuit} \longrightarrow \sD^{\heartsuit} \] is fully faithful. Assume also that the right adjoint $F^{\RR}$ is continuous and conservative. If the t-structure on $\sC$ is coherent, then so is the t-structure on $\sD$.

\end{lemma}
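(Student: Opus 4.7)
Plan: My approach is to transfer compactness across the adjunction $F \dashv F^\RR$. First I would check that $(F, F^\RR)$ restricts, for each integer $n$, to an adjunction between $\sC^{\geq n}$ and $\sD^{\geq n}$: t-exactness of $F$ gives $F(\sC^{\geq n}) \subseteq \sD^{\geq n}$, while right t-exactness of $F$ forces $F^\RR$ to be left t-exact, so that $F^\RR(\sD^{\geq n}) \subseteq \sC^{\geq n}$. Both restrictions are continuous, because $F$ and $F^\RR$ are continuous and because $\sC^{\geq n}, \sD^{\geq n}$ are closed under filtered colimits; consequently, both restrictions preserve compact objects (using the standard criterion that a right adjoint preserves compacts whenever its left adjoint is continuous).

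Next I would take a compact $d \in \sD^{\geq 0}$ and apply the above to conclude that $F^\RR(d)$ is compact in $\sC^{\geq 0}$. Coherence of $\sC$ then upgrades this to compactness of $F^\RR(d)$ in $\sC^{\geq -1}$. The remaining task is to descend this compactness back to $d$ itself, i.e.\ to show that $\Hom_\sD(d,-)$ commutes with filtered colimits in $\sD^{\geq -1}$.

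For this descent, my plan is to analyze, for any $d' \in \sD^{\geq -1}$, the canonical fiber sequence $H^{-1}(d')[1] \to d' \to \tau^{\geq 0}d'$. Filtered colimits commute with $\tau^{\geq 0}$ and $H^{-1}$, and the compactness of $d$ in $\sD^{\geq 0}$ handles the $\tau^{\geq 0}$-summand of the resulting long exact sequence of mapping spaces. The remaining term to control is $\Hom_\sD(d, N[1])$ for $N \in \sD^\heartsuit$.

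The main obstacle is identifying this mapping space with one on the $\sC$-side to which compactness of $F^\RR(d)$ in $\sC^{\geq -1}$ can be applied. My plan is to invoke monadicity: since $F^\RR$ is continuous and conservative, Barr--Beck--Lurie identifies $\sD$ with the category of modules over the monad $T := F^\RR F$ on $\sC$. Mapping spaces in $\sD$ then admit a standard bar-resolution description in terms of mapping spaces in $\sC$, and the fully faithfulness of $F|_\heartsuit$ (equivalent to the isomorphism $c \xrightarrow{\sim} H^0(T(c))$ for $c \in \sC^\heartsuit$, using t-exactness of $F$ and left t-exactness of $F^\RR$) provides the control on $T$ in low degrees needed to reduce the filtered-colimit question for $\Hom_\sD(d, N[1])$ to the already-established compactness of $F^\RR(d)$ in $\sC^{\geq -1}$. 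Making this last reduction precise -- in particular, checking that only finitely many terms of the bar resolution contribute obstructions in a given cohomological range -- is where I expect the bulk of the technical work to lie.
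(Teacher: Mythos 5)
There is a genuine gap: the claim that $F^{\RR}$ preserves compact objects is unjustified, and the criterion you cite in support of it is stated backwards. The correct statement is that a \emph{left} adjoint preserves compact objects whenever its \emph{right} adjoint is continuous; in the present situation that gives you that $F$ preserves compacts (because $F^{\RR}$ is continuous), but it gives nothing about $F^{\RR}$. A continuous conservative right adjoint whose left adjoint is continuous need not preserve compacts -- the forgetful functor $A\mod \to \Vect$ for a non-perfect algebra $A$ is the standard example -- and nothing in the extra hypotheses of the lemma (t-exactness of $F$, full faithfulness of $F|_{\heartsuit}$) repairs this. Since every subsequent step of your outline rests on $F^{\RR}(d)$ being compact in $\sC^{\geq 0}$ so that coherence of $\sC$ can be applied, the argument does not go through as written.

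The essential difference from the paper's proof is the direction of compactness transport. The paper moves finiteness from $\sC$ to $\sD$ along the \emph{left} adjoint $F$, which is the valid direction: it first shows that any compact object in $\sD^{\heartsuit}$ admits a finite filtration with subquotients $F(c)$ for $c$ compact in $\sC^{\heartsuit}$ (this is where the monadic structure and full faithfulness of $F|_{\heartsuit}$ enter), then observes that coherence of $\sC$ makes each such $c$ almost compact and that $F$, being t-exact with continuous right adjoint, preserves almost compact objects, and finally that almost compactness is stable under extensions. Your monadicity idea is adjacent to the paper's first step, but the transport along $F^{\RR}$ must be abandoned and replaced by this finite-filtration argument; the finite filtration is also what eliminates the infinite bar-resolution concerns you flag at the end, since everything reduces to finitely many applications of $F$ to compact objects.
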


\begin{proof}

A standard argument shows that any compact object in $\sD^{\heartsuit}$ admits a finite filtration with subquotients of the form $F(c)$, where $c$ is compact in $\sC^{\heartsuit}$. If the t-structure on $\sC$ is coherent, then any compact object in $\sC^{\heartsuit}$ is almost compact in $\sC$. Since $F$ preserves almost compact objects and any extension of almost compact objects is almost compact, the lemma follows.

\end{proof}

\begin{proof}[Proof of Proposition \ref{upscoh}]

Lemma 7.16.1 of \cite{R2} says that \[ \ind^{* \to \ch} : \Rep(P)_{X^I_{\dR}} \longrightarrow \Upsilon(\mathfrak{n}_P)\mod_0^{\fact}(\Rep(M))_{X^I_{\dR}} \] is fully faithful on the heart, and the t-structure on $\Rep(P)_{X^I_{\dR}}$ is coherent by Proposition \ref{repulagen}. Thus the hypotheses of Lemma \ref{upscohlem} apply to the adjunction (\ref{chiralind2}).

\end{proof}

\subsection{The factorization algebra $\Upsilon(\mathfrak{n}_P,\sO_G)$} We will abuse notation slightly by also writing $\Upsilon(\mathfrak{n}_P,-)$ for the composition \[ \Rep(G) \xrightarrow{\res^G_P} \Rep(P) \xrightarrow{\res^{N_P}_{\mathfrak{n}_P}} \mathfrak{n}_P\mod(\Rep(M)) \xrightarrow{\Upsilon(\mathfrak{n}_P,-)} \Rep(M). \] This is a morphism in $\FactCat_{\laxuntl}$, and hence can be viewed as a factorization algebra in \[ \uHom_{\FactCat_{\laxuntl}}(\Rep(G),\Rep(M)). \] But $\Rep(G)$ is canonically self-dual in $\FactCat_{\laxuntl}$ by Proposition \ref{ulagenprop}, so we can identify \[ \uHom_{\FactCat_{\laxuntl}}(\Rep(G),\Rep(M)) \cong \Rep(G) \otimes \Rep(M) \cong \Rep(G \times M). \]

We will write $\Upsilon(\mathfrak{n}_P,\sO_G)$ for the factorization algebra in $\Rep(G \times M)$ corresponding to $\Upsilon(\mathfrak{n}_P,-)$. As the notation suggests, this factorization algebra can also be obtained by applying the morphism \[ \id_{\Rep(G)} \otimes \Upsilon(\mathfrak{n}_P,-) : \Rep(G) \otimes \Rep(G) \longrightarrow \Rep(G) \otimes \Rep(M) \cong \Rep(G \times M) \] in $\FactCat_{\laxuntl}$ to $\sO_G$, viewed as a commutative factorization algebra in $\Rep(G) \otimes \Rep(G)$.

By construction, there is a canonical morphism of factorization algebras \[ \unit_{\Rep(G)} \boxtimes \Upsilon(\mathfrak{n}_P) \longrightarrow \Upsilon(\mathfrak{n}_P,\sO_G) \] in $\Rep(G \times M)$. By Lemma \ref{factmodtrans}, this induces an equivalence
\begin{equation}
\label{upsilonfactmodtrans}
\Upsilon(\mathfrak{n}_P,\sO_G)\modfact(\Rep(G \times M)) \tilde{\longrightarrow} \Upsilon(\mathfrak{n}_P,\sO_G)\modfact(\Rep(G) \otimes \Upsilon(\mathfrak{n}_P)\modfact(\Rep(M))).
\end{equation}

\subsection{An integrability condition on $\Upsilon(\mathfrak{n}_P,\sO_G)$-modules} By definition, the factorization algebra $\Upsilon(\mathfrak{n}_P,\sO_G)$ is the image of $\res^{G \times G}_{G \times P}$ under the functor \[ \id_{\Rep(G)} \otimes (\Upsilon(\mathfrak{n}_P,-) \circ \res^{N_P}_{\mathfrak{n}_P}) : \Rep(G \times P) \longrightarrow \Rep(G) \otimes \Upsilon(\mathfrak{n}_P)\modfact(\Rep(M))), \] and in particular belongs to the full subcategory \[ \Upsilon(\mathfrak{n}_P,\sO_G)\modfact(\Rep(G) \otimes \Upsilon(\mathfrak{n}_P)\mod_0^{\fact}(\Rep(M))). \]

We define \[ \Upsilon(\mathfrak{n}_P,\sO_G)\mod_0^{\fact}(\Rep(G \times M)) \subset \Upsilon(\mathfrak{n}_P,\sO_G)\modfact(\Rep(G \times M)) \] to be the full subcategory corresponding to \[ \Upsilon(\mathfrak{n}_P,\sO_G)\modfact(\Rep(G) \otimes \Upsilon(\mathfrak{n}_P)\mod_0^{\fact}(\Rep(M))) \] under the equivalence (\ref{upsilonfactmodtrans}).

The adjunction (\ref{chiralind2}) induces an adjunction \[ \Rep(G \times P) \rightleftarrows \Rep(G) \otimes \Upsilon(\mathfrak{n}_P)\mod_0^{\fact}(\Rep(M)) \] in $\FactCat_{\laxuntl}$ after tensoring with $\id_{\Rep(G)}$. Applying the construction (\ref{factmodfunct}) yields an adjunction
\begin{equation}
\label{psconngen}
\sO_{G}\modfact(\Rep(G \times P)) \rightleftarrows \Upsilon(\mathfrak{n}_P,\sO_{G})\mod_0^{\fact}(\Rep(G \times M))
\end{equation}
in $\FactCat^{\laxfact}_{\laxuntl}$, with the left adjoint belonging to $\FactCat^{\laxfact}$. Note that by construction, the right adjoint makes the square \[
\begin{tikzcd}
\Upsilon(\mathfrak{n}_P,\sO_{G})\mod_0^{\fact}(\Rep(G \times M)) \arrow{r} \arrow{d}{\oblv_{\Upsilon(\mathfrak{n}_P,\sO_G)}} & \sO_{G}\modfact(\Rep(G \times P)) \arrow{d}{\oblv_{\sO_G}} \\
\Rep(G \times M) & \Rep(G \times P) \arrow{l}{\id \otimes \res^P_M}
\end{tikzcd} \]
commute, and in particular is conservative over $X^I_{\dR}$ for any finite set $I$.

\subsection{The t-structure on integrable $\Upsilon(\mathfrak{n}_P,\sO_G)$-modules} Recall that by Proposition \ref{tstrcomalg}, the category \[ \sO_{G}\modfact(\Rep(G \times P))_{X^I_{\dR}} \] admits a unique t-structure which makes the forgetful functor to $\Rep(G \times P)_{X^I_{\dR}}$ t-exact.

\begin{lemma}

\label{psequivheart}

For any finite set $I$, there is a unique t-structure on the category \[ \Upsilon(\mathfrak{n}_P,\sO_G)\mod_0^{\fact}(\Rep(G \times M))_{X^I_{\dR}} \] which makes the left adjoint in (\ref{psconngen}) t-exact over $X^I_{\dR}$, and this functor moreover restricts to an equivalence on the hearts of the t-structures.

\end{lemma}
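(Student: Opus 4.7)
The strategy is to bootstrap from the analogous facts for the adjunction (\ref{chiralind2}) in the ``no-$\sO_G$'' case: that $\ind^{*\to\ch}$ is t-exact (cited just after Proposition \ref{indchint}) and fully faithful on the heart (Lemma 7.16.1 of \cite{R2}). First, I would define the t-structure on $\Upsilon(\mathfrak{n}_P,\sO_G)\mod_0^{\fact}(\Rep(G \times M))_{X^I_{\dR}}$ by declaring its connective subcategory to be generated under colimits by the essential image of $L$ applied to the connective subcategory of $\sO_G\modfact(\Rep(G \times P))_{X^I_{\dR}}$ (supplied by Proposition \ref{tstrcomalg}). Compact generation on the left together with monadicity of the adjunction (via conservativity of $R$) ensure this is a genuine t-structure; by construction $L$ is right t-exact, and uniqueness among t-structures making $L$ t-exact is immediate since any such t-structure must have its connectives contain $L(\sC^{\leq 0})$, and monadicity forces this subcategory to generate.

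For left t-exactness of $L$, I would use the equivalence (\ref{upsilonfactmodtrans}) to identify the right category with $\Upsilon(\mathfrak{n}_P,\sO_G)\modfact$ in the factorization category $\Rep(G) \otimes \Upsilon(\mathfrak{n}_P)\mod_0^{\fact}(\Rep(M))$. The underlying functor of $L$ is then $\id \otimes \ind^{*\to\ch}$, which is t-exact by Lemma \ref{tensorexact} combined with the t-exactness of $\ind^{*\to\ch}$. Since the t-structure on $\sO_G\modfact(\Rep(G \times P))_{X^I_{\dR}}$ is compatible with the underlying t-structure via the t-exact functor $\oblv_{\sO_G}$, and the right-hand t-structure is likewise compatible via the commutative square for $R$ stated just after (\ref{psconngen}), left t-exactness at the underlying level propagates to left t-exactness of $L$.

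For the heart equivalence, full faithfulness of $L|_{\sC^{\heartsuit}}$ amounts to the unit $\id \to T := R \circ L$ being an isomorphism on $\sC^{\heartsuit}$; by conservativity and t-exactness of the forgetful functors, this reduces to $\oblv^{\ch\to*} \circ \ind^{*\to\ch}$ acting as the identity on $\Rep(P)_{X^I_{\dR}}^{\heartsuit}$, which is the content of Lemma 7.16.1 of \cite{R2}. For essential surjectivity, any $N \in \sD^{\heartsuit}$ may be written as a filtered colimit in $\sD$ of objects $L(X_i)$ with $X_i \in \sC^{\leq 0}$ (by the generation property). Applying $H^0$ and commuting it past the filtered colimit (the t-structures being compatible with filtered colimits), together with t-exactness of $L$, yields $N \simeq L(\colim_i H^0(X_i))$, exhibiting $N$ as in the essential image of $L|_{\sC^{\heartsuit}}$.

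The hard part will be verifying that the generation-based t-structure on the right coincides with the ``underlying'' t-structure — i.e., that $N \in \sD$ is connective iff its underlying object in $\Rep(G) \otimes \Upsilon(\mathfrak{n}_P)\mod_0^{\fact}(\Rep(M))_{X^I_{\dR}}$ is connective. This compatibility, which is what makes the propagation argument for left t-exactness go through, should follow from an analog of Proposition \ref{tstrcomalg} adapted to factorization modules for $\Upsilon(\mathfrak{n}_P,\sO_G)$ in the (non-commutative) target factorization category; one may alternatively argue via the PBW filtration of Lemma \ref{indchfiltr}, whose associated graded tensors with $\Sym^!(\mathfrak{n}_P \otimes k_X[1])_{X^I_{\dR}}$ and hence can be analyzed degree by degree.
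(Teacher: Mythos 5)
Your overall shape is right — define the t‑structure by declaring $L(\sC^{\leq 0})$ generates $\sD^{\leq 0}$, then argue left t‑exactness of $L$ and the heart equivalence — but your route to left t‑exactness introduces an unnecessary detour, and the gap you flag at the end is exactly what you would need to bridge for that route to work.

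You try to propagate left t‑exactness through the forgetful functor on the \emph{target} $\sD := \Upsilon(\mathfrak{n}_P,\sO_G)\mod_0^{\fact}(\Rep(G\times M))_{X^I_{\dR}}$, which requires knowing that $\oblv_{\sD}$ is t‑exact and conservative before you've finished constructing the t‑structure. That is the ``hard part'' you acknowledge. But it is entirely avoidable: since $R$ is conservative and $\sD^{\geq 0} = \{d : R(d) \in \sC^{\geq 0}\}$, left t‑exactness of $L$ is literally equivalent to left t‑exactness of the monad $T = R\circ L$ on $\sC = \sO_G\modfact(\Rep(G\times P))_{X^I_{\dR}}$. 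Here the forgetful functor $\oblv_{\sC} : \sC \to \Rep(G\times P)_{X^I_{\dR}}$ is \emph{already} known to be t‑exact and conservative (Proposition \ref{tstrcomalg}), and it intertwines $T$ with the monad $\id_{\Rep(G)} \otimes (\oblv^{\ch\to *}\circ\ind^{*\to\ch}\circ\res^{N_P}_{\mathfrak{n}_P})$ on $\Rep(G\times P)_{X^I_{\dR}}$. Left t‑exactness of the latter is then a PBW computation (via the filtration in Lemma \ref{indchfiltr}, or equivalently the argument in \cite{R3} Proposition 7.11.1). This is precisely the paper's proof, and it never needs to say anything about the t‑structure on $\sD$ beyond its definition. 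A further small objection to your version: you invoke Lemma \ref{tensorexact} for t‑exactness of $\id\otimes\ind^{*\to\ch}$, but that lemma concerns absolute tensor products of DG categories, whereas over $X^I_{\dR}$ the tensor is relative over $\DD(X^I)$; this would need separate justification.

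Your argument for the heart equivalence is essentially the paper's (reduce full faithfulness to the unit map $\id \to T$ on the heart, handled by PBW as in \cite{R2} Lemma 7.16.1, together with an essential surjectivity argument via generation). So that part is fine. The key gap is that the paragraph on left t‑exactness needs to be replaced by the monad‑on‑the‑source argument, after which your flagged hard part disappears.
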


\begin{proof}

Since the right adjoint in (\ref{psconngen}) is conservative, there is a unique t-structure which makes the left adjoint right t-exact. To see that it is left t-exact, it suffices to check that the corresponding monad on \[ \sO_G\modfact(\Rep(G \times P))_{X^I_{\dR}} \] is left t-exact. The forgetful functor \[ \sO_G\modfact(\Rep(G \times P))_{X^I_{\dR}} \longrightarrow \Rep(G \times P)_{X^I_{\dR}} \] is t-exact and intertwines the monad in question with the monad \[ \id_{\Rep(G)} \otimes (\oblv^{\ch \to *} \circ \ind^{* \to \ch} \circ \res^{N_P}_{\mathfrak{n}_P}) \] on the target. Thus it suffices to show that the latter monad is left t-exact, which follows from the PBW theorem for factorization modules as in the proof of Proposition 7.11.1 in \cite{R3}.

The left adjoint in (\ref{psconngen}) is an equivalence on hearts by a similar calculation using the PBW theorem (see the proof of Lemma 7.16.1 in \emph{loc. cit.}; cf. also Proposition \ref{comfactequivheart} above).

\end{proof}

By Proposition \ref{comfactequivheart}, the functor \[ \Rep(P)_{X^I_{\dR}} \cong \sO_G\mod(\Rep(G \times P))_{X^I_{\dR}} \xrightarrow{\oblv^{\com \to \fact}} \sO_G\modfact(\Rep(G \times P))_{X^I_{\dR}} \] restricts to an equivalence on the hearts of the t-structures. Combining this with Lemma \ref{psequivheart}, we obtain an equivalence \[ (\Rep(P)_{X^I_{\dR}})^{\heartsuit} \tilde{\longrightarrow} (\Upsilon(\mathfrak{n}_P,\sO_G)\mod_0^{\fact}(\Rep(G \times M))_{X^I_{\dR}})^{\heartsuit}. \]

\begin{proposition}

\label{specpscoh}

For any finite set $I$, the t-structure on \[ \Upsilon(\mathfrak{n}_P,\sO_G)\mod_0^{\fact}(\Rep(G \times M))_{X^I_{\dR}} \] satisfies the hypotheses of Proposition \ref{factcatren}. In particular, this category is almost ULA generated over $X^I$.

\end{proposition}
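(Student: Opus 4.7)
The plan is to deduce the proposition from Corollary \ref{factmodcoh} applied to $\sO_G$, and then transfer the conclusion along the adjunction (\ref{psconngen}). First, I would observe that $\sO_G$ is a commutative algebra of finite type in $\Rep(G \times G)$, and that there is a canonical identification $\sO_G\mod(\Rep(G \times P)) \cong \Rep(P)$ in $\FactCat$ (expressing that $G$ is a homogeneous $G$-space under right translation). Coherence of $\Rep(P)_{X^I_{\dR}}$ is part of Proposition \ref{repulagen}, so Corollary \ref{factmodcoh} yields that $\sC := \sO_G\modfact(\Rep(G \times P))$ satisfies the hypotheses of Proposition \ref{factcatren}; in particular each $\sC_{X^I_{\dR}}$ is almost ULA generated.

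Write $\sD := \Upsilon(\mathfrak{n}_P, \sO_G)\mod_0^{\fact}(\Rep(G \times M))$ and let $L \dashv R$ denote the adjunction (\ref{psconngen}). By Lemma \ref{psequivheart}, on each $X^I_{\dR}$ the functor $L$ is t-exact, $L^{\heartsuit}$ is an equivalence, and $R$ is conservative; $R$ is also continuous, inherited from $\oblv^{\ch \to *}$ in (\ref{chiralind2}). Moreover both $L$ and $R$ are $\DD(X^I)$-linear morphisms of sheaves of categories on $\Ran^{\untl}$, and in particular intertwine all structure functors. With these properties in hand, I would verify the conditions of Proposition \ref{factcatren} for $\sD$ one at a time: coherence from Lemma \ref{upscohlem} applied to $L$, almost compact generation from Lemma \ref{cohcrit} together with the heart equivalence, and the t-exactness and finite homological amplitude conditions on structure functors between $X^I_\dR$'s as well as the external product compatibility by transferring the analogous properties on $\sC$ through the commutative squares formed by $L$.

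The main obstacle I expect is verifying almost ULA generation, which requires not only exhibiting almost ULA objects in $\sD_{X^I_{\dR}}$ but also showing that the resulting $\tau^{\geq 0}(\sF \otimes \oblv_{\sD}(-))$ compactly generate $(\IndCoh(X^I) \otimes_{\DD(X^I)} \sD_{X^I_{\dR}})^{\geq 0}$. For this I would base change the adjunction $L \dashv R$ to an adjunction $\tilde L \dashv \tilde R$ on the $\IndCoh(X^I)$-tensored categories; $\tilde L$ is t-exact by Lemma \ref{tensorexact} and $\tilde R$ is continuous, so $\tilde L$ preserves almost compact objects and hence $L$ preserves almost ULA objects. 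Propagation of generation then follows the template of the final paragraph of the proof of Corollary \ref{factmodcoh}: the heart equivalence $L^\heartsuit$ gives that the essential image of $L|_{\sC^{\geq 0}_{X^I_{\dR}}}$ generates $\sD^{\geq 0}_{X^I_{\dR}}$ under colimits, and the analogous statement transferred to the $\IndCoh(X^I)$-tensored categories via $\IndCoh(X^I)$-linearity of $\tilde L$ promotes the ULA-type generators of $\sC_{X^I_{\dR}}$ to a set of ULA-type generators for $\sD_{X^I_{\dR}}$.
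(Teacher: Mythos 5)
Your proposal is correct and follows essentially the same strategy as the paper: apply Corollary \ref{factmodcoh} to $\sO_G\modfact(\Rep(G \times P))$ (observing $\sO_G\mod(\Rep(G\times P)) \cong \Rep(P)$ is coherent), then transfer every hypothesis of Proposition \ref{factcatren} across the adjunction (\ref{psconngen}) using the facts, from Lemma \ref{psequivheart}, that the left adjoint is t-exact and an equivalence on hearts while the right adjoint is continuous, left t-exact, conservative, and $\DD(X^I)$-linear. The small deviations are cosmetic: you route coherence through Lemma \ref{upscohlem} while the paper invokes Lemma \ref{cohcrit} directly, and you spell out the almost-ULA-generation step (base change to the $\IndCoh(X^I)$-tensored categories, preservation of almost compact objects by $\tilde L$, conservativity of $\tilde R$ on coconnectives) which the paper compresses to ``the claim follows''; both versions of each step are sound.
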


\begin{proof}

First, we show that the t-structure is almost ULA generated. Coherence follows from Lemma \ref{psequivheart} using the criterion of Lemma \ref{cohcrit}, since the left adjoint in (\ref{psconngen}) preserves almost compact objects, the category \[ \Rep(P)_{X^I_{\dR}}^{\heartsuit} \] is compactly generated, and \[ \sO_G\modfact(\Rep(G \times P))_{X^I_{\dR}} \] is almost compactly generated by Corollary \ref{factmodcoh}. Recall that \[ \sO_G\modfact(\Rep(G \times P))_{X^I_{\dR}} \] admits a set of almost ULA generators by Corollary \ref{factmodcoh}, and since the right adjoint in (\ref{psconngen}) is left t-exact, conservative, and $\DD(X^I)$-linear, the claim follows.

Hypothesis (ii) of Proposition \ref{sheafranren} follows from the corresponding property of \[ \sO_G\modfact(\Rep(G \times P)) \] via the right adjoint in (\ref{psconngen}), which is left t-exact and conservative over $X^I_{\dR}$ for all $I$. As for hypothesis (iii), by right completeness it suffices to check that the structure functors in question are left t-exact on the heart of the t-structure. Since \[ \sO_G\modfact(\Rep(G \times P)) \] has this property by Corollary \ref{factmodcoh}, the claim follows from Lemma \ref{psequivheart}.

Similarly, the hypothesis in Proposition \ref{factcatren} is satisfied because of Lemma \ref{psequivheart} and the corresponding property of \[ \sO_G\modfact(\Rep(G \times P)). \]

\end{proof}

\subsection{Definition of the spectral Hecke categories} Applying Propositions \ref{factcatren} and \ref{specpscoh}, we obtain an object \[ \Sph^{\spec}_{G,P} := \Upsilon(\mathfrak{n}_P,\sO_G)\mod_0^{\fact}(\Rep(G \times M))^{\ren} \] in $\FactCat^{\laxfact}$. Composing the morphism \[ \Rep(P) \cong \sO_G\mod(\Rep(G \times P)) \xrightarrow{\oblv^{\comfact}} \sO_G\modfact(\Rep(G \times P)) \] in $\FactCat^{\laxfact}$ with the left adjoint in (\ref{psconngen}), we obtain a morphism
\begin{equation}
\label{psgen}
\Rep(P) \longrightarrow \Upsilon(\mathfrak{n}_P,\sO_G)\mod_0^{\fact}(\Rep(G \times M)).
\end{equation}
This functor is t-exact over $X^I_{\dR}$ for every finite set $I$, and hence renormalizes to a morphism
\begin{equation}
\label{psgenren}
\Rep(P) \longrightarrow \Sph^{\spec}_{G,P}
\end{equation}
in $\FactCat^{\laxfact}$, where we used Proposition \ref{repulagen} to identify $\Rep(P)^{\ren} \cong \Rep(P)$. By Lemma \ref{psequivheart}, this induces an equivalence \[ \Rep(P)_{X^I_{\dR}}^{\heartsuit} \tilde{\longrightarrow} (\Sph^{\spec}_{G,P})_{X^I_{\dR}}^{\heartsuit} \] for any finite set $I$.

\begin{proposition}

The morphism (\ref{psgenren}) admits a right adjoint in $\FactCat^{\laxfact}_{\laxuntl}$, which is conservative over $X^I_{\dR}$ for any finite set $I$. In particular, the adjunction is monadic, and hence $\Sph^{\spec}_{G,P}$ belongs to $\FactCat$.

\end{proposition}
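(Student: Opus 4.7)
The plan is to construct the right adjoint by first producing one at the pre-renormalized level (using a factorization of (\ref{psgen}) through $\sO_G\modfact(\Rep(G \times P))$), then transporting it through the renormalization machinery developed in Section \ref{s:renorm}, and finally deducing conservativity and strict factorization from the equivalence of hearts.

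Concretely, I would factor the pre-renormalized morphism (\ref{psgen}) as the composition
\[
\Rep(P) \simeq \sO_G\mod(\Rep(G \times P)) \xrightarrow{\oblv^{\comfact}} \sO_G\modfact(\Rep(G \times P)) \longrightarrow \Upsilon(\mathfrak{n}_P,\sO_G)\mod_0^{\fact}(\Rep(G \times M)),
\]
where the final arrow is the left adjoint of (\ref{psconngen}). The third factor has a right adjoint in $\FactCat^{\laxfact}_{\laxuntl}$ conservative over every $X^I_{\dR}$ (as recorded in the commutative diagram preceding Lemma \ref{psequivheart}). For the second factor, I apply Theorem \ref{vacacthm} with $A = \sO_G$ (which is finitely presented since $G$ is smooth affine of finite type) and $H = G$ to conclude that the vacuum module $\Vac_{\sO_G,X^I_{\dR}}$ is almost ULA; Proposition \ref{vacacchar} then guarantees that $\coind^{\fact \to \com}$ is $\DD(X^I)^+$-linear and preserves filtered colimits bounded uniformly from below. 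Composing these right adjoints gives a right adjoint of (\ref{psgen}) in $\FactCat^{\laxfact}_{\laxuntl}$.

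To transport this to the renormalized setting, I use that (\ref{psgen}) is t-exact (Lemma \ref{psequivheart}) with source and target almost ULA generated (Proposition \ref{repulagen} and Proposition \ref{specpscoh}), so it is a $1$-ary morphism in $(\FactCat^{\laxfact})^{\otimes}_{\aULA}$ and renormalizes to (\ref{psgenren}) by Proposition \ref{monfactcatren}; on the source nothing changes since $\Rep(P)^{\ren} \simeq \Rep(P)$. The right adjoint is left t-exact and preserves bounded-below filtered colimits, so Lemma \ref{fhaext} (together with Proposition \ref{operadequiv}) allows us to uniquely extend it to a continuous functor between the renormalized categories, upgrading to a morphism in $\FactCat^{\laxfact}_{\laxuntl}$ which on eventually coconnective objects coincides with the original right adjoint under the identifications $\Rep(P)^{\ren,+} \simeq \Rep(P)^+$ and $(\Sph^{\spec}_{G,P})^+ \simeq (\Upsilon(\mathfrak{n}_P,\sO_G)\mod_0^{\fact}(\Rep(G \times M)))^+$. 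For conservativity, observe that this renormalized right adjoint restricts on hearts to the inverse of the equivalence induced by (\ref{psgenren}), as recorded between Lemma \ref{psequivheart} and Proposition \ref{specpscoh}; combined with right completeness and ULA (hence bounded coherent) generation of $\Sph^{\spec}_{G,P}$ from Proposition \ref{ularen}, this implies that the essential image of (\ref{psgenren}) generates $\Sph^{\spec}_{G,P}$ under colimits at each $X^I_{\dR}$, so any object annihilated by the right adjoint is zero. Barr--Beck then yields monadicity fiberwise, and strict factorization of $\Sph^{\spec}_{G,P}$ follows: the lax unital factorization structure from renormalization of $\Upsilon(\mathfrak{n}_P,\sO_G)\mod_0^{\fact}(\Rep(G \times M))$ is transported, via the monad description, to a strict factorization structure inherited from $\Rep(P)$.

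The main obstacle I expect is the renormalization step for the right adjoint: t-exact functors renormalize effortlessly via Proposition \ref{monfactcatren}, but a right adjoint of a t-exact functor is only left t-exact in general, so it does not automatically lie in the pseudo-tensor category $(\FactCat^{\laxfact})^{\otimes}_{\aULA}$. Bridging this gap requires careful use of Lemma \ref{fhaext}, whose applicability hinges precisely on the bounded-below continuity of $\coind^{\fact \to \com}$ guaranteed by the local acyclicity Theorem \ref{vacacthm}; making sure these extensions are compatible with the factorization structure (i.e., with the restriction morphisms in Proposition \ref{sheafrancomb} and the external fusion maps) is the most delicate bookkeeping of the argument.
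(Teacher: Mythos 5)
Your approach is a genuine variant of the paper's, and the broad outline (factor through $\sO_G\modfact(\Rep(G\times P))$, invoke Theorem \ref{vacacthm} and the adjunction (\ref{psconngen}), use the hearts equivalence for conservativity) hits the right ingredients. But there is a real flaw in the middle.

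You write that Proposition \ref{vacacchar} guarantees $\coind^{\fact \to \com}$ is $\DD(X^I)^+$-linear and preserves filtered colimits \emph{bounded uniformly from below}, and then in the very next sentence conclude that "composing these right adjoints gives a right adjoint of (\ref{psgen}) in $\FactCat^{\laxfact}_{\laxuntl}$." That category consists of continuous DG functors; $\coind^{\fact \to \com}$ is not continuous (it only has the weaker colimit-preservation property you just stated), so the composite is not a morphism in $\FactCat^{\laxfact}_{\laxuntl}$. At the pre-renormalized level you only have a partial right adjoint on eventually coconnective objects. You do then acknowledge that renormalization and Lemma \ref{fhaext} are needed, but the logic has already been broken: you can't first claim the right adjoint exists in $\FactCat^{\laxfact}_{\laxuntl}$ and then "transport" it.

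The paper sidesteps this entirely. The observation is that Theorem \ref{vacacthm} and (\ref{psconngen}) together show that (\ref{psgen}) preserves \emph{almost} ULA objects; after renormalization, (\ref{psgenren}) therefore preserves ULA objects in the sense of $\DD(X^I)$-module categories. Since $\Rep(P)_{X^I_{\dR}}$ is ULA generated (Proposition \ref{repulagen}), a $\DD(X^I)$-linear functor out of it that preserves ULA objects automatically has a continuous $\DD(X^I)$-linear right adjoint — no Lemma \ref{fhaext}, no Kan extension, no delicate compatibility checks with the factorization structure. The right adjoint is produced directly in the renormalized setting, where continuity is free. The conservativity, monadicity, and strict-factorization steps in your proposal are then essentially the same as the paper's. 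If you want to salvage your route, the fix is to drop the claim that a right adjoint exists in $\FactCat^{\laxfact}_{\laxuntl}$ at the un-renormalized level, keep only the statement that (\ref{psgen}) preserves almost ULA objects, and then immediately pass to the renormalized categories where the standard "preserves compacts implies continuous right adjoint" argument applies.
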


\begin{proof}

Theorem \ref{vacacthm}, together with the existence of the adjunction (\ref{psconngen}), implies that \[ \Rep(P)_{X^I_{\dR}} \longrightarrow (\Sph^{\spec}_{G,P})_{X^I_{\dR}} \] preserves ULA objects for any finite set $I$. Moreover, the essential image of this functor generates its target, since it induces an equivalence on the hearts of the t-structures. Since $\Rep(P)_{X^I_{\dR}}$ is ULA generated by Proposition \ref{repulagen}, we obtain the desired right adjoint. It follows that $\Sph^{\spec}_{G,P}$ factorizes strictly, since $\Rep(P)$ does and we have just shown that (\ref{psgenren}) has a monadic right adjoint.

\end{proof}

\subsection{Monoidal and bimodule structures} In the case $G = P$, we will write \[ \Sph^{\spec}_G := \Sph^{\spec}_{G,G} \] to simplify the notation. Here the factorization algebra $\Upsilon(\mathfrak{n}_P,\sO_G) = \sO_G$ is the monoidal unit in \[ \Rep(G \times G) \cong \uEnd_{\FactCat_{\laxuntl}}(\Rep(G)). \] The object \[ \sO_G\modfact(\Rep(G \times G)) \] is obtained by applying the construction (\ref{factmodunit}) to $\Rep(G \times G)$, hence inherits an associative algebra structure in $\FactCat^{\laxfact}$. Similarly, the object \[ \Upsilon(\mathfrak{n}_P,\sO_G)\modfact(\Rep(G \times M)) \] is naturally a bimodule for the pair \[ (\sO_G\modfact(\Rep(G \times G)),\sO_M\modfact(\Rep(M \times M))). \]

\begin{proposition}

\label{specheckemult}

The object $\Sph^{\spec}_G$ admits a unique structure of associative algebra in $\FactCat$ compatible with the morphism \[ \Sph^{\spec}_G \longrightarrow \sO_G\modfact(\Rep(G \times G)). \] Similarly, the object $\Sph^{\spec}_{G,P}$ admits a unique structure of $(\Sph^{\spec}_G,\Sph^{\spec}_M)$-bimodule in $\FactCat$ compatible with the morphism \[ \Sph^{\spec}_{G,P} \longrightarrow \Upsilon(\mathfrak{n}_P,\sO_G)\modfact(\Rep(G \times M)) \] in $\FactCat^{\laxfact}$.

\end{proposition}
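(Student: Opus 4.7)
The strategy is to transport the algebra and bimodule structures from the unrenormalized (integrable) factorization module categories to their renormalizations via the pseudo-tensor morphism of Proposition \ref{monfactcatren}. By Proposition \ref{specpscoh}, the relevant unrenormalized categories $\Upsilon(\mathfrak{n}_P,\sO_G)\mod_0^{\fact}(\Rep(G \times M))$ belong to the pseudo-tensor subcategory $(\FactCat^{\laxfact})^{\otimes}_{\aULA}$, so the renormalization machinery applies. The algebra structure on $\sO_G\modfact(\Rep(G \times G))$ is produced by (\ref{factmodunit}) applied to $\Rep(G \times G) \cong \uEnd_{\FactCat_{\laxuntl}}(\Rep(G))$, and for the bimodule case the text already records that $\Upsilon(\mathfrak{n}_P,\sO_G)\modfact(\Rep(G \times M))$ is naturally a bimodule for $(\sO_G\modfact(\Rep(G \times G)),\sO_M\modfact(\Rep(M \times M)))$; this bimodule structure is built analogously by viewing $\Rep(G \times M) \cong \uHom_{\FactCat_{\laxuntl}}(\Rep(G),\Rep(M))$ as a bimodule over the two endomorphism categories.

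The first substantive step is to verify that these structures restrict to the integrable subcategories. For the algebra case $P=G$ there is nothing to check, since $\mathfrak{n}_G = 0$ forces $\Upsilon(\mathfrak{n}_G,\sO_G) = \sO_G$ and the integrability condition is trivial. For the bimodule case, one must check that the left action of $\sO_G\modfact(\Rep(G \times G))$ and the right action of $\sO_M\modfact(\Rep(M \times M))$ both preserve the full subcategory generated by the image of $\ind^{* \to \ch}$ on $\Rep(P)$-valued objects. The $\sO_G\modfact(\Rep(G \times G))$-action is implemented by convolution of kernels on the $G$-factor; this commutes with the $\mathfrak{n}_P$-action, which lives entirely on the $M$-factor, and so preserves $\Rep(P)$-integrability. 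The $\sO_M\modfact(\Rep(M \times M))$-action preserves integrability because $M$ normalizes $P$, so composition with $\Rep(M \times M)$-kernels carries the image of $\Rep(P)$ into itself.

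Second, one checks that the multiplication and action functors are $n$-ary morphisms in $(\FactCat^{\laxfact})^{\otimes}_{\aULA}$, i.e., send eventually coconnective inputs to eventually coconnective outputs. Using Lemma \ref{psequivheart}, this reduces to the same boundedness for the corresponding operations on $\sO_G\modfact(\Rep(G \times P))$ and on $\sO_G\modfact(\Rep(G \times G))$, where convolution is given by relative tensor product over $\sO_G$; boundedness for the latter follows from the bounded homological amplitude of $\otimes^!$ in $\Rep(G \times G)_{X^I_{\dR}}$ established in the proof of Proposition \ref{comfactmodtstr}. Applying Proposition \ref{monfactcatren} to the resulting algebra / bimodule diagrams in $(\FactCat^{\laxfact})^{\otimes}_{\aULA}$ then produces the desired structures on the renormalizations $\Sph^{\spec}_G$ and $\Sph^{\spec}_{G,P}$.

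Uniqueness is essentially automatic from the framework: the canonical renormalization morphism lives in $(\DGCat_{\tstr}^{\Delta^1})^{\otimes}_{\ren}$, and the equivalence of pseudo-tensor categories of Proposition \ref{operadequiv} shows that an $n$-ary operation out of the renormalized categories is determined by its composition with $\sC^{\ren} \to \sC$; in particular, any associative algebra (resp.\ bimodule) structure on $\Sph^{\spec}_G$ (resp.\ $\Sph^{\spec}_{G,P}$) compatible with the forgetful morphism is uniquely so. The step I expect to require the most care is the verification of integrability preservation in the second paragraph: although the picture is geometrically transparent, making it rigorous requires carefully unwinding the bimodule action through the equivalence (\ref{upsilonfactmodtrans}), tracking how chiral induction $\ind^{* \to \ch}$ from $\Rep(P)$-valued factorization modules interacts with the external-fusion description of factorization-module convolution, and using that the $M$-action on $\mathfrak{n}_P$ factors through the $P$-action to conclude stability of the integrable subcategory. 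Once this is in hand, the remainder of the argument is formal pseudo-tensor bookkeeping built on the renormalization framework developed in \S\ref{s:renorm}.
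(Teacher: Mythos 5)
Your overall strategy matches the paper's: transport the algebra/bimodule structures to the renormalizations via the pseudo-tensor morphism of Proposition \ref{monfactcatren}, and get uniqueness from the pseudo-tensor equivalence of Proposition \ref{operadequiv}. However there are two places where the verification as you wrote it does not go through.

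First, your argument that the multiplication on $\sO_G\modfact(\Rep(G \times G))$ is a binary morphism in $(\FactCat^{\laxfact})^{\otimes}_{\aULA}$ appeals to the ``bounded homological amplitude of $\otimes^!$ in $\Rep(G \times G)_{X^I_{\dR}}$'' after identifying convolution with ``relative tensor product over $\sO_G$.'' This is not the right chain of reasoning: the multiplication on $\sO_G\modfact(\Rep(G \times G))_{X^I_{\dR}}$ is not $\otimes^!$ on $\Rep(G\times G)_{X^I_{\dR}}$, and the ``relative tensor product'' identification is not how the paper controls it. What actually makes the argument work is that $\oblv^{\comfact} : \Rep(G)_{X^I_{\dR}} \to \sO_G\modfact(\Rep(G\times G))_{X^I_{\dR}}$ is a t-exact \emph{monoidal} functor inducing an equivalence on hearts (Proposition \ref{comfactequivheart}), so the multiplication of heart objects is computed by the $\otimes^!$ on $\Rep(G)_{X^I_{\dR}}$ (not $\Rep(G\times G)_{X^I_{\dR}}$), which is left t-exact in each variable; right completeness then bootstraps this to eventually coconnective objects. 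The unit object is also checked directly (it lies in cohomological degree $-\#I$ by Lemmas \ref{tstrlem1} and \ref{tstrlem2}).

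Second, for the bimodule case you correctly flag preservation of the integrable subcategory as the delicate step, but your proposed resolution (commutativity of the $G$-factor action with the $\mathfrak{n}_P$-action, ``$M$ normalizes $P$'') is a geometric heuristic that would take real work to make rigorous, precisely because the action is by factorization modules, not merely by $\Rep(G\times G)$- or $\Rep(M\times M)$-kernels. The paper avoids this entirely: since the morphism (\ref{psgen}) is t-exact and induces an equivalence on hearts (Lemma \ref{psequivheart}), and the t-structures are right complete, the action of eventually coconnective objects on heart objects of $\Upsilon(\mathfrak{n}_P,\sO_G)\mod_0^{\fact}(\Rep(G\times M))_{X^I_{\dR}}$ lands in the integrable subcategory and is bounded below, and this is enough; the same one-line homological argument simultaneously establishes both that the integrable subcategory is preserved and that the action takes place in $(\FactCat^{\laxfact})^{\otimes}_{\aULA}$. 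Replacing your paragraphs 2--3 with this single right-completeness-through-the-heart argument closes both gaps and is substantially shorter than the path you sketch.
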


\begin{proof}

For the first claim, by Proposition \ref{monfactcatren} it suffices to show that the unit and multiplication morphisms for the associative algebra \[ \sO_G\modfact(\Rep(G \times G)) \] are unary and binary morphisms, respectively, in the pseudo-tensor category $(\FactCat^{\laxfact})^{\otimes}_{\aULA}$. This means that for any finite set $I$, the monoidal unit object in \[ \sO_G\modfact(\Rep(G \times G))_{X^I_{\dR}} \] is eventually coconnective, and that eventually coconnective objects are stable under the monoidal operation. By Lemmas \ref{tstrlem1} and \ref{tstrlem2}, the unit object $\Vac_{\sO_G,X^I_{\dR}}$ is concentrated in cohomological degree $-\# I$, and in particular is eventually coconnective. As for the multiplication, the t-exact monoidal functor \[ \oblv^{\com \to \fact} : \Rep(G)_{X^I_{\dR}} \longrightarrow \sO_G\modfact(\Rep(G \times G))_{X^I_{\dR}} \] induces an equivalence on the hearts by Proposition \ref{comfactequivheart}, and monoidal structure on $\Rep(G)_{X^I_{\dR}}$ is left t-exact in each variable separately. The claim now follows by right completeness of the t-structure on \[ \sO_G\modfact(\Rep(G \times G))_{X^I_{\dR}}. \]

Recall that the morphism (\ref{psgen}) in $\FactCat^{\laxfact}$ is t-exact and induces an equivalence on the hearts over each $X^I_{\dR}$. By right completeness of the t-structures, it follows that the action of the pair \[ (\sO_G\modfact(\Rep(G \times G)),\sO_M\modfact(\Rep(M \times M))) \] preserves the full subcategory \[ \Upsilon(\mathfrak{n}_P,\sO_G)\mod_0^{\fact}(\Rep(G \times M)) \subset \Upsilon(\mathfrak{n}_P,\sO_G)\modfact(\Rep(G \times M)), \] and that the action of the pair on this subcategory takes place in $(\FactCat^{\laxfact})^{\otimes}_{\aULA}$. Then we are done by Proposition \ref{monfactcatren}.

\end{proof}

\section{Construction of the derived Satake transform}\label{s:functor}

In this section, we construct the factorization categories that appear on the geometric (a.k.a. automorphic) side. We then construct functors to the spectral side, and formulate our main results as Theorems \ref{mainthm} and \ref{mainthmps}.

\subsection{Canonical twists} Let $\rho$ denote the half-sum of the simple coroots of $G$. Choose a square root $(\Omega^1_X)^{\otimes \frac12}$ of the canonical line bundle $\Omega^1_X$ on $X$, and put \[ \rho(\Omega^1_X) := 2\rho((\Omega^1_X)^{\otimes \frac12}), \] which is a well-defined $T$-bundle on $X$ because $2\rho \in \Lambda$ is an integral coweight.

In what follows, we replace $G$ by its \emph{canonical twist} by $\rho(\Omega^1_X)$. By definition, this is the group scheme of automorphisms of the $G$-bundle on $X$ defined by \[ G \overset{T}{\times} \rho(\Omega^1_X). \] Similarly, we replace $P$ and $P^-$ by their canonical twists, defined in the same way (the corresponding twist of $T$ itself is trivial because $T$ is commutative). These are subgroup schemes of the canonical twist of $G$. We remark that by construction, all of these twists are pure inner forms (cf. \S 2.15 of \cite{R3} of the corresponding constant group schemes $G$, $P$, and $P^-$. In particular, replacing each of these groups with its canonical twist does not affect the corresponding space of principal bundles.

The canonical twist of $N$ is defined as \[ N \overset{T}{\times} \rho(\Omega^1_X) \] using the adjoint action of $T$ on $N$, and similarly for $N^-$. These are normal subgroup schemes of the canonical twists of $B$ and $B^-$ respectively, with the quotient in either case being the constant group scheme $T$. The canonical twists of $N$ and $N^-$ are \emph{not} pure inner forms of the constant group schemes, and their spaces of principal bundles generally differ. One can similarly define the canonical twist of any subgroup of $N$ stable under the adjoint action of $T$, such as $N_P$ or $N_M := N \cap M$.

These twists will be suppressed below in order to simplify the notation. Their presence can be safely ignored in most situations, with the notable exception of defining the Whittaker condition.

\subsection{The spherical Hecke stack} Write $\sH_G := \mathfrak{L}^+G \backslash \mathfrak{L}G/\mathfrak{L}^+G$ for the spherical Hecke stack, a groupoid in corr-unital factorization spaces (i.e., factorization spaces with a unit correspondence rather than a single morphism, cf. \cite{charles-lin} \S 10). The groupoid structure on $\sH_G$ induces a monoidal structure on \[ \DD(\sH_{G,X^I}) := \DD((\mathfrak{L}G)_{X^I})^{\mathfrak{L}^+G \times \mathfrak{L}^+G} \] for each finite set $I$, and these assemble into an object $\DD(\sH_G)$ of $\AssocAlg(\FactCat)$ satisfying \[ \DD(\sH_G)_{X^I_{\dR}} = \DD(\sH_{G,X^I}). \]

Inversion on the group $\fL G$ induces an anti-automorphism of $\DD(\sH)$, which allows us to exchange left and right $\DD(\sH)$-module structures. In what follows, we will make use of this without comment.

For any finite set $I$, the Beilinson-Drinfeld affine Grassmannian \[ \Gr_{G,X^I} = (\mathfrak{L}G/\mathfrak{L}^+G)_{X^I} \] is an ind-scheme locally of finite type, and in particular $\DD(\Gr_{G,X^I})$ is equipped with a natural t-structure. We give $\DD(\sH_{G,X^I})$ the unique t-structure such that the forgetful functor \[ \DD(\sH_{G,X^I}) = \DD(\Gr_{G,X^I})^{\mathfrak{L}^+G} \longrightarrow \DD(\Gr_{G,X^I}) \] is t-exact.

\subsection{Definition of the spherical Hecke category} The na\"{i}ve Satake transform, as constructed in \S 6 of \cite{R2}, is a morphism \[ \Sat^{\nv}_G : \Rep(\check{G}) \longrightarrow \DD(\sH_G) \] in $\AssocAlg(\FactCat)$. We recall from \emph{loc. cit.} that for any finite set $I$, the corresponding functor \[ \Rep(\check{G})_{X^I_{\dR}} \longrightarrow \DD(\sH_G)_{X^I_{\dR}} \] is t-exact and induces an equivalence \[ \Rep(\check{G})_{X^I_{\dR}}^{\heartsuit} \tilde{\longrightarrow} \DD(\sH_G)_{X^I_{\dR}}^{\heartsuit}. \]

\begin{lemma}

\label{naivesatula}

For any finite set $I$, the functor \[ \Sat^{\nv}_G : \Rep(\check{G})_{X^I_{\dR}} \longrightarrow \DD(\sH_G)_{X^I_{\dR}} \] preserves almost ULA objects.

\end{lemma}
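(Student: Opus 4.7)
My plan is to verify directly that $\Sat^{\nv}_G$ sends almost ULA objects in $\Rep(\check{G})_{X^I_{\dR}}$ to almost ULA objects in $\DD(\sH_G)_{X^I_{\dR}}$. The strategy has two parts: first I will reduce the claim to verifying it on the ULA generators $\Loc_{X^I}(V)$ for $V$ compact in $\Rep(\check{G})^{\otimes I}$, and then I will verify it on those generators using the factorization structure of $\Sat^{\nv}_G$.

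For the reduction, consider the tensored functor
\[
\tilde F := \id_{\IndCoh(X^I)} \otimes \Sat^{\nv}_G : \IndCoh(X^I) \underset{\DD(X^I)}{\otimes} \Rep(\check G)_{X^I_{\dR}} \longrightarrow \IndCoh(X^I) \underset{\DD(X^I)}{\otimes} \DD(\sH_G)_{X^I_{\dR}}.
\]
This is t-exact (as a tensor of t-exact functors over $\DD(X^I)$) and $\IndCoh(X^I)$-linear. By Proposition \ref{ulagenprop}(i), the source category is compactly generated by objects of the form $\oblv(\Loc_{X^I}(V)) \otimes \sF$ for $V$ compact in $\Rep(\check G)^{\otimes I}$ and $\sF$ perfect on $X^I$. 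If we knew each $\Sat^{\nv}_G(\Loc_{X^I}(V))$ to be almost ULA, then $\tilde F$ would send $\tau^{\geq n}$ of each such generator to a compact object in the $\geq n$ subcategory (using that tensoring with a perfect complex preserves almost compactness, together with t-exactness of $\tilde F$). Hence $\tilde F$ would preserve almost compact objects, and therefore $\Sat^{\nv}_G$ would preserve almost ULA objects.

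For the main step, I use the factorization structure on $\Sat^{\nv}_G$. For $V = V_1 \boxtimes \cdots \boxtimes V_n$ with each $V_i$ compact in $\Rep(\check{G})$, the object $\Sat^{\nv}_G(\Loc_{X^I}(V))$ coincides, over the disjoint locus in $X^I$, with the external product of the individual Satake sheaves $\Sat^{\nv}_G(V_i) \in \DD(\sH_G)_{X_{\dR}}$, and is extended to all of $X^I$ via the unital structure of $\Sat^{\nv}_G$ as a factorization morphism. Each $\Sat^{\nv}_G(V_i)$ is (up to shift) the classical geometric Satake perverse sheaf associated to $V_i$: a coherent $\fL^+G$-equivariant D-module on $\Gr_{G,X}$ supported on a finite union of Schubert cells, and in particular ULA over $X$. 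A standard Cousin filtration argument along the stratification of $X^I$ by diagonals, together with preservation of (almost) ULA-ness under external fusion on the smooth curve $X$, then shows that $\Sat^{\nv}_G(\Loc_{X^I}(V))$ is almost ULA over $X^I$.

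The main obstacle is making the last claim precise: one must verify that the factorization-unital extension from the disjoint locus to all of $X^I$ preserves almost ULA-ness in the presence of $\fL^+G$-equivariance. Concretely, this requires controlling how external fusion of $\fL^+G$-equivariant coherent D-modules behaves along the deeper diagonal strata of $X^I$, and checking that the resulting sheaves on $\sH_{G,X^I}$ retain the finite-type support conditions needed for almost compactness after applying $\oblv$ to $\IndCoh(X^I) \otimes_{\DD(X^I)} \DD(\sH_G)_{X^I_{\dR}}$. This compatibility is essentially built into the construction of $\Sat^{\nv}_G$ in \cite{R2}, and is the technical heart of the verification.
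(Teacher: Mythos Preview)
Your reduction step is fine, but the ``main step'' is where you lose ground: you correctly identify that extending almost ULA-ness from the disjoint locus across the diagonals via factorization is the technical heart, and then you don't actually carry it out. Appealing to ``essentially built into the construction in \cite{R2}'' is not a proof; the Cousin filtration argument you sketch would require knowing that the nearby-cycles-type gluing of Satake sheaves along diagonals preserves almost compactness after forgetting equivariance, and this is not obvious.

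The paper's proof avoids this difficulty entirely by exploiting that $\Sat^{\nv}_G$ is \emph{monoidal} and that $\Rep(\check G)_{X^I}$ is \emph{rigid}. In a rigid monoidal category compactly generated by its unit, compactness coincides with dualizability; since $\Sat^{\nv}_G$ is monoidal it preserves dualizable objects, so the entire claim reduces to showing that the single object $\delta_{\fL^+G,X^I}$ is almost compact in $\DD(\sH_G)_{X^I}$. That is checked directly: apply the t-exact, conservative forgetful functor $\oblv_{\fL^+G}$ to $\DD(\Gr_G)_{X^I}$ (which has a continuous right adjoint), observe that the image of $\delta_{\fL^+G,X^I}$ is the pushforward of $\omega_{X^I}$ along the ind-proper unit section and hence compact, and conclude by Lemma~\ref{aclem}. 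This bypasses any stratification or fusion analysis on $X^I$.

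So your approach is not wrong in principle, but you have traded a one-line reduction (monoidality plus rigidity) for a genuinely hard verification that you then leave open. The paper's argument is both shorter and complete.
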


\begin{proof}

It suffices to show that the $\IndCoh(X^I)$-linear functor \[ \Rep(\check{G})_{X^I} \longrightarrow \DD(\sH_G)_{X^I} \] preserves almost compact objects. By Proposition \ref{repulagen}, almost compact objects in $\Rep(\check{G})_{X^I}$ are compact. Moreover, the symmetric monoidal structure is rigid, which means that compactness in $\Rep(\check{G})_{X^I}$ is equivalent to dualizability. Since $\Sat^{\nv}_G$ is monoidal, it preserves dualizable objects, so it remains only to show that the unit object $\delta_{\mathfrak{L}^+ G,X^I}$ is almost compact in $\DD(\sH_G)_{X^I}$. For this, recall that the forgetful functor \[ \oblv_{\mathfrak{L}^+G} : \DD(\sH_G)_{X^I} \longrightarrow \IndCoh(X^I) \otimes_{\DD(X^I)} \DD(\Gr_{G,X^I}) =: \DD(\Gr_G)_{X^I} \] is t-exact, conservative, and admits an $\IndCoh(X^I)$-linear right adjoint, namely $*$-averaging with respect to $\mathfrak{L}^+G$. The image of $\delta_{\mathfrak{L}^+ G,X^I}$ under this functor is the direct image of $\omega_{X^I}$ along the unit section \[ X^I \longrightarrow X^I \times_{X^I_{\dR}} (\Gr_{G,X^I})_{\dR}, \] which is ind-proper. It follows that $\oblv_{\mathfrak{L}^+G}(\delta_{\mathfrak{L}^+ G,X^I})$ is compact in $\DD(\Gr_G)_{X^I}$, and hence $\delta_{\mathfrak{L}^+ G,X^I}$ is almost compact in $\DD(\sH_G)_{X^I}$ by Lemma \ref{aclem}.

\end{proof}

\begin{proposition}

\label{sphren}

The t-structure on $\DD(\sH_G)_{X^I_{\dR}}$ satisfies the hypotheses of Proposition \ref{factcatren}. Moreover, the object \[ \Sph_G := \DD(\sH_G)^{\ren} \] belongs to $\FactCat$ and admits a unique associative algebra structure compatible with \[ \Sph_G \longrightarrow \DD(\sH_G). \]

\end{proposition}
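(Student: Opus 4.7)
The plan is to verify the hypotheses of Propositions \ref{sheafranren} and \ref{factcatren} for $\DD(\sH_G)$ by transporting properties from $\Rep(\check{G})$ along the naive Satake morphism $\Sat^{\nv}_G$, and then to invoke Proposition \ref{monfactcatren} to lift the associative algebra structure to the renormalization.

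For hypothesis (i) of Proposition \ref{sheafranren}, I would establish almost ULA generation of $\DD(\sH_G)_{X^I_{\dR}}$ as follows. Proposition \ref{repulagen} provides almost ULA generators in $\Rep(\check{G})_{X^I_{\dR}}$, and Lemma \ref{naivesatula} says that $\Sat^{\nv}_G$ carries them to almost ULA objects. Moreover, since $\Sat^{\nv}_G$ is t-exact and induces an equivalence $\Rep(\check{G})_{X^I_{\dR}}^{\heartsuit} \tilde{\to} \DD(\sH_G)_{X^I_{\dR}}^{\heartsuit}$, the essential image of the generators on the $\Rep$-side generates $\DD(\sH_G)_{X^I_{\dR}}^{\geq 0}$ under colimits (using right completeness of the t-structure, which is inherited from that of $\DD(\Gr_{G,X^I})$ via the t-exact, conservative forgetful functor $\oblv_{\fL^+G}$). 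Coherence then follows from Lemma \ref{cohcrit}: compact objects in the heart lift through the heart equivalence to compact objects in $\Rep(\check{G})_{X^I_{\dR}}^{\heartsuit}$, hence are almost ULA there, hence map to almost ULA (in particular almost compact) objects in $\DD(\sH_G)_{X^I_{\dR}}$.

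For hypotheses (ii)-(iii) of Proposition \ref{sheafranren} and the disjoint-union hypothesis of Proposition \ref{factcatren}: the structure functors of $\DD(\sH_G)$ along a surjection $I \to J$ have left adjoints given by de Rham pushforward along the diagonal embedding $\sH_{G,X^J} \to \sH_{G,X^I}$, which is a closed embedding of schemes smooth relatively over the base, so these pushforwards are t-exact. The structure functors along injections $I \to J$ are obtained from the unital extension on $\fL G$, factoring as $!$-pullback along a smooth morphism (up to a cohomological shift of finite amplitude) at the new coordinates; these have bounded homological amplitude. The disjoint-union condition reduces to a similar cohomological bound on $j^! \boxtimes$-products of bounded-below objects, where $j$ is the open embedding of the disjoint locus; this follows from a standard analysis using the Cousin-type stratification and the fact that the disjoint-factorization isomorphism involves only $!$-restriction to an open subset.

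With these hypotheses verified, Proposition \ref{factcatren} yields an object $\Sph_G = \DD(\sH_G)^{\ren} \in \FactCat^{\laxfact}$. To upgrade this to an object of $\FactCat$ (strict unitality) and to endow it with an associative algebra structure, I would apply Proposition \ref{monfactcatren} to the unit and multiplication morphisms of the algebra $\DD(\sH_G) \in \AssocAlg(\FactCat)$. The required verifications in $(\FactCat^{\laxfact})^{\otimes}_{\aULA}$ amount to checking that the unit object $\delta_{\fL^+G,X^I_{\dR}}$ is eventually coconnective (it is concentrated in degree $-\#I$ by the calculation in Lemma \ref{naivesatula}) and that convolution of eventually coconnective objects is eventually coconnective; the latter follows from Proposition \ref{comfactequivheart}-type reasoning via the heart equivalence with $\Rep(\check{G})_{X^I_{\dR}}^{\heartsuit}$ together with right completeness of the t-structure. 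The main obstacle I anticipate is the careful verification of the disjoint-union t-amplitude condition and the boundedness of the structure functors along injections, since these involve the interplay between the $\fL^+G$-equivariant structure and the non-properness of the factorizable affine Grassmannian; the reduction through $\oblv_{\fL^+G}$ to $\DD(\Gr_G)$ and ultimately to $\DD(X^I)$ should tame these issues, but bookkeeping the relevant shifts and the passage between $\IndCoh$ and $\DD$ renormalization requires care.
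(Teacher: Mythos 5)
Your outline is essentially the paper's: transport the required t-structure properties from $\Rep(\check{G})$ along $\Sat^{\nv}_G$ and from $\DD(\Gr_{G,X^I})$ along the forgetful functor, then apply the renormalization machinery. However, two steps contain genuine gaps.

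First, the coherence verification. You assert that compact objects in $\Rep(\check{G})_{X^I_{\dR}}^{\heartsuit}$ are almost ULA in $\Rep(\check{G})_{X^I_{\dR}}$. This is false: already in $\DD(X)$, the object $\DD_X$ is compact in the heart but not almost ULA, since its underlying $\sO$-module is not almost compact in $\IndCoh(X)$, and the same phenomenon persists in $\Rep(\check{G})_{X^I_{\dR}}$. What is true is that compact objects in the heart are almost compact (coherence of $\Rep(\check{G})_{X^I_{\dR}}$), but that is not enough to invoke Lemma \ref{naivesatula}, which only provides preservation of almost ULA objects, not of almost compact ones. The paper uses a different and simpler route to coherence: apply Lemma \ref{aclem} to $\oblv_{\fL^+G} : \DD(\sH_G)_{X^I_{\dR}} \to \DD(\Gr_{G,X^I})$ (t-exact, conservative, with continuous right adjoint $\Av_*^{\fL^+G}$), noting that $\DD(\Gr_{G,X^I})$ is coherent because $\Gr_{G,X^I}$ is an ind-scheme locally of finite type. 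You already use this forgetful functor for right completeness, so the paper's argument costs nothing extra; you should route coherence through it as well.

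Second, and more seriously, you give no valid argument for why $\Sph_G$ factorizes \emph{strictly}, i.e.\ belongs to $\FactCat$ rather than just $\FactCat^{\laxfact}$. You attribute this to Proposition \ref{monfactcatren}, but that proposition is an endomorphism of the pseudo-tensor category $(\FactCat^{\laxfact})^{\otimes}_{\aULA}$: its output is \emph{a priori} only a lax factorization category, and it produces the multiplicative structure, not strictness. (You also write ``(strict unitality)'' in parentheses, but the distinction $\FactCat$ vs.\ $\FactCat^{\laxfact}$ concerns strictness of the \emph{factorization} isomorphisms, not unitality of morphisms, which is the $\FactCat$ vs.\ $\FactCat_{\laxuntl}$ distinction.) The paper supplies a separate argument: by Proposition \ref{repulagen} and Lemma \ref{naivesatula}, renormalizing $\Sat^{\nv}_G$ yields $\Rep(\check{G}) \to \Sph_G$ in $\AssocAlg(\FactCat^{\laxfact})$ that admits a right adjoint in $\FactCat^{\laxfact}_{\laxuntl}$; since $\Sat^{\nv}_G$ is an equivalence on hearts, this right adjoint is conservative (hence monadic) over each $X^I_{\dR}$; and since $\Rep(\check{G})$ factorizes strictly and a monadic adjunction transports strict factorization, so does $\Sph_G$. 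This argument is indispensable and must be added. The remainder of your outline (almost ULA generation, transport of conditions (ii)--(iii) of Proposition \ref{sheafranren}, and the verification in $(\FactCat^{\laxfact})^{\otimes}_{\aULA}$ for the algebra structure) matches the paper in substance.
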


\begin{proof}

We first show that that the t-structure on $\DD(\sH_G)_{X^I_{\dR}}$ is coherent, or equivalently that any object compact in $\DD(\sH_G)_{X^I_{\dR}}^{\geq 0}$ is almost compact in $\DD(\sH_G)_{X^I_{\dR}}$. The t-structure on $\DD(\Gr_{G,X^I})$ is coherent because $\Gr_{G,X^I}$ is an ind-scheme locally of finite type. Thus the claim follows from Lemma \ref{aclem} applied to the functor \[ \oblv_{\mathfrak{L}^+G} : \DD(\sH_G)_{X^I_{\dR}} = \DD(\sH_{G,X^I}) \longrightarrow \DD(\Gr_{G,X^I}), \] which is t-exact, conservative, and admits a continuous right adjoint.

The functor \[ \Sat^{\nv}_G : \Rep(\check{G})_{X^I_{\dR}} \longrightarrow \DD(\sH_G)_{X^I_{\dR}} \] induces an equivalence on the hearts and preserves almost ULA objects by Lemma \ref{naivesatula}, and $\Rep(\check{G})_{X^I_{\dR}}$ is ULA generated by Proposition \ref{repulagen}. It follows readily that $\DD(\sH_G)_{X^I_{\dR}}$ is almost ULA generated. Conditions (ii-iii) of Proposition \ref{sheafranren}, as well as the hypothesis of Proposition \ref{factcatren}, can be deduced from the corresponding properties of $\DD(\Gr_{G,X^I})$.

By Proposition \ref{repulagen}, renormalizing $\Sat^{\nv}_G$ yields a morphism \[ \Rep(\check{G}) \longrightarrow \Sph_G \] in $\AssocAlg(\FactCat^{\laxfact})$. Moreover, by Lemma \ref{naivesatula}, this morphism admits a right adjoint in $\FactCat^{\laxfact}_{\laxuntl}$. Since $\Sat^{\nv}_G$ is an equivalence on the hearts, this right adjoint is conservative and hence monadic over each $X^I_{\dR}$. It follows that $\Sph_G$ belongs to $\FactCat$, i.e., factorizes strictly, since $\Rep(\check{G})$ has this property.

For the last claim, by Proposition \ref{monfactcatren} it suffices to show that $\DD(\sH_G)$ is an associative algebra in the pseudo-tensor category $(\FactCat^{\laxfact})^{\otimes}_{\aULA}$. This follows from right completeness and the corresponding property of $\Rep(\check{G})$, once more using the fact that the monoidal functor \[ \Sat^{\nv}_G : \Rep(\check{G})_{X^I_{\dR}} \longrightarrow \Sph_{G,X^I_{\dR}} \] is t-exact and induces an equivalence on the hearts for any finite set $I$.

\end{proof}

\begin{rem}

The above result may be compared with \cite{nocera} Corollary 1.2
in the Betti setting.

\end{rem}

\subsection{Whittaker (co)invariants} We have an isomorphism \[ N^-/[N^-,N^-] \cong \prod_{i \in \sI_G} \bG_a^{\Omega^1_X} \] of group schemes over $X$, where \[ \bG_a^{\Omega^1_X} := \bG_a \overset{\bG_m}{\times} \Omega^1_X \] is the canonical twist of $\bG_a$. Summing over $\sI_G$, we obtain a homomorphism $N^- \to \bG_a^{\Omega^1_X}$ and hence a homomorphism of factorizable group ind-schemes \[ \mathfrak{L}N^- \longrightarrow \mathfrak{L}(\bG_a^{\Omega^1_X}). \] Finally, we compose with the canonical residue homomorphism \[ \mathfrak{L}(\bG_a^{\Omega^1_X}) \longrightarrow \bG_a \] to obtain \[ \psi : \mathfrak{L}N^- \longrightarrow \bG_a. \]

In particular, we obtain a multiplicative D-module (a.k.a. character D-module of rank 1) $\psi^!\exp[1]$ on $\mathfrak{L}N^-$. For any category $\sC$ acted on by $\DD((\mathfrak{L}G)_{X^I})$, we can consider the associated Whittaker invariants \[ \sC^{\mathfrak{L}N^-,\psi} := \sC^{\mathfrak{L}N^-,\psi^!\exp[1]}, \] and similarly for coinvariants (we suppress $X^I$ from the notation here for simplicity). Theorem 2.2.1 of \cite{whit} asserts that there is a canonical equivalence \[ \sC_{\mathfrak{L}N^-,\psi} \tilde{\longrightarrow} \sC^{\mathfrak{L}N^-,\psi}. \]

\subsection{The spherical Whittaker category} As shown in \cite{R3} Corollary 2.31.2, the $\DD(X^I)$-module categories \[ \DD(\Gr_{G,X^I})^{\mathfrak{L}N^-,\psi} \] assemble into an object \[ \DD(\Gr_G)^{\mathfrak{L}N^-,\psi} \] of $\FactCat$, with unit given by the so-called Whittaker vacuum object. Convolution defines a right action of $\DD(\sH_G)$ on this category, and in particular, we obtain a morphism \[ \DD(\sH_G) \longrightarrow \DD(\Gr_G)^{\mathfrak{L}N^-,\psi} \] in $\FactCat$ by acting on the unit.

\begin{theorem}

\label{csformula}

The composite functor \[ \Rep(\check{G}) \xrightarrow{\Sat^{\nv}_G} \DD(\sH_G) \longrightarrow \DD(\Gr_G)^{\mathfrak{L}N^-,\psi} \] is an isomorphism in $\FactCat$.

\end{theorem}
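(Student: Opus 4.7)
The plan is to prove this by constructing an adjoint pair and then reducing to a checkable equivalence on hearts, exploiting the pointwise Casselman--Shalika theorem of \cite{fgv} together with the factorization structure on both sides.

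First, I would produce a right adjoint in the factorization setting. The composite morphism is the image of $\unit_{\Rep(\check{G})}$ under the action map coming from $\Rep(\check{G}) \xrightarrow{\Sat_G^{\nv}} \DD(\sH_G)$ and the right $\DD(\sH_G)$-action on $\DD(\Gr_G)^{\fL N^-,\psi}$, so it is automatically a morphism in $\FactCat$ that sends the monoidal unit to the Whittaker vacuum. Since $\Rep(\check{G})$ is rigid symmetric monoidal and compactly generated, and the Whittaker vacuum is compact (being the factorization unit of a category obtained via Whittaker averaging from a geometrically defined category), Lemma \ref{rigidrightadj} furnishes a right adjoint $R$ in $\FactCat_{\laxuntl}$ which coincides fiberwise with the na\"ive right adjoint over each $X^I$.

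Second, I would equip both sides with natural t-structures and verify t-exactness of the functor over each $X^I_{\dR}$: the t-structure on $\Rep(\check{G})_{X^I_{\dR}}$ is that of Proposition \ref{repulagen}, while on $\DD(\Gr_G)^{\fL N^-,\psi}_{X^I_{\dR}}$ one uses the t-structure inherited from Whittaker averaging on the perverse t-structure of $\DD(\Gr_{G,X^I})$, following \cite{R3}. Standard calculations (tracking an irreducible representation of highest weight $\lambda$ to the Whittaker IC-sheaf on the corresponding semi-infinite orbit) show the composite is t-exact and sends the connective generators to connective generators. Since both categories are right complete and almost ULA generated --- for $\Rep(\check{G})$ by Proposition \ref{repulagen}, for the Whittaker category by a parallel argument using compactness of the Whittaker vacuum and ULA-ness of its convolutions with $\Sat_G^{\nv}(V)$ --- it suffices to prove that the functor induces an equivalence of abelian categories on hearts over every $X^I_{\dR}$.

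Third, for the equivalence on hearts, I would proceed by stratifying $X^I$ by diagonals. Over the open stratum $(X^I)_{\disj}$, the factorization structure on both sides gives a strict external fusion decomposition: the left side restricts to (the appropriate symmetric power of) $\Rep(\check{G})^{\otimes I}|_{(X^I)_{\disj}}$, and the right side to the analogous external product of pointwise Whittaker categories. The classical Casselman--Shalika theorem of \cite{fgv} identifies the hearts at a point; taking external tensor products identifies them over the disjoint locus. To propagate across diagonals, I would use the factorization abelian category formalism of \cite{R3}: on the hearts, the structure over $X^I$ is reconstructed from Cousin-type data along the diagonal stratification together with factorization gluing, and t-exactness plus strict factorization compatibility of the functor force the induced map on gluing data to be an isomorphism.

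The main obstacle will be the last step --- propagating the equivalence of hearts from $(X^I)_{\disj}$ across the diagonals. The essential difficulty is controlling the Whittaker heart over the deep diagonals, where one cannot simply factorize, and must instead match the Cousin filtration on the Whittaker side (indexed by semi-infinite orbits in $\Gr_{G,X^I}$ over each stratum) with the corresponding factorization stratification on $\Rep(\check{G})_{X^I_{\dR}}$. This amounts to an $X^I$-parametric version of the Casselman--Shalika formula, and will rely crucially on the identification of the Whittaker-averaged perverse sheaf on a closed stratum with the nearby-cycles specialization of its counterpart on $(X^I)_{\disj}$.
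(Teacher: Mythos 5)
The paper does not prove this theorem itself: the entire proof is the single citation ``This is Theorem 6.36.1 of \cite{R2}.'' So there is no in-paper argument to compare against; you are effectively proposing a new proof of a result the authors treat as an external input.

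That said, your outline has gaps that would need to be filled before it could be called a proof. The reduction in Step 2 to ``it suffices to prove an equivalence of abelian categories on the hearts'' is not automatic: a t-exact, conservative functor between two DG categories that restricts to an equivalence on hearts is generally not an equivalence unless you also control the higher Ext groups or invoke an appropriate completeness/generation statement. You invoke almost ULA generation, which is the right ingredient, but you don't actually run the argument; and establishing almost ULA generation of the Whittaker side over $X^I$ (which you describe as ``a parallel argument'') is itself nontrivial and should not be waved away. More seriously, Step 3 --- propagating the equivalence of hearts across the diagonals via a Cousin filtration and nearby cycles of Whittaker perverse sheaves --- is the entire content of the factorizable Casselman--Shalika formula, and you explicitly flag it as an unresolved obstacle rather than resolving it. As written this is a reduction of the theorem to a statement that is essentially as hard as the theorem.

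A cleaner route, closer in spirit to the machinery this paper itself sets up, would be: (a) show the composite preserves ULA objects over each $X^I$; (b) invoke the criterion of Proposition B.8.1 of \cite{R2} (which the authors use in \S\ref{pointred} for exactly this purpose) to reduce checking the equivalence to checking it stratum by stratum over $X^I_{\dR}$; (c) use strict factorization to reduce the stratum-wise statement to the case of a single point of $X$; and (d) at a single point, quote the geometric Casselman--Shalika theorem of \cite{fgv}. This bypasses the Cousin/nearby-cycles analysis entirely and is almost certainly how the cited Theorem 6.36.1 proceeds. Your first two steps (factorization structure of the morphism, right adjoint via rigidity, t-exactness) are sound preparatory observations, but they feed most naturally into the ULA + stratum reduction rather than into the heart-matching strategy you sketch.
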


\begin{proof}

This is Theorem 6.36.1 of \cite{R2}.

\end{proof}

In particular, the equivalence in the theorem induces a right action of $\DD(\sH_G)$ on \[ \Rep(\check{G}) \cong \DD(\Gr_G)^{\mathfrak{L}N^-,\psi} \] in $\FactCat$. By Proposition \ref{ulagenprop}, this action corresponds to a morphism
\begin{equation}
\label{sphtobimod}
\DD(\sH_G) \longrightarrow \underline{\End}_{\FactCat_{\laxuntl}}(\Rep(\check{G})) \cong \Rep(\check{G} \times \check{G})
\end{equation}
in $\AssocAlg(\FactCat_{\laxuntl})$.

We will need the following observation.

\begin{lemma}

For any finite set $I$, the functor \[ \DD(\sH_G)_{X^I_{\dR}} \xrightarrow{(\ref{sphtobimod})} \Rep(\check{G} \times \check{G})_{X^I_{\dR}} \] is t-exact.

\label{sphtobimodexact}

\end{lemma}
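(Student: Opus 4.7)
The plan is to reduce t-exactness of the functor $F := (\ref{sphtobimod})_{X^I_{\dR}}$ to t-exactness of the composite $F \circ \Sat^{\nv}_G$, and then to identify the latter as a manifestly t-exact functor using the self-duality of $\Rep(\check{G})$ and the Casselman-Shalika equivalence.

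First I will exploit that, by the proof of Proposition \ref{sphren}, $\Sat^{\nv}_G \colon \Rep(\check{G})_{X^I_{\dR}} \to \DD(\sH_G)_{X^I_{\dR}}$ is t-exact and restricts to an equivalence on hearts, and both t-structures are right-complete and compatible with filtered colimits. This implies that the connective part $\DD(\sH_G)_{X^I_{\dR}}^{\leq 0}$ is the colimit closure of $\Sat^{\nv}_G(\Rep(\check{G})_{X^I_{\dR}}^{\leq 0})$, and likewise every object of $\DD(\sH_G)_{X^I_{\dR}}^{\heartsuit}$ lies in the image of $\Sat^{\nv}_G$.

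Next I will identify $F \circ \Sat^{\nv}_G$. By construction, $F$ encodes (via the self-duality of $\Rep(\check{G})$ in $\FactCat_{\laxuntl}$ from Proposition \ref{ulagenprop}) the right $\DD(\sH_G)$-action on $\DD(\Gr_G)^{\fL N^-,\psi}$. Under the equivalence $\DD(\Gr_G)^{\fL N^-,\psi} \simeq \Rep(\check{G})$ of Theorem \ref{csformula}, this action, when restricted along the monoidal morphism $\Sat^{\nv}_G$, becomes the self-action of $\Rep(\check{G})$ by its own tensor product (this is exactly the content of the Casselman-Shalika theorem being an equivalence of $\DD(\sH_G)$-modules, where $\Sat^{\nv}_G$ is the monoidal equivalence). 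Hence $F \circ \Sat^{\nv}_G$ is the $\DD(X^I)$-linear morphism $\Rep(\check{G})_{X^I_{\dR}} \to \Rep(\check{G} \times \check{G})_{X^I_{\dR}}$ arising (via self-duality) from the regular bimodule structure on $\Rep(\check{G})$. Concretely, this is the factorization version of the pushforward $\Delta_\ast$ along the diagonal $B\check{G} \to B\check{G} \times B\check{G}$, which is t-exact because $\Delta$ is affine. The t-exactness at the $X^I_{\dR}$-level follows from Proposition \ref{repulagen}, together with Lemma \ref{rigidrightadj} applied to the multiplication $\Rep(\check{G}) \otimes \Rep(\check{G}) \to \Rep(\check{G})$.

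To conclude, right t-exactness of $F$ follows at once: $F$ is continuous, $F \circ \Sat^{\nv}_G$ is right t-exact by the previous step, and $\DD(\sH_G)_{X^I_{\dR}}^{\leq 0}$ is generated by $\Sat^{\nv}_G(\Rep(\check{G})_{X^I_{\dR}}^{\leq 0})$. For left t-exactness, note that every object of $\DD(\sH_G)_{X^I_{\dR}}^{\heartsuit}$ is $\Sat^{\nv}_G(V)$ for some $V$ in the heart, and $F(\Sat^{\nv}_G(V)) = (F \circ \Sat^{\nv}_G)(V) \in \Rep(\check{G} \times \check{G})_{X^I_{\dR}}^{\heartsuit}$ by the previous step; thus $F$ preserves the heart. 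Combined with right t-exactness, this upgrades to left t-exactness on each $\DD(\sH_G)_{X^I_{\dR}}^{[a,b]}$ by the usual filtration-by-cohomologies argument, and then extends to all coconnective objects using that $F$ admits a right adjoint $F^R$ (by continuity and presentability) and that $F$ right t-exact on the heart-preserving $F$ forces $F^R$ to be right t-exact, which is equivalent to $F$ being left t-exact.

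The main obstacle is the middle step, the explicit identification of $F \circ \Sat^{\nv}_G$ with (the factorization version of) $\Delta_\ast$. This requires carefully tracing the self-duality of $\Rep(\check{G})$ through the Casselman-Shalika equivalence and the monoidality of $\Sat^{\nv}_G$, i.e., checking that the kernel in $\Rep(\check{G} \times \check{G})_{X^I_{\dR}}$ attached to the endofunctor ``tensor with $V$'' of $\Rep(\check{G})_{X^I_{\dR}}$ is indeed the t-exact $\Delta_\ast V$, rather than some cohomologically shifted version involving the factorization algebra $\sO_{\check{G},X^I_{\dR}}$. Once this normalization is pinned down, everything else is formal.
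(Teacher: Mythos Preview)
Your approach is essentially the same as the paper's: reduce to the composite $F \circ \Sat^{\nv}_G$, identify it with $\coind_{\check{G}}^{\check{G} \times \check{G}}$ (your $\Delta_*$), verify its t-exactness, and then deduce right and left t-exactness of $F$ from the heart equivalence. Two small points deserve tightening.

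First, your justification for the t-exactness of $\Delta_*$ over $X^I_{\dR}$ via Proposition~\ref{repulagen} and Lemma~\ref{rigidrightadj} is not quite on target: those results give existence of the factorizable right adjoint and control of the t-structure on $\Rep(H)_{X^I_{\dR}}$, but not directly the t-exactness of $\coind_{\check{G}}^{\check{G}\times\check{G}}$. The paper handles this by noting that the argument for $\coind_1^{\check{G}}$ (Proposition~6.24.2 of \cite{R2}) adapts or specializes to $\coind_{\check{G}}^{\check{G}\times\check{G}}$; you should cite that rather than the internal lemmas.

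Second, your left t-exactness argument is more convoluted than needed, and the final sentence (``$F$ right t-exact on the heart-preserving $F$ forces $F^R$ to be right t-exact'') does not parse into a valid deduction. The paper's route is cleaner: right completeness of the t-structure means $\DD(\sH_G)_{X^I_{\dR}}^{\geq 0}$ is generated by the heart under filtered colimits, nonnegative shifts, and extensions, so a continuous functor that preserves the heart automatically preserves coconnective objects. This replaces your bounded-then-extend argument in one line.
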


\begin{proof}

First, we claim that the composite functor \[ \Rep(\check{G})_{X^I_{\dR}} \xrightarrow{\Sat^{\nv}_G} \DD(\sH_G)_{X^I_{\dR}} \longrightarrow \Rep(\check{G} \times \check{G})_{X^I_{\dR}} \] is t-exact. By the construction of (\ref{sphtobimod}), this composite agrees with the induction functor $\coind_{\check{G}}^{\check{G} \times \check{G}}$, i.e., the right adjoint to restriction along the diagonal homomorphism $\check{G} \to \check{G} \times \check{G}$. The t-exactness of $\coind_{\check{G}}^{\check{G} \times \check{G}}$ be proved in the same way as (or deduced from) that of $\coind_1^{\check{G}}$, which is Proposition 6.24.2 of \cite{R2}.

This immediately implies that the functor in the proposition is right t-exact, since $\DD(\sH_G)_{X^I_{\dR}}^{\leq 0}$ is generated under colimits by \[ \DD(\sH_G)_{X^I_{\dR}}^{\heartsuit} \cong \Rep(\check{G})_{X^I_{\dR}}^{\heartsuit}. \] Right completeness of the t-structure implies that $\DD(\sH_G)_{X^I_{\dR}}^{\geq 0}$ is generated by $\DD(\sH_G)_{X^I_{\dR}}^{\heartsuit}$ under filtered colimits, shifts, and extensions, whence the functor in question is left t-exact.

\end{proof}

\subsection{Derived Satake transform} Note that the functor (\ref{sphtobimod}) is not strictly unital with respect to the factorization structure. Namely, it preserves monoidal units, but the monoidal and factorization units in $\Rep(\check{G} \times \check{G})$ do not coincide.

Applying the functor (\ref{factmodunit}), we obtain a morphism
\begin{equation}
\label{presatake}
\DD(\sH_G) \longrightarrow \sO_{\check{G}}\modfact(\Rep(\check{G} \times \check{G}))
\end{equation}
in $\AssocAlg(\FactCat^{\laxfact})$

Observe that the monoidal functor \[ \DD(\sH_G)_{X^I_{\dR}} \xrightarrow{(\ref{presatake})} \sO_{\check{G}}\modfact(\Rep(\check{G} \times \check{G}))_{X^I_{\dR}} \] is t-exact for any finite set $I$, since the forgetful functor \[ \sO_{\check{G}}\modfact(\Rep(\check{G} \times \check{G}))_{X^I_{\dR}} \longrightarrow \Rep(\check{G} \times \check{G})_{X^I_{\dR}} \] is t-exact and conservative, and its composition with (\ref{presatake}) is t-exact by Proposition \ref{sphtobimodexact}.

By Propositions \ref{monfactcatren} and \ref{sphren}, together with Corollary \ref{factmodcoh}, we can renormalize (\ref{presatake}) to obtain a morphism \[ \Sat_G : \Sph_G \longrightarrow \Sph_{\check{G}}^{\spec} \] in $\AssocAlg(\FactCat)$. Now we can state our first main theorem precisely.

\begin{theorem}

\label{mainthm}

The functor $\Sat_G$ is an equivalence.

\end{theorem}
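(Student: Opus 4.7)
The plan is to follow the strategy sketched in the introduction: reduce to the case $G = T$ by a Jacquet-functor argument, then verify the torus case directly.

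First I would check that $\Sat_G$ is t-exact over each $X^I_{\dR}$ and induces an equivalence between the hearts of the t-structures. Both facts are essentially built into the construction: $\Sat_G$ is the renormalization of the t-exact morphism (\ref{presatake}), and both $\Sph_G^{\heartsuit}$ and $(\Sph^{\spec}_{\check G})^{\heartsuit}$ are identified over each $X^I$ with $\Rep(\check G)^{\heartsuit}_{X^I_{\dR}}$ through the naive Satake $\Sat^{\nv}_G$ and the renormalized morphism $\Rep(\check G) \to \Sph^{\spec}_{\check G}$ of (\ref{psgenren}), in a manner compatible with $\Sat_G$. Because both sides are almost ULA generated by Proposition \ref{sphren} and Proposition \ref{specpscoh}, it then suffices to verify that $\Sat_G$ induces isomorphisms on Hom-complexes between fixed sets of almost ULA generators. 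By Lemma \ref{naivesatula}, generators on the left can be taken to be the images of compact objects in $\Rep(\check G)^{\otimes I}$ under $\Sat^{\nv}_G \circ \Loc_{X^I}$.

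Next comes the reduction to the torus. For the Borel $P = B$ with Levi $T$, one has a commutative square relating $\Sat_G$ to $\Sat_T$ via Jacquet-type functors on both sides. On the automorphic side, this Jacquet functor is given by convolution with the Whittaker object on the coinvariant category $\DD(\Gr_G)_{\mathfrak{L}N\,\mathfrak{L}^+T}$, combined with Theorem \ref{csformula} for $G$ and for $T$; on the spectral side, it is the de Rham analogue of the functor $\Chev_{\Upsilon}$ from \S\ref{ss:intro-parabolic}, extended naturally to factorization modules via the constructions of \S\ref{s:spec-hecke}. Compatibility of the two Jacquet-type functors with $\Sat_G$ and $\Sat_T$ is ensured by Theorem 4.15.1 of \cite{R3} (together with the trivial compatibility for $T$). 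One can then translate Hom-computations between ULA generators $\Sat^{\nv}_G(V)$ and $\Sat^{\nv}_G(V')$ in $\Sph_G$ into Hom-computations in $\Sph_T$, using that the Jacquet-type functor on the spectral side, enhanced by its right adjoint and factorization module structure, captures enough information to recover the Hom-complexes.

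The torus case is verified by direct calculation in \S\ref{s:torus}. For $G = T$ both sides admit tractable descriptions: $\Sph_T$ decomposes according to the coweight lattice $\Lambda$ and identifies with a $\Lambda$-graded enhancement of $\Rep(\check T)$, while $\Sph^{\spec}_{\check T}$ is the renormalization of $\sO_{\check T}\modfact(\Rep(\check T \times \check T))$. Matching explicit generators and Hom-spaces on the two sides yields the equivalence. I expect the main obstacle of the whole proof to lie in the Jacquet-functor reduction rather than in either endpoint: since the Jacquet functor is far from conservative on $\Sph_G$, transporting the torus-level equivalence back to general $G$ requires careful bookkeeping of the $\Sph_G$-bimodule structures on both sides and genuine use of the factorizable Casselman--Shalika formula, rather than a direct descent argument.
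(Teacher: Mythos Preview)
Your high-level strategy matches the paper's: reduce to the torus via the Borel, and verify $G=T$ directly. But the mechanism of reduction you sketch has a real gap, which you yourself flag: the Jacquet functor is not conservative, so there is no direct way to ``transport Hom-computations between ULA generators'' from $\Sph_G$ to $\Sph_T$. You propose to compensate with bimodule structures and Casselman--Shalika, but do not say how, and in fact the paper does not proceed this way.

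The paper's actual argument avoids the conservativity problem entirely by a monadic reformulation. After reducing to a single point $x$ (\S\ref{pointred}), one observes that both $\Sph_{G,x}$ and $\Sph_{\check G,x}^{\spec}$ are monadic over $\Rep(\check G)$ via the renormalized na\"ive Satake and (\ref{psgenren}) respectively, so $\Sat_{G,x}$ corresponds to a morphism of monads $\Psi_{G,x}\to\Psi_{\check G,x}^{\spec}$. A Ginzburg-type equivariant cohomology computation (Proposition \ref{geommonad}) and Corollary \ref{specpsmonad} identify both monads abstractly with the algebra $\Sym(\check{\mathfrak g}[-2])$ in $\Rep(\check G)$. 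So the problem becomes: show that a certain $\check G$-equivariant algebra endomorphism $\varphi_G$ of $\Sym(\check{\mathfrak g}[-2])$ is an isomorphism. Since $H^2$ of the invariant subalgebra generates $\check{\mathfrak g}$ as a $\check G$-representation, it suffices to check $\varphi_G$ on $\Sym(\check{\mathfrak g}[-2])^{\check G}=\End(\delta_{\fL^+G})$. Now the commutative diagram relating $\Sph_G$, $\Sph_{G,B}$, $\Sph_T$ (and their spectral counterparts) on endomorphisms of the unit objects becomes the Chevalley restriction $\Sym(\check{\mathfrak g}[-2])^{\check G}\hookrightarrow\Sym(\check{\mathfrak t}[-2])$ on both sides, with $\varphi_T$ intertwining them. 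Granting the torus case, $\varphi_G$ is injective on cohomology, hence an isomorphism by finite-dimensionality of each graded piece. This is the step that replaces your ``careful bookkeeping''; without it your reduction is not a proof.

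For the torus case, ``matching explicit generators and Hom-spaces'' understates what is needed: the paper proves a factorizable local geometric class field theory (Theorem \ref{lgcft}) via the Contou-Carr\`ere pairing, and then interprets $\Sat_{T,x}$ as the induced map on inner endomorphisms of the unit in two presentations of the same category.
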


This theorem will be proved in \S \ref{s:mainthmproof}.

\subsection{The semi-infinite Whittaker category} Recall from \cite{R3} \S 2.34 that the categories \[ ((\DD(\mathfrak{L}G)_{\mathfrak{L}N_P\mathfrak{L}^+M})^{\mathfrak{L}N^-,\psi})_{X^I_{\dR}} \] assemble into an object of $\FactCat$, denoted there by $\Whit^{\frac{\infty}{2}}$. Following \emph{loc. cit.} in the case $P = B$, we will now construct a morphism
\begin{equation}
\label{semiinfres}
(\DD(\mathfrak{L}G)_{\mathfrak{L}N_P\mathfrak{L}^+M})^{\mathfrak{L}N^-,\psi} \longrightarrow \Rep(\check{M})
\end{equation}
in $\FactCat_{\laxuntl}$.

First, we restrict the Whittaker condition to the subgroup $N^-_M := N^- \cap M$, i.e., apply the forgetful functor \[ (\DD(\mathfrak{L}G)_{\mathfrak{L}N_P\mathfrak{L}^+M})^{\mathfrak{L}N^-,\psi} \longrightarrow (\DD(\mathfrak{L}G)_{\mathfrak{L}N_P\mathfrak{L}^+M})^{\mathfrak{L}N_M^-,\psi}. \] At the next stage, restrict along the inclusion $\mathfrak{L}P \to \mathfrak{L}G$ to obtain \[ (\DD(\mathfrak{L}G)_{\mathfrak{L}N_P\mathfrak{L}^+M})^{\mathfrak{L}N_M^-,\psi} \longrightarrow (\DD(\mathfrak{L}P)_{\mathfrak{L}N_P\mathfrak{L}^+M})^{\mathfrak{L}N_M^-,\psi} \cong \DD(\Gr_M)^{\mathfrak{L}N_M^-,\psi}. \]

Recall that for any finite set $I$ there is a degree map \[ \deg_M : \Gr_{M,X^I} \longrightarrow \pi_1(M), \] compatible with factorization, where $\pi_1(M)$ is viewed as discrete abelian group. We can identify \[ \pi_1(M) = \Lambda/(\bZ \cdot R_M), \] the lattice of coweights modulo the coroot lattice of $M$. Note that the homomorphism
\begin{align*}
\ell_P : \pi_1(M) &\longrightarrow \bZ \\
\lambda &\mapsto \langle 2\check{\rho}_G - 2\check{\rho}_M,\lambda \rangle
\end{align*}
is well-defined, and hence we obtain an automorphism
\begin{align*}
\DD(\Gr_M) &\longrightarrow \DD(\Gr_M) \\
\sM &\mapsto \sM[-\ell_P(\deg_M)]
\end{align*}
in $\FactCat$ given by a cohomological shift which depends on the connected component.

Since $\mathfrak{L}N^-_M$ is connected, this automorpism of $\DD(\Gr_M)$ preserves the Whittaker condition. Compose this automorphism with the previously constructed morphism \[ (\DD(\mathfrak{L}G)_{\mathfrak{L}N_P\mathfrak{L}^+M})^{\mathfrak{L}N^-,\psi} \longrightarrow \DD(\Gr_M)^{\mathfrak{L}N_M^-,\psi} \] to obtain another such morphism. Finally, applying Theorem \ref{csformula} to $M$, we have \[ \DD(\Gr_M)^{\mathfrak{L}N_M^-,\psi} \tilde{\longrightarrow} \Rep(\check{M}). \] This completes the construction of the morphism (\ref{semiinfres}).

\begin{theorem}

\label{semiinfresvac}

The morphism (\ref{semiinfres}) sends the unit object to the factorization algebra $\Upsilon(\check{\mathfrak{n}}_{\check{P}})$ in $\Rep(\check{M})$.

\end{theorem}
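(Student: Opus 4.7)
The plan is to deduce the claim from \cite{R3} Theorem 4.15.1 (and its forthcoming extension to general parabolics in \cite{FH}), by realizing the factorization unit of the source of \eqref{semiinfres} as the output of a convolution pairing.

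First, I would recall from the introduction the convolution pairing
\[
C : \DD(\Gr_G)_{\mathfrak{L}N_P\mathfrak{L}^+M} \otimes \DD(\Gr_G)^{\mathfrak{L}N^-,\psi} \longrightarrow (\DD(\mathfrak{L}G)_{\mathfrak{L}N_P\mathfrak{L}^+M})^{\mathfrak{L}N^-,\psi},
\]
and verify that this map is strictly unital at the factorizable level: the image of the pair consisting of the factorization unit of $\DD(\Gr_G)_{\mathfrak{L}N_P\mathfrak{L}^+M}$ and the Whittaker vacuum of $\DD(\Gr_G)^{\mathfrak{L}N^-,\psi}$ is the factorization unit of the target. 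Fiberwise over $X^I_{\dR}$, the target is the balanced tensor product $\DD(\Gr_G)_{\mathfrak{L}N_P\mathfrak{L}^+M} \otimes_{\DD(\mathfrak{L}G)_{X^I}} \DD(\Gr_G)^{\mathfrak{L}N^-,\psi}$, and the unit is by construction the image of the pair of units on the two factors.

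Next, I would compose \eqref{semiinfres} with $C$ and, using \eqref{csequiv} to identify $\DD(\Gr_G)^{\mathfrak{L}N^-,\psi} \simeq \Rep(\check{G})$ (with the Whittaker vacuum corresponding to the trivial representation), show that the resulting functor
\[
\DD(\Gr_G)_{\mathfrak{L}N_P\mathfrak{L}^+M} \otimes \Rep(\check{G}) \longrightarrow \Rep(\check{M})
\]
coincides with the pairing described in \S\ref{ss:intro-parabolic}. This is essentially a matter of unwinding the definitions: the composite of the forgetful functor from $\mathfrak{L}N^-$-Whittaker to $\mathfrak{L}N_M^-$-Whittaker, the restriction along $\mathfrak{L}P \to \mathfrak{L}G$, the shift by $\ell_P(\deg_M)$, and Casselman-Shalika for $M$, precomposed with convolution, implements the $!$-restriction to $\mathfrak{L}(P \times N_M^-)$ followed by Casselman-Shalika for both $G$ and $M$ that appears in \S\ref{ss:intro-parabolic}. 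Once this identification is established, \cite{R3} Theorem 4.15.1 (extended via \cite{FH} to arbitrary $P$) asserts that fixing the factorization unit in $\DD(\Gr_G)_{\mathfrak{L}N_P\mathfrak{L}^+M}$ realizes this pairing as the functor $\Chev_{\Upsilon}$, and applying $\Chev_{\Upsilon}$ to the trivial representation of $\check{G}$ yields $\Upsilon(\check{\mathfrak{n}}_{\check{P}})$ by construction.

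The main obstacle I anticipate is the bookkeeping in the first two steps, specifically the verification that the cohomological shift by $\ell_P(\deg_M)$ built into \eqref{semiinfres} conspires correctly with the various forgetful functors and Casselman-Shalika equivalences so that the composite with $C$ matches, on the nose, the pairing of \S\ref{ss:intro-parabolic}. This shift is essential: it compensates for the discrepancy between the Whittaker normalizations on $\Gr_G$ and $\Gr_M$, and without it the comparison would hold only up to a nontrivial twist and the cited result could not be applied verbatim. Once this normalization is done, the substantive content of the theorem is entirely supplied by \cite{R3} and \cite{FH}.
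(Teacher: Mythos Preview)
Your proposal is correct in spirit but takes an unnecessarily circuitous route compared to the paper. The paper's proof is a bare citation: the case $P=G$ is trivial, the case $P=B$ is exactly \cite{R3} Theorem 4.4.1, and the general parabolic case is deferred to \cite{FH}. No convolution pairing, no identification with the pairing of \S\ref{ss:intro-parabolic}, no invocation of Theorem 4.15.1.

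Your approach instead passes through the convolution map $C$, identifies the factorization unit of the target as $C$ applied to the pair of units, composes with \eqref{semiinfres}, and then matches the resulting bilinear pairing with the Jacquet-type pairing of \S\ref{ss:intro-parabolic} so as to invoke \cite{R3} Theorem 4.15.1 (the identification of the induced functor $\Rep(\check G)\to\Rep(\check M)$ with $\Chev_{\Upsilon}$). This works, but it is a detour: you are effectively re-deriving the content of \cite{R3} Theorem 4.4.1 from the closely related Theorem 4.15.1, plus some compatibility checks. What your approach buys is a more explicit explanation of \emph{why} the unit lands on $\Upsilon(\check{\mathfrak n}_{\check P})$, namely because the Jacquet functor is $\Chev_{\Upsilon}$ and $\Upsilon(\check{\mathfrak n}_{\check P}) = \Chev_{\Upsilon}(\mathrm{triv})$; the paper's bare citation obscures this. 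The cost is the bookkeeping you correctly flag (the $\ell_P(\deg_M)$ shift and the matching of the two pairings), and a possible logical circularity in the external references: in \cite{R3}, Theorem 4.15.1 may well depend on Theorem 4.4.1, in which case your derivation is not genuinely independent.
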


\begin{proof}

In the case $P = G$, the morphism (\ref{semiinfres}) is inverse to the equivalence of Theorem \ref{csformula}, and $\Upsilon(\check{\mathfrak{n}}_{\check{P}})$ is the unit object in $\Rep(\check{M}) = \Rep(\check{G})$, so there is nothing to show.

The case $P = B$ is Theorem 4.4.1 of \cite{R3}.

The general case is proved in \cite{FH}.

\end{proof}

In particular, applying the construction (\ref{factmodfunct}) to (\ref{semiinfres}), we obtain a morphism
\begin{equation}
\label{semiinfresenh}
(\DD(\mathfrak{L}G)_{\mathfrak{L}N_P\mathfrak{L}^+M})^{\mathfrak{L}N^-,\psi} \longrightarrow \Upsilon(\check{\mathfrak{n}}_{\check{P}})\modfact(\Rep(\check{M}))
\end{equation}
in $\FactCat$.

\subsection{The accessible Whittaker category} The morphism \[ (\DD(\mathfrak{L}G)_{\mathfrak{L}N_P\mathfrak{L}^+M})^{\mathfrak{L}N^-,\psi} \xrightarrow{\oblv_{\mathfrak{L}N^-,\psi}} \DD(\mathfrak{L}G)_{\mathfrak{L}N_P\mathfrak{L}^+M} \xrightarrow{\Av^{\mathfrak{L}^+G}_*} \DD(\mathfrak{L}^+G \backslash \mathfrak{L}G)_{\mathfrak{L}N_P\mathfrak{L}^+M} \] in $\FactCat_{\laxuntl}$ admits a left adjoint (see \cite{R3} Proposition 5.4.1), which we denote by $\Av^{\mathfrak{L}N^-,\psi}_!$. Moreover, by the construction of unital structures in \emph{loc. cit.}, the morphism $\Av^{\mathfrak{L}N^-,\psi}_!$ is strictly unital.

Define the factorization subcategory \[ (\DD(\mathfrak{L}G)_{\mathfrak{L}N_P\mathfrak{L}^+M})^{\mathfrak{L}N^-,\psi,\acc} \subset (\DD(\mathfrak{L}G)_{\mathfrak{L}N_P\mathfrak{L}^+M})^{\mathfrak{L}N^-,\psi} \] to be generated under colimits by the essential image of \[ \Av^{\mathfrak{L}N^-,\psi}_! : \DD(\mathfrak{L}^+G \backslash \mathfrak{L}G)_{\mathfrak{L}N_P\mathfrak{L}^+M} \longrightarrow (\DD(\mathfrak{L}G)_{\mathfrak{L}N_P\mathfrak{L}^+M})^{\mathfrak{L}N^-,\psi}. \] As explained in \emph{loc. cit.}, this subcategory admits a unique unital factorization structure which makes the inclusion a morphism in $\FactCat$.

\begin{theorem}

\label{semiinfresequiv}

The morphism (\ref{semiinfresenh}) restricts to an equivalence \[ (\DD(\mathfrak{L}G)_{\mathfrak{L}N_P\mathfrak{L}^+M})^{\mathfrak{L}N^-,\psi,\acc} \tilde{\longrightarrow} \Upsilon(\check{\mathfrak{n}}_{\check{P}})\mod_0^{\fact}(\Rep(\check{M})). \]

\end{theorem}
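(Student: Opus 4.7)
The plan is to establish the equivalence by a Barr--Beck monadicity argument, decomposing the task as follows. First, I would verify that (\ref{semiinfresenh}) maps the generating subcategory of accessible objects into that of integrable $\Upsilon(\check{\mathfrak{n}}_{\check{P}})$-modules. Second, I would identify the two sides as modules over matching monads on $\Rep(\check{P})$ and apply Barr--Beck.

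For the first step, recall that the accessible Whittaker category is, by definition, generated under colimits by the essential image of $\Av^{\mathfrak{L}N^-,\psi}_!$ applied to $\DD(\mathfrak{L}^+G \backslash \mathfrak{L}G)_{\mathfrak{L}N_P\mathfrak{L}^+M}$; dually, $\Upsilon(\check{\mathfrak{n}}_{\check{P}})\mod_0^{\fact}(\Rep(\check{M}))$ is generated by the image of $\Rep(\check{P})$ under $\ind^{*\to\ch}$. My strategy is to produce a natural comparison $\DD(\mathfrak{L}^+G \backslash \mathfrak{L}G)_{\mathfrak{L}N_P\mathfrak{L}^+M} \to \Rep(\check{P})$ (a parabolic form of naive Satake applied to the spherical Jacquet category) such that the composite with $\ind^{*\to\ch}$ agrees, after (\ref{semiinfresenh}) and $\Av^{\mathfrak{L}N^-,\psi}_!$, with the original functor. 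This would simultaneously verify that (\ref{semiinfresenh}) restricts as claimed and set up the Barr--Beck comparison.

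For the second step, the adjunction (\ref{chiralind2}) realizes the right-hand side as modules over the monad $\oblv^{\ch\to *}\ind^{*\to\ch}$ on $\Rep(\check{P})$. The Whittaker-averaging adjunction on the geometric side, combined with the parabolic Satake functor above, should provide an analogous monadic description of the accessible Whittaker category. Barr--Beck then reduces the theorem to showing the induced comparison of monads on $\Rep(\check{P})$ is an isomorphism.

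The main obstacle is precisely this identification of monads. Theorem \ref{semiinfresvac} yields only the unit-level comparison (both monads evaluate to $\Upsilon(\check{\mathfrak{n}}_{\check{P}})$ on the monoidal unit of $\Rep(\check{P})$), so the technical content lies in upgrading it to a full monad comparison, i.e., matching the monads' values on all of $\Rep(\check{P})$ together with their algebra structures. For $P = G$ the statement is immediate from Theorem \ref{csformula}. For $P = B$, the relevant computation is encapsulated in Theorem 4.15.1 of \cite{R3}, which identifies the key functor as $\Chev_{\Upsilon}$. For general parabolics the calculation relies on \cite{FH}, which extends these methods, as discussed in the introduction; hence the theorem's status for $P$ other than $B$ or $G$ is conditional on that work.
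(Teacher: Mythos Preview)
Your Barr--Beck strategy diverges from the paper's argument and has a genuine gap in the setup. The paper does not attempt to describe both sides monadically over a common base; instead it proves fully faithfulness and essential surjectivity separately. Fully faithfulness is the substantive input: for $P=B$ it is quoted as Theorem 5.7.1 of \cite{R3}, and for general $P$ it is obtained by adapting that argument together with a parabolic form of the Arkhipov--Bezrukavnikov equivalence. Essential surjectivity is then a short generation argument: both sides are $\Rep(\check{M})$-module categories generated by the factorization unit, and (\ref{semiinfresenh}) is a $\Rep(\check{M})$-linear morphism in $\FactCat$, so it suffices to match the units, which is automatic.

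The gap in your approach is the proposed monadicity over $\Rep(\check{P})$ on the geometric side. There is no functor $\Rep(\check{P}) \to (\DD(\mathfrak{L}G)_{\mathfrak{L}N_P\mathfrak{L}^+M})^{\mathfrak{L}N^-,\psi,\acc}$ with the required properties: the ``parabolic na\"{i}ve Satake'' you invoke is only an identification of hearts (cf.\ (\ref{reppfunct}) in the paper), not an equivalence of DG categories, so you cannot transport the Whittaker-averaging adjunction to one based on $\Rep(\check{P})$. Moreover, even the adjunction $\Av^{\mathfrak{L}N^-,\psi}_! \dashv \Av^{\mathfrak{L}^+G}_*$ over the spherical Jacquet category is not known to be monadic: the right adjoint is not obviously conservative on the accessible subcategory. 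The reference you give, Theorem 4.15.1 of \cite{R3}, identifies the functor $\Rep(\check{G}) \to \Rep(\check{M})$ (pairing with the unit) with $\Chev_{\Upsilon}$; this is a statement about values, not about the monad structure you need. If you wanted to salvage a monadicity argument, the natural common base would be $\Rep(\check{M})$ (see Proposition \ref{whitgenexact}), but identifying the resulting monads is then essentially equivalent to the fully faithfulness input from \cite{R3} that the paper cites directly.
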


\begin{proof}

In the case $P = G$, the accessibility condition is vacuous, and this is inverse to the equivalence of Theorem \ref{csformula}.

In the case $P = B$, Theorem 5.7.1 of \cite{R3} says that this functor is fully faithful. To prove essential surjectivity, we claim that for arbitrary $P$, the image of the functor generates \[ \Upsilon(\check{\mathfrak{n}}_{\check{P}})\mod_0^{\fact}(\Rep(\check{M})) \] under colimits. The functor in question is $\Rep(\check{M})$-linear, where $\Rep(\check{M})$ acts on \[ (\DD(\mathfrak{L}G)_{\mathfrak{L}N_P\mathfrak{L}^+M})^{\mathfrak{L}N^-,\psi,\acc} \] via $\Sat_M^{\nv}$. We have a commutative triangle \[
\begin{tikzcd}
& \Rep(\check{M}) \arrow{dl} \arrow{dr} & \\
(\DD(\mathfrak{L}G)_{\mathfrak{L}N_P\mathfrak{L}^+M})^{\mathfrak{L}N^-,\psi,\acc} \arrow{rr} & & \Upsilon(\check{\mathfrak{n}}_{\check{P}})\mod_0^{\fact}(\Rep(\check{M}))
\end{tikzcd} \]
in $\FactCat$, where both diagonal morphisms are given by acting on the unit for the factorization structure. Similarly, the triangle \[
\begin{tikzcd}
& \Rep(\check{M}) \arrow{dl}[swap]{\triv_{\check{N}_{\check{P}}}} \arrow{dr} & \\
\Rep(\check{P}) \arrow{rr}{(\ref{chiralind2})} & & \Upsilon(\check{\mathfrak{n}}_{\check{P}})\mod_0^{\fact}(\Rep(\check{M}))
\end{tikzcd} \]
commutes because both circuits are $\Rep(\check{M})$-linear morphisms in $\FactCat$. Since the images of $\triv_{\check{N}_{\check{P}}}$ and (\ref{chiralind2}) generate their targets under colimits, the claim follows.

For the general case, by the argument above it suffices to show fully faithfulness. This can be deduced from Theorem \ref{semiinfresvac} by adapting the argument of \cite{R3} to the parabolic case. One also needs a parabolic version of the Arkhipov-Bezrukavnikov equivalence \cite{AB}, which can be deduced from the case $P = B$, as explained in \S 6 of \cite{bezrukavnikov-losev} (cf. also \cite{ACR}).

\end{proof}

\subsection{The t-structure on the semi-infinite spherical category} We now prepare to define the spherical Hecke category $\Sph_{G,P}$ attached to $P$. Appealing again to \cite{R3} \S 2, we recall that the categories \[ (\DD(\mathfrak{L}^+G \backslash \mathfrak{L}G)_{\mathfrak{L}N_P\mathfrak{L}^+M})_{X^I_{\dR}} \] assemble into an object of $\FactCat$. We will denote the unit object by $\Delta^0$.

Consider the correspondence of corr-unital factorization spaces \[
\begin{tikzcd}
& \mathfrak{L}^+P \backslash \mathfrak{L}P \arrow{dl}[swap]{\mathfrak{p}} \arrow{dr}{\mathfrak{q}} & \\
\mathfrak{L}^+G \backslash \mathfrak{L}G & & \mathfrak{L}^+M \backslash \mathfrak{L}M.
\end{tikzcd} \]
The composite morphism \[ \DD(\sH_M) \xrightarrow{\mathfrak{q}^!} \DD(\mathfrak{L}^+P \backslash \mathfrak{L}P/\mathfrak{L}^+M) \longrightarrow \DD(\mathfrak{L}^+P \backslash \mathfrak{L}P)_{\mathfrak{L}N_P\mathfrak{L}^+M} \] is an isomorphism in $\FactCat$. We will abuse notation and denote it simply by $\mathfrak{q}^!$, and its inverse by $\mathfrak{q}_!$.

The morphism \[ \mathfrak{p}^! : \DD(\mathfrak{L}^+G \backslash \mathfrak{L}G)_{\mathfrak{L}N_P\mathfrak{L}^+M} \longrightarrow \DD(\mathfrak{L}^+P \backslash \mathfrak{L}P)_{\mathfrak{L}N_P\mathfrak{L}^+M} \] admits a left adjoint $\mathfrak{p}_!$. Explicitly, the composite \[ \DD(\sH_M) \xrightarrow{\mathfrak{q}^!} \DD(\mathfrak{L}^+P \backslash \mathfrak{L}P)_{\mathfrak{L}N_P\mathfrak{L}^+M} \xrightarrow{\mathfrak{p}_!} \DD(\mathfrak{L}^+G \backslash \mathfrak{L}G)_{\mathfrak{L}N_P\mathfrak{L}^+M} \] is given by acting on the unit object.

Now consider the adjunction
\begin{equation}
\label{semiinfgen}
\mathfrak{p}_! \mathfrak{q}^![-\ell_P(\deg_M)] : \DD(\sH_M) \rightleftarrows \DD(\mathfrak{L}^+G \backslash \mathfrak{L}G)_{\mathfrak{L}N_P \mathfrak{L}^+M} : \mathfrak{q}_! \mathfrak{p}^![\ell_P(\deg_M)],
\end{equation}
which takes place in $\FactCat_{\laxuntl}$, with the left adjoint being strictly unital. We also remark that the right adjoint is conservative, which follows from the fact that $\mathfrak{p}$ is an ind-locally closed stratification.

We equip $(\DD(\mathfrak{L}^+G \backslash \mathfrak{L}G)_{\mathfrak{L}N_P \mathfrak{L}^+M})_{X^I_{\dR}}$ with the t-structure characterized by the requirement that \[ \mathfrak{q}_! \mathfrak{p}^![\ell_P(\deg_M)] : (\DD(\mathfrak{L}^+G \backslash \mathfrak{L}G)_{\mathfrak{L}N_P \mathfrak{L}^+M})_{X^I_{\dR}} \longrightarrow \DD(\sH_{M})_{X^I_{\dR}} \] be left t-exact. Equivalently, the connective objects are generated by the image of $\DD(\sH_M)_{X^I_{\dR}}^{\leq 0}$ under $\mathfrak{p}_! \mathfrak{q}^![-\ell_P(\deg_M)]$.

We remark that this does \emph{not} agree with the t-structure on this category defined (in the case $P = B$) by Gaitsgory in \cite{G2}. Namely, unlike the t-structure introduced in \emph{loc. cit.}, our t-structure is local on the curve $X$ (since the t-structure on \[ \DD(\sH_M)_{X^I_{\dR}} = \DD(\sH_{M,X^I}) \] has this feature). However, the two t-structures do agree when restricted to objects supported at a single point $x \in X(k)$.

\begin{lemma}

\label{semiinftstrlem}

For any finite set $I$, the functor \[ \mathfrak{p}_! \mathfrak{q}^![-\ell_P(\deg_M)] : \DD(\sH_M)_{X^I_{\dR}} \longrightarrow (\DD(\mathfrak{L}^+G \backslash \mathfrak{L}G)_{\mathfrak{L}N_P \mathfrak{L}^+M})_{X^I_{\dR}} \] is t-exact.

\end{lemma}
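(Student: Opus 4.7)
By construction of the t-structure on $(\DD(\mathfrak{L}^+G\backslash\mathfrak{L}G)_{\mathfrak{L}N_P\mathfrak{L}^+M})_{X^I_{\dR}}$, the functor $F := \mathfrak{p}_!\mathfrak{q}^![-\ell_P(\deg_M)]$ is automatically right t-exact, so only left t-exactness of $F$ needs proof. By adjunction, left t-exactness of $F$ is equivalent to right t-exactness of the right adjoint $G := \mathfrak{q}_!\mathfrak{p}^![\ell_P(\deg_M)]$. Since $G$ is conservative and, by the defining property of the t-structure, left t-exact, the real content of the lemma is that $G$ is also right t-exact.

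To verify this, the plan is to decompose everything according to the $\pi_1(M)$-grading given by $\deg_M$. Because $\mathfrak{L}N_P$ is connected, $\deg_M$ extends from $\mathfrak{L}^+M\backslash\mathfrak{L}M$ to $\mathfrak{L}^+P\backslash\mathfrak{L}P$, and the image of $\mathfrak{p}$ in $\mathfrak{L}^+G\backslash\mathfrak{L}G$ inherits a decomposition into the semi-infinite orbits arising from the Iwasawa decomposition of $\mathfrak{L}G$, indexed by $\lambda \in \pi_1(M)$. The shift $[\ell_P(\deg_M)]$ acts by the constant $[\ell_P(\lambda)]$ on the $\lambda$-component, so right t-exactness of $G$ reduces to the componentwise statement that each $G_\lambda$ is right t-exact. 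Under the presentations of \cite{R3} \S 2, the restriction of $\mathfrak{q}$ to the $\lambda$-th component exhibits a quotient by $\mathfrak{L}N_P$ of virtual relative dimension $\ell_P(\lambda)$, so $\mathfrak{q}_!^\lambda$ is t-exact up to a shift by $[-\ell_P(\lambda)]$; meanwhile, after passage to $\mathfrak{L}N_P\mathfrak{L}^+M$-coinvariants on the target, the restriction of $\mathfrak{p}^!$ to the $\lambda$-component is an $!$-restriction to a connected component of a locally closed stratification, hence t-exact by standard base change. The cohomological shift $[\ell_P(\lambda)]$ in $G_\lambda$ exactly compensates the contribution $[-\ell_P(\lambda)]$ coming from $\mathfrak{q}_!^\lambda$, yielding the desired t-exactness.

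The principal technical obstacle is making the virtual-dimension count rigorous in the $\DD^*$-theory of D-modules on infinite-dimensional ind-schemes from \cite{infdim} and \cite{B}: one has to present $(\mathfrak{L}^+P\backslash\mathfrak{L}P)^\lambda$ by finite-dimensional approximations on which $\mathfrak{q}$ becomes a smooth map of explicit relative dimension, and verify that the renormalized $\mathfrak{q}_!^\lambda$ carries precisely the expected cohomological shift. In the case $P = B$ this calibration is carried out directly in \cite{R3}; the extension to arbitrary parabolic $P$ is uniform in $\lambda$ once the corresponding semi-infinite geometry is in place, since $\ell_P$ is additive on $\pi_1(M)$ and constant on each connected component.
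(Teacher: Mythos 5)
Your logical reduction at the start is incorrect. For an adjunction $F \dashv G$, the standard fact is that right t-exactness of $F$ is equivalent to left t-exactness of $G$; there is no corresponding adjunction statement relating \emph{left} t-exactness of $F$ to \emph{right} t-exactness of $G$. These are genuinely independent conditions. To see the correct reduction: with this t-structure, an object $y$ of the target is coconnective if and only if $G(y)$ is coconnective (since connective objects in the target are generated by $F$ applied to connective objects, and $\Hom(F(c),y) \cong \Hom(c,G(y))$). Therefore $F$ left t-exact is equivalent to the \emph{monad} $GF$ being left t-exact, not to $G$ alone being right t-exact. In fact, $G$ right t-exact is equivalent to $GF$ being \emph{right} t-exact, which is a different (and automatic) condition. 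So even if your stratum-by-stratum analysis of $G$ succeeded, it would prove the wrong thing.

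Separately, the geometric sketch has a false step: you assert that $\mathfrak{p}^!$ restricted to the $\lambda$-component of a locally closed stratification is t-exact ``by standard base change.'' In general, $!$-restriction to a locally closed stratum is only \emph{left} t-exact; it is t-exact for open strata and t-exact up to a shift for smooth closed strata, but for semi-infinite orbits in $\mathfrak{L}^+G\backslash\mathfrak{L}G$ the needed cohomological bounds are precisely the hard content here and cannot be waved through by a virtual-dimension count. The paper's proof reduces (using $\DD(X^I)$-linearity, right completeness, and an \'etale coordinate) to showing that the monad object $(\mathfrak{q}_!\mathfrak{p}^!\Delta^0_x)[\ell_P(\deg_M)]$ in $\DD(\sH_{M,x})$ is coconnective, and then invokes Gaitsgory's Theorem 1.5.5 of \cite{G1} for this coconnectivity, together with t-exactness of the convolution product on $\DD(\sH_{M,x})$. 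Your dimension-counting heuristic is roughly parallel to what goes into the proof of that theorem of \emph{loc.\ cit.}, but as written it does not close either the logical gap described above or the analytic gap about stratumwise cohomological amplitudes.
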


\begin{proof}

This functor is right t-exact by definition, so it suffices to show left t-exactness. Since its right adjoint is conservative, it is enough to prove that the resulting monad on $\DD(\sH_M)_{X^I_{\dR}}$ is left t-exact. This monad is $\DD(\sH_M)_{X^I_{\dR}}$-linear, so its value on an object $\sM$ in $\DD(\sH_M)_{X^I_{\dR}}$ is \[ (\mathfrak{q}_! \mathfrak{p}^!\Delta^0_{X^I})[\ell_P(\deg_M)] \star \sM. \]

Using $\DD(X^I)$-linearity, we can reduce to the case that $I$ has one element. So we must show that if $\sM$ belongs to $\DD(\sH_M)_{X_{\dR}}^{\geq 0}$, then \[ (\mathfrak{q}_! \mathfrak{p}^!\Delta^0_X)[\ell_P(\deg_M)] \star \sM \] is coconnective. Since $\DD(\sH_M)_{X_{\dR}}$ is right complete, we can even assume that $\sM$ belongs to \[ \DD(\sH_M)_{X_{\dR}}^{\heartsuit} \cong \Rep(\check{M})_{X_{\dR}}^{\heartsuit} \cong (\Rep(\check{M}) \otimes \DD(X))^{\heartsuit}. \]

Choose an \'{e}tale coordinate on $X$, so that $\Delta^0_X \cong \Delta^0_x \boxtimes \omega_X$ for a fixed point $x \in X(k)$. Thus it suffices to show that for any $V \in \Rep(\check{M})^{\heartsuit}$, the object \[ (\mathfrak{q}_! \mathfrak{p}^!\Delta^0_x)[\ell_P(\deg_M)] \star \Sat_{M,x}^{\nv}(V) \] belongs to $\DD(\sH_{M,x})^{\geq 0}$. Theorem 1.5.5 in \cite{G1} says that \[ (\mathfrak{q}_! \mathfrak{p}^!\Delta^0_x)[\ell_P(\deg_M)] \] is coconnective when $P = B$, and the general case can be proved similarly. Now we are done by t-exactness of the convolution product on $\DD(\sH_{M,x})$.

\end{proof}

\subsection{The t-structure on the semi-infinite Whittaker category} Consider the morphism
\begin{equation}
\label{whitgen}
\Rep(\check{M}) \longrightarrow (\DD(\mathfrak{L}N^-,\psi \backslash \mathfrak{L}G)_{\mathfrak{L}N_P\mathfrak{L}^+M})_{\acc}
\end{equation}
in $\FactCat$ given by acting on the unit of the factorization structure via $\Sat_M^{\nv}$. For any finite set $I$, we equip the category \[ (\DD(\mathfrak{L}G)_{\mathfrak{L}N_P\mathfrak{L}^+M})^{\mathfrak{L}N^-,\psi,\acc}_{X^I_{\dR}} \] with the t-structure whose connective objects are generated under colimits by the image of $\Rep(\check{M})^{\leq 0}$ under (\ref{whitgen}).

\begin{proposition}

The equivalence of Theorem \ref{semiinfresequiv} is t-exact over $X^I_{\dR}$ for every finite set $I$.

\end{proposition}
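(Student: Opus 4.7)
The plan is to establish t-exactness by comparing how the distinguished generators of the connective parts on both sides correspond under the equivalence. Both t-structures are defined by declaring the connective subcategory to be the one generated under colimits by an explicit family: on the left, objects of the form $\Sat_M^{\nv}(V) \star \unit$ with $V \in \Rep(\check{M})_{X^I_{\dR}}^{\leq 0}$; on the right, objects $\ind^{* \to \ch}(W)$ with $W \in \Rep(\check{P})_{X^I_{\dR}}^{\leq 0}$. Once the two generator sets are matched, the result follows from closure under colimits (and extensions) on each side.

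First, I would handle right t-exactness. By the commutative triangle displayed in the proof of Theorem \ref{semiinfresequiv}, the composite of the left generator $V \mapsto \Sat_M^{\nv}(V) \star \unit$ with the equivalence is $V \mapsto \ind^{* \to \ch}(\triv_{\check{\mathfrak{n}}_{\check{P}}}(V))$. The functor $\triv_{\check{\mathfrak{n}}_{\check{P}}}$ is restriction along $\check{P} \to \check{M}$, hence t-exact; and $\ind^{* \to \ch}$ is t-exact by Proposition 7.11.1(2) of \cite{R2} (used immediately after the definition of the t-structure on $\Upsilon(\mathfrak{n}_P)\mod_0^{\fact}(\Rep(M))_{X^I_{\dR}}$). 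Closing under colimits yields right t-exactness.

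Next, I would handle left t-exactness by showing that the inverse equivalence is also right t-exact. It suffices to verify that each generator $\ind^{* \to \ch}(W)$ for compact $W \in \Rep(\check{P})^{\heartsuit}_{X^I_{\dR}}$ lies in the essential image of the left connective part under the equivalence; the case of arbitrary $W \in \Rep(\check{P})^{\leq 0}_{X^I_{\dR}}$ then follows by colimit-closure and t-exactness of $\ind^{* \to \ch}$. The key input is a finite filtration of any such compact $W$ by $\check{P}$-subobjects whose subquotients lie in the image of $\triv_{\check{\mathfrak{n}}_{\check{P}}}: \Rep(\check{M})^{\heartsuit}_{X^I_{\dR}} \to \Rep(\check{P})^{\heartsuit}_{X^I_{\dR}}$, coming from the ascending filtration by depth of $\check{\mathfrak{n}}_{\check{P}}$-annihilation (which is $\check{M}$-stable with $\check{\mathfrak{n}}_{\check{P}}$-trivial subquotients, by unipotency of $\check{N}_{\check{P}}$ in characteristic zero). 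Applying the t-exact functor $\ind^{* \to \ch}$ then writes $\ind^{* \to \ch}(W)$ as an iterated extension of objects $\ind^{* \to \ch}(\triv_{\check{\mathfrak{n}}_{\check{P}}}(V_i))$, each of which is the image under the equivalence of $\Sat_M^{\nv}(V_i) \star \unit$ in the left connective part by Step 1. Closure of the connective subcategory under extensions completes the argument.

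The main anticipated difficulty is ensuring the filtration argument behaves uniformly in the relative setting over $X^I$, i.e., that compact objects of $\Rep(\check{P})_{X^I_{\dR}}^{\heartsuit}$ admit filtrations by $\check{P}$-subobjects whose subquotients lie in the image of $\triv_{\check{\mathfrak{n}}_{\check{P}}}$. This should follow by writing any compact heart object as a colimit of $\Loc_{X^I}$ applied to tensor products of compact objects in $\Rep(\check{P})^{\heartsuit}$ (cf. Proposition \ref{ulagenprop}), and then applying the pointwise filtration factor by factor; the bookkeeping here is the most delicate piece, but it is purely formal.
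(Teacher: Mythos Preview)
Your proposal is correct and follows essentially the same strategy as the paper. The paper's proof is organized slightly more efficiently: rather than treating right and left t-exactness separately, it observes in one step that $\Rep(\check{P})_{X^I_{\dR}}^{\leq 0}$ is generated under colimits by the image of $\triv_{\check{N}}$ applied to $\Rep(\check{M})_{X^I_{\dR}}^{\leq 0}$, so that the connective parts on both sides are generated by the same class of objects via the commutative triangle; your filtration argument is precisely the content of this generation claim, made explicit.
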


\begin{proof}

By definition, the connective objects in \[ \Upsilon(\check{\mathfrak{n}}_{\check{P}})\mod_0^{\fact}(\Rep(\check{M}))_{X^I_{\dR}} \] are generated by the image of $\Rep(\check{P})_{X^I_{\dR}}^{\leq 0}$ under the left adjoint in (\ref{chiralind2}). But the latter is generated by the image of $\Rep(\check{M})_{X^I_{\dR}}^{\leq 0}$ under \[ \triv_{\check{N}} : \Rep(\check{M})_{X^I_{\dR}} \longrightarrow  \Rep(\check{P})_{X^I_{\dR}}, \] so it suffices to show that the triangle \[
\begin{tikzcd}
& \Rep(\check{M}) \arrow{dl}[swap]{(\ref{whitgen})} \arrow{dr}{\ind^{* \to \ch} \circ \triv_{\check{N}}} & \\
(\DD(\mathfrak{L}N^-,\psi \backslash \mathfrak{L}G)_{\mathfrak{L}N_P\mathfrak{L}^+M})_{\acc} \arrow{rr}{\sim} & & \Upsilon(\check{\mathfrak{n}}_{\check{P}})\mod_0^{\fact}(\Rep(\check{M}))
\end{tikzcd} \]
in $\FactCat$ commutes. This is immediate from the fact that all three functors are $\Rep(\check{M})$-linear and unital.

\end{proof}

\begin{proposition}

\label{whitgenexact}

The morphism (\ref{whitgen}) is t-exact over $X^I_{\dR}$ for any finite set $I$ and preserves ULA objects. The resulting right adjoint \[ (\DD(\mathfrak{L}G)_{\mathfrak{L}N_P\mathfrak{L}^+M})^{\mathfrak{L}N^-,\psi,\acc} \longrightarrow \Rep(\check{M}) \] in $\FactCat_{\laxuntl}$ is conservative over $X^I_{\dR}$ for any finite set $I$.

\end{proposition}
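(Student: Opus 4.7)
The preceding proposition established that the equivalence of Theorem \ref{semiinfresequiv} is t-exact over $X^I_{\dR}$, and the commutative triangle in its proof identifies (\ref{whitgen}) with the composition
\[ \Rep(\check{M}) \xrightarrow{\triv_{\check{N}_{\check{P}}}} \Rep(\check{P}) \xrightarrow{\ind^{* \to \ch}} \Upsilon(\check{\mathfrak{n}}_{\check{P}})\mod_0^{\fact}(\Rep(\check{M})). \]
So it suffices to prove the claims for this composition.

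For t-exactness, $\triv_{\check{N}_{\check{P}}}$ is obtained by pullback along the quotient homomorphism $\check{P} \to \check{M}$; its t-exactness over $X^I_{\dR}$ follows from the fact that the corresponding symmetric monoidal functor is t-exact together with the behavior of $\Loc_{X^I}$ (cf. Proposition \ref{repulagen}). The functor $\ind^{* \to \ch}$ is t-exact over $X^I_{\dR}$ as recorded in the discussion preceding Proposition \ref{upscoh}.

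For ULA preservation, by Lemma \ref{aulapreslem} it suffices to check that the right adjoint is continuous and $\DD(X^I)$-linear. The right adjoint of $\ind^{* \to \ch}$ is $\oblv^{\ch \to *}$, which by (\ref{chiralind2}) is a morphism in $\FactCat_{\laxuntl}$, hence $\DD(X^I)$-linear and continuous. The right adjoint of $\triv_{\check{N}_{\check{P}}}$ is the derived invariants functor $(-)^{\check{N}_{\check{P}}} : \Rep(\check{P}) \to \Rep(\check{M})$; this is continuous since $\check{N}_{\check{P}}$ is unipotent in characteristic zero, so that $(-)^{\check{N}_{\check{P}}}$ has finite cohomological amplitude $\dim \check{\mathfrak{n}}_{\check{P}}$, and it is $\DD(X^I)$-linear as a morphism in $\FactCat_{\laxuntl}$ obtained from Lemma \ref{rigidrightadj} applied to $\triv_{\check{N}_{\check{P}}}$ (note that $\Rep(\check{M})$ is rigid and the unit of $\Rep(\check{P})$ is compact).

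For conservativity of the right adjoint, since $\oblv^{\ch \to *}$ is conservative by (\ref{chiralind2}), it suffices to show that $(-)^{\check{N}_{\check{P}}} : \Rep(\check{P})_{X^I_{\dR}} \to \Rep(\check{M})_{X^I_{\dR}}$ is conservative. This reduces to the corresponding nonderived statement on hearts: any nonzero object of $\Rep(\check{P})_{X^I_{\dR}}^{\heartsuit}$ has nonzero $\check{N}_{\check{P}}$-invariants by unipotence. To upgrade to arbitrary objects, we combine the finite cohomological amplitude of $(-)^{\check{N}_{\check{P}}}$ with right completeness of the t-structure: if $V \in \Rep(\check{P})_{X^I_{\dR}}$ is nonzero, choose a lowest nonvanishing cohomology $H^m(V) \neq 0$ (using right completeness to reduce to the bounded-below situation); then left t-exactness of the invariants functor together with agreement with nonderived invariants on the heart gives $H^m(V^{\check{N}_{\check{P}}}) = H^m(V)^{\check{N}_{\check{P}}} \neq 0$. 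The main obstacle I expect is this last step: making the reduction to the bounded-below case precise in the t-structure on $\Rep(\check{P})_{X^I_{\dR}}$, which may require replacing this direct argument with one that first passes to compact generators (via $\Loc_{X^I}$ applied to compact objects of $\Rep(\check{P})^{\otimes I}$, which are bounded below) and then uses continuity of the invariants functor.
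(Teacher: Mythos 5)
Your overall strategy --- transporting everything across the equivalence of Theorem \ref{semiinfresequiv} and working with the spectral-side decomposition $\ind^{* \to \ch} \circ \triv_{\check{N}_{\check{P}}}$ --- is precisely the route the paper takes (the paper's proof simply says ``these are consequences of Theorem \ref{semiinfresequiv}''), and your treatment of t-exactness and ULA preservation is correct: t-exactness of $\triv_{\check{N}_{\check{P}}}$ follows since $\oblv_{\check{P}} \circ \triv_{\check{N}_{\check{P}}} = \oblv_{\check{M}}$ with $\oblv_{\check{P}}$ conservative and t-exact, and $\ind^{* \to \ch}$ is t-exact by the cited Proposition 7.11.1(2) of \cite{R2}; for ULA preservation, the relevant statement is really Proposition B.7.1 of \cite{R2} (Lemma \ref{aulapreslem} is its almost-ULA analogue), but the content --- both right adjoints are continuous and $\DD(X^I)$-linear, via Lemma \ref{rigidrightadj} and (\ref{chiralind2}) --- is right.

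The gap you flag in the conservativity argument is genuine, and your diagnosis that ``right completeness'' lets you ``choose a lowest nonvanishing cohomology'' is a left/right confusion: a lowest nonvanishing cohomology requires boundedness from below, which is controlled by left completeness (or left-separatedness), not right completeness. Your proposed patch --- passing to compact generators and using continuity --- does not close the gap either: if $(-)^{\check{N}_{\check{P}}}$ is continuous and kills $V = \colim_\alpha V_\alpha$, one cannot conclude it kills each $V_\alpha$.

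The clean fix is to dualize the claim. Conservativity of the right adjoint of (\ref{whitgen}) is equivalent to the assertion that the image of the left adjoint generates the target under colimits. Since $\Upsilon(\check{\mathfrak{n}}_{\check{P}})\mod_0^{\fact}(\Rep(\check{M}))$ is by definition generated under colimits by the image of $\ind^{* \to \ch}$, it suffices to show the image of $\triv_{\check{N}_{\check{P}}} : \Rep(\check{M})_{X^I_{\dR}} \to \Rep(\check{P})_{X^I_{\dR}}$ generates under colimits. This holds because $\check{N}_{\check{P}}$ is unipotent and normal in $\check{P}$, so every object of $\Rep(\check{P})^{\heartsuit}$ has a finite filtration with subquotients pulled back from $\Rep(\check{M})^{\heartsuit}$, and $\Rep(\check{P})_{X^I_{\dR}}$ is generated from its heart (via the compact generators $\Loc_{X^I}$ of compact objects, which are bounded). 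This is exactly the generation statement the paper itself invokes in the proof of Theorem \ref{semiinfresequiv} (``the images of $\triv_{\check{N}_{\check{P}}}$ and (\ref{chiralind2}) generate their targets under colimits''), so it should have been the first thing to reach for. The paper also offers an independent geometric route for conservativity via Theorem 5.10.1 of \cite{R3}, which sidesteps the spectral decomposition entirely.
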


\begin{proof}

These are consequences of Theorem \ref{semiinfresequiv}. Alternatively, one can prove the t-exactness more directly along the same lines as Lemma \ref{semiinftstrlem}, and the existence of a factorizable and conservative right adjoint follows from Theorem 5.10.1 in \cite{R3}.

\end{proof}

\subsection{Definition of the spherical semi-infinite category} We will need a couple of technical lemmas.

\begin{lemma}

\label{whitavtexact}

For any finite set $I$, the functor \[ (\DD(\mathfrak{L}^+G \backslash \mathfrak{L}G)_{\mathfrak{L}N_P \mathfrak{L}^+M})_{X^I_{\dR}} \xrightarrow{\Av^{\mathfrak{L}N^-,\psi}_!} (\DD(\mathfrak{L}G)_{\mathfrak{L}N_P\mathfrak{L}^+M})^{\mathfrak{L}N^-,\psi,\acc}_{X^I_{\dR}} \] is t-exact. Moreover, this functor is conservative on eventually coconnective objects.

\end{lemma}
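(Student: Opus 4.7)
The proof has three components: right t-exactness, left t-exactness, and conservativity on eventually coconnective objects. The first is a generator chase, and the latter two follow from an analysis of the right adjoint $R := \Av^{\fL^+G}_* \circ \oblv_{\fL N^-,\psi}$.

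For right t-exactness, the connective part of the source is, by construction, the colimit closure of $\mathfrak{p}_! \mathfrak{q}^![-\ell_P(\deg_M)](\DD(\sH_M)_{X^I_{\dR}}^{\leq 0})$; since $\Sat^{\nv}_M$ is t-exact and $\DD(\sH_M)_{X^I_{\dR}}$ is right complete, this equals the colimit closure of $\mathfrak{p}_! \mathfrak{q}^![-\ell_P(\deg_M)]\Sat^{\nv}_M(\Rep(\check{M})_{X^I_{\dR}}^{\leq 0})$. The functors $\mathfrak{p}_! \mathfrak{q}^![-\ell_P(\deg_M)]$, $\Av^{\fL N^-,\psi}_!$, and $\Sat^{\nv}_M$ are all strictly unital (the first two in $\FactCat$ and $\FactCat_{\laxuntl}$, the last in $\FactCat$), and a $\DD(\sH_M)$-linearity argument reducing to the unit identifies the triple composite with the morphism (\ref{whitgen}), which is t-exact by Proposition \ref{whitgenexact}. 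Continuity of $\Av^{\fL N^-,\psi}_!$ then gives right t-exactness.

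For left t-exactness, by adjunction this is equivalent to right t-exactness of $R$. Using the generator description of the connective part of the Whittaker accessible category and continuity of $R$, it suffices to show $R((\ref{whitgen})(V))$ is connective in the semi-infinite spherical category for every $V \in \Rep(\check{M})_{X^I_{\dR}}^{\leq 0}$. By $\DD(\sH_M)$-equivariance of $R$ together with right t-exactness of the convolution action of $\DD(\sH_M)^{\leq 0}$, this further reduces to the connectivity of $R$ applied to the Whittaker unit, which is the main obstacle. I would handle this following the strategy of Lemma \ref{semiinftstrlem}: reduce via an \'etale coordinate on $X$ to a point-wise statement, then to a stratum-by-stratum computation of Whittaker averages, where the cohomological shift by $\ell_P(\deg_M)$ is precisely calibrated to cancel the relative dimensions of the strata, in the spirit of Theorem 1.5.5 in \cite{G1}.

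Finally, conservativity on eventually coconnective objects follows from t-exactness by reducing (via commutation with cohomological truncations) to conservativity on the heart. Under the t-exact equivalence of Theorem \ref{semiinfresequiv} and the identification above, the restriction of $\Av^{\fL N^-,\psi}_!$ to heart objects in the essential image of $\mathfrak{p}_! \mathfrak{q}^![-\ell_P(\deg_M)] \circ \Sat^{\nv}_M$ corresponds to $\ind^{* \to \ch} \circ \triv_{\check{N}_{\check{P}}}$, which is fully faithful on hearts by Lemma 7.16.1 of \cite{R2}; combined with the conservativity of $\mathfrak{q}_! \mathfrak{p}^![\ell_P(\deg_M)]$ noted in the text before Lemma \ref{semiinftstrlem} and the equivalence-on-hearts property of $\Sat^{\nv}_M$, this yields the claim.
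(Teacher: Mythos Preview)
Your argument for right t-exactness is fine and matches the paper's. The problem is in the left t-exactness step.

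The claim ``left t-exactness of $\Av^{\fL N^-,\psi}_!$ is, by adjunction, equivalent to right t-exactness of $R$'' is false. For an adjunction $L \dashv R$, what adjunction gives you is \emph{right} t-exactness of $L$ $\iff$ \emph{left} t-exactness of $R$; there is no corresponding statement relating left t-exactness of $L$ to right t-exactness of $R$. In fact, in the situation at hand $R$ is \emph{not} right t-exact: $R$ applied to the Whittaker unit is the monad $R \circ \Av^{\fL N^-,\psi}_!$ evaluated at $\Delta^0$, and the paper's own analysis (in the conservativity paragraph) identifies this monad with convolution by an object $\sM \in \DD(\sH_G)_{x}^{\geq 0}$ whose higher cohomologies are nonzero (they encode $\CC_{\dR}(\bB G) \cong \Sym(\mathfrak{g}[-2])^G$). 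Since the $\Rep(\check{G})$-action on the semi-infinite heart is t-exact, $\sM \star \Delta^0$ has nonvanishing positive cohomology and is therefore not connective. So the reduction you propose leads to a false target statement.

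The paper handles left t-exactness differently: it reduces via factorization and an \'etale coordinate to a single point, then uses right completeness to reduce to the heart. There the key input is the description $(\DD(\fL^+G \backslash \fL G)_{\fL N_P \fL^+M})_x^{\heartsuit} \cong \Rep(\check{P})^{\heartsuit}$ (a parabolic variant of results in \cite{G1}), together with the fact that $\Rep(\check{P})^{\heartsuit}$ is generated under extensions by the image of $\triv_{\check{N}_{\check{P}}}$, on which one already knows the answer from Proposition \ref{whitgenexact}.

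Your conservativity sketch also has a gap: you only control $\Av^{\fL N^-,\psi}_!$ on heart objects lying in the essential image of $\mathfrak{p}_! \mathfrak{q}^![-\ell_P(\deg_M)] \circ \Sat^{\nv}_M$, but this image is not all of the heart (it corresponds to $\triv_{\check{N}_{\check{P}}}(\Rep(\check{M})^{\heartsuit}) \subsetneq \Rep(\check{P})^{\heartsuit}$), and the conservativity of $\mathfrak{q}_! \mathfrak{p}^!$ does not help you pass from this subclass to the whole heart. The paper instead analyzes the monad $R \circ \Av^{\fL N^-,\psi}_!$ directly: it is given by convolution with $\sM$ as above, and since $H^0\sM = \delta_1$ while the action of $\Rep(\check{G})$ is left t-exact, the unit map $c \to \sM \star c$ induces an isomorphism on $H^0$ for $c$ in the heart, giving conservativity.
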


\begin{proof}

To see that the functor is right t-exact, consider the commutative triangle \[
\begin{tikzcd}
& \Rep(\check{M}) \arrow{dl} \arrow{dr} & \\
\DD(\mathfrak{L}^+G \backslash \mathfrak{L}G)_{\mathfrak{L}N_P \mathfrak{L}^+M} \arrow{rr} & & (\DD(\mathfrak{L}G)_{\mathfrak{L}N_P\mathfrak{L}^+M})^{\mathfrak{L}N^-,\psi,\acc}
\end{tikzcd} \]
in $\FactCat$, where both diagonal morphisms are given by acting on the unit objects. Over $X^I_{\dR}$, the diagonal functors are t-exact and generate their targets under colimits, which implies that the horizontal functor is right t-exact.

For left t-exactness, the Cousin filtration and factorization immediately reduce the claim to the case where $I$ has one element. After \'{e}tale localization on $X$, we have a $\DD(X)$-linear equivalence \[ (\DD(\mathfrak{L}^+G \backslash \mathfrak{L}G)_{\mathfrak{L}N_P \mathfrak{L}^+M})_{X_{\dR}} \cong (\DD(\mathfrak{L}^+G \backslash \mathfrak{L}G)_{\mathfrak{L}N_P \mathfrak{L}^+M})_x \otimes \DD(X) \] for a fixed $x \in X(k)$, and similarly for \[ (\DD(\mathfrak{L}G)_{\mathfrak{L}N_P\mathfrak{L}^+M})^{\mathfrak{L}N^-,\psi,\acc}_{X_{\dR}}. \] Thus, by Lemma \ref{tensorexact} we can even reduce to proving the claim at a single point $x$. Since the t-structure on $(\DD(\mathfrak{L}^+G \backslash \mathfrak{L}G)_{\mathfrak{L}N_P \mathfrak{L}^+M})_x$ is right complete, it is enough to prove that the functor is left t-exact when restricted to the heart. By (parabolic variants of) Proposition 1.5.7 and Theorem 4.2.3 in \cite{G1}, there is a functor
\begin{equation}
\label{reppfunct}
\Rep(\check{P}) \longrightarrow (\DD(\mathfrak{L}^+G \backslash \mathfrak{L}G)_{\mathfrak{L}N_P \mathfrak{L}^+M})_x
\end{equation}
which induces an equivalence \[ \Rep(\check{P})^{\heartsuit} \tilde{\longrightarrow} (\DD(\mathfrak{L}^+G \backslash \mathfrak{L}G)_{\mathfrak{L}N_P \mathfrak{L}^+M})_x^{\heartsuit}. \] Since $\Rep(\check{P})^{\heartsuit}$ is generated under extensions and direct sums by the image of \[ \triv_{\check{N}_{\check{P}}} : \Rep(\check{M})^{\heartsuit} \longrightarrow \Rep(\check{P})^{\heartsuit}, \] the claim follows from Proposition \ref{whitgenexact}.

Finally, we prove conservativity on eventually coconnective objects. Again, using factorization we can reduce to the case that $I$ has a single element. The functor $\Av^{\mathfrak{L}N^-,\psi}_!$ is left adjoint to $\Av^{\mathfrak{L}^+G}_*$, and by the t-exactness statements proved above, the resulting monad on \[ (\DD(\mathfrak{L}^+G \backslash \mathfrak{L}G)_{\mathfrak{L}N_P \mathfrak{L}^+M})_{X_{\dR}} \] is left t-exact. Thus it suffices to show that this monad is conservative on eventually coconnective objects. Observe that the monad is given by the action of the object \[ \Av^{\mathfrak{L}^+G}_*\Av^{\mathfrak{L}N^-,\psi}_!\delta_{1,X} \] in $\DD(\sH_G)_{X_{\dR}}$. After \'{e}tale localization on $X$ we have $(\mathfrak{L}G)_X \cong (\mathfrak{L}G)_x \times X$ for a fixed $x \in X(k)$, from which it follows that the object \[ \Av^{\mathfrak{L}^+G}_*\Av^{\mathfrak{L}N^-,\psi}_!\delta_{1,X} \] has the form $\sM \boxtimes \omega_X$ with respect to the decomposition \[ \DD(\sH_G)_{X_{\dR}} = \DD(\sH_{G,X}) \cong \DD(\sH_{G,x}) \otimes \DD(X). \] The na\"{i}ve Satake equivalence implies that for any $i \in \bZ$, we have $H^i\sM \cong \Sat_{G,x}^{\nv}V$ for some $V$ in $\Rep(\check{G})^{\heartsuit}$. Since the monad in question is left t-exact, we have $H^i\sM = 0$ for $i < 0$, and Theorem \ref{csformula} implies that $H^0\sM = \delta_{1,x}$. Thus it suffices show that the action of the symmetric monoidal category $\Rep(\check{G})$ on $(D(\mathfrak{L}^+G \backslash \mathfrak{L}G)_{\mathfrak{L}N_P \mathfrak{L}^+M})_x$ is left t-exact. This follows from the fact that the functor (\ref{reppfunct}) is $\Rep(\check{G})$-linear and induces an equivalence on the hearts.

\end{proof}

\begin{lemma} 

For any finite set $I$, the t-structure on $(\DD(\mathfrak{L}^+G \backslash \mathfrak{L}G)_{\mathfrak{L}N_P \mathfrak{L}^+M})_{X^I_{\dR}}$ satisfies the conditions of Proposition \ref{factcatren}.

\end{lemma}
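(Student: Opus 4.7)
The plan is to transfer the conditions of Proposition \ref{factcatren} from $\DD(\sH_M)_{X^I_{\dR}}$ --- which satisfies them by Proposition \ref{sphren} applied to $M$ --- via the adjunction (\ref{semiinfgen}). Writing $F := \mathfrak{p}_!\mathfrak{q}^![-\ell_P(\deg_M)]$ and $F^{\RR}:=\mathfrak{q}_!\mathfrak{p}^![\ell_P(\deg_M)]$, both functors are $\DD(X^I)$-linear, $F$ is t-exact by Lemma \ref{semiinftstrlem}, and $F^{\RR}$ is continuous, left t-exact, and conservative (the latter because $\mathfrak{p}$ is an ind-locally closed stratification). Consequently $F$ preserves compact objects, and by Lemma \ref{aulapreslem} it preserves almost ULA objects (hence also almost compact objects).

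For the almost ULA generation condition, I first apply Lemma \ref{cohcrit} to deduce almost compact generation. Condition (i) of that lemma --- compatibility of the t-structure with filtered colimits and right completeness --- is inherited from $\DD(\sH_M)_{X^I_{\dR}}$ via the continuous, left t-exact, conservative functor $F^{\RR}$. For condition (ii), t-exactness of $F$ together with the definition of the t-structure on the target imply that the heart is generated under colimits by the image of $\DD(\sH_M)_{X^I_{\dR}}^{\heartsuit}$; combined with preservation of compact objects by $F$ and compact generation of the heart of $\DD(\sH_M)_{X^I_{\dR}}$, this yields compact generation. Condition (iii) reduces, by retracts and finite colimits, to showing that images of compact objects in $\DD(\sH_M)_{X^I_{\dR}}^{\heartsuit}$ are almost compact in the target, which follows from preservation of almost compact objects by $F$ together with Proposition \ref{sphren}. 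To upgrade to almost ULA generation proper, the commutative square of $\oblv$ and $\id \otimes F$ on $\IndCoh(X^I) \otimes_{\DD(X^I)} (-)$ sends an almost ULA generating family $\{\tau^{\geq 0}(\sF \otimes \oblv_{\sC}(c_{\alpha}))\}$ of the $\geq 0$ part of the source to generators $\{\tau^{\geq 0}(\sF \otimes \oblv_{\sD}(F(c_{\alpha})))\}$ of the target, with $F(c_{\alpha})$ almost ULA; these images generate because the right adjoint remains conservative after $\IndCoh$-tensoring.

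The remaining structure-functor conditions --- t-exactness of the left adjoint for surjections, finite homological amplitude for injections, and the external-fusion coconnectivity condition of Proposition \ref{factcatren} --- are inherited from the corresponding properties of $\DD(\sH_M)$ via the compatibility of the adjunction (\ref{semiinfgen}) with factorization structures and the $\DD(X^I)$-linearity of both $F$ and $F^{\RR}$. In each case the relevant structure functor on $(\DD(\mathfrak{L}^+G\backslash\mathfrak{L}G)_{\mathfrak{L}N_P\mathfrak{L}^+M})_{X^I_{\dR}}$ intertwines through $F$ with the known structure functor on $\DD(\sH_M)_{X^I_{\dR}}$, and the required (co)connectivity estimate can be read off from the source using that $F$ is t-exact while $F^{\RR}$ is conservative and left t-exact. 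I expect the main obstacle to be the bookkeeping around these intertwinings: verifying in each case that the commutative square is genuinely t-compatible across all varying $X^I$, and in particular that the shift by $\ell_P(\deg_M)$ --- which depends on the connected component of $\Gr_M$ --- does not interfere with the t-exactness transfers when mapping between strata of $X^I$.
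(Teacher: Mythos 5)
Your approach for the coherence condition has a genuine gap, and it is precisely the condition that the paper singles out as requiring care.

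For coherence, you attempt to push the coherence of $\DD(\sH_M)_{X^I_{\dR}}$ forward along $F := \mathfrak{p}_!\mathfrak{q}^![-\ell_P(\deg_M)]$, arguing that compact objects in the heart of the target are built "by retracts and finite colimits" from images of compact heart objects and are therefore almost compact. This reduction is incorrect as stated: the relevant mechanism (encoded in Lemma~\ref{upscohlem}) produces a \emph{finite filtration} of a compact heart object with subquotients of the form $F(c)$, and that mechanism requires $F$ to be fully faithful on hearts. You never assert or verify this, and it is genuinely nontrivial over $X^I$ --- the paper only identifies the heart of $(\DD(\mathfrak{L}^+G\backslash\mathfrak{L}G)_{\mathfrak{L}N_P\mathfrak{L}^+M})$ explicitly over a point (via parabolic variants of results of \cite{G1}, where it becomes $\Rep(\check{P})^{\heartsuit}$), and even there one would need to check that $\mathfrak{p}_!\mathfrak{q}^!$ realizes the inflation $\Rep(\check{M})^{\heartsuit}\hookrightarrow\Rep(\check{P})^{\heartsuit}$, which the paper does not establish in this form over $X^I$. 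Absent full faithfulness on hearts, pushing coherence forward along a t-exact functor with conservative continuous right adjoint is simply not a valid implication.

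The paper goes in the opposite direction: it considers the adjunction $\Av^{\mathfrak{L}N^-,\psi}_! \dashv \Av^{\mathfrak{L}^+G}_*$, identifies the target with $\Upsilon(\check{\mathfrak{n}}_{\check{P}})\mod_0^{\fact}(\Rep(\check{M}))_{X^I_{\dR}}$ via Theorem~\ref{semiinfresequiv}, notes that $\Av^{\mathfrak{L}N^-,\psi}_!$ is t-exact and conservative on eventually coconnective objects (Lemma~\ref{whitavtexact}), and then applies Lemma~\ref{aclem} to \emph{pull back} coherence from the spectral category (which was established in Proposition~\ref{upscoh}). Pulling coherence back along a t-exact, $\DD(X^I)$-linear functor that is conservative on $\sC^+$ only uses Lemma~\ref{aclem}; no full faithfulness on hearts is needed, and the hypotheses are verifiable. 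Note also that the input coherence here comes from the spectral side (Proposition~\ref{upscoh}), not from $\DD(\sH_M)$ --- your plan to source the whole argument from $\DD(\sH_M)_{X^I_{\dR}}$ and the adjunction~(\ref{semiinfgen}) does not furnish the needed half of the argument. The remaining conditions (almost ULA generation granted coherence, conditions~(ii)--(iii) of Proposition~\ref{sheafranren}, and the external-fusion coconnectivity condition) do transfer from $\DD(\sH_M)$ roughly as you describe, and this part of your proposal matches the paper's one-line remark.
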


\begin{proof}

Most of the conditions can be easily deduced from the corresponding properties of the t-structure on $\DD(\sH_M)_{X^I_{\dR}}$, with the exception of coherence.

For coherence, we consider the adjunction of $\DD(X^I)$-module categories \[ \Av^{\mathfrak{L}N^-,\psi}_! : (\DD(\mathfrak{L}^+G \backslash \mathfrak{L}G)_{\mathfrak{L}N_P \mathfrak{L}^+M})_{X^I_{\dR}} \rightleftarrows (\DD(\mathfrak{L}G)_{\mathfrak{L}N_P\mathfrak{L}^+M})^{\mathfrak{L}N^-,\psi,\acc}_{X^I_{\dR}} : \Av^{\mathfrak{L}^+G}_*. \] Using Theorem \ref{semiinfresequiv}, we can replace the right side by its spectral counterpart: \[ (\DD(\mathfrak{L}^+G \backslash \mathfrak{L}G)_{\mathfrak{L}N_P \mathfrak{L}^+M})_{X^I_{\dR}} \rightleftarrows \Upsilon(\check{\mathfrak{n}}_{\check{P}})\mod_0^{\fact}(\Rep(\check{M}))_{X^I_{\dR}}. \] Proposition \ref{whitavtexact} says that the left adjoint is t-exact, and also conservative on eventually coconnective objects. By Lemma \ref{aclem}, an object in $(\DD(\mathfrak{L}^+G \backslash \mathfrak{L}G)_{\mathfrak{L}N_P \mathfrak{L}^+M})_{X^I_{\dR}}$ is almost compact if and only if it is so after applying the left adjoint. Now coherence of the left side follows from coherence of the right side, which was proved in Proposition \ref{upscoh}.

\end{proof}

Thus we can define \[ \Sph_{G,P} := (\DD(\mathfrak{L}^+G \backslash \mathfrak{L}G)_{\mathfrak{L}N_P \mathfrak{L}^+M})^{\ren}, \] which is \emph{a priori} an object of $\FactCat^{\laxfact}$. Note that the adjunction (\ref{semiinfgen}) renormalizes to a monadic adjunction \[ \Sph_M \rightleftarrows \Sph_{G,P}, \] and in particular $\Sph_{G,P}$ belongs to $\FactCat$ because $\Sph_M$ does.

\begin{proposition}

The object $\Sph_{G,P}$ admits a unique structure of $(\Sph_G,\Sph_M)$-bimodule in $\FactCat$ compatible with the morphism \[ \Sph_{G,P} \longrightarrow \DD(\mathfrak{L}^+G \backslash \mathfrak{L}G)_{\mathfrak{L}N_P \mathfrak{L}^+M}. \]

\end{proposition}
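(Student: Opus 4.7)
The plan is to mimic the proof of Proposition \ref{specheckemult} on the spectral side. By Proposition \ref{monfactcatren}, the renormalization assignment $\sC \mapsto \sC^{\ren}$ lifts to a morphism of pseudo-tensor categories $(\FactCat^{\laxfact})^{\otimes}_{\aULA} \to (\FactCat^{\laxfact})^{\otimes}_{\aULA}$. Accordingly, it suffices to show that the natural $(\DD(\sH_G),\DD(\sH_M))$-bimodule structure on $\DD(\mathfrak{L}^+G \backslash \mathfrak{L}G)_{\mathfrak{L}N_P\mathfrak{L}^+M}$, coming from convolution, defines a bimodule structure in the pseudo-tensor category $(\FactCat^{\laxfact})^{\otimes}_{\aULA}$. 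Uniqueness will then be automatic from the universal property of the pseudo-tensor structure.

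Concretely, this means I need to verify that for each finite set $I$, the $\DD(X^I)$-linear action maps
\[ \DD(\sH_G)_{X^I_{\dR}} \underset{\DD(X^I)}{\otimes} (\DD(\mathfrak{L}^+G \backslash \mathfrak{L}G)_{\mathfrak{L}N_P\mathfrak{L}^+M})_{X^I_{\dR}} \longrightarrow (\DD(\mathfrak{L}^+G \backslash \mathfrak{L}G)_{\mathfrak{L}N_P\mathfrak{L}^+M})_{X^I_{\dR}} \]
(and similarly for the right $\DD(\sH_M)$-action) send the product of the eventually coconnective subcategories into the eventually coconnective subcategory. By right completeness of the t-structures on all three categories, combined with the fact that the t-structures are compactly generated on their coconnective halves, it suffices to verify that the bifunctor in question is left t-exact in each variable separately (which will imply the claim on $+$-objects). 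Since right completeness lets us reduce further to testing on the hearts, the core of the argument is to check that the action preserves coconnectivity at the level of hearts.

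For the right action of $\DD(\sH_M)$, this is essentially built into the definition of the t-structure on $(\DD(\mathfrak{L}^+G \backslash \mathfrak{L}G)_{\mathfrak{L}N_P\mathfrak{L}^+M})_{X^I_{\dR}}$: by construction the conservative right adjoint $\mathfrak{q}_!\mathfrak{p}^![\ell_P(\deg_M)]$ to the generating functor is $\DD(\sH_M)$-linear and left t-exact, and the argument of Lemma \ref{semiinftstrlem} (which expresses the resulting monad via convolution with a coconnective kernel) shows that right convolution with any object of $\DD(\sH_M)_{X^I_{\dR}}^{\geq 0}$ is left t-exact. For the left action of $\DD(\sH_G)$, I would use the naive Satake equivalence $\Sat^{\nv}_G : \Rep(\check G)_{X^I_{\dR}}^{\heartsuit} \tilde{\to} \DD(\sH_G)_{X^I_{\dR}}^{\heartsuit}$ to reduce to showing left t-exactness of the $\Rep(\check G)$-action on $(\DD(\mathfrak{L}^+G \backslash \mathfrak{L}G)_{\mathfrak{L}N_P\mathfrak{L}^+M})_{X^I_{\dR}}^{\heartsuit}$, and then transport the argument used at the end of the proof of Lemma \ref{whitavtexact}: the parabolic variant of the functor (\ref{reppfunct}) is $\Rep(\check G)$-linear and induces an equivalence on hearts, and the $\Rep(\check G)$-action on $\Rep(\check P)^{\heartsuit}$ is manifestly t-exact in each variable.

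The main technical obstacle is precisely the verification of left t-exactness of the $\Rep(\check G)$-action on the hearts for general $I$ (as opposed to at a single point). The pointwise version is cleanly handled by (\ref{reppfunct}) and its extension, but globalizing over $X^I$ requires some care: one can \'{e}tale-localize on $X$ and apply Lemma \ref{tensorexact} combined with a Cousin/stratification argument on $X^I$ to reduce to the pointwise case, analogously to the strategy employed in Lemma \ref{whitavtexact}. Once this is checked, the conclusion that $\Sph_{G,P} \in \FactCat$ (rather than merely $\FactCat^{\laxfact}$) inherits the bimodule structure over $(\Sph_G,\Sph_M)$ is immediate from functoriality of the renormalization morphism of pseudo-tensor categories.
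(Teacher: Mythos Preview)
Your proposal is correct and follows essentially the same approach as the paper's proof: invoke Proposition \ref{monfactcatren} to reduce to showing the bimodule structure lives in $(\FactCat^{\laxfact})^{\otimes}_{\aULA}$, handle the $\DD(\sH_M)$-action via the $\DD(\sH_M)$-linear conservative left t-exact right adjoint in (\ref{semiinfgen}), and handle the $\DD(\sH_G)$-action via na\"{i}ve Satake plus right completeness to reduce to left t-exactness of the $\Rep(\check{G})$-action on hearts, then reduce to a point and cite the argument at the end of Lemma \ref{whitavtexact}. The paper is more terse about the reduction to a point (it just says ``using factorization''), whereas you spell out the Cousin/\'{e}tale-localization mechanism, but the strategy is identical.
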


\begin{proof}

By Proposition \ref{monfactcatren}, it suffices to show that the $(\DD(\sH_G),\DD(\sH_M))$-bimodule structure on $\DD(\mathfrak{L}^+G \backslash \mathfrak{L}G)_{\mathfrak{L}N_P \mathfrak{L}^+M}$ takes place in the pseudo-tensor category $\FactCat^{\otimes}_{\aULA}$. For $\DD(\sH_M)$, this follows from the existence of the right adjoint in (\ref{semiinfgen}), which is $\DD(\sH_M)$-linear as well as conservative and left t-exact over $X^I_{\dR}$ for every finite set $I$. As for $\DD(\sH_G)$, by right completeness of the t-structure and the na\"{i}ve Satake equivalence, it is enough to show that any object of $\Rep(\check{G})_{X^I_{\dR}}^{\heartsuit}$ acts by a left t-exact functor. Using factorization, this immediately reduces to the corresponding claim at a point $x \in X(k)$, which was established in the proof of Lemma \ref{whitavtexact}.

\end{proof}

\subsection{Derived Satake transform for a parabolic} We begin by constructing a morphism
\begin{equation}
\label{psfunctor}
\DD(\mathfrak{L}^+ G \backslash \mathfrak{L}G)_{\mathfrak{L}N_P\mathfrak{L}^+M} \longrightarrow \Rep(\check{G} \times \check{M})
\end{equation}
in $\FactCat_{\laxuntl}$. By Proposition \ref{ulagenprop}, this is the same datum as a morphism \[ \Rep(\check{G}) \otimes \DD(\mathfrak{L}^+ G \backslash \mathfrak{L}G)_{\mathfrak{L}N_P\mathfrak{L}^+M} \longrightarrow \Rep(\check{M}). \]

By Theorem \ref{csformula}, we can identify \[ \Rep(\check{G}) \tilde{\longrightarrow} \DD(\Gr_G)^{\mathfrak{L}N^-,\psi}, \] so we will construct a morphism \[ \DD(\Gr_G)^{\mathfrak{L}N^-,\psi} \otimes \DD(\mathfrak{L}^+ G \backslash \mathfrak{L}G)_{\mathfrak{L}N_P\mathfrak{L}^+M} \longrightarrow \Rep(\check{M}). \] First, convolve in the middle to obtain a morphism \[ \DD(\Gr_G)^{\mathfrak{L}N^-,\psi} \otimes \DD(\mathfrak{L}^+ G \backslash \mathfrak{L}G)_{\mathfrak{L}N_P\mathfrak{L}^+M} \longrightarrow (\DD(\mathfrak{L}G)_{\mathfrak{L}N_P\mathfrak{L}^+M})^{\mathfrak{L}N^-,\psi}. \] Then compose with (\ref{semiinfres}) to obtain the desired morphism (\ref{psfunctor}).

\begin{theorem}

\label{psfunctorunit}

The image of the unit under the morphism (\ref{psfunctor}) is canonically isomorphic to $\Upsilon(\check{\mathfrak{n}}_{\check{P}},\sO_{\check{G}})$ as a factorization algebra.

\end{theorem}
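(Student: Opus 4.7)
The plan is to exploit the self-duality $\Rep(\check{G})^{\vee} \simeq \Rep(\check{G})$ in $\FactCat_{\laxuntl}$ from Proposition \ref{ulagenprop}. Under this duality, factorization algebras in $\Rep(\check{G} \times \check{M})$ correspond to factorization algebras in $\uHom_{\FactCat_{\laxuntl}}(\Rep(\check{G}),\Rep(\check{M}))$, i.e., lax factorizable functors $\Rep(\check{G}) \to \Rep(\check{M})$. By construction, $\Upsilon(\check{\mathfrak{n}}_{\check{P}},\sO_{\check{G}})$ corresponds to $\Upsilon(\check{\mathfrak{n}}_{\check{P}},-)$, since $\sO_{\check{G}} \in \Rep(\check{G} \times \check{G})$ corresponds to $\id_{\Rep(\check{G})}$ under self-duality. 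On the other hand, unwinding the definition of (\ref{psfunctor}), the image of $\Delta^0$ corresponds to the composite
\begin{equation*}
\Phi\colon \Rep(\check{G}) \xrightarrow{\text{Thm \ref{csformula}}} \DD(\Gr_G)^{\mathfrak{L}N^-,\psi} \xrightarrow{-\star\Delta^0} (\DD(\mathfrak{L}G)_{\mathfrak{L}N_P\mathfrak{L}^+M})^{\mathfrak{L}N^-,\psi} \xrightarrow{\text{(\ref{semiinfres})}} \Rep(\check{M}),
\end{equation*}
so the theorem reduces to showing $\Phi \simeq \Upsilon(\check{\mathfrak{n}}_{\check{P}},-)$ as lax factorizable functors.

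Since convolution with the factorization unit $\Delta^0$ visibly lands in the accessible subcategory, Theorem \ref{semiinfresequiv} lifts $\Phi$ to a strictly unital morphism $\widetilde{\Phi}\colon \Rep(\check{G}) \to \Upsilon(\check{\mathfrak{n}}_{\check{P}})\mod_0^{\fact}(\Rep(\check{M}))$ in $\FactCat$, and the theorem reduces further to identifying $\widetilde{\Phi}$ with $\widetilde{\Phi}^{\spec} := \text{(\ref{chiralind2})} \circ \res^{\check{G}}_{\check{P}}$; composing either with $\oblv^{\ch\to*}$ recovers the lax factorizable functor in question.

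To match $\widetilde{\Phi}$ and $\widetilde{\Phi}^{\spec}$, both are strictly unital morphisms in $\FactCat$ out of the rigid symmetric monoidal factorization category $\Rep(\check{G})$, and both send the factorization unit to $\Vac_{\Upsilon(\check{\mathfrak{n}}_{\check{P}})}$: on the spectral side by construction, on the geometric side by Theorem \ref{semiinfresvac}. Furthermore, both are $\Rep(\check{G})$-linear, where $\Rep(\check{G})$ acts on $\Upsilon(\check{\mathfrak{n}}_{\check{P}})\mod_0^{\fact}(\Rep(\check{M}))$ through the symmetric monoidal functor $\widetilde{\Phi}^{\spec}$ itself. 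Since a strictly unital $\Rep(\check{G})$-linear morphism in $\FactCat$ out of $\Rep(\check{G})$ is determined by its value on the factorization unit---by rigidity, $F(V) \simeq V \star F(\mathbf{1})$---this yields $\widetilde{\Phi} \simeq \widetilde{\Phi}^{\spec}$.

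The main obstacle is establishing the $\Rep(\check{G})$-linearity of $\widetilde{\Phi}$ with the correct identification of actions: specifically, that the left convolution action of $\DD(\sH_G) \supset \Rep(\check{G})$ on the semi-infinite Whittaker category $(\DD(\mathfrak{L}G)_{\mathfrak{L}N_P\mathfrak{L}^+M})^{\mathfrak{L}N^-,\psi,\acc}$ is intertwined, via the equivalence of Theorem \ref{semiinfresequiv}, with the action of $\Rep(\check{G})$ on $\Upsilon(\check{\mathfrak{n}}_{\check{P}})\mod_0^{\fact}(\Rep(\check{M}))$ coming from $\res^{\check{G}}_{\check{P}}$. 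This compatibility is not immediate from the formal construction but is an essential feature of the Arkhipov-Bezrukavnikov-style proof of Theorem \ref{semiinfresequiv}; for $P = B$ it is contained in \cite{R3}, and for general $P$ it is part of the input from \cite{FH}. Combined with Theorem \ref{semiinfresvac}, this linearity completes the identification of the two factorization algebras.
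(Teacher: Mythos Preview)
Your reduction is identical to the paper's: both pass through the self-duality of $\Rep(\check{G})$ in $\FactCat_{\laxuntl}$ to translate the claim into the assertion that the functor
\[
\Phi:\Rep(\check{G})\to\Rep(\check{M})
\]
obtained by pairing with the unit $\Delta^0$ coincides with $\Upsilon(\check{\mathfrak{n}}_{\check{P}},-)$ as a morphism in $\FactCat_{\laxuntl}$.

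From here the packaging diverges. The paper simply cites this identification: it is the parabolic variant of Theorem~7.9.1 of \cite{R2} (established for $P=B$ in \cite{R3} and for general~$P$ in \cite{FH}), which states exactly that the relevant triangle commutes. No further argument is supplied. You instead attempt to reprove that identification by lifting both functors through Theorem~\ref{semiinfresequiv} to the category $\Upsilon(\check{\mathfrak{n}}_{\check{P}})\mod_0^{\fact}(\Rep(\check{M}))$ and invoking uniqueness of $\Rep(\check{G})$-linear unital morphisms out of $\Rep(\check{G})$. This is a valid strategy, but the external input it requires---that the equivalence of Theorem~\ref{semiinfresequiv} intertwines the $\Rep(\check{G})$-action via $\Sat_G^{\nv}$ on the geometric side with the action via $\widetilde{\Phi}^{\spec}$ on the spectral side---is at least as strong as, and essentially equivalent to, the statement the paper cites directly. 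Both proofs therefore defer the substantive content to the same body of work in \cite{R3} and \cite{FH}; the paper just invokes the endpoint, while you invoke a structural compatibility that would be proved alongside it. One small slip: the forgetful functor you want after $\widetilde{\Phi}$ and $\widetilde{\Phi}^{\spec}$ is $\oblv_{\Upsilon(\check{\mathfrak{n}}_{\check{P}})}$ to $\Rep(\check{M})$, not $\oblv^{\ch\to *}$ to $\Rep(\check{P})$.
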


\begin{proof}

We have already seen this in the case $P = G$, where the functor (\ref{psfunctor}) is monoidal and the $\Upsilon(\check{\mathfrak{n}}_{\check{P}},\sO_{\check{G}})$ is the monoidal unit $\sO_{\check{G}}$ in $\Rep(\check{G} \times \check{G})$.

For the general case, a parabolic variant of Theorem 7.9.1 in \cite{R2} says that the triangle \[
\begin{tikzcd}
& \Rep(\check{G}) \arrow{dl} \arrow{dr}{\Upsilon(\check{\mathfrak{n}}_{\check{P}},-)} & \\
(\DD(\mathfrak{L}G)_{\mathfrak{L}N_P\mathfrak{L}^+M})^{\mathfrak{L}N^-,\psi} \arrow{rr}{(\ref{semiinfres})} & & \Rep(\check{M})
\end{tikzcd} \]
in $\FactCat_{\laxuntl}$ commutes. Here the left morphism is the composite \[ \Rep(\check{G}) \longrightarrow \DD(\mathfrak{L}^+G \backslash \mathfrak{L}G)_{\mathfrak{L}N_P\mathfrak{L}^+M} \xrightarrow{\Av_!^{\mathfrak{L}N^-,\psi}} (\DD(\mathfrak{L}G)_{\mathfrak{L}N_P\mathfrak{L}^+M})^{\mathfrak{L}N^-,\psi}, \] where the first functor is given by acting on the unit via $\Sat^{\nv}_G$. Since $\Upsilon(\check{\mathfrak{n}},-)$ corresponds to $\Upsilon(\check{\mathfrak{n}}_{\check{P}},\sO_{\check{G}})$ under the equivalence \[ \uHom_{\FactCat_{\laxuntl}}(\Rep(\check{G}),\Rep(\check{M})) \cong \Rep(\check{G} \times \check{M}), \] the claim follows.

\end{proof}

Applying (\ref{factmodfunct}), we therefore obtain a morphism
\begin{equation}
\label{psfunc1}
\DD(\mathfrak{L}^+G \backslash \mathfrak{L}G)_{\mathfrak{L}N_P \mathfrak{L}^+M} \longrightarrow \Upsilon(\check{\mathfrak{n}}_{\check{P}},\sO_{\check{G}})\modfact(\Rep(\check{G} \times \check{M})).
\end{equation}
in $\FactCat^{\laxfact}$. By construction, it is a morphism of $(\DD(\sH_G),\DD(\sH_M))$-bimodules, where the bimodule structure on the right side comes from the action of \[ (\sO_{\check{G}}\modfact(\Rep(\check{G} \times \check{G})),\sO_{\check{M}}\modfact(\Rep(\check{M} \times \check{M}))) \] via (\ref{presatake}).

\begin{proposition}

The image of the morphism (\ref{psfunc1}) is contained in \[ \Upsilon(\check{\mathfrak{n}}_{\check{P}},\sO_{\check{G}})\mod_0^{\fact}(\Rep(\check{G} \times \check{M})). \] For any finite set $I$, the resulting functor \[ (\DD(\mathfrak{L}^+G \backslash \mathfrak{L}G)_{\mathfrak{L}N_P \mathfrak{L}^+M})_{X^I_{\dR}} \longrightarrow \Upsilon(\check{\mathfrak{n}}_{\check{P}},\sO_{\check{G}})\mod_0^{\fact}(\Rep(\check{G} \times \check{M}))_{X^I_{\dR}} \] is t-exact.  

\end{proposition}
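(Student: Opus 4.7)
The plan is to prove the two assertions in turn, then tackle t-exactness in two halves (right and left).

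\emph{Image in the integrable subcategory.} I would trace through the definition of (\ref{psfunc1}) as an enhancement of (\ref{psfunctor}). Recall that (\ref{psfunctor}) is built by first convolving $\DD(\Gr_G)^{\mathfrak{L}N^-,\psi}$ with the semi-infinite category and then applying (\ref{semiinfres}). The key observation is that, by Theorem \ref{csformula}, every object of $\DD(\Gr_G)^{\mathfrak{L}N^-,\psi}$ is obtained via $\Av_!^{\mathfrak{L}N^-,\psi}$-averaging from the spherical side, so the convolution automatically lands in the accessible subcategory $(\DD(\mathfrak{L}G)_{\mathfrak{L}N_P\mathfrak{L}^+M})^{\mathfrak{L}N^-,\psi,\acc}$. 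Passing to factorization modules, the enhancement (\ref{semiinfresenh}) restricted to the accessible subcategory takes values in $\Upsilon(\check{\mathfrak{n}}_{\check{P}})\mod_0^{\fact}(\Rep(\check{M}))$ by Theorem \ref{semiinfresequiv}. Translated through the equivalence (\ref{upsilonfactmodtrans}), the image of (\ref{psfunc1}) lies in the integrable subcategory.

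\emph{Right t-exactness via generating computations.} The source t-structure is generated under colimits by the image of $\mathfrak{p}_!\mathfrak{q}^![-\ell_P(\deg_M)]$ applied to $\DD(\sH_M)_{X^I_{\dR}}^{\leq 0}$ (t-exact by Lemma \ref{semiinftstrlem}); using na\"{i}ve Satake, this is equivalently generated by $\Rep(\check{M})^{\leq 0}$ via $\mathfrak{p}_!\mathfrak{q}^![-\ell_P(\deg_M)] \circ \Sat^{\nv}_M$. The target t-structure is generated by the image of $\Rep(\check{P})^{\leq 0}$ under the t-exact morphism (\ref{psgen}), and hence also by $\Rep(\check{M})^{\leq 0}$ via $(\ref{psgen}) \circ \triv_{\check{N}_{\check{P}}}$. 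I claim that the two resulting composites $\Rep(\check{M}) \to \text{target}$ agree: both are $\Rep(\check{M})$-linear unital morphisms in $\FactCat$ sending the factorization unit to the factorization unit (the first because each factor is strictly unital; the second by construction of (\ref{psgen})), and such a morphism is uniquely determined by the image of the unit. Right t-exactness of (\ref{psfunc1}) is then immediate, since the generators of source-connective map to target-connective objects.

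\emph{Left t-exactness and the main obstacle.} For left t-exactness, I would compose (\ref{psfunc1}) with the forgetful functor through (\ref{upsilonfactmodtrans}) and $\oblv_{\Upsilon(\check{\mathfrak{n}}_{\check{P}},\sO_{\check{G}})}$, obtaining a functor $\text{source} \to \Upsilon(\check{\mathfrak{n}}_{\check{P}})\mod_0^{\fact}(\Rep(\check{M}))$. Unwinding the construction from Part 1, this composite equals $\Av_!^{\mathfrak{L}N^-,\psi}$ followed by the equivalence of Theorem \ref{semiinfresequiv}, both of which are t-exact (by Lemma \ref{whitavtexact} and the proposition immediately following Theorem \ref{semiinfresequiv} respectively). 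The $\Sph_G$-linearity of (\ref{psfunc1}), combined with the fact that $\Sph_G$-actions on both sides are t-exact (by right completeness and the na\"{i}ve Satake analysis, cf.\ the proof of Lemma \ref{whitavtexact}), then upgrades this to the full left t-exactness claim. The trickiest point, and what I expect to be the main obstacle, is verifying that the intrinsic t-structure on the target defined via Lemma \ref{psequivheart} is actually \emph{detected} by this forgetful composition—equivalently, that it matches the t-structure transported from the accessible Whittaker side under Theorem \ref{semiinfresequiv}. This should follow from both t-structures being the unique ones characterized by generation from $\Rep(\check{P})^{\leq 0}$ together with the matching of generators worked out in Part 2, but it requires careful bookkeeping through the equivalence (\ref{upsilonfactmodtrans}).
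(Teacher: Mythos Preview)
Your first two parts are essentially correct, though the paper takes a more economical route for the first claim. Rather than tracing through convolution and invoking Theorem~\ref{semiinfresequiv}, the paper simply notes that the image of the left adjoint in (\ref{semiinfgen}) generates the source under colimits, and since (\ref{psfunc1}) is $\DD(\sH_M)$-linear and unital, its image lies in the $\DD(\sH_M)$-submodule generated by the unit, which is precisely the integrable subcategory. Your right t-exactness argument matches the paper's: both exploit the commutative triangle with $\Rep(\check{M})$ at the apex, where the diagonal functors are t-exact and generate.

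Your left t-exactness argument, however, diverges and runs into the obstacle you yourself identify. The paper does not try to detect left t-exactness through a forgetful functor to $\Upsilon(\check{\mathfrak{n}}_{\check{P}})\mod_0^{\fact}(\Rep(\check{M}))$; indeed, verifying that such a functor is t-exact and conservative with respect to the t-structure of Lemma~\ref{psequivheart} would require exactly the kind of bookkeeping you flag, and the invocation of $\Sph_G$-linearity does not obviously close the gap. Instead, the paper uses factorization to reduce to a single point $x \in X(k)$, then uses right completeness to reduce to showing that objects in the heart land in coconnective objects. At a point, the heart of the source admits the explicit description
\[
(\DD(\mathfrak{L}^+G \backslash \mathfrak{L}G)_{\mathfrak{L}N_P \mathfrak{L}^+M})_x^{\heartsuit} \cong \Rep(\check{P})^{\heartsuit}
\]
via (the parabolic analogue of) Gaitsgory's semi-infinite IC computation, and $\Rep(\check{P})^{\heartsuit}$ is generated under extensions and filtered colimits by $\triv_{\check{N}_{\check{P}}}(\Rep(\check{M})^{\heartsuit})$. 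One then reuses the same commutative triangle from the right t-exactness step together with Lemma~\ref{psequivheart} to conclude. This sidesteps entirely the need to compare t-structures through (\ref{upsilonfactmodtrans}).
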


\begin{proof}

The first claim follows from the fact that the image of the left adjoint in (\ref{semiinfgen}) generates the target under colimits, and hence the image of (\ref{psfunc1}) is contained in the $\DD(\sH_M)$-submodule category generated by the unit.

Right t-exactness of the functor in question follows from the commutative triangle \[
\begin{tikzcd}
& \Rep(\check{M}) \arrow{dl} \arrow{dr} & \\
\DD(\mathfrak{L}^+G \backslash \mathfrak{L}G)_{\mathfrak{L}N_P \mathfrak{L}^+M} \arrow{rr} & &  \Upsilon(\check{\mathfrak{n}}_{\check{P}},\sO_{\check{G}})\mod_0^{\fact}(\Rep(\check{G} \times \check{M})),
\end{tikzcd} \]
in $\FactCat$, where the diagonal morphisms are uniquely characterized by $\Rep(\check{M})$-linearity. Namely, over each $X^I_{\dR}$, both diagonal functors are t-exact and generate their targets under colimits.

Using factorization, left t-exactness reduces to the corresponding claim at a point $x \in X$. By right completeness of the t-structure, it suffices to show that (\ref{psfunc1}) sends any object in \[ (\DD(\mathfrak{L}^+G \backslash \mathfrak{L}G)_{\mathfrak{L}N_P \mathfrak{L}^+M})_x^{\heartsuit} \] to a coconnective object. As in the proof of Proposition \ref{whitavtexact}, we have an equivalence \[ (\DD(\mathfrak{L}^+G \backslash \mathfrak{L}G)_{\mathfrak{L}N_P \mathfrak{L}^+M})_x^{\heartsuit} \cong \Rep(\check{P})^{\heartsuit}, \] and $\Rep(\check{P})^{\heartsuit}$ is generated under extensions and filtered colimits by the image of \[ \triv_{\check{N}_{\check{P}}} : \Rep(\check{M})^{\heartsuit} \longrightarrow \Rep(\check{P})^{\heartsuit}. \] In view of the commutative triangle above, the claim follows from Lemma \ref{psequivheart}.

\end{proof}

Thus (\ref{psfunc1}) renormalizes to a morphism \[ \Sat_{G,P} : \Sph_{G,P} \longrightarrow \Sph_{\check{G},\check{P}}^{\spec} \] in $\FactCat$. By construction, it is a morphism of $(\Sph_G,\Sph_M)$-bimodules, where the bimodule structure on the target comes from $\Sat_G$ and $\Sat_M$.

The following is our second main theorem.

\begin{theorem}

\label{mainthmps}

The morphism $\Sat_{G,P}$ is an isomorphism.

\end{theorem}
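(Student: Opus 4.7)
The plan is to follow the same broad renormalization strategy underlying the construction of $\Sat_{G,P}$: reduce the assertion, via t-exactness, to an equivalence on the hearts of the t-structures, and use the $\Sph_M$-bimodule structure (together with Theorem \ref{mainthm} for $M$) to check this reduced statement.

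First, I would record that $\Sat_{G,P}$ is t-exact over each $X^I_{\dR}$, which is essentially the content of the Proposition proved just before Theorem \ref{mainthmps}: the non-renormalized functor (\ref{psfunc1}) is t-exact, and renormalization preserves this. Next, I would identify the hearts on both sides with $\Rep(\check{P})_{X^I_{\dR}}^{\heartsuit}$. On the spectral side this is exactly Lemma \ref{psequivheart}. On the geometric side, the argument from the proof of Lemma \ref{whitavtexact} shows $(\DD(\mathfrak{L}^+G\backslash\mathfrak{L}G)_{\mathfrak{L}N_P\mathfrak{L}^+M})_x^{\heartsuit}\cong\Rep(\check{P})^{\heartsuit}$ at a point, and this globalizes via the $\DD(X^I)$-linear structure and factorization to give the desired identification with $\Rep(\check{P})_{X^I_{\dR}}^{\heartsuit}$.

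The key step is to show that under these identifications, $\Sat_{G,P}$ on the heart becomes the identity. For this I would use: (a) $\Sat_{G,P}$ is a morphism of $(\Sph_G,\Sph_M)$-bimodules, and (b) by Theorem \ref{mainthm} applied to $M$, the action of $\Sph_M$ is transported to the action of $\Sph^{\spec}_{\check{M}}\cong\Sph_{\check M}^{\spec}$ under $\Sat_M$. On the heart, the $\Sph_M$-action is the action of $\Rep(\check{M})^{\heartsuit}$ via $\Sat_M^{\nv}$, while on the spectral side it is the action of $\Rep(\check{M})^{\heartsuit}$ on $\Rep(\check{P})^{\heartsuit}$ via $\mathrm{res}^{\check{P}}_{\check{M}}$; both identifications of the heart with $\Rep(\check{P})^{\heartsuit}$ are characterized by the $\Rep(\check{M})^{\heartsuit}$-action on the unit object, together with the $\check{N}_{\check{P}}$-module structure on that unit (coming, on the geometric side, from the parabolic variant of the Mirkovi\'c-Vilonen/\cite{G1} theory invoked in Lemma \ref{whitavtexact}, and on the spectral side tautologically from $\triv_{\check{N}_{\check{P}}}$). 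That $\Sat_{G,P}$ matches up the unit objects as $\check{N}_{\check{P}}$-modules is essentially Theorem \ref{psfunctorunit} combined with the construction, which factors the derived Satake pairing through the semi-infinite Whittaker category and its equivalence (Theorem \ref{semiinfresequiv}) with $\Upsilon(\check{\mathfrak{n}}_{\check{P}})\mod_0^{\fact}(\Rep(\check{M}))$.

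Given t-exactness and an equivalence on hearts, the induced functor on eventually coconnective subcategories is an equivalence. Since both $\Sph_{G,P}$ and $\Sph^{\spec}_{\check{G},\check{P}}$ are the canonical renormalizations of almost ULA generated categories (by the propositions in \S\ref{s:spec-hecke} and the preceding subsections), Proposition \ref{operadequiv} lifts this to an equivalence of the full DG categories over each $X^I_{\dR}$, and the factorization compatibility is automatic from $\Sat_{G,P}$ being a morphism in $\FactCat$. The main obstacle is the coherence of the two heart identifications with $\Rep(\check{P})^{\heartsuit}$: verifying that the geometric identification (via parabolic Mirkovi\'c--Vilonen, ultimately descending from \cite{G1} and \cite{AB} as used in Theorem \ref{semiinfresequiv}) is transported to the tautological spectral identification under $\Sat_{G,P}$, in a way compatible with the full $\Sph_M$-action and with factorization. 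This is precisely what the Gaitsgory-Lurie paradigm in \S\ref{ss:intro-parabolic} predicts, and its verification relies crucially on the parabolic Casselman-Shalika input of Theorem \ref{semiinfresvac} and the accessibility equivalence of Theorem \ref{semiinfresequiv}, whose proofs for general $P$ are the content of the forthcoming \cite{FH}.
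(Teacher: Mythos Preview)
Your proposal has a genuine gap at the decisive step. You assert that t-exactness together with an equivalence on hearts yields an equivalence on eventually coconnective objects, but this implication fails in general: a t-exact functor that is an equivalence of abelian categories on hearts need not induce isomorphisms on higher $\mathrm{Ext}$ groups between objects of the heart, hence need not be fully faithful on $\sC^+$. Nothing in your argument controls these higher Exts. To make this strategy work you would have to show that the induced morphism of monads on $\Rep(\check P)$ is an isomorphism; on the spectral side the monad is $\Sym((\check{\mathfrak g}/\check{\mathfrak n}_{\check P})[-2])$ by Corollary \ref{specpsmonad}, but you have not computed the geometric monad, and matching them is precisely the hard content you are trying to avoid.

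The paper's proof (\S\ref{s:psequiv}) takes a different route and, crucially, uses Theorem \ref{mainthm} for $G$ rather than for $M$. It introduces the tempered/anti-tempered decomposition of $\sC^{\mathfrak L^+G}$ for any $\DD(\mathfrak L G)_x$-module $\sC$ and proves a universal formula (Proposition \ref{tempsatuniv})
\[
\sC^{\mathfrak L^+G,\temp}\ \tilde\longrightarrow\ \sO_{\check G}\modfact\bigl(\Rep(\check G)\otimes \sC^{\mathfrak L N^-,\psi,\acc}\bigr)_x,
\]
whose proof reduces to the case $\sC=\DD(\Gr_G)_x$, where it is exactly the tempered form of derived Satake for $G$ (Proposition \ref{tempsat}). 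Specializing to $\sC=(\DD(\mathfrak L G)_{\mathfrak L N_P\mathfrak L^+M})_x$ and invoking Theorem \ref{semiinfresequiv} identifies the right-hand side with $\Upsilon(\check{\mathfrak n}_{\check P},\sO_{\check G})\mod_0^{\fact}(\Rep(\check G\times\check M))_x$, giving an honest equivalence between the \emph{tempered} quotient of the geometric side and the non-renormalized spectral side. The passage to eventually coconnective objects then comes not from a heart comparison but from the fact that anti-tempered objects are infinitely connective (since $\Av_!^{\mathfrak L N^-,\psi}$ is conservative on $+$ by Proposition \ref{whitavtexact}); renormalization finishes. The $\Sph_M$-bimodule structure plays no role in this argument.
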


The proof of this theorem is contained in \S \ref{s:psequiv}. Note that Theorem \ref{mainthm} is the case $G = P$.

\subsection{Reduction to a point} \label{pointred} Before proceeding, we show that in order to prove Theorem \ref{mainthmps} (and hence its special case Theorem \ref{mainthm}), it is enough to show that \[ \Sat_{G,P,x} : \Sph_{G,P,x} \longrightarrow \Sph_{\check{G},\check{P},x}^{\spec} \] is an equivalence for any $x \in X(k)$.

Consider the commutative triangle \[
\begin{tikzcd}
& \Rep(\check{M}) \arrow{dl} \arrow{dr} & \\
\Sph_{G,P} \arrow{rr}{\Sat_{G,P}} & & \Sph_{\check{G},\check{P}}^{\spec}
\end{tikzcd} \]
in $\FactCat$, where the diagonal morphisms are uniquely characterized by $\Rep(\check{M})$-linearity. For any finite set $I$, the diagonal functors preserve ULA objects over $X^I$ and generate their targets, which implies that the functor $\Sat_{G,P}$ preserves ULA objects. Thus, by Proposition B.8.1 in \cite{R2}, it suffices to show that this functor is an equivalence over each stratum in $X^I_{\dR}$. Using factorization, we can therefore reduce to the case that $I$ is a singleton, i.e., we must show that $\Sat_{G,P,X_{\dR}}$ is an equivalence.

Choosing an \'{e}tale coordinate on $X$, we can moreover assume that we have $\DD(X)$-linear equivalences \[ \Sph_{G,P,X_{\dR}} \cong \Sph_{G,P,x} \otimes \DD(X) \] and \[ \Sph_{\check{G},\check{P},X_{\dR}}^{\spec} \cong \Sph_{\check{G},\check{P},x}^{\spec} \otimes \DD(X) \] for a fixed $x \in X(k)$, and that the $\DD(X)$-linear functor $\Sat_{G,P,X_{\dR}}$ is induced from $\Sat_{G,P,x}$. Thus it suffices to show that \[ \Sat_{G,P,x} : \Sph_{G,P,x} \longrightarrow \Sph_{\check{G},\check{P},x}^{\spec} \] is an equivalence.

\section{The case of a torus}\label{s:torus}

The goal of this section is to formulate and prove Theorem \ref{lgcft}, a factorizable version of local geometric class field theory in the de Rham setting, and then deduce Theorem \ref{mainthm} in the case that $G=T$ is a torus.

\subsection{Gauge forms} Let $H$ denote an arbitrary algebraic group. The space $\mathfrak{h} \otimes \Omega^1_D$ over $\Ran$ is defined as follows: a point $S \to \mathfrak{h} \otimes \Omega^1_D$ consists of $\underline{x} : S \to \Ran$ together with a section of the vector bundle $(\mathfrak{h} \otimes \Omega^1_X)|_{D_{\underline{x}}}$.

Fix a point $x \in X(k)$. We define $(\mathfrak{h} \otimes \Omega^1_D)^{x \ram}_{\underline{x}}$ similarly to $\mathfrak{h} \otimes \Omega^1_D$, but the section is allowed to have a pole along $\{x\} \times S$.

The spaces $\mathfrak{h} \otimes \Omega^1_D$ and $(\mathfrak{h} \otimes \Omega^1_D)^{x \ram}$ admit factorization structures which are co-unital and corr-unital, respectively. The inclusion \[ \mathfrak{h} \otimes \Omega^1_D \longrightarrow (\mathfrak{h} \otimes \Omega^1_D)^{x \ram} \] is a morphism of corr-unital factorization spaces.

Let $(\mathfrak{L}^+H)^{x \ram}$ be the group classifying maps $D_{\underline{x}} \to H$ which are allowed to be meromorphic along $\{ x \} \times S$. It also forms a corr-unital factorization space, and receives a homomorphism \[ \mathfrak{L}^+H \longrightarrow (\mathfrak{L}^+H)^{x \ram} \] compatible with this structure.

The group $(\mathfrak{L}^+H)^{x \ram}$ acts on $(\mathfrak{h} \otimes \Omega^1_D)^{x \ram}$, compatibly with the factorization structure, via the gauge action (cf. \S 1.12 of \cite{R4}). The subgroup $\mathfrak{L}^+H$ preserves the subspace $\mathfrak{h} \otimes \Omega^1_D$. Define \[ \LS_H(D) := (\mathfrak{h} \otimes \Omega^1_D)/\mathfrak{L}^+H \] and \[ \LS_H(D)^{x \ram} := (\mathfrak{h} \otimes \Omega^1_D)^{x \ram}/(\mathfrak{L}^+H)^{x \ram}, \] a co-unital and corr-unital factorization space respectively. We have a canonical morphism of corr-unital factorization spaces \[ \iota : \LS_H(D) \longrightarrow \LS_H(D)^{x \ram}. \]

The co-unital factorization structure on $\LS_H(D)$ makes $\QCoh(\LS_H(D))$ into an object of $\FactCat^{\laxfact}$. We recall from \cite{R3}, Lemma 9.8.1 that we have a canonical equivalence \[ \QCoh(\LS_H(D)) \tilde{\longrightarrow} \Rep(H) \] in $\FactCat^{\laxfact}$, and in particular $\QCoh(\LS_H(D))$ actually belongs to $\FactCat$.

\subsection{Unital structure} We would like to upgrade $\QCoh(\LS_H(D)^{x \ram})$ to a unital factorization category. Given an injection of finite sets $J \to I$, the corr-unital structure on $\LS_H(D)^{x \ram}$ determines a correspondence \[
\begin{tikzcd}
& \LS_H(D)^{x \ram}_{J \to I} \arrow{dl}[swap]{\alpha} \arrow{dr}{\beta} & \\
\LS_H(D)^{x \ram}_{X^J_{\dR}} & & \LS_H(D)^{x \ram}_{X^I_{\dR}}.
\end{tikzcd} \]
The morphism $\beta$ is ind-schematic, but generally not schematic unless $H$ is unipotent. In particular, the usual direct image of quasicoherent sheaves along this map is poorly behaved.

\begin{proposition}

\label{lsunrres}

If $H$ is reductive, then for any injection of finite sets $J \to I$, the functor \[ \beta^* : \QCoh(\LS_H(D)^{x \ram}_{X^I_{\dR}}) \longrightarrow \QCoh(\LS_H(D)^{x \ram}_{J \to I}) \] admits a left adjoint.

\end{proposition}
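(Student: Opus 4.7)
The plan is to analyze the morphism $\beta$ via the presentation $\LS_H(D)^{x\ram} = (\mathfrak{h}\otimes \Omega^1_D)^{x\ram}/(\mathfrak{L}^+H)^{x\ram}$, and then exploit the reductivity of $H$ to produce the requisite left adjoint. As a first step, I would unpack the correspondence space $\LS_H(D)^{x\ram}_{J\to I}$ and the morphism $\beta$ explicitly. An $S$-point of the former consists of an $I$-indexed divisor $\underline{x}_I$ on $S$ extending a $J$-indexed subdivisor $\underline{x}_J$, a gauge class of a meromorphic (at $x$) connection form on $D_{\underline{x}_I}$, together with a compatibility lift to a gauge class on the smaller disc $D_{\underline{x}_J}$. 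The morphism $\beta$ simply projects out this compatibility lift. Covering source and target by their ind-affine atlases coming from $(\mathfrak{h}\otimes \Omega^1_D)^{x\ram}$, the problem reduces to constructing a left adjoint to pullback along the analogous morphism of ind-affine spaces, equivariantly with respect to the gauge group actions.

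At the level of atlases, the morphism in question is the projection of an ind-affine space over $(\mathfrak{h}\otimes \Omega^1_D)^{x\ram}_{X^I_{\dR}}$ parametrizing meromorphic gauge equivalences on the smaller disc; pullback along such an ind-affine morphism admits a left adjoint given by ind-pushforward, constructed step by step along the ind-presentation. The substantive content lies in descending this left adjoint through the quotient by $(\mathfrak{L}^+H)^{x\ram}$. Here the assumption that $H$ is reductive enters decisively: along the standard congruence filtration, $(\mathfrak{L}^+H)^{x\ram}$ is an extension of a pro-unipotent group by an algebraic group whose reductive quotients are built from copies of $H$. For any finite-type reductive group $H'$, rigidity of $\Rep(H')$ and compactness of its unit object guarantee that pullback along quotient maps by $H'$-actions admits a left adjoint on $\QCoh$.

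The main obstacle will be assembling the reductive and pro-unipotent descents compatibly in the limit. The pro-unipotent part is handled by the $1$-affineness of classifying stacks of pro-unipotent groups, so descent for $\QCoh$ along these is automatic and commutes with the formation of left adjoints. The reductive part requires only the finite-type reductive considerations indicated above. Combining these should produce a left adjoint for $\beta^{*}$ that can be described heuristically as gauge-equivariant ind-pushforward along the atlas-level projection; a careful bookkeeping of cohomological amplitude is needed because the ind-affine atlas morphism involves infinite-dimensional spaces of meromorphic forms, but the reductivity hypothesis ensures that no exotic cohomological contributions obstruct the construction.
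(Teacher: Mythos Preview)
Your proposal has a genuine gap in how you understand the gauge group and where reductivity enters.

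You describe $(\mathfrak{L}^+H)^{x\ram}$ as having a congruence filtration making it ``an extension of a pro-unipotent group by an algebraic group whose reductive quotients are built from copies of $H$.'' This is incorrect. Over the locus where $x$ coincides with one of the divisor points, $(\mathfrak{L}^+H)^{x\ram}$ allows arbitrary meromorphic singularities at $x$ and so behaves like the full loop group $\mathfrak{L}H$; it is a group \emph{ind}-scheme, not a pro-algebraic group, and there is no congruence filtration of the sort you invoke. Consequently your plan to ``descend through the quotient by $(\mathfrak{L}^+H)^{x\ram}$'' using rigidity of $\Rep(H')$ for finite-type reductive $H'$ plus $1$-affineness for pro-unipotent groups does not go through: the group you need to quotient by is neither finite type nor pro-unipotent nor an extension of such.

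The paper handles this by factoring $\beta$ through an intermediate space $\LS_H(D)^{x\ram,\log} := (\mathfrak{h}\otimes\Omega^1_D)^{x\ram}/\mathfrak{L}^+H$, where one only quotients by the \emph{regular} arc group. The map $\beta_1$ from $\LS_H(D)^{x\ram}_{J\to I}$ to this log version is handled at the atlas level: after bounding pole order, the relevant inclusions of spaces of forms are regular closed embeddings, so $(\beta_1^*)^{\LL}$ exists. The remaining map $\beta_2$ is the further quotient by the Hecke groupoid $(\mathfrak{L}^+H)^{x\ram}/\mathfrak{L}^+H$, and \emph{this} is where reductivity is used: the Hecke groupoid is ind-proper exactly because $H$ is reductive (this is the ind-properness of the affine Grassmannian), and ind-proper pushforward supplies $(\beta_2^*)^{\LL}$. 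Your proposal does not identify this mechanism, and the substitute you offer does not work.
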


\begin{proof}

Writing $\LS_H(D)^{x\ram,\log} := (\mathfrak{h} \otimes \Omega^1_D)^{x\ram}/\mathfrak{L}^+H$, we can factorize $\beta$ as \[ \LS_H(D)^{x \ram}_{J \to I} \xrightarrow{\beta_1} \LS_H(D)^{x\ram,\log}_{X^I_{\dR}} \xrightarrow{\beta_2} \LS_H(D)^{x \ram}_{X^I_{\dR}}. \] To see that $\beta_1^*$ admits a left adjoint, it suffices to show the same for inverse image along the inclusion \[ (\mathfrak{h} \otimes \Omega^1_D)^{x\ram}_{J \to I} \longrightarrow (\mathfrak{h} \otimes \Omega^1_D)^{x\ram}_{X^I_{\dR}}. \] For any $n \geq 0$, let \[ (\mathfrak{h} \otimes \Omega^1_D)^{x\ram,\leq n}_{J \to I} \subset (\mathfrak{h} \otimes \Omega^1_D)^{x\ram}_{J \to I} \] be the closed subscheme where the $1$-form is allowed to have poles of order at most $n$ at $x$. Then we have \[ (\mathfrak{h} \otimes \Omega^1_D)^{x\ram}_{J \to I} = \underset{n}{\colim} \ (\mathfrak{h} \otimes \Omega^1_D)^{x\ram,\leq n}_{J \to I}, \] and the transition maps are regular closed embeddings. Moreover, the map \[ (\mathfrak{h} \otimes \Omega^1_D)^{x\ram,\leq n}_{J \to I} \longrightarrow (\mathfrak{h} \otimes \Omega^1_D)^{x\ram,\leq n}_{X^I_{\dR}} \] is a regular closed embedding for any $n \geq 0$, from which the claim follows.

The existence of the left adjoint $(\beta_2^*)^{\LL}$ follows from the fact that $\LS_H(D)^{x \ram}_{X^I_{\dR}}$ is the quotient of $\LS_H(D)^{x\ram,\log}_{X^I_{\dR}}$ by a Hecke groupoid, which is ind-proper because $H$ is reductive.

\end{proof}

As a consequence of the proposition, we can endow the object $\QCoh(\LS_H(D)^{x \ram})$ with a unital structure and hence view it as an object of $\FactCat^{\laxfact}$. Explicitly, the structure morphism attached to an injection $J \to I$ is given by \[ (\beta^*)^{\LL}\alpha^* : \QCoh(\LS_H(D)^{x \ram}_{X^J_{\dR}}) \longrightarrow \QCoh(\LS_H(D)^{x \ram}_{X^I_{\dR}}). \] As explained in \S 9.9 of \cite{R3}, the $1$-affineness of $\LS_H(D)_{X^I_{\dR}}$ implies that $\QCoh(\LS_H(D)^{x \ram})$ factorizes strictly, i.e., belongs to $\FactCat$.

It follows that \[ (\iota^*)^{\LL} : \QCoh(\LS_H(D)) \longrightarrow \QCoh(\LS_H(D)^{x \ram}) \] is a morphism in $\FactCat$. Moreover, the pointwise tensor product on $\QCoh(\LS_H(D)^{x \ram})$ upgrades it to an object of $\ComAlg(\FactCat_{\laxuntl})$. 

\begin{rem}

Note that the morphism \[ \Vect \longrightarrow \QCoh(\LS_H(D)^{x \ram}) \] corresponding to the monoidal unit $\sO_{\LS_H(D)^{x \ram}}$ belongs to $\FactCat_{\laxuntl}$ but not $\FactCat$, i.e., it is only lax unital with respect to factorization. Namely, the map \[ (\iota^*)^{\LL}\sO_{\LS_H(D)} \longrightarrow \sO_{\LS_H(D)^{x \ram}} \] from the factorization unit to the monoidal unit is not an isomorphism.

\end{rem}

\subsection{Local geometric class field theory in the de Rham setting} Define the space $\Gr_T^{\infty \cdot x}$ as follows: a point $S \to \Gr_T^{\infty \cdot x}$ consists of $\underline{x} : S \to \Ran$, a $T$-bundle $\sP_T$ on $X \times S$, a trivialization of $\sP_T$ over $(X \times S) \setminus \Gamma_{\underline{x}}$, and a trivialization of $\sP_T$ over $D_{\{x\} \times S}$. More concisely, a point of $\Gr_T^{\infty \cdot x}$ consists of a point of $\Gr_T$ together with a full level structure at $x$.

In particular, the fiber of $\Gr_T^{\infty \cdot x}$ at $x$ identifies with $(\mathfrak{L}T)_x$, while its restriction to $X \setminus \{ x \}$ identifies with $\Gr_T$. The space $\Gr_T^{\infty \cdot x}$ admits a natural corr-unital factorization structure, which is compatible with the projection \[ p : \Gr_T^{\infty \cdot x} \longrightarrow \Gr_T. \]

The commutative group structure on $\Gr_T^{\infty \cdot x}$ endows $\DD(\Gr_T^{\infty \cdot x})$ with a symmetric monoidal structure under convolution, which makes it an object of $\ComAlg(\FactCat_{\laxuntl})$.

\begin{rem}

The monoidal unit morphism \[ \Vect \longrightarrow \DD(\Gr_T^{\infty} \cdot x) \] takes place in $\FactCat_{\laxuntl}$ but not $\FactCat$, which follows from the fact that the morphism \[ \delta_1 \longrightarrow \omega_{\mathfrak{L}^+T} \] in $\DD(\mathfrak{L}T)_x$ is not an isomorphism.

\end{rem}

\begin{theorem}

\label{lgcft}

There is a canonical equivalence \[ \bL_T : \DD(\Gr_T^{\infty \cdot x}) \tilde{\longrightarrow} \QCoh(\LS_{\check{T}}(D)^{x\ram}) \] in $\ComAlg(\FactCat_{\laxuntl})$ fitting into a commutative square \[
\begin{tikzcd}
\DD(\Gr_T^{\infty \cdot x}) \arrow{r}{\bL_T} \arrow{d}{p_{\dR,*}} & \QCoh(\LS_{\check{T}}(D)^{x\ram}) \arrow{d}{\iota^*} \\
\DD(\Gr_T) \arrow{r}{\sim} & \QCoh(\LS_{\check{T}}(D)).
\end{tikzcd} \]

\end{theorem}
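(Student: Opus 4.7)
The plan is to construct $\bL_T$ by reducing to a local (i.e.\ pointwise) statement at $x$, combining this with the already-established unramified equivalence to propagate over $\Ran$ by factorization, and then upgrading the result to the level of commutative algebras in $\FactCat_{\laxuntl}$.

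First I would observe that both sides are naturally modules over their unramified counterparts: the morphism $p : \Gr_T^{\infty \cdot x} \to \Gr_T$ realizes $\DD(\Gr_T^{\infty \cdot x})$ as a module for $\DD(\Gr_T)$ under convolution (the commutative group structure on $\Gr_T^{\infty \cdot x}$ is compatible with the one on $\Gr_T$), and likewise $\iota : \LS_{\check T}(D) \to \LS_{\check T}(D)^{x\ram}$ makes $\QCoh(\LS_{\check T}(D)^{x\ram})$ a $\QCoh(\LS_{\check T}(D))$-module via pointwise tensor product. Under the known equivalence $\DD(\Gr_T) \simeq \QCoh(\LS_{\check T}(D))$ in $\ComAlg(\FactCat)$, the task therefore reduces to identifying the two as factorization module categories over the unramified side. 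Because the ``ramification datum'' at $x$ is concentrated at the single point $x \in X$, factorization compatibility allows me to reduce the construction to giving an equivalence of fibers at the configuration $\underline{x} = \{x\}$, i.e.\ an equivalence
\[ \DD((\mathfrak{L}T)_x) \tilde{\longrightarrow} \QCoh(\LS_{\check T}(D)^{x\ram}_x) \]
compatible with the $\Rep(\check T) \simeq \DD(\Gr_T)_x$-action and with commutative group/tensor structures.

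The pointwise equivalence is the classical content of local geometric class field theory for tori in the de Rham setting. Using the short exact sequence $0 \to (\mathfrak{L}^+ T)_x \to (\mathfrak{L}T)_x \to \Lambda \to 0$, one decomposes $\DD((\mathfrak{L}T)_x)$ into a $\Lambda$-graded piece and a ``pro-unipotent/central'' piece coming from $(\mathfrak{L}^+T)_x$; dually, $\LS_{\check T}(D)^{x\ram}_x \simeq \check{\mathfrak{t}}((t))dt/\mathrm{dlog}(\check T((t)))$ splits via the residue and higher-polar part into a $B\check T$-factor, a $\check T$-factor (Cartier-dual to $\Lambda$), and a pro-infinite-dimensional affine piece (Cartier-dual via Fourier--Mukai/exponential to the pro-unipotent part of $(\mathfrak{L}^+T)_x$). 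Matching these pieces yields the local equivalence, and factorization extends it to all of $\Ran$ canonically thanks to Proposition \ref{lsunrres} (which supplies the unital structure on the ramified spectral side). The commutative algebra structures match because convolution on $(\mathfrak{L}T)_x$ is intertwined with addition of connections via $\mathrm{dlog}$.

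Finally I would verify the commutative square: $p_{\dR,*}$ integrates over the full-level-structure torsor, while $\iota^*$ forgets the ramified gauge data; under the local Cartier/Fourier equivalences above, these are dual operations and hence correspond under $\bL_T$ together with the unramified equivalence of the bottom row. The main obstacle I anticipate is not the pointwise equivalence itself, which is essentially classical, but rather bookkeeping the \emph{lax} unital structures: as observed in the Remarks preceding the theorem, the monoidal unit $\sO_{\LS_{\check T}(D)^{x\ram}}$ differs from the factorization unit $(\iota^*)^{\LL}\sO_{\LS_{\check T}(D)}$ (and similarly $\delta_1 \neq \omega_{\mathfrak{L}^+T}$ on the geometric side), so the equivalence intrinsically lives in $\FactCat_{\laxuntl}$ and one must carefully track the adjunctions $p^! \dashv p_{\dR,*}$ and $(\iota^*)^{\LL} \dashv \iota^*$ through the construction to confirm the lax unital (but not strictly unital) structure is preserved.
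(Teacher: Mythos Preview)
Your approach differs substantively from the paper's. The paper does not build $\bL_T$ by first establishing the pointwise equivalence at $x$ and then extending via factorization; instead, it constructs a compatible system $\bL_T^{(n)} : \DD(\Gr_T^{n \cdot x}) \to \QCoh(\LS_{\check T}(D)^{x\ram,\leq n})$ for each finite level $n$ directly over every $X^I_{\dR}$, using the Contou-Carr\`ere pairing as a bimultiplicative line bundle on $(\Gr_{T,X^I}^{n \cdot x})_{\dR} \times_{X^I_{\dR}} \LS_{\check T}(D)^{x\ram,\leq n}_{X^I_{\dR}}$. Bimultiplicativity of this kernel is what supplies the commutative-algebra structure, and its compatibility with factorization yields the morphism in $\ComAlg(\FactCat_{\laxuntl})$, all without any bootstrap from the point. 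The candidate inverse $'\bL_T^{(n)}$ is obtained from the same kernel via the self-dualities of both sides (Lemma \ref{lsselfdual}), and the verification that $\bL_T^{(n)}$ and $'\bL_T^{(n)}$ are mutually inverse (Lemma \ref{lgcftlem}) is the \emph{only} place the paper reduces to the fiber at $x$; there it does invoke the explicit decomposition you describe, but only as a check, not a construction.

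The gap in your proposal is the step ``factorization extends it to all of $\Ran$ canonically.'' Even granting that both sides are factorization module categories over $\Rep(\check T)$ supported at $x$, a morphism between them in $\ComAlg(\FactCat_{\laxuntl})$ is not determined by a functor on fibers alone: you must exhibit compatibility of your pointwise equivalence with the factorization $\Rep(\check T)$-module structures on $\DD((\mathfrak{L}T)_x)$ and $\QCoh(\LS_{\check T}(\mathring D_x))$, and there is no general spread-out principle available here to do that for you. The Contou-Carr\`ere pairing is precisely the global object that packages this compatibility, and it is the main ingredient your outline is missing. Indeed, when the paper later (in the proof of Theorem \ref{mainthm} for $G=T$) does use that $\bL_{T,x}$ respects the factorization module structures, it is invoking this as a \emph{consequence} of the global construction of $\bL_T$, not as input to it.
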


For any $n \geq 0$, let $\Gr_T^{n \cdot x}$ be the quotient of $\Gr_T^{\infty \cdot x}$ classifying a point of $\Gr_T$ together with a structure of level $n$ at $x$ (i.e., we only ask for a trivialization of $\sP_T$ over the $n^{\text{th}}$ order formal neighborhood $D_{\{ x \} \times S}^{(n)}$). To prove the theorem, we will construct a system of isomorphisms \[ \bL_T^{(n)} : \DD(\Gr_T^{n \cdot x}) \tilde{\longrightarrow} \QCoh(\LS_{\check{T}}(D)^{x\ram,\leq n}) \] in $\ComAlg(\FactCat_{\laxuntl})$, and obtain $\bL_T$ by passing to the limit.

\subsection{Duality for quasicoherent sheaves on local systems} Fix a finite set $I$ and $n \geq 0$. Write \[ \pi : \LS_{\check{T}}(D)^{x\ram,\leq n}_{X^I_{\dR}} \longrightarrow X^I_{\dR} \] for the projection. We will now define a $\DD(X^I)$-linear functor \[ \pi_{*,\ren} : \QCoh(\LS_{\check{T}}(D)^{x\ram,\leq n}_{X^I_{\dR}}) \longrightarrow \DD(X^I.) \]

Letting \[ \rho : \LS_{\check{T}}(D)^{x\ram,\log,\leq n}_{X^I_{\dR}} \longrightarrow X^I_{\dR} \] denote the projection, the usual direct image functor \[ \rho_* : \QCoh(\LS_{\check{T}}(D)^{x\ram,\log,\leq n}_{X^I_{\dR}}) \longrightarrow \DD(X^I) \] is continuous and $\DD(X^I)$-linear by Corollary 3.12.2 of \cite{R4} (the proof there is given at $x$ rather than over $X^I_{\dR}$, but adapts to the latter setting). The map \[ \LS_{\check{T}}(D)^{x\ram,\log,\leq n}_{X^I_{\dR}} \longrightarrow \LS_{\check{T}}(D)^{x\ram,\leq n}_{X^I_{\dR}} \] realizes the latter as the quotient of the former by the formal group \[ \sG := ((\mathfrak{L}^+\check{T})^{x\ram,\leq n}/\mathfrak{L}^+T)_{X^I_{\dR}}. \] The $\DD(X^I)$-module category $\IndCoh(\sG)$ admits a natural symmetric monoidal structure with respect to convolution, which is in fact rigid because $\sG$ has finite-dimensional tangent space at the identity. Since $\rho$ is equivariant for the action of $\sG$, the functor $\rho_*$ therefore factors through \[ \QCoh(\LS_{\check{T}}(D)^{x\ram,\log,\leq n}_{X^I_{\dR}}) \longrightarrow \QCoh(\LS_{\check{T}}(D)^{x\ram,\log,\leq n}_{X^I_{\dR}})_{\IndCoh(\sG)} \cong \QCoh(\LS_{\check{T}}(D)^{x\ram,\leq n}_{X^I_{\dR}}), \] which yields the desired functor $\pi_{*,\ren}$.

\begin{lemma}

\label{lsselfdual}

The $\DD(X^I)$-linear pairing \[ \QCoh(\LS_{\check{T}}(D)^{x\ram,\leq n}_{X^I_{\dR}}) \underset{\DD(X^I)}{\otimes} \QCoh(\LS_{\check{T}}(D)^{x\ram,\leq n}_{X^I_{\dR}}) \xrightarrow{\Delta^*} \QCoh(\LS_{\check{T}}(D)^{x\ram,\leq n}_{X^I_{\dR}}) \xrightarrow{\pi_{*,\ren}} \DD(X^I) \] is perfect. This self-duality makes the following square commute: \[
\begin{tikzcd}
\QCoh(\LS_{\check{T}}(D)^{x\ram,\leq n}_{X^I_{\dR}}) \arrow{r}{\sim} \arrow{d}{\pi_{*,\ren}} & \QCoh(\LS_{\check{T}}(D)^{x\ram,\leq n}_{X^I_{\dR}})^{\vee} \arrow{d}{(\pi^*)^{\vee}} \\
\DD(X^I) \arrow{r}{\sim} & \DD(X^I)^{\vee}.
\end{tikzcd} \]
\end{lemma}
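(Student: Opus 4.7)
\bigskip

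\textbf{Proof plan.}

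The plan is to factor the self-duality statement through the log stack $\LS^{\log} := \LS_{\check{T}}(D)^{x\ram,\log,\leq n}_{X^I_{\dR}}$ and bootstrap via descent along the formal group $\sG = ((\mathfrak{L}^+\check{T})^{x\ram,\leq n}/\mathfrak{L}^+\check{T})_{X^I_{\dR}}$. Writing $\LS := \LS_{\check{T}}(D)^{x\ram,\leq n}_{X^I_{\dR}}$ and $\rho : \LS^{\log} \to \LS$ for the quotient map, the first step is to verify that the pullback $\rho^*$ identifies $\QCoh(\LS)$ with $\QCoh(\LS^{\log})^{\IndCoh(\sG)}$ (invariants for the convolution action), and dually (using the rigidity of $\IndCoh(\sG)$, which holds because $\sG$ has finite-dimensional tangent space at the identity) with $\QCoh(\LS^{\log})_{\IndCoh(\sG)}$ (coinvariants). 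By the construction of $\pi_{*,\ren}$ in the paragraph preceding the lemma, the renormalized direct image is exactly the functor on $\IndCoh(\sG)$-coinvariants induced by the continuous $\DD(X^I)$-linear functor $\rho_{*}^{\log} : \QCoh(\LS^{\log}) \to \DD(X^I)$. Thus all the structures in question descend cleanly from the log situation once the latter is settled.

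Next I would establish the analogue of the lemma for $\LS^{\log}$: namely, that $\QCoh(\LS^{\log})$ is self-dual as a $\DD(X^I)$-module via the pairing $\rho_*^{\log} \circ \Delta^*$, and that under this self-duality $\rho_*^{\log}$ is dual to $(\rho^{\log})^*$. Since $\LS^{\log}$ is the quotient of the affine scheme $(\check{\mathfrak{t}} \otimes \Omega^1_D)^{x\ram,\leq n}_{X^I_{\dR}}$ by the (pro)algebraic group $(\mathfrak{L}^+\check{T})_{X^I_{\dR}}$, the desired self-duality follows from a standard two-step argument: first establish self-duality of $\QCoh$ for the affine scheme (where the pairing is tautologically perfect, the pairing into $\DD(X^I)$ being given by the $\DD(X^I)$-linear global sections functor which is continuous by the cited result from \cite{R4}), then descend along $B\mathfrak{L}^+\check{T}$ using rigidity of $\Rep(\mathfrak{L}^+\check{T})$ (which is self-dual with respect to the Hopf-algebra antipode). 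The identification of $\rho_*^{\log}$ with the dual of $(\rho^{\log})^*$ is then formal from the construction: $(\rho^{\log})^*$ sends $\sO_{X^I_{\dR}}$ to $\sO_{\LS^{\log}}$, and its linear dual under the self-duality is precisely the trace map $\rho_*^{\log}$.

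Finally, to descend from $\LS^{\log}$ to $\LS$, I would invoke the general fact that if a rigid symmetric monoidal category $\sA$ acts on a self-dual $\DD(X^I)$-module $\sM$ preserving the self-duality (in the sense that the duality datum is $\sA$-linear in an appropriate sense), then $\sM_{\sA}$ is canonically self-dual. In the present situation, $\sA = \IndCoh(\sG)$, $\sM = \QCoh(\LS^{\log})$, and the self-duality datum on $\sM$ is visibly $\IndCoh(\sG)$-equivariant (the convolution action commutes with the pushforward to $X^I_{\dR}$). Passing to coinvariants gives the self-duality of $\QCoh(\LS)$ with the stated pairing, and also gives the commutativity of the diagram comparing $\pi_{*,\ren}$ and $(\pi^*)^{\vee}$, since both factor through the corresponding statement for $\LS^{\log}$.

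The main obstacle will be justifying the descent step in sufficient rigor: one must verify that the natural self-duality on $\QCoh(\LS^{\log})$ really is $\IndCoh(\sG)$-equivariant as a duality datum, and that passage to $\sG$-coinvariants (or equivalently, using rigidity, to $\sG$-invariants) preserves dualizability and sends $\rho_*^{\log}$ to $\pi_{*,\ren}$ in the sense of the diagram. The rigidity of $\IndCoh(\sG)$ is what makes this possible, but the compatibility between the convolution action and the trace pairing needs careful bookkeeping, particularly because $\pi_{*,\ren}$ is defined precisely as the descended version of $\rho_*$ rather than as honest pushforward.
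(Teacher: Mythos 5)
Your proposal follows essentially the same route as the paper's proof: establish the $\DD(X^I)$-linear self-duality of $\QCoh$ of the log stack $\LS^{\log}$ via the pairing $\rho_*\Delta^*$, then descend along the formal group $\sG$ using rigidity of $\IndCoh(\sG)$, and observe that the diagram identity is formal from the construction of $\pi_{*,\ren}$ as the $\IndCoh(\sG)$-coinvariants of $\rho_*$. The only material difference is in how you justify the log-level self-duality: the paper simply invokes the standard fact that continuity of $\rho_*$ together with affineness of the relative diagonal $\Delta$ implies perfectness of $\rho_*\Delta^*$, whereas you propose to further decompose this into an affine-scheme step followed by descent along $B\mathfrak{L}^+\check{T}$ using rigidity of $\Rep(\mathfrak{L}^+\check{T})$; this is a valid proof of the same fact, just less direct than what the paper uses.
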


\begin{proof}

Since $\rho_*$ is continuous and \[ \Delta : \LS_{\check{T}}(D)^{x\ram,\log,\leq n}_{X^I_{\dR}} \longrightarrow \LS_{\check{T}}(D)^{x\ram,\log,\leq n}_{X^I_{\dR}} \underset{X^I_{\dR}}{\times} \LS_{\check{T}}(D)^{x\ram,\log,\leq n}_{X^I_{\dR}} \] is affine, it follows that \[ \QCoh(\LS_{\check{T}}(D)^{x\ram,\log,\leq n}_{X^I_{\dR}}) \underset{\DD(X^I)}{\otimes} \QCoh(\LS_{\check{T}}(D)^{x\ram,\log,\leq n}_{X^I_{\dR}}) \xrightarrow{\Delta^*} \QCoh(\LS_{\check{T}}(D)^{x\ram,\log,\leq n}_{X^I_{\dR}}) \xrightarrow{\rho_*} \DD(X^I) \] is a perfect pairing. Now the fact that the pairing in lemma is perfect follows from the rigidity of $\IndCoh(\sG)$.

As for the commutativity of the square, note that $\rho_*$ and $\rho^*$ are dual by the construction of the self-duality on \[ \QCoh(\LS_{\check{T}}(D)^{x\ram,\log,\leq n}_{X^I_{\dR}}). \] The claim then follows formally from the construction of $\pi_{*,\ren}$.

\end{proof}

\subsection{Proof of Theorem \ref{lgcft}} For any $n \geq 0$ and any finite set $I$, the Contou-Carr\`ere pairing gives rise to a bimultiplicative line bundle
\begin{equation}
\label{ccpair1}
(\Gr_{T,X^I}^{n \cdot x})_{\dR} \underset{X^I_{\dR}}{\times} \LS_{\check{T}}(D)^{x\ram,\leq n}_{X^I_{\dR}} \longrightarrow \BB \bG_m
\end{equation}
(cf. \S 6.3 of \cite{HR}). Since $\Gr_{T,X^I}^{n \cdot x}$ is an ind-scheme locally of finite type, Verdier duality defines a perfect pairing \[ \DD(\Gr_{T,X^I}^{n \cdot x}) \underset{\DD(X^I)}{\otimes} \DD(\Gr_{T,X^I}^{n \cdot x}) \longrightarrow \DD(X^I). \] Thus the line bundle (\ref{ccpair1}) determines an object of \[ \QCoh((\Gr_{T,X^I}^{n \cdot x})_{\dR} \underset{X^I_{\dR}}{\times} \LS_{\check{T}}(D)^{x\ram,\leq n}_{X^I_{\dR}}) \tilde{\longrightarrow} \Fun_{\DD(X^I)}(\DD(\Gr_{T,X^I}^{n \cdot x}),\QCoh(\LS_{\check{T}}(D)^{x\ram,\leq n}_{X^I_{\dR}})), \] which is the desired $\DD(X^I)$-linear functor \[ \bL_T^{(n)} : \DD(\Gr_{T,X^I}^{n \cdot x}) \longrightarrow \QCoh(\LS_{\check{T}}(D)^{x\ram,\leq n}_{X^I_{\dR}}). \] The bimultiplicativity of (\ref{ccpair1}) implies that $\bL_T^{(n)}$ is symmetric monoidal, and the compatibility of (\ref{ccpair1}) with factorization implies that $\bL_T^{(n)}$ defines a morphism in $\ComAlg(\Fact_{\laxuntl})$ as $I$ varies.

On the other hand, by Lemma \ref{lsselfdual}, the line bundle (\ref{ccpair1}) determines a $\DD(X^I)$-linear functor \[ \QCoh(\LS_{\check{T}}(D)^{x\ram,\leq n}_{X^I_{\dR}}) \longrightarrow \DD(\Gr_{T,X^I}^{n \cdot x}). \] Define $^{\prime}\bL_T^{(n)}$ to be the composition of this functor with $\inv^!$, where \[ \inv : \Gr_{T,X^I}^{n \cdot x} \longrightarrow \Gr_{T,X^I}^{n \cdot x} \] is inversion of the group law. We claim that $^{\prime}\bL_T^{(n)}$ is inverse to $\bL_T^{(n)}$.

Denote by $\sL$ the line bundle (\ref{ccpair1}), viewed as an object of \[ \QCoh((\Gr_{T,X^I}^{n \cdot x})_{\dR} \underset{X^I_{\dR}}{\times} \LS_{\check{T}}(D)^{x\ram,\leq n}_{X^I_{\dR}}). \] Write \[ \upsilon : \Gr_{T,X^I}^{n \cdot x} \longrightarrow X^I \] for the projection.

\begin{lemma}

\label{lgcftlem}

There exist canonical isomorphisms \[ (\upsilon_{\dR,*} \otimes \id)(\sL) \cong \sO_{\LS_{\check{T}}(D)^{x\ram,\leq n}_{X^I_{\dR}}} \quad \text{and} \quad (\id \otimes \pi_{*,\ren})(\sL) \cong \delta_{1,X^I_{\dR}}. \]

\end{lemma}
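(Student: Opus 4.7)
The plan is to deduce both isomorphisms from bimultiplicativity of the Contou-Carr\`ere pairing, together with the triviality of $\sL$ on the two identity sections. Heuristically, both statements are instances of Peter-Weyl/Fourier-style orthogonality: integration of the trivial character produces the unit, and integration of any non-trivial character vanishes. As a preliminary reduction, I would use the factorization compatibility of $\sL$, $\upsilon_{\dR,*}$, and $\pi_{*,\ren}$ to reduce to the case where $I$ is a singleton, at which point the statements become classical computations with the Contou-Carr\`ere line bundle on a single disk.

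\textbf{Step for the first isomorphism.} Both $\LS_{\check{T}}(D)^{x\ram,\leq n}_{X^I_{\dR}}$ and $\Gr_{T,X^I}^{n\cdot x}$ are commutative group objects over $X^I_{\dR}$ (under tensor product of local systems and of bundles with level structure, respectively), and by bimultiplicativity $\sL$ is translation-equivariant in each factor. Equivariance in the $\LS$-variable forces $(\upsilon_{\dR,*} \otimes \id)(\sL)$ to be translation-equivariant as an object of $\QCoh(\LS_{\check{T}}(D)^{x\ram,\leq n}_{X^I_{\dR}})$, and such an object is determined up to canonical isomorphism by its fiber at the identity $e$. By bimultiplicativity in the other direction, $\sL|_{\Gr_{T,X^I}^{n\cdot x} \times \{e\}}$ is canonically trivialized, so this fiber equals $\upsilon_{\dR,*}\omega_{\Gr_{T,X^I}^{n\cdot x}}$; the Contou-Carr\`ere normalization identifies this with $\sO_{X^I}$, yielding the claimed isomorphism.

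\textbf{Step for the second isomorphism.} The second statement is proved by the dual argument, with the self-duality of $\QCoh(\LS_{\check{T}}(D)^{x\ram,\leq n}_{X^I_{\dR}})$ from Lemma \ref{lsselfdual} playing the role of Verdier self-duality. Translation equivariance of $\sL$ by $\Gr_{T,X^I}^{n\cdot x}$ forces $(\id \otimes \pi_{*,\ren})(\sL) \in \DD(\Gr_{T,X^I}^{n\cdot x})$ to be translation-equivariant. The crucial geometric input is that for $g \in \Gr_{T,X^I}^{n\cdot x}$ disjoint from the identity section, the character $\sL|_{\{g\} \times \LS_{\check{T}}(D)^{x\ram,\leq n}_{X^I_{\dR}}}$ of the commutative group stack $\LS_{\check{T}}(D)^{x\ram,\leq n}_{X^I_{\dR}}$ is non-trivial, and hence $\pi_{*,\ren}$ of this restriction vanishes. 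Combined with equivariance, this forces $(\id \otimes \pi_{*,\ren})(\sL)$ to be supported along the identity section $e : X^I \to \Gr_{T,X^I}^{n\cdot x}$, and a computation of its restriction there --- via triviality of $\sL|_{\{e\}\times\LS_{\check{T}}(D)^{x\ram,\leq n}_{X^I_{\dR}}}$ and the identification $\pi_{*,\ren}\sO_{\LS_{\check{T}}(D)^{x\ram,\leq n}_{X^I_{\dR}}} \cong \omega_{X^I}$ built into Lemma \ref{lsselfdual} --- identifies the result as $\delta_{1, X^I_{\dR}}$.

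\textbf{Main obstacle.} The decisive technical point is the vanishing of $\pi_{*,\ren}$ on non-trivial Contou-Carr\`ere characters of $\LS_{\check{T}}(D)^{x\ram,\leq n}_{X^I_{\dR}}$; this is the non-formal content of local geometric class field theory. My plan is to verify it using the explicit presentation of $\pi_{*,\ren}$ as $\rho_{*}$ composed with descent along the formal group $\sG$ from the discussion preceding Lemma \ref{lsselfdual}, together with a vanishing property for non-trivial characters on $\rho$-fibers. The normalization computations (e.g.\ $\upsilon_{\dR,*}\omega_{\Gr_{T,X^I}^{n\cdot x}} \cong \sO_{X^I}$) also need to be handled with care, since $\Gr_{T,X^I}^{n\cdot x}$ is infinite-dimensional and its naive relative cohomology is richer than the answer; the resolution is that the Contou-Carr\`ere line bundle is defined precisely so as to cancel the contributions of the ``unwanted'' cohomology.
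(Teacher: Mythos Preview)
Your approach is genuinely different from the paper's, and it has a real gap in the equivariance step.

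\textbf{The equivariance deduction does not go through.} Bimultiplicativity of $\sL$ in the $\LS$-variable says that $(\id \times m_{\LS})^*\sL \cong p_{12}^*\sL \otimes p_{13}^*\sL$. Pushing this forward along $\upsilon$ does \emph{not} yield $\tau_\sigma^* F \cong F$ for $F = (\upsilon_{\dR,*}\otimes\id)(\sL)$; instead one gets $\tau_\sigma^* F \cong (\upsilon_{\dR,*}\otimes\id)\bigl(\sL \otimes p_{\Gr}^*(\sL|_{\Gr\times\{\sigma\}})\bigr)$, which is $F$ twisted by a nontrivial character of $\Gr_T$ inside the pushforward. There is no projection formula that removes this twist, so $F$ is not translation-equivariant in the sense you need, and it is not determined by its fiber at the identity. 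Concretely, over a point of $X\setminus\{x\}$ where $\Gr_T$ is the discrete lattice $\Lambda$, the fiber of $F$ at the identity of $\LS$ is $\upsilon_{\dR,*}\omega_{\Gr_T} \cong \bigoplus_{\lambda\in\Lambda} k$, not a single copy of $k$; your own ``normalization'' worry is exactly this, and it cannot be fixed by adjusting the Contou--Carr\`ere line bundle. The analogous issue afflicts the second isomorphism: the vanishing of $\pi_{*,\ren}$ on nontrivial characters is precisely the substance of local class field theory, and your plan to extract it from the explicit description of $\pi_{*,\ren}$ amounts to redoing that theory from scratch over the whole of $X^I$.

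\textbf{What the paper does instead.} The paper does not attempt a direct orthogonality argument. It first observes that over $X\setminus\{x\}$ the functors $\bL_T^{(n)}$ and $^{\prime}\bL_T^{(n)}$ are already known to be mutually inverse (this is the unramified case), so both isomorphisms hold there. Then it uses the factorization structure: since $(\id\otimes\pi_{*,\ren})(\sL)$ agrees with the factorization unit $\delta_1$ away from $x$, it acquires the structure of an object of $\delta_1\modfact(\DD(\Gr_T^{n\cdot x}))_x \cong \DD(\Gr_{T,x}^{n\cdot x})$, and is therefore determined by its restriction to the fiber at $x$; symmetrically for the other isomorphism using that $\sO_{\LS_{\check T}(D)}$ is the factorization unit of $\QCoh(\LS_{\check T}(D))$. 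This reduces everything to the single point $x$, where the result is classical local geometric class field theory (cited from \cite{HR}). Note that this is a much sharper reduction than your ``reduce to $|I|=1$'': the paper localizes at the ramification point $x$ using the factorization \emph{module} structure for the unit, not merely at a generic point of $X$.
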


Before proving the lemma, let us explain how it implies that $\bL_T^{(n)}$ and $^{\prime}\bL_T^{(n)}$ are mutually inverse. Using the self-duality of $\DD(\Gr_{T,X^I}^{n \cdot x})$, we see that the composition \[ ^{\prime}\bL_T^{(n)} \circ \bL_T^{(n)} : \DD(\Gr_{T,X^I}^{n \cdot x}) \longrightarrow \DD(\Gr_{T,X^I}^{n \cdot x}) \] is given by a kernel $\sK$ in \[ \DD(\Gr_{T,X^I}^{n \cdot x} \underset{X^I_{\dR}}{\times} \Gr_{T,X^I}^{n \cdot x}). \] The bimultiplicativity of (\ref{ccpair1}) implies that \[ \sK \cong (\id \otimes \inv^!)\mult^!(\id \otimes \pi_{*,\ren})(\sL), \] where \[ \mult : \Gr_{T,X^I}^{n \cdot x} \underset{X^I}{\times} \Gr_{T,X^I}^{n \cdot x} \longrightarrow \Gr_{T,X^I}^{n \cdot x} \] denotes the multiplication map. The lemma therefore implies that \[ \sK \cong \Delta_{\dR,*}\omega_{\Gr_{T,X^I}^{n \cdot x}} \] is isomorphic to the kernel corresponding to the identity functor, and hence $^{\prime}\bL_T^{(n)} \circ \bL_T^{(n)}$ is isomorphic to the identity functor. A symmetric argumentshows that $\bL_T^{(n)} \circ ^{\prime}\bL_T^{(n)}$ is isomorphic to the identity.

\begin{proof}[Proof of Lemma \ref{lgcftlem}]

We begin by observing that $\bL_T^{(n)}$ and $^{\prime}\bL_T^{(n)}$ are mutually inverse over $X \setminus \{x \}$, where they agree with the standard equivalence \[ \DD(\Gr_T) \tilde{\longrightarrow} \Rep(\check{T}) = \QCoh(\LS_{\check{T}}(D)) \] and its inverse respectively.

It follows that $(\id \otimes \pi_{*,\ren})(\sL)$ identifies with $\delta_1$ away from $x$. The factorization structure on $(\id \otimes \pi_{*,\ren})(\sL)$ therefore upgrades it to an object of \[ \delta_1\modfact(\DD(\Gr_T^{n \cdot x}))_x. \] Since $\delta_1$ is the unit object for the factorization structure on $\DD(\Gr_T)$, restriction along the inclusion\[ \Gr_{T,x}^{n \cdot x} \longrightarrow \Gr_{T,X^I}^{n \cdot x} \] determines an equivalence \[ \delta_1\modfact(\DD(\Gr_T^{n \cdot x}))_x \cong \DD(\Gr_{T,x}^{n \cdot x}). \] To establish the isomorphism \[ (\id \otimes \pi_{*,\ren})(\sL) \cong \delta_{1,X^I_{\dR}}, \] it therefore it suffices to prove that \[ (\id \otimes \pi_{*,\ren})(\sL)|^!_{\Gr_{T,x}^{n \cdot x}} \cong \delta_1. \]

Similarly, using the fact that $\sO_{\LS_{\check{T}}(D)}$ is the unit for the factorization structure on $\QCoh(\LS_{\check{T}}(D))$, a symmetric argument reduces the construction of the other isomorphism to the claim that \[  (\upsilon_{\dR,*} \otimes \id)(\sL)|_{\LS_{\check{T}}(\mathring{D}_x)^{\leq n}} \cong \sO_{\LS_{\check{T}}(\mathring{D}_x)^{\leq n}}. \]

Thus we have reduced to checking that $\bL_T^{(n)}$ and $^{\prime}\bL_T^{(n)}$ are mutually inverse over $x$. This is well-known (cf. \cite{HR} \S 6.3): a choice of coordinate at $x$ allows one to decompose $\Gr_{T,x}^{n \cdot x}$ and $\LS_{\check{T}}(\mathring{D}_x)^{\leq n}$ into direct factors which can be analyzed explicitly.

\end{proof}

\subsection{Proof of Theorem \ref{mainthm} for $G = T$} As explained in \S \ref{pointred}, it suffices to prove that $\Sat_{T,x}$ is an equivalence. Let us reinterpret the monoidal functor \[ \DD(\sH_{T,x}) \xrightarrow{(\ref{presatake})} \sO_{\check{T}}\modfact(\Rep(\check{T} \times \check{T}))_x. \] Being the fiber of the factorization category $\DD(\Gr_T^{\infty \cdot x})$ at $x$, the category $\DD(\mathfrak{L}T)_x$ admits commuting structures of $\DD(\mathfrak{L}T)_x$-module and factorization $\DD(\Gr_T)$-module at $x$. In particular, it defines a functor \[ \DD(\mathfrak{L}T)_x\mod \longrightarrow \DD(\Gr_T)\modfact_x. \]

Similarly, the category $\QCoh(\LS_{\check{T}}(\mathring{D}_x))$ admits commuting structures of $\QCoh(\LS_{\check{T}}(\mathring{D}_x))$-module and factorization $\Rep(\check{T})$-module at $x$, hence it defines a functor \[ \QCoh(\LS_{\check{T}}(\mathring{D}_x))\mod \longrightarrow \Rep(\check{T})\modfact_x. \]

Taking the fiber of $\bL_T$ at $x$, we obtain an equivalence \[ \mathbbm{L}_{T,x} : \DD(\mathfrak{L}T)_x \tilde{\longrightarrow} \QCoh(\LS_{\check{T}}(\mathring{D}_x)) \] of symmetric monoidal categories, which moreover respects the factorization module structures for $\DD(\Gr_T) \cong \Rep(\check{T})$. It therefore gives rise to a commutative square \[
\begin{tikzcd}
\DD(\mathfrak{L}T)_x\mod \arrow{r} \arrow{d}[sloped]{\sim} & \DD(\Gr_T)\modfact_x \arrow{d}[sloped]{\sim} \\
\QCoh(\LS_{\check{T}}(\mathring{D}_x))\mod \arrow{r} & \Rep(\check{T})\modfact_x
\end{tikzcd} \]
in $\DGCat\mod$.

We have a monoidal equivalence \[ \DD(\sH_{T,x}) \tilde{\longrightarrow} \uEnd_{\DD(\mathfrak{L}T)_x\mod}(\DD(\Gr_{T,x})), \] where the right side means relative inner endomorphisms with respect to the action of $\DGCat$ on $\DD(\mathfrak{L}T)_x\mod$. On the other hand, there is a monoidal equivalence \[ \sO_{\check{T}}\modfact(\Rep(\check{T} \times \check{T}))_x \tilde{\longrightarrow} \uEnd_{\Rep(\check{T})\modfact_x}(\Rep(\check{T})) \] by \cite{R3} \S 9.22.

Tracing through the constructions, we see that the previously constructed monoidal functor \[ \DD(\sH_{T,x}) \longrightarrow \sO_{\check{T}}\modfact(\Rep(\check{T} \times \check{T}))_x \] corresponds under the above identifications with the monoidal functor \[ \uEnd_{\DD(\mathfrak{L}T)_x\mod}(\DD(\Gr_{T,x})) \longrightarrow \uEnd_{\Rep(\check{T})\modfact_x}(\Rep(\check{T})) \] induced by the upper-right circuit of the above commutative square. Using the commutativity of the square (and hence Theorem \ref{lgcft}), it follows that this agrees with the monoidal functor induced by the lower-left circuit of the square. But this is an equivalence by Theorem 9.13.1 in \cite{R3} (in fact, the theorem is proved after a series of reductions by establishing precisely this equivalence).

This completes the proof of Theorem \ref{mainthm} for $G = T$.

\section{Factorization modules at a point}\label{s:factmodpt}

In this section, we will give an explicit description of the category of factorization modules \[ \Upsilon(\mathfrak{n}_P,\sO_{G})\modfact(\Rep(G) \otimes \Rep(M))_x \] at a fixed point $x \in X(k)$.

\subsection{Lie-$!$ coalgebras} A \emph{Lie-}$!$ \emph{coalgebra} in $\Rep(H)$ on $X$ is a Lie coalgebra in the symmetric monoidal category $\Rep(H) \otimes \DD(X)$. One can give an alternative definition as follows. The category $\Rep(H)_{\Ran}$ carries a non-unital symmetric monoidal structure denoted by $\otimes^*$ (cf. \cite{R1} \S 7.17). Note that the restriction functor \[ \Delta^! : \Rep(H)_{\Ran} \longrightarrow \Rep(H)_{X_{\dR}} \cong \Rep(H) \otimes \DD(X) \] carries $\otimes^*$ to $\otimes^!$, and in particular lifts to a functor \[ (\Delta^!)^{\enh} : \LieCoalg^{\otimes^*}(\Rep(H)_{\Ran}) \longrightarrow \LieCoalg^{\otimes^!}(\Rep(H) \otimes \DD(X)). \]

\begin{proposition}

The functor $(\Delta^!)^{\enh}$ restricts to an equivalence on the full subcategory of $\LieCoalg^{\otimes^*}(\Rep(H)_{\Ran})$ consisting of objects whose underlying D-module on $\Ran$ is supported on the main diagonal.

\end{proposition}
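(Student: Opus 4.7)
The plan is to construct a quasi-inverse to $(\Delta^!)^{\enh}$ via de Rham pushforward $\Delta_{\dR,*} : \Rep(H) \otimes \DD(X) \to \Rep(H)_{\Ran}$, exploiting the adjunction $\Delta_{\dR,*} \dashv \Delta^!$ which holds because $\Delta : X \to \Ran$ is a closed embedding.

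First I would verify the symmetric monoidality of $\Delta^!$ by base change. Writing $F_1 \otimes^* F_2 = u_{\dR,*}(F_1 \boxtimes F_2)$ for $u : \Ran \times \Ran \to \Ran$ the union map, the square
\[
\begin{tikzcd}
X \arrow{r}{(\Delta \times \Delta) \circ \Delta_X} \arrow{d}{\id} & \Ran \times \Ran \arrow{d}{u} \\
X \arrow{r}{\Delta} & \Ran
\end{tikzcd}
\]
is cartesian in the non-unital setting (a union of two nonempty subsets of $\{x\}$ must equal $\{x\} \cup \{x\}$), so base change for $\Delta^! \circ u_{\dR,*}$ yields a natural isomorphism $\Delta^!(F_1 \otimes^* F_2) \simeq \Delta^! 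F_1 \otimes^! \Delta^! F_2$. This endows $\Delta^!$ with a strong symmetric monoidal structure from $\otimes^*$ to $\otimes^!$, which is exactly the structure giving rise to $(\Delta^!)^{\enh}$.

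Second, because $\Delta$ is a closed embedding, $\Delta_{\dR,*}$ is fully faithful and left adjoint to $\Delta^!$, with $\Delta^! \Delta_{\dR,*} \simeq \id$ and essential image exactly the full subcategory $\Rep(H)_{\Ran,\Delta} \subset \Rep(H)_{\Ran}$ of diagonal-supported objects. As a left adjoint to a strong symmetric monoidal functor, $\Delta_{\dR,*}$ acquires a canonical oplax symmetric monoidal structure, and the adjunction unit $\id \to \Delta^! \Delta_{\dR,*}$ is a symmetric monoidal isomorphism. Consequently $\Delta_{\dR,*}$ induces a functor $(\Delta_{\dR,*})^{\enh}$ on Lie coalgebras (since oplax symmetric monoidal functors preserve cooperadic structures), and the adjunction lifts to $(\Delta_{\dR,*})^{\enh} \dashv (\Delta^!)^{\enh}$.

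Finally, I would check that the lifted unit and counit control the equivalence. The lifted unit is an isomorphism, inherited from $\id \simeq \Delta^! \Delta_{\dR,*}$, which shows $(\Delta^!)^{\enh}$ is fully faithful on the essential image of $(\Delta_{\dR,*})^{\enh}$. Dually, the lifted counit $(\Delta_{\dR,*})^{\enh}(\Delta^!)^{\enh} \to \id$ is an isomorphism precisely when the underlying counit $\Delta_{\dR,*} \Delta^! F \to F$ is, namely on the full subcategory of Lie coalgebras whose underlying object lies in $\Rep(H)_{\Ran,\Delta}$. This yields the claimed equivalence.

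The main obstacle will be the $\infty$-categorical bookkeeping: ensuring that $\Delta_{\dR,*} \dashv \Delta^!$ together with the oplax/strong symmetric monoidal structures interact coherently to produce the lifted adjunction on Lie coalgebras. Concretely, this requires verifying that oplax symmetric monoidal functors induce functors on $\LieCoalg$ and that such adjunctions descend to adjunctions on $\LieCoalg$---standard in the framework of \cite{higheralgebra} but requiring careful unpacking in the present non-unital factorization context.
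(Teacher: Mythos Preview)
Your proposal is correct and takes essentially the same approach as the paper. The paper's proof is a one-liner observing that $\Delta_{\dR,*}$ is fully faithful and left-lax (i.e., oplax) symmetric monoidal; you unpack this by supplying the base-change verification that $\Delta^!$ is strong monoidal, deriving the oplax structure on $\Delta_{\dR,*}$ via doctrinal adjunction, and checking the lifted unit/counit explicitly---but the underlying mechanism is identical.
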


\begin{proof}

This follows from the observation that the left adjoint \[ \Delta_{\dR,*} : \Rep(H) \otimes \DD(X) \longrightarrow \Rep(H)_{\Ran} \] of $\Delta^!$ is fully faithful and left-lax symmetric monoidal.

\end{proof}

There is another non-unital symmetric monoidal structure $\oplus^*$ on $\Rep(H)_{\Ran}$, which is defined in the same way as $\otimes^*$ but with $\boxtimes$ replaced by $\boxplus$. Note that unlike $\otimes^*$, the operation $\oplus^*$ does not preserve colimits in each variable. The restriction functor \[ \Delta^! : \Rep(H)_{\Ran} \longrightarrow \Rep(H)_{X_{\dR}} \cong \Rep(H) \otimes \DD(X) \] carries $\oplus^*$ to $\oplus$, and hence lifts to a functor \[ (\Delta^!)^{\enh,\oplus} : \ComAlg^{\oplus^*}(\Rep(H)_{\Ran}) \longrightarrow \ComAlg^{\oplus}(\Rep(H) \otimes \DD(X)) \cong \Rep(H) \otimes \DD(X). \]

\begin{proposition}

\label{factplus}

The functor $(\Delta^!)^{\enh,\oplus}$ admits a fully faithful left adjoint \[ \Fact^{\oplus} : \Rep(H) \otimes \DD(X) \longrightarrow \ComAlg^{\oplus^*}(\Rep(H)_{\Ran}). \] The essential image consists of those objects $\sM$ such that the structure map $\sM \oplus^* \sM \to \sM$ induces an equivalence \[ (\sM \boxplus \sM)|_{(\Ran \times \Ran)_{\disj}} \tilde{\longrightarrow} (\add^!\sM)|_{(\Ran \times \Ran)_{\disj}}. \]

\end{proposition}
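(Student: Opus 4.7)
The plan is to construct $\Fact^{\oplus}$ explicitly via an incidence-variety pushforward, verify the adjunction by a direct computation, and characterize the essential image via the factorization condition.

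For the construction, let $Z \hookrightarrow X \times \Ran$ denote the universal incidence subspace consisting of pairs $(x,\underline{x})$ with $x \in \underline{x}$, and write $p_X: Z \to X$ and $p_{\Ran}: Z \to \Ran$ for the projections. I set $\Fact^{\oplus}(\sM) := (p_\Ran)_{\dR,*} p_X^! \sM$ in $\Rep(H)_\Ran$; its fiber at $\underline{x}$ is $\bigoplus_{x \in \underline{x}} \sM_x$. The commutative algebra structure for $\oplus^*$ is induced by the natural ``subset inclusion'' maps in the correspondence $Z \times_\Ran Z \to Z$ over $\Ran \times \Ran$, compatibly with the union map $\add$.

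To verify the adjunction $\Hom_{\ComAlg^{\oplus^*}}(\Fact^{\oplus}(\sM), \sN) \simeq \Hom(\sM, \Delta^! \sN)$, the strategy is to show that a morphism of commutative algebras out of $\Fact^{\oplus}(\sM)$ is determined by its restriction along the main diagonal $\Delta: X \to \Ran$, which is precisely a morphism $\sM \to \Delta^! \sN$. The key input is that the explicit description of the $\oplus^*$-multiplication on $\Fact^{\oplus}(\sM)$ makes this compatibility automatic (any underlying map automatically respects the fold structure), so no extra data beyond the diagonal restriction is required. Fully faithfulness then follows from the identification $\Delta^! \Fact^{\oplus}(\sM) \simeq \sM$ by base change: the preimage of $\Delta(X) \subset \Ran$ under $p_{\Ran}$ is exactly $X \subset Z$, since a singleton $\{x\}$ contains $x$ as its unique element, yielding $\Delta^! (p_{\Ran})_{\dR,*} p_X^! \sM \simeq \sM$.

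For the essential image, the forward inclusion is a direct fiberwise computation on $(\Ran \times \Ran)_{\disj}$: the fiber $(\sM \boxplus \sM)_{(\underline{x}_1, \underline{x}_2)} = \bigoplus_{x \in \underline{x}_1} \sM_x \oplus \bigoplus_{x \in \underline{x}_2} \sM_x$ matches $(\add^! \sM)_{(\underline{x}_1, \underline{x}_2)} = \bigoplus_{x \in \underline{x}_1 \cup \underline{x}_2} \sM_x$ precisely because the union is disjoint. The converse is the main obstacle: given a factorizable commutative algebra $\sN$, the counit $\Fact^{\oplus}(\Delta^! \sN) \to \sN$ is an equivalence on the main diagonal by construction, and must be propagated to all of $\Ran$. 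I plan to proceed inductively on the cardinality of $I$: the factorization condition identifies $\sN|_{(X^I)_{\disj}}$ with the additive exterior product of copies of $\Delta^! \sN$, while the sheaf-on-$\Ran$ compatibility along diagonal embeddings $X^J \hookrightarrow X^I$ controls the behavior on deeper strata. Carefully threading these two constraints through the Cousin stratification of $X^I$ reconstructs $\sN$ from its diagonal data, completing the proof.
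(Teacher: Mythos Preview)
The paper does not prove this proposition at all: it simply cites Theorem~3.3.3 of Rozenblyum's thesis. Your proposal is therefore an attempt to supply the actual argument, and the construction of $\Fact^{\oplus}$ via the incidence correspondence $Z \subset X \times \Ran$ is indeed the standard one; the fiberwise description and the forward inclusion for the essential image are correct.

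There is, however, a genuine gap in your adjunction step. The claim that ``any underlying map automatically respects the fold structure'' is not correct as stated: $\oplus^*$ is \emph{not} the coCartesian monoidal structure on $\Rep(H)_{\Ran}$ (the latter is the pointwise direct sum), so being a commutative $\oplus^*$-algebra map is a nontrivial condition, and a plain map $\Fact^{\oplus}(\sM) \to \sN$ of underlying objects need not respect the multiplications. What you actually need is that $\Fact^{\oplus}(\sM)$ is the \emph{free} commutative $\oplus^*$-algebra on $\Delta_{\dR,*}\sM$, i.e., that your incidence-variety pushforward computes $\Sym^{\oplus^*}(\Delta_{\dR,*}\sM)$. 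This requires an honest identification: over each $X^I_{\dR}$ one must check that the symmetric powers of $\Delta_{\dR,*}\sM$ for $\oplus^*$ assemble to the pushforward from the incidence divisor $Z_I \to X^I$, which uses that $\boxplus$ is the coCartesian structure on the external category and that the union map from $X^n$ to $\Ran$ factors through $\Sigma_n$-coinvariants. Without this, the passage from ``determined on the diagonal'' to ``the adjunction holds'' is circular.

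The essential-image direction is also only sketched. The Cousin induction you describe is the right idea, but one must argue carefully that the counit $\Fact^{\oplus}(\Delta^!\sN) \to \sN$, once known to be an isomorphism on $(X^I)_{\disj}$ and on all smaller diagonals by induction, is an isomorphism on $X^I$ itself. This step uses that for D-modules the Cousin triangles split the problem into strata, together with the compatibility of the $\oplus^*$-algebra structure across strata; it is not automatic and is where the substance of Rozenblyum's argument lies.
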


\begin{proof}

This is Theorem 3.3.3 in \cite{nick-thesis}.

\end{proof}

The category $\Rep(H)_{\Ran}$ is also equipped with the ``pointwise" tensor product $\otimes^!$. Note that for any $\sM$ in $\Rep(H)_{\Ran}$, the functor $(-) \otimes^! \sM$ is lax symmetric monoidal with respect to $\oplus^*$. In particular $\otimes^!$ naturally lifts to a symmetric monoidal structure on $\ComAlg^{\oplus^*}(\Rep(H)_{\Ran})$. The functor $(\Delta^!)^{\enh,\oplus}$ is symmetric monoidal with respect to $\otimes^!$, and hence $\Fact^{\oplus}$ is left-lax symmetric monoidal. It therefore lifts to a functor \[ \LieCoalg^{\otimes^!}(\Rep(H) \otimes \DD(X)) \longrightarrow \LieCoalg^{\otimes^!}(\ComAlg^{\oplus^*}(\Rep(H)_{\Ran})). \]

\subsection{Lie-$!$ comodules} If $L$ is a Lie-$!$ coalgebra in $\Rep(H)$, we have seen that the object $\Delta_{\dR,*}L$ in $\Rep(H)_{\Ran}$ has the structure of Lie coalgebra with respect to $\otimes^*$. Given a map of prestacks $Z \to \Ran$, we obtain a category $\Rep(H)_{\Ran_Z}$ over the marked Ran space $\Ran_Z$. This category is acted on by $\Rep(H)_{\Ran}$ with its convolution product $\otimes^*$.

We can therefore define the category of \emph{Lie-}$!$ \emph{comodules} for $L$ over $Z$ by \[ L\comod^!(\Rep(H))_Z := \Delta_{\dR,*}L\comod^{\otimes^*}(\Rep(H)_{\Ran_Z}) \underset{\Rep(H)_{\Ran_Z}}{\times} \Rep(H)_Z. \] That is, an object of $\Delta_{\dR,*}L\comod^{\otimes^*}(\Rep(H)_{\Ran_Z})$ is a Lie-$!$ comodule if the underlying object of $\Rep(H)_{\Ran_Z}$ is supported on the main diagonal $Z \to \Ran_Z$.

This construction can be promoted to an object $L\comod^!(\Rep(H))$ in $\FactCat^{\laxfact}$, similarly to the case of Lie-$*$ modules.

On the other hand, we have the object $\Fact^{\oplus}(L)$ in $\LieCoalg^{\otimes^!}(\ComAlg^{\oplus^*}(\Rep(H)_{\Ran}))$. For any $Z \to \Ran$, we obtain $\Fact^{\oplus}(L)_Z$, an object of $\LieCoalg^{\otimes^!}(\Rep(H)_Z)$. Thus we can consider the category \[ \Fact^{\oplus}(L)_Z\comod(\Rep(H)_Z) \] over $Z$. These assemble these into an object $\Fact^{\oplus}(L)\comod(\Rep(H))$ of $\FactCat^{\laxfact}$.

\begin{proposition}

\label{liecomod}

There is a canonical equivalence \[ L\comod^!(\Rep(H)) \tilde{\longrightarrow} \Fact^{\oplus}(L)\comod(\Rep(H)) \] in $\FactCat^{\laxfact}$, which commutes with the forgetful functors to $\Rep(H)$.

\end{proposition}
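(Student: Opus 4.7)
The plan is to construct the equivalence by transporting coaction data along a canonical identification of the natural ``target'' objects of the coaction maps. Fix a prestack $Z \to \Ran$. The LHS consists of pairs $(M,c)$ with $M \in \Rep(H)_Z$, viewed as $\Delta_{Z,\dR,*}M \in \Rep(H)_{\Ran_Z}$, together with a Lie-$*$ coaction
\[ c : \Delta_{Z,\dR,*}M \longrightarrow \Delta_{\dR,*}L \otimes^* \Delta_{Z,\dR,*}M \]
in $(\Rep(H)_{\Ran_Z},\otimes^*)$. The RHS consists of pairs $(M,c')$ with $M \in \Rep(H)_Z$ and $c' : M \to \Fact^{\oplus}(L)_Z \otimes^! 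M$ satisfying the Lie coaction axioms in $(\Rep(H)_Z,\otimes^!)$. The strategy is to compare these using the universal property of $\Fact^{\oplus}$ from Proposition \ref{factplus}.

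The technical core is to establish a canonical equivalence
\[ \Delta_Z^!\bigl(\Delta_{\dR,*}L \otimes^* \Delta_{Z,\dR,*}M\bigr) \;\simeq\; \Fact^{\oplus}(L)_Z \otimes^! M \]
in $\Rep(H)_Z$, natural in both $L$ and $M$, and compatible with the Lie-coalgebra structures. The geometric content is that pulling the $\otimes^*$-convolution of an object on the main diagonal of $\Ran$ against an object on the diagonal of $\Ran_Z$ back to $Z$ assembles exactly the Ran combinatorics defining $\Fact^{\oplus}$: over a point $\underline{x}$ of $Z \subset \Ran_Z$, the fiber picks up contributions from each way a point of $X$ either coincides with a point of $\underline{x}$ (giving a copy of $L$ glued onto the diagonal) or is disjoint from it (giving $L \boxplus \text{id}$), and these are precisely the generators and relations defining $\Fact^{\oplus}(L)_Z$ as characterized by the essential image description in Proposition \ref{factplus}.

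Granting this identification, I would then invoke that $\Fact^{\oplus}$ is left-lax symmetric monoidal with respect to $\otimes^!$ (shown after Proposition \ref{factplus}), so that it sends the Lie-$!$ coalgebra $L$ to a Lie coalgebra $\Fact^{\oplus}(L)$ in $(\ComAlg^{\oplus^*}(\Rep(H)_{\Ran}),\otimes^!)$; evaluating at $Z$ then gives the Lie-$!$ coalgebra $\Fact^{\oplus}(L)_Z$ on the RHS. A Lie-$*$ coaction data on $\Delta_{Z,\dR,*}M$ unpacks, via $\Delta_Z^!$ and the identification above, into a Lie-$!$ coaction data on $M$ over $\Fact^{\oplus}(L)_Z$; the axioms match coherently because the identification is symmetric monoidal. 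Finally, the construction is manifestly natural in $Z$ and compatible with the external fusion structure used to assemble both sides into objects of $\FactCat^{\laxfact}$, so the equivalence upgrades to $\FactCat^{\laxfact}$; that it intertwines the forgetful functors to $\Rep(H)$ is immediate from the construction.

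The main obstacle is the key identification displayed above. While heuristically transparent, making it precise requires a careful analysis of supports under the $\otimes^*$-convolution and a version of Proposition \ref{factplus} that works relative to the marked Ran space $\Ran_Z \to \Ran$ rather than just $\Ran$ itself. I expect this will follow by a descent argument along $\Ran_Z \to \Ran$ combined with the combinatorial presentation of $\Ran_Z$ as a colimit of products $X^{I'} \times_\Ran X^I \to \Ran$ (where $X^I \to \Ran$ is the structure map from $Z$), reducing the question to an explicit stratum-by-stratum verification. Coherence of higher compatibilities (co-Jacobi, functoriality in $L$, and factorization compatibility) should be formal once the underlying equivalence of monoidal input data is established.
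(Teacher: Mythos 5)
Your proposal identifies the right core mechanism but takes a slightly heavier path than the paper, and it overestimates the difficulty of the key step. The paper does \emph{not} verify an identification of the form $\Delta_Z^!\bigl(\Delta_{\dR,*}L \otimes^* \Delta_{Z,*}M\bigr) \simeq \Fact^{\oplus}(L)_Z \otimes^! M$ natural in both $L$ and $M$. Instead it exploits the oplax module formalism more fully than you do: both sides are categories of Lie comodules for the same Lie coalgebra $L \in \LieCoalg^{\otimes^!}(\Rep(H)_{X_{\dR}})$, taken in two oplax $\Rep(H)_{X_{\dR}}$-module categories, namely $\Rep(H)_Z$ with action $\sM \mapsto \Fact^{\oplus}(\sM)_Z$ (using that $\Fact^{\oplus}$ is oplax symmetric monoidal for $\otimes^!$) and $\Rep(H)_{\Ran_Z}$ with action $\sM \mapsto \Delta_{\dR,*}\sM \otimes^* (-)$ (using that $\Delta_{\dR,*}$ is oplax symmetric monoidal). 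The whole content then reduces to showing $\Delta_{Z,*} : \Rep(H)_Z \to \Rep(H)_{\Ran_Z}$ is a morphism of oplax $\Rep(H)_{X_{\dR}}$-module categories, and this in turn follows from a single observation: the functor $\sM \mapsto \Fact^{\oplus}(\sM)_Z$ is the composite of $\sM \mapsto \Delta_{\dR,*}\sM \otimes^* \Delta_{Z,*}\sO_Z$ with $\Delta_Z^!$. In other words, the general-$M$ identification you flag as the ``main obstacle'' is never needed; only the value at the unit is. Once the two oplax module structures are compatible along $\Delta_{Z,*}$, which is fully faithful with image the diagonally-supported objects, the equivalence of Lie comodule categories and the compatibility with the forgetful functors come for free, as does the compatibility with factorization (the constructions are all done pointwise over $Z$). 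So your plan would work, but the worry about a marked-Ran-space analogue of Proposition \ref{factplus} and a stratum-by-stratum argument can be eliminated entirely by pushing the oplax module structure one step further, as the paper does.
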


\begin{proof}

For any $Z \to \Ran$, we construct an equivalence \[ L\comod^!(\Rep(H))_Z \tilde{\longrightarrow} \Fact^{\oplus}(L)_Z\comod(\Rep(H)_Z) \] which lifts the identity functor on $\Rep(H)_Z$. The compatibility with factorization structures will be manifest.

Since $\Fact^{\oplus}$ is oplax symmetric monoidal with respect to $\otimes^!$, so is
\begin{align*}
\Rep(H)_{X_{\dR}} &\longrightarrow \Rep(H)_Z \\
\sM &\mapsto \Fact^{\oplus}(\sM)_Z.
\end{align*}
This defines an oplax action of $\Rep(H)_{X_{\dR}}$ on $\Rep(H)_Z$. On the other hand, we have the oplax symmetric monoidal functor \[ \Delta_{\dR,*} : (\Rep(H)_{X_{\dR}},\otimes^!) \longrightarrow (\Rep(H)_{\Ran},\otimes^*). \] The action of $(\Rep(H)_{\Ran},\otimes^*)$ on $\Rep(H)_{\Ran_Z}$ therefore restricts to an oplax action of $\Rep(H)_{X_{\dR}}$. Thus it suffices to show that \[ \Delta_{Z,*} : \Rep(H)_Z \longrightarrow \Rep(H)_{\Ran_Z} \] is a morphism of oplax $\Rep(H)_{X_{\dR}}$-module categories with respect to the above actions. This follows from the observation that the $\sM \mapsto \Fact^{\oplus}(\sM)_Z$ is isomorphic to the composition of 
\begin{align*}
\Rep(H)_{X_{\dR}} &\longrightarrow \Rep(H)_{\Ran_Z} \\
\sM &\mapsto \Delta_{\dR,*}(\sM) \otimes^* \Delta_{Z,*}\sO_Z
\end{align*}
with \[ \Delta_Z^! : \Rep(H)_{\Ran_Z} \longrightarrow \Rep(H)_Z. \]
\end{proof}

\subsection{Koszul duality for Lie coalgebras} Let $\bO$ be a non-unital symmetric monoidal DG category. The functor \[ \triv_{\ComAlg} : \bO \longrightarrow \ComAlg^{\nonuntl}(\bO) \] admits a left adjoint $\coPrim_{\ComAlg}$, and similarly \[ \triv_{\LieCoalg} : \bO \longrightarrow \LieCoalg(\bO) \] admits a right adjoint $\Prim_{\LieCoalg}$. We define \[ \coChev := [1] \circ \coPrim_{\ComAlg} \quad \text{and} \quad \Chev := [-1] \circ \Prim_{\LieCoalg}. \] The Koszul duality between the commutative operad and the Lie co-operad means that these lift to adjoint functors \[ \coChev^{\enh} : \ComAlg^{\nonuntl}(\bO) \rightleftarrows \LieCoalg(\bO) : \Chev^{\enh}. \]

\begin{proposition}

\label{grpliecoalg}

The adjoint functors \[ \BB_{\LieCoalg(\bO)^{\op}} : \Grp(\LieCoalg(\bO)^{\op}) \rightleftarrows \LieCoalg(\bO)^{\op} : \Omega_{\LieCoalg(\bO)^{\op}} \] are equivalences.

\end{proposition}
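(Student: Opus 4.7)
The strategy is to deduce the statement from a general principle of Lurie (Higher Algebra, \S 5.2.6): for any pointed, presentable $\infty$-category $\sC$, the bar--loop adjunction $\BB \dashv \Omega$ between $\Grp(\sC)$ and $\sC_{\ast/}$ is an equivalence if and only if $\sC$ is stable (equivalently, $\Omega : \sC \to \sC$ is an equivalence). So the task reduces to showing that $\LieCoalg(\bO)^{\op}$ is stable.

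First I would verify the formal prerequisites: $\LieCoalg(\bO)$ is pointed, with zero object given by the trivial Lie coalgebra on $0 \in \bO$, and is presentable by general results on coalgebras over cooperads in a presentable stable symmetric monoidal $\infty$-category. Hence $\LieCoalg(\bO)^{\op}$ inherits the same properties. The substantive step is to produce an equivalence $\Sigma : \LieCoalg(\bO) \xrightarrow{\sim} \LieCoalg(\bO)$, where $\Sigma L = 0 \sqcup_L 0$ is the pushout in $\LieCoalg(\bO)$; note that $\Omega_{\LieCoalg(\bO)^{\op}} = \Sigma_{\LieCoalg(\bO)}$. Since $\oblv : \LieCoalg(\bO) \to \bO$ is a left adjoint (its right adjoint is the cofree Lie coalgebra functor), it preserves colimits, so $\oblv(\Sigma L) \simeq (\oblv L)[1]$. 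This shift is already an equivalence at the level of the stable $\bO$; what remains is to lift the inverse shift to the Lie coalgebra level.

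The main obstacle is exactly this lift: producing a canonical Lie coalgebra structure on $L[-1]$ that realizes it as $\Sigma^{-1} L$ in $\LieCoalg(\bO)$, and showing this assignment is functorial and inverse to $\Sigma$. The cleanest route is through the Koszul-self-duality of the shifted Lie (co)operad: the shift $L \mapsto L[1]$ intertwines $\LieCoalg$-structures with $\LieCoalg_{[1]}$-structures, and in the stable setting an operadic twist identifies these two coalgebra categories. One can package this directly, or deduce it by going through the Koszul duality adjunction $\coChev^{\enh} \dashv \Chev^{\enh}$ recalled just above, where the bar/cobar formalism produces the delooping explicitly and the group-object equivalence drops out by transporting structure. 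Either way, once this suspension equivalence is in hand, invoking Lurie's theorem completes the proof.
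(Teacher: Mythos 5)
Your proof rests on a false ``general principle.'' It is not true that for a pointed presentable $\infty$-category $\sC$, the adjunction $\BB : \Grp(\sC) \rightleftarrows \sC : \Omega$ is an equivalence if and only if $\sC$ is stable. Stability of a pointed category with finite limits is equivalent to the \emph{endofunctor} $\Omega : \sC \to \sC$ being an equivalence, not to the adjunction with $\Grp(\sC)$; these differ precisely because the forgetful functor $\Grp(\sC) \to \sC$ need not be an equivalence. In fact the proposition at hand is itself a counterexample to your biconditional: $\LieCoalg(\bO)^{\op} \cong \LieAlg(\bO^{\op})$ is \emph{not} stable (for instance, finite products and coproducts disagree there, whereas they coincide in any stable category), yet the $\BB$-$\Omega$ adjunction is claimed to be, and is, an equivalence. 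The same remark applies to $\LieAlg(\bO)$ in the nicer setting of GR, Vol.~II, Ch.~6, Prop.~1.6.4, which the paper's proof adapts. As a result, your plan of producing an inverse to the suspension $\Sigma : \LieCoalg(\bO) \to \LieCoalg(\bO)$ cannot work: $\Sigma$ is simply not an equivalence on $\LieCoalg(\bO)$, and the correct computation $\oblv(\Sigma L) \simeq \oblv(L)[1]$ in $\bO$ does not lift to one.

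The argument that actually works proves directly that the unit and counit of $\BB \dashv \Omega$ are isomorphisms, with the key input being a commutative square comparing $\BB$ on $\Grp(\LieAlg(\bO^{\op}))$ with the shift $[1]$ on $\bO^{\op}$ after applying the forgetful functor, as in GR. The delicate point, and the reason the paper gives a nontrivial proof rather than a bare citation, is that the monoidal structure on $\bO^{\op}$ does not commute with colimits, so one cannot invoke the usual fact that the forgetful from algebras over an operad preserves geometric realizations. Instead one observes that the specific simplicial object $(\BB G)^{\bullet}$, after applying $\oblv$, becomes the \v{C}ech nerve of $\oblv(G) \to 0$ in $\bO^{\op}$; its realization computes the shift, and the tensor product on $\bO^{\op}$ does commute with this particular sifted colimit. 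Any variant of your Koszul-duality route would still have to confront this same colimit issue, which your write-up does not engage.
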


\begin{proof}

This is proved in much the same way as Proposition 1.6.4 in Chapter 6 of \cite{gr-ii}, the key difference being that the commutativity of the square \[
\begin{tikzcd}
\Grp(\LieCoalg(\bO)^{\op}) \arrow{r}[yshift=0.2em]{\BB_{\LieCoalg(\bO)^{\op}}} \arrow{d}{\oblv} & \LieCoalg(\bO)^{\op} \arrow{d}{\oblv} \\
\bO^{\op} \arrow{r}{[1]_{\bO^{\op}}} & \bO^{\op}.
\end{tikzcd} \]
is less obvious here. To prove this commutativity, we first rewrite \[ \LieCoalg(\bO)^{\op} \cong \LieAlg(\bO^{\op}). \] We remark that $\LieAlg(\bO^{\op})$ does not quite fit into the framework of \emph{loc. cit.}, since the tensor product on $\bO^{\op}$ does not generally preserve colimits, but nonetheless we can consider Lie algebras in this category. Given $L$ in $\Grp(\LieAlg(\bO^{\op}))$, we note that the image of the simplicial object $(\BB_{\LieAlg(\bO^{\op})} L)^{\bullet}$ under the forgetful functor \[ \Grp(\LieAlg(\bO^{\op})) \longrightarrow \Grp(\bO^{\op}) \cong \bO^{\op} \] (the equivalence here is due to the stability of $\bO^{\op}$) is simply the \v{C}ech nerve of $L \to 0$. The tensor product on $\bO^{\op}$ certainly preserves this particular sifted colimit in each variable, since the geometric realization of the \v{C}ech nerve in $\bO^{\op}$ computes the shift $L[1]$. The claim now follows.

\end{proof}

\subsection{Commutative Hopf algebras} The category of non-unital commutative Hopf algebras in $\bO$ is defined by \[ \ComHopfAlg^{\nonuntl}(\bO) := \Grp(\ComAlg^{\nonuntl}(\bO)^{\op})^{\op}. \] By Proposition (\ref{grpliecoalg}) and its proof, the functor \[ (\BB_{\LieCoalg(\bO)^{\op}})^{\op} \circ \Grp(\coChev^{\enh,\op})^{\op} : \ComHopfAlg^{\nonuntl}(\bO) \longrightarrow \LieCoalg(\bO) \] fits into a commutative square \[
\begin{tikzcd}
\ComHopfAlg^{\nonuntl}(\bO) \arrow{r} \arrow{d}{\oblv} & \LieCoalg(\bO) \arrow{d}{\oblv} \\
\ComAlg^{\nonuntl}(\bO) \arrow{r}{\coPrim_{\ComAlg}} & \bO,
\end{tikzcd} \]
and hence we denote it by \[ \coPrim_{\ComHopfAlg}^{\enh} := (\BB_{\LieCoalg(\bO)^{\op}})^{\op} \circ \Grp(\coChev^{\enh,\op})^{\op}. \] Note that $\coPrim_{\ComHopfAlg}^{\enh}$ admits the right adjoint \[ \UU^{\co} : \Grp(\Chev^{\enh,\op})^{\op} \circ (\Omega_{\LieCoalg(\bO)^{\op}})^{\op} : \LieCoalg(\bO) \longrightarrow \ComHopfAlg^{\nonuntl}(\bO). \] This choice of notation is justified by the following observation.

\begin{proposition}

The composition \[ \LieCoalg(\bO) \xrightarrow{\UU^{\co}} \ComHopfAlg^{\nonuntl}(\bO) \xrightarrow{\oblv} \AssocCoalg^{\nonuntl}(\bO) \] is right adjoint to the functor \[ \cores^{\AssocCoalg \to \LieCoalg} : \AssocCoalg^{\nonuntl}(\bO) \longrightarrow \LieCoalg(\bO) \] of corestriction along the morphism of co-operads $\emph{Assoc}^* \to \emph{Lie}^*$.

\end{proposition}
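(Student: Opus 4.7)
The plan is to establish the adjunction by unwinding the definition of $\UU^{\co}$ via Proposition \ref{grpliecoalg} and identifying $\oblv \circ \UU^{\co}$ as the derived analogue of the classical co-universal enveloping coalgebra, which is Koszul-dual to the universal enveloping algebra functor. Concretely, I aim to produce a natural equivalence
\[ \Hom_{\AssocCoalg^{\nonuntl}(\bO)}(C, \oblv\, \UU^{\co}(L)) \simeq \Hom_{\LieCoalg(\bO)}(\cores^{\AssocCoalg \to \LieCoalg}(C), L) \]
for $C \in \AssocCoalg^{\nonuntl}(\bO)$ and $L \in \LieCoalg(\bO)$.

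First I would unwind $\UU^{\co}$. Under the equivalence $\Omega_{\LieCoalg(\bO)^{\op}} : \LieCoalg(\bO)^{\op} \tilde{\longrightarrow} \Grp(\LieCoalg(\bO)^{\op})$ of Proposition \ref{grpliecoalg}, the group object $\Omega(L)$ is identified with the \v{C}ech nerve of $L \to 0$; applying $\Grp(\Chev^{\enh,\op})$ levelwise and undoing the op gives the commutative Hopf algebra $\UU^{\co}(L)$, whose underlying commutative algebra is $\Chev^{\enh}(L)$ and whose comultiplication is induced by the cogroup structure. The forgetful functor $\oblv : \ComHopfAlg^{\nonuntl}(\bO) \to \AssocCoalg^{\nonuntl}(\bO)$ retains this comultiplication while discarding the commutative multiplication.

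Next I would check the universal property directly. A map $C \to \oblv\, \UU^{\co}(L)$ in $\AssocCoalg^{\nonuntl}(\bO)$ induces, by composition with the natural counit $\cores\, \oblv\, \UU^{\co}(L) \to L$ arising from the tautological cogroup structure (and ultimately from the coprimitives map for $\coPrim_{\ComHopfAlg}^{\enh}$), a map $\cores(C) \to L$. Conversely, given $\cores(C) \to L$, I would produce a map $C \to \oblv\, \UU^{\co}(L)$ by applying the \v{C}ech-nerve construction in $\AssocCoalg^{\nonuntl}(\bO)^{\op}$ and $\LieCoalg(\bO)^{\op}$ respectively and invoking the Koszul-duality adjunction $\coChev^{\enh} \dashv \Chev^{\enh}$ together with the compatibility between this adjunction and the morphism of co-operads $\text{Assoc}^* \to \text{Lie}^*$ (which itself is a shadow of the operad map $\text{Lie} \to \text{Assoc}$ on the commutator side).

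The main obstacle is verifying that these two constructions are mutually inverse. This reduces to a formal check that the counit $\cores\, \oblv\, \UU^{\co}(L) \to L$ agrees with the natural map coming from $\coPrim_{\ComHopfAlg}^{\enh}$, which in turn follows from the commutative square relating $\coPrim_{\ComHopfAlg}^{\enh}$ to $\coPrim_{\ComAlg}$ via the forgetful functor $\ComHopfAlg^{\nonuntl}(\bO) \to \ComAlg^{\nonuntl}(\bO)$, combined with the basic Koszul identity arising from the counit of $\coChev^{\enh} \dashv \Chev^{\enh}$. The bookkeeping is formal but requires carefully tracking compatibilities between the $(\infty,1)$-categorical group-object constructions, the operadic bar-cobar functors, and the various forgetful and (co)restriction functors involved.
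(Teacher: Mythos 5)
The paper's proof is a single sentence: ``The proof is analogous to that of Theorem 6.1.2 in Chapter 6 of \cite{gr-ii}.'' That is, the intended argument is to run the Gaitsgory--Rozenblyum proof that $\oblv \circ U^{\mathrm{Hopf}} \colon \LieAlg(\bO) \to \AssocAlg(\bO)$ is left adjoint to restriction, dualized over $\bO^{\op}$. Your sketch is in the right spirit — you correctly identify $\UU^{\co}$ via the \v{C}ech-nerve/delooping description and aim to exhibit the adjunction directly — but there are two points where your plan is thinner than you acknowledge.

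First, the reverse direction is where the content lives, and your sketch glosses over it. You propose to start from $\cores(C) \to L$ and ``apply the \v{C}ech-nerve construction in $\AssocCoalg^{\nonuntl}(\bO)^{\op}$ and $\LieCoalg(\bO)^{\op}$ respectively.'' Note that Proposition \ref{grpliecoalg} is a \emph{Lie-specific} statement: $\Omega$ in $\LieCoalg(\bO)^{\op}$ is an equivalence (and coincides with a shift on underlying objects) precisely because the tensor product preserves the geometric realization of the relevant \v{C}ech nerve, an argument which uses the Lie/commutative Koszul duality in an essential way. In $\AssocCoalg^{\nonuntl}(\bO)^{\op}$ the analogous $\Omega$ is \emph{not} a shift and is not even obviously conservative, so taking the \v{C}ech nerve of $C$ does not hand you anything you can feed into $\Chev^{\enh}$ levelwise in the same way. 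The actual GR argument sidesteps this by a more structured comparison (controlling the situation with the PBW filtration and a careful comparison of monads/comonads on module categories), not by constructing both adjuncts from scratch and diagram-chasing.

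Second, the phrase ``the bookkeeping is formal'' underestimates the dualization itself. As the paper explicitly warns in the proof of Proposition \ref{grpliecoalg}, $\bO^{\op}$ is not a symmetric monoidal DG category in the sense required by \cite{gr-ii} — the tensor product there preserves limits rather than colimits — so one cannot simply quote GR statements in $\bO^{\op}$ verbatim; one must re-examine which sifted (co)limits are preserved where. Your plan does not address this. To turn the sketch into a proof, you should instead follow the GR template closely: establish the candidate counit $\cores\,\oblv\,\UU^{\co}(L) \to L$ as you describe, but then argue the adjunction by exhibiting $\oblv\,\UU^{\co}$ as limit-preserving (so that a left adjoint exists abstractly) and identifying that left adjoint with $\cores$ via the filtered/associated-graded reduction — essentially the PBW argument dualized — rather than attempting to construct the second adjunct by hand from a \v{C}ech nerve in the associative setting.
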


\begin{proof}

The proof is analogous to that of Theorem 6.1.2 in Chapter 6 of \cite{gr-ii}.

\end{proof}

\begin{corollary}

\label{hopfcomod}

For any $A$ in $\ComHopfAlg^{\nonuntl}(\bO)$, there is a canonical morphism \[ \cores^{\AssocCoalg \to \LieCoalg}(A) \longrightarrow \coPrim_{\ComHopfAlg}^{\enh}(A) \] in $\LieCoalg(\bO)$, with the same underlying morphism in $\bO$ as the unit map \[ A \longrightarrow \triv_{\ComAlg}\coPrim_{\ComAlg}(A). \] In particular, the forgetful functor \[ A\comod(\bO) \longrightarrow \bO \] naturally lifts to a functor \[ A\comod(\bO) \longrightarrow \coPrim_{\ComHopfAlg}^{\enh}(A)\comod(\bO). \]

\end{corollary}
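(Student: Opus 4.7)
The canonical morphism is constructed by transposing the unit of the adjunction $\coPrim^{\enh}_{\ComHopfAlg} \dashv \UU^{\co}$ under the adjunction $\cores^{\AssocCoalg \to \LieCoalg} \dashv \oblv \circ \UU^{\co}$ stated in the preceding proposition. Explicitly, setting $L := \coPrim^{\enh}_{\ComHopfAlg}(A)$, the unit provides a morphism $\eta_A : A \to \UU^{\co}(L)$ in $\ComHopfAlg^{\nonuntl}(\bO)$; forgetting along $\ComHopfAlg^{\nonuntl}(\bO) \to \AssocCoalg^{\nonuntl}(\bO)$ gives a morphism $A \to \oblv \UU^{\co}(L)$; and passing to its adjunct under $\cores \dashv \oblv \circ \UU^{\co}$ produces the desired canonical map $\cores^{\AssocCoalg \to \LieCoalg}(A) \to L$ in $\LieCoalg(\bO)$.

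To identify the underlying morphism in $\bO$, I would apply the forgetful functor $\oblv_{\bO} : \LieCoalg(\bO) \to \bO$. The commutative square displayed in the excerpt identifies $\oblv_{\bO}(L)$ with $\coPrim_{\ComAlg}(A)$, and since $\cores$ acts as the identity on underlying objects, the underlying morphism is a map $A \to \coPrim_{\ComAlg}(A)$ in $\bO$. Tracing through the two adjunctions via their units and counits, this underlying map factors through $\oblv_{\bO}$ of $\eta_A$, and naturality of the unit combined with the commutative square shows it coincides with the unit $A \to \triv_{\ComAlg} \coPrim_{\ComAlg}(A)$ of the adjunction $\coPrim_{\ComAlg} \dashv \triv_{\ComAlg}$ (regarded as a map in $\bO$).

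The lift of the forgetful functor is then an immediate consequence. Given $M \in A\comod(\bO)$ with coaction $M \to M \otimes A$, corestriction along the cooperad morphism $\mathrm{Assoc}^* \to \mathrm{Lie}^*$ equips $M$ with a structure of $\cores^{\AssocCoalg \to \LieCoalg}(A)$-comodule in $\LieCoalg(\bO)$, and pushing forward along the canonical morphism $\cores(A) \to L$ (which is functorial on comodules for morphisms of Lie coalgebras) yields an $L$-comodule structure on $M$ with the same underlying object. Assembling these two operations gives the desired lift $A\comod(\bO) \to L\comod(\bO)$.

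The main obstacle will be the compatibility check for the underlying morphism in $\bO$. While the construction of the canonical map via adjunction is formal, verifying that it reduces to the unit of $\coPrim_{\ComAlg} \dashv \triv_{\ComAlg}$ in $\bO$ requires unwinding $\UU^{\co}$ through the $\BB$-$\Omega$ equivalence of Proposition \ref{grpliecoalg} and the cobar-type construction in $\Grp(\LieCoalg(\bO)^{\op})$, and then checking that each layer of this construction interacts correctly with the forgetful functor to $\bO$. Once this compatibility is established, the remaining ingredients are formal consequences of the adjunction yoga and the functoriality of the comodule construction.
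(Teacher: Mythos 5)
Your construction of the canonical morphism — take the unit $\eta_A\colon A \to \UU^{\co}(\coPrim^{\enh}_{\ComHopfAlg}(A))$, forget to $\AssocCoalg^{\nonuntl}(\bO)$, then transpose under $\cores^{\AssocCoalg\to\LieCoalg} \dashv (\oblv \circ \UU^{\co})$ — is exactly the paper's composition $\cores(A) \to \cores(\oblv\UU^{\co}(\coPrim^{\enh}_{\ComHopfAlg}(A))) \to \coPrim^{\enh}_{\ComHopfAlg}(A)$, just phrased as an adjunction transpose rather than as a two-step composite. Your further discussion of the underlying morphism and the corestriction-of-scalars argument for the comodule lift is the standard formal unwinding the paper leaves implicit, so your approach matches the paper's.
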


\begin{proof}

The morphism in question can be obtained as the composition
\begin{align*}
\cores^{\AssocCoalg \to \LieCoalg}(A) &\longrightarrow \cores^{\AssocCoalg \to \LieCoalg}(\UU^{\co}(\coPrim_{\ComHopfAlg}^{\enh}(A))) \\
&\longrightarrow \coPrim_{\ComHopfAlg}^{\enh}(A).
\end{align*}

\end{proof}

\subsection{Deformation theory} Let $\pi : Z \to Y$ be a morphism of prestacks equipped with a section $\sigma : Y \to Z$ which is affine.

\begin{proposition}

The cotangent complex $T^*(Y/Z)$ relative to $\sigma$ has a natural structure of Lie coalgebra in $\QCoh(Y)$, and $\sigma^*$ naturally lifts to a functor \[ (s^*)^{\enh} : \QCoh(Z) \longrightarrow T^*(Y/Z)\comod(\QCoh(Y)). \]

\label{colie}

\end{proposition}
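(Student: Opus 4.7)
The plan is to apply Corollary \ref{hopfcomod} to a commutative Hopf algebra naturally associated to $\sigma$, and then identify the resulting Lie coalgebra with $T^*(Y/Z)$ via a cotangent complex computation. First, I would form the derived fiber product $\sG := Y \times_{Z,\sigma,\sigma} Y$. The section condition $\pi\sigma \simeq \id_Y$ makes the two projections $p_1, p_2 : \sG \to Y$ canonically equivalent, endowing $\sG$ with the structure of a group object in prestacks over $Y$: the identity section is the diagonal $\Delta : Y \to \sG$, composition comes from the map $Y \times_Z Y \times_Z Y \to Y \times_Z Y$ forgetting the middle factor, and inversion from swapping factors. Because $\sigma$ is affine, $p_1$ is affine by base change of $\sigma$ along itself, so $A := p_{1,*}\sO_\sG$ is a well-behaved object of $\QCoh(Y)$, naturally a commutative Hopf algebra via the group structure on $\sG$; viewed non-unitally via the augmentation $A \to \sO_Y$ arising from $\Delta$, it provides an object of $\ComHopfAlg^{\nonuntl}(\QCoh(Y))$.

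Second, I would lift $\sigma^*$ to $A\comod(\QCoh(Y))$. For $\sF \in \QCoh(Z)$, the tautological homotopy $\sigma p_1 \simeq \sigma p_2$ on $\sG$ yields a canonical descent isomorphism $p_1^*\sigma^*\sF \simeq p_2^*\sigma^*\sF$, whose image under the unit $\id \to p_{1,*}p_1^*$ produces a map
\[ \sigma^*\sF \longrightarrow p_{1,*}p_2^*\sigma^*\sF \simeq A \otimes_{\sO_Y} \sigma^*\sF, \]
where the final isomorphism combines base change ($p_{1,*}p_2^* \simeq \sigma^*\sigma_*$) with the projection formula for the affine morphism $\sigma$ (so $\sigma^*\sigma_*\sE \simeq A \otimes_{\sO_Y} \sE$). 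This coaction is functorial in $\sF$, producing the lift $\QCoh(Z) \to A\comod(\QCoh(Y))$. Composing with the functor supplied by Corollary \ref{hopfcomod} gives $\QCoh(Z) \to \coPrim_{\ComHopfAlg}^{\enh}(A)\comod(\QCoh(Y))$, so the remaining task is to identify $\coPrim_{\ComHopfAlg}^{\enh}(A)$ with $T^*(Y/Z)$ in $\LieCoalg(\QCoh(Y))$.

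On underlying objects, $\coPrim_{\ComHopfAlg}^{\enh}(A)$ equals $\coPrim_{\ComAlg}(A_+)$ by the commutative square defining $\coPrim^{\enh}_{\ComHopfAlg}$, and by the standard Koszul duality for augmented commutative algebras this equals $\bL_{Y/\sG}[-1]$, the shifted relative cotangent complex of $\Delta$. Splitting the triangle $\Delta^*\bL_\sG \to \bL_Y \to \bL_{Y/\sG}$ via the retraction $p_1 \circ \Delta = \id_Y$ together with the base change identification $\bL_{\sG/Y} \simeq p_2^*\bL_{Y/Z}$ yields $\bL_{Y/\sG} \simeq \bL_{Y/Z}[1]$, and hence $\coPrim_{\ComAlg}(A_+) \simeq \bL_{Y/Z} = T^*(Y/Z)$. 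The main obstacle is matching Lie coalgebra structures: the one extracted by $\coPrim_{\ComHopfAlg}^{\enh}$ — Koszul-dually from the group law on $\sG$ — must be shown to coincide with the canonical Lie coalgebra structure on $T^*(Y/Z)$ from derived deformation theory. This matching is a form of the Lurie / Gaitsgory-Rozenblyum equivalence between formal groups over $Y$ and Lie algebras in $\QCoh(Y)$, applied to the formal completion of $\sG$ at $\Delta$, equivalently to the formal moduli problem $Z^\wedge_Y$ attached to $\sigma$. The affineness hypothesis on $\sigma$ is used to ensure that $p_1$ is affine and $A$ is a genuine quasicoherent Hopf algebra, so that this Koszul duality can be carried out directly in $\QCoh(Y)$ rather than requiring an ind-coherent enlargement.
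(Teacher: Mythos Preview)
Your approach is essentially the same as the paper's: form the group $\sG = Y \times_Z Y$, take its Hopf algebra of functions (the paper takes the augmentation ideal directly, you take $p_{1,*}\sO_\sG$ and pass to $A_+$, which amounts to the same thing), identify $\coPrim_{\ComAlg}$ of this with $T^*(Y/Z)$, and invoke Corollary~\ref{hopfcomod} for the comodule lift. The paper streamlines slightly by replacing $Z$ with $\BB_Y\sG$ at the outset, so that the lift of $\sigma^*$ to $A$-comodules is literally the tautological equivalence $\QCoh(\BB_Y\sG) \simeq A\comod(\QCoh(Y))$ rather than the explicit descent construction you write out; but this is a packaging difference, not a mathematical one.

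The one point worth flagging is your final paragraph. You describe ``matching Lie coalgebra structures'' as the main obstacle, appealing to a formal-groups/Lie-algebras equivalence to compare the structure produced by $\coPrim_{\ComHopfAlg}^{\enh}$ with a pre-existing ``canonical Lie coalgebra structure on $T^*(Y/Z)$ from derived deformation theory.'' But the proposition is not asserting compatibility with some independently defined structure: it is \emph{constructing} a Lie coalgebra structure on $T^*(Y/Z)$, and the construction is precisely $\coPrim_{\ComHopfAlg}^{\enh}(A)$ together with the identification of its underlying object. So there is nothing to match, and no obstacle---once you have identified the underlying object, you are done. The paper's proof accordingly stops at that point.
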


\begin{proof}

Let $\sG := Y \times_Z Y$, a group in prestacks affine over $Y$. Without loss of generality, we can replace $Z$ by the classifying prestack $\BB_Y \sG$. Writing $e : Y \to \sG$ for the unit section, let $A \in \ComHopfAlg^{\nonuntl}(\QCoh(Y)^{\leq 0})$ be the direct image of the ideal sheaf $\sI_e$ under the projection $\sG \to Y$. Then we have a canonical equivalence \[ A\comod(\QCoh(Y)) \tilde{\longrightarrow} \QCoh(\BB_Y \sG) \] which fits into a commutative triangle \[
\begin{tikzcd}
A\comod(\QCoh(Y)) \arrow{r} \arrow{dr}{\oblv} & \QCoh(\BB_Y \sG) \arrow{d}{s^*} \\
& \QCoh(Y).
\end{tikzcd} \]

It follows from the definitions that we can identify \[ \coPrim_{\ComAlg}(A) \cong e^*T^*(\sG/Y) \cong T^*(Y/B_Y\sG). \] Using the functor $\coPrim_{\ComHopfAlg}^{\enh}$ constructed in the previous section, we obtain a Lie coalgebra structure on $T^*(Y/B_Y\sG)$. The second claim follows from Corollary \ref{hopfcomod}.

\end{proof}

\subsection{Deformations of factorization spaces} Now suppose that $\pi : Z \to Y$ is a morphism of corr-unital commutative factorization spaces, again equipped with an affine section $\sigma : Y \to Z$. We assume that for any injection of finite sets $J \to I$, the structure map \[ Y_{I,J} \longrightarrow Y_{X^I_{\dR}} \] is a regular closed embedding, and likewise for $Z$. This allows us to equip $\QCoh(Y)$ and $\QCoh(Z)$ with unital structures, and hence upgrade them to objects of $\FactCat^{\laxfact}$. Namely, if \[
\begin{tikzcd}
& Y_{I,J} \arrow{dl}[swap]{\alpha} \arrow{dr}{\beta} & \\
Y_{X^J_{\dR}} & & Y_{X^I_{\dR}}
\end{tikzcd} \]
is the correspondence defining the corr-unital structure on $Y$, then the corresponding unital structure on $\QCoh(Y)$ is given by \[ (\beta^*)^{\LL}\alpha^* : \QCoh(Y)_{X^J_{\dR}} \longrightarrow \QCoh(Y)_{X^I_{\dR}}. \]

\begin{lemma}

\label{cotanfact}

There is a canonical isomorphism of Lie coalgebras \[ \Fact^{\oplus}(T^*(Y_{X_{\dR}}/Z_{X_{\dR}})) \tilde{\longrightarrow} T^*(Y_{\Ran}/Z_{\Ran}) \] in $\QCoh(Y_{\Ran})$.

\end{lemma}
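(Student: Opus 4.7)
The plan is to first establish the isomorphism at the level of objects of $\ComAlg^{\oplus^*}(\QCoh(Y_{\Ran}))$, where it will be an immediate consequence of the universal property of $\Fact^{\oplus}$ in Proposition \ref{factplus}, and then to promote it to an isomorphism of Lie coalgebras by appeal to the naturality of the construction in Proposition \ref{colie}.

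First I would upgrade $T^*(Y_{\Ran}/Z_{\Ran})$ to an object of $\ComAlg^{\oplus^*}(\QCoh(Y_{\Ran}))$. The corr-unital factorization structures on $\pi: Z \to Y$, together with the section $\sigma$, yield a canonical identification
\[
(Y_{\Ran \times \Ran}, Z_{\Ran \times \Ran}, \sigma_{\Ran \times \Ran})\big|_{(\Ran \times \Ran)_{\disj}} \;\cong\; (Y_{\Ran} \times Y_{\Ran}, Z_{\Ran} \times Z_{\Ran}, \sigma_{\Ran} \times \sigma_{\Ran})\big|_{(\Ran \times \Ran)_{\disj}},
\]
where the disjoint-union morphism $u \circ j$ is schematic \'etale and hence compatible with cotangent complexes via pullback. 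Because relative cotangent complexes are additive under products of sections, and because each structure map $Y_{X^J_{\dR}} \to Y_{X^I_{\dR}}$ (and similarly for $Z$) is a regular closed embedding by hypothesis, the pull-push along the correspondence defining $\otimes^*$ produces a canonical map
\[
T^*(Y_{\Ran}/Z_{\Ran}) \oplus^* T^*(Y_{\Ran}/Z_{\Ran}) \longrightarrow T^*(Y_{\Ran}/Z_{\Ran})
\]
whose restriction to the disjoint locus is an isomorphism. This equips $T^*(Y_{\Ran}/Z_{\Ran})$ with a commutative $\oplus^*$-algebra structure satisfying precisely the factorization condition appearing in Proposition \ref{factplus}. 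Since the relative cotangent complex commutes with restriction along $\Delta: X_{\dR} \to \Ran$, we have $\Delta^! T^*(Y_{\Ran}/Z_{\Ran}) \cong T^*(Y_{X_{\dR}}/Z_{X_{\dR}})$, so Proposition \ref{factplus} produces a canonical isomorphism
\[
\Fact^{\oplus}(T^*(Y_{X_{\dR}}/Z_{X_{\dR}})) \;\tilde{\longrightarrow}\; T^*(Y_{\Ran}/Z_{\Ran})
\]
in $\ComAlg^{\oplus^*}(\QCoh(Y_{\Ran}))$.

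To upgrade to Lie coalgebras with respect to $\otimes^!$, I would apply the construction of Proposition \ref{colie} to the entire diagram $\sigma_{\Ran}: Y_{\Ran} \to Z_{\Ran}$ of corr-unital factorization spaces. The infinitesimal groupoid $\sG_{\Ran} := Y_{\Ran} \times_{Z_{\Ran}} Y_{\Ran}$ inherits a corr-unital factorization structure, and on the disjoint locus decomposes as the external product of the corresponding groupoids on each factor. Applying $\coPrim_{\ComHopfAlg}^{\enh}$ to the ideal sheaf of the unit section in the commutative Hopf algebra associated to $\sG_{\Ran}$ is functorial in this factorization data; hence the resulting Lie coalgebra structure on $T^*(Y_{\Ran}/Z_{\Ran})$ is compatible with the $\oplus^*$-algebra structure constructed above, in the sense that its restriction to the main diagonal recovers the Lie coalgebra structure on $T^*(Y_{X_{\dR}}/Z_{X_{\dR}})$. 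Since $\Fact^{\oplus}$ is left-lax symmetric monoidal for $\otimes^!$ and is fully faithful onto the full subcategory of factorization objects, the canonical isomorphism above automatically refines to an isomorphism in $\LieCoalg^{\otimes^!}(\QCoh(Y_{\Ran}))$, as desired.

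The main obstacle will be formalizing the last compatibility: one must check that the two natural Lie coalgebra structures one has on $T^*(Y_{\Ran}/Z_{\Ran})$ (one intrinsic to its role as a relative cotangent complex via Proposition \ref{colie}, one obtained by transport through $\Fact^{\oplus}$) actually coincide. The cleanest approach is to phrase both structures as arising from the functor $\coPrim_{\ComHopfAlg}^{\enh}$ applied to a single commutative Hopf algebra in $\QCoh(Y_{\Ran})$ equipped with a compatible $\oplus^*$-algebra structure, and then observe that the $\Fact^{\oplus}$-characterization of factorizable objects pins down all such structures by their restriction to $X_{\dR}$ under $\Delta^!$.
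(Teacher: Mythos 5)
Your proof takes essentially the same route as the paper's terse two-sentence argument: endow $T^*(Y_{\Ran}/Z_{\Ran})$ with the commutative $\oplus^*$-algebra structure induced from the factorization structures on $Y$ and $Z$, observe via Proposition \ref{factplus} that it lies in the essential image of $\Fact^{\oplus}$, and reduce the Lie coalgebra isomorphism to a check over $X_{\dR}$ using full faithfulness of the lifted $\Fact^{\oplus}$. The compatibility between the $\coPrim^{\enh}_{\ComHopfAlg}$-induced Lie coalgebra structure and the $\oplus^*$-algebra structure that you rightly flag in your last paragraph is exactly what the paper's phrase ``immediate from the definitions'' is taking for granted.
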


\begin{proof}

The commutative factorization structures on $Y$ and $Z$ induce a structure of commutative $\oplus^*$-algebra on $T^*(Y_{\Ran}/Z_{\Ran})$, which moreover lies in the essential image of $\Fact^{\oplus}$ by Proposition \ref{factplus}. It therefore suffices to produce an isomorphism of Lie-$!$ coalgebras \[ T^*(Y_{X_{\dR}}/Z_{X_{\dR}}) \tilde{\longrightarrow} (\Delta^!)^{\enh}T^*(Y_{\Ran}/Z_{\Ran}), \] which is immediate from the definitions.

\end{proof}

\begin{proposition}

The functor $\sigma^*$ lifts to a morphism \[ \QCoh(Z) \longrightarrow T^*(Y_{X_{\dR}}/Z_{X_{\dR}})\comod^!(\QCoh(Y)) \] in $\FactCat^{\laxfact}$.

\end{proposition}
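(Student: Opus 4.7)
The plan is to chain together the three main results already established in this section: Proposition~\ref{colie}, which from an affine section of a morphism of prestacks produces a cotangent Lie-coalgebra comodule refinement of the pullback; Lemma~\ref{cotanfact}, which identifies the relative cotangent complex over $\Ran$ with $\Fact^{\oplus}$ applied to the relative cotangent complex over $X_{\dR}$; and Proposition~\ref{liecomod}, which identifies Lie-$!$ comodules with $\Fact^{\oplus}$-comodules in $\FactCat^{\laxfact}$.

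Concretely, I would first apply Proposition~\ref{colie} in families. For each $S \to \Ran$, the base change $\sigma_S : Y_S \to Z_S$ remains an affine section of $\pi_S$, so Proposition~\ref{colie} yields a canonical functor
\[ \QCoh(Z_S) \longrightarrow T^*(Y_S/Z_S)\comod(\QCoh(Y_S)), \]
natural in $S$ by the naturality of the cotangent construction. By Lemma~\ref{cotanfact} (or rather its pullback to $S$), the relative cotangent complex $T^*(Y_S/Z_S)$ is identified as a Lie coalgebra with the restriction to $S$ of $\Fact^{\oplus}(T^*(Y_{X_{\dR}}/Z_{X_{\dR}}))$. Now the proof of Proposition~\ref{liecomod} is a purely formal manipulation of the non-unital symmetric monoidal structures $\otimes^*$, $\oplus^*$, $\otimes^!$ and makes no essential use of the ambient category being $\Rep(H)$; it extends verbatim to $\QCoh(Y)$ as an object of $\FactCat^{\laxfact}$. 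This identifies the fiberwise target above with the fiber over $S$ of $T^*(Y_{X_{\dR}}/Z_{X_{\dR}})\comod^!(\QCoh(Y))$, giving the desired functor on each fiber.

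The main obstacle is assembling these fiberwise functors into a morphism in $\FactCat^{\laxfact}$. Compatibility with the lax factorization structure is automatic, as both sides are manufactured in parallel from the single Lie-$!$ coalgebra $T^*(Y_{X_{\dR}}/Z_{X_{\dR}})$ using the oplax symmetric monoidal functor $\Fact^{\oplus}$; the multiplicativity of cotangent complexes under external products of prestacks propagates naturally through the construction. The delicate point is compatibility with the unital structures on $\QCoh(Y)$ and $\QCoh(Z)$: these are defined by $(\beta^*)^{\LL}\alpha^*$ along the correspondences witnessing the corr-unital structure, where the left adjoint $(\beta^*)^{\LL}$ exists precisely because of the regular-embedding hypothesis imposed on the structure maps $Y_{I,J} \to Y_{X^I_{\dR}}$ and $Z_{I,J} \to Z_{X^I_{\dR}}$ (as in the proof of Proposition~\ref{lsunrres}). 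Commutation of $\sigma^*$ with $\alpha^*$ is immediate from naturality; commutation with $(\beta^*)^{\LL}$ follows by mate calculus from commutation with $\beta^*$, which in turn is a base-change assertion that uses the cartesian squares arising from $\sigma$ being a section of a morphism of corr-unital commutative factorization spaces. Transporting this verification through Proposition~\ref{liecomod} induces the corresponding unital structure on the comodule side, yielding the claim.
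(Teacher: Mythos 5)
Your proof follows the same route as the paper's one-line argument (chain Proposition~\ref{colie} applied to $\sigma_{\Ran} : Y_{\Ran} \to Z_{\Ran}$ with Proposition~\ref{liecomod} via the identification in Lemma~\ref{cotanfact}), but usefully fills in two points the paper leaves implicit: that Proposition~\ref{liecomod} is stated only for the constant category $\Rep(H)$ and must be extended to $\QCoh(Y)$, and that compatibility with the unital structures $(\beta^*)^{\LL}\alpha^*$ requires the regular-embedding hypothesis and a base-change check. Both observations are correct and do not change the overall strategy.
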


\begin{proof}

This follows from Propositions \ref{colie} and \ref{liecomod} in view of Lemma \ref{cotanfact}.

\end{proof}

\subsection{Deformations of local systems} Let us specialize to the situation \[ \pi : \LS_P(D)^{x \ram} \underset{\LS_M(D)^{x \ram}}{\times} \LS_M(D) \rightleftarrows \LS_M(D) : \sigma. \] We begin with a geometric observation.

\begin{proposition}

\label{affsect}

For any injection of finite sets $J \to I$, the corr-unital structure map \[ \LS_P(D)^{x \ram}_{J \to I} \underset{\LS_M(D)^{x \ram}_{J \to I}}{\times} \LS_M(D)_{X^I_{\dR}} \longrightarrow \LS_P(D)^{x \ram}_{X^I_{\dR}} \underset{\LS_M(D)^{x \ram}_{X^I_{\dR}}}{\times} \LS_M(D)_{X^I_{\dR}} \] is a regular closed embedding. Moreover, the section \[ \sigma: \LS_M(D)_{X^I_{\dR}} \longrightarrow \LS_P(D)^{x \ram}_{X^I_{\dR}} \underset{\LS_M(D)^{x \ram}_{X^I_{\dR}}}{\times} \LS_M(D)_{X^I_{\dR}} \] is affine.

\end{proposition}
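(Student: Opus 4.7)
The plan is to reduce both claims to statements about the unipotent radical $N_P$ via the Levi decomposition $P = M \ltimes N_P$. Concretely, the compatible decompositions $\mathfrak{p} \otimes \Omega^1_D = (\mathfrak{m} \otimes \Omega^1_D) \oplus (\mathfrak{n}_P \otimes \Omega^1_D)$ and $(\mathfrak{L}^+P)^{x\ram} = (\mathfrak{L}^+M)^{x\ram} \ltimes (\mathfrak{L}^+N_P)^{x\ram}$ realize the projection $\mathfrak{q} : \LS_P(D)^{x\ram} \to \LS_M(D)^{x\ram}$ as a twisted $\LS_{N_P}(D)^{x\ram}$-fibration, with fiber $\LS_{N_P(\sP_M)}(D)^{x\ram}$ over an $M$-bundle $\sP_M$, where $N_P(\sP_M) := \sP_M \overset{M}{\times} N_P$. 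Accordingly, the fiber product in the statement parametrizes pairs $(\sP_M, \sP_U)$ with $\sP_M \in \LS_M(D)_{X^I_{\dR}}$ and $\sP_U \in \LS_{N_P(\sP_M)}(D)^{x\ram}_{X^I_{\dR}}$, and $\sigma$ is the trivial-$N_P(\sP_M)$-bundle section.

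For the regular closed embedding claim, I would first verify the property at the level of gauge forms: for every $n \geq 0$, the structure map
\[ (\mathfrak{p} \otimes \Omega^1_D)^{x\ram,\leq n}_{J \to I} \hookrightarrow (\mathfrak{p} \otimes \Omega^1_D)^{x\ram,\leq n}_{X^I_{\dR}} \]
is a regular closed embedding by the same argument used for $\beta_1$ in the proof of Proposition \ref{lsunrres} (these are inclusions of sections of a vector bundle supported along a regularly embedded closed subscheme of $X^I$), and likewise for $\mathfrak{m}$ in place of $\mathfrak{p}$. Taking the colimit in $n$, descending to quotients by $(\mathfrak{L}^+P)^{x\ram}$ and $(\mathfrak{L}^+M)^{x\ram}$ respectively, and forming the derived fiber product over $\LS_M(D)_{X^I_{\dR}}$ preserves this property.

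For the affineness of $\sigma$, the fibration picture reduces us to showing that for any unipotent group scheme $U$ on the disk, the inclusion $\BB U \hookrightarrow \LS_U(D)^{x\ram}$ of the trivial local system is affine (in fact, I expect it is a closed embedding). The idea is that the gauge orbit of $\omega = 0$ under the action of $(\mathfrak{L}^+U)^{x\ram}$ on $(\mathfrak{u} \otimes \Omega^1_D)^{x\ram}$ is the image of the logarithmic derivative $g \mapsto g^{-1}dg$, and for unipotent $U$ this image is an ind-affine closed subspace cut out by iterated residue-vanishing conditions: in the abelian case $U = \bG_a$, it is precisely the kernel of the residue at $x$, and the general case follows by induction on the length of the derived series using the $\bG_a$-extension structure. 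Since the stabilizer of $0$ is the constant subgroup $U \subset (\mathfrak{L}^+U)^{x\ram}$, which is affine, the orbit quotient $[\{0\}/(\mathfrak{L}^+U)^{x\ram}] \cong \BB U$ embeds as a closed (hence affine) substack of $\LS_U(D)^{x\ram}$.

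The main obstacle is the last step: showing that the gauge orbit of $0$ is closed in $(\mathfrak{u} \otimes \Omega^1_D)^{x\ram}$ and carrying out the induction on the unipotent radical filtration in the derived factorization setting. This is precisely where unipotence is essential, in sharp contrast to the reductive situation where $(\mathfrak{L}^+H)^{x\ram}/\mathfrak{L}^+H \cong \Gr_H$ is ind-proper rather than ind-affine (as exploited in the $\beta_2$-step of Proposition \ref{lsunrres}). Additional care is required to handle the twisting by $\sP_M$ and to make precise the iterated residue-vanishing conditions that define the orbit.
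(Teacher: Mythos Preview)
Your overall strategy---reducing to the unipotent radical via the Levi decomposition---matches the paper's. The difference is that the paper extracts a single clean consequence of unipotence that renders both claims immediate, whereas you spread the same content across a colimit argument and an induction.

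The paper observes that since $N_P$ is unipotent, the canonical map
\[ \Bigl((\mathfrak{p} \otimes \Omega_D^1)^{x \ram,\leq 1}_{J \to I} \underset{(\mathfrak{m} \otimes \Omega_D^1)^{x \ram,\leq 1}_{J \to I}}{\times} (\mathfrak{m} \otimes \Omega_D^1)_{X^I_{\dR}}\Bigr)\big/(\mathfrak{L}^+P)_{X^I_{\dR}} \longrightarrow \LS_P(D)^{x \ram}_{J \to I} \underset{\LS_M(D)^{x \ram}_{J \to I}}{\times} \LS_M(D)_{X^I_{\dR}} \]
is already an isomorphism (and likewise with $X^I_{\dR}$ in place of $J \to I$). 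In other words, once the $\mathfrak{m}$-component is regular, gauge transformations by $(\mathfrak{L}^+N_P)^{x\ram}$ bring the $\mathfrak{n}_P$-component to pole order at most $1$, and the residual gauge group collapses to $\mathfrak{L}^+P$. This is exactly the computation underlying your ``iterated residue-vanishing'' description of the orbit of $0$, repackaged as a finite-type presentation of the entire fiber product. From this presentation both assertions are immediate: the structure map is induced by a linear inclusion of finite-rank bundles of sections, and $\sigma$ is the vanishing locus of the $\mathfrak{n}_P$-residue.

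There is a genuine gap in your route to the first claim. You assert that regularity of the closed embedding survives taking the colimit over $n$ and then quotienting by the ind-group $(\mathfrak{L}^+P)^{x\ram}$. But at that stage the objects are quotients of ind-schemes by ind-group schemes, and ``regular closed embedding'' is not a notion that transparently makes sense or is preserved in that generality; you have not explained why the resulting map is even schematic. The paper's point is precisely that no colimit is needed and the gauge group never has to leave $\mathfrak{L}^+P$: everything already lives at level $n=1$, where the statement concerns honest finite-type algebraic stacks. Your inductive argument for the affineness of $\sigma$ is not wrong, but it is doing by hand the same work that the $\leq 1$ presentation packages in one line.
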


\begin{proof}

Since $N_P$ is unipotent, the canonical map \[ ((\mathfrak{p} \otimes \Omega_D^1)^{x \ram,\leq 1}_{J \to I} \underset{(\mathfrak{m} \otimes \Omega_D^1)^{x \ram,\leq 1}_{J \to I}}{\times} (\mathfrak{m} \otimes \Omega_D^1)_{X^I_{\dR}})/(\mathfrak{L}^+P)_{X^I_{\dR}} \longrightarrow \LS_P(D)^{x \ram}_{J \to I} \underset{\LS_M(D)^{x \ram}_{J \to I}}{\times} \LS_M(D)_{X^I_{\dR}} \] is an isomorphism. Both assertions follow readily from this presentation.

\end{proof}

The proposition allows us to endow \[ \QCoh(\LS_P(D)^{x \ram} \underset{\LS_M(D)^{x \ram}}{\times} \LS_M(D)) \] with a unital structure, and hence view it as an object of $\FactCat^{\laxfact}$. As shown in \S 9.9 of \cite{R3}, the $1$-affineness of $\LS_P(D)_{X^I_{\dR}}$ implies that the factorization structure is strict, i.e., this object belongs to $\FactCat$.

The object $j_!j^!(\mathfrak{n}_P^* \otimes \omega_X)$ inherits a structure of Lie-$!$ coalgebra in $\Rep(M)$ from $\mathfrak{n}_P^* \otimes \omega_X$. It is Verdier dual to the Lie-$*$ algebra $j_*j^*(\mathfrak{n}_P \otimes k_X)$ which appears in \S 8 of \cite{R3}.

\begin{proposition}

The relative cotangent complex of \[ \sigma : \LS_M(D)_{X_{\dR}} \longrightarrow \LS_P(D)^{x \ram}_{X_{\dR}} \underset{\LS_M(D)^{x \ram}_{X_{\dR}}}{\times} \LS_M(D)_{X_{\dR}} \] is canonically isomorphic to $j_!j^!(\mathfrak{n}_P^* \otimes \omega_X)$ as a Lie-$!$ coalgebra in $\Rep(M)$.

\end{proposition}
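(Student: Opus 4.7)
The plan is to compute the relative cotangent of $\sigma$ directly from the explicit presentation in Proposition~\ref{affsect} and then identify its Lie-$!$ coalgebra structure using Proposition~\ref{colie}. Applied with $I = J = \{*\}$ and using the decomposition $\mathfrak{p} = \mathfrak{n}_P \oplus \mathfrak{m}$ as $M$-modules, Proposition~\ref{affsect} identifies $Z_{X_{\dR}}$ with the quotient
\[ \bigl((\mathfrak{n}_P \otimes \Omega^1_D)^{x\ram,\leq 1}_{X_{\dR}} \times (\mathfrak{m} \otimes \Omega^1_D)_{X_{\dR}}\bigr)/\mathfrak{L}^+P_{X_{\dR}}, \]
under which $\sigma$ corresponds to the origin of the $\mathfrak{n}_P$-factor and the fiber of $\pi$ at $\sigma$ becomes $(\mathfrak{n}_P \otimes \Omega^1_D)^{x\ram,\leq 1}_{X_{\dR}}/\mathfrak{L}^+N_P$.

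The tangent to this fiber at the origin is the standard two-term complex for a quotient of a vector space by the linearized gauge action of a prounipotent group, namely the shifted logarithmic de Rham complex
\[ \bigl[\mathfrak{n}_P \otimes \hat{\sO}_D \xrightarrow{d} \mathfrak{n}_P \otimes (\Omega^1_D)^{x\ram,\leq 1}\bigr]_{X_{\dR}} \]
placed in degrees $[-1,0]$. A direct calculation of the cohomology of this complex, using acyclicity of the disk's de Rham complex away from $x$ together with the one-dimensional residue at $x$, identifies it with $\mathfrak{n}_P \otimes j_*j^*k_X[-1]$, i.e., with a cohomological shift of the Lie-$*$ algebra $j_*j^*(\mathfrak{n}_P \otimes k_X)$ appearing immediately before the statement. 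Applying the general identity $T^*(Y/Z) \cong \sigma^*T^*(Z/Y)[1]$ and Verdier-dualizing on $X$ then yields $T^*(Y/Z) \cong j_!j^!(\mathfrak{n}_P^* \otimes \omega_X)$ as an underlying object of $\Rep(M) \otimes \DD(X)$.

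For the coalgebra structure, Proposition~\ref{colie} realizes the Lie-$!$ coalgebra structure on $T^*(Y/Z)$ as the one arising from the formal group $\sG = Y \times_Z Y$ of automorphisms of $\sigma$ inside $Z$. Unwinding the presentation above, $\sG$ is the formal completion at the identity of a twist of $(\mathfrak{L}^+N_P)^{x\ram}/\mathfrak{L}^+N_P$, so its Lie algebra recovers precisely the Lie-$*$ algebra $j_*j^*(\mathfrak{n}_P\otimes k_X)$ with its standard bracket (coming from the Lie bracket on $\mathfrak{n}_P$ via the commutative $!$-algebra structure on $j_*j^*k_X$). Dualizing, this gives the Koszul-dual Lie-$!$ coalgebra structure on $j_!j^!(\mathfrak{n}_P^* \otimes \omega_X)$.

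The main technical obstacle is this last step. Proposition~\ref{colie} a priori only produces the Lie coalgebra structure abstractly, via group prestacks and Koszul duality, so verifying that it agrees with the manifest structure on $j_!j^!(\mathfrak{n}_P^*\otimes \omega_X)$, rather than merely up to some auto-equivalence, requires a chain-level comparison. I would handle this by reducing via factorization to a pointwise statement on $X$, where the agreement amounts to the classical cotangent identification for the semi-infinite flag space $(\mathfrak{L}^+N_P)^{x\ram}/\mathfrak{L}^+N_P$, and then transporting the match globally using that both sides are unambiguously determined by their Lie-$!$ structure at a single moving point.
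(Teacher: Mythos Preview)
Your computation of the underlying object of $T^*(\sigma)$ is reasonable, and the explicit two-term complex you write down does compute the right thing at the level of $\Rep(M)\otimes\DD(X)$. However, the gap you yourself flag --- matching the Lie-$!$ coalgebra structures --- is real, and your proposed resolution (reduce to a point via factorization, then transport) is not convincing: at a point $x'\neq x$ the coalgebra is simply $\mathfrak{n}_P^*$ with its classical cobracket, so no information is gained there, and it is unclear what ``transporting the match globally'' means without already having a canonical map to compare.

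The paper's argument neatly circumvents this. Rather than computing $T^*(\sigma)$ first and then trying to identify the coalgebra structure, it begins by restricting to $U = X\setminus\{x\}$. There the morphism $\sigma$ becomes the section $\bB M \to \bB P$, so $j^!T^*(\sigma)\cong \mathfrak{n}_P^*\otimes\omega_U$ canonically \emph{as a Lie-$!$ coalgebra}. The $(j_!,j^!)$-adjunction then produces a canonical morphism
\[
j_!j^!(\mathfrak{n}_P^*\otimes\omega_X)\longrightarrow T^*(\sigma)
\]
which is automatically a morphism of Lie-$!$ coalgebras, since $j_!$ and $j^!$ are compatible with $\otimes^!$. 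All that remains is to check this map is an isomorphism of underlying D-modules; it is so over $U$ by construction, and one checks it on $i^!$-fibers at $x$ by identifying $i^!T^*(\sigma)$ with $\mathfrak{n}_P^*\otimes\CC^{\dR}(\mathring{D}_x)$ (essentially the punctured-disk version of your logarithmic calculation). The advantage is that the Lie coalgebra compatibility is built in from the start, so the ``chain-level comparison'' you worry about never arises.
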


\begin{proof}

Let $U := X \setminus \{ x \}$. Since \[ \LS_H(D)^{x \ram}_{X_{\dR}} \underset{X_{\dR}}{\times} U_{\dR} = \LS_H(D)_{U_{\dR}} = \bB H \times U_{\dR}, \] we have \[ j^!T^*(\sigma) \cong T^*(\LS_M(D)_{U_{\dR}}/\LS_P(D)_{U_{\dR}}) \cong \mathfrak{n}_P^* \otimes \omega_U \cong j^!(\mathfrak{n}_P^* \otimes \omega_X), \] from which we obtain a morphism of Lie-$!$ coalgebras \[ j_!j^!(\mathfrak{n}_P^* \otimes \omega_X) \longrightarrow T^*(\sigma). \] It is an isomorphism over $U$ by construction, so it suffices to prove that \[ i^!j_!j^!(\mathfrak{n}_P^* \otimes \omega_X) \longrightarrow i^!T^*(\sigma) \] is an isomorphism, where $i : \{ x \} \to X$ is the inclusion.

Observe that \[ i^!T^*(\sigma) \cong T^*(\LS_M(D_x)/\LS_P(\mathring{D}_x) \underset{\LS_M(\mathring{D}_x)}{\times} \LS_M(D_x)) \cong \mathfrak{n}_P^* \otimes \CC^{\dR}(\mathring{D}_x), \] where \[ \CC^{\dR}(\mathring{D}_x) = \cofib(\sO_{\mathring{D}_x} \xrightarrow{d} \Omega^1_{\mathring{D}_x}) \] is the complex of de Rham chains on the punctured disk. By construction, the morphism in question is given by tensoring the canonical isomorphism \[ i^!j_!\omega_U \tilde{\longrightarrow} \CC^{\dR}(\mathring{D}_x) \] with $\mathfrak{n}_P^*$.

\end{proof}

\begin{corollary}

The functor $\sigma^*$ lifts to a morphism \[ \QCoh(\LS_P(D)^{x \ram} \underset{\LS_M(D)^{x \ram}}{\times} \LS_M(D)) \longrightarrow j_*j^*(\mathfrak{n}_P \otimes k_X)\mod^*(\Rep(M)) \] in $\FactCat$.

\end{corollary}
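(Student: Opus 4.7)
The plan is to specialize the preceding proposition (on lifting $\sigma^*$ to comodules over the relative cotangent complex) to the pair
\[ Y = \LS_M(D), \qquad Z = \LS_P(D)^{x \ram} \underset{\LS_M(D)^{x \ram}}{\times} \LS_M(D), \]
equipped with the natural section $\sigma$. Proposition \ref{affsect} verifies the hypotheses: the unital structure maps for $Z$ are regular closed embeddings, and $\sigma$ is affine. Invoking that proposition yields a morphism
\[ \sigma^* : \QCoh(Z) \longrightarrow T^*(\sigma)\comod^!(\QCoh(Y)) \]
in $\FactCat^{\laxfact}$, and inserting the identification $T^*(\sigma) \cong j_!j^!(\mathfrak{n}_P^* \otimes \omega_X)$ of Lie-$!$ coalgebras from the preceding proposition, together with the equivalence $\QCoh(\LS_M(D)) \cong \Rep(M)$ in $\FactCat$ recalled from \cite{R3}, the target becomes $j_!j^!(\mathfrak{n}_P^* \otimes \omega_X)\comod^!(\Rep(M))$.

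The next step is to convert this Lie-$!$ comodule category into the Lie-$*$ module category appearing in the statement. Since $\mathfrak{n}_P$ is finite-dimensional, the constant sheaf $\mathfrak{n}_P^* \otimes \omega_X$ is Verdier dual to $\mathfrak{n}_P \otimes k_X$ on $X$, and likewise $j_!j^!$ is swapped with $j_*j^*$ under Verdier duality on $X$. Verdier duality on the $\DD(X)$-factor of $\Rep(M) \otimes \DD(X)$ (and its extension over $\Ran$) exchanges $\otimes^!$ with $\otimes^*$ and sends the Lie-$!$ coalgebra $j_!j^!(\mathfrak{n}_P^* \otimes \omega_X)$ to the Lie-$*$ algebra $j_*j^*(\mathfrak{n}_P \otimes k_X)$. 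Applying this duality termwise to Lie-$!$ comodule objects produces an equivalence
\[ j_!j^!(\mathfrak{n}_P^* \otimes \omega_X)\comod^!(\Rep(M)) \;\tilde{\longrightarrow}\; j_*j^*(\mathfrak{n}_P \otimes k_X)\mod^*(\Rep(M)) \]
in $\FactCat^{\laxfact}$, compatibly with the forgetful functors to $\Rep(M)$.

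Finally, one upgrades the composite morphism from $\FactCat^{\laxfact}$ to $\FactCat$. For the source, strict factorization of $\QCoh(Z)$ follows from the $1$-affineness of $\LS_P(D)^{x\ram}_{X^I_{\dR}} \times_{\LS_M(D)^{x\ram}_{X^I_{\dR}}} \LS_M(D)_{X^I_{\dR}}$ as in \cite{R3} \S 9.9; for the target, strict factorization of Lie-$*$ module categories is standard. Hence the morphism produced above descends to $\FactCat$.

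The main obstacle is the middle step: setting up the Verdier duality equivalence at the level of factorization categories of comodules/modules carefully, including compatibility with the factorization structure inherited from the Ran space construction of Proposition \ref{liecomod} and Lemma \ref{cotanfact}. One has to check that the Verdier duality on $\Rep(M) \otimes \DD(X)$ extends correctly to $\Rep(M) \otimes \DD(\Ran_Z)$ and exchanges the $\otimes^*$-comodule structure coming from $\Fact^{\oplus}(L)$ with the $\otimes^*$-module structure for the Lie-$*$ algebra $\bD L$, in a way that respects the factorizable compatibilities and is strictly unital. The remaining steps are largely formal consequences of the structural results established earlier in the section.
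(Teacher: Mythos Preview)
Your approach is the same as the paper's: apply the preceding proposition to land in Lie-$!$ comodules, identify these with Lie-$*$ modules via duality, then check both source and target lie in $\FactCat$. Two points where the paper is sharper than your sketch. First, the duality step is simpler than you indicate: by the definitions in this section, both $L\comod^!(\Rep(M))_Z$ and $L'\mod^*(\Rep(M))_Z$ are cut out inside categories built from $\otimes^*$ on $\Rep(M)_{\Ran_Z}$, so there is no need to extend Verdier duality over $\Ran_Z$ or to exchange $\otimes^!$ with $\otimes^*$. The paper simply observes that the self-duality of $\Rep(M)_{X_{\dR}}$ restricts to a contravariant equivalence $L \mapsto \bD(L)$ between Lie-$!$ coalgebras and Lie-$*$ algebras with compact underlying object, and that the resulting identification $L\mod^*(\Rep(M)) \cong \bD(L)\comod^!(\Rep(M))$ is tautological (a comodule structure for $\bD(L)$ is the same datum as a module structure for $L$ when $L$ is dualizable). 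Second, strict factorization of the target is not left as ``standard'': the paper deduces it from the monadic adjunction
\[
\mathfrak{n}_P\mod(\Rep(M)) \rightleftarrows j_*j^*(\mathfrak{n}_P \otimes k_X)\mod^*(\Rep(M))
\]
(induction and restriction along $\mathfrak{n}_P \otimes k_X \to j_*j^*(\mathfrak{n}_P \otimes k_X)$), since the left-hand side is already known to lie in $\FactCat$.
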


\begin{proof}

Observe that \[ \QCoh(\LS_H(D)_{X_{\dR}}) = \QCoh(\bB H \times X_{\dR}) = \Rep(H) \otimes \DD(X) \] is canonically self-dual via the self-duality on $\Rep(H)$ and $\DD(X)$. The resulting equivalence \[ \Rep(H)_{X_{\dR}}^{\cc,\op} \tilde{\longrightarrow} \Rep(H)_{X_{\dR}}^{\cc} \] lifts to a contravariant equivalence $L \mapsto \bD(L)$ between Lie-$!$ coalgebras in $\Rep(H)$ whose underlying object of $\Rep(H)_{X_{\dR}}$ is compact and Lie-$*$ algebras in $\Rep(H)$ satisfying the same condition. If $L$ is such a Lie-$*$ algebra, then there is a tautological isomorphism \[ L\mod^*(\Rep(H)) \tilde{\longrightarrow} \bD(L)\comod^!(\Rep(H)) \] in $\FactCat^{\laxfact}$.

The fact that $j_*j^*(\mathfrak{n}_P \otimes k_X)\mod^*(\Rep(M))$ belongs to $\FactCat$ (as opposed to merely $\FactCat^{\laxfact}$) follows from the existence of the monadic adjunction \[ \ind : \mathfrak{n}_P\mod(\Rep(M)) \rightleftarrows j_*j^*(\mathfrak{n}_P \otimes k_X)\mod^*(\Rep(M)) : \oblv \] in $\FactCat^{\laxfact}_{\laxuntl}$ and the strictness of the factorization structure on $\mathfrak{n}_P\mod(\Rep(M))$.

\end{proof}

\subsection{Factorization $\Rep(P)$-module structure} Consider the canonical morphism of factorization spaces \[ \iota : \LS_P(D) \longrightarrow \LS_P(D)^{x \ram} \underset{\LS_M(D)^{x \ram}}{\times} \LS_M(D). \] Proposition \ref{affsect} implies that $\iota^*$ admits a left adjoint $(\iota^*)^{\LL}$.

\begin{proposition}

\label{liestarmodprop1}

The functor $(\iota^*)^{\LL}$ fits into a commutative square \[
\begin{tikzcd}
\Rep(P) \arrow{d}{(\iota^*)^{\LL}} \arrow{r}{\res^{N_P}_{\mathfrak{n}_P}} & \mathfrak{n}_P\mod(\Rep(M)) \arrow{d} \\
\QCoh(\LS_P(D)^{x \ram} \underset{\LS_M(D)^{x \ram}}{\times} \LS_M(D)) \arrow{r}{(s^*)^{\enh}} & j_*j^*(\mathfrak{n}_P \otimes k_X)\mod^*(\Rep(M))
\end{tikzcd} \]
in $\FactCat$, where the right vertical functor is given by induction along the morphism of Lie-$*$ algebras $\mathfrak{n}_P \otimes k_X \to j_*j^*(\mathfrak{n}_P \otimes k_X)$.

\end{proposition}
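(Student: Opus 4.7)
The plan is to identify both circuits of the square with a single natural construction — heuristically, the ``free $j_*j^*(\mathfrak{n}_P\otimes k_X)$-module attached to a $P$-representation'' — and then trace through the definitions. First I would verify that each composite is genuinely a morphism in $\FactCat$. For the upper circuit, $\res^{N_P}_{\mathfrak{n}_P}$ is symmetric monoidal and hence gives a morphism in $\FactCat$, while the induction along $\mathfrak{n}_P\otimes k_X \to j_*j^*(\mathfrak{n}_P\otimes k_X)$ is a morphism in $\FactCat_{\laxuntl}$ with strictly unital left adjoint (this is the Lie-$*$ variant of the observation used to build $\ind^{*\to\ch}$). For the lower circuit, $(s^*)^{\enh}$ is a morphism in $\FactCat$ by the preceding corollary, and $(\iota^*)^{\LL}$ is a morphism in $\FactCat$ because $\iota$ is a morphism of corr-unital factorization spaces whose relevant structure maps are regular closed embeddings by the argument of Proposition~\ref{affsect}, so the required left adjoints exist and are compatible with the unital structures.

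Next I would reduce to a computation at the level of $X_{\dR}$. Since $\Rep(P)$ arises from a rigid compactly generated symmetric monoidal category, Proposition~\ref{ulagenprop} says that for each finite set $I$ the category $\Rep(P)_{X^I_{\dR}}$ is ULA generated by objects of the form $\Loc_{X^I}(V_1 \boxtimes \cdots \boxtimes V_n)$ for $V_i \in \Rep(P)^{\cc}$. By the factorization property of both sides (which belong to $\FactCat$, not merely $\FactCat^{\laxfact}$), it suffices to exhibit a natural isomorphism between the two composites after evaluation on $\Loc_X(V)$ for $V \in \Rep(P)^{\cc}$, compatibly with $\DD(X)$-linear structure and factorization; the extension to general $X^I_{\dR}$ then follows from the fact that both composites intertwine the unital factorization structures.

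The heart of the argument is the comparison over $X_{\dR}$. There $\LS_P(D)_{X_{\dR}} = \bB P \times X_{\dR}$, so $\Rep(P)_{X_{\dR}} \cong \QCoh(\LS_P(D)_{X_{\dR}})$ and $(\iota^*)^{\LL}$ is literally the left adjoint to pullback along an affine morphism determined by Proposition~\ref{affsect}. By Proposition~\ref{colie}, $(s^*)^{\enh}((\iota^*)^{\LL}\Loc_X(V))$ acquires the structure of a comodule for the relative cotangent complex $T^*(\sigma) \cong j_!j^!(\mathfrak{n}_P^*\otimes \omega_X)$, the latter identification being the content of the previous proposition. Under the Verdier duality $L\mod^*(\Rep(M)) \tilde{\to} \bD(L)\comod^!(\Rep(M))$ used in the proof of the previous corollary, this comodule structure should correspond to the module structure furnished by induction along $\mathfrak{n}_P\otimes k_X \to j_*j^*(\mathfrak{n}_P\otimes k_X)$ applied to $\res^{N_P}_{\mathfrak{n}_P}(\Loc_X(V))$.

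The main obstacle is establishing this last identification rigorously: one must trace the enhanced pullback $(s^*)^{\enh}$ through the Koszul duality functors $\coPrim_{\ComHopfAlg}^{\enh}$ and $\UU^{\co}$ in order to identify the abstract deformation-theoretic comodule structure on $(s^*)^{\enh}((\iota^*)^{\LL}\Loc_X(V))$ with the explicit Lie-$*$ induction, while keeping track of shifts and the Verdier duality interchange between Lie-$!$ coalgebras and Lie-$*$ algebras. A secondary subtlety is to ensure that this isomorphism is compatible with the factorization structure over $X^I_{\dR}$ (not just $X_{\dR}$), which amounts to verifying that the left-lax symmetric monoidal structure on $\Fact^{\oplus}$ from Proposition~\ref{liecomod} matches the external fusion structure on Lie-$*$ modules underlying the right vertical functor.
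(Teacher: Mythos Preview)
Your proposal has a genuine gap: you are trying to construct the isomorphism between the two circuits from scratch by unwinding Koszul duality, and you yourself flag that you do not know how to do this. The paper avoids this entirely with a single observation you missed: the square \emph{tautologically} commutes when the vertical functors are replaced by their right adjoints (restriction along $\mathfrak{n}_P\otimes k_X \to j_*j^*(\mathfrak{n}_P\otimes k_X)$ on the right, and $\iota^*$ on the left). This immediately produces a canonical natural transformation
\[
\ind_{\mathfrak{n}_P \otimes k_X}^{j_*j^*(\mathfrak{n}_P \otimes k_X)} \circ \res_{\mathfrak{n}_P}^{N_P} \longrightarrow (s^*)^{\enh}\circ(\iota^*)^{\LL},
\]
and the problem becomes checking that this map is an isomorphism. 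No Koszul-duality tracing is required.

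With the map in hand, the paper's reduction is also sharper than yours. Rather than reducing to $\Loc_X(V)$ via ULA generation, the paper uses $\Rep(M)_{X^I_{\dR}}$-linearity of both composites to reduce to the single object $\triv_P(\omega_{X^I})$, then applies the conservative forgetful functor to $\Rep(M)_{X^I_{\dR}}$ and factorization to land at $I$ a singleton. Over $U = X\setminus\{x\}$ the map is visibly the identity on $\triv_M(\omega_U)$, so everything comes down to the fiber at $x$. There the stacks are finite-dimensional and formally smooth, so one can apply the equivalence $\Upsilon_Y:\QCoh(Y)\tilde{\to}\IndCoh(Y)$; under $\Upsilon$ the functor $(\iota^*)^{\LL}$ becomes $\iota_*$ (since $\iota$ is proper), and the commutativity of the square is then ordinary base change for $\IndCoh$. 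Your outline never isolates the point $x$, and so never reaches the finite-dimensional regime where the computation becomes tractable.
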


\begin{proof}

The square in question tautologically commutes if the vertical functors are replaced with their right adjoints. Thus we must show that the resulting natural transformation \[ \ind_{\mathfrak{n}_P \otimes k_X}^{j_*j^*(\mathfrak{n}_P \otimes k_X)} \res_{\mathfrak{n}_P}^{N_P} \longrightarrow (s^*)^{\enh}(\iota^*)^{\LL} \] is an isomorphism.

For any finite set $I$, both composite functors are $\Rep(M)_{X^I_{\dR}}$-linear, and since $\Rep(P)_{X^I_{\dR}}$ is generated as a $\Rep(M)_{X^I_{\dR}}$-module by $\triv_P(\omega_{X^I})$, it suffices to check that this natural transformation is an isomorphism after evaluating on this object. Applying the conservative functor \[ \oblv : j_*j^*(\mathfrak{n}_P \otimes k_X)\mod^*(\Rep(M))_{X^I_{\dR}} \longrightarrow \Rep(M)_{X^I_{\dR}}, \] we have reduced to proving that \[ \ind_{\mathfrak{n}_P \otimes k_X}^{j_*j^*(\mathfrak{n}_P \otimes k_X)} \triv_{\mathfrak{n}_P}(\omega_{X^I}) \longrightarrow s^*(\iota^*)^{\LL} \triv_P(\omega_{X^I}) \] is an isomorphism in $\Rep(M)_{X^I_{\dR}}$. Using factorization, we we can assume that $I$ is a singleton. Over $U = X \setminus \{ x \}$, then this morphism is simply the identity map on $\triv_M(\omega_U)$, so it suffices to check that the induced map on the fiber at $x$ is an isomorphism.

In particular, we have reduced to proving the commutativity of the square in question at the point $x$. Then we are in the realm of finite-dimensional deformation theory, and it can be identified with the commutative square \[
\begin{tikzcd}
\QCoh(\LS_P(D_x)) \arrow{d}{(\iota^*)^{\LL}} \arrow{r} & \QCoh(\LS_P(D_x)^{\swedge}_{\LS_M(D_x)}) \arrow{d}{(\widehat{\iota}^*)^{\LL}} \\
\QCoh(\LS_P(\mathring{D}_x) \underset{\LS_M(\mathring{D}_x)}{\times} \LS_M(D_x)) \arrow{r} & \QCoh((\LS_P(\mathring{D}_x) \underset{\LS_M(\mathring{D}_x)}{\times} \LS_M(D_x))^{\swedge}_{\LS_M(D_x)}),
\end{tikzcd} \]
where the horizontal morphisms are given by inverse image and \[ \widehat{\iota} : \LS_P(D_x)^{\swedge}_{\LS_M(D_x)} \longrightarrow (\LS_P(\mathring{D}_x) \underset{\LS_M(\mathring{D}_x)}{\times} \LS_M(D_x))^{\swedge}_{\LS_M(D_x)} \] is obtained from $\iota$ by formal completion along $\LS_M(D_x)$. All of the stacks appearing here are formally smooth, and for such stacks we have the equivalence \[ \Upsilon_Y : \QCoh(Y) \tilde{\longrightarrow} \IndCoh(Y), \] which intertwines inverse image for $\QCoh$ with $!$-inverse image for $\IndCoh$. Since the maps $\iota$ and $\widehat{\iota}$ are proper, the above square can be identified with \[
\begin{tikzcd}
\IndCoh(\LS_P(D_x)) \arrow{d}{\iota_*} \arrow{r} & \IndCoh(\LS_P(D_x)^{\swedge}_{\LS_M(D_x)}) \arrow{d}{\widehat{\iota}_*} \\
\IndCoh(\LS_P(\mathring{D}_x) \underset{\LS_M(\mathring{D}_x)}{\times} \LS_M(D_x)) \arrow{r} & \IndCoh((\LS_P(\mathring{D}_x) \underset{\LS_M(\mathring{D}_x)}{\times} \LS_M(D_x))^{\swedge}_{\LS_M(D_x)}),
\end{tikzcd} \]
which commutes by base change for the standard functors on $\IndCoh$.

\end{proof}

In particular, it follows that $(s^*)^{\enh}$ induces a morphism
\begin{equation}
\label{liestarmodpt}
\QCoh(\LS_P(\mathring{D}_x) \underset{\LS_M(\mathring{D}_x)}{\times} \LS_M(D_x)) \longrightarrow j_*j^*(\mathfrak{n}_P \otimes k_X)\mod^*(\Rep(M))_x
\end{equation} 
of factorization $\Rep(P)$-module categories at $x$.

\begin{proposition}

\label{liestarmodprop2}

The morphism (\ref{liestarmodpt}) restricts to an equivalence on the full subcategory \[ \QCoh((\LS_P(\mathring{D}_x) \underset{\LS_M(\mathring{D}_x)}{\times} \LS_M(D_x))^{\swedge}_{\LS_M(D_x)}), \] embedded via the left adjoint of the restriction functor \[ \QCoh(\LS_P(\mathring{D}_x) \underset{\LS_M(\mathring{D}_x)}{\times} \LS_M(D_x)) \longrightarrow \QCoh((\LS_P(\mathring{D}_x) \underset{\LS_M(\mathring{D}_x)}{\times} \LS_M(D_x))^{\swedge}_{\LS_M(D_x)}). \]

\end{proposition}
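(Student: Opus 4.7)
The plan is to interpret both sides, restricted to the appropriate formal subcategories, as categories of Lie comodules for the relative cotangent complex, and then use the Koszul-duality machinery of the previous subsections to identify them. I will work throughout at the fixed point $x$, leveraging that the assertion is purely about the formal neighborhood of the section $\sigma$ landing in $\LS_M(D_x)$.

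Write $\sY := \LS_M(D_x)$ and $\sZ := \LS_P(\mathring{D}_x) \underset{\LS_M(\mathring{D}_x)}{\times} \LS_M(D_x)$, and let $\sZ^{\swedge} = \sZ^{\swedge}_{\sY}$ denote the formal completion along $\sigma$. By Proposition \ref{affsect}, $\sigma$ is affine and regular; combined with the $1$-affineness of $\sY = \LS_M(D_x)$, the standard deformation-theoretic yoga (Proposition \ref{grpliecoalg} and Corollary \ref{hopfcomod}, applied to the group prestack $\sY \times_{\sZ^{\swedge}} \sY$ in $\QCoh(\sY)$) upgrades $\sigma^*$ to an equivalence
\[
\sigma^*_{\swedge} : \QCoh(\sZ^{\swedge}) \tilde{\longrightarrow} T^*(\sY/\sZ^{\swedge})\comod^!(\Rep(M))_x,
\]
where the Lie-$!$ coalgebra structure is the one produced by Proposition \ref{colie} together with its factorization enhancement from Lemma \ref{cotanfact} and Proposition \ref{liecomod}. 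The first step is to promote this to a $\Rep(P)$-linear equivalence compatible with factorization module structures at $x$; this is formal from the fact that both the formation of the formal completion and the passage to Lie comodules are compatible with the commutative factorization structures and with inverse image along $\LS_M(D)_{X_{\dR}} \to \LS_M(\mathring{D}_x)$.

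The second step is to identify $T^*(\sY/\sZ^{\swedge})$ with $j_!j^!(\mathfrak{n}_P^* \otimes \omega_X)$ as a Lie-$!$ coalgebra in $\Rep(M)$: this is precisely the content of the previous proposition. Combining with Verdier self-duality on $\Rep(M)_{X_{\dR}}^{\cc}$ (which interchanges Lie-$!$ coalgebras and Lie-$*$ algebras with compact underlying objects, as recalled in the proof of the corollary to Proposition \ref{liestarmodprop1}), we obtain a $\Rep(P)$-linear, factorization-compatible equivalence
\[
\QCoh(\sZ^{\swedge}) \tilde{\longrightarrow} j_*j^*(\mathfrak{n}_P \otimes k_X)\mod^*(\Rep(M))_x.
\]

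The third and final step is to check that this equivalence agrees with the restriction of (\ref{liestarmodpt}) to the essential image of the left adjoint $\QCoh(\sZ^{\swedge}) \hookrightarrow \QCoh(\sZ)$. By construction, both functors intertwine $\sigma^*$ (respectively $(s^*)^{\enh}$) with the forgetful functor, so they agree after composing with the conservative functor to $\Rep(M)_x$; to promote this to a genuine identification it suffices to match the Lie-$*$ module structures, and here Proposition \ref{liestarmodprop1} shows that on the generating collection coming from $\Rep(P)$ the structure is dictated by the induction $\ind_{\mathfrak{n}_P \otimes k_X}^{j_*j^*(\mathfrak{n}_P \otimes k_X)}$, which is the same as the one produced by Koszul duality. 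Since both sides are $\Rep(P)$-linear and the essential image of $\Rep(P)_{X_{\dR}}$ generates the relevant formal subcategory on both sides, we obtain the desired equivalence.

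The principal difficulty will be the careful execution of the first step: transporting Gaitsgory–Rozenblyum-type deformation theory into the factorization module setting at $x$ requires matching the $\otimes^*$-algebra structure on the cotangent complex along $\Ran_x$ (via Lemma \ref{cotanfact} and Proposition \ref{liecomod}) with the one appearing in the Lie-$*$ module category. Once this dictionary is in place, the remaining identifications are formal.
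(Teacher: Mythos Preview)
Your argument has a genuine gap at what you yourself flag as ``the principal difficulty.'' You assert that Proposition \ref{grpliecoalg} and Corollary \ref{hopfcomod}, applied to the group prestack $\sY \times_{\sZ^{\swedge}} \sY$, upgrade $\sigma^*$ to an \emph{equivalence}
\[
\QCoh(\sZ^{\swedge}) \tilde{\longrightarrow} T^*(\sY/\sZ^{\swedge})\comod^!(\Rep(M))_x.
\]
But those results only produce a \emph{functor} from Hopf algebra comodules to Lie coalgebra comodules (via the morphism $\cores^{\AssocCoalg \to \LieCoalg}(A) \to \coPrim^{\enh}_{\ComHopfAlg}(A)$), not an equivalence. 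For a formal group the coordinate Hopf algebra is pro-Artinian, and passing from its comodules to comodules over its cotangent Lie coalgebra is a lossy operation in general; nothing in the cited propositions tells you that the fibers of this functor are trivial. So your first step does not go through as stated, and the rest of the argument (matching Lie-$*$ structures on generators) rests on it.

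The paper's route is both shorter and avoids this issue entirely. In the proof of Proposition \ref{liestarmodprop1} it was already observed that, at the point $x$, all the stacks in play are formally smooth, so that $\Upsilon_Y:\QCoh(Y)\tilde{\to}\IndCoh(Y)$ is an equivalence intertwining $*$-pullback with $!$-pullback; under $\Upsilon$, the functor $(s^*)^{\enh}$ becomes the $!$-restriction
\[
\IndCoh\bigl(\LS_P(\mathring{D}_x) \underset{\LS_M(\mathring{D}_x)}{\times} \LS_M(D_x)\bigr) \longrightarrow \IndCoh\bigl((\LS_P(\mathring{D}_x) \underset{\LS_M(\mathring{D}_x)}{\times} \LS_M(D_x))^{\swedge}_{\LS_M(D_x)}\bigr),
\]
with the target identified with $j_*j^*(\mathfrak{n}_P\otimes k_X)\mod^*(\Rep(M))_x$ by standard finite-dimensional deformation theory. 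The subcategory in the statement, namely the image of the left adjoint to $\QCoh$-restriction, corresponds under $\Upsilon$ to the image of $\widehat{i}_*^{\IndCoh}$, on which $\widehat{i}^!$ is tautologically inverse. That is the whole proof. If you want to salvage your approach, the missing ingredient is precisely this passage through $\IndCoh$ via $\Upsilon$ (using formal smoothness); once you have it, the Lie-comodule description becomes equivalent to the Lie-module one and the rest of your outline collapses to the paper's argument.
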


\begin{proof}

Identify (\ref{liestarmodpt}) with the composition
\begin{align*}
\QCoh(\LS_P(\mathring{D}_x) \underset{\LS_M(\mathring{D}_x)}{\times} \LS_M(D_x)) &\stackrel{\Upsilon}{\tilde{\longrightarrow}} \IndCoh(\LS_P(\mathring{D}_x) \underset{\LS_M(\mathring{D}_x)}{\times} \LS_M(D_x)) \\
&\longrightarrow \IndCoh((\LS_P(\mathring{D}_x) \underset{\LS_M(\mathring{D}_x)}{\times} \LS_M(D_x))^{\swedge}_{\LS_M(D_x)})
\end{align*}
as in the proof of Proposition \ref{liestarmodprop1}.

\end{proof}

\subsection{Factorization modules for $\Upsilon(\mathfrak{n}_P)$} The lax factorization category $\QCoh(\LS_H(D)^{x \ram})$ has fiber $\QCoh(\LS_H(\mathring{D}_x))$ at $x$, and therefore induces on $\QCoh(\LS_H(\mathring{D}_x))$ the structure of factorization $\Rep(H)$-module at $x$. The action of $\QCoh(\LS_{\Gamma}(\mathring{D}_x))$ on itself by tensor product of quasicoherent sheaves commutes with the factorization $\Rep(H)$-module structure, yielding a functor
\begin{align*}
\Phi_H : \QCoh(\LS_H(\mathring{D}_x))\mod &\longrightarrow \Rep(H)\modfact_x
\end{align*}
which commutes with the forgetful functors to $\DGCat$. On the level of objects, this means that there is a natural factorization $\Rep(H)$-module structure on any $\sC$ in $\QCoh(\LS_H(\mathring{D}_x))\mod$ coming from the factorization module structure on the second factor of \[ \sC \otimes_{\QCoh(\LS_H(\mathring{D}_x))} \QCoh(\LS_H(\mathring{D}_x)) = \sC. \]

The restriction functor \[ \QCoh(\LS_H(\mathring{D}_x)) \longrightarrow \QCoh(\LS_H(\mathring{D}_x)^{\swedge}_{\LS_H(D_x)}) \] admits a fully faithful $\QCoh(\LS_H(\mathring{D}_x))$-linear left adjoint. It follows that the restriction of scalars functor \[ \QCoh(\LS_H(\mathring{D}_x)^{\swedge}_{\LS_H(D_x)})\mod \longrightarrow \QCoh(\LS_H(\mathring{D}_x))\mod \] is fully faithful.

\begin{theorem}

The restriction of $\Phi_H$ to the full subcategory $\QCoh(\LS_H(\mathring{D}_x)^{\swedge}_{\LS_H(D_x)})\mod$ is fully faithful.

\label{factmodff}

\end{theorem}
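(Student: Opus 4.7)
The plan is to reduce the fully faithfulness to a calculation of inner endomorphisms in $\Rep(H)\modfact_x$ and then carry out this calculation using the Lie-$!$ coalgebra and deformation-theoretic machinery developed in the preceding subsections.

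First I would observe that $\QCoh(\LS_H(\mathring{D}_x)^{\swedge}_{\LS_H(D_x)})\mod$ is generated under the action of $\DGCat$ by the unit module $\QCoh(\LS_H(\mathring{D}_x)^{\swedge}_{\LS_H(D_x)})$ itself, and that for any such $\sC$ the relevant $\Hom$-spaces in both categories of modules preserve tensoring with DG categories in appropriate senses. Thus fully faithfulness of the restriction of $\Phi_H$ reduces to showing that the canonical map
\[ \QCoh(\LS_H(\mathring{D}_x)^{\swedge}_{\LS_H(D_x)}) \longrightarrow \uEnd_{\Rep(H)\modfact_x}\bigl(\Phi_H(\QCoh(\LS_H(\mathring{D}_x)^{\swedge}_{\LS_H(D_x)}))\bigr) \]
is an equivalence of symmetric monoidal categories; conservativity of the forgetful functor from $\QCoh(\LS_H(\mathring{D}_x)^{\swedge}_{\LS_H(D_x)})$-modules to $\DGCat$ will then upgrade this equivalence to fully faithfulness of $\Phi_H$ on the subcategory in question.

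Next I would identify both sides of this map explicitly. On the left, the deformation theory of $\LS_H$ together with Proposition \ref{liecomod} and the Lie-$!$ coalgebra formalism of this section identifies $\QCoh(\LS_H(\mathring{D}_x)^{\swedge}_{\LS_H(D_x)})$ with a category of Lie-$!$ comodules in $\Rep(H)$ built from the adjoint representation $\mathfrak{h}$, parallel to the calculation already carried out in Propositions \ref{liestarmodprop1} and \ref{liestarmodprop2} for the parabolic case. On the right, the inner endomorphism category admits a description as factorization modules for an appropriate chiral/factorization algebra in $\Rep(H)$, which one recognizes (via the chiral envelope machinery of \S \ref{chirenvsec}) as Koszul-dual to the same Lie-$!$ coalgebra: concretely, the factorization algebra $\UU^{*\to\fact}(\mathfrak{h} \otimes k_X)$ (with adjoint action) should play the role of $\sO_{\LS_H(\mathring{D}_x)^{\swedge}_{\LS_H(D_x)}}$. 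The identification then follows by matching the two descriptions through Koszul duality.

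The main obstacle I anticipate is the careful matching of the two sides: the left-hand description is in terms of commutative (Hopf) algebra formal geometry on the curve, while the right-hand side lives in the world of chiral/factorization algebras, and bridging them requires a clean use of the Koszul duality between the commutative and Lie co-operads, together with a verification that the comparison respects the factorization structure and the $\Rep(H)$-action. A secondary difficulty is the interplay between the \emph{local} formal completion at $x$ and the inherently \emph{global} (factorization) structure on the curve, which must be traced through the chiral envelope and the Lie-$!$ coalgebra formalism. The torus case (Theorem \ref{lgcft} and its use in \S \ref{s:torus}) provides a template but the nonabelian case demands that all these compatibilities be tracked simultaneously.
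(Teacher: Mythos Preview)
The paper does not prove this statement: its proof is a one-line citation to Theorem~9.13.1 of \cite{R3}. So there is nothing to compare your argument against in this paper. That said, the paper does remark (at the end of \S\ref{s:torus}) that the cited theorem is ultimately proved in \cite{R3} by reducing to precisely the equivalence that arises from local geometric class field theory for the torus; this suggests the argument in \cite{R3} proceeds via a reduction to the abelian case rather than by a direct Koszul-duality computation as you propose.

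Your reduction step is sound: fully faithfulness of $\Phi_H$ on $\QCoh(\LS_H(\mathring{D}_x)^{\swedge}_{\LS_H(D_x)})\mod$ does reduce to showing that the induced map on inner endomorphism categories of the unit object is an equivalence, since every module is a colimit of free ones and $\Phi_H$ is $\DGCat$-linear. However, the second half of your outline is too schematic to be a proof. The assertion that the inner endomorphisms in $\Rep(H)\modfact_x$ ``admit a description as factorization modules for an appropriate chiral/factorization algebra'' which one then ``recognizes as Koszul-dual to the same Lie-$!$ coalgebra'' is exactly the content of the theorem, not an argument for it: the difficulty is precisely to compute $\uEnd_{\Rep(H)\modfact_x}$ of the formal completion, and nothing in \S\ref{s:factmodpt} prior to this point gives you a handle on inner Homs in the $2$-category $\Rep(H)\modfact_x$. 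Note also that Propositions~\ref{liestarmodprop1} and~\ref{liestarmodprop2} concern the fiber product $\LS_P(\mathring{D}_x) \times_{\LS_M(\mathring{D}_x)} \LS_M(D_x)$ completed along $\LS_M(D_x)$, not $\LS_H(\mathring{D}_x)$ completed along $\LS_H(D_x)$; specializing to $P = G = M$ collapses the former to $\LS_G(D_x)$, so these propositions do not directly treat the case you need, and indeed Proposition~\ref{specwhitmonad} (which comes after Theorem~\ref{factmodff}) \emph{uses} Theorem~\ref{factmodff} rather than the other way around.
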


\begin{proof}

This is Theorem 9.13.1 of \cite{R3}.

\end{proof}

\begin{proposition}

\label{specwhitmonad}

There is a canonical isomorphism \[ \Phi_P(\QCoh((\LS_P(\mathring{D}_x) \underset{\LS_M(\mathring{D}_x)}{\times} \LS_M(D_x))^{\swedge}_{\LS_M(D_x)})) \tilde{\longrightarrow} \Upsilon(\mathfrak{n}_P)\modfact(\Rep(M))_x \] in $\Rep(P)\modfact_x$, where $\QCoh(\LS_P(\mathring{D}_x)^{\swedge}_{\LS_P(D_x)})$ acts on \[ \QCoh((\LS_P(\mathring{D}_x) \underset{\LS_M(\mathring{D}_x)}{\times} \LS_M(D_x))^{\swedge}_{\LS_M(D_x)}) \] via restriction along the composition
\begin{align*}
(\LS_P(\mathring{D}_x) \underset{\LS_M(\mathring{D}_x)}{\times} \LS_M(D_x))^{\swedge}_{\LS_M(D_x)} &\longrightarrow (\LS_P(\mathring{D}_x) \underset{\LS_M(\mathring{D}_x)}{\times} \LS_M(D_x))^{\swedge}_{\LS_P(D_x)} \\
&\longrightarrow \LS_P(\mathring{D}_x)^{\swedge}_{\LS_P(D_x)},
\end{align*}
and the factorization $\Rep(P)$-module structure on $\Upsilon(\mathfrak{n}_P)\modfact(\Rep(M))_x$ arises from the morphism \[ \Rep(P) \longrightarrow \mathfrak{n}_P\mod(\Rep(M)) \xrightarrow{\ind^{* \to \fact}} \Upsilon(\mathfrak{n}_P)\modfact(\Rep(M))_x \] in $\FactCat$.

\end{proposition}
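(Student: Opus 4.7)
The plan is to factor the desired isomorphism through Lie-$*$ modules over $j_*j^*(\mathfrak{n}_P \otimes k_X)$, using the deformation theory of Propositions \ref{liestarmodprop1}--\ref{liestarmodprop2} together with the Beilinson--Drinfeld correspondence between chiral modules and meromorphic Lie-$*$ modules at a point.

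First, Proposition \ref{liestarmodprop2} identifies
\[\QCoh((\LS_P(\mathring{D}_x) \underset{\LS_M(\mathring{D}_x)}{\times} \LS_M(D_x))^{\swedge}_{\LS_M(D_x)})\]
with a full subcategory of $j_*j^*(\mathfrak{n}_P \otimes k_X)\mod^*(\Rep(M))_x$. I would then invoke the classical Beilinson--Drinfeld identification: for a Lie-$*$ algebra $L$ on $X$, chiral modules at $x$ over the chiral enveloping algebra $\UU^{*\to\ch}(L)$ correspond to Lie-$*$ modules over $j_*j^*L$ supported at $x$. Composing with the factorization-chiral module equivalence
\[\UU^{*\to\ch}(\mathfrak{n}_P \otimes k_X)\mod^{\ch}(\Rep(M))_x \simeq \Upsilon(\mathfrak{n}_P)\modfact(\Rep(M))_x\]
from \S\ref{chirenvsec}, I would obtain a fully faithful functor from the left-hand quasicoherent category into $\Upsilon(\mathfrak{n}_P)\modfact(\Rep(M))_x$.

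To match $\Rep(P)$-factorization module structures: by construction via $\Phi_P$, the structure on the geometric side is inherited from the action of $\QCoh(\LS_P(\mathring{D}_x)^{\swedge}_{\LS_P(D_x)})$, which via Proposition \ref{liestarmodprop1} corresponds (in the Lie-$*$ realization) to induction along $\mathfrak{n}_P \otimes k_X \to j_*j^*(\mathfrak{n}_P \otimes k_X)$ applied to objects in the image of $\Rep(P) \hookrightarrow \mathfrak{n}_P\mod(\Rep(M))$. On the spectral side, the $\Rep(P)$-factorization module structure is given by the composite $\Rep(P) \to \mathfrak{n}_P\mod(\Rep(M)) \xrightarrow{\ind^{*\to\ch}} \Upsilon(\mathfrak{n}_P)\modfact(\Rep(M))$; under the Beilinson--Drinfeld correspondence, $\ind^{*\to\ch}$ at $x$ is realized by the very same induction along $\mathfrak{n}_P \otimes k_X \to j_*j^*(\mathfrak{n}_P \otimes k_X)$. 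Thus the two $\Rep(P)$-module structures coincide. For essential surjectivity, I note that both sides are generated under the $\Rep(P)$-action by the image of the unit object, which matches across the equivalences. Combined with the full faithfulness guaranteed by Theorem \ref{factmodff}, this gives the desired equivalence in $\Rep(P)\modfact_x$.

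The main obstacle will be the careful implementation of the Beilinson--Drinfeld correspondence in the present factorization-categorical framework; one must check that the equivalence between chiral modules over $\UU^{*\to\ch}(L)$ at $x$ and Lie-$*$ modules over $j_*j^*L$ at $x$ is natural with respect to $\Rep(P)$-factorization module structures. A related issue is to verify that the essential image identified in Proposition \ref{liestarmodprop2} coincides with the full $\Upsilon(\mathfrak{n}_P)\modfact(\Rep(M))_x$ under the chain of equivalences---that is, that formally completed quasicoherent sheaves exhaust all $\Upsilon(\mathfrak{n}_P)$-factorization modules at $x$ arising from the meromorphic Lie-$*$ picture.
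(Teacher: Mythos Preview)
Your approach is essentially the same as the paper's: factor through $j_*j^*(\mathfrak{n}_P \otimes k_X)\mod^*(\Rep(M))_x$ using Propositions \ref{liestarmodprop1}--\ref{liestarmodprop2}, and then pass to $\Upsilon(\mathfrak{n}_P)\modfact(\Rep(M))_x$ via the chiral envelope. The ``Beilinson--Drinfeld correspondence'' you invoke is exactly what the paper uses, citing Lemma 8.16.1 of \cite{R3}: the functor $\ind^{*\to\ch}$ gives an equivalence $j_*j^*(\mathfrak{n}_P\otimes k_X)\mod^*(\Rep(M))_x \simeq \UU^{\ch}(j_*j^*(\mathfrak{n}_P\otimes k_X))\mod^{\ch}(\Rep(M))_x$, and since $j^*\UU^{\ch}(j_*j^*(\mathfrak{n}_P\otimes k_X)) \simeq j^*\UU^{\ch}(\mathfrak{n}_P\otimes k_X)$ as chiral algebras on $X\setminus\{x\}$, the right-hand side identifies with $\Upsilon(\mathfrak{n}_P)\modfact(\Rep(M))_x$.

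Two points of unnecessary caution in your write-up: first, Proposition \ref{liestarmodprop2} already gives an \emph{equivalence} from the formally completed $\QCoh$ category onto all of $j_*j^*(\mathfrak{n}_P\otimes k_X)\mod^*(\Rep(M))_x$, not merely a fully faithful embedding into a proper subcategory; combined with the equivalence above, this immediately yields the desired equivalence of underlying categories, so no separate essential surjectivity argument is needed. Second, Theorem \ref{factmodff} plays no role here and should be dropped. The only remaining content is the compatibility with the $\Rep(P)$-factorization module structure, and here the paper's argument is exactly what you sketch: one checks that applying $\Phi_P$ to the $\QCoh(\LS_P(\mathring{D}_x))$-linear functor $(\iota_x^*)^{\LL}:\Rep(P)\to\QCoh((\ldots)^{\swedge}_{\LS_M(D_x)})$ recovers the fiber at $x$ of the factorizable morphism $(\iota^*)^{\LL}$ from Proposition \ref{liestarmodprop1}, which in turn matches $\ind^{*\to\ch}\circ\res^{N_P}_{\mathfrak{n}_P}$ by that proposition.
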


\begin{proof}

Lemma 8.16.1 of \cite{R3} says that \[ \ind^{* \to \ch} : j_*j^*(\mathfrak{n}_P \otimes k_X)\mod^*(\Rep(M))_x \tilde{\longrightarrow} \UU^{\ch}(j_*j^*(\mathfrak{n}_P \otimes k_X))\mod^{\ch}(\Rep(M))_x \] is an equivalence. Identifying \[ j^*\UU^{\ch}(j_*j^*(\mathfrak{n}_P \otimes k_X)) \tilde{\longrightarrow} j^*\UU^{\ch}(\mathfrak{n}_P \otimes k_X) \] as chiral algebras on $X \setminus \{ x \}$, we obtain an equivalence \[ j_*j^*(\mathfrak{n}_P \otimes k_X)\mod^*(\Rep(M))_x \tilde{\longrightarrow} \Upsilon(\mathfrak{n}_P)\mod^{\fact}(\Rep(M))_x. \] By construction, this equivalence respects the factorization $\Rep(P)$-module structures.

We can apply $\Phi_P$ to the $\QCoh(\LS_P(\mathring{D}_x))$-linear functor \[ (\iota_x^*)^{\LL} : \Rep(P) = \QCoh(\LS_P(D_x)) \longrightarrow \QCoh((\LS_P(\mathring{D}_x) \underset{\LS_M(\mathring{D}_x)}{\times} \LS_M(D_x))^{\swedge}_{\LS_M(D_x)}) \] to obtain a  morphism in $\Rep(P)\mod^{\fact}_x$. Inspecting the constructions, we see that taking the fiber at $x$ of the morphism $(\iota^*)^{\LL}$ in $\FactCat^{\laxfact}$ from Proposition \ref{liestarmodprop1} yields the same morphism. Now apply Proposition \ref{liestarmodprop2}.

\end{proof}

\subsection{Factorization $\Upsilon(\mathfrak{n},\sO_{G})$-modules at a point} Finally, we are ready to prove the main theorem of this section.

\begin{theorem}

\label{factmodpoint}

There is a canonical equivalence \[ \Upsilon(\mathfrak{n}_P,\sO_G)\modfact(\Rep(G \times M))_x \tilde{\longrightarrow} \QCoh((\LS_P(\mathring{D}_x) \underset{\LS_{G \times M}(\mathring{D}_x)}{\times} \LS_{G \times M}(D_x))^{\swedge}_{\LS_M(D_x)}). \] 

\end{theorem}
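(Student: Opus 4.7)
The plan is to combine three ingredients: the transitivity of factorization modules, the parabolic structure theorem from Proposition \ref{specwhitmonad}, and a relative version of Theorem \ref{factmodff} along the lines of the torus-case argument in \S \ref{s:torus}.

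First I would apply the transitivity equivalence (\ref{upsilonfactmodtrans}), coming from Lemma \ref{factmodtrans} together with the canonical morphism $\unit_{\Rep(G)} \boxtimes \Upsilon(\mathfrak{n}_P) \to \Upsilon(\mathfrak{n}_P, \sO_G)$ of factorization algebras in $\Rep(G \times M)$. Taking the fiber at $x$, this reduces the problem to computing $\widetilde{\sO}_G\modfact(\Rep(G) \otimes \Upsilon(\mathfrak{n}_P)\modfact(\Rep(M)))_x$, where $\widetilde{\sO}_G$ denotes the image of the commutative factorization algebra $\sO_G \in \Rep(G \times G)$ under the monoidal morphism $\id \otimes (\ind^{* \to \ch} \circ \res^{N_P}_{\mathfrak{n}_P} \circ \res^G_P)$.

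Next, Proposition \ref{specwhitmonad} identifies $\Upsilon(\mathfrak{n}_P)\modfact(\Rep(M))_x$, as a factorization $\Rep(P)$-module category at $x$, with the image under $\Phi_P$ of $\QCoh(Y'^{\swedge}_{\LS_M(D_x)})$, where $Y' := \LS_P(\mathring{D}_x) \underset{\LS_M(\mathring{D}_x)}{\times} \LS_M(D_x)$ and the $\QCoh(\LS_P(\mathring{D}_x))$-action arises from the projection $Y' \to \LS_P(\mathring{D}_x)$. Combining this with the tautological identification $\Phi_G(\QCoh(\LS_G(D_x))) \cong \Rep(G)$ and the factorization $\Phi_{G \times P} = \Phi_G \otimes \Phi_P$ (a consequence of the construction of $\Phi_H$), I would realize $\Rep(G) \otimes \Upsilon(\mathfrak{n}_P)\modfact(\Rep(M))_x$ as $\Phi_{G \times P}$ applied to $\QCoh(\LS_G(D_x) \times Y'^{\swedge}_{\LS_M(D_x)})$, together with its natural factorization $\Rep(G \times P)$-module structure at $x$.

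The core step is then to adapt the argument of \S \ref{s:torus} from $\sO_{\check T}$ to $\widetilde{\sO}_G$. Under the spectral identification above, $\widetilde{\sO}_G$ corresponds to the structure sheaf of the graph $\LS_P(\mathring{D}_x) \to \LS_G(\mathring{D}_x) \times \LS_P(\mathring{D}_x)$ of the morphism $\LS_P(\mathring{D}_x) \to \LS_G(\mathring{D}_x)$ induced by $P \hookrightarrow G$, tensored (in the second variable) with the extra factor coming from $Y'^{\swedge}_{\LS_M(D_x)}$. Invoking Theorem \ref{factmodff} in the $G$-variable and standard base change for $\QCoh$, the category of $\widetilde{\sO}_G$-factorization modules at $x$ is identified with $\QCoh$ of the formal completion of $\LS_G(D_x) \underset{\LS_G(\mathring{D}_x)}{\times} Y'^{\swedge}_{\LS_M(D_x)}$ along the image of $\LS_M(D_x)$. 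Finally, using the factorization $Y = Y' \underset{\LS_G(\mathring{D}_x)}{\times} \LS_G(D_x)$ for $Y := \LS_P(\mathring{D}_x) \underset{\LS_{G \times M}(\mathring{D}_x)}{\times} \LS_{G \times M}(D_x)$, together with the compatibility of formal completion with fiber products along regular closed embeddings (provided by Proposition \ref{affsect}), this target agrees with $\QCoh(Y^{\swedge}_{\LS_M(D_x)})$, as desired.

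The principal obstacle will be rigorously establishing the relative version of Theorem \ref{factmodff} invoked in the third step: one must verify that the spectral incarnation of $\widetilde{\sO}_G$ is indeed the graph sheaf described above, and that forming $\sO_G$-factorization modules on the automorphic side translates into the base change $\LS_G(D_x) \underset{\LS_G(\mathring{D}_x)}{\times} (-)$ on the spectral side even in the presence of the extra parabolic data. This is essentially dictated by the monoidal naturality of $\Phi_H$ together with the full faithfulness afforded by Theorem \ref{factmodff}, but the bookkeeping surrounding the interaction of factorization structures, formal completions, and the fiber product decomposition of $Y$ demands careful execution.
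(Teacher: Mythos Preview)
Your outline is correct and follows essentially the same route as the paper: transitivity (Lemma \ref{factmodtrans}/equation (\ref{upsilonfactmodtrans})), then Proposition \ref{specwhitmonad}, then Theorem \ref{factmodff}, then a fiber-product identification.

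The one place your packaging diverges from the paper is the ``core step.'' You frame it as adapting \S\ref{s:torus} via graph sheaves and a factorization $\Phi_{G\times P}=\Phi_G\otimes\Phi_P$. The paper instead invokes directly the formula from \cite{R3} \S\S 9.22--24,
\[
\sO_G\modfact(\Rep(G)\otimes\sC)_x \;\tilde{\longrightarrow}\; \Fun_{\Rep(G)\modfact_x}(\Rep(G),\sC),
\]
then uses Theorem \ref{factmodff} plus the self-duality of $\Rep(G)$ over $\QCoh(\LS_G(\mathring{D}_x)^{\swedge}_{\LS_G(D_x)})$ to rewrite this as $\Rep(G)\otimes_{\QCoh(\LS_G(\mathring{D}_x)^{\swedge}_{\LS_G(D_x)})}\sC$, and finally uses affineness of $\LS_G(D_x)\to \LS_G(\mathring{D}_x)^{\swedge}_{\LS_G(D_x)}$ (isomorphic to $\{0\}/G\to \widehat{\mathfrak{g}}_0/G$) to turn the tensor product into $\QCoh$ of a fiber product. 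This is exactly the ``relative version of Theorem \ref{factmodff}'' you flag as the principal obstacle, so you have correctly identified where the content lies; but the reference to \S\ref{s:torus} is misleading, since the torus argument there goes through the Contou--Carr\`ere pairing rather than through the inner-Hom formula. Citing \cite{R3} \S\S 9.22--24 in place of the graph-sheaf discussion would make your argument both shorter and match the paper's.
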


\begin{proof}

As explained in \cite{R3} \S\S 9.22-24, for any factorization $\Rep(G)$-module category $\sC$ at $x$, we can identify \[ \sO_G\modfact(\Rep(G) \otimes \sC)_x \tilde{\longrightarrow} \Fun_{\Rep(G)\modfact_x}(\Rep(G),\sC). \] Here we view the diagonal bimodule $\sO_G$ as a factorization algebra in $\Rep(G \times G)$, and $\Rep(G) \otimes \sC$ as a factorization $\Rep(G \times G)$-module at $x$. Moreover, if $\sC$ lies in the essential image of \[ \QCoh(\LS_G(\mathring{D}_x)^{\swedge}_{\LS_G(D_x)})\mod \] under $\Phi_G$, it follows that
\begin{align*}
\Rep(G) \underset{\QCoh(\LS_G(\mathring{D}_x)^{\swedge}_{\LS_G(D_x)})}{\otimes} \sC &\tilde{\longrightarrow} \Fun_{\QCoh(\LS_G(\mathring{D}_x)^{\swedge}_{\LS_G(D_x)})}(\Rep(G),\sC) \\
&\tilde{\longrightarrow} \Fun_{\Rep(G)\modfact_x}(\Rep(G),\sC) \\
&\tilde{\longrightarrow} \sO_G\modfact(\Rep(G) \otimes \sC)_x,
\end{align*}
where we used Theorem \ref{factmodff} for the second equivalence. The first equivalence uses the observation that $\Rep(G)$ is dualizable and self-dual as an object of $\QCoh(\LS_G(\mathring{D}_x)^{\swedge}_{\LS_G(D_x)})\mod$, cf. \emph{loc. cit.} \S 9.18.

We now apply the above in the case \[ \sC = \QCoh((\LS_P(\mathring{D}_x) \underset{\LS_M(\mathring{D}_x)}{\times} \LS_M(D_x))^{\swedge}_{\LS_M(D_x)}), \] viewed as a $\QCoh(\LS_G(\mathring{D}_x)^{\swedge}_{\LS_G(D_x)})$-module via inverse image along the projection \[ \LS_P(\mathring{D}_x)^{\swedge}_{\LS_P(D_x)} \longrightarrow \LS_G(\mathring{D}_x)^{\swedge}_{\LS_G(D_x)}. \] Note that by Lemma \ref{factmodtrans}, we have an equivalence \[ \sO_G\modfact(\Rep(G) \otimes \Upsilon(\mathfrak{n}_P)\modfact(\Rep(M)))_x \tilde{\longrightarrow} \Upsilon(\mathfrak{n}_P,\sO_G)\modfact(\Rep(G \times M))_x. \] Applying Proposition \ref{specwhitmonad} and the equivalence in the previous paragraph identifies the right hand side with \[ \Rep(G) \underset{\QCoh(\LS_G(\mathring{D}_x)^{\swedge}_{\LS_G(D_x)})}{\otimes} \QCoh((\LS_P(\mathring{D}_x) \underset{\LS_M(\mathring{D}_x)}{\times} \LS_M(D_x))^{\swedge}_{\LS_M(D_x)}). \] The morphism \[ \LS_G(D_x) \longrightarrow \LS_G(\mathring{D}_x)^{\swedge}_{\LS_G(D_x)}, \] being isomorphic to $\{ 0 \}/G \to \widehat{\mathfrak{g}}_0/G$, is affine, which implies that the above tensor product identifies with \[ \QCoh(\LS_G(D_x) \underset{\LS_G(\mathring{D}_x)^{\swedge}_{\LS_G(D_x)}}{\times} (\LS_P(\mathring{D}_x) \underset{\LS_M(\mathring{D}_x)}{\times} \LS_M(D_x))^{\swedge}_{\LS_M(D_x)}). \] Finally, using the identification \[ \LS_H(\mathring{D}_x)^{\swedge}_{\LS_H(D_x)} \tilde{\longrightarrow} \widehat{\mathfrak{h}}_0/H, \] it is easy to see that the canonical map from the above fiber product to \[ \LS_G(D_x) \underset{\LS_G(\mathring{D}_x)}{\times} \LS_P(\mathring{D}_x) \underset{\LS_M(\mathring{D}_x)}{\times} \LS_M(D_x) = \LS_P(\mathring{D}_x) \underset{\LS_{G \times M}(\mathring{D}_x)}{\times} \LS_{G \times M}(D_x) \] factors through an isomorphism with the formal completion of the latter along $\LS_M(D_x)$.

\end{proof}

\begin{corollary}

\label{specpsmonad}

The monad on $\Rep(P)$ induced by the adjunction \[ \Rep(P) \rightleftarrows \Sph^{\spec}_{G,P,x}, \] where the left adjoint is (\ref{psgenren}), is isomorphic to the monad given by the associative algebra $\Sym((\mathfrak{g}/\mathfrak{n}_P)[-2])$.

\end{corollary}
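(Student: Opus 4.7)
My plan is to invoke Theorem \ref{factmodpoint} to translate the problem into a computation on the formal derived stack
\[ Z := (\LS_P(\mathring{D}_x) \underset{\LS_{G \times M}(\mathring{D}_x)}{\times} \LS_{G \times M}(D_x))^{\swedge}_{\LS_M(D_x)}, \]
and then to identify the monad as an endomorphism algebra using the deformation theory of $Z$. By Theorem \ref{factmodpoint} the non-renormalized category $\Upsilon(\mathfrak{n}_P,\sO_G)\modfact(\Rep(G \times M))_x$ is identified with $\QCoh(Z)$; passing to the integrable full subcategory $\mod_0^{\fact}$ and then applying the canonical renormalization $(-)^{\ren}$ yields the $\IndCoh$-type category attached to this formal completion, related to $\QCoh(Z)$ by Koszul duality for formal moduli problems. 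Under this translation the functor (\ref{psgenren}) is pullback along the natural projection $\pi \colon Z \to \BB P = \LS_P(D_x)$, whose restriction to the reduced locus $\BB M \hookrightarrow Z$ coincides with the map $\BB M \to \BB P$ induced by the Levi section $M \hookrightarrow P$. This identification can be checked on the hearts of the t-structures via Lemma \ref{psequivheart}, where both sides reduce to induction along $\BB M \to \BB P$.

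Since $\Sph^{\spec}_{G,P,x}$ carries a factorization $\Rep(P)$-module structure compatible with the adjunction (\ref{psgenren}) and its right adjoint, the monad $T$ is $\Rep(P)$-linear and hence determined by $T(\e) = \uEnd_{\Sph^{\spec}_{G,P,x}}(\e) \in \Rep(P)$. Geometrically the unit $\e$ corresponds to $\sigma_* \sO_{\LS_M(D_x)}$ for the closed embedding $\sigma \colon \LS_M(D_x) \hookrightarrow Z$; this is precisely the vacuum factorization module. I will then compute its endomorphism algebra in the renormalized category via the relative cotangent complex of $\sigma$: using the techniques of Proposition \ref{colie} together with the formal-completion calculations from the proof of Theorem \ref{factmodpoint}, the contributions from the three factors of the fiber product defining $Z$ reassemble, as $P$-representations, into a shift of $\mathfrak{g}/\mathfrak{n}_P$. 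Koszul duality for formal moduli then converts this tangent datum into the symmetric algebra and yields $\uEnd(\sigma_* \sO_{\LS_M(D_x)}) \simeq \Sym((\mathfrak{g}/\mathfrak{n}_P)[-2])$ as an algebra in $\Rep(P)$, which is the claimed description of the monad.

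The principal obstacle is the careful bookkeeping of the renormalization and its Koszul-dual effect on the monad: Theorem \ref{factmodpoint} delivers $\QCoh(Z)$, whereas the monad of interest lives after the canonical renormalization $(-)^{\ren}$, and it is precisely this transition that is responsible for $\mathfrak{g}/\mathfrak{n}_P$ appearing in cohomological degree $-2$ rather than in the positive degree where the tangent-complex calculation naturally places it. A secondary technical step is verifying that the relative cotangent of $\sigma$ simplifies, as a $P$-representation, to exactly $\mathfrak{g}/\mathfrak{n}_P$; this uses the Levi-type decomposition of $\mathfrak{g}$ into $\mathfrak{n}_P$, $\mathfrak{m}$, and $\mathfrak{g}/\mathfrak{p}$, together with careful tracking of the $P$-equivariant structure through the fiber product and the punctured-disk de Rham cohomology.
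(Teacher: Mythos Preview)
Your proposal has the right overall shape—translate via Theorem \ref{factmodpoint} to a formal stack and compute the monad there by Koszul duality—but there is a genuine gap in identifying which formal stack is relevant, and your description of the functor (\ref{psgenren}) is internally inconsistent.

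Theorem \ref{factmodpoint} identifies the \emph{full} category $\Upsilon(\mathfrak{n}_P,\sO_G)\modfact(\Rep(G \times M))_x$ with $\QCoh$ of the completion along $\LS_M(D_x)$. But $\Sph^{\spec}_{G,P,x}$ is the renormalization of the \emph{integrable} subcategory $\mod_0^{\fact}$, and this corresponds not to your $Z$ but to the completion along $\LS_P(D_x)$, which after choosing a coordinate is the honest algebraic stack $(\mathfrak{n}_P \times_{\mathfrak{g}} \{0\})/P$. The paper makes this identification directly and checks it is t-exact, so renormalization simply yields $\IndCoh((\mathfrak{n}_P \times_{\mathfrak{g}} \{0\})/P)$. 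You never isolate this step, and working with the completion along $\LS_M(D_x)$ leads you to the wrong embedding $\sigma$.

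Relatedly, you assert both that (\ref{psgenren}) is pullback along a projection $\pi\colon Z \to \BB P$ and that the unit is $\sigma_*\sO_{\LS_M(D_x)}$; these are incompatible (if it were $\pi^*$, the unit would be $\sO_Z$). In fact (\ref{psgenren}) corresponds to $\iota_*$ along the closed embedding $\iota\colon \{0\}/P \hookrightarrow (\mathfrak{n}_P \times_{\mathfrak{g}} \{0\})/P$, so the source is $\BB P$, not $\BB M$. Once this is straightened out, the monad is simply $\iota^!\iota_*$, and since $\Gamma((\mathfrak{n}_P \times_{\mathfrak{g}} \{0\}),\sO) = \Sym((\mathfrak{g}/\mathfrak{n}_P)^*[1])$ the standard Koszul duality computation gives $\iota^!\iota_* V \simeq \Sym((\mathfrak{g}/\mathfrak{n}_P)[-2]) \otimes V$ immediately—no separate cotangent-complex analysis of the three fiber-product factors is needed.
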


\begin{proof}

Observe that the equivalence of Theorem \ref{factmodpoint} restricts to an equivalence \[ \Upsilon(\mathfrak{n}_P,\sO_G)\mod_0^{\fact}(\Rep(G \times M))_x \tilde{\longrightarrow} \QCoh((\LS_P(\mathring{D}_x) \underset{\LS_{G \times M}(\mathring{D}_x)}{\times} \LS_{G \times M}(D_x))^{\swedge}_{\LS_P(D_x)}). \] We claim that this equivalence is t-exact with respect to the usual t-structure on quasicoherent sheaves: recall that \[ (\LS_P(\mathring{D}_x) \underset{\LS_{G \times M}(\mathring{D}_x)}{\times} \LS_{G \times M}(D_x))^{\swedge}_{\LS_P(D_x)} \cong (\mathfrak{n}_P \underset{\mathfrak{g}}{\times} \{ 0 \})/P, \] and in particular this is an algebraic stack of finite type. The t-structure on \[ \QCoh((\mathfrak{n}_P \underset{\mathfrak{g}}{\times} \{ 0 \})/P) \] is uniquely characterized by the fact that direct image along \[ \{ 0 \}/P \longrightarrow (\mathfrak{n}_P \underset{\mathfrak{g}}{\times} \{ 0 \})/P \] is t-exact, whence the claim.

It follows that the above equivalence preserves coherence, and hence renormalizes to \[ \Sph_{G,P,x}^{\spec} \tilde{\longrightarrow} \IndCoh((\LS_P(\mathring{D}_x) \underset{\LS_{G \times M}(\mathring{D}_x)}{\times} \LS_{G \times M}(D_x))^{\swedge}_{\LS_P(D_x)}). \] The functor (\ref{psgenren}) corresponds to direct image of ind-coherent sheaves along \[ \iota : \LS_P(D_x) \longrightarrow (\LS_P(\mathring{D}_x) \underset{\LS_{G \times M}(\mathring{D}_x)}{\times} \LS_{G \times M}(D_x))^{\swedge}_{\LS_P(D_x)}, \] where we identify \[ \IndCoh(\LS_P(D_x)) \cong \QCoh(\LS_P(D_x)) \cong \Rep(P) \] using the t-exact functor $\Psi$. After a choice of formal coordinate at $x$, this becomes the map \[ \{ 0 \}/P \longrightarrow (\mathfrak{n}_P \underset{\mathfrak{g}}{\times} \{ 0 \})/P. \] If we identify \[ \Gamma(\mathfrak{n}_P \underset{\mathfrak{g}}{\times} \{ 0 \},\sO) = \Sym((\mathfrak{g}/\mathfrak{n}_P)^*[1]), \] then we have \[ \iota^!\iota_*V = \Hom_{\Sym((\mathfrak{g}/\mathfrak{n}_P)^*[1])}(k,V) = \Sym((\mathfrak{g}/\mathfrak{n}_P)[-2]) \otimes V \] for any $V$ in $\Rep(P)$, i.e., the monad $\iota^!\iota_*$ is given by the algebra $\Sym((\mathfrak{g}/\mathfrak{n}_P)[-2])$ Koszul dual to $\Sym((\mathfrak{g}/\mathfrak{n}_P)^*[1])$, as desired.

\end{proof}

\section{The equivalence for $P = G$} \label{s:mainthmproof}

In this section, we complete the proof of the factorizable derived Satake equivalence, as formulated in Theorem \ref{mainthm}.

\subsection{The derived Satake transform as a morphism of monads} By the construction of the morphism (\ref{presatake}), we have a commutative triangle \[
\begin{tikzcd}
& \Rep(\check{G}) \arrow{dl}[above,xshift=-0.3em]{\Sat_G^{\nv}} \arrow{dr}{\oblv^{\comfact}} & \\
\DD(\sH_G) \arrow{rr}{(\ref{presatake})} & & \sO_{\check{G}}\modfact(\Rep(\check{G} \times \check{G}))
\end{tikzcd} \]
in $\FactCat^{\laxfact}$. After renormalization, we obtain a commutative triangle\[
\begin{tikzcd}
\label{monadmaptri}
& \Rep(\check{G}) \arrow{dl} \arrow{dr} & \\
\Sph_G \arrow{rr}{\Sat_G} & & \Sph_{\check{G}}^{\spec}
\end{tikzcd} \]
in $\FactCat$. By Lemma \ref{naivesatula} and Theorem \ref{vacacthm} respectively, the diagonal functors preserve ULA objects over each $X^I$, hence admit factorizable right adjoints. Moreover, their right adjoints are conservative and hence monadic. The above triangle therefore corresponds to a morphism
\begin{equation}
\label{monadmap}
\Psi_G \longrightarrow \Psi_{\check{G}}^{\spec}
\end{equation}
between the corresponding monads on $\Rep(\check{G})$ in $\FactCat_{\laxuntl}$, which we will prove is an isomorphism. As explained in \S \ref{pointred}, Theorem \ref{mainthm} will follow if we prove that the morphism of monads \[ \Psi_{G,x} \longrightarrow \Psi_{\check{G},x}^{\spec} \] is an isomorphism over each $x \in X(k)$.

\subsection{Explicit description of the monads at a point} We will need the following lemma.

\begin{lemma}

\label{innerhomlem}

Let $H$ be an affine group scheme and suppose that $\Rep(H)$ acts on a DG category $\sC$. For any objects $c,d$ in $\sC$, we have a canonical isomorphism \[ \uHom_{\Rep(H)}(c,d) \tilde{\longrightarrow} \Hom_{\sC}(c,\sO_H \star d) \] in $\Rep(H)$, where the $H$-action on the right side arises by functoriality from the $H$-action on $\sO_H$ by right translations.

\end{lemma}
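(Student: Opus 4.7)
The plan is to unwind both sides geometrically using the identification $\Rep(H) = \QCoh(\BB H)$ and the $1$-affineness of $\BB H$, which allows us to view the $\Rep(H)$-module category $\sC$ as a sheaf of categories over $\BB H$. Let $\pi : \pt \to \BB H$ be the universal $H$-torsor, with the associated adjunction $\pi^* : \sC \rightleftarrows \sC_{\pt} : \pi_*$, where $\sC_{\pt} := \Vect \otimes_{\Rep(H)} \sC$ is the ``fiber'' of $\sC$. The key geometric input is that $\pi_* \sO_{\pt} = \sO_H$ as an object of $\Rep(H)$, and more generally $\pi_* \pi^* d \cong \sO_H \star d$ as objects of $\sC$. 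The $H$-action on $\pi_* \pi^*$ coming from the $2$-morphism $\pi \Rightarrow \pi$ (given by right multiplication on $\pt$) translates exactly into the right-translation $H$-action on $\sO_H$.

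With this in hand, the proof is a short computation. On the one hand, the $\pi^* \dashv \pi_*$ adjunction gives
\[ \Hom_{\sC}(c, \sO_H \star d) \;=\; \Hom_{\sC}(c, \pi_* \pi^* d) \;=\; \Hom_{\sC_{\pt}}(\pi^* c, \pi^* d), \]
and by naturality of this adjunction the $H$-action on the left, coming from the right-translation $H$-action on $\sO_H$, corresponds to the $H$-action on the right arising from the $H$-equivariance structures on $\pi^* c$ and $\pi^* d$ (which exist because $c$ and $d$ descend to $\sC$). On the other hand, the relative inner Hom in a sheaf of categories over $\BB H$ computes $\uHom_{\Rep(H)}(c, d)$ as the object of $\QCoh(\BB H) = \Rep(H)$ whose pullback along $\pi$ is $\Hom_{\sC_{\pt}}(\pi^* c, \pi^* d)$ equipped precisely with this same descent-theoretic $H$-action. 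Combining the two identifications yields the desired natural isomorphism in $\Rep(H)$.

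The only nontrivial point is the verification that the $H$-action on $\Hom_{\sC}(c, \sO_H \star d)$ induced by right translations on $\sO_H$ agrees with the $H$-action on $\Hom_{\sC_{\pt}}(\pi^* c, \pi^* d)$ coming from descent, which is what I expect to be the main obstacle (though a mild one). This is a formal consequence of the Barr--Beck presentation of $\sC$ as comodules for the comonad $\pi^* \pi_*$ on $\sC_{\pt}$, i.e., as $H$-equivariant objects in $\sC_{\pt}$; the $H$-action on $\sO_H$ by right translations is, tautologically, the $H$-action on the comonad coming from its description as $H$-equivariance data.
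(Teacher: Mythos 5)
Your proof is correct in outline and takes a genuinely different route from the paper's. The paper's (omitted) argument is purely algebraic: it writes $\uHom_{\Rep(H)}(c,-)$ as the composite $(\id_{\Rep(H)} \otimes \Hom_{\sC}(c,-)) \circ \act^R_{\sC}$, uses rigidity of $\Rep(H)$ to identify $\act^R_{\sC}$ with the coaction, and then applies the commutative square $\act^R_{\sC} \circ \act_{\sC} \cong (\id \otimes \act_{\sC}) \circ (\Ind_H^{H \times H} \otimes \id_{\sC})$ together with $\Ind_H^{H \times H}(k) = \sO_H$ to produce the answer. Your argument geometrizes the same structure via $\pi : \pt \to \BB H$, descent, and the projection formula $\pi_*\pi^* \cong \sO_H \star (-)$. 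What the paper's approach buys: it uses only rigidity of $\Rep(H)$, so it works verbatim for any affine group scheme. What your approach buys: it is conceptually transparent once the dictionary between $\Rep(H)$-modules and sheaves of categories over $\BB H$ is in place, and the ``descent $H$-action equals right-translation $H$-action on the comonad'' point is indeed just a reformulation of the same coherence that the paper encodes in its commutative square.

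Two points worth making explicit in your write-up. First, you lean on $1$-affineness of $\BB H$ to pass between $\Rep(H)$-modules and sheaves of categories over $\BB H$; this holds for affine algebraic groups of finite type (the case needed in the paper, $H = \check{G}$), but it is not automatic for an arbitrary affine group scheme, whereas the statement is phrased for the latter. In fact your proof can be made independent of $1$-affineness: the adjunction $\pi^* \dashv \pi_*$, the projection formula, and the base change of $\uHom_{\Rep(H)}(c,-)$ along $\pi^*$ use only rigidity of $\Rep(H)$, not descent. Second, and related: the identities $\pi_*\pi^*(d) \cong \sO_H \star d$ for $d$ in $\sC$ (not merely $\QCoh(\BB H)$), and $\pi^*\uHom_{\Rep(H)}(c,d) \cong \Hom_{\sC_{\pt}}(\pi^*c,\pi^*d)$, both require $\Rep(H)$ to be rigid so that $\pi_*$ satisfies the projection formula and $\uHom_{\Rep(H)}(c,-)$ is strictly (not just laxly) $\Rep(H)$-linear and continuous. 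You use both facts without flagging the rigidity input; worth a sentence.
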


Now we deduce an explicit description of the monad $\Psi_{G,x}$ from a well-known equivariant cohomology calculation originally due to Ginzburg.

\begin{proposition}

\label{geommonad}

The monad \[ \Psi_{G,x}: \Rep(\check{G}) \longrightarrow \Sph_{G,x} \longrightarrow \Rep(\check{G}) \] is isomorphic to the monad given by the associative algebra $\Sym(\check{\mathfrak{g}}[-2])$.

\end{proposition}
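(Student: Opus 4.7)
Since $\Sat_{G,x}^{\nv}$ is a monoidal functor and $\Sph_{G,x}$ is a $\Rep(\check{G})$-bimodule via $\Sat_{G,x}^{\nv}$ and convolution, the monad $\Psi_{G,x} = L^{\RR} \circ \Sat_{G,x}^{\nv}$ is canonically $\Rep(\check{G})$-linear for (say) the right action. Consequently, $\Psi_{G,x}$ is determined as a $\Rep(\check{G})$-linear monad by the associative algebra object $A_G := \Psi_{G,x}(k)$ in $\Rep(\check{G})$, via the formula $\Psi_{G,x}(V) \simeq A_G \otimes V$. The plan is to identify $A_G$ with $\Sym(\check{\mathfrak{g}}[-2])$ (equipped with its natural $\check{G}$-action and its commutative algebra structure).

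First, I would unpack $A_G$ as an inner Hom. Applying Lemma \ref{innerhomlem} to the $\Rep(\check{G})$-action on $\Sph_{G,x}$ and to $c = d = \unit_{\Sph_{G,x}} = \Sat_{G,x}^{\nv}(k)$, we get
\[
A_G \ =\ L^{\RR}(\unit_{\Sph_{G,x}}) \ \simeq\ \uHom_{\Rep(\check{G})}(k, L^{\RR}L(k))\ \simeq\ \Hom_{\Sph_{G,x}}(\unit, \sO_{\check{G}} \star \unit)\ \simeq\ \Hom_{\Sph_{G,x}}(\unit, \Sat_{G,x}^{\nv}(\sO_{\check{G}})),
\]
where $\check{G}$ acts via the right-regular action on $\sO_{\check{G}}$, and the associative algebra structure on $A_G$ is that of inner endomorphisms of $\unit_{\Sph_{G,x}}$ in the $\Rep(\check{G})$-enhanced sense. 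Tautologically, the $\check{G}$-invariants of $A_G$ recover $\End_{\Sph_{G,x}}^{\bullet}(\unit) = H^*_{\mathfrak{L}^+G}(\mathrm{pt}) \simeq \Sym(\check{\mathfrak{g}}[-2])^{\check{G}}$, so the question is to promote this classical equivariant cohomology computation to a $\check{G}$-equivariant statement.

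Next, I would invoke Ginzburg's theorem, whose factorizable-free version is the core input from \cite{bezrukavnikov-finkelberg}: the $\Rep(\check{G})$-enhanced equivariant cohomology functor
\[
\uHom_{\Sph_{G,x}}(\unit, -) \,: \, \Sph_{G,x} \,\longrightarrow\, \Sym(\check{\mathfrak{g}}[-2])\mod(\Rep(\check{G}))
\]
sends $\Sat_{G,x}^{\nv}(V)$ to the free module $\Sym(\check{\mathfrak{g}}[-2]) \otimes V$, functorially and monoidally. In particular $A_G \simeq \Sym(\check{\mathfrak{g}}[-2])$ as an algebra in $\Rep(\check{G})$. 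This is where the adjoint $\check{G}$-action on $\check{\mathfrak{g}}$ arises: the $\check{G}$ acting on $\check{\mathfrak{g}}$ is (up to canonical identifications) the Langlands-dual $\check{G}$ acting by adjoint, matching the action coming from the right-regular action on $\sO_{\check{G}}$ transported by Satake. The algebra structure on both sides being determined by their $\check{G}$-invariants (the symmetric algebra structure on the one hand, composition of inner endomorphisms on the other), the identification as associative algebras follows.

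The main obstacle is the $\check{G}$-equivariant enhancement in the second step: the naive Satake equivalence only produces $\check{G}$-invariants of equivariant cohomology, and enhancing it to a genuinely $\Rep(\check{G})$-valued functor requires nontrivial input (essentially the derived Satake theorem at a single point of \cite{bezrukavnikov-finkelberg}, or equivalently, formality of the equivariant derived category together with the identification of the generator algebra with $\Sym(\check{\mathfrak{g}}[-2])$). Once this enhancement is in place the identification of $A_G$ is a direct calculation; the $\Rep(\check{G})$-linearity of the monad $\Psi_{G,x}$ then propagates the computation of $A_G$ to a full description of $\Psi_{G,x}$ on all of $\Rep(\check{G})$.
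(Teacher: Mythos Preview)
Your approach is essentially identical to the paper's: observe that the monad is $\Rep(\check{G})$-linear and hence given by the inner endomorphism algebra $\uEnd_{\Rep(\check{G})}(\delta_{\mathfrak{L}^+G})$, rewrite this via Lemma~\ref{innerhomlem} as $\Hom_{\Sph_{G,x}}(\delta_{\mathfrak{L}^+G},\sO_{\check{G}} \star \delta_{\mathfrak{L}^+G})$, then invoke Ginzburg's computation. One caveat on your last paragraph: the paper cites Theorem~7.6.1 of \cite{ABG} for the final identification, which is logically prior to derived Satake; invoking the full derived Satake equivalence of \cite{bezrukavnikov-finkelberg} here would be circular, since the whole point of the paper is to give an independent construction and proof of that equivalence, so you should lean on the direct equivariant cohomology calculation (your stated alternative) rather than on \cite{bezrukavnikov-finkelberg}.
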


\begin{proof}

The functor \[ \Rep(\check{G}) \longrightarrow \Sph_{G,x} \] obtained by renormalizing $\Sat_{G,x}^{\nv}$ is monoidal, and in particular can be viewed as a $\Rep(\check{G})$-linear functor. It follows that $\Psi_{G,x}$ is given by the associative algebra $\uEnd_{\Rep(\check{G})}(\delta_{\mathfrak{L}^+G})$. Lemma \ref{innerhomlem} yields a $\check{G}$-equivariant isomorphism \[ \uEnd_{\Rep(\check{G})}(\delta_{\mathfrak{L}^+G}) \tilde{\longrightarrow} \Hom_{\Sph_{G,x}}(\delta_{\mathfrak{L}^+G},\sO_{\check{G}} \star \delta_{\mathfrak{L}^+G}). \] Note that this is an isomorphism of associative algebras in $\Rep(\check{G})$, where the algebra structure on the right side is induced by the algebra structure on $\sO_{\check{G}}$ and composition of morphisms in $\Sph_{G,x}$.

Then apply Theorem 7.6.1 from \cite{ABG}, which supplies an isomorphism of associative algebras \[ \Hom_{\Sph_{G,x}}(\delta_1,\sO_{\check{G}} \star \delta_{\mathfrak{L}^+G}) \tilde{\longrightarrow} \Sym(\check{\mathfrak{g}}[-2]). \]

\end{proof}

Next, we show that the monad $\Psi_{\check{G},x}^{\spec}$ is abstractly isomorphic to the same associative algebra as $\Psi_G$.

\begin{proposition}

\label{specmonad}

The monad \[ \Psi_{\check{G},x}^{\spec} : \Rep(\check{G}) \longrightarrow \Sph_{\check{G},x}^{\spec} \longrightarrow \Rep(\check{G}) \] is isomorphic to the monad given by the associative algebra $\Sym(\check{\mathfrak{g}}[-2])$.

\end{proposition}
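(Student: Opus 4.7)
The plan is to deduce Proposition \ref{specmonad} as an immediate consequence of Corollary \ref{specpsmonad}, specialized to the case when the parabolic equals the ambient group. The key observation is that the construction of the spectral Hecke category $\Sph^{\spec}_{G,P}$ and of the morphism $\Rep(P) \to \Sph^{\spec}_{G,P}$ carried out in \S\ref{s:spec-hecke} is intrinsic to the pair $(G,P)$: nothing in the construction of $\Upsilon(\mathfrak{n}_P,\sO_G)$, its factorization module category, or the renormalization procedure depends on $G$ arising as a Langlands dual. In particular, Corollary \ref{specpsmonad} applies verbatim after replacing the fixed group $G$ of the paper by any reductive group, including $\check{G}$.

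Applying Corollary \ref{specpsmonad} with $G$ replaced by $\check{G}$ and the parabolic taken to be $P = \check{G}$ itself, so that $\mathfrak{n}_P = 0$ and $\mathfrak{g}/\mathfrak{n}_P = \check{\mathfrak{g}}$, identifies the monad induced by the adjunction
\[ \Rep(\check{G}) \rightleftarrows \Sph^{\spec}_{\check{G},\check{G},x} = \Sph^{\spec}_{\check{G},x} \]
with the algebra $\Sym(\check{\mathfrak{g}}[-2])$. This is precisely the monad $\Psi_{\check{G},x}^{\spec}$, so the proposition follows.

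The underlying geometric content had already been extracted: Theorem \ref{factmodpoint} identifies $\Upsilon(\mathfrak{n}_P,\sO_G)\mod^{\fact}_0(\Rep(G \times M))_x$ (for any reductive $G$) with quasi-coherent sheaves on the formal completion of $(\LS_P(\mathring{D}_x) \times_{\LS_{G \times M}(\mathring{D}_x)} \LS_{G \times M}(D_x))$ along $\LS_M(D_x)$, and in the proof of Corollary \ref{specpsmonad} the generator functor is identified with $\iota_*$ for the closed embedding $\{0\}/P \hookrightarrow (\mathfrak{n}_P \times_{\mathfrak{g}} \{0\})/P$. Specializing to $P = G$ (now applied to the dual group $\check{G}$) collapses this to the map $\{0\}/\check{G} \to (\{0\} \times_{\check{\mathfrak{g}}} \{0\})/\check{G}$, whose derived structure sheaf on the target is $\Sym(\check{\mathfrak{g}}^*[1])$. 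Koszul duality then gives $\iota^! \iota_* = \Sym(\check{\mathfrak{g}}[-2]) \otimes (-)$, yielding the claim.

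There is no genuine obstacle here — the proposition is a direct specialization of a theorem already in hand, and the only work is to verify the notational translation and observe that the constructions of \S\S\ref{s:spec-hecke}--\ref{s:factmodpt} depend only on the abstract reductive group, not on its origin as $\check{G}$.
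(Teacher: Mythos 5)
Your proof is correct and takes essentially the same route as the paper, which disposes of this proposition in one line by citing Corollary \ref{specpsmonad} in the case where the parabolic equals the whole group (applied to $\check{G}$). Your additional explanation — that the constructions of \S\ref{s:spec-hecke}--\S\ref{s:factmodpt} depend only on the abstract reductive group, together with the re-derivation of $\iota^!\iota_* = \Sym(\check{\mathfrak{g}}[-2]) \otimes (-)$ from the map $\{0\}/\check{G} \to (\{0\} \times_{\check{\mathfrak{g}}} \{0\})/\check{G}$ — is accurate but just unpacks what the corollary already gives.
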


\begin{proof}

This is Corollary \ref{specpsmonad}, applied in the case $\check{P} = \check{G}$.

\end{proof}

\subsection{Another reduction step} We claim that if $\Sat_{G,x}$ induces an isomorphism \[ \End_{\Sph_{G,x}}(\delta_{\mathfrak{L}^+G}) \tilde{\longrightarrow} \End_{\Sph_{\check{G},x}^{\spec}}(\Vac_{\sO_{\check{G}}}) \] on endomorphisms of the unit objects, then it is an equivalence.

We have previously shown that if the morphism of monads (\ref{monadmap}) is an isomorphism over $x$, then $\Sat_{G,x}$ is an equivalence. Propositions \ref{geommonad} and \ref{specmonad} show that both of these monads are abstractly isomorphic to the associative algebra $\Sym(\check{\mathfrak{g}}[-2])$ in $\Rep(\check{G})$, so we must prove that the endomorphism \[ \varphi_G : \Sym(\check{\mathfrak{g}}[-2]) \longrightarrow \Sym(\check{\mathfrak{g}}[-2]) \] corresponding to (\ref{monadmap}) is an isomorphism.

It suffices to show that $\varphi_G$ is an isomorphism at the level of cohomology. We view the cohomology of the associative DG algebra $\Sym(\check{\mathfrak{g}}[-2])$ as a classical graded commutative algebra, so we need only prove that $\varphi_G$ restricts to an isomorphism on the generators $\check{\mathfrak{g}}$. On the other hand, the subspace \[ H^2(\Sym(\check{\mathfrak{g}}[-2])^{\check{G}}) \subset \check{\mathfrak{g}} \] generates the adjoint representation, so it is enough to show that $\varphi_G$ restricts to an isomorphism on the algebra of invariants $\Sym(\check{\mathfrak{g}}[-2])^{\check{G}}$.

From the proof of Proposition \ref{geommonad} we obtain a commutative square \[
\begin{tikzcd}
\Sym(\check{\mathfrak{g}}[-2])^{\check{G}} \arrow{r} \arrow{d}[sloped]{\sim} & \Sym(\check{\mathfrak{g}}[-2]) \arrow{d}[sloped]{\sim} \\
\End_{\Sph_{G,x}}(\delta_{\mathfrak{L}^+G}) \arrow{r} & \uEnd_{\Rep(\check{G})}(\delta_{\mathfrak{L}^+G}),
\end{tikzcd} \]
where the left vertical isomorphism is the composition \[ \Sym(\check{\mathfrak{g}}[-2])^{\check{G}} \tilde{\longrightarrow} \Sym(\mathfrak{g}[-2])^G \tilde{\longrightarrow} \CC_{\dR}(\bB G) \tilde{\longrightarrow} \End_{\Sph_{G,x}}(\delta_{\mathfrak{L}^+G}). \] Similarly, Proposition \ref{specmonad} yields a commutative square  \[
\begin{tikzcd}
\Sym(\check{\mathfrak{g}}[-2])^{\check{G}} \arrow{r} \arrow{d}[sloped]{\sim} & \Sym(\check{\mathfrak{g}}[-2]) \arrow{d}[sloped]{\sim} \\
\End_{\Sph_{\check{G},x}^{\spec}}(\Vac_{\sO_{\check{G}}}) \arrow{r} & \uEnd_{\Rep(\check{G})}(\Vac_{\sO_{\check{G}}}).
\end{tikzcd} \]
The claim now follows.

\subsection{Conclusion of the proof} Finally, to prove that $\Sat_{G,x}$ induces an isomorphism on endomorphisms of the unit objects, we reduce to the case $G = T$ using $\Sat_{G,B}$ as an intermediary. Consider the commutative diagram in $\FactCat$ \[
\begin{tikzcd}
\Sph_G \arrow{r}{\Sat_G} \arrow{d} & \Sph_{\check{G}}^{\spec} \arrow{d} \\
\Sph_{G,B} \arrow{r}{\Sat_{G,B}} & \Sph_{\check{G},\check{B}}^{\spec} \\
\Sph_T \arrow{u} \arrow{r}{\Sat_T} \arrow{u} & \Sph_{\check{T}}^{\spec} \arrow{u},
\end{tikzcd} \]
where $\Sph_G \to \Sph_{G,B}$ is the unique morphism of $\Sph_G$-modules in $\FactCat$ (uniqueness is a consequence of unitality), and similarly for the other vertical functors.

Taking the fiber at $x$ and passing to endomorphisms of the unit objects, we obtain a commutative diagram of associative algebras \[
\begin{tikzcd}
\End_{\Sph_{G,x}}(\delta_{\mathfrak{L}^+G}) \arrow{r} \arrow{d} & \End_{\Sph_{\check{G},x}^{\spec}}(\Vac_{\sO_{\check{G}}}) \arrow{d} \\
\End_{\Sph_{G,B,x}}(\Delta^0) \arrow{r} & \End_{\Sph_{\check{G},\check{B},x}^{\spec}}(\Vac_{\Upsilon(\check{\mathfrak{n}},\sO_{\check{G}})}) \\
\End_{\Sph_{T,x}}(\delta_{\mathfrak{L}^+T}) \arrow{u} \arrow{r} & \End_{\Sph_{\check{T},x}^{\spec}}(\Vac_{\sO_{\check{T}}}) \arrow{u}.
\end{tikzcd} \]

Let us describe the algebras in the middle row: first, we have \[ \Sym(\check{\mathfrak{t}}[-2]) \tilde{\longrightarrow} \Sym(\mathfrak{t}^*[-2]) \tilde{\longrightarrow} \CC_{\dR}(\bB T) \tilde{\longrightarrow} \End_{\Sph_{G,B,x}}(\Delta^0). \] By Corollary \ref{specpsmonad}, we have an equivalence \[ \Sph_{\check{G},\check{B},x}^{\spec} \tilde{\longrightarrow} \Sym((\check{\mathfrak{g}}/\check{\mathfrak{n}})[-2])\mod(\Rep(\check{B})) \] which sends \[ \Vac_{\Upsilon(\check{\mathfrak{n}},\sO_{\check{G}})} \mapsto \Sym((\check{\mathfrak{g}}/\check{\mathfrak{n}})[-2]). \] In particular, we have isomorphisms
\begin{align*}
\End_{\Sph_{\check{G},\check{B},x}^{\spec}}(\Vac_{\Upsilon(\check{\mathfrak{n}},\sO_{\check{G}})}) &\tilde{\longrightarrow} \End_{\Sym((\check{\mathfrak{g}}/\check{\mathfrak{n}})[-2])\mod(\Rep(\check{B}))}(\Sym((\check{\mathfrak{g}}/\check{\mathfrak{n}})[-2])) \\
&\tilde{\longrightarrow} \Sym((\check{\mathfrak{g}}/\check{\mathfrak{n}})[-2])^{\check{B}} = \Sym(\check{\mathfrak{t}}[-2]).
\end{align*}

To summarize, under these identifications the above commutative diagram of associative algebras becomes \[
\begin{tikzcd}
\Sym(\check{\mathfrak{g}}[-2])^{\check{G}} \arrow{r}{\varphi_G} \arrow{d} & \Sym(\check{\mathfrak{g}}[-2])^{\check{G}} \arrow{d} \\
\Sym(\check{\mathfrak{t}}[-2]) \arrow{r} & \Sym(\check{\mathfrak{t}}[-2]) \\
\Sym(\check{\mathfrak{t}}[-2]) \arrow{u}{\id} \arrow{r}{\varphi_T} & \Sym(\check{\mathfrak{t}}[-2]) \arrow{u}{\id}.
\end{tikzcd} \]
Here the downward vertical arrows are given by the Chevalley homomorphism \[ \Sym(\check{\mathfrak{g}}[-2])^{\check{G}} \tilde{\longrightarrow} \Sym(\check{\mathfrak{t}}[-2])^W \longrightarrow \Sym(\check{\mathfrak{t}}[-2]). \] We have previously proved that Theorem \ref{mainthm} holds in the case $G = T$, which implies that $\varphi_T$ is an isomorphism. It follows that \[ \varphi_G : \Sym(\check{\mathfrak{g}}[-2])^{\check{G}} \longrightarrow \Sym(\check{\mathfrak{g}}[-2])^{\check{G}} \] is injective at the level of cohomology, hence an isomorphism, since $\Sym(\check{\mathfrak{g}}[-2])^{\check{G}}$ has finite-dimensional cohomologies.

\section{The equivalence for a proper parabolic} \label{s:psequiv}

In this section, we deduce Theorem \ref{mainthmps} from Theorem \ref{mainthm} and the main theorem of \cite{R3}.

\subsection{Recovering spherical objects from Whittaker invariants} Consider the pairing in $\FactCat$ defined as the composite \[ \Rep(\check{G}) \otimes \DD(\mathfrak{L}^+G \backslash \mathfrak{L}G) \tilde{\longrightarrow} \DD(\Gr_G)^{\mathfrak{L}N^-,\psi} \otimes \DD(\mathfrak{L}^+G \backslash \mathfrak{L}G) \longrightarrow \DD(\mathfrak{L}G)^{\mathfrak{L}N^-,\psi}, \] where the first functor is the equivalence of Theorem \ref{csformula} and the second functor is given by convolution. By Proposition \ref{ulagenprop}, this corresponds to a morphism
\begin{equation}
\label{whitconv}
\DD(\mathfrak{L}^+G \backslash \mathfrak{L}G) \longrightarrow \Rep(\check{G}) \otimes \DD(\mathfrak{L}G)^{\mathfrak{L}N^-,\psi}
\end{equation}
in $\FactCat_{\laxuntl}$. By construction, this morphism is equivariant for the right action of $\mathfrak{L}G$.

\begin{proposition}

The morphism (\ref{whitconv}) factors through the inclusion\[ \Rep(\check{G}) \otimes \DD(\mathfrak{L}G)^{\mathfrak{L}N^-,\psi,\acc} \longrightarrow \Rep(\check{G}) \otimes \DD(\mathfrak{L}G)^{\mathfrak{L}N^-,\psi}, \] and moreover sends the unit $\delta_{\mathfrak{L}^+G}$ to the image of the regular bimodule $\sO_{\check{G}}$ under the functor
\begin{align*}
\Rep(\check{G}) \otimes \Rep(\check{G}) &\tilde{\longrightarrow} \Rep(\check{G}) \otimes \DD(\Gr_G)^{\mathfrak{L}N^-,\psi} \\
&\xrightarrow{\oblv_{\mathfrak{L}^+G}} \Rep(\check{G}) \otimes \DD(\mathfrak{L}G)^{\mathfrak{L}N^-,\psi,\acc}.
\end{align*}

\end{proposition}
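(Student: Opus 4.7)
The proof has two parts, corresponding to the two claims. I would begin by analyzing the pairing
\[ \Rep(\check{G}) \otimes \DD(\mathfrak{L}^+G \backslash \mathfrak{L}G) \longrightarrow \DD(\mathfrak{L}G)^{\mathfrak{L}N^-,\psi} \]
from which (\ref{whitconv}) is obtained by dualization; this pairing sends $(F,G) \mapsto F * G$, where $F$ is viewed as a Whittaker D-module on $\Gr_G$ via Theorem \ref{csformula}.

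For the accessibility claim, I would use Theorem \ref{csformula} to observe that $\Rep(\check{G}) \cong \DD(\Gr_G)^{\mathfrak{L}N^-,\psi}$ is generated under colimits by the Whittaker vacuum $\Av^{\mathfrak{L}N^-,\psi}_!(\delta_{\mathfrak{L}^+G})$, where $\delta_{\mathfrak{L}^+G} \in \DD(\sH_G) \subset \DD(\Gr_G)$ is the unit. By continuity of the pairing, it therefore suffices to check the accessibility statement on pure tensors of the form $\Av^{\mathfrak{L}N^-,\psi}_!(F_0) \otimes G$ with $F_0 \in \DD(\sH_G)$ and $G \in \DD(\mathfrak{L}^+G \backslash \mathfrak{L}G)$. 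The essential input is then the compatibility
\[ \Av^{\mathfrak{L}N^-,\psi}_!(F_0) * G \;\cong\; \Av^{\mathfrak{L}N^-,\psi}_!(F_0 * G), \]
which holds because $\Av^{\mathfrak{L}N^-,\psi}_!$ is given by left convolution with the character sheaf on $\mathfrak{L}N^-$ and therefore commutes with right convolution by $G$. Since $F_0$ is left $\mathfrak{L}^+G$-invariant, $F_0 * G$ lies in $\DD(\mathfrak{L}^+G \backslash \mathfrak{L}G)$, so the right-hand side is by definition in the accessible subcategory.

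For the second claim, specializing the pairing to $G = \delta_{\mathfrak{L}^+G}$ (the convolution unit for left $\mathfrak{L}^+G$-invariant sheaves) yields the functor $F \mapsto F * \delta_{\mathfrak{L}^+G}$, which is simply the forgetful functor $\oblv_{\mathfrak{L}^+G} \colon \DD(\Gr_G)^{\mathfrak{L}N^-,\psi} \to \DD(\mathfrak{L}G)^{\mathfrak{L}N^-,\psi}$. Under the self-duality of $\Rep(\check{G})$ supplied by Proposition \ref{ulagenprop}, a continuous functor $\Rep(\check{G}) \to \sC$ corresponds to an object of $\Rep(\check{G}) \otimes \sC$, with the identity functor $\id_{\Rep(\check{G})}$ corresponding precisely to the regular bimodule $\sO_{\check{G}} \in \Rep(\check{G}) \otimes \Rep(\check{G}) \cong \Rep(\check{G} \times \check{G})$ (this is the monoidal unit of the latter category, thought of as the identity endofunctor, and is exactly the matching-of-units observation invoked in the paragraph following (\ref{sphtobimod})). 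Pre-composing with the Casselman-Shalika equivalence and post-composing with $\oblv_{\mathfrak{L}^+G}$ gives the functor displayed in the statement, and its image on $\delta_{\mathfrak{L}^+G}$ is thus the image of $\sO_{\check{G}}$ under the composition in the proposition.

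The main technical point to get right will be the associativity identity $\Av^{\mathfrak{L}N^-,\psi}_!(F_0) * G = \Av^{\mathfrak{L}N^-,\psi}_!(F_0 * G)$ at the factorizable level, i.e., ensuring that the various left/right $\mathfrak{L}^+G$- and $\mathfrak{L}N^-$-equivariances interact as expected on $(\mathfrak{L}G)_{X^I}$ uniformly in the finite set $I$. Once that is in place, both assertions reduce to the manipulations above, since the self-duality of $\Rep(\check{G})$ is established in $\FactCat_{\laxuntl}$ by Proposition \ref{ulagenprop} and all the functors in question are unital with respect to the factorization structure (which is why only the monoidal unit $\sO_{\check{G}}$, rather than the factorization unit of $\Rep(\check{G} \times \check{G})$, appears).
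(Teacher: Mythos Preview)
Your proof is correct and takes essentially the same approach as the paper. The paper's version of the accessibility argument is slightly more streamlined: rather than checking on generators, it observes directly that the pairing factors as
\[ \Rep(\check{G}) \otimes \DD(\mathfrak{L}^+G \backslash \mathfrak{L}G) \xrightarrow{\text{act via }\Sat_G^{\nv}} \DD(\mathfrak{L}^+G \backslash \mathfrak{L}G) \xrightarrow{\Av_!^{\mathfrak{L}N^-,\psi}} \DD(\mathfrak{L}G)^{\mathfrak{L}N^-,\psi}, \]
which is exactly your identity $\Av_!^{\mathfrak{L}N^-,\psi}(F_0)*G \cong \Av_!^{\mathfrak{L}N^-,\psi}(F_0*G)$ packaged as a factorization of functors. (One small imprecision: $\DD(\Gr_G)^{\mathfrak{L}N^-,\psi}$ is not generated under colimits by the Whittaker vacuum alone; rather, every object is of the form $\Av_!^{\mathfrak{L}N^-,\psi}(F_0)$ for some $F_0 \in \DD(\sH_G)$, which is what you actually use.) For the second claim, the paper applies $\mathfrak{L}^+G$-invariants to (\ref{whitconv}) and recognizes the result as the monoidal morphism (\ref{sphtobimod}), which preserves monoidal units; this is the same duality computation you describe, just phrased from the other side.
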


\begin{proof}

For first assertion, note that (\ref{whitconv}) can also be obtained from the composite \[ \Rep(\check{G}) \otimes \DD(\mathfrak{L}^+G \backslash \mathfrak{L}G) \longrightarrow \DD(\mathfrak{L}^+G \backslash \mathfrak{L}G) \xrightarrow{\Av_!^{\mathfrak{L}N^-,\psi}} \DD(\mathfrak{L}G)^{\mathfrak{L}N^-,\psi}, \] where the first morphism is the action of $\Rep(\check{G})$ coming from $\Sat_G^{\nv}$.

For the second claim, apply the functor of $\mathfrak{L}^+G$-invariants to (\ref{whitconv}) to obtain a morphism \[ \DD(\sH_G) \longrightarrow \Rep(\check{G}) \otimes \DD(\Gr_G)^{\mathfrak{L}N^-,\psi} \tilde{\longrightarrow} \Rep(\check{G}) \otimes \Rep(\check{G}). \] By construction, this agrees with the morphism of associative algebras (\ref{sphtobimod}) and in particular preserves the monoidal unit. The claim follows.

\end{proof}

Observe that for a fixed $x \in X(k)$, the category \[ \Rep(\check{G}) \otimes \DD(\mathfrak{L}G)_x^{\mathfrak{L}N^-,\psi,\acc} \] has a canonical structure of factorization module at $x$ for the factorization category \[ \Rep(\check{G}) \otimes \DD(\Gr_G)^{\mathfrak{L}N^-,\psi} \cong \Rep(\check{G}) \otimes \Rep(\check{G}), \] compatible with the functor \[ \oblv_{\mathfrak{L}^+G} : \Rep(\check{G}) \otimes \DD(\Gr_G)_x^{\mathfrak{L}N^-,\psi} \longrightarrow \Rep(\check{G}) \otimes \DD(\mathfrak{L}G)_x^{\mathfrak{L}N^-,\psi,\acc}. \] The previous proposition implies that (\ref{whitconv}) induces a morphism \[ \DD(\mathfrak{L}^+G \backslash \mathfrak{L}G)_x \longrightarrow \sO_{\check{G}}\modfact(\Rep(\check{G}) \otimes \DD(\mathfrak{L}G)^{\mathfrak{L}N^-,\psi,\acc})_x, \] equivariant for the right action of $(\mathfrak{L}G)_x$.

If $\sC$ is a left $\DD(\mathfrak{L}G)_x$-module, we obtain a functor \[ \sC^{\mathfrak{L}^+G} \tilde{\longrightarrow} \DD(\mathfrak{L}^+G \backslash \mathfrak{L}G)_x \underset{\DD(\mathfrak{L}G)_x}{\otimes} \sC \longrightarrow \sO_{\check{G}}\modfact(\Rep(\check{G}) \otimes\DD(\mathfrak{L}G)^{\mathfrak{L}N^-,\psi,\acc})_x \underset{\DD(\mathfrak{L}G)_x}{\otimes} \sC. \] Note that since the $\DD(\mathfrak{L}G)_x$-module and factorization $\Rep(\check{G})$-module structures on the category \[ \DD(\mathfrak{L}G)_x^{\mathfrak{L}N^-,\psi,\acc} \] commute, the category \[ \sC^{\mathfrak{L}N^-,\psi,\acc} \cong \DD(\mathfrak{L}G)_x^{\mathfrak{L}N^-,\psi,\acc} \underset{\DD(\mathfrak{L}G)_x}{\otimes} \sC \] automatically inherits a factorization $\Rep(\check{G})$-module structure, and hence we can identify \[ \sO_{\check{G}}\modfact(\Rep(\check{G}) \otimes\DD(\mathfrak{L}G)^{\mathfrak{L}N^-,\psi,\acc})_x \underset{\DD(\mathfrak{L}G)_x}{\otimes} \sC \tilde{\longrightarrow} \sO_{\check{G}}\modfact(\Rep(\check{G}) \otimes \sC^{\mathfrak{L}N^-,\psi,\acc})_x. \] We will show that the resulting functor
\begin{equation}
\label{sphfromwhit}
\sC^{\mathfrak{L}^+G} \longrightarrow \sO_{\check{G}}\modfact(\Rep(\check{G}) \otimes \sC^{\mathfrak{L}N^-,\psi,\acc})_x.
\end{equation}
is ``nearly'' an equivalence.

\subsection{Tempered geometric Satake equivalence} Given a $\DD(\mathfrak{L}G)_x$-module category $\sC$, we define the \emph{anti-tempered} subcategory of $\sC^{\mathfrak{L}^+G}$ by \[ \sC^{\mathfrak{L}^+G,\antitemp} := \ker(\sC^{\mathfrak{L}^+G} \xrightarrow{\Av^{\mathfrak{L}N^-,\psi}_!} \sC^{\mathfrak{L}N^-,\psi}). \] The \emph{tempered} subcategory $\sC^{\mathfrak{L}^+G,\temp}$ is defined to be the right complement of $\sC^{\mathfrak{L}^+G,\antitemp}$. The inclusion \[ \sC^{\mathfrak{L}^+G,\temp} \longrightarrow \sC^{\mathfrak{L}^+G} \] admits a right adjoint.

\begin{proposition}

\label{tempsat}

The composite functor \[ \DD(\Gr_G)_x^{\mathfrak{L}^+G,\temp} \longrightarrow \DD(\Gr_G)_x^{\mathfrak{L}^+G} = \DD(\sH_G)_x \xrightarrow{(\ref{presatake})} \sO_{\check{G}}\modfact(\Rep(\check{G}) \otimes \Rep(\check{G}))_x \] is an equivalence.

\end{proposition}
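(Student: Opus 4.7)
The plan is to combine the factorizable derived Satake equivalence (Theorem \ref{mainthm}) at $x$ with the monad computations of Propositions \ref{geommonad} and \ref{specmonad}. First I would observe that (\ref{presatake}) annihilates the anti-tempered subcategory: by construction it factors through the morphism (\ref{sphtobimod}), which encodes the action of $\DD(\sH_G)_x$ on the Whittaker category $\DD(\Gr_G)^{\mathfrak{L}N^-,\psi}_x$ via the Casselman--Shalika equivalence (Theorem \ref{csformula}), so objects killed by $\Av_!^{\mathfrak{L}N^-,\psi}$ act trivially and are sent to zero. Since $\sC^{\mathfrak{L}^+G,\temp}$ is the right complement of $\sC^{\mathfrak{L}^+G,\antitemp}$, it follows that the composite in the statement can equivalently be described as the tempered-restriction of (\ref{presatake}), and it suffices to prove that this restriction is an equivalence.

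Next I would reduce the equivalence to an isomorphism of monads on $\Rep(\check G)$. The functor $F : \DD(\Gr_G)_x^{\mathfrak{L}^+G,\temp} \to \sO_{\check G}\modfact(\Rep(\check G)\otimes\Rep(\check G))_x$ is $\Rep(\check G)$-linear (via $\Sat_G^{\nv}$ on the source and the first tensor factor on the target), sends $\delta_{\mathfrak{L}^+G}$ to $\Vac_{\sO_{\check G}}$, and both source and target are generated under the $\Rep(\check G)$-action by these unit objects: for the target, this is the generation by the vacuum factorization module used throughout \S\ref{s:factmodpt} and \cite{R3}; for the source, it follows from Theorem \ref{csformula} and the classical derived Satake description of the tempered subcategory. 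Both adjunctions are monadic, so $F$ is an equivalence if and only if it induces an isomorphism of the corresponding monads on $\Rep(\check G)$.

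The monad on the source is $\Sym(\check{\mathfrak g}[-2])$ by Proposition \ref{geommonad}, and the monad on the target is again $\Sym(\check{\mathfrak g}[-2])$ by Corollary \ref{specpsmonad} specialized to $P = G$ (equivalently, by Proposition \ref{specmonad}). The main obstacle is verifying that the morphism of monads induced by $F$ is an isomorphism, rather than merely an abstract comparison of the two presentations as algebras. For this I would invoke Theorem \ref{mainthm}: the square relating $\Sat_{G,x}$ to $F$ via de-renormalization is commutative by construction, so the morphism of monads induced by $F$ is controlled by the morphism of monads induced by $\Sat_{G,x}$, which is shown to be an isomorphism in the proof of Theorem \ref{mainthm} via reduction to $\check G$-invariants $\Sym(\check{\mathfrak g}[-2])^{\check G} \simeq \CC_{\dR}(\bB G)$ and then to the torus case through the Borel. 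The equivalence of $F$ then follows.
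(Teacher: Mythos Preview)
Your first step is fine and in fact gives a different (valid) argument from the paper's: you deduce that \eqref{presatake} annihilates anti-tempered objects from the Whittaker description of \eqref{sphtobimod}, whereas the paper instead invokes Corollary~2.3.7 of \cite{dario-ramanujan} to identify the anti-tempered subcategory with the infinitely connective objects, and then uses t-exactness of \eqref{presatake} together with left completeness of the target to conclude.

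The gap is in your monadicity step. Propositions~\ref{geommonad} and~\ref{specmonad} compute the monads for the \emph{renormalized} categories $\Sph_{G,x}$ and $\Sph^{\spec}_{\check G,x}$, not for $\DD(\sH_G)_x^{\temp}$ and $\sO_{\check G}\modfact(\Rep(\check G\times\check G))_x$. You never justify why the monadic adjunction $\Rep(\check G)\rightleftarrows \DD(\sH_G)_x^{\temp}$ exists with the same monad $\Sym(\check{\mathfrak g}[-2])$; the phrase ``classical derived Satake description of the tempered subcategory'' is doing exactly the work that this proposition is meant to supply, so the reference is circular in this paper's internal logic. Likewise, your argument that the monad morphism induced by $F$ is an isomorphism relies on matching it with the one induced by $\Sat_{G,x}$ through de-renormalization, but this comparison requires knowing that the right adjoints are compatible with the passage $\Sph_{G,x}\to\DD(\sH_G)_x^{\temp}$, which is again the missing ingredient.

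The paper's proof avoids all of this by a left-completion argument: once one knows (from \cite{dario-ramanujan}) that anti-tempered $=$ infinitely connective, the quotient $\DD(\sH_G)_x\to\DD(\sH_G)_x^{\temp}$ is identified with left completion; since $\sO_{\check G}\modfact(\Rep(\check G\times\check G))_x\cong\QCoh((\{0\}\times_{\check{\mathfrak g}}\{0\})/\check G)$ is already left complete, and $\Sat_{G,x}$ is a t-exact equivalence by Theorem~\ref{mainthm}, the functor in question is simply the left completion of $\Sat_{G,x}$, hence an equivalence. This is both shorter and sidesteps the need to separately establish monadicity on the non-renormalized side.
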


\begin{proof}

Recall that (\ref{presatake}) is t-exact, and that by Theorem \ref{mainthm} it becomes an equivalence after renormalization. By Corollary 2.3.7 of \cite{dario-ramanujan}, the subcategory \[ \DD(\Gr_G)_x^{\mathfrak{L}^+G,\antitemp} \subset \DD(\sH_G)_x \] consists precisely of the infinitely connective objects, i.e., those with vanishing cohomology objects in all degrees. It follows that the functor \[ \DD(\sH_G)_x \longrightarrow \DD(\Gr_G)_x^{\mathfrak{L}^+G,\temp} \] right adjoint to the inclusion realizes the latter as the left completion of the t-structure on the former. On the other hand, the proof of Proposition \ref{specpsmonad} shows that we have a t-exact equivalence \[ \sO_{\check{G}}\modfact(\Rep(\check{G}) \otimes \Rep(\check{G}))_x \cong \QCoh((\{ 0 \} \underset{\check{\mathfrak{g}}}{\times} \{ 0 \})/\check{G}), \] and in particular the t-structure on the source is left complete. Thus \[ \Sph_{\check{G},x}^{\spec} = \sO_{\check{G}}\modfact(\Rep(\check{G}) \otimes \Rep(\check{G}))_x^{\ren} \longrightarrow \sO_{\check{G}}\modfact(\Rep(\check{G}) \otimes \Rep(\check{G}))_x \] realizes the target as the left completion of the source. It follows that the functor in the proposition is obtained by applying left completion to the equivalence of Theorem \ref{mainthm}, hence is an equivalence.

\end{proof}

\subsection{A formula for tempered $\fL^+G$-invariants} We will need a couple of lemmas.

\begin{lemma}

\label{whitaccgen}

The convolution functor \[ \DD(\Gr_G)^{\mathfrak{L}N^-,\psi}_x \underset{\DD(\sH_G)_x}{\otimes} \DD(\mathfrak{L}^+G \backslash \mathfrak{L}G)_x \longrightarrow \DD(\mathfrak{L}G)_x^{\mathfrak{L}N^-,\psi} \] is an equivalence onto \[ \DD(\mathfrak{L}G)_x^{\mathfrak{L}N^-,\psi,\acc}. \]

\end{lemma}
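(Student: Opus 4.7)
The plan is to identify both sides of the claimed equivalence with categories of modules over the same monad on $\DD(\fL^+G \backslash \fL G)_x$, namely left convolution with $K := \Av^{\fL^+G}_*(\Whit_{\vac}) \in \DD(\sH_G)_x$, where $\Whit_{\vac}$ denotes the unit object of $\DD(\Gr_G)^{\fL N^-,\psi}_x$.

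First I would verify a cyclicity statement: the object $\Whit_{\vac}$ generates $\DD(\Gr_G)^{\fL N^-,\psi}_x$ as a right $\DD(\sH_G)_x$-module. This follows directly from Theorem \ref{csformula}: under the Casselman-Shalika equivalence $\DD(\Gr_G)^{\fL N^-,\psi}_x \simeq \Rep(\check G)$, the Whittaker vacuum corresponds to the unit, the right action of $\DD(\sH_G)_x$ factors through the monoidal functor $\Sat_G^{\nv}$, and the unit of $\Rep(\check G)$ is a cyclic generator for the (right) action by $\Rep(\check G)$ on itself. Cyclicity then yields both well-definedness of the balanced convolution functor and the fact that for arbitrary $\fM \in \Whit$ and $\fN \in \Gr^+ := \DD(\fL^+G \backslash \fL G)_x$, the convolution $\fM \star \fN$ is a colimit of objects $\Whit_{\vac} \star \fH_i \star \fN = \Av^{\fL N^-,\psi}_!(\fH_i \star \fN)$. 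This immediately shows the functor lands in, and is essentially surjective onto, the accessible subcategory $\DD(\fL G)^{\fL N^-,\psi,\acc}_x$.

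The core of the proof is fully faithfulness, which I would establish by comparing two monadic adjunctions on $\Gr^+$. On the target side, the adjunction
\[ \Av^{\fL N^-,\psi}_! : \Gr^+ \rightleftarrows \DD(\fL G)^{\fL N^-,\psi,\acc}_x : \Av^{\fL^+G}_* \]
is monadic: the left adjoint has generating essential image by definition of the accessible subcategory, and the right adjoint is consequently conservative on this subcategory. The induced monad is $\fN \mapsto K \star \fN$, using that right convolution by $\fN$ commutes with the left $\fL^+G$-averaging functor $\Av^{\fL^+G}_*$. On the source side, the adjunction
\[ \Gr^+ \rightleftarrows \Whit \underset{\DD(\sH_G)_x}{\otimes} \Gr^+, \qquad \fN \mapsto \Whit_{\vac} \otimes \fN \]
is monadic by the cyclicity observation above (the left adjoint has generating essential image, and the resulting monad encodes $\uEnd_{\Whit,\DD(\sH_G)_x}(\Whit_{\vac})$ acting by convolution). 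Then one identifies the algebra $\uEnd_{\Whit,\DD(\sH_G)_x}(\Whit_{\vac})$ in $\DD(\sH_G)_x$ with $K$: this is the adjoint formulation of the computation $\Av^{\fL^+G}_* \Av^{\fL N^-,\psi}_!(\delta_{\fL^+G}) = K$. Finally, the convolution functor intertwines the two left adjoints ($\Whit_{\vac} \otimes \fN \mapsto \Whit_{\vac} \star \fN = \Av^{\fL N^-,\psi}_!(\fN)$), so it is a morphism of monadic adjunctions inducing an isomorphism of monads, hence an equivalence.

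The main obstacle will be verifying monadicity on the tensor product side and making the identification $\uEnd_{\Whit,\DD(\sH_G)_x}(\Whit_{\vac}) \cong K$ of algebras in $\DD(\sH_G)_x$ rigorous. The subtlety is that $\Whit_{\vac}$ need not be compact in $\Whit$ in the absolute sense, so one has to use cyclicity and the existence of the continuous right adjoint $\Av^{\fL^+G}_*$ to define the relevant inner endomorphism object. Once these bookkeeping points are dispatched, the equivalence follows formally from the Barr--Beck theorem.
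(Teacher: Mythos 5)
Your approach is genuinely different from the paper's. The paper dispatches fully faithfulness in one line: it is a formal consequence of ind-properness of the spherical Hecke stack $\sH_{G,x}$, invoking Theorem 3.1.5 of \cite{CD} (a general statement that relative tensor products over $\DD(\sH_G)_x$ embed fully faithfully into convolution categories when the Hecke stack is ind-proper). Essential surjectivity onto the accessible subcategory is then obtained exactly as you propose for your ``cyclicity'' step, except phrased without Casselman--Shalika: one simply tensors $\Av^{\fL N^-,\psi}_!\colon \DD(\sH_G)_x \to \DD(\Gr_G)^{\fL N^-,\psi}_x$ with $\DD(\fL^+G\backslash\fL G)_x$ over $\DD(\sH_G)_x$ and observes that the composite with the convolution functor is $\Av^{\fL N^-,\psi}_!$ on $\DD(\fL^+G\backslash\fL G)_x$, whose image generates the accessible subcategory by definition. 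Your cyclicity argument via Theorem \ref{csformula} gets the same generation statement by a slight detour.

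The gap in your proposal is the monadicity claim for the adjunction
\[
\Gr^+ \rightleftarrows \DD(\Gr_G)^{\fL N^-,\psi}_x \underset{\DD(\sH_G)_x}{\otimes} \Gr^+.
\]
You assert this is monadic ``by the cyclicity observation,'' i.e.\ because the left adjoint $\fN \mapsto \Whit_{\vac}\otimes\fN$ has generating essential image. But generating essential image does not imply that the right adjoint is conservative, which is what Barr--Beck actually requires. You flag the existence and continuity of the right adjoint as a ``subtlety,'' and those do go through (via $\Av^{\fL^+G}_*\otimes \id$ and $\DD(\sH_G)_x$-linearity), but the real issue is conservativity of $\Av^{\fL^+G}_*\otimes\id$ on the relative tensor product. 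That statement is equivalent to the fully faithfulness of the comparison functor you are trying to prove, so assuming it here is circular. The ind-properness theorem the paper cites is precisely the nontrivial input that resolves this: it gives fully faithfulness of the convolution functor directly, which in turn forces conservativity of the right adjoint on the tensor product side. Your Barr--Beck reformulation does not avoid this input; it only relocates it into a monadicity check that cannot be completed by elementary means. If you want to pursue the two-monad comparison, you would still need to cite or reprove the ind-properness result to get conservativity, at which point the paper's more direct route is shorter.
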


\begin{proof}

This functor is fully faithful because $\sH_{G,x}$ is ind-proper (see Theorem 3.1.5 of \cite{CD}), so it suffices to show that the image generates the target under colimits. Tensor the functor \[ \Av^{\mathfrak{L}N^-,\psi}_! : \DD(\sH_G)_x \longrightarrow \DD(\Gr_G)^{\mathfrak{L}N^-,\psi}_x \] with $\DD(\mathfrak{L}^+G \backslash \mathfrak{L}G)_x$ over $\DD(\sH_G)_x$ to obtain a functor \[ \DD(\mathfrak{L}^+G \backslash \mathfrak{L}G)_x \longrightarrow \DD(\Gr_G)^{\mathfrak{L}N^-,\psi}_x \underset{\DD(\sH_G)_x}{\otimes} \DD(\mathfrak{L}^+G \backslash \mathfrak{L}G)_x. \] Its composition with the functor in question identifies with \[ \Av^{\mathfrak{L}N^-,\psi}_! : \DD(\mathfrak{L}^+G \backslash \mathfrak{L}G)_x \longrightarrow \DD(\mathfrak{L}G)_x^{\mathfrak{L}N^-,\psi}, \] whose image generates the accessible subcategory by definition.

\end{proof}

\begin{lemma}

The functor \[ \DD(\Gr_G)_x^{\mathfrak{L}^+G,\temp} \underset{\DD(\sH_G)_x}{\otimes} \DD(\mathfrak{L}^+G \backslash\mathfrak{L}G)_x \longrightarrow \DD(\Gr_G)_x^{\mathfrak{L}^+G} \underset{\DD(\sH_G)_x}{\otimes} \DD(\mathfrak{L}^+G \backslash\mathfrak{L}G)_x = \DD(\mathfrak{L}G)_x^{\mathfrak{L}^+G} \] is an equivalence onto $\DD(\mathfrak{L}G)_x^{\mathfrak{L}^+G,\temp}$.

\end{lemma}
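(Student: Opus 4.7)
The plan is to identify both sides as coreflective subcategories of $\DD(\mathfrak{L}G)_x^{\mathfrak{L}^+G}$ using compatible cofiber sequences in $\DGCat$, and to deduce the claim from Lemma \ref{whitaccgen}.

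First I would show the functor lands in $\DD(\mathfrak{L}G)_x^{\mathfrak{L}^+G, \temp}$. Because $\Av^{\mathfrak{L}N^-,\psi}_!$ averages on the left factor of $\mathfrak{L}G$, it commutes with the right convolution action of $\DD(\sH_G)_x$; in particular it is $\DD(\sH_G)_x$-linear, so its kernel $\DD(\sH_G)_x^{\antitemp}$ is a sub-$\DD(\sH_G)_x$-module, and so is its right complement $\DD(\sH_G)_x^{\temp}$ (using ind-properness of $\sH_{G,x}$ to pass to adjoints of convolution operators). Full faithfulness of the functor is then immediate by tensoring the fully faithful inclusion $\DD(\sH_G)_x^{\temp} \hookrightarrow \DD(\sH_G)_x$, whose unit is an isomorphism, over $\DD(\sH_G)_x$ with $\DD(\mathfrak{L}^+G \backslash \mathfrak{L}G)_x$.

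For essential surjectivity onto the tempered subcategory, I would consider the cofiber sequence $\DD(\sH_G)_x^{\antitemp} \to \DD(\sH_G)_x \to \DD(\sH_G)_x^{\temp}$ in $\DGCat$. Tensoring over $\DD(\sH_G)_x$ with $\DD(\mathfrak{L}^+G \backslash \mathfrak{L}G)_x$ produces a cofiber sequence whose middle term is $\DD(\mathfrak{L}G)_x^{\mathfrak{L}^+G}$. Comparing this with the tautological cofiber sequence $\DD(\mathfrak{L}G)_x^{\mathfrak{L}^+G,\antitemp} \to \DD(\mathfrak{L}G)_x^{\mathfrak{L}^+G} \to \DD(\mathfrak{L}G)_x^{\mathfrak{L}^+G,\temp}$ yields a map of cofiber sequences which is the identity in the middle. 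The claim therefore reduces to showing the induced map on anti-tempered parts,
\[ \DD(\sH_G)_x^{\antitemp} \underset{\DD(\sH_G)_x}{\otimes} \DD(\mathfrak{L}^+G \backslash \mathfrak{L}G)_x \longrightarrow \DD(\mathfrak{L}G)_x^{\mathfrak{L}^+G,\antitemp}, \]
is an equivalence. This I would deduce from Lemma \ref{whitaccgen}: the equivalence there identifies $\Av^{\mathfrak{L}N^-,\psi}_!$ on $\DD(\mathfrak{L}G)_x^{\mathfrak{L}^+G}$ with the tensor product over $\DD(\sH_G)_x$ of the spherical averaging map with $\id_{\DD(\mathfrak{L}^+G \backslash \mathfrak{L}G)_x}$, and matching kernels then gives the desired identification.

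The main obstacle will be the last step. The spherical averaging $\DD(\sH_G)_x \to \DD(\Gr_G)^{\mathfrak{L}N^-,\psi}_x$ is not itself a localization, since its quotient $\DD(\sH_G)_x^{\temp} \cong \sO_{\check G}\modfact(\Rep(\check G \times \check G))_x$ maps only essentially surjectively (rather than fully faithfully) onto the Whittaker target $\Rep(\check G)$; consequently, identifying kernels after tensoring with $\DD(\mathfrak{L}^+G \backslash \mathfrak{L}G)_x$ is not automatic. Overcoming this should require showing that, although the quotient-to-target map is not an equivalence on its own, it becomes so after tensoring with $\DD(\mathfrak{L}^+G \backslash \mathfrak{L}G)_x$ over $\DD(\sH_G)_x$, which in turn should follow from the ind-properness of $\sH_{G,x}$ together with the accessibility generation condition built into Lemma \ref{whitaccgen}.
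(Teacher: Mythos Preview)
Your overall plan matches the paper's proof: establish full faithfulness from the $\DD(\sH_G)_x$-linearity of the right adjoint to the tempered inclusion (the paper phrases this as ``$\DD(\sH_G)_x$ is compactly generated by left and right dualizable objects,'' which amounts to your ind-properness observation), and then identify the complementary anti-tempered pieces via the commutative square supplied by Lemma~\ref{whitaccgen}. The paper's argument is a terse version of your cofiber-sequence comparison; it simply asserts that the kernel of the tensored right adjoint equals $\DD(\mathfrak{L}G)_x^{\mathfrak{L}^+G,\antitemp}$ ``by the commutative square.''

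You are right to flag the kernel-matching step: one needs $\ker(\Av_!^{\mathfrak{L}N^-,\psi}) \otimes_{\DD(\sH_G)_x} M = \ker(\Av_!^{\mathfrak{L}N^-,\psi} \otimes_{\DD(\sH_G)_x} \id_M)$, and this is not formal because $\Av_!^{\mathfrak{L}N^-,\psi}$ is not a localization. However, your proposed resolution is incorrect. The induced map $\DD(\sH_G)_x^{\temp} \to \Rep(\check G)$ does \emph{not} become an equivalence after tensoring with $M = \DD(\mathfrak{L}^+G \backslash \mathfrak{L}G)_x$: by the very lemma under discussion the source becomes $\DD(\mathfrak{L}G)_x^{\mathfrak{L}^+G,\temp}$, while by Lemma~\ref{whitaccgen} the target becomes $\DD(\mathfrak{L}G)_x^{\mathfrak{L}N^-,\psi,\acc}$, and Proposition~\ref{tempsatuniv} (which this lemma feeds into) identifies the former with a nontrivial category of factorization $\sO_{\check G}$-modules in the latter, not with the latter itself. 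What you actually need is only that this map remains \emph{conservative} after tensoring, equivalently that $(-)\otimes_{\DD(\sH_G)_x} M$ preserves the kernel of $\Av_!^{\mathfrak{L}N^-,\psi}$. This follows from the same input already invoked for full faithfulness: since $\DD(\sH_G)_x$ is rigid (compactly generated by two-sided dualizable objects with compact unit), every $\DD(\sH_G)_x$-module is dualizable, so $(-)\otimes_{\DD(\sH_G)_x} M \simeq \Fun_{\DD(\sH_G)_x}(M^\vee,-)$ preserves limits and hence kernels. The paper leaves this implicit, but it is what makes the commutative-square argument go through.
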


\begin{proof}

Recall that the inclusion \[ \DD(\Gr_G)_x^{\mathfrak{L}^+G,\temp} \longrightarrow \DD(\Gr_G)_x^{\mathfrak{L}^+G} \] admits a right adjoint, which is automatically $\DD(\sH_G)_x$-linear because $\DD(\sH_G)_x$ is compactly generated by left and right dualizable objects. It follows that the functor in question is fully faithful.

To see that the image of this functor is $\DD(\mathfrak{L}G)_x^{\mathfrak{L}^+G,\temp}$, it suffices to show that that the kernel of the right adjoint \[ \DD(\mathfrak{L}G)_x^{\mathfrak{L}^+G} \longrightarrow \DD(\Gr_G)_x^{\mathfrak{L}^+G,\temp} \underset{\DD(\sH_G)_x}{\otimes} \DD(\mathfrak{L}^+G \backslash\mathfrak{L}G)_x \] is equal to $\DD(\mathfrak{L}G)_x^{\mathfrak{L}^+G,\antitemp}$. This follows from the commutative square \[
\begin{tikzcd}
\DD(\Gr_G)_x^{\mathfrak{L}^+G} \underset{\DD(\sH_G)_x}{\otimes} \DD(\mathfrak{L}^+G \backslash\mathfrak{L}G)_x \arrow{r}{\sim} \arrow{d}{\Av_!^{\mathfrak{L}N^-,\psi} \otimes \id} & \DD(\mathfrak{L}^+G \backslash\mathfrak{L}G)_x \arrow{d}{\Av_!^{\mathfrak{L}N^-,\psi}} \\
\DD(\Gr_G)_x^{\mathfrak{L}N^-,\psi} \underset{\DD(\sH_G)_x}{\otimes} \DD(\mathfrak{L}^+G \backslash\mathfrak{L}G)_x \arrow{r}{\sim} & \DD(\mathfrak{L}G)_x^{\mathfrak{L}N^-,\psi,\acc}.
\end{tikzcd} \]

\end{proof}

Restrict (\ref{sphfromwhit}) to the tempered subcategory to obtain a functor
\begin{equation}
\label{tempsphfromwhit}
\sC^{\mathfrak{L}^+G,\temp} \longrightarrow \sO_{\check{G}}\modfact(\Rep(\check{G}) \otimes \sC^{\mathfrak{L}N^-,\psi,\acc})_x.
\end{equation}

\begin{proposition}

\label{tempsatuniv}

The functor (\ref{tempsphfromwhit}) is an equivalence for any $\DD(\mathfrak{L}G)_x$-module category $\sC$.

\end{proposition}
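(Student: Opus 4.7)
The strategy is to exhibit both sides of (\ref{tempsphfromwhit}) as tensor products of a ``universal'' object over $\DD(\sH_G)_x$ against $\sC^{\fL^+G}$, and then invoke Proposition \ref{tempsat} in its tensored form. The key point to exploit is that the construction of (\ref{sphfromwhit}), and hence of (\ref{tempsphfromwhit}), is $\DD(\sH_G)_x$-linear: the morphism (\ref{whitconv}) is built out of convolution and the Casselman--Shalika equivalence, both of which are $\DD(\sH_G)_x$-linear (indeed, the former is $\fL G$-equivariant). Consequently, both sides of (\ref{tempsphfromwhit}) depend functorially on $\sC$ in a way that preserves colimits of $\DD(\fL G)_x$-module categories, and it suffices to verify the statement after presenting each side as a tensor product over $\DD(\sH_G)_x$.

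For the left-hand side, I would use the second lemma preceding the proposition, which gives the identification $\DD(\fL G)_x^{\fL^+G,\temp} \cong \DD(\Gr_G)_x^{\fL^+G,\temp} \otimes_{\DD(\sH_G)_x} \DD(\fL^+G\backslash \fL G)_x$. Combined with the standard description of invariants as $\sC^{\fL^+G} \cong \DD(\fL^+G\backslash \fL G)_x \otimes_{\DD(\fL G)_x} \sC$, this yields
\[
\sC^{\fL^+G,\temp} \;\cong\; \DD(\Gr_G)_x^{\fL^+G,\temp} \underset{\DD(\sH_G)_x}{\otimes} \sC^{\fL^+G}.
\]
For the right-hand side, I would apply $\Rep(\check G) \otimes (-)$ to Lemma \ref{whitaccgen}, then use Theorem \ref{csformula}, to obtain
\[
\Rep(\check G) \otimes \sC^{\fL N^-,\psi,\acc} \;\cong\; \Rep(\check G) \otimes \Rep(\check G) \underset{\DD(\sH_G)_x}{\otimes} \sC^{\fL^+G},
\]
and then commute the formation of $\sO_{\check G}$-factorization modules at $x$ past this tensor product to get
\[
\sO_{\check G}\modfact(\Rep(\check G) \otimes \sC^{\fL N^-,\psi,\acc})_x \;\cong\; \sO_{\check G}\modfact(\Rep(\check G) \otimes \Rep(\check G))_x \underset{\DD(\sH_G)_x}{\otimes} \sC^{\fL^+G}.
\]

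Once both sides are expressed this way, one checks by unwinding definitions that the functor (\ref{tempsphfromwhit}) is precisely $(\text{equivalence of Prop.~\ref{tempsat}}) \otimes_{\DD(\sH_G)_x} \id_{\sC^{\fL^+G}}$, which completes the proof. The main obstacle is the commutation of $\sO_{\check G}\modfact(-)_x$ with the tensor product over $\DD(\sH_G)_x$; this is the step that genuinely uses structural properties of the spectral side. The way I would handle it is to identify $\sO_{\check G}\modfact(\Rep(\check G) \otimes \Rep(\check G))_x$ with the relative inner endomorphisms $\uEnd_{\Rep(\check G)\modfact_x}(\Rep(\check G))$ (as in \cite{R3} \S 9.22 and used in \S \ref{s:torus} above), and similarly $\sO_{\check G}\modfact(\Rep(\check G) \otimes \mathcal{E})_x \cong \uHom_{\Rep(\check G)\modfact_x}(\Rep(\check G),\mathcal{E})$ for any factorization $\Rep(\check G)$-module category $\mathcal{E}$ at $x$. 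Because $\Rep(\check G)$ is rigid (Proposition \ref{ulagenprop}) and is compact as a factorization $\Rep(\check G)$-module over itself at $x$, this inner-Hom commutes with colimits in $\mathcal{E}$, and in particular with the relevant tensor product over $\DD(\sH_G)_x \cong \uEnd_{\Rep(\check G)\modfact_x}(\Rep(\check G))^{\mathrm{op}}$ acting on the factorization module category $\Rep(\check G) \otimes \DD(\Gr_G)_x^{\fL N^-,\psi} \cong \Rep(\check G) \otimes \Rep(\check G)$.
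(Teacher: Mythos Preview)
Your proposal is correct and follows essentially the same route as the paper. The paper organizes the reduction in two steps---first tensoring over $\DD(\fL G)_x$ to reduce to the universal case $\sC = \DD(\fL G)_x$, then tensoring over $\DD(\sH_G)_x$ with $\DD(\fL^+G\backslash\fL G)_x$ to reduce to $\sC = \DD(\Gr_G)_x$---whereas you collapse these into a single step by writing both sides as $(-)\otimes_{\DD(\sH_G)_x}\sC^{\fL^+G}$; the ingredients (Lemma \ref{whitaccgen}, the second lemma, Theorem \ref{csformula}, and Proposition \ref{tempsat}) are identical. One small point: the paper justifies the commutation of $\sO_{\check G}\modfact(-)_x$ with the tensor product simply by noting that the $\DD(\sH_G)_x$-action and the factorization $\Rep(\check G)$-module structure on $\DD(\Gr_G)^{\fL N^-,\psi}_x$ commute, rather than invoking the inner-Hom description; also, your identification ``$\DD(\sH_G)_x \cong \uEnd_{\Rep(\check G)\modfact_x}(\Rep(\check G))^{\mathrm{op}}$'' should really read ``acts through'' rather than ``is isomorphic to,'' since the map (\ref{presatake}) is only an equivalence after passing to the tempered quotient.
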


\begin{proof}

First, we can immediately reduce to the universal case $\sC = \DD(\mathfrak{L}G)_x$. Namely, we have \[ \sC^{\mathfrak{L}^+G,\temp} \cong \DD(\mathfrak{L}G)_x^{\mathfrak{L}^+G,\temp} \underset{\DD(\mathfrak{L}G)_x}{\otimes} \sC, \] and the functor in question is given by tensoring
\begin{equation}
\label{sphfromwhituniv}
\DD(\mathfrak{L}G)_x^{\mathfrak{L}^+G,\temp} \longrightarrow \sO_{\check{G}}\modfact(\Rep(\check{G}) \otimes \DD(\mathfrak{L}G)^{\mathfrak{L}N^-,\psi,\acc})_x
\end{equation}
with $\sC$ over $\DD(\mathfrak{L}G)_x$.

Next, we reduce to the case $\sC = \DD(\Gr_G)_x$. First, observe that
\begin{align*}
\sO_{\check{G}}\modfact(\Rep(\check{G}) \otimes & \DD(\Gr_G)^{\mathfrak{L}N^-,\psi})_x \underset{\DD(\sH_G)_x}{\otimes} \DD(\mathfrak{L}^+G \backslash\mathfrak{L}G)_x \\
&\tilde{\longrightarrow} \sO_{\check{G}}\modfact(\Rep(\check{G}) \otimes \DD(\Gr_G)^{\mathfrak{L}N^-,\psi} \underset{\DD(\sH_G)}{\otimes} \DD(\mathfrak{L}^+G \backslash\mathfrak{L}G))_x \\
&\tilde{\longrightarrow} \sO_{\check{G}}\modfact(\Rep(\check{G}) \otimes \DD(\mathfrak{L}G)^{\mathfrak{L}N^-,\psi,\acc})_x,
\end{align*}
where the first equivalence is because the right $\DD(\sH_G)_x$-module and factorization $\Rep(\check{G})$-module structures on \[ \DD(\Gr_G)^{\mathfrak{L}N^-,\psi}_x \] commute, and the second equivalence uses Lemma \ref{whitaccgen}. On the other hand, we have \[ \DD(\Gr_G)_x^{\mathfrak{L}^+G,\temp} \underset{\DD(\sH_G)_x}{\otimes} \DD(\mathfrak{L}^+G \backslash\mathfrak{L}G)_x \tilde{\longrightarrow} \DD(\mathfrak{L}G)_x^{\mathfrak{L}^+G,\temp} \] by the previous lemma. Thus if we can establish that (\ref{tempsphfromwhit}) is an equivalence for $\sC = \DD(\Gr_G)_x$, then the case $\sC = \DD(\mathfrak{L}G)_x$ will follow by tensoring with $\DD(\mathfrak{L}^+G \backslash \mathfrak{L}G)_x$ over $\DD(\sH_G)_x$.

To see that \[ \DD(\Gr_G)_x^{\mathfrak{L}^+G,\temp} \longrightarrow \sO_{\check{G}}\modfact(\Rep(\check{G}) \otimes \DD(\Gr_G)^{\mathfrak{L}N^-,\psi,\acc})_x \] is an equivalence, observe that \[ \DD(\Gr_G)^{\mathfrak{L}N^-,\psi,\acc} = \DD(\Gr_G)^{\mathfrak{L}N^-,\psi,\acc} \tilde{\longrightarrow} \Rep(\check{G}) \] by Theorem \ref{csformula}. The resulting functor \[ \DD(\Gr_G)_x^{\mathfrak{L}^+G,\temp} \longrightarrow \sO_{\check{G}}\modfact(\Rep(\check{G}) \otimes \Rep(\check{G}))_x \] is an equivalence by Proposition \ref{tempsat}.

\end{proof}

\subsection{Proof of Theorem \ref{mainthmps}} As shown in \S \ref{pointred}, it is enough to prove that $\Sat_{G,P,x}$ is an equivalence for a fixed $x \in X(k)$.

Apply Proposition \ref{tempsatuniv} in the case $\sC = (\DD(\mathfrak{L}G)_{\mathfrak{L}N_P\mathfrak{L}^+M})_x$, which yields the equivalence \[ (\DD(\mathfrak{L}G)_{\mathfrak{L}N_P\mathfrak{L}^+M})_x^{\mathfrak{L}^+G,\temp} \tilde{\longrightarrow} \sO_{\check{G}}\modfact(\Rep(\check{G}) \otimes (\DD(\mathfrak{L}G)_{\mathfrak{L}N_P\mathfrak{L}^+M})^{\mathfrak{L}N^-,\psi,\acc})_x. \] Theorem \ref{semiinfresequiv} supplies an isomorphism \[ (\DD(\mathfrak{L}G)_{\mathfrak{L}N_P\mathfrak{L}^+M})^{\mathfrak{L}N^-,\psi,\acc} \tilde{\longrightarrow} \Upsilon(\check{\mathfrak{n}}_{\check{P}})\mod_0^{\fact}(\Rep(\check{M})) \] in $\FactCat$, and the commutative triangle in the proof of Theorem \ref{psfunctorunit} yields a commutative triangle \[
\begin{tikzcd}
& \Rep(\check{G}) \arrow{dl} \arrow{dr}{\Upsilon(\check{\mathfrak{n}}_{\check{P}},-)} & \\
(\DD(\mathfrak{L}G)_{\mathfrak{L}N_P\mathfrak{L}^+M})^{\mathfrak{L}N^-,\psi,\acc} \arrow{rr}{\sim} & & \Upsilon(\check{\mathfrak{n}}_{\check{P}})\mod_0^{\fact}(\Rep(\check{M})).
\end{tikzcd} \]
Here the left diagonal morphism is the composite \[ \Rep(\check{G}) \longrightarrow \DD(\mathfrak{L}^+G \backslash \mathfrak{L}G)_{\mathfrak{L}N_P\mathfrak{L}^+M} \xrightarrow{\Av_!^{\mathfrak{L}N^-,\psi}} (\DD(\mathfrak{L}G)_{\mathfrak{L}N_P\mathfrak{L}^+M})^{\mathfrak{L}N^-,\psi,\acc}, \] where the first morphism is given by the acting on the unit via $\Sat_G^{\nv}$. We therefore obtain an equivalence \[ (\DD(\mathfrak{L}G)_{\mathfrak{L}N_P\mathfrak{L}^+M})_x^{\mathfrak{L}^+G,\temp} \tilde{\longrightarrow} \Upsilon(\check{\mathfrak{n}}_{\check{P}},\sO_{\check{G}})\modfact(\Rep(\check{G}) \otimes \Upsilon(\check{\mathfrak{n}})\mod_0^{\fact}(\Rep(\check{M})))_x, \] and by definition the latter category is equivalent to \[ \Upsilon(\check{\mathfrak{n}}_{\check{P}},\sO_{\check{G}})\mod_0^{\fact}(\Rep(\check{G}) \otimes \Rep(\check{M}))_x. \]

Equip the category \[ (\DD(\mathfrak{L}G)_{\mathfrak{L}N_P\mathfrak{L}^+M})_x^{\mathfrak{L}^+G,\temp} \] with the t-structure characterized by the requirement that the functor
\begin{equation}
\label{tempquotps}
(\DD(\mathfrak{L}G)_{\mathfrak{L}N_P\mathfrak{L}^+M})_x^{\mathfrak{L}^+G} \longrightarrow (\DD(\mathfrak{L}G)_{\mathfrak{L}N_P\mathfrak{L}^+M})_x^{\mathfrak{L}^+G,\temp}
\end{equation}
right adjoint to the inclusion is left t-exact. In fact, this functor is t-exact: its kernel \[ (\DD(\mathfrak{L}G)_{\mathfrak{L}N_P\mathfrak{L}^+M})_x^{\mathfrak{L}^+G,\antitemp} \] equals the kernel of $\Av^{\mathfrak{L}N^-,\psi}_!$ by definition, and by Proposition \ref{whitavtexact} the latter consists of infinitely connective objects, i.e., those with vanishing cohomology objects in all degrees. In particular this kernel is stable under truncation functors, which implies that (\ref{tempquotps}) is t-exact as claimed.

Combining the above equivalences, we obtain
\begin{equation}
\label{temppsequiv}
(\DD(\mathfrak{L}G)_{\mathfrak{L}N_P\mathfrak{L}^+M})_x^{\mathfrak{L}^+G,\temp} \tilde{\longrightarrow} \Upsilon(\check{\mathfrak{n}}_{\check{P}},\sO_{\check{G}})\mod_0^{\fact}(\Rep(\check{G}) \otimes \Rep(\check{M}))_x.
\end{equation}
Tracing through the constructions, we see that this is the restriction of (\ref{psfunc1}) to the tempered subcategory. We claim that the triangle \[
\begin{tikzcd}
& (\DD(\mathfrak{L}G)_{\mathfrak{L}N_P\mathfrak{L}^+M})_x^{\mathfrak{L}^+G} \arrow{dl}[swap]{(\ref{tempquotps})} \arrow{dr}{(\ref{psfunc1})} & \\
(\DD(\mathfrak{L}G)_{\mathfrak{L}N_P\mathfrak{L}^+M})_x^{\mathfrak{L}^+G,\temp} \arrow{rr}{(\ref{temppsequiv})} & & \Upsilon(\check{\mathfrak{n}}_{\check{P}},\sO_{\check{G}})\mod_0^{\fact}(\Rep(\check{G}) \otimes \Rep(\check{M}))_x
\end{tikzcd} \]
commutes, which amounts to the assertion that (\ref{psfunc1}) vanishes on \[ (\DD(\mathfrak{L}G)_{\mathfrak{L}N_P\mathfrak{L}^+M})_x^{\mathfrak{L}^+G,\antitemp}. \] As noted above, this subcategory consists of infinite connective objects, and the t-structure on \[ \Upsilon(\check{\mathfrak{n}}_{\check{P}},\sO_{\check{G}})\mod_0^{\fact}(\Rep(\check{G}) \otimes \Rep(\check{M}))_x \cong \QCoh((\check{\mathfrak{n}}_{\check{P}} \underset{\check{\mathfrak{g}}}{\times} \{ 0 \})/\check{P}) \] is left complete, so the claim follows.

It therefore suffices to show that (\ref{tempquotps}) restricts to an equivalence on eventually coconnective objects, or equivalently on coconnective objects. The restriction of this functor to coconnective objects admits the left adjoint \[ ((\DD(\mathfrak{L}G)_{\mathfrak{L}N_P\mathfrak{L}^+M})_x^{\mathfrak{L}^+G,\temp})^{\geq 0} \longrightarrow (\DD(\mathfrak{L}G)_{\mathfrak{L}N_P\mathfrak{L}^+M})_x^{\mathfrak{L}^+G} \xrightarrow{\tau^{\geq 0}} ((\DD(\mathfrak{L}G)_{\mathfrak{L}N_P\mathfrak{L}^+M})_x^{\mathfrak{L}^+G})^{\geq 0}, \] which is fully faithful because (\ref{tempquotps}) is t-exact. So it is enough to show that (\ref{tempquotps}) is conservative on eventually coconnective objects. But \[ \Av^{\mathfrak{L}N^-,\psi}_! : (\DD(\mathfrak{L}G)_{\mathfrak{L}N_P\mathfrak{L}^+M})_x^{\mathfrak{L}^+G} \longrightarrow (\DD(\mathfrak{L}G)_{\mathfrak{L}N_P\mathfrak{L}^+M})_x^{\mathfrak{L}N^-,\psi} \] factors through (\ref{tempquotps}), and the former is conservative on eventually coconnective objects by Proposition \ref{whitavtexact}.

\bibliography{bibtex.bib}{}

\newcommand{\etalchar}[1]{$^{#1}$}
\begin{thebibliography}{ABG{\etalchar{+}}14}

\bibitem[AB09]{AB}
Sergey Arkhipov and Roman Bezrukavnikov.
\newblock {Perverse sheaves on affine flags and Langlands dual group}.
\newblock {\em Israel Journal of Mathematics}, 170:135--183, 2009.

\bibitem[ABC{\etalchar{+}}18]{paris-notes}
Dima Arinkin, Dario Beraldo, Justin Campbell, Lin Chen, Yuchen Fu, Dennis
  Gaitsgory, Quoc Ho, Sergey Lysenko, Sam Raskin, Simon Riche, Nick Rozenblyum,
  James Tao, David Yang, and Yifei Zhao.
\newblock {Notes from the winter school on local geometric Langlands}, 2018.
\newblock Available at
  \url{https://sites.google.com/site/winterlanglands2018/notes-of-talks}.

\bibitem[ABG04]{ABG}
Sergey Arkhipov, Roman Bezrukavnikov, and Victor Ginzburg.
\newblock {Quantum groups, the loop Grassmannian, and the Springer resolution}.
\newblock {\em Journal of the American Mathematical Society}, 17(3):595--678,
  2004.

\bibitem[ABG{\etalchar{+}}14]{jerusalem2014}
Dima Arinkin, Dario Beraldo, Dennis Gaitsgory, Sam Raskin, and Nick Rozenblyum.
\newblock {Notes from the workshop: \emph{Towards the proof of the geometric
  Langlands conjecture}}, 2014.
\newblock Available at
  \url{https://sites.google.com/site/geometriclanglands2014/notes}.

\bibitem[ACR18]{ACR}
Pramod Achar, Nicholas Cooney, and Simon Riche.
\newblock {The parabolic exotic t-structure}.
\newblock {\em arXiv preprint arXiv:1805.05624}, 2018.

\bibitem[AG15]{arinkin-gaitsgory}
Dima Arinkin and Dennis Gaitsgory.
\newblock {Singular support of coherent sheaves and the geometric Langlands
  conjecture}.
\newblock {\em Selecta Mathematica}, 21(1):1--199, 2015.

\bibitem[Ber17]{B}
Dario Beraldo.
\newblock {Loop group actions on categories and Whittaker invariants}.
\newblock {\em Advances in Mathematics}, 322:565--636, 2017.

\bibitem[Ber19]{dario-top-ch}
Dario Beraldo.
\newblock {The topological chiral homology of the spherical category}.
\newblock {\em Journal of Topology}, 12(3):685--704, 2019.

\bibitem[Ber21]{dario-ramanujan}
Dario Beraldo.
\newblock {On the geometric Ramanujan conjecture}.
\newblock {\em arXiv preprint arXiv:2103.17211}, 2021.

\bibitem[BF08]{bezrukavnikov-finkelberg}
Roman Bezrukavnikov and Michael Finkelberg.
\newblock {Equivariant Satake category and Kostant--Whittaker reduction}.
\newblock {\em Mosc. Math. J}, 8(1):39--72, 2008.

\bibitem[BG08]{bg-deformations}
Alexander Braverman and Dennis Gaitsgory.
\newblock {Deformations of Local Systems and Eisenstein Series}.
\newblock {\em Geometric and Functional Analysis}, 2008.

\bibitem[BL18]{bezrukavnikov-losev}
Roman Bezrukavnikov and Ivan Losev.
\newblock {On dimension groth of modular irreducible representations of
  semisimple Lie algebras}.
\newblock 326:59--89, 2018.

\bibitem[BZFN10]{bzfn}
David Ben-Zvi, John Francis, and David Nadler.
\newblock {Integral transforms and Drinfeld centers in derived algebraic
  geometry}.
\newblock {\em Journal of the American Mathematical Society}, 23(4):909--966,
  2010.

\bibitem[BZNP17]{BNP}
David Ben-Zvi, David Nadler, and Anatoly Preygel.
\newblock Integral transforms for coherent sheaves.
\newblock {\em Journal of the European Mathematical Society},
  19(12):3763--3812, 2017.

\bibitem[CCGR24]{eis}
Justin Campbell, Lin Chen, Dennis Gaitsgory, and Sam Raskin.
\newblock {Proof of the geometric Langlands conjecture III: Compatibility with
  parabolic induction}.
\newblock 2024.

\bibitem[CD21]{CD}
Justin Campbell and Gurbir Dhillon.
\newblock {Affine Harish-Chandra bimodules and Steinberg-Whittaker
  localization}.
\newblock {\em arXiv preprint arXiv:2108.02806}, 2021.

\bibitem[CF21a]{chen-fu}
Lin Chen and Yuchen Fu.
\newblock {An extension of the Kazhdan-Lusztig equivalence}.
\newblock {\em arXiv preprint arXiv:2111.14606}, 2021.

\bibitem[CF21b]{charles-lin}
Lin Chen and Yuchen Fu.
\newblock {An extension of the Kazhdan-Lusztig equivalence}.
\newblock {\em arXiv preprint arXiv:2111.14606}, 2021.

\bibitem[Che20]{lin-nearby}
Lin Chen.
\newblock {Nearby cycles on Drinfeld-Gaitsgory-Vinberg Interpolation
  Grassmannian and long intertwining functor}.
\newblock {\em arXiv preprint arXiv:2008.09349}, 2020.

\bibitem[FG12]{FG}
John Francis and Dennis Gaitsgory.
\newblock {Chiral Koszul duality}.
\newblock {\em Selecta Mathematica}, 18(1):27--87, 2012.

\bibitem[FGV01]{fgv}
Edward Frenkel, Dennis Gaitsgory, and Kari Vilonen.
\newblock {Whittaker patterns in the geometry of moduli spaces of bundles on
  curves}.
\newblock {\em Annals of Mathematics}, 153(3):699--748, 2001.

\bibitem[FH24]{FH}
Joakim F\ae{}rgeman and Andreas Hayash.
\newblock {Parabolic geometric Eisenstein series and constant term functors}.
\newblock 2024.

\bibitem[FR22]{coeffs}
Joakim F\ae{}rgeman and Sam Raskin.
\newblock {Non-vanishing of geometric Whittaker coefficients for reductive
  groups}.
\newblock {\em arXiv preprint arXiv:2207.02955}, 2022.

\bibitem[FR23]{independence}
Joakim F\ae{}rgeman and Sam Raskin.
\newblock {The Arinkin--Gaitsgory temperedness conjecture}.
\newblock {\em Bulletin of the London Mathematical Society}, 55(3):1419--1446,
  2023.

\bibitem[Gai11]{whatacts}
Dennis Gaitsgory.
\newblock {What acts on geometric Eisenstein series}.
\newblock 2011.
\newblock Available at
  \url{https://people.mpim-bonn.mpg.de/gaitsgde/GL/WhatActs.pdf}.

\bibitem[Gai13]{indcoh}
Dennis Gaitsgory.
\newblock {Ind-coherent sheaves}.
\newblock {\em Moscow Mathematical Journal}, 13(3):399--528, 2013.

\bibitem[Gai14]{G3}
Dennis Gaitsgory.
\newblock {Sheaves of categories and the notion of 1-affineness}.
\newblock Available at
  \url{people.math.harvard.edu/~gaitsgde/GL/shvsofcat.pdf}, 2014.

\bibitem[Gai15]{dennis-laumonconf}
Dennis Gaitsgory.
\newblock {Outline of the proof of the geometric Langlands conjecture for
  GL(2)}.
\newblock {\em Ast{\'e}risque}, 2015.

\bibitem[Gai19a]{atiyah-bott}
Dennis Gaitsgory.
\newblock {The Atiyah-Bott formula for the cohomology of the moduli space of
  bundles on a curve}.
\newblock Available at
  \url{https://people.mpim-bonn.mpg.de/gaitsgde/GL/Tamagawa.pdf}, 2019.

\bibitem[Gai19b]{G1}
Dennis Gaitsgory.
\newblock {The semi-infinite intersection cohomology sheaf}.
\newblock 2019.
\newblock Available at
  \url{people.math.harvard.edu/~gaitsgde/GL/semiinfIC.pdf}.

\bibitem[Gai22]{G2}
Dennis Gaitsgory.
\newblock {The semi-infinite intersection cohomology sheaf-II: the Ran space
  version}.
\newblock In {\em Representation Theory and Algebraic Geometry}, pages
  151--265. Springer, 2022.

\bibitem[GL19]{gl-small}
Dennis Gaitsgory and Sergey Lysenko.
\newblock {Metaplectic Whittaker category and quantum groups: the" small" FLE}.
\newblock {\em arXiv preprint arXiv:1903.02279}, 2019.

\bibitem[GR19]{gr-i}
Dennis Gaitsgory and Nick Rozenblyum.
\newblock {\em {A study in derived algebraic geometry: Volume I:
  correspondences and duality}}, volume 221.
\newblock American Mathematical Society, 2019.

\bibitem[GR20]{gr-ii}
Dennis Gaitsgory and Nick Rozenblyum.
\newblock {\em {A study in derived algebraic geometry: Volume II: deformations,
  Lie theory and formal geometry}}, volume 221.
\newblock American Mathematical Society, 2020.

\bibitem[Lur17a]{higheralgebra}
Jacob Lurie.
\newblock Higher algebra.
\newblock Available at \url{http://www.math.ias.edu/~lurie/papers/HA.pdf},
  2017.

\bibitem[Lur17b]{htt}
Jacob Lurie.
\newblock Higher topos theory.
\newblock Available at \url{http://www.math.ias.edu/~lurie/papers/HTT.pdf},
  2017.

\bibitem[Noc23]{nocera}
Guglielmo Nocera.
\newblock {A model for the $\bE_3$ fusion-convolution product of constructible
  sheaves on the affine Grassmannian}, 2023.

\bibitem[Ras15a]{R1}
Sam Raskin.
\newblock {Chiral categories}.
\newblock Available at \url{gauss.math.yale.edu/~sr2532/chiralcats.pdf}, 2015.

\bibitem[Ras15b]{infdim}
Sam Raskin.
\newblock {D-modules on infinite-dimensional varieties}.
\newblock https://gauss.math.yale.edu/~sr2532/dmod.pdf, 2015.

\bibitem[Ras15c]{R4}
Sam Raskin.
\newblock {On the notion of spectral decomposition in local geometric
  Langlands}.
\newblock 2015.
\newblock Available at \url{gauss.math.yale.edu/~sr2532/locsys.pdf}.

\bibitem[Ras16]{R3}
Sam Raskin.
\newblock {Chiral principal series categories II: the factorizable Whittaker
  category}.
\newblock 2016.
\newblock Available at \url{https://gauss.math.yale.edu/~sr2532/cpsii.pdf}.

\bibitem[Ras19]{R5}
Sam Raskin.
\newblock {Homological methods in semi-infinite contexts}.
\newblock Available at \url{https://gauss.math.yale.edu/~sr2532/topalg.pdf},
  2019.

\bibitem[Ras21a]{R2}
Sam Raskin.
\newblock {Chiral principal series categories I: finite dimensional
  calculations}.
\newblock {\em Advances in Mathematics}, 388:107856, 2021.

\bibitem[Ras21b]{whit}
Sam Raskin.
\newblock {$\sW$-algebras and Whittaker categories}.
\newblock {\em Selecta Mathematica}, 27(3):1--114, 2021.

\bibitem[RH23]{HR}
Sam Raskin and Justin Hilburn.
\newblock {Tate's thesis in the de Rham setting}.
\newblock {\em Journal of the American Mathematical Society}, 36:917--1001,
  2023.

\bibitem[Roz11]{nick-thesis}
Nick Rozenblyum.
\newblock {Connections on conformal blocks}.
\newblock 2011.
\newblock https://dspace.mit.edu/handle/1721.1/67813.

\bibitem[Yan21]{ruotao-flags}
Ruotao Yang.
\newblock {Twisted Whittaker category on affine flags and category of
  representations of mixed quantum group}.
\newblock {\em arXiv preprint arXiv:2104.09848}, 2021.

\bibitem[YR22]{RY}
David Yang and Sam Raskin.
\newblock {Affine Beilinson-Bernstein localization at the critical level}.
\newblock {\em arXiv preprint arXiv:2203.13885}, 2022.

\end{thebibliography}
\bibliographystyle{alpha}

%
%
%
%
%
%
%
%
%
%
%
%
%
%
%
%
%
%
%
%
%
%
%

\end{document}